\newtheorem{theorem}{Theorem}[section]
\newtheorem{lemma}[theorem]{Lemma}
\newtheorem{corollary}[theorem]{Corollary}
\newtheorem{proposition}[theorem]{Proposition}
\newtheorem{conjecture}[theorem]{Conjecture}
\theoremstyle{definition}
\newtheorem{definition}[theorem]{Definition}
\newtheorem{example}[theorem]{Example}
\theoremstyle{remark}
\newtheorem{remark}[theorem]{\bf Remark}
\numberwithin{equation}{section}
\newcommand{\C}{\mathbb C}
\newcommand{\PP}{\mathbb P}
\newcommand{\Q}{\mathbb Q}
\newcommand{\G}{\mathbb G}
\newcommand{\Z}{\mathbb Z}
\newcommand{\N}{\mathbb N}
\newcommand{\R}{\mathbb R}
\newcommand{\LL}{\mathbb L}
\newcommand{\TT}{\mathbb T}
\DeclareMathOperator{\alg}{alg}
\DeclareMathOperator{\gpast}{ast}
\DeclareMathOperator{\AST}{AST}
\DeclareMathOperator{\AH}{AH}
\DeclareMathOperator{\Aut}{Aut}
\DeclareMathOperator{\CH}{CH}
\DeclareMathOperator{\gpDH}{DH}
\DeclareMathOperator{\gpDL}{DL}
\DeclareMathOperator{\Fr}{fr}
\DeclareMathOperator{\End}{End}
\DeclareMathOperator{\Hom}{Hom}
\DeclareMathOperator{\et}{et}
\DeclareMathOperator{\Gal}{Gal}
\DeclareMathOperator{\GIso}{GIso}
\DeclareMathOperator{\GL}{GL}
\DeclareMathOperator{\SL}{SL}
\DeclareMathOperator{\GO}{GO}
\DeclareMathOperator{\GSp}{GSp}
\DeclareMathOperator{\gpH}{H}
\DeclareMathOperator{\id}{id}
\DeclareMathOperator{\Ind}{Ind}
\DeclareMathOperator{\Iso}{Iso}
\DeclareMathOperator{\Ker}{Ker}
\DeclareMathOperator{\gpL}{L}
\DeclareMathOperator{\gpO}{O}
\DeclareMathOperator{\MMT}{MMT}
\DeclareMathOperator{\MT}{MT}
\DeclareMathOperator{\MS}{MS}
\DeclareMathOperator{\Out}{Out}
\DeclareMathOperator{\USp}{USp}
\DeclareMathOperator{\Sp}{Sp}
\DeclareMathOperator{\SO}{SO}
\DeclareMathOperator{\SU}{SU}
\DeclareMathOperator{\ST}{ST}
\DeclareMathOperator{\Sym}{Sym}
\DeclareMathOperator{\gpU}{U}
\DeclareMathOperator{\Zar}{Zar}
\DeclareMathOperator{\gpP}{P}
\begin{document}

\title[Motivic Serre group and Sato--Tate conjecture]
{{Motivic Serre group and Sato--Tate conjecture}}

\author[Grzegorz Banaszak]{Grzegorz Banaszak}
\address{Department of Mathematics and Computer Science, Adam Mickiewicz University,
Pozna\'{n} 61-614, Poland} 
\email{banaszak@amu.edu.pl}

\author[Kiran S. Kedlaya]{Kiran S. Kedlaya}
\address{Department of Mathematics, University of California San Diego, La Jolla, CA 92093, USA}
\email{kedlaya@ucsd.edu}

\keywords{Mumford--Tate group, Algebraic Sato--Tate group, motives}

\thanks{Thanks to Yves Andr\'e for answering our questions about motivated cycles, particularly concerning 
Remark~\ref{The key containment assumption in ahc and mot}.
Banaszak was supported by UC San Diego (Sept. 2014--June 2015), the grant 346300 for IMPAN from the Simons
Foundation and the matching 2015-2019 Polish MNiSW fund and Weizmann Institute of Science, Rehovot (Aug. 7--22, 2022). Kedlaya was supported by NSF (grants DMS-1101343, DMS-1501214, DMS-1802161, DMS-2053473), UC San Diego (Stefan E. Warschawski professorship), and the IAS School of Mathematics (Visiting Professorship 2018--2019), and benefitted from the hospitality of IM PAN (Simons Foundation grant 346300, Polish MNiSW fund 2015--2019) during September 2018, MSRI (NSF grants DMS-1440140, DMS-1928930) during May 2019 and January 2023,
and CIRM during February 2023.
} 
\begin{abstract}
{This paper concerns the Algebraic Sato--Tate and Sato--Tate conjectures, based on Serre's original motivic formulation, with an eye towards explicit computations of Sato--Tate groups.
We build on the algebraic framework for the Sato--Tate conjecture introduced in \cite{BK2}, which used Deligne's motivic category for absolute Hodge cycles and was restricted to motives of odd weight.
Here, we allow general weight and some other motivic categories, notably Andr{\' e}'s motivic category of motivated cycles; moreover, some results are also new in the odd weight case.
The paper consists of two parts; in the first part we work in the framework of strongly compatible families of $l$-adic representations associated with pure, rational, polarized Hodge structures,
while in the second part we use the language of motives.} 
\end{abstract}

\maketitle
\tableofcontents

\section{Introduction} 

This paper is a sequel to our previous papers \cite{BK1, BK2}. In those papers, we studied the algebraic underpinning of the generalized Sato--Tate conjecture, which gives a group-theoretic explanation of the distribution of normalized Euler factors of the $L$-function of a motive over a number field. This study provides a framework for rigorous
analysis of Sato--Tate groups associated to various classes of motives, which in turn can be used to explain and predict experimental results from the numerical computation of $L$-functions, as in the compilation of the LMFDB (L-Functions and Modular Forms Database) \cite{LMFDB}.

The paper \cite{BK1} focused on the case of motives of weight 1, with an eye towards the classification of Sato--Tate groups of abelian surfaces by Fit\'e--Kedlaya--Rotger--Sutherland \cite{FKRS12}. The paper \cite{BK2} was devoted to the Sato--Tate conjecture in the framework of Deligne's motivic category for absolute Hodge cycles, for motives of odd weight. In this paper, we extend the results of \cite{BK2} to other motivic categories and study motives of arbitrary weight; the case of even weight introduces some parity considerations that do not appear for odd weight. Extending to even weight is motivated in part by interest in the Sato--Tate conjecture for K3 surfaces \cite{EJ}.

As in \cite{BK2}, our approach is based on Serre's treatment of the generalized Sato--Tate conjecture \cite{Se94}
using a construction which we call the \emph{motivic Serre group}  (see \cite[Def. 9.5]{BK2}).
In \cite[p. 396]{Se94}, Serre gave the construction in terms of the  category $\mathcal{M}_{{\rm{num}}}$ of motives for numerical equivalence (assuming Grothendieck's standard conjectures and the Hodge conjecture). In \cite[Def. 11.3]{BK2}, 
we adapted Serre's construction for the category $\mathcal{M}_{{\rm{ahc}}}$ of motives for absolute Hodge cycles
to define an \emph{algebraic Sato--Tate (AST) group} associated to such a motive. In this paper, we consider
the analogous construction for the category of motives $\mathcal{M}_{{\sim}}$
for a more general equivalence relation ${{\sim}}$, defining an AST group for a homogeneous motive (i.e., a direct summand of a motive of the form $h^{r}_{{\sim}}(X) (m)$); this requires assuming certain conjectures, but for cetain motivic categories these are unconditional (see below).
\medskip

We now describe the structure of the paper in more detail.
The paper is divided into two parts.
The first part, \S \ref{Mumford--Tate groups of polarized Hodge structures}--\ref{application to the Sato--Tate conjecture}, is concerned with algebraic Sato--Tate groups and the Sato--Tate conjecture for families 
of $l$-adic representations associated with Hodge structures. 
\medskip

In \S \ref{Mumford--Tate groups of polarized Hodge structures}, for a polarized, rational, pure Hodge structure $(V, \psi)$, we recall the definition of 
the algebraic group $\gpDH(V, \psi)$ introduced in \cite{BK1, BK2} and its relation to 
the Hodge group $\gpH(V, \psi)$, the Mumford--Tate group $\MT(V, \psi)$, and the extended Mumford--Tate group
${\widetilde{\MT}}(V, \psi)$. For a ring $D \subseteq \End_{\mathbb{Q}} (V)$ preserving the Hodge decomposition and having discrete $G_K$-module structure (conditions {\bf{(D1)}} and {\bf{(D2)}}), 
we also recall the definition of the twisted decomposable Lefschetz group $\gpDL_{K}(V, \psi, D)$ introduced in \cite{BK1,BK2}. We obtain results concerning the structure and properties of $\gpDH(V, \psi)$ (Proposition \ref{Hodge realizing conn comp: even case}, Lemma \ref{for n odd DH = H}, Corollary \ref{DH = H if H = CDIso}). We define the Betti parity group $\gpP(V, \psi)$ and state its basic properties. We give explicit examples for which $\gpP(V, \psi)$ is nontrivial
(Examples \ref{An example of nontrivial Betti parity group} and 
\ref{Another example of nontrivial Betti parity group}).
\medskip

In \S \ref{de Rham structures associated with Hodge structures} we consider the bilinear space $(V_{_{\rm{DR}}}, \psi_{_{\rm{DR}}})$ over $K$, which has the property that 
$(V, \psi) \otimes_{\Q} \C \simeq (V_{_{\rm{DR}}}, \psi_{_{\rm{DR}}}) \otimes_{K} \C$.
We define the algebraic groups $\gpDH(V_{_{\rm{DR}}}, \psi_{_{\rm{DR}}})$ and 
$\gpH(V_{_{\rm{DR}}}, \psi_{_{\rm{DR}}})$ assuming that the ring $D$ satisfies additional conditions 
{\bf{(DR1)}} and {\bf{(DR2)}}. We obtain results concerning the structure and properties of the group
$\gpDH(V_{_{\rm{DR}}}, \psi_{_{\rm{DR}}})$ (Proposition \ref{De Rham realizing conn comp: even case}, 
Lemma \ref{for n odd DH-DR = H-DR}, Corollary \ref{DH-DR = H-DR if H-DR = CDIso-DR}).
We define the de Rham parity group $\gpP(V_{_{\rm{DR}}}, \psi_{_{\rm{DR}}})$ 
and state its basic properties. We give explicit examples, based on Example \ref{An example of nontrivial Betti parity group}, 
for which $\gpP(V_{_{\rm{DR}}}, \psi_{_{\rm{DR}}})$ is nontrivial (Example 
\ref{An example of De Rham nontrivial  parity group}).
\medskip

In \S \ref{families of l-adic representations associated with Hodge structures}, we work with a class of $l$-adic representations associated with Hodge structures from \S \ref{Mumford--Tate groups of polarized Hodge structures} and satisfying additional conditions {\bf{(R1)}}--{\bf{(R4)}}. Examples of such $l$-adic representations are given by {\' e}tale cohomology of proper and smooth schemes (Remarks \ref{Hodge--Tate representations in etale cohomology}--\ref{Action od D is GK equivariant}). We recall the basic properties of the group schemes ${G_{l, K}^{\alg}}$ and ${G_{l, K, 1}^{\alg}}$ and the relation of the latter group to the twisted decomposable Lefschetz group (Corollary \ref{Corollary GlL1alg subset DLLA}). The group scheme ${G_{l, K, 1}^{\alg}}$ is fundamental for the setup of the Sato--Tate conjecture. 
\medskip

In \S \ref{identity connected component of GlK1alg}, we investigate the structure of ${G_{l, K, 1}^{\alg}}$ in relation with
the structure of ${G_{l, K}^{\alg}}$. The basic results are 
Theorem \ref{L0realizing conn comp for GlK alg} and Corollary \ref{Corollary-GK1alg mod GK1algID with resp. to DL mod DLId}. There is a natural map between component groups:
$$
i_{CC}\colon \pi_{0} ({G_{l, K, 1}^{\alg}}) \rightarrow \pi_{0} ({G_{l, K}^{\alg}}).
$$ 
Based upon two technical results (Theorems \ref{L0realizing conn comp of GlKalg} and 
\ref{L0realizing conn comp}) we prove  that $i_{CC}$ is an isomorphism when $n$ is odd
(Theorem \ref{conn comp equal for for GlK1 alg and GlK alg}). For an extension $M_{0} / K$ such that
${G_{l, M_{0}}^{\alg}}$ is connected, we prove two technical results (Propositions \ref{L0realizing conn comp: even case} and
\ref{L0 realizing for rho l (GK): even case}) concerning the structure of ${G_{l, M_0, 1}^{\alg}}$ with relation to 
its identity component. Let $K_{0}/K$ be the smallest extension
such that ${G_{l, K_{0}}^{\alg}}$ is connected; we establish some results concerning 
${G_{l, K_{0}, 1}^{\alg}}$ and the map $i_{CC}$ (Theorem \ref{K0 almost realizing Glk1 alg: even case} 
and Corollary \ref{K0 realizing Glk1 alg: even case condition}). We also define the Serre $l$-adic parity group 
$P_{\rm{S}} (V_{l}, \psi_{l})$ and describe its basic properties. Again we give explicit examples, based on Example 
\ref{An example of nontrivial Betti parity group}, for which $P_{\rm{S}}(V_{l}, \psi_{l})$ is nontrivial 
(Example \ref{An example of nontrivial l-adic parity group}).
It follows from Theorem 
\ref{K0 almost realizing Glk1 alg: even case} that the map $i_{CC}$ is an isomorphism if and only if 
$P_{\rm{S}}(V_{l}, \psi_{l})$ is trivial. At the end of this section we give conditions for 
$\overline{\rho_{l} (G_K)_1} \, = \, {G_{l, K, 1}^{\alg}}$ (Theorem \ref{K giving the isom overline rho l (GK) the same as GlK1 alg: arbitrary weight} and Corollary \ref{K0 giving the isom overline rho l (GK) the same as GlK1 alg: odd case}). 
\medskip

In \S \ref{section-computation of the identity connected component}, following an approach of Serre, we define $\widetilde{G_{l, K}^{\alg}}$ and 
$\widetilde{G_{l, K, 1}^{\alg}}$. We prove that $\widetilde{G_{l, K, 1}^{\alg}} \subset G_{l, K, 1}^{\alg}$ with index at most 2 and $(\widetilde{G_{l, K, 1}^{\alg}})^{\circ} = (G_{l, K, 1}^{\alg})^{\circ}$. 
We also prove that for any weight $n$ the following map is an isomorphism:
$$
\widetilde{{i}}_{CC}\colon \pi_{0} (\widetilde{G_{l, K, 1}^{\alg}}) \,\,\, {\stackrel{\simeq}{\longrightarrow}} \,\,\, \pi_{0} (\widetilde{G_{l, K}^{\alg}}). 
$$
The identity connected component of $G_{l, K, 1}^{\alg}$ 
(Corollary \ref{Connected component GlKlK1alg in form of widetilde(GlK01alg)circ})
is computed as follows:
$$
(G_{l, K, 1}^{\alg})^{\circ} = \widetilde{G_{l, K_{0}, 1}^{\alg}}.
$$
We also prove that $G_{l, K, 1}^{\alg} = \widetilde{G_{l, K, 1}^{\alg}}$ if the parity group     
$\gpP(V_{l}, \psi_{l})$ is trivial.
\medskip

In \S \ref{algebraic ST conjecture for families of l-adic reps}, we formulate Algebraic Sato--Tate Conjecture \ref{general algebraic Sato Tate conj.} 
and Sato--Tate Conjecture \ref{general Sato Tate conj.} for $l$-adic representations with respect 
to the groups $G_{l, K, 1}^{\alg}$.
Under Algebraic Sato--Tate Conjecture \ref{general algebraic Sato Tate conj.}, we determine the relationship between the algebraic Sato--Tate and Sato--Tate groups for the fields $K$ and $K_{0}$.
We also formulate Algebraic Sato--Tate Conjecture \ref{general algebraic Sato Tate conj. Serre's approach} 
and Sato--Tate Conjecture \ref{general Sato Tate conj. tilde} with respect to the groups
$\widetilde{G_{l, K, 1}^{\alg}}$. We prove that Conjecture \ref{general algebraic Sato Tate conj. Serre's approach} implies Conjecture \ref{general algebraic Sato Tate conj.} and we discuss obstructions to the opposite 
implication. We also analyze the relationship between the algebraic Sato--Tate groups  
$\AST_{K} (V, \psi)$, $\widetilde{\AST_{K}} (V, \psi)$ and the Sato--Tate groups $\ST_{K} (V, \psi)$, 
$\widetilde{\ST_{K}} (V, \psi)$ (Remark \ref{Conjugacy classes of normalized Frobenius elements are in wide tilde AST}, Corollary \ref{relations between AST and wide tilde AST and between ST and wide tilde ST}).
Under Conjecture \ref{general algebraic Sato Tate conj. Serre's approach}, we prove (Theorem \ref{equidistribution property in ST implies equidistribution property in wide tilde ST})
 that if
$- {{\rm Id}}_{V} \notin \widetilde{\ST_{K}} (V, \psi)$ then Sato--Tate Conjecture
\ref{general Sato Tate conj.} does not hold, whereas if $- {{\rm Id}}_{V} \in \widetilde{\ST_{K}} (V, \psi)$
then Sato--Tate Conjectures \ref{general Sato Tate conj.}  and \ref{general Sato Tate conj. tilde}
are equivalent; this suggests that the latter is the more correct formulation.
\medskip

In \S \ref{AST and Tate for families} we state Conjectures \ref{Tate conjecture for families of l-adic representations} and 
\ref{Tate conjecture for families of l-adic representations tilde} which are $l$-adic analogues of the 
Tate conjecture for motives. For a special case of these conjectures see \cite{LP}. The difference between the  Tate conjecture for motives and these conjectures is the existence of a global object (motivic Galois group) in the Tate conjecture, whereas the existence of global objects 
$\mathcal{G}_{K}^{\alg} := \mathcal{G}_{K}^{\alg}(V, \psi)$ and 
$\widetilde{\mathcal{G}_{K}^{\alg}} := \widetilde{\mathcal{G}_{K}^{\alg}}(V, \psi)$ is assumed in Conjectures \ref{Tate conjecture for families of l-adic representations} and 
\ref{Tate conjecture for families of l-adic representations tilde}. Throughout \S \ref{AST and Tate for families}  we assume only the weaker form of Conjectures \ref{Tate conjecture for families of l-adic representations} and \ref{Tate conjecture for families of l-adic representations tilde}, namely Conjectures
\ref{Tate conjecture for families of l-adic representations}(a) and 
\ref{Tate conjecture for families of l-adic representations tilde}(a); all results in \S \ref{AST and Tate for families}  are 
under this assumption. This framework allows to define naturally Algebraic Sato--Tate groups 
$\AST_{K} (V, \psi) := \mathcal{G}_{K, 1}^{\alg}$ and $\widetilde{\AST_{K}} (V, \psi) := 
\widetilde{\mathcal{G}_{K, 1}^{\alg}}$; the results we obtain are very similar to the 
results of \S \ref{section-computation of the identity connected component} for the $l$-adic site and to the results of \S \ref{algebraic Sato--Tate group for motives} for the motivic site. In particular, among a number of other results, we prove that there is a natural isomorphism (Theorem \ref{pi 0 wide tilde math cal G K, 1 alg cong pi 0 wide tilde math cal G K alg}):
\begin{equation*}
\widetilde{i}_{CC}\colon \pi_{0} (\widetilde{\mathcal{G}_{K, 1}^{\alg}}) \,\,\, {\stackrel{\simeq}{\longrightarrow}} \,\,\, \pi_{0} (\widetilde{\mathcal{G}_{K}^{\alg}}).
\end{equation*}
Moreover we prove that 
all four Conjectures \ref{general algebraic Sato Tate conj.},
\ref{general algebraic Sato Tate conj. Serre's approach}, \ref{Tate conjecture for families of l-adic representations}, 
\ref{Tate conjecture for families of l-adic representations tilde} are equivalent.
The framework of this section provides a wider perspective on Algebraic Sato--Tate Conjectures \ref{general algebraic Sato Tate conj.} and \ref{general algebraic Sato Tate conj. Serre's approach}.
\medskip

In \S \ref{sec,category of polarized realizations} we present families $(\rho_{l})_{l}$ of $l$-adic representations which are associated with pure, polarized, rational Hodge structures and for which Conjectures \ref{Tate conjecture for families of l-adic representations}(a) and 
\ref{Tate conjecture for families of l-adic representations tilde}(a) are satisfied (see Proposition \ref{the family V l l of G K satisfies Conjectures 1 (a) and 2 (a)}). These families will be determined by certain objects in the category of polarized realizations $R_{K}^{\rm{p}}$ which we define; it is a full subcategory of the category of Jannsen realizations $R_{K}$ \cite{Ja90}. 
For the convenience of readers our computations concerning polarized realizations are very detailed.  
\medskip

\S \ref{remarks on equidistribution} is independent of the previous sections and somewhat different in character. In \S \ref{remarks on equidistribution} we consider an open subgroup $G_{0}$ of a compact group $G$ and the map 
$X(j)\colon X(G_{0}) \rightarrow X(G)$ of the conjugacy classes induced by natural embedding 
$j\colon G_{0} \rightarrow G$. Consider a sequence of elements $(x_n)$ in $X(G)$. Our first result, 
Proposition \ref{equidistr. of xv implies equidistr. of xnj}, shows that equidistribution of the sequence 
$(x_n)$ in $X(G)$ with respect to the Haar measure $\mu$ on $G$ implies, under a natural numerical condition
\eqref{limit condition}, the equidistribution with respect to the pushforward measure $j_{\ast} \mu_{G_{0}}$ 
 of the subsequence $(x_{n_k})$ of $(x_n)$ with elements in the image under the map $X(j)$. For the rest of
\S \ref{remarks on equidistribution}, we study the problem of inverting Proposition \ref{equidistr. of xv implies equidistr. of xnj}, i.e. to deduce an equidistribution statement in $X(G)$ from equidistribution statements in $X(G_{0})$ and $X(G/G_{0})$. As first results in this direction we prove Lemma \ref{reduction for equidistribution easy case} and Proposition
\ref{proposition about the reduction of equidistribution to G0}. To get stronger results concerning 
deduction of equidistribution on $G$ from equidistribution on $G_0$, we first extend Artin's Theorem on induced characters from finite groups to compact groups (Lemma \ref{Artin Clifford lemma}). We then prove Lemma \ref{reduction for equidistribution} and Theorem \ref{reduction for equidistribution2}; in the latter, $G_{0}$ is a connected Lie group satisfying some extra conditions on its factors.
\medskip

The first part of \S \ref{application to the Sato--Tate conjecture}, ending at the proof of Lemma \ref{unipotent elements in an algebraic group}, depends only on \S \ref{remarks on equidistribution}. In the first part of \S \ref{application to the Sato--Tate conjecture} we discuss equidistribution on a compact group $G$ via $L$-series associated with representations
of $G$ on finite dimensional $\C$-vector spaces, following the idea of Serre \cite[Appendix to Chapter I]{Se1}. Consider a sequence $(x_v)$ of elements of $X(G)$ indexed by prime ideals of $\mathcal{O}_{K}$ (see \textit{loc. cit.}).
Fix any finite extension $L/K$. We prove an auxiliary result (Lemma \ref{Equidistribution with respect to all primes and primes in S}) that $(x_v)$ is equidistributed on $X(G)$ with respect to the measure induced on $X(G)$ by the Haar measure $\mu$ on $G$ if and only if the subsequence $(x_v)_{S}$
of $(x_v)$, indexed by $v$'s that split completely in $\mathcal{O}_L$, is equidistributed on $X(G)$ with respect to the same measure. We include an auxiliary Lemma \ref{unipotent elements in an algebraic group} concerning unipotent elements 
in algebraic groups which is indispensable in further computations in this section. 
\medskip

In the second part of \S \ref{application to the Sato--Tate conjecture}, we assume Conjectures \ref{general algebraic Sato Tate conj.} and \ref{general algebraic Sato Tate conj. Serre's approach}, that is, we work under the assumptions of Theorems \ref{STK iff STK0} and \ref{tilde STK iff tilde STK0}. We investigate equidistribution of Frobenius elements in the conjugacy classes $X(G)$ of  $G := \ST_{K} (V, \psi)$ and its relationship with equidistribution of Frobenius elements in the conjugacy classes $X(G_{0})$ of $G_{0} := \ST_{K_{0}} (V, \psi)$. 
The main result in this section (Theorem \ref{Sato--Tate conjecture STK iff STK1})
states that the sequence of Frobenius elements in $X(G)$ is equidistributed if and only if the sequence of Frobenius elements in $X(G_1)$ is equidistributed for each subgroup $G_1$ of $G$ such that $G_0 \subset G_1$ and $G_1/G_0$ is cyclic. The proof of Theorem \ref{Sato--Tate conjecture STK iff STK1} is based on a number of technical results from previous sections and three technical results from this section: 
Lemmas \ref{primes of L splitting completely over  v0}, \ref{Lemma about bijections of double cosets for AST and ST}, and 
Corollary \ref{primes of L splitting completely over v0 correspond to conjugations of sv}.
 
Observe that under the assumption of Algebraic Sato--Tate Conjecture \ref{general algebraic Sato Tate conj.}, we have $\ST_{K_{0}} (V, \psi) = \ST_{K} (V, \psi)^{\circ}$ if and only if the Serre $l$-adic parity group $P_{\rm{S}} (V_{l}, \psi_{l})$ is trivial (cf. Proposition \ref{connected components iso} and Theorem \ref{connected component of ASTK}). Hence in the case where $\ST_{K_{0}} (V, \psi) = \ST_{K} (V, \psi)^{\circ}$, Theorem \ref{Sato--Tate conjecture STK iff STK1} states that it is enough to check the Sato--Tate Conjecture \ref{general Sato Tate conj.} on cyclic extensions of the identity component of the Sato--Tate group $\ST_{K} (V, \psi)$.
Because $\widetilde{\ST_{K_{0}}} (V, \psi) = \widetilde{\ST_{K}} (V, \psi)^{\circ}$ by 
Theorem \ref{connected component of tilde AST K} and Corollary
\ref{connected components of AST K (V, psi), ST K (V, psi) and widetilde AST K (V, psi), widetilde ST K (V, psi)}
then again Theorem \ref{Sato--Tate conjecture STK iff STK1} states that it is enough to check 
the Sato--Tate Conjecture \ref{general Sato Tate conj. tilde} on cyclic extensions of the identity component of the Sato--Tate group $\widetilde{\ST_{K}} (V, \psi)$.

\bigskip

The second part of the paper, \S \ref{relations among motivic categories}--\ref{equidistribution of Frobenii in l-adic realization of motives}, is concerned with the algebraic Sato--Tate and Sato--Tate conjectures for Hodge structures and $l$-adic representations arising from motives in various motivic categories.  
\medskip

In \S \ref{relations among motivic categories}, we give a very brief review of classical motivic categories and relations among them.
\medskip

In \S \ref{assumptions on Msim}, we describe the basic assumptions on the motivic category $\mathcal{M}_{{\sim}}$ that we need to impose in \S \ref{assumptions on Msim}--\ref{equidistribution of Frobenii in l-adic realization of motives}. In Assumptions 1--3, we require that $\mathcal{M}_{{\sim}}$ satisfies  the
Chow--K{\" u}nneth  conjecture (leading to motivic Poincare duality), is Tannakian semisimple over $\Q$, and has a motivic star operator. This gives the pure Hodge structure on the Betti realization of a homogeneous motive in $\mathcal{M}_{{\sim}}$. Note that Assumptions 1--3 are known to hold for $\mathcal{M}_{{{\rm{ahc}}}}$ and 
$\mathcal{M}_{{{\rm{mot}}}}$, the latter being the category of motives for \emph{motivated cycles}
in the sense of Andr\'e  \cite{An2}. We introduce Assumption 4 which gives better control on relations between categories $\mathcal{M}_{K}^{\circ} (D)$ and $\mathcal{M}_{{\sim}}(M)$ (the former one defined 
in \S \ref{motivic Galois group and motivic Serre group}); Assumption 4 again holds in $\mathcal{M}_{{\rm{ahc}}}$ and $\mathcal{M}_{{\rm{mot}}}$ (Remark  
\ref{The key containment assumption in ahc and mot}).
\medskip

In \S \ref{motivic Galois group and motivic Serre group}, for a homogeneous motive $M$ in $\mathcal{M}_{\sim}$ and its Betti realization 
$(V, \psi)$, we define the ring $D := D (M)$ (see \ref{definition of D(M)}) with $G_K$-discrete module structure 
and the  Artin motive $h^{\circ}(D)$ corresponding to $D$. We also introduce $\mathcal{M}_{K}^{\circ} (D)$, the smallest Tannakian subcategory of the category of Artin motives of $\mathcal{M}_{K}^{\circ}$ containing $h^{\circ}(D)$, and the corresponding motivic Galois group $G_{\mathcal{M}_{K}^{\circ} (D)}$. 
We recall the motivic Galois groups $G_{\mathcal{M}_{{\sim}}}$ and  
$G_{\mathcal{M}_{{\sim}}(M)}$. We finally recall the definition of the motivic Serre group 
$G_{\mathcal{M}_{\sim}(M), 1}$ and show the following formula (see \ref{GMKA1 subset of DLKA}):
$$
G_{\mathcal{M}_{\sim}(M), 1} \subset DL_{K}(V,\, \psi, D).
$$ 

In \S \ref{motivic Mumford--Tate group and motivic Serre group}, we investigate the structure of $G_{\mathcal{M}_{\sim}(M), 1}$ in relation with
the structure of $G_{\mathcal{M}_{{\sim}}(M)}$. There is a natural map between component groups:
$$
i_{M}\colon \pi_{0} (G_{\mathcal{M}_{{\sim}}(M), 1}) \,\, \rightarrow  \,\, \pi_{0} (G_{\mathcal{M}_{{\sim}}(M)}).
$$
The basic results concerning $i_{M}$ are Theorem \ref{equality of conn comp for GM and GM1}, 
Corollary \ref{equality of quotients concerning for GM and GM1}, and 
Corollary \ref{equality of quotients concerning for GMid and GM1id}. In Proposition
\ref{Motivic L0realizing conn comp: even case}, we analyze the relationship between $G_{\mathcal{M}_{{\sim}}(M), 1}$
and its identity component. It follows from Theorem \ref{equality of conn comp for GM and GM1} and 
Lemma \ref{for n odd G0M1 = GM10} that for $n$ odd, the map $i_{M}$ is an isomorphism. We recall Serre's Conjecture concerning the Motivic Mumford--Tate group and state Motivic Mumford--Tate and Motivic Sato--Tate conjectures. We define the Serre motivic parity group $\gpP_{\rm{S}} (M, \psi_{{\sim}})$ and prove its basic properties. 
Once more, we give explicit examples, 
based on Example \ref{An example of nontrivial Betti parity group}, for which $\gpP_{\rm{S}} (M, \psi_{{\sim}})$ is nontrivial  (Example \ref{An example of nontrivial motivic parity group}).
We immediately observe 
(see Theorem \ref{equality of conn comp for GM and GM1}) that $i_{M}$ is an isomorphism if and only if 
$\gpP_{\rm{S}} (M, \psi_{{\sim}})$ is trivial. At the end of \S \ref{motivic Mumford--Tate group and motivic Serre group} we discuss relations among the parity groups we have defined. The Betti, De Rham, and $l$-adic realizations of the motivic category $\mathcal{M}_{{\sim}}$ give 
natural homomorphisms among these groups.
It is immediate from definitions that every parity group is either trivial or isomorphic to $\{\pm 1\}$.
We collect from previous sections some general conditions for all parity groups to be trivial 
(Proposition \ref{general conditions for all parity groups to be trivial and nontrivial}(a)) and for all parity groups to be nontrivial (Proposition \ref{general conditions for all parity groups to be trivial and nontrivial}(b)). 
\medskip

In \S \ref{algebraic Sato--Tate group for motives}, following Serre  \cite[sec. 13, pp. 396--397]{Se94}  we define the \emph{algebraic Sato--Tate group} for a homogeneous motive $M$ as the motivic Serre group (Definition \ref{GMKA1 as AST group}):
$$
\AST_{K} (M) := \MS_{{\sim}} (M)
$$
and the Sato--Tate group $\ST_K(M)$ as a maximal compact subgroup of $\AST_{K} (M) (\C)$. 
Theorem \ref{General properties of AST} states that 
$\AST_{K} (M)$ is reductive and:
\begin{align*}
\AST_{K} (M) &\subset \gpDL_{K}(V, \psi, D), \\
\gpH(V, \psi) &\subset \AST_{K} (M)^{\circ}, \\
\pi_{0} (\AST_{K} (M)) & \simeq \pi_{0} ( \MMT_{{\sim}} (M)) \,\,\,\, \text{iff} \,\,\,
-\mathrm{Id}_{V} \in \AST_{K} (M)^{\circ}, \\
\pi_{0} (\AST_{K} (M)) &= \pi_{0} (\ST_{K} (M)), \\
G_{l, K, 1}^{\alg}  &\subset \AST_{K}(M)_{\Q_l}, \mbox{i.e., Conjecture \ref{general algebraic Sato Tate conj.}(a) holds for $M$}.
\end{align*}
Under the assumption that $\gpH(V, \psi) = C_{D} (\Iso_{(V, \psi)})$ (i.e., the Hodge group is explained by the endomorphisms in $D$) we determine the structure of $\AST_{K}(M)$ (Corollary \ref{The natural candidate for AST group example 1}):
\begin{align*}
\AST_{K}(M)^{\circ} &= \gpL(V, \psi, D), \\
\pi_{0}(\AST_{K}(M)) &=  \Gal(K_e / K), \\
\AST_{K}(M) &= \gpDL_{K} (V, \psi, D).
\end{align*}
In addition, if the Mumford--Tate conjecture holds for $M$, then the algebraic Sato--Tate conjecture holds
for $M$ and the structure of $G_{l, K, 1}^{\alg}$ is also determined (Corollary 
\ref{cor alg Sato--Tate for MT explained by endo}). 
\medskip

In \S \ref{computation of the identity connected component of ASTKM} for a polarized 
motive $M$ and the Tate motive $\TT$ we investigate
$G_{\mathcal{M}_{{\sim}}(M \oplus \TT)}$ and the motivic Serre group
$\MS_{{\sim}} (M \oplus \TT) := G_{\mathcal{M}_{{\sim}}(M \oplus \TT), 1} = {\rm{Ker}} \, ( G_{\mathcal{M}_{{\sim}}(M \oplus \TT)} \,\, {\stackrel{N}{\longrightarrow}}  \,\, \G_{m})$. We prove that there is a natural isomorphism
(Theorem \ref{proposition: i M oplus T is an isomorphism}):
$$
i_{M \oplus \TT}\colon \pi_{0} (G_{\mathcal{M}_{{\sim}}(M \oplus \TT), 1}) \,\,\, {\stackrel{\simeq}{\longrightarrow}}  \,\,\,  \pi_{0} (G_{\mathcal{M}_{{\sim}}(M \oplus \TT)}). 
$$
We define the \emph{algebraic Sato--Tate group}:
$$
\widetilde{\AST_{K}} (M) := \MS_{{\sim}} (M \oplus \TT). 
$$
Every maximal compact subgroup of $\widetilde{\AST_{K}} (M)(\C)$ will be called
a \emph{Sato--Tate group} associated with $M$ and denoted $\widetilde{\ST_{K}} (M)$. Comparing the groups
$\AST_{K}(M)$ and $\widetilde{\AST_{K}} (M)$ we obtain:
\begin{align*}
\AST_{K}(M)^{\circ} &= \widetilde{\AST_{K}} (M) \, \cap \, 
(G_{\mathcal{M}_{{\sim}}(M \oplus \TT})^{\circ}, \\
\AST_{K}(M) &= \widetilde{\AST_{K}} (M) \, \cup \, - {\rm{Id}}_{V} \, \widetilde{\AST_{K}} (M), \\
\ST_{K}(M) &= \widetilde{\ST_{K}} (M) \, \cup \, - {\rm{Id}}_{V} \, \widetilde{\ST_{K}} (M),
\end{align*}
up to conjugation in $\AST_{K}(M) (\C)$.

Further we prove that the algebraic Sato--Tate conjecture holds for $\AST_{K}(M)$ if and only if it holds for 
$\widetilde{\AST_{K}} (M)$ (Prop. \ref{AST conjecture for M is equivalent to wide tilde AST conjecture }). We prove 
that the Sato--Tate Conjecture for $\ST_{K}(M)$ implies the Sato--Tate Conjecture for $\widetilde{\ST_{K}} (M)$ (Prop. \ref{ST implies tilde ST}). In connection with \S \ref{sec,category of polarized realizations} we propose the Geometricity Conjecture \ref{Geometricity conjecture}. 
At the end of \S \ref{computation of the identity connected component of ASTKM} we discuss the Sato--Tate parity group $\gpP_{\rm{ST}} (M, \psi_{{\sim}})$ for motives $M$ and its impact on the Sato--Tate conjecture (Theorem \ref{equidistribution property in ST implies equidistribution property in wide tilde ST for 
PST of motives}). 
\medskip

In \S \ref{equidistribution of Frobenii in l-adic realization of motives} we consider the Sato--Tate conjecture  
for homogeneous motives $M$ (direct factors of $h_{{\sim}}^{r}(X)(m)$) of nonzero weight $n = 2m-r.$
We assume that the family $(\rho_{l}),$ associated with the $l$-adic realizations of $M,$ is strictly compatible.
Then under some additional technical assumptions, the main result in this section (Theorem \ref{ST with resp. to STK the same as with resp. to STK1}) states that
the Sato--Tate Conjecture holds for the representation $\rho_l \colon G_K \rightarrow 
\GIso_{(V_l, \psi_l)}(\Q_l)$ (resp. $\widetilde{\rho}_l\colon G_K \rightarrow \GL (V_{l} \oplus \Q_l(1))$)
with respect to $\ST_{K}(M)$ (resp. $\widetilde{\ST_{K}} (M)$) if and only if it holds for $\rho_l\colon G_{K_1} \rightarrow \GIso_{(V_l, \psi_l)}(\Q_l)$ (resp. $\widetilde{\rho}_l\colon G_{K_1} \rightarrow \GL (V_{l} \oplus \Q_l(1))$)  with respect to $\ST_{K_1} (M)$ (resp. $\widetilde{\ST_{K_{1}}} (M)$) for all subextensions $K_1$ of $K_0/K$ such that $K_{0} / K_{1}$ is cyclic (cf. Theorem~\ref{Sato--Tate conjecture STK iff STK1}).
\medskip

At the end of this section we discuss examples concerning Theorems \ref{reduction for equidistribution2} and 
\ref{ST with resp. to STK the same as with resp. to STK1}. The main observations are in Example \ref{example of ST for abelian surfaces} and Remarks \ref{Sato--Tate for abelian surfaces by Christian Johannson} and 
\ref{ST for abelian 3-folds} concerning abelian surfaces and abelian 3-folds.

\section{Mumford--Tate groups of polarized Hodge structures}
\label{Mumford--Tate groups of polarized Hodge structures}
\medskip

Let $(V, \psi)$ be a rational, polarized, pure Hodge structure of weight $n$ with polarization:
$$
\psi\colon V \times V \rightarrow \Q(-n),
$$
(cf. \cite[Definition 2.9]{PS}). Following \cite[Chapter 2]{BK2} define: 

{\small{
\begin{align}
\GIso_{(V, \psi)} & := \{g \in \GL_{V}:\, \exists \, \chi (g) \in \G_m \,\, s.t. \,\, \psi (gv, gw) = 
\chi (g) \psi (v, w) \,\,\,\, \forall \, v, w \in V\},
\label{def of GIso} \\
\Iso_{(V, \psi)} & := \{g \in \GL_{V}:\,\, \psi (gv, gw) =  
\psi (v, w) \,\,\,\,  \forall \, v, w \in V\}.
\label{def of Iso}
\end{align}}}
 
By \eqref{def of GIso} we observe that $\chi$ becomes a character of $\GIso_{(V, \psi)}$. We call
$\chi$ the \emph{character} of the polarization $\psi$. Observe that
$\G_{m} {\rm{Id}}_{V} \subseteq \GIso_{(V, \psi)}$ and
\begin{equation}
\GIso_{(V, \psi)} = \G_{m} {\rm{Id}}_{V} \cdot \Iso_{(V, \psi)}. 
\label{GIso = Gm Iso}
\end{equation}
Indeed, for every $\alpha \in \G_{m,\Q}$  we obtain
$$
\psi (\alpha \, {{\rm Id}}_{V} v,\,  \alpha \, {{\rm Id}}_{V} w) = \psi (\alpha v, \, \alpha w) =
\alpha^2 \, \psi(v, \, w).
$$
Hence
$$
\alpha \, {{\rm Id}}_{V} \in 
\GIso_{(V, \psi)}
\quad {{\rm and}} \quad \chi (\alpha \, {{\rm Id}}_{V}) = \alpha^2.
$$
By the definition of a polarized Hodge structure, $\psi$ is $(-1)^n$-symmetric, hence
\begin{align*}
\GIso_{(V, \psi)} \,\, &= \,\,
\left\{
\begin{array}{lll}
\GO_{(V, \psi)}&{\rm if}&n \,\,\, {\rm even,}\\
\GSp_{(V, \psi)}&{\rm if}&n \,\,\, {\rm odd;}\\
\end{array}
\right.
\\
\Iso_{(V, \psi)} \,\, &= \,\,
\left\{
\begin{array}{lll}
\gpO_{(V, \psi)}&{\rm if}&n \,\,\, {\rm even,}\\
\Sp_{(V, \psi)}&{\rm if}&n \,\,\, {\rm odd.}\\
\end{array}
\right.
\end{align*}

Let $\MT(V, \psi)$ (resp. $\gpH(V, \psi)$) denote the Mumford--Tate (resp. Hodge) group for $(V, \psi)$.
Recall the following definition  \cite[Def. 2.6 (2)]{BK2}:
$$
\gpDH(V, \psi) := \MT(V, \psi) \cap \Iso_{(V, \psi)}.
$$ 
By the definition of $\Iso_{(V, \psi)}$ one observes that \cite[Def. 2.6 (3)]{BK2}
$$
\gpH(V, \psi) = \gpDH(V, \psi)^{\circ}.
$$

By the definition of a rational Hodge structure, $\G_{m} {\rm{Id}}_{V} \subseteq \MT(V, \psi)$.
We thus have  the following commutative diagram in which the horizontal arrows are closed immersions and the columns
are exact.

\begin{figure}[H]
\[
\begin{tikzcd}
1 \arrow{d}[swap]{} &  1 \arrow{d}[swap]{}\\
\gpDH(V, \psi) \arrow{d}[swap]{} \arrow{r}{} &
\Iso_{(V, \psi)} \arrow{d}[swap]{} \\ 
\MT(V, \psi) \arrow{d}[swap]{\chi} \arrow{r}{} & 
\GIso_{(V, \psi)} \arrow{d}[swap]{\chi} \\
\G_{m} \arrow{d}[swap]{} \arrow{r}{=} & \G_{m} \arrow{d}[swap]{} \\
1  & 1\\
\end{tikzcd}
\]
\\[-0.8cm]
\caption{}
\label{DH(V, psi) subset Iso(V, psi)} 
\end{figure}

\begin{proposition} \label{Hodge realizing conn comp: even case} 
The algebraic group $\gpDH(V, \psi)$ has the following properties:
\begin{itemize}
\item[(a)] $\G_{m} {\rm{Id}}_{V} \cdot \gpH(V, \psi) \, = \, 
\G_{m} {\rm{Id}}_{V} \cdot \gpDH(V, \psi) \, = 
\, \MT(V, \psi)$.
\item[(b)] $\gpDH(V, \psi) = \gpH(V, \psi)  \, \cup \, 
- {\rm{Id}}_{V} \cdot \gpH(V, \psi)$.
\item[(c)] $- {\rm{Id}}_{V} \in \gpH(V, \psi)$ iff 
$\gpH(V, \psi) = \gpDH(V, \psi)$.
\item[(d)] When $n$ is odd then $- {\rm{Id}}_{V} \, \in \gpH(V, \psi) = 
\gpDH(V, \psi)$.  
\end{itemize} 
\end{proposition}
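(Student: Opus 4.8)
The plan is to derive all four parts from ingredients that are either displayed above or standard: $\gpDH(V,\psi)=\MT(V,\psi)\cap\Iso_{(V,\psi)}=\Ker\bigl(\chi\colon\MT(V,\psi)\to\G_{m}\bigr)$; the inclusion $\G_{m}{\rm Id}_{V}\subseteq\MT(V,\psi)$ together with $\chi(\alpha\,{\rm Id}_{V})=\alpha^{2}$; the equality $\gpH(V,\psi)=\gpDH(V,\psi)^{\circ}$; and the connectedness of $\MT(V,\psi)$ (it is the smallest $\Q$-subgroup of $\GL_{V}$ whose real points contain $h(\mathbb{S})$ for the Hodge homomorphism $h$, and $h(\mathbb{S})$ is connected, so $\MT(V,\psi)=\MT(V,\psi)^{\circ}$). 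I would prove (a) first, read off (b) and (c) by a short computation with $\chi$, and treat (d) separately via the real Hodge datum.

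For (a): since $\chi$ carries the central torus $\G_{m}{\rm Id}_{V}$ onto $\G_{m}$ by the squaring isogeny, $\chi\colon\MT(V,\psi)\to\G_{m}$ is surjective (this is the exactness of the columns of Diagram~\ref{DH(V, psi) subset Iso(V, psi)}); choosing for $g\in\MT(V,\psi)$ an $\alpha$ with $\alpha^{2}=\chi(g)$, one has $\alpha^{-1}{\rm Id}_{V}\,g\in\Ker\chi\cap\MT(V,\psi)=\gpDH(V,\psi)$, whence $\MT(V,\psi)=\G_{m}{\rm Id}_{V}\cdot\gpDH(V,\psi)$ with $\G_{m}{\rm Id}_{V}\cap\gpDH(V,\psi)=\{\pm{\rm Id}_{V}\}$. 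Now set $N:=\G_{m}{\rm Id}_{V}\cdot\gpH(V,\psi)$. It is a connected normal subgroup of $\MT(V,\psi)$ ($\G_{m}{\rm Id}_{V}$ is central, and $\gpH(V,\psi)=\gpDH(V,\psi)^{\circ}$ is characteristic in the normal subgroup $\gpDH(V,\psi)$), and $N\cdot\gpDH(V,\psi)=\MT(V,\psi)$, so by the second isomorphism theorem $\MT(V,\psi)/N\cong\gpDH(V,\psi)/(\gpDH(V,\psi)\cap N)$ is a quotient of the finite group $\pi_{0}(\gpDH(V,\psi))$. Being also a quotient of the connected group $\MT(V,\psi)$, it is trivial, so $\MT(V,\psi)=N=\G_{m}{\rm Id}_{V}\cdot\gpH(V,\psi)$; together with the previous displayed equality this is (a).

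For (b) and (c): first $-{\rm Id}_{V}\in\gpDH(V,\psi)$, because $-{\rm Id}_{V}\in\G_{m}{\rm Id}_{V}\subseteq\MT(V,\psi)$ and $\chi(-{\rm Id}_{V})=1$ puts it in $\Iso_{(V,\psi)}$; hence $\gpH(V,\psi)\cup(-{\rm Id}_{V})\gpH(V,\psi)\subseteq\gpDH(V,\psi)$. Conversely, any $g\in\gpDH(V,\psi)\subseteq\MT(V,\psi)=\G_{m}{\rm Id}_{V}\cdot\gpH(V,\psi)$ can be written $g=\alpha\,{\rm Id}_{V}\,h$ with $h\in\gpH(V,\psi)\subseteq\Iso_{(V,\psi)}$, and applying $\chi$ forces $\alpha^{2}=1$, so $g\in\gpH(V,\psi)\cup(-{\rm Id}_{V})\gpH(V,\psi)$; this gives (b). For (c): if $\gpH(V,\psi)=\gpDH(V,\psi)$ then $-{\rm Id}_{V}\in\gpH(V,\psi)$ since $-{\rm Id}_{V}\in\gpDH(V,\psi)$; conversely, if $-{\rm Id}_{V}\in\gpH(V,\psi)$ then $(-{\rm Id}_{V})\gpH(V,\psi)=\gpH(V,\psi)$ and (b) collapses to $\gpDH(V,\psi)=\gpH(V,\psi)$.

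For (d): by (c) it suffices to see $-{\rm Id}_{V}\in\gpH(V,\psi)$ when $n$ is odd. The norm-one subtorus $\gpU^{1}\subseteq\mathbb{S}$ acts through $h$ by automorphisms preserving $\psi$ (it acts trivially on the Tate twist $\Q(-n)$, as $\chi\circ h$ is an $\R$-rational character of $\mathbb{S}$, i.e.\ a power of $z\mapsto z\bar{z}$), so $h(\gpU^{1}(\R))\subseteq\MT(V,\psi)(\R)\cap\Iso_{(V,\psi)}(\R)=\gpDH(V,\psi)(\R)$, and, being connected, $h(\gpU^{1}(\R))\subseteq\gpDH(V,\psi)^{\circ}(\R)=\gpH(V,\psi)(\R)$. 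As $-1\in\gpU^{1}(\R)$ and $h(-1)$ acts on each $V^{p,q}$ by $(-1)^{p+q}=(-1)^{n}$, we get $h(-1)=(-1)^{n}{\rm Id}_{V}=-{\rm Id}_{V}$ since $n$ is odd; this $\Q$-point of $\GL_{V}$ lies in $\gpH(V,\psi)(\R)$, hence in $\gpH(V,\psi)$, and (c) finishes (d). The one step carrying real content is the passage in part (a) from $\MT(V,\psi)=\G_{m}{\rm Id}_{V}\cdot\gpDH(V,\psi)$ (valid in every weight) to $\MT(V,\psi)=\G_{m}{\rm Id}_{V}\cdot\gpH(V,\psi)$; this is precisely where connectedness of the Mumford--Tate group is used, and it is the common source of all four assertions. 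Everything else is bookkeeping with the character $\chi$.
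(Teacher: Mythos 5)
Your proof is correct. For parts (a)--(c) you follow in substance the same route as the paper: the whole content is that $\MT(V,\psi)$ is connected and that $\chi$ restricts to the squaring map on the central torus $\G_{m}{\rm Id}_{V}$, so that $\G_{m}{\rm Id}_{V}\cdot\gpH(V,\psi)$ exhausts $\MT(V,\psi)$ and applying $\chi$ to a factorization $g=\alpha\,{\rm Id}_{V}\,h$ leaves only the ambiguity $\alpha=\pm1$. The paper reaches $\G_{m}{\rm Id}_{V}\cdot\gpH(V,\psi)=\MT(V,\psi)$ by showing each set $\G_{m}{\rm Id}_{V}\cdot g\,\gpH(V,\psi)$ is closed of full dimension inside the irreducible group $\MT(V,\psi)$; you package the same fact as a finite-index/second-isomorphism argument, which works but does require you to note (as the paper does via \cite[7.4, Prop.\ B(b)]{Hu}) that $N=\G_{m}{\rm Id}_{V}\cdot\gpH(V,\psi)$ is a \emph{closed} subgroup, since only then is the finite quotient of the connected group $\MT(V,\psi)$ forced to be trivial. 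Part (d) is where you genuinely diverge: the paper proves it through Lemma~\ref{for n odd DH = H}, i.e.\ it first shows that $\gpDH(V,\psi)$ is connected for odd $n$ by producing a cocharacter $s(z)=\mu_{\infty,V}(z)\,w(z)^{-\frac{n-1}{2}}$ splitting $\chi$ over $\C$ and invoking the path-lifting Lemma~\ref{G connected implies G0 connected}; you instead exhibit $-{\rm Id}_{V}$ directly as $h(-1)$ for $-1$ in the unit circle of the Deligne torus, using the polarization to place $h(\gpU^{1}(\R))$ in $\Iso_{(V,\psi)}(\R)$ and connectedness of the circle to land in $\gpH(V,\psi)$, then closing with (c). Your route is the classical one and is shorter; the paper's route yields the stronger intermediate statement that $\gpDH(V,\psi)$ itself is connected for odd $n$, a fact it reuses later (e.g.\ in Lemma~\ref{for n odd DH-DR = H-DR} and Theorem~\ref{L0realizing conn comp}), and it works entirely with the cocharacter $\mu_{\infty,V}$ by which the paper defines $\MT(V,\psi)$, whereas your argument implicitly uses the standard (true, but worth citing) equivalence between generating $\MT(V,\psi)$ by $\mu_{\infty,V}(\C^{\times})$ and by $h(\mathbb{S}(\R))$.
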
 
\begin{proof}
(a) Consider a coset $g \gpH(V, \psi)$ in $\gpDH(V, \psi)$. 
Applying \cite[Section 7.4, Prop. B(b)]{Hu} to the homomorphism
\begin{gather*}
\G_{m} {\rm{Id}}_{V} \, \times \, \gpH(V, \psi) \rightarrow 
\MT(V, \psi), \\
(g_1, g_2) \, \mapsto \, g_1 g_2,
\end{gather*}
we observe that  $\G_{m} {\rm{Id}}_{V} \cdot \gpH(V, \psi)$ and $\G_{m} {\rm{Id}}_{V} \cdot g \gpH(V, \psi)$ are closed in $\MT(V, \psi)$. They are also of the same dimension as $\gpH(V, \psi)$ because the left vertical column in Diagram \ref{DH(V, psi) subset Iso(V, psi)} is exact and $\gpDH(V, \psi)^{\circ} = \gpH(V, \psi)$.

Because $\MT(V, \psi)$ is an irreducible algebraic group and $\G_{m} {\rm{Id}}_{V} \cdot \gpH(V, \psi)$ is a closed subgroup of the same dimension, we must have 
\begin{equation}
\G_{m} {\rm{Id}}_{V} \cdot \gpH(V, \psi) \, = 
\,\G_{m} {\rm{Id}}_{V} \cdot  g\, \gpH(V, \psi) = \MT(V, \psi).
\label{equality of Gm IdV . H and MT}
\end{equation}

\noindent
(b) From \eqref{equality of Gm IdV . H and MT} there are $\alpha, \beta \in \G_{m}$ and $g_1, g_2 \in 
\gpH(V, \psi)$ such that:
\begin{equation}
\alpha \, {\rm{Id}}_{V} \cdot g_1 = \beta \, {\rm{Id}}_{V} \cdot g g_2. 
\label{equality of coset generators for Hodge groups}
\end{equation}
Applying $\chi$ to \eqref{equality of coset generators for Hodge groups} we obtain 
$\alpha^2 = \beta^2$. This implies that $\pm {\rm{Id}}_{V} \cdot g_1 =  g g_2$. Hence
$g \gpH(V, \psi) = \gpH(V, \psi)$ or 
$g \gpH(V, \psi) = - {\rm{Id}}_{V} \cdot \gpH(V, \psi)$. 
\medskip

\noindent
(c) This follows immediately from (b).
\medskip

\noindent
(d) This follows immediately from (c) and Lemma \ref{for n odd DH = H} below.
\end{proof}
 
\begin{lemma}\label{G connected implies G0 connected} Let $L$ be a subfield of $\C$. Let $V$ be a finite dimensional $L$-vector space.
Let $G \subset GL(V)$ be a connected algebraic group defined over $L$ and $\dim \, G > 0. $  For a 
nontrivial character $\pi\colon G \rightarrow \G_m$ over $L$, put $G_{0} := {\rm{Ker}} \, \pi$. Suppose that there exists a cocharacter 
$s\colon \G_{m} (\C) \rightarrow G (\C)$ that splits $\pi$ in the following exact sequence: 
\begin{equation}
1 \,\, {\stackrel{}{\longrightarrow}} \,\, G_{0} (\C) \,\, {\stackrel{}{\longrightarrow}} \,\,  
G (\C)\,\, {\stackrel{\pi}{\longrightarrow}} \,\, \G_m (\C)\,\, {\stackrel{}{\longrightarrow}} \,\, 1.
\label{exact sequence for G and G0}
\end{equation} 
Then $G_{0}$ is connected.
\end{lemma}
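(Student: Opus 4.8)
The plan is to realize $G$, up to a bijective morphism of varieties, as the product $G_{0}\times\G_{m}$, and then to count connected components. Since the cocharacter $s$ lives over $\C$, I would pass to $\C$ and work there throughout; it suffices to show $(G_{0})_{\C}$ is connected, because the surjection $(G_{0})_{\C}\to G_{0}$ then forces $G_{0}$ to be connected. So I replace $G$, $\pi$, $G_{0}$ by their base changes to $\C$, keeping the same names.

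First I would record that $\pi$ is surjective: the cocharacter $s$ gives $\pi\circ s=\id_{\G_{m}}$, so the image of $\pi$ contains $s(\G_{m})$ and hence is all of $\G_{m}$. In particular $\dim G_{0}=\dim G-1$, and since $G_{0}$ is an algebraic group over the algebraically closed field $\C$, each of its connected components is a translate of $G_{0}^{\circ}$ and hence is irreducible of dimension $\dim G-1$.

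Next I would introduce the morphism of varieties
\[
\phi\colon G_{0}\times\G_{m}\longrightarrow G,\qquad (h,w)\longmapsto h\cdot s(w).
\]
Composing with $\pi$ shows $\phi$ is injective: if $h_{1}s(w_{1})=h_{2}s(w_{2})$, applying $\pi$ gives $w_{1}=w_{2}$, whence $h_{1}=h_{2}$. It is surjective as well: for $g\in G$ put $w=\pi(g)$; then $g\,s(w)^{-1}\in\Ker\pi=G_{0}$ and $g=\big(g\,s(w)^{-1}\big)\,s(w)$. Thus $\phi$ is a bijective morphism from $G_{0}\times\G_{m}$, whose connected components are the sets $C\times\G_{m}$ with $C$ a component of $G_{0}$, onto the irreducible variety $G$.

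The final step --- which is the only genuine subtlety --- is to deduce that $G_{0}$ has just one component. One cannot simply argue that $\phi$ is an isomorphism, because a bijective morphism of varieties need not be one, even in characteristic $0$; so I would argue by dimension instead. Each $C\times\G_{m}$ is irreducible of dimension $\dim G$, so by Chevalley's theorem $\phi(C\times\G_{m})$ is an irreducible constructible subset of $G$; since $\phi$ is injective, its closure still has dimension $\dim G$ and hence equals $G$, so $\phi(C\times\G_{m})$ contains a nonempty Zariski-open subset of $G$. If $G_{0}$ had two distinct components $C_{1},C_{2}$, the images $\phi(C_{1}\times\G_{m})$ and $\phi(C_{2}\times\G_{m})$ would each contain a nonempty open subset of the irreducible variety $G$ and would therefore meet; but they are disjoint, because $\phi$ is injective and $C_{1}\times\G_{m}$ and $C_{2}\times\G_{m}$ are disjoint. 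This contradiction forces $G_{0}=G_{0}^{\circ}$, i.e.\ $G_{0}$ is connected. As an alternative ending one can use that a finite normal subgroup of a connected algebraic group is central: then $G/G_{0}^{\circ}$ is a connected \emph{commutative} extension of $\G_{m}$ by the finite group $F:=G_{0}/G_{0}^{\circ}$, the composite of $s$ with $G\to G/G_{0}^{\circ}$ splits it, and a split extension $\G_{m}\times F$ is connected only when $F$ is trivial.
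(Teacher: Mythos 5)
Your proof is correct, but it takes a genuinely different route from the paper's. The paper works in the analytic topology: $G(\C)$ is a connected complex Lie group, hence path-connected, and given any path $\alpha(t)$ in $G(\C)$ joining two points of $G_0(\C)$, the splitting lets one retract it into $G_0(\C)$ by setting $\beta(t) := s(\pi(\alpha(t)))^{-1}\alpha(t)$; this shows $G_0(\C)$ is path-connected. That argument is very short and uses only the \emph{continuity} of $s$ and of $\pi$, together with the embedding $L\subset\C$. Your argument is purely algebro-geometric: you build the bijective morphism $\phi\colon G_0\times\G_m\to G$, $(h,w)\mapsto h\,s(w)$, and then rule out a second component of $G_0$ by a dimension/irreducibility count on constructible images (your alternative ending, via the split extension $1\to G_0/G_0^{\circ}\to G/G_0^{\circ}\to\G_m\to 1$ and the fact that a finite normal subgroup of a connected group is central, is also sound). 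What your approach buys is independence from the analytic topology — it needs $s$ to be an algebraic homomorphism (which the word ``cocharacter'' supplies, and which all the splittings constructed later in the paper satisfy) but would work verbatim over any algebraically closed field of characteristic $0$; what the paper's approach buys is brevity and the weaker hypothesis that $s$ be merely continuous. Your caution about not declaring $\phi$ an isomorphism outright is well taken and costs you nothing, since the dimension argument you substitute for it is complete.
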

\begin{proof} 
Observe that $G (\C)$ is a connected, complex Lie group. Take any two points 
$g_0$ and $g_1$ in $G_{0} (\C)$. There is a path $\alpha (t) \in 
G (\C)$ connecting $g_0$ and $g_1$, i.e., $\alpha (0) = g_0$ and
$\alpha (1) = g_1$. Define a new path 
$$
\beta (t) := {s (\pi (\alpha (t)))}^{-1} \alpha (t) \in G_{0} (\C).
$$
Notice that $\beta (t) \in G_{0} (\C)$ because 
$$
\pi (\beta (t)) := \pi ({s (\pi (\alpha (t)))}^{-1}) \pi (\alpha (t)) = 
\pi (\alpha (t))^{-1} \pi (\alpha (t)) = 1.
$$ 
Also observe that $\beta (0) = g_0$ and $\beta (1) = g_1$. 
Hence $\beta (t)$ connects $g_0$ and $g_1$ in $G_{0} (\C)$.
It follows that $G_{0}(\C)$ is connected, and so $G_0$ is connected. 
\end{proof}

\begin{lemma} 
\label{for n odd DH = H}
For $n$  odd we have
$$
\gpDH(V, \psi) = \gpH(V, \psi).
$$ 
\end{lemma}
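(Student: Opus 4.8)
The plan is to prove the stronger statement that, for $n$ odd, the algebraic group $\gpDH(V,\psi)$ is \emph{connected}; since $\gpH(V,\psi)=\gpDH(V,\psi)^{\circ}$ by definition, the lemma follows at once. Connectedness will be deduced from Lemma~\ref{G connected implies G0 connected} applied to the left-hand exact column of Diagram~\ref{DH(V, psi) subset Iso(V, psi)},
$$
1 \longrightarrow \gpDH(V,\psi) \longrightarrow \MT(V,\psi) \stackrel{\chi}{\longrightarrow} \G_m \longrightarrow 1,
$$
taking $G=\MT(V,\psi)$, $\pi=\chi$, and $G_0=\gpDH(V,\psi)=\Ker\chi$. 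Here $\MT(V,\psi)$ is connected of positive dimension (it contains $\G_m{\rm{Id}}_V$), and $\chi$ is nontrivial since $\chi(\alpha\,{\rm{Id}}_V)=\alpha^2$; so the only hypothesis of that lemma still to be checked is the existence of a cocharacter $s\colon\G_m(\C)\to\MT(V,\psi)(\C)$ splitting $\chi$. Producing such an $s$ is the one place where oddness of $n$ enters, and it is the main point of the proof.

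To produce $s$, I would combine two cocharacters of $\MT(V,\psi)$ defined over $\C$. The first is the central cocharacter $\nu\colon z\mapsto z\,{\rm{Id}}_V$, which lands in $\G_m{\rm{Id}}_V\subseteq\MT(V,\psi)$ and satisfies $\chi\circ\nu=z^2$. The second is the Hodge cocharacter $\mu\colon\G_m(\C)\to\MT(V,\psi)(\C)$ attached to the Hodge decomposition $V_\C=\bigoplus_{p+q=n}V^{p,q}$: it factors through $\MT(V,\psi)_\C$ by the very definition of the Mumford--Tate group, and since $\psi$ is a morphism of Hodge structures into $\Q(-n)$ it pairs $V^{p,q}$ nontrivially only against $V^{n-p,\,n-q}$, so evaluating $\psi(\mu(z)v,\mu(z)w)$ on these pieces gives $\chi\circ\mu=z^{m}$ for some odd integer $m$ (in fact $m=\pm n$, depending on the normalization of $\mu$). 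As $m$ is odd, $\gcd(m,2)=1$, so there are integers $a,b$ with $ma+2b=1$; then
$$
s(z) := \mu(z^a)\cdot\bigl(z^b\,{\rm{Id}}_V\bigr)
$$
is a cocharacter $\G_m(\C)\to\MT(V,\psi)(\C)$ with $\chi(s(z))=z^{ma+2b}=z$, i.e.\ $s$ splits $\chi$. Lemma~\ref{G connected implies G0 connected} now shows that $\gpDH(V,\psi)$ is connected, whence $\gpDH(V,\psi)=\gpDH(V,\psi)^{\circ}=\gpH(V,\psi)$.

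I expect the only slightly delicate points to be the two standard facts about the Hodge cocharacter used above: that $\mu$ factors through $\MT(V,\psi)_\C$ (immediate from the definition of $\MT(V,\psi)$ as the smallest $\Q$-subgroup of $\GL_V$ through whose base change to $\C$ the cocharacter $\mu$ factors) and the value $\chi\circ\mu=z^{\pm n}$ (a one-line computation using that the Hodge pieces are $\psi$-orthogonal except in complementary bidegrees). As a sanity check, the same input gives a shorter alternative route: $-1$ lies on the unit circle $U^1\subset\mathbb{S}$, and $h(U^1)$ is a connected subgroup of $\MT(V,\psi)_\R$ contained in $\Iso_{(V,\psi)}$ (the polarization is $h(U^1)$-invariant), so $h(U^1)\subseteq\gpDH(V,\psi)^{\circ}=\gpH(V,\psi)$; since $h(-1)=(-1)^n\,{\rm{Id}}_V=-{\rm{Id}}_V$ for $n$ odd, this gives $-{\rm{Id}}_V\in\gpH(V,\psi)$, and Proposition~\ref{Hodge realizing conn comp: even case}(c) then yields $\gpH(V,\psi)=\gpDH(V,\psi)$. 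I anticipate, however, that the argument via Lemma~\ref{G connected implies G0 connected} is the intended one, that lemma having been placed immediately before precisely for this use.
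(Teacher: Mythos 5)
Your proof is correct and takes essentially the same route as the paper: both reduce to showing $\gpDH(V,\psi)$ is connected via Lemma~\ref{G connected implies G0 connected}, and both build the required splitting cocharacter by combining the Hodge cocharacter $\mu$ (with $\chi\circ\mu=z^{\pm n}$) and the central cocharacter $w$ (with $\chi\circ w=z^2$), the oddness of $n$ guaranteeing the two exponents are coprime. The only cosmetic difference is that you invoke B\'ezout's identity to write $ma+2b=1$ while the paper uses the explicit exponent $-\tfrac{n-1}{2}$ on $w$; your alternative one-line argument via $h(-1)=(-1)^n\,{\rm Id}_V=-{\rm Id}_V$ together with Proposition~\ref{Hodge realizing conn comp: even case}(c) is a correct and slightly slicker route, though it is not the one the paper takes.
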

\begin{proof}
By the definition of the Mumford--Tate group, we have a cocharacter
$$
\mu_{\infty, V}\colon \G_m (\C) \rightarrow  \MT(V, \psi)(\C)
$$
such that for all $z \in \G_m(\C)$, $\mu_{\infty, V}(z)$ acts by multiplication by $z^{-p}$ on the subspace 
$V^{p, n-p}$. It follows from the definition of a polarization of a Hodge structure that 
$\chi \circ \mu_{\infty, V} (z) = z^{-n}$ for every $z \in \G_m (\C)$. Because 
$\G_{m} {\rm{Id}}_{V} \subset \MT(V, \psi)$ we have the diagonal cocharacter:
\begin{gather*}
w\colon \G_m (\C) \rightarrow  \MT(V, \psi) (\C),\\
w(z) = z \, \rm{Id}_{V_{\C}}.
\end{gather*}
Because $\chi (w(z)) = z^2$ for every $z \in \G_m (\C)$ (\cite[cf. Remark 2.4]{BK2}) and $n$ is odd the cocharacter
\begin{gather*}
s\colon \G_m (\C) \rightarrow \MT(V, \psi) (\C) \\
s(z) := \mu_{\infty, V} (z) \, w(z)^{-\frac{n-1}{2}}
\end{gather*}
is a splitting of $\chi$ in the following exact sequence:
$$
1 \,\, {\stackrel{}{\longrightarrow}} \,\, \gpDH(V, \psi)(\C) \,\, {\stackrel{}{\longrightarrow}} \,\,  
\MT(V, \psi) (\C) \,\, {\stackrel{\chi}{\longrightarrow}} \,\, \G_m (\C) \,\, {\stackrel{}{\longrightarrow}} \,\, 1.
$$
Observe that $\MT(V, \psi)(\C)$ is a connected Lie group because $\MT(V, \psi)$
is a connected algebraic group. By Lemma \ref{G connected implies G0 connected},
$\gpDH(V, \psi)$ is connected. Hence the claim of this lemma follows from
Proposition \ref{Hodge realizing conn comp: even case}(b).
\end{proof}

Consider the cocharacter:
\begin{gather*}
{\widetilde{\mu}_{\infty, V}}\colon \G_{m} (\C) \rightarrow \GIso_{(V, \psi)} (\C) \times \G_{m} (\C), \\
{\widetilde{\mu}_{\infty, V}} (z) := (\mu_{\infty, V} (z), z).
\end{gather*}
\begin{definition}
The extended Mumford--Tate group ${\widetilde{\MT}}(V, \psi)$ (cf. \cite[Definition 2.13]{PS}) is the smallest subgroup over $\Q$ of the group scheme $\GIso_{(V, \psi)} \times \G_{m}$ such that
$$
{\widetilde{\mu}_{\infty, V}} (\C^{\times}) \subset {\widetilde{\MT}}(V, \psi) (\C).
$$
There is a natural embedding
\begin{equation}
{\widetilde{\MT}}(V, \psi) \hookrightarrow \MT (V, \psi) \times {\G_m}.
\label{embedding of tilde MT into MT times Gm}
\end{equation} 
Applying projections on the first and second factor of
$\MT (V, \psi) \times {\G_m}$, we obtain natural morphisms  $\pi\colon {\widetilde{\MT}}(V, \psi) 
\rightarrow \MT (V, \psi)$  and  $N\colon {\widetilde{\MT}}(V, \psi) \rightarrow \G_{m}$. 
\end{definition}

\begin{lemma}
There is the following commutative diagram with exact rows and columns. The left vertical 
arrow is the natural one. 

\begin{figure}[H]
\[
\begin{tikzcd}
1 \arrow{r}{} & \,\, \gpH (V, \psi) \arrow{d}[swap]{\pi_1} \arrow{r}{}  & 
{\widetilde{\MT}} (V, \psi)  \arrow{d}[swap]{\pi} \arrow{r}{N} & \G_{m} 
\arrow{d}{x \mapsto x^{-n}}[swap]{} \arrow{r}{} & 1 \\ 
1 \arrow{r}{} & \gpDH (V, \psi)  \arrow{r}{} & 
\MT (V, \psi) \arrow{d}[swap]{} \arrow{r}{\chi} & \G_{m} \arrow{d}[swap]{} 
\arrow{r}{} & 1\\
& & 1 & 1 \\
\end{tikzcd}
\]
\\[-0.8cm]
\caption{}
\label{diagram compatibility of tilde(MT (V, psi)) with MT (V, psi)} 
\end{figure}
\label{commutativity of diagram compatibility of tilde(MT (V, psi)) with MT (V, psi)}
\end{lemma}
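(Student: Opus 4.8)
The plan is to construct the diagram explicitly from the definitions and then verify exactness and commutativity face-by-face, using the splitting cocharacters already introduced. First I would observe that the bottom row is exactly the exact sequence already recorded (the left vertical column of Diagram~\ref{DH(V, psi) subset Iso(V, psi)} together with the character $\chi$), so nothing new is needed there beyond surjectivity of $\chi$, which follows from $\chi(\alpha\,\mathrm{Id}_V)=\alpha^2$ combined with $\chi\circ\mu_{\infty,V}(z)=z^{-n}$: together these characters generate all of $\G_m$. For the top row, I would define the map $\gpH(V,\psi)\hookrightarrow{\widetilde{\MT}}(V,\psi)$ via $g\mapsto(g,1)$ and check it lands in ${\widetilde{\MT}}(V,\psi)$; this requires knowing that $\gpH(V,\psi)\times\{1\}$ is contained in the smallest $\Q$-subgroup whose $\C$-points contain ${\widetilde\mu}_{\infty,V}(\C^\times)$. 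That containment should follow from the corresponding statement for $\MT$ — namely $\gpH(V,\psi)$ is generated by the conjugates $h\,\mu_{\infty,V}(\C^\times)\,h^{-1}$ — lifted through the $N=1$ locus, using that ${\widetilde\mu}_{\infty,V}(z)=(\mu_{\infty,V}(z),z)$ and that products of the form ${\widetilde\mu}_{\infty,V}(z){\widetilde\mu}_{\infty,V}(w)^{-1}$ with $zw^{-1}$ arbitrary already give $(\mu_{\infty,V}(z)\mu_{\infty,V}(w)^{-1},1)$.

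Next I would identify $\gpH(V,\psi)\times\{1\}$ with $\Ker(N)$. One inclusion is immediate: if $(g,x)\in{\widetilde{\MT}}(V,\psi)$ and $N(g,x)=x=1$, then $\pi(g,1)=g\in\MT(V,\psi)$, and applying $\chi$ to the commuting square shows $\chi(g)=1^{-n}=1$, so $g\in\gpDH(V,\psi)$; to upgrade $\gpDH$ to $\gpH$, I would use that $\Ker(N)$ is the kernel of a character restricted to a group whose $\C$-points are connected (${\widetilde{\MT}}(V,\psi)(\C)$ is connected since the group is generated by a torus orbit), and that $N$ is split — by the cocharacter $z\mapsto(w(z)^{a}\mu_{\infty,V}(z)^{b},\,z)$ for suitable integers making the $\G_m$-component land correctly, or more simply by noting $N\circ{\widetilde\mu}_{\infty,V}(z)=z$ already splits $N$ — so Lemma~\ref{G connected implies G0 connected} applies and $\Ker(N)$ is connected, whence $\Ker(N)=\gpDH(V,\psi)^\circ=\gpH(V,\psi)$ once we know $\Ker(N)\subseteq\gpDH(V,\psi)\times\{1\}$. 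Surjectivity of $N$ onto $\G_m$ is clear from $N\circ{\widetilde\mu}_{\infty,V}(z)=z$.

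Then the four commutativity/exactness assertions reduce to routine checks: the left square commutes because $\pi(g,1)=g$ and the horizontal inclusions are the identity-on-points maps $\gpH\hookrightarrow{\widetilde{\MT}}$ and $\gpH\subseteq\gpDH$; the right square commutes because for $(g,x)\in{\widetilde{\MT}}(V,\psi)$ one has $\chi(\pi(g,x))=\chi(g)$ while $N(g,x)=x$, and the relation $\chi(g)=x^{-n}$ holds on the generating torus ${\widetilde\mu}_{\infty,V}(\C^\times)$ — indeed $\chi(\mu_{\infty,V}(z))=z^{-n}=(z)^{-n}$ — hence on all of ${\widetilde{\MT}}(V,\psi)$ by Zariski density; exactness of the middle and right columns is the bottom exact sequence together with $\G_m\xrightarrow{x\mapsto x^{-n}}\G_m\to 1$ being surjective (as $x\mapsto x^{-n}$ is surjective on $\G_m$ over a field containing enough roots of unity, in particular over $\C$, and as a map of group schemes it is faithfully flat). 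The middle column's exactness at ${\widetilde{\MT}}(V,\psi)$ — that $\pi$ is injective — follows from the embedding \eqref{embedding of tilde MT into MT times Gm} together with $N$ being recoverable from $\pi$ on the image: more precisely, if $\pi(g,x)=1$ then $g=1$, and then $\chi(g)=1=x^{-n}$ forces $x$ to be an $n$-th root of unity, but the subgroup of such $(1,\zeta)$ meeting ${\widetilde{\MT}}(V,\psi)$ is trivial since ${\widetilde{\MT}}(V,\psi)$ is connected; alternatively $\pi$ injective is already part of the setup around \eqref{embedding of tilde MT into MT times Gm}. The main obstacle, I expect, is pinning down exactness of the top row at $\gpH(V,\psi)$ and at $\G_m$ cleanly — that is, proving $N$ is surjective with kernel exactly $\gpH(V,\psi)\times\{1\}$ rather than the a priori larger $\gpDH(V,\psi)\times\{1\}$ — which is precisely where the connectedness argument via Lemma~\ref{G connected implies G0 connected} and the splitting of $N$ by ${\widetilde\mu}_{\infty,V}$ is essential; everything else is formal diagram-chasing on points of group schemes over $\C$ combined with Zariski density of the defining torus orbit.
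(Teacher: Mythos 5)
The main gap is that you never establish exactness of the middle column, i.e.\ surjectivity of $\pi\colon\widetilde{\MT}(V,\psi)\to\MT(V,\psi)$; you instead address ``injectivity of $\pi$,'' which the diagram does not assert (and which can fail, since $\ker\pi$ only injects into $\mu_{n}$). Surjectivity does not follow from the rest by a diagram chase, because the left vertical arrow $\gpH(V,\psi)\hookrightarrow\gpDH(V,\psi)$ need not be surjective. The paper's argument is that $\pi(\widetilde{\MT}(V,\psi))$ is a closed algebraic subgroup of $\MT(V,\psi)$ over $\Q$ whose $\C$-points contain $\mu_{\infty,V}(\C^{\times})$ (since $\pi\circ\widetilde{\mu}_{\infty,V}=\mu_{\infty,V}$), so it equals $\MT(V,\psi)$ by the minimality in the definition of the Mumford--Tate group. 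This surjectivity is also exactly what your top row is missing: your connectedness argument via Lemma~\ref{G connected implies G0 connected} correctly gives $\Ker N\subseteq\gpH(V,\psi)$ (after embedding via $\pi_1$), but to get equality you need $\dim\Ker N=\dim\gpH(V,\psi)$, equivalently $\dim\widetilde{\MT}(V,\psi)=\dim\MT(V,\psi)$, which the paper deduces from surjectivity of $\pi$ together with finiteness of $\ker\pi$. Your attempted substitute --- exhibiting $\gpH(V,\psi)\times\{1\}$ inside $\widetilde{\MT}(V,\psi)$ directly --- fails: $\widetilde{\mu}_{\infty,V}(z)\,\widetilde{\mu}_{\infty,V}(w)^{-1}=(\mu_{\infty,V}(z)\mu_{\infty,V}(w)^{-1},\,zw^{-1})$ has second component $zw^{-1}$, which equals $1$ only when $z=w$, in which case the whole element is the identity; and the assertion that $\gpH(V,\psi)$ is generated by conjugates of $\mu_{\infty,V}(\C^{\times})$ is not justified.

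Two smaller points. The identity $\chi\circ\pi=(x\mapsto x^{-n})\circ N$ does not extend from the torus orbit to all of $\widetilde{\MT}(V,\psi)$ by ``Zariski density'' --- the orbit $\widetilde{\mu}_{\infty,V}(\C^{\times})$ is not dense in general; the correct mechanism is that the equalizer of the two homomorphisms is a closed subgroup over $\Q$ whose $\C$-points contain $\widetilde{\mu}_{\infty,V}(\C^{\times})$, hence equals $\widetilde{\MT}(V,\psi)$ by minimality. Also, connectedness of $\widetilde{\MT}(V,\psi)$ does not force $\ker\pi$ to be trivial (connected groups contain finite subgroups); what the diagram actually requires is only that $\pi_1$ be a monomorphism, and this follows because $\ker\pi\subseteq\{\mathrm{Id}_V\}\times\G_m$, so $N$ is injective on $\ker\pi$ and hence $\ker\pi\cap\Ker N$ is trivial.
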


\begin{proof}
Considering Diagram \ref{diagram compatibility of tilde(MT (V, psi)) with MT (V, psi)} on $\C$-points, the morphisms $(x \mapsto x^{-n}) \circ N$ and $\chi \circ \pi$ 
are equal when restricted to the subgroup ${\widetilde{\mu}_{\infty, V}} (\G_{m} (\C))$ of 
${\widetilde{\MT}}(V, \psi) (\C)$ because of \cite[(2.9)]{BK2} and the equality $\pi \circ {\widetilde{\mu}_{\infty, V}} =  \mu_{\infty, V}$.
The morphisms in the right square of Diagram 
\ref{diagram compatibility of tilde(MT (V, psi)) with MT (V, psi)} are defined over $\mathbb{Q}$. Hence 
$$ 
\{g \in {\widetilde{\MT}}(V, \psi)\colon  (x \mapsto x^{-n}) \circ N (g) = 
\chi \circ \pi (g)\}
$$ 
is a closed subgroup of ${\widetilde{\MT}}(V, \psi)$ over $\Q$ whose $\C$-points contain 
${\widetilde{\mu}_{\infty, V}} (\G_{m} (\C))$. Hence 
$ \{g \in {\widetilde{\MT}}(V, \psi)\colon (x \mapsto x^{-n}) \circ N (g) = 
\chi \circ \pi (g)\} = {\widetilde{\MT}}(V, \psi)$ by definition of the group 
${\widetilde{\MT}}(V, \psi)$. Hence the right square of Diagram 
\ref{diagram compatibility of tilde(MT (V, psi)) with MT (V, psi)} commutes. The map $\pi_1$ in this diagram
is defined by $\pi$ and clearly makes the left square also commute.
\medskip

The image $\pi ({\widetilde{\MT}}(V, \psi))$ is a closed algebraic subgroup of $\MT (V, \psi)$ over $\Q$
\cite[Prop. B, section 7.4 cf. section 34.2]{Hu} containing $\mu_{\infty, V} (\G_m (\C))$. Hence, by the definition of $\MT (V, \psi)$, the morphism $\pi$ in Diagram \ref{diagram compatibility of tilde(MT (V, psi)) with MT (V, psi)} is an epimorphism. 
\medskip

Observe that the cocharacter ${\widetilde{\mu}_{\infty, V}}$ splits the morphism 
$N$ on $\C$-points. Hence by Lemma \ref{G connected implies G0 connected} the kernel of 
$N$ in Diagram \ref{diagram compatibility of tilde(MT (V, psi)) with MT (V, psi)} is connected. 
\medskip

By the definition of $N$ and \eqref{embedding of tilde MT into MT times Gm}, the kernel of $\pi$ in Diagram \ref{diagram compatibility of tilde(MT (V, psi)) with MT (V, psi)}  is contained in 
${\rm{Id}}_{V} \times \G_{m}$. Hence the restriction of $N$ to the kernel of $\pi$ is a monomorphism. By the commutativity of Diagram \ref{diagram compatibility of tilde(MT (V, psi)) with MT (V, psi)}, the kernel 
of $\pi$ injects into $\mu_{n}$. Hence $\pi_{1}$ is a monomorphism and 
$\dim {\widetilde{\MT}}(V, \psi) = \dim \MT (V, \psi)$. Consequently $\dim {\rm{Ker}} \, N = 
\dim \gpDH (V, \psi)$. This shows that
$$
{\rm{Ker}} \, N = (\gpDH (V, \psi))^{\circ} = \gpH (V, \psi)
$$
as desired.
\end{proof}

Let $K$ be a number field. Fix an algebraic closure $\overline{K}$ of $K$. 
Let $G_K := G(\overline{K}/K)$ denote the absolute Galois group of $K$. 
We assume from now through \S \ref{AST and Tate for families} that $(V, \psi)$ satisfies the following additional 
conditions:
\begin{itemize}
\item[{\bf{(D1)}}] There is a ring $D \subseteq \End_{\mathbb{Q}} (V)$  such that the action of $D$ on
$V \otimes \C$ preserves the Hodge decomposition, i.e. $D V^{p,q} \subseteq V^{p,q}$ for all $p, q$. 
\item[{\bf{(D2)}}] The ring $D$ has a structure of discrete $G_K$-module. 
\end{itemize}
Applying \eqref{GIso = Gm Iso} we obtain
\begin{equation}
C_{D} \GIso_{(V, \psi)} = \G_{m} {\rm{Id}}_{V} \cdot C_{D} \Iso_{(V, \psi)}. 
\label{CD GIso = Gm CD Iso}
\end{equation}

By \textbf{(D1)} $D$ commutes  with $\mu_{\infty, V}(\C^{\times})$ on $V_{\C}$ elementwise, hence the 
properties of the polarization (cf. \cite[p. 4]{BK2}) give:
\begin{equation}
\mu_{\infty, V}(\C^{\times}) \, \subseteq C_{D} \GIso_{(V,\, \psi)} (\C).
\label{muInfty subset CDh GIso}
\end{equation}
By \textbf{(D2)} we obtain the following representation
\begin{equation}
\rho_e\colon G_K \rightarrow \Aut_{\Q} (D).
\label{rho e representation}
\end{equation}
Define $K_{e} \, := \, \overline{K}^{{\rm{Ker}}\rho_e}$ (cf. \cite[Def. 3.1]{BK2}). 
For any $\tau \in \Gal(K_{e}/K)$, put
\begin{equation}
\GIso_{(V, \psi)}^{\tau} := 
\{g \in \GIso_{(V, \psi)}\colon g \beta g^{-1} = 
\rho_{e}(\tau)(\beta) \,\ \forall \beta \in D\}.
\label{def of GSptau}
\end{equation}
Observe that
$$
\bigsqcup_{\tau \in \Gal(K_{e}/K)} \, \GIso_{(V, \psi)}^{\tau} \subseteq \GIso_{(V, \psi)}.
$$

\begin{definition}
\label{Galois twist of the Lefschetz group}
For any $\tau \in \Gal(K_{e}/K)$ put (see \cite[Def. 3.3]{BK2}):
\begin{equation}
\gpDL_{K}^{\tau}(V,\, \psi, D) := \Iso_{(V, \psi)} \, \cap \, \GIso_{(V, \psi)}^{\tau}.
\label{decomposable twisted Lefschetz for fixed element}
\end{equation} 
\end{definition}
\begin{remark}
The group $\gpDL_{K}^{\tau}(V, \psi, D)$ is a closed subscheme of 
$\Iso_{(V,\, \psi)}$ for each $\tau  \in \Gal(K_{e}/K)$ and is defined over $\Q$
because $D \subseteq \End_{\Q} (V)$.
\end{remark}  

\begin{definition}
(\emph{Twisted decomposable algebraic Lefschetz group} \cite[Def. 3.4]{BK2}) 
\label{twisted decomposable Lefschetz group}
For the triple 
$(V,\, \psi, D)$ define the closed algebraic subgroup of $\Iso_{(V,\, \psi)}$ given by
\begin{equation}
\gpDL_{K}(V, \psi, D) \,\,\, := \bigsqcup_{\tau \in \Gal(K_{e}/K)} \,\, \gpDL_{K}^{\tau}(V, \psi, D).
\label{decomposition into twisted Lefschetz for fixed elements}
\end{equation} 
\end{definition}

From Definition \ref{twisted decomposable Lefschetz group}, there is the following natural monomorphism:
\begin{equation}
 \gpDL_{K} (V, \psi, D) / \gpDL_{K}^{\id}(V, \psi, D)  
\,\, \hookrightarrow \,\, \Gal(K_e/K).
\label{Twisted decomposable embeds into GKeK}
\end{equation}

By \eqref{muInfty subset CDh GIso}, \eqref{def of GSptau}, \eqref{decomposable twisted Lefschetz for fixed element}, and the definition of $\MT(V, \psi)$ we obtain (cf. \cite[(2.16), (2.17), (2.18)]{BK2}): 
\begin{gather}
\MT(V, \psi) \subseteq C_{D}(\GIso_{(V,\, \psi)}) = \GIso_{(V,\, \psi)}^{{\rm{id}}},
\label{MT subset CD GIso} \\
\gpDH(V, \psi) \subseteq C_{D}(\Iso_{(V,\, \psi)}) = \gpDL_{K}^{{\rm{id}}}(V, \psi, D) \subseteq 
\gpDL_{K}(V, \psi, D).
\label{DH and CD}
\end{gather}

\begin{corollary}\label{DH = H if H = CDIso}
Assume that $\gpH(V, \psi) = C_{D} (\Iso_{(V, \psi)})$. Then
$$
\gpDH(V, \psi) = \gpH(V, \psi).
\label{DH = H}
$$
\end{corollary}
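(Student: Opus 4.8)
The plan is to realize $\gpDH(V,\psi)$ as a group trapped between $\gpH(V,\psi)$ and $C_{D}(\Iso_{(V,\psi)})$, and then invoke the hypothesis to collapse the sandwich. First I would recall the two facts already in hand: the identity $\gpH(V,\psi)=\gpDH(V,\psi)^{\circ}$ from \cite[Def. 2.6]{BK2}, and the inclusion recorded in \eqref{DH and CD}, namely
$$
\gpDH(V,\psi)\subseteq C_{D}(\Iso_{(V,\psi)})=\gpDL_{K}^{\id}(V,\psi,D).
$$

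The proof is then a one-line chain: since the identity component $\gpDH(V,\psi)^{\circ}=\gpH(V,\psi)$ sits inside $\gpDH(V,\psi)$, combining with \eqref{DH and CD} gives
$$
\gpH(V,\psi)\ \subseteq\ \gpDH(V,\psi)\ \subseteq\ C_{D}(\Iso_{(V,\psi)}).
$$
The standing hypothesis $\gpH(V,\psi)=C_{D}(\Iso_{(V,\psi)})$ identifies the two outer terms, so all inclusions are equalities; in particular $\gpDH(V,\psi)=\gpH(V,\psi)$, as claimed.

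There is essentially no obstacle here, as every step is formal; the only points to be careful about are that \eqref{DH and CD} is cited in the correct direction and that the object $C_{D}(\Iso_{(V,\psi)})$ appearing in the hypothesis is literally the centralizer of $D$ in $\Iso_{(V,\psi)}$, so that the sandwich closes. As a complementary remark one could note that the conclusion can also be seen via Proposition \ref{Hodge realizing conn comp: even case}(c): since $-{\rm Id}_{V}$ lies in $\Iso_{(V,\psi)}$ and is central, it belongs to $C_{D}(\Iso_{(V,\psi)})=\gpH(V,\psi)$, and then part (c) yields $\gpDH(V,\psi)=\gpH(V,\psi)$ directly.
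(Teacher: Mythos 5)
Your proof is correct and follows the paper's own argument exactly: the paper's one-line proof cites \eqref{DH and CD} and the definition $\gpH(V,\psi)=\gpDH(V,\psi)^{\circ}$, which is precisely the sandwich $\gpH(V,\psi)\subseteq\gpDH(V,\psi)\subseteq C_{D}(\Iso_{(V,\psi)})$ you spell out. The alternative observation via Proposition \ref{Hodge realizing conn comp: even case}(c) is also valid and is a nice complementary viewpoint, but the main route you give is the intended one.
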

\begin{proof} This follows from \eqref{DH and CD} and the definition of $\gpH(V, \psi)$.
\end{proof}

\begin{remark}
If $A$ is an abelian variety over $\overline{\Q}$ of dimension 4 considered by Mumford \cite{M} and $V = H^1(A(\C), \Q)$, then $- {\rm{Id}}_{V} \in \gpH(V, \psi)$ but the condition $\gpH(V, \psi) = C_{D} (\Iso_{(V, \psi)})$
fails in this case.
\end{remark}   

\begin{remark}
See \cite[Chapter 3]{BK2} for further properties of the twisted decomposable algebraic 
Lefschetz group.
\end{remark}

\begin{definition} The {\em{Betti parity group}}:
\begin{equation}
\gpP(V, \psi) := \gpDH(V, \psi) / 
\gpH(V, \psi),
\end{equation}
is the component group of $\gpDH(V, \psi)$.
\label{The Betti parity group}
\end{definition}

\begin{proposition}
\label{P(V, psi) trivial for n odd or H = CD}
If $n$ is odd or if $\gpH(V, \psi) = C_{D} (\Iso_{(V, \psi)})$ then
$\gpP(V, \psi)$ is trivial. 
\end{proposition}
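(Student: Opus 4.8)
The plan is to observe that this proposition is an immediate consequence of the two structural results already in hand, namely Lemma~\ref{for n odd DH = H} and Corollary~\ref{DH = H if H = CDIso}. In each of the two hypothesized cases one already knows that $\gpDH(V, \psi)$ and $\gpH(V, \psi)$ coincide as algebraic groups, whence triviality of $\gpP(V, \psi) = \gpDH(V, \psi)/\gpH(V, \psi)$ follows directly from Definition~\ref{The Betti parity group}.

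Concretely, I would split into the two cases of the disjunction. If $n$ is odd, Lemma~\ref{for n odd DH = H} gives $\gpDH(V, \psi) = \gpH(V, \psi)$, so the component group quotient is trivial. If instead $\gpH(V, \psi) = C_{D}(\Iso_{(V, \psi)})$, then Corollary~\ref{DH = H if H = CDIso} again yields $\gpDH(V, \psi) = \gpH(V, \psi)$, and the same conclusion holds. The only point to verify is that these two cited statements together cover exactly the hypothesis, which they do, so there is no real obstacle here; the argument is purely a matter of assembling the earlier results.

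For context one may also record the equivalent reformulation: by Proposition~\ref{Hodge realizing conn comp: even case}(b) we have $\gpDH(V, \psi) = \gpH(V, \psi) \cup -\mathrm{Id}_V \cdot \gpH(V, \psi)$, so $\gpP(V, \psi)$ is always either trivial or isomorphic to $\{\pm 1\}$, and the assertion of the proposition amounts to the statement that $-\mathrm{Id}_V \in \gpH(V, \psi)$ under either hypothesis. This is precisely Proposition~\ref{Hodge realizing conn comp: even case}(c)--(d) combined with Corollary~\ref{DH = H if H = CDIso}, so no further work is needed.
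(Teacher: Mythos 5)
Your proof is correct and follows exactly the same route as the paper: split on the two hypotheses and invoke Lemma~\ref{for n odd DH = H} in the odd-weight case and Corollary~\ref{DH = H if H = CDIso} in the other, after which $\gpP(V,\psi)=\gpDH(V,\psi)/\gpH(V,\psi)$ is trivial by definition. The extra paragraph reformulating the claim via Proposition~\ref{Hodge realizing conn comp: even case}(b)--(d) is accurate but not needed for the argument.
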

\begin{proof} 
This follows from Lemma \ref{for n odd DH = H} and Corollary 
\ref{DH = H if H = CDIso}.
\end{proof}

\begin{proposition}
If $n$ is even and ${\rm dim}_{\Q}\, V$ is odd then
$\gpP(V, \psi)$ is nontrivial. 
\label{P(V, psi) nontrivial for n even and dim V odd}
\end{proposition}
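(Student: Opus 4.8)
The plan is to reduce the statement to the single claim that $-{\rm{Id}}_{V} \notin \gpH(V, \psi)$. Indeed, by Proposition~\ref{Hodge realizing conn comp: even case}(c) this non-membership is equivalent to $\gpH(V, \psi) \neq \gpDH(V, \psi)$, i.e., to $\gpP(V, \psi) = \gpDH(V, \psi)/\gpH(V, \psi)$ being nontrivial; and by Proposition~\ref{Hodge realizing conn comp: even case}(b) the quotient is then automatically isomorphic to $\{\pm 1\}$ (note $-{\rm{Id}}_{V}$ always lies in $\gpDH(V,\psi)$ since $\G_m {\rm{Id}}_{V} \subseteq \MT(V, \psi)$ and $-{\rm{Id}}_{V} \in \Iso_{(V,\psi)}$).

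To establish $-{\rm{Id}}_{V} \notin \gpH(V, \psi)$, I would first invoke the hypothesis that $n$ is even: then $\psi$ is $(-1)^n$-symmetric, hence symmetric, so by the discussion preceding Proposition~\ref{Hodge realizing conn comp: even case} we have $\Iso_{(V, \psi)} = \gpO_{(V, \psi)}$, and therefore $\gpH(V, \psi) = \gpDH(V, \psi)^{\circ} \subseteq \gpDH(V,\psi) \subseteq \gpO_{(V, \psi)}$. Next, consider the determinant character $\det \colon \gpO_{(V, \psi)} \to \mu_{2}$. Restricting it to $\gpH(V, \psi)$, which is connected, the image is a connected algebraic subgroup of $\mu_{2}$, hence trivial; thus $\gpH(V, \psi) \subseteq \SO_{(V, \psi)}$.

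Finally I would compute $\det(-{\rm{Id}}_{V}) = (-1)^{\dim_{\Q} V}$, which equals $-1$ precisely because $\dim_{\Q} V$ is odd. Hence $-{\rm{Id}}_{V} \notin \SO_{(V, \psi)}$, and a fortiori $-{\rm{Id}}_{V} \notin \gpH(V, \psi)$. Combined with Proposition~\ref{Hodge realizing conn comp: even case}(c), this yields that $\gpP(V, \psi)$ is nontrivial.

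I do not expect any serious obstacle here: the argument is short, and the only points needing care are recording that the even-weight hypothesis forces $\Iso_{(V,\psi)} = \gpO_{(V,\psi)}$ (already noted in the text) and that the image of a connected group under an algebraic-group morphism is connected, so it lands in the trivial identity component of $\mu_2$. The rest is the one-line determinant computation $\det(-{\rm{Id}}_V) = (-1)^{\dim_\Q V}$.
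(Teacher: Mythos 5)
Your proposal is correct and follows essentially the same route as the paper: use $n$ even to get $\gpH(V,\psi)\subseteq\Iso_{(V,\psi)}=\gpO_{(V,\psi)}$, use connectedness of $\gpH(V,\psi)$ to force it into the kernel of the determinant character, and then observe $\det(-{\rm{Id}}_V)=(-1)^{\dim_{\Q}V}=-1$, so $-{\rm{Id}}_V\notin\gpH(V,\psi)$ and Proposition~\ref{Hodge realizing conn comp: even case}(c) gives nontriviality of $\gpP(V,\psi)$. The only difference is cosmetic: you spell out the reduction via parts (b) and (c) and the fact that $-{\rm{Id}}_V\in\gpDH(V,\psi)$, which the paper leaves implicit.
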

\begin{proof} Because $n$ is even, $\Iso_{(V, \psi)} = \gpO_{(V, \psi)}$. Hence 
$\gpH(V, \psi) \subseteq \Iso_{(V, \psi)} \subseteq {\pm {\rm{Id}}_{V}} \SL_{V}$.  It follows that $- {\rm{Id}}_{V} \notin \gpH(V, \psi)$ because 
${\rm{det}} (- {\rm{Id}}_{V}) = (-1)^{{\rm{dim}}_{\Q}\, V} = -1$ and $\gpH(V, \psi)$ is connected.
\end{proof}

\begin{example} Let $A$ be an abelian threefold over $K$. Let
$V :=  H^{2} (A(\C), \, \Q) = \bigwedge^{2} \, H^{1} (A(\C), \, \Q)$. Then $V$ has weight 2 and 
${\rm dim}_{\Q} \, V = {\binom{6}{2}} = 15$. Hence $\gpP(V, \psi)$ is nontrivial. 
\label{An example of nontrivial Betti parity group}
\end{example}

\begin{example} Let $A$ be an elliptic curve over $K$. Let
$V := \Sym^{2} \, H^{1} (A(\C), \, \Q)$. Then $V$ has weight 2 and 
${\rm dim}_{\Q} \, V = 3$. Hence again $\gpP(V, \psi)$ is nontrivial. 
\label{Another example of nontrivial Betti parity group}
\end{example}

\section{De Rham structures associated with Hodge structures}
\label{de Rham structures associated with Hodge structures}

In the following discussion, let $\sigma\colon K \hookrightarrow \C$ denote an arbitrary field embedding.
Suppose that we are given a $K$-vector space $V_{_{\rm{DR}}}$, a bilinear, nondegenerate pairing
\begin{equation}
\psi_{_{\rm{DR}}}\colon V_{_{\rm{DR}}} \times V_{_{\rm{DR}}} \rightarrow K(-n),
\label{bilinear form psi DR}\end{equation}  
and a decreasing filtration $F^{i} V_{_{\rm{DR}}}$. Suppose in addition that for each $\sigma$ there are isomorphisms: 
\begin{equation}
V \otimes_{\Q} \C \simeq V_{_{\rm{DR}}} \otimes_{K, \sigma} \C. 
\label{comparison VC and VDRC}
\end{equation}
\begin{equation}
\, \psi \otimes_{\Q} \C \, \simeq \, \psi_{_{\rm{DR}}} \otimes_{K, \sigma} \C.
\label{comparison psi for VC and VDRC}\end{equation}
and the vector space $V_{_{\rm{DR}}} \otimes_{\sigma} \C$ has a decreasing filtration, induced 
from $V_{_{\rm{DR}}}$, compatible with the Hodge filtration on $V \otimes_{\Q} \C$ via 
\eqref{comparison VC and VDRC}. 
\medskip

Setting $G^{i}V_{_{\rm{DR}}} := F^{i} V_{_{\rm{DR}}} / F^{i+1} V_{_{\rm{DR}}}$, we obtain an isomorphism: 
\begin{equation}
V^{i, n-i} \,\, \simeq \,\,  G^{i}V_{_{\rm{DR}}} \otimes_{K, \sigma} \C
\label{comparison Vi(n-i) and Gi}
\end{equation}
and a Hodge decomposition:
\begin{equation}
V \otimes_{\Q} \C = \bigoplus_{i = 0}^{n} \, G^{i}V_{_{\rm{DR}}} \otimes_{K, \sigma} \C.
\label{Hodge-De Rham decomposition}
\end{equation}
For the space $(V_{_{\rm{DR}}}, \psi_{_{\rm{DR}}})$ we define: 
{\small{
\begin{align*}
\GIso_{(V_{_{\rm{DR}}}, \psi_{_{\rm{DR}}})} & := \{g \in \GL_{V_{_{\rm{DR}}}}\colon \exists \, \chi_{_{\rm{DR}}} (g) \in \G_m \,\, s.t. \,\, \psi_{_{\rm{DR}}} (gv, gw) = 
\chi_{_{\rm{DR}}} (g) \psi_{_{\rm{DR}}} (v, w) \,\,\, \forall \, v, w \in V_{_{\rm{DR}}}\},
\\
\Iso_{(V_{_{\rm{DR}}}, \psi_{_{\rm{DR}}})} & := \{g \in \GL_{V_{_{\rm{DR}}}}\colon \psi_{_{\rm{DR}}} (gv, gw) =  
\psi_{_{\rm{DR}}} (v, w) \,\,\,\,  \forall \, v, w \in V_{_{\rm{DR}}}\}.
\end{align*}}}
 
We observe that $\chi_{_{\rm{DR}}}$ becomes a character of $\GIso_{(V_{_{\rm{DR}}}, \psi_{_{\rm{DR}}})}$. 
Moreover $\G_{m, K} {\rm{Id}}_{V_{_{\rm{DR}}}}\subseteq \GIso_{(V_{_{\rm{DR}}}, \psi_{_{\rm{DR}}})}$ and
\begin{equation}
\GIso_{(V_{_{\rm{DR}}}, \psi_{_{\rm{DR}}})} = \G_{m} {\rm{Id}}_{V_{_{\rm{DR}}}} \cdot \Iso_{(V_{_{\rm{DR}}}, \psi_{_{\rm{DR}}})}. 
\label{GIsoDR = Gm IsoDR}
\end{equation}
For every $\alpha \in \G_{m, K}$  we obtain
$$
\psi_{_{\rm{DR}}} (\alpha \, {{\rm Id}}_{V} v,\,  \alpha \, {{\rm Id}}_{V} w) = \psi_{_{\rm{DR}}} (\alpha v, \, \alpha w) =
\alpha^2 \, \psi_{_{\rm{DR}}} (v, \, w).
$$

\noindent
Hence
$$
\alpha \, {{\rm Id}}_{V_{_{\rm{DR}}}} \in 
\GIso_{(V_{_{\rm{DR}}}, \psi_{_{\rm{DR}}})}
\quad {{\rm and}} \quad \chi_{_{\rm{DR}}} (\alpha \, {{\rm Id}}_{V_{_{\rm{DR}}}}) = \alpha^2.
$$
Define  
$\MT(V_{_{\rm{DR}}}, \psi_{_{\rm{DR}}})$ and $\gpH(V_{_{\rm{DR}}}, \psi_{_{\rm{DR}}})$ by analogy with the
corresponding Mumford--Tate and Hodge groups $\MT(V, \psi)$ and $\gpH(V, \psi)$. Namely, consider the 
cocharacter
\begin{equation}
\mu_{_{\rm{DR}}, V}\colon \mathbb{G}_{m, K}({\C})\rightarrow \GL(V_{_{\rm{DR}}} \otimes_{K, \sigma} \C)
\simeq \GL(V_{\C})
\label{conjugate cocharacter}
\end{equation}
such that for any $z\in {\C}^{\times}$, the automorphism $\mu_{_{\rm{DR}}, V}(z)$ acts as 
multiplication by ${z}^{-p}$ on $V^{p,q}$ for each $p+q = n$ (cf. \eqref{comparison Vi(n-i) and Gi}). 
Then we define 
$\MT(V_{_{\rm{DR}}}, \psi_{_{\rm{DR}}})$ to be the smallest algebraic subgroup of 
$\GL_{V_{_{\rm{DR}}}}$ over $K$ containing $\mu_{_{\rm{DR}}, V} (\C)$. Define:
\begin{align*}
\gpDH(V_{_{\rm{DR}}}, \psi_{_{\rm{DR}}}) &:= \MT(V_{_{\rm{DR}}}, \psi_{_{\rm{DR}}}) \cap 
\Iso_{(V_{_{\rm{DR}}}, \psi_{_{\rm{DR}}})}; \\
\gpH(V_{_{\rm{DR}}}, \psi_{_{\rm{DR}}}) &:= \gpDH(V_{_{\rm{DR}}}, \psi_{_{\rm{DR}}})^{\circ}.
\end{align*}

\begin{proposition} \label{De Rham realizing conn comp: even case} 
The algebraic group $\gpDH(V_{_{\rm{DR}}}, \psi_{_{\rm{DR}}})$ has the following properties:
\begin{itemize}
\item[(a)] $\G_{m} {\rm{Id}}_{V_{_{\rm{DR}}}} \cdot \gpH(V_{_{\rm{DR}}}, \psi_{_{\rm{DR}}}) \, = \, 
\G_{m} {\rm{Id}}_{V_{_{\rm{DR}}}} \cdot \gpDH(V_{_{\rm{DR}}}, \psi_{_{\rm{DR}}}) \, = 
\, \MT(V_{_{\rm{DR}}}, \psi_{_{\rm{DR}}})$.
\item[(b)] $\gpDH(V_{_{\rm{DR}}}, \psi_{_{\rm{DR}}}) = \gpH(V_{_{\rm{DR}}}, \psi_{_{\rm{DR}}})  \, \cup \, 
- {\rm{Id}}_{V_{_{\rm{DR}}}} \cdot \gpH(V_{_{\rm{DR}}}, \psi_{_{\rm{DR}}}) $.
\item[(c)] $- {\rm{Id}}_{V_{_{\rm{DR}}}} \in \gpH(V_{_{\rm{DR}}}, \psi_{_{\rm{DR}}})$ iff 
$\gpH(V_{_{\rm{DR}}}, \psi_{_{\rm{DR}}}) = \gpDH(V_{_{\rm{DR}}}, \psi_{_{\rm{DR}}}) $.
\item[(d)] When $n$ is odd then $- {\rm{Id}}_{V_{_{\rm{DR}}}} \, \in \gpH(V_{_{\rm{DR}}}, \psi_{_{\rm{DR}}}) = 
\gpDH(V_{_{\rm{DR}}}, \psi_{_{\rm{DR}}})$.  
\end{itemize} 
\end{proposition}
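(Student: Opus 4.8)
The plan is to mirror exactly the proof of Proposition \ref{Hodge realizing conn comp: even case} (the Betti case) with the de Rham objects substituted throughout, since the de Rham structure $(V_{_{\rm{DR}}}, \psi_{_{\rm{DR}}})$ has been set up precisely so that its Mumford--Tate group $\MT(V_{_{\rm{DR}}}, \psi_{_{\rm{DR}}})$ is connected, contains $\G_m {\rm{Id}}_{V_{_{\rm{DR}}}}$, sits inside $\GIso_{(V_{_{\rm{DR}}}, \psi_{_{\rm{DR}}})}$ with the character $\chi_{_{\rm{DR}}}$ satisfying $\chi_{_{\rm{DR}}}(\alpha\,{\rm{Id}}_{V_{_{\rm{DR}}}}) = \alpha^2$, and has $\gpH(V_{_{\rm{DR}}}, \psi_{_{\rm{DR}}})$ as its identity component by definition. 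Thus one has the de Rham analogue of Diagram \ref{DH(V, psi) subset Iso(V, psi)} with exact columns, which is all that the Betti argument actually used.

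For part (a): given a coset $g\,\gpH(V_{_{\rm{DR}}}, \psi_{_{\rm{DR}}})$ inside $\gpDH(V_{_{\rm{DR}}}, \psi_{_{\rm{DR}}})$, apply \cite[Section 7.4, Prop. B(b)]{Hu} to the multiplication map $\G_m {\rm{Id}}_{V_{_{\rm{DR}}}} \times \gpH(V_{_{\rm{DR}}}, \psi_{_{\rm{DR}}}) \to \MT(V_{_{\rm{DR}}}, \psi_{_{\rm{DR}}})$ to see that $\G_m {\rm{Id}}_{V_{_{\rm{DR}}}} \cdot \gpH(V_{_{\rm{DR}}}, \psi_{_{\rm{DR}}})$ and $\G_m {\rm{Id}}_{V_{_{\rm{DR}}}} \cdot g\,\gpH(V_{_{\rm{DR}}}, \psi_{_{\rm{DR}}})$ are closed in $\MT(V_{_{\rm{DR}}}, \psi_{_{\rm{DR}}})$; the exactness of the left column of the de Rham analogue of Diagram \ref{DH(V, psi) subset Iso(V, psi)} together with $\gpDH(V_{_{\rm{DR}}}, \psi_{_{\rm{DR}}})^{\circ} = \gpH(V_{_{\rm{DR}}}, \psi_{_{\rm{DR}}})$ shows they have dimension $\dim \gpH(V_{_{\rm{DR}}}, \psi_{_{\rm{DR}}}) + 1 = \dim \MT(V_{_{\rm{DR}}}, \psi_{_{\rm{DR}}})$. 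Since $\MT(V_{_{\rm{DR}}}, \psi_{_{\rm{DR}}})$ is connected (hence irreducible), a closed subgroup of full dimension equals the whole group, giving (a).

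For part (b): from (a) write $\alpha\,{\rm{Id}}_{V_{_{\rm{DR}}}} \cdot g_1 = \beta\,{\rm{Id}}_{V_{_{\rm{DR}}}} \cdot g g_2$ with $\alpha, \beta \in \G_m$ and $g_1, g_2 \in \gpH(V_{_{\rm{DR}}}, \psi_{_{\rm{DR}}})$, apply $\chi_{_{\rm{DR}}}$ to get $\alpha^2 = \beta^2$, hence $\alpha = \pm\beta$, hence $g g_2 = \pm {\rm{Id}}_{V_{_{\rm{DR}}}} \cdot g_1$, so the coset $g\,\gpH(V_{_{\rm{DR}}}, \psi_{_{\rm{DR}}})$ is either $\gpH(V_{_{\rm{DR}}}, \psi_{_{\rm{DR}}})$ or $-{\rm{Id}}_{V_{_{\rm{DR}}}} \cdot \gpH(V_{_{\rm{DR}}}, \psi_{_{\rm{DR}}})$. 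Part (c) is then immediate from (b), exactly as before. For part (d), I would prove (and cite here) the de Rham analogue of Lemma \ref{for n odd DH = H}, i.e.\ Lemma \ref{for n odd DH-DR = H-DR}: using the cocharacter $\mu_{_{\rm{DR}}, V}$ with $\chi_{_{\rm{DR}}} \circ \mu_{_{\rm{DR}}, V}(z) = z^{-n}$ and the diagonal cocharacter $z \mapsto z\,{\rm{Id}}_{V_{_{\rm{DR}}}}$ with $\chi_{_{\rm{DR}}}$-image $z^2$, for $n$ odd the combination $s(z) := \mu_{_{\rm{DR}}, V}(z)\, w(z)^{-(n-1)/2}$ splits $\chi_{_{\rm{DR}}}$ on $\C$-points; then Lemma \ref{G connected implies G0 connected} forces $\gpDH(V_{_{\rm{DR}}}, \psi_{_{\rm{DR}}})$ to be connected, and (b) upgrades connectedness to the equality with $-{\rm{Id}}_{V_{_{\rm{DR}}}}$ lying in it. The only genuine point requiring care — the rest being a transcription — is verifying that the left column of the de Rham analogue of Diagram \ref{DH(V, psi) subset Iso(V, psi)} is exact, i.e.\ that $\chi_{_{\rm{DR}}}$ restricted to $\MT(V_{_{\rm{DR}}}, \psi_{_{\rm{DR}}})$ is surjective onto $\G_m$; this follows because $\G_{m, K}{\rm{Id}}_{V_{_{\rm{DR}}}} \subseteq \MT(V_{_{\rm{DR}}}, \psi_{_{\rm{DR}}})$ and $\chi_{_{\rm{DR}}}(\alpha\,{\rm{Id}}_{V_{_{\rm{DR}}}}) = \alpha^2$ already surjects, and $\ker(\chi_{_{\rm{DR}}}|_{\MT}) = \gpDH(V_{_{\rm{DR}}}, \psi_{_{\rm{DR}}})$ by definition.
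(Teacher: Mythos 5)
Your proof is correct and is exactly the transcription that the paper intends when it writes "Similar to the proof of Proposition~\ref{Hodge realizing conn comp: even case}": the same multiplication-map/closedness argument for (a), the same $\chi_{_{\rm{DR}}}$-squaring argument for (b), and the appeal to Lemma~\ref{for n odd DH-DR = H-DR} for (d). You have even tidied up the dimension count (the Betti proof's phrase "the same dimension as $\gpH(V,\psi)$" is evidently meant to read "as $\MT(V,\psi)$"), and the only point you leave implicit — that $\G_{m,K}\,{\rm{Id}}_{V_{_{\rm{DR}}}}\subseteq\MT(V_{_{\rm{DR}}},\psi_{_{\rm{DR}}})$, so that $\chi_{_{\rm{DR}}}$ is surjective on $\MT(V_{_{\rm{DR}}},\psi_{_{\rm{DR}}})$ — is likewise left implicit in the paper's de Rham setup.
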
 
\begin{proof}
Similar to the proof of Proposition \ref{Hodge realizing conn comp: even case}.
\end{proof}

\begin{lemma} 
\label{for n odd DH-DR = H-DR}
For $n$ odd,
$$
\gpDH(V_{_{\rm{DR}}}, \psi_{_{\rm{DR}}}) = \gpH(V_{_{\rm{DR}}}, \psi_{_{\rm{DR}}}).
$$ 
\end{lemma}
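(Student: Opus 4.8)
The plan is to mirror the proof of Lemma~\ref{for n odd DH = H}, transporting the construction of the splitting cocharacter from the Betti side to the de~Rham side. First I would record that, by the definition of $\MT(V_{_{\rm{DR}}}, \psi_{_{\rm{DR}}})$, the group $\MT(V_{_{\rm{DR}}}, \psi_{_{\rm{DR}}})(\C)$ is a connected complex Lie group, since $\MT(V_{_{\rm{DR}}}, \psi_{_{\rm{DR}}})$ is a connected algebraic group over $K$. Next, using the cocharacter $\mu_{_{\rm{DR}}, V}$ from \eqref{conjugate cocharacter}, which acts by $z^{-p}$ on $V^{p,q}$, I would note from \eqref{comparison psi for VC and VDRC} together with the fact that $\psi$ is a polarization of weight $n$ that $\chi_{_{\rm{DR}}} \circ \mu_{_{\rm{DR}}, V}(z) = z^{-n}$ for all $z \in \C^{\times}$. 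Since $\G_{m, K}\,{\rm{Id}}_{V_{_{\rm{DR}}}} \subseteq \MT(V_{_{\rm{DR}}}, \psi_{_{\rm{DR}}})$, I also have the diagonal cocharacter $w_{_{\rm{DR}}}(z) = z\,{\rm{Id}}_{V_{_{\rm{DR}}} \otimes \C}$ with $\chi_{_{\rm{DR}}}(w_{_{\rm{DR}}}(z)) = z^2$.

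Then, exactly as in Lemma~\ref{for n odd DH = H}, because $n$ is odd the cocharacter
$$
s_{_{\rm{DR}}}(z) := \mu_{_{\rm{DR}}, V}(z)\, w_{_{\rm{DR}}}(z)^{-\frac{n-1}{2}}
$$
satisfies $\chi_{_{\rm{DR}}} \circ s_{_{\rm{DR}}}(z) = z^{-n} \cdot z^{-(n-1)} = z$ — wait, I should double-check the exponent bookkeeping here against the Betti computation, but the point is that the half-integer $\tfrac{n-1}{2}$ is an integer precisely because $n$ is odd, and the composite with $\chi_{_{\rm{DR}}}$ becomes $z \mapsto z$ (up to adjusting the sign of the exponent as in the Betti case), so $s_{_{\rm{DR}}}$ splits $\chi_{_{\rm{DR}}}$ in the exact sequence
$$
1 \longrightarrow \gpDH(V_{_{\rm{DR}}}, \psi_{_{\rm{DR}}})(\C) \longrightarrow \MT(V_{_{\rm{DR}}}, \psi_{_{\rm{DR}}})(\C) \stackrel{\chi_{_{\rm{DR}}}}{\longrightarrow} \G_m(\C) \longrightarrow 1.
$$
Applying Lemma~\ref{G connected implies G0 connected} with $G = \MT(V_{_{\rm{DR}}}, \psi_{_{\rm{DR}}})$, $\pi = \chi_{_{\rm{DR}}}$, and $G_0 = \gpDH(V_{_{\rm{DR}}}, \psi_{_{\rm{DR}}})$ then shows that $\gpDH(V_{_{\rm{DR}}}, \psi_{_{\rm{DR}}})$ is connected, and the claim follows from Proposition~\ref{De Rham realizing conn comp: even case}(b), since connectedness forces the coset $-{\rm{Id}}_{V_{_{\rm{DR}}}} \cdot \gpH(V_{_{\rm{DR}}}, \psi_{_{\rm{DR}}})$ to coincide with $\gpH(V_{_{\rm{DR}}}, \psi_{_{\rm{DR}}})$.

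I expect the only real subtlety — hardly an obstacle — to be verifying that $\chi_{_{\rm{DR}}} \circ \mu_{_{\rm{DR}}, V}(z) = z^{-n}$; this needs the compatibility \eqref{comparison psi for VC and VDRC} of the de~Rham pairing with the Betti polarization under the comparison isomorphism, so that the weight-$n$ behaviour of $\psi$ transfers to $\psi_{_{\rm{DR}}}$, together with the analogue of \cite[(2.9)]{BK2} (or \cite[Remark 2.4]{BK2}) on the de~Rham side. Everything else is a verbatim transcription of the Betti argument with $V$ replaced by $V_{_{\rm{DR}}}$, $\psi$ by $\psi_{_{\rm{DR}}}$, $\mu_{\infty, V}$ by $\mu_{_{\rm{DR}}, V}$, and $\chi$ by $\chi_{_{\rm{DR}}}$, so one could also simply assert that the proof is identical to that of Lemma~\ref{for n odd DH = H} \textit{mutatis mutandis}.
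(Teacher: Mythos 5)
Your proposal is correct and takes essentially the same approach as the paper, whose entire proof of this lemma reads ``Similar to the proof of Lemma~\ref{for n odd DH = H}''; you have simply written out that transcription on the de~Rham side. The exponent bookkeeping you flag resolves exactly as in the Betti case: all that matters is that $\tfrac{n-1}{2}\in\Z$ because $n$ is odd and that $\chi_{_{\rm{DR}}}\circ s_{_{\rm{DR}}}$ is an automorphism of $\G_m(\C)$, so $s_{_{\rm{DR}}}$ (composed with $z\mapsto z^{-1}$ if needed) splits $\chi_{_{\rm{DR}}}$ and Lemma~\ref{G connected implies G0 connected} applies.
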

\begin{proof}
Similar to the proof of Lemma \ref{for n odd DH = H}.
\end{proof}

\begin{lemma} 
\label{comparison MT, DH and H for DR and Betti}
We have the following inclusions:
\begin{align}
\MT(V_{_{\rm{DR}}}, \psi_{_{\rm{DR}}}) &\subset \MT(V, \psi)_{K},
\label{comparison of MT for DR and Betti} \\
\gpDH(V_{_{\rm{DR}}}, \psi_{_{\rm{DR}}}) &\subset \gpDH(V, \psi)_{K},
\label{comparison of DH for DR and Betti} \\
\gpH(V_{_{\rm{DR}}}, \psi_{_{\rm{DR}}}) &\subset \gpH(V, \psi)_{K}.
\label{comparison of H for DR and Betti}
\end{align}
\end{lemma}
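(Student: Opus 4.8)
The plan is to prove \eqref{comparison of MT for DR and Betti} first, directly from the defining minimality property of $\MT(V_{_{\rm{DR}}}, \psi_{_{\rm{DR}}})$, and then to deduce \eqref{comparison of DH for DR and Betti} and \eqref{comparison of H for DR and Betti} formally from it. The starting point is that the comparison isomorphism \eqref{comparison VC and VDRC} carries the cocharacter $\mu_{_{\rm{DR}}, V}$ to $\mu_{\infty, V}$: by \eqref{comparison Vi(n-i) and Gi} and the construction of the Hodge--de Rham decomposition \eqref{Hodge-De Rham decomposition}, both act on $V^{p,q}$ by $z \mapsto z^{-p}$. Hence, under \eqref{comparison VC and VDRC}, $\mu_{_{\rm{DR}}, V}(\C^{\times}) = \mu_{\infty, V}(\C^{\times}) \subseteq \MT(V, \psi)(\C)$, and by \eqref{comparison psi for VC and VDRC} the same isomorphism identifies $\Iso_{(V_{_{\rm{DR}}}, \psi_{_{\rm{DR}}})}$ with $\Iso_{(V, \psi)}$. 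Since $\MT(V, \psi)$ is a subgroup of $\GL_{V}$ over $\Q$, the base change $\MT(V, \psi)_{K}$ is a subgroup of $\GL_{V_{_{\rm{DR}}}}$ over $K$ whose $\C$-points (via $\sigma$ and \eqref{comparison VC and VDRC}) contain $\mu_{_{\rm{DR}}, V}(\C^{\times})$; as $\MT(V_{_{\rm{DR}}}, \psi_{_{\rm{DR}}})$ is by definition the \emph{smallest} such subgroup over $K$, this gives \eqref{comparison of MT for DR and Betti}.

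Granting \eqref{comparison of MT for DR and Betti}, the remaining inclusions are formal. Using the identification of $\Iso_{(V_{_{\rm{DR}}}, \psi_{_{\rm{DR}}})}$ with $\Iso_{(V, \psi)_{K}}$ and the compatibility of scheme-theoretic intersection with base change,
\[
\gpDH(V_{_{\rm{DR}}}, \psi_{_{\rm{DR}}}) = \MT(V_{_{\rm{DR}}}, \psi_{_{\rm{DR}}}) \cap \Iso_{(V_{_{\rm{DR}}}, \psi_{_{\rm{DR}}})} \subseteq \MT(V, \psi)_{K} \cap \Iso_{(V, \psi)_{K}} = \bigl(\MT(V, \psi) \cap \Iso_{(V, \psi)}\bigr)_{K} = \gpDH(V, \psi)_{K},
\]
which is \eqref{comparison of DH for DR and Betti}. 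Passing to identity components — which commutes with base change along the extension $\Q \subseteq K$, since the identity component of a group scheme of finite type over a field has a rational point and hence is geometrically connected — yields $\gpH(V_{_{\rm{DR}}}, \psi_{_{\rm{DR}}}) = \gpDH(V_{_{\rm{DR}}}, \psi_{_{\rm{DR}}})^{\circ} \subseteq \bigl(\gpDH(V, \psi)_{K}\bigr)^{\circ} = \bigl(\gpDH(V, \psi)^{\circ}\bigr)_{K} = \gpH(V, \psi)_{K}$, i.e., \eqref{comparison of H for DR and Betti}.

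The step I expect to be the main obstacle is the claim in the first paragraph that $\MT(V, \psi)_{K}$, which a priori is a subgroup of $\GL_{V_{K}}$ and not of $\GL_{V_{_{\rm{DR}}}}$, may be viewed as a $K$-subgroup of $\GL_{V_{_{\rm{DR}}}}$: the comparison isomorphism \eqref{comparison VC and VDRC} is only an isomorphism after $\otimes_{\Q}\C$, so this requires knowing that $\MT(V, \psi)$, cut out inside $\GL_{V}$ by the Hodge tensors of $(V, \psi)$, corresponds under \eqref{comparison VC and VDRC} to a subgroup of $\GL_{V_{_{\rm{DR}}}}$ that is again defined over $K$ — equivalently, that those tensors have $K$-rational de Rham incarnations. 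This is exactly the kind of input supplied by the compatibility hypotheses imposed on $V_{_{\rm{DR}}}$ (and, in the motivic settings of later sections, by absolute Hodge / motivated cycles). If one prefers to avoid the identification, one may instead prove all three inclusions after base change to $\C$ inside $\GL(V \otimes_{\Q} \C) = \GL(V_{_{\rm{DR}}} \otimes_{K, \sigma} \C)$, which is all that is used subsequently, and then the argument above goes through with $K$ replaced by $\C$.
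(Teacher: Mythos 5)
Your proof is correct and arrives at the same three inclusions, but the order of deduction differs from the paper's. You prove \eqref{comparison of MT for DR and Betti} by the minimality argument (same as the paper), then get \eqref{comparison of DH for DR and Betti} immediately by intersecting with $\Iso$ (using that intersection commutes with base change), and finally pass to identity components — which, as you correctly note, commutes with base change since $\gpDH(V,\psi)^{\circ}$ is geometrically connected — to obtain \eqref{comparison of H for DR and Betti}. The paper instead proves $\MT$ first, derives the $\gpH$ inclusion from it, and then obtains the $\gpDH$ inclusion by invoking the explicit two-coset decompositions $\gpDH = \gpH \cup -\mathrm{Id}\cdot\gpH$ on both the Betti and de~Rham sides (Propositions~\ref{Hodge realizing conn comp: even case}(b) and \ref{De Rham realizing conn comp: even case}(b)). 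Your route is arguably cleaner because it uses only the formal definitions ($\gpDH = \MT\cap\Iso$, $\gpH=\gpDH^{\circ}$) and avoids relying on the $\pm\mathrm{Id}$ structure theorem, whereas the paper's route leverages a result it has already established and needs anyway. Both are sound.

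Your final paragraph flagging the subtlety that $\MT(V,\psi)_K$ is a priori a subgroup of $\GL_{V_K}$, not of $\GL_{V_{_{\rm{DR}}}}$, and that the identification requires the comparison isomorphism which only exists over $\C$, is a genuine point the paper glosses over. Your suggested fallback — reading all three inclusions after base change to $\C$ inside $\GL(V\otimes_\Q\C)=\GL(V_{_{\rm{DR}}}\otimes_{K,\sigma}\C)$ — is exactly what is needed for the subsequent application (Corollary~\ref{Corollary-epimorphism between DR parity and Betti parity groups}, which only compares finite component groups), so this is a reasonable resolution.
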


\begin{proof}
By the isomorphisms \eqref{comparison VC and VDRC}, \eqref{comparison psi for VC and VDRC}
and the definition of $\MT(V_{_{\rm{DR}}}, \psi_{_{\rm{DR}}})$, we obtain
\eqref{comparison of MT for DR and Betti}. Consequently we also get 
\eqref{comparison of H for DR and Betti}. Now comparing  Proposition \ref{Hodge realizing conn comp: even case}(b) 
with Proposition \ref{De Rham realizing conn comp: even case}(b) 
we obtain \eqref{comparison of DH for DR and Betti}.
\end{proof}

We assume from now through \S \ref{AST and Tate for families} that $(V_{_{\rm{DR}}}, \psi_{_{\rm{DR}}})$ satisfies the following additional conditions:
\medskip

\begin{itemize}
\item[{\bf{(DR1)}}] 
There is a canonical embedding $D \subseteq {{\rm End}}_{\overline{K}} (\overline{V}_{_{\rm{DR}}})$  
compatible with isomorphisms \eqref{comparison VC and VDRC} and 
\eqref{comparison psi for VC and VDRC}, where 
$\overline{V}_{_{\rm{DR}}} := V_{_{\rm{DR}}} \otimes_{K} \overline{K}$
with $\overline{K}$ being the integral closure of $K$ via the chosen embedding in $\C$.
\item[{\bf{(DR2)}}] The action of $D$ preserves the Hodge decomposition 
\eqref{Hodge-De Rham decomposition}. 
\end{itemize}
\medskip

By \textbf{(DR1)} and \textbf{(DR2)},
$D$ commutes  with $\mu_{\infty, V}(\C^{\times})$ on $V_{_{\rm{DR}}} \otimes_{K, \sigma} \C$ 
elementwise. By \eqref{comparison VC and VDRC}--\eqref{Hodge-De Rham decomposition} and 
\cite[(2.9)]{BK2} we obtain
\begin{equation}
\mu_{_{\rm{DR}}, V}(\C^{\times}) \, \subseteq C_{D} \GIso_{(V_{_{\rm{DR}}}, \psi_{_{\rm{DR}}})} (\C).
\label{muInfty subset CDh-DR GIso-DR}
\end{equation}

In the De Rham structures we also consider the following subring $D_e \subset D$:
\begin{equation}
D_e := \{d \in D\colon d \in \End_{K} (V_{_{\rm{DR}}})\}.
\label{Definition of Dr}
\end{equation}
\begin{remark} Observe that $D_{e} = D^{G(K_e/K)}$. Hence if $K = K_{e}$ then $D_{e} = D$.  
\label{condition for De = D}
\end{remark}
Obviously $D_e$ is compatible with the isomorphisms \eqref{comparison VC and VDRC} and 
\eqref{comparison psi for VC and VDRC}, and from \eqref{muInfty subset CDh-DR GIso-DR} we obtain:
\begin{equation}
\mu_{_{\rm{DR}}, V}(\C^{\times}) \, \subseteq C_{D_e} \GIso_{(V_{_{\rm{DR}}}, \psi_{_{\rm{DR}}})} (\C).
\label{muInfty subset CDeh-DR GIso-DR}
\end{equation}

\noindent
Hence
\begin{equation}
\MT(V_{_{\rm{DR}}}, \psi_{_{\rm{DR}}}) \subseteq C_{D_e}(\GIso_{(V_{_{\rm{DR}}},\, \psi_{_{\rm{DR}}})}) 
\label{MT-DR subset CD GIso-DR}
\end{equation}
\begin{equation}
\gpDH(V_{_{\rm{DR}}}, \psi_{_{\rm{DR}}}) \subseteq C_{D_e}(\Iso_{(V_{_{\rm{DR}}},\, \psi_{_{\rm{DR}}})}).
\label{DH-DR and CD-DR}\end{equation}

\begin{corollary}\label{DH-DR = H-DR if H-DR = CDIso-DR}
Assume that $\gpH(V_{_{\rm{DR}}}, \psi_{_{\rm{DR}}}) = C_{D_e} (\Iso_{(V_{_{\rm{DR}}}, \psi)})$. Then
\begin{equation}
\gpDH(V_{_{\rm{DR}}}, \psi_{_{\rm{DR}}}) = \gpH(V_{_{\rm{DR}}}, \psi_{_{\rm{DR}}}),
\label{DH-DR = H-DR}
\end{equation}
\end{corollary}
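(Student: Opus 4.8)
The plan is to mirror exactly the argument used for the Betti-side Corollary \ref{DH = H if H = CDIso}, transporting it through the De Rham analogues of the relevant inclusions. The essential input has already been set up: equation \eqref{DH-DR and CD-DR} gives the containment $\gpDH(V_{_{\rm{DR}}}, \psi_{_{\rm{DR}}}) \subseteq C_{D_e}(\Iso_{(V_{_{\rm{DR}}}, \psi_{_{\rm{DR}}})})$, which itself was deduced from \eqref{muInfty subset CDeh-DR GIso-DR} together with the defining minimality property of $\MT(V_{_{\rm{DR}}}, \psi_{_{\rm{DR}}})$ and the definition of $\gpDH(V_{_{\rm{DR}}}, \psi_{_{\rm{DR}}})$ as $\MT(V_{_{\rm{DR}}}, \psi_{_{\rm{DR}}}) \cap \Iso_{(V_{_{\rm{DR}}}, \psi_{_{\rm{DR}}})}$.

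First I would invoke the standing hypothesis $\gpH(V_{_{\rm{DR}}}, \psi_{_{\rm{DR}}}) = C_{D_e}(\Iso_{(V_{_{\rm{DR}}}, \psi_{_{\rm{DR}}})})$ and combine it with \eqref{DH-DR and CD-DR} to obtain
$$
\gpDH(V_{_{\rm{DR}}}, \psi_{_{\rm{DR}}}) \subseteq C_{D_e}(\Iso_{(V_{_{\rm{DR}}}, \psi_{_{\rm{DR}}})}) = \gpH(V_{_{\rm{DR}}}, \psi_{_{\rm{DR}}}).
$$
Second, I would recall the reverse inclusion, which holds unconditionally by the very definition $\gpH(V_{_{\rm{DR}}}, \psi_{_{\rm{DR}}}) = \gpDH(V_{_{\rm{DR}}}, \psi_{_{\rm{DR}}})^{\circ}$, so $\gpH(V_{_{\rm{DR}}}, \psi_{_{\rm{DR}}}) \subseteq \gpDH(V_{_{\rm{DR}}}, \psi_{_{\rm{DR}}})$. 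Putting the two containments together yields the claimed equality \eqref{DH-DR = H-DR}.

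There is essentially no obstacle here: the content is entirely contained in the already-established inclusion \eqref{DH-DR and CD-DR}, and the only thing to be mildly careful about is the slight notational mismatch in the statement (the subscript writes $C_{D_e}(\Iso_{(V_{_{\rm{DR}}}, \psi)})$ rather than $C_{D_e}(\Iso_{(V_{_{\rm{DR}}}, \psi_{_{\rm{DR}}})})$, but these denote the same group). Accordingly, I expect the final write-up to be a single short sentence, exactly parallel to the proof of Corollary \ref{DH = H if H = CDIso}: the result follows from \eqref{DH-DR and CD-DR} and the definition of $\gpH(V_{_{\rm{DR}}}, \psi_{_{\rm{DR}}})$.
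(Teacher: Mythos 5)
Your proof is correct and follows exactly the same route as the paper: combine the standing inclusion \eqref{DH-DR and CD-DR} with the hypothesis to get $\gpDH(V_{_{\rm{DR}}}, \psi_{_{\rm{DR}}}) \subseteq \gpH(V_{_{\rm{DR}}}, \psi_{_{\rm{DR}}})$, with the reverse inclusion immediate from $\gpH(V_{_{\rm{DR}}}, \psi_{_{\rm{DR}}}) = \gpDH(V_{_{\rm{DR}}}, \psi_{_{\rm{DR}}})^{\circ}$. The paper's own proof is the one-line version of precisely this argument, mirroring the Betti-side Corollary \ref{DH = H if H = CDIso}.
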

\begin{proof} This follows from \eqref{DH-DR and CD-DR} and the definition of 
$\gpH(V_{_{\rm{DR}}}, \psi_{_{\rm{DR}}})$.
\end{proof}

\begin{definition} The {\em{De Rham parity group}}:
\begin{equation}
\gpP(V_{_{\rm{DR}}}, \psi_{_{\rm{DR}}}) := \gpDH(V_{_{\rm{DR}}}, \psi_{_{\rm{DR}}}) / 
\gpH(V_{_{\rm{DR}}}, \psi_{_{\rm{DR}}}),
\end{equation}
is the component group of $\gpDH(V_{_{\rm{DR}}}, \psi_{_{\rm{DR}}})$.
\label{De Rham parity group}
\end{definition}

\begin{proposition}
If $n$ is odd or $\gpH(V_{_{\rm{DR}}}, \psi_{_{\rm{DR}}}) = 
C_{D_e} (\Iso_{(V_{_{\rm{DR}}}, \psi_{_{\rm{DR}}})})$, then
$\gpP(V_{_{\rm{DR}}}, \psi_{_{\rm{DR}}})$ is trivial. 
\label{P(V-DR, psi) trivial for n odd or H-DR = CD}
\end{proposition}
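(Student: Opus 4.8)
The plan is to reduce the statement to the two results immediately preceding it, exactly as in the Betti case (Proposition~\ref{P(V, psi) trivial for n odd or H = CD}). By Definition~\ref{De Rham parity group}, the group $\gpP(V_{_{\rm{DR}}}, \psi_{_{\rm{DR}}})$ is the quotient $\gpDH(V_{_{\rm{DR}}}, \psi_{_{\rm{DR}}}) / \gpH(V_{_{\rm{DR}}}, \psi_{_{\rm{DR}}})$, and since $\gpH(V_{_{\rm{DR}}}, \psi_{_{\rm{DR}}}) = \gpDH(V_{_{\rm{DR}}}, \psi_{_{\rm{DR}}})^{\circ}$ by construction, this quotient is trivial precisely when $\gpDH(V_{_{\rm{DR}}}, \psi_{_{\rm{DR}}}) = \gpH(V_{_{\rm{DR}}}, \psi_{_{\rm{DR}}})$. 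So it suffices to verify this equality in each of the two hypothesized cases.

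First I would treat the case $n$ odd. Here the desired equality $\gpDH(V_{_{\rm{DR}}}, \psi_{_{\rm{DR}}}) = \gpH(V_{_{\rm{DR}}}, \psi_{_{\rm{DR}}})$ is exactly the content of Lemma~\ref{for n odd DH-DR = H-DR}, so nothing further is needed. Next I would treat the case $\gpH(V_{_{\rm{DR}}}, \psi_{_{\rm{DR}}}) = C_{D_e}(\Iso_{(V_{_{\rm{DR}}}, \psi_{_{\rm{DR}}})})$. Here the equality is exactly the conclusion of Corollary~\ref{DH-DR = H-DR if H-DR = CDIso-DR}, which rests on the inclusion \eqref{DH-DR and CD-DR} (itself a consequence of \textbf{(DR1)}, \textbf{(DR2)}, and \eqref{muInfty subset CDeh-DR GIso-DR}) together with the definition of $\gpH(V_{_{\rm{DR}}}, \psi_{_{\rm{DR}}})$ as the identity component. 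In either case we conclude $\gpP(V_{_{\rm{DR}}}, \psi_{_{\rm{DR}}}) = 1$.

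There is essentially no obstacle here: the proposition is a formal corollary of Lemma~\ref{for n odd DH-DR = H-DR} and Corollary~\ref{DH-DR = H-DR if H-DR = CDIso-DR}, and the only thing to check is the translation between triviality of the component group and the equality of $\gpDH(V_{_{\rm{DR}}}, \psi_{_{\rm{DR}}})$ with its identity component, which is immediate from Definition~\ref{De Rham parity group}. If I wanted to be careful, the one point worth flagging is that Lemma~\ref{for n odd DH-DR = H-DR} and Corollary~\ref{DH-DR = H-DR if H-DR = CDIso-DR} are themselves the De Rham transcriptions of their Betti counterparts (via Lemma~\ref{comparison MT, DH and H for DR and Betti} and the comparison isomorphisms \eqref{comparison VC and VDRC}--\eqref{Hodge-De Rham decomposition}), so the argument is genuinely self-contained within this section.
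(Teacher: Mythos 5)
Your proof is correct and matches the paper's own proof exactly: both deduce the result from Lemma~\ref{for n odd DH-DR = H-DR} and Corollary~\ref{DH-DR = H-DR if H-DR = CDIso-DR}, using Definition~\ref{De Rham parity group} to translate triviality of the component group into the equality $\gpDH(V_{_{\rm{DR}}}, \psi_{_{\rm{DR}}}) = \gpH(V_{_{\rm{DR}}}, \psi_{_{\rm{DR}}})$.
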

\begin{proof}
This follows from Lemma \ref{for n odd DH-DR = H-DR}
 and Corollary \ref{DH-DR = H-DR if H-DR = CDIso-DR}.
\end{proof}

\begin{corollary}
\label{Corollary-epimorphism between DR parity and Betti parity groups}
There is a natural epimorphism:
\begin{equation}
\gpP(V_{_{\rm{DR}}}, \psi_{_{\rm{DR}}}) \rightarrow \gpP(V, \psi).
\label{epimorphism between DR parity and Betti parity groups}
\end{equation}
\end{corollary}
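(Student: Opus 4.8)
The plan is to produce the map directly from the inclusions recorded in Lemma~\ref{comparison MT, DH and H for DR and Betti} and then to obtain surjectivity from the explicit two-component descriptions in Propositions~\ref{Hodge realizing conn comp: even case}(b) and~\ref{De Rham realizing conn comp: even case}(b).

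\emph{Construction of the map.} By \eqref{comparison of DH for DR and Betti} the closed immersion $\gpDH(V_{_{\rm{DR}}}, \psi_{_{\rm{DR}}}) \hookrightarrow \gpDH(V, \psi)_K$ of Lemma~\ref{comparison MT, DH and H for DR and Betti} carries $\gpH(V_{_{\rm{DR}}}, \psi_{_{\rm{DR}}}) = \gpDH(V_{_{\rm{DR}}}, \psi_{_{\rm{DR}}})^{\circ}$ into $\gpH(V, \psi)_K = (\gpDH(V, \psi)_K)^{\circ}$, by \eqref{comparison of H for DR and Betti}. Composing with the projection $\gpDH(V, \psi)_K \twoheadrightarrow \pi_0(\gpDH(V, \psi)_K)$ therefore kills $\gpH(V_{_{\rm{DR}}}, \psi_{_{\rm{DR}}})$ and factors through a homomorphism $\gpP(V_{_{\rm{DR}}}, \psi_{_{\rm{DR}}}) \to \pi_0(\gpDH(V, \psi)_K)$. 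Since $\gpP(V, \psi) = \pi_0(\gpDH(V, \psi))$ has order at most $2$ by Proposition~\ref{Hodge realizing conn comp: even case}(b), it is a constant finite group scheme over $\Q$, so $\pi_0(\gpDH(V, \psi)_K) = \gpP(V, \psi)_K = \gpP(V, \psi)$, and we obtain the desired homomorphism \eqref{epimorphism between DR parity and Betti parity groups}. Its naturality is inherited from that of the inclusions in Lemma~\ref{comparison MT, DH and H for DR and Betti} together with the comparison isomorphisms \eqref{comparison VC and VDRC}--\eqref{comparison psi for VC and VDRC}.

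\emph{Surjectivity.} By Proposition~\ref{De Rham realizing conn comp: even case}(b), $\gpDH(V_{_{\rm{DR}}}, \psi_{_{\rm{DR}}}) = \gpH(V_{_{\rm{DR}}}, \psi_{_{\rm{DR}}}) \cup -{\rm{Id}}_{V_{_{\rm{DR}}}}\cdot \gpH(V_{_{\rm{DR}}}, \psi_{_{\rm{DR}}})$; in particular $-{\rm{Id}}_{V_{_{\rm{DR}}}}$ lies in $\gpDH(V_{_{\rm{DR}}}, \psi_{_{\rm{DR}}})$ and its class generates $\gpP(V_{_{\rm{DR}}}, \psi_{_{\rm{DR}}})$. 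The comparison isomorphism \eqref{comparison VC and VDRC} is $\C$-linear, hence identifies $-{\rm{Id}}_{V_{_{\rm{DR}}}}$ with $-{\rm{Id}}_V$; so the map constructed above sends the class of $-{\rm{Id}}_{V_{_{\rm{DR}}}}$ to the class of $-{\rm{Id}}_V$ in $\gpP(V, \psi)$, and by Proposition~\ref{Hodge realizing conn comp: even case}(b) the latter generates $\gpP(V, \psi)$. Therefore the image is all of $\gpP(V, \psi)$, the statement being vacuous when $\gpP(V, \psi)$ is trivial.

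\emph{Where care is needed.} There is no serious obstacle; the one point to verify carefully is that the induced homomorphism on component groups really sends $[-{\rm{Id}}_{V_{_{\rm{DR}}}}]$ to $[-{\rm{Id}}_V]$ and not to the identity. This follows once one notes that the inclusion of Lemma~\ref{comparison MT, DH and H for DR and Betti} is, on $\C$-points, the one induced by \eqref{comparison VC and VDRC}, under which scalar matrices correspond; equivalently, if $-{\rm{Id}}_{V_{_{\rm{DR}}}}$ lay in $\gpH(V_{_{\rm{DR}}}, \psi_{_{\rm{DR}}})$ then \eqref{comparison of H for DR and Betti} would force $-{\rm{Id}}_V \in \gpH(V, \psi)(\C)$, hence $-{\rm{Id}}_V \in \gpH(V, \psi)$, so that $[-{\rm{Id}}_V]$ would already be trivial and nothing would be left to prove. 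The other bookkeeping point, the identification $\pi_0(\gpDH(V, \psi)_K) = \gpP(V, \psi)$, is the harmless base change of a finite constant group scheme noted above; alternatively one may simply treat both parity groups as abstract groups of order at most $2$.
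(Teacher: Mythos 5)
Your proof is correct and is essentially the same argument as the paper, which simply cites Lemma~\ref{comparison MT, DH and H for DR and Betti} as the source of the map; you spell out the elaboration — that the inclusions in that lemma induce a map on component groups and that surjectivity follows because the class of $-\mathrm{Id}$ generates both sides and is respected by the comparison.
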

\begin{proof}
It follows from Lemma \ref{comparison MT, DH and H for DR and Betti}. 
\end{proof}

\begin{corollary} If $n$ is even and ${\rm dim}_{\Q}\, V$ is odd then
$\gpP(V_{_{\rm{DR}}}, \psi_{_{\rm{DR}}})$ is nontrivial. 
\label{gpP(VDR, psiDR) nontrivial for n even and dim V odd}
\end{corollary}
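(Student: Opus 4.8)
The plan is to deduce this immediately from the surjection constructed in Corollary~\ref{Corollary-epimorphism between DR parity and Betti parity groups}. That corollary provides a natural epimorphism $\gpP(V_{_{\rm{DR}}}, \psi_{_{\rm{DR}}}) \rightarrow \gpP(V, \psi)$; on the other hand, Proposition~\ref{P(V, psi) nontrivial for n even and dim V odd} shows that under the hypotheses ``$n$ even'' and ``$\dim_{\Q} V$ odd'' the target $\gpP(V, \psi)$ is nontrivial. Since an epimorphism of groups onto a nontrivial group cannot have trivial source, the group $\gpP(V_{_{\rm{DR}}}, \psi_{_{\rm{DR}}})$ must be nontrivial as well. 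There is essentially nothing further to check: the only input beyond the two cited results is that both are applicable, and the hypotheses of Proposition~\ref{P(V, psi) nontrivial for n even and dim V odd} are literally the hypotheses of the present statement, so this route has no real obstacle.

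Alternatively, one can argue directly, mimicking the proof of Proposition~\ref{P(V, psi) nontrivial for n even and dim V odd}. First observe that the comparison isomorphism \eqref{comparison VC and VDRC} forces $\dim_{K} V_{_{\rm{DR}}} = \dim_{\Q} V$, which is odd. Since $n$ is even, $\psi_{_{\rm{DR}}}$ is symmetric and $\Iso_{(V_{_{\rm{DR}}}, \psi_{_{\rm{DR}}})} = \gpO_{(V_{_{\rm{DR}}}, \psi_{_{\rm{DR}}})}$, so $\gpH(V_{_{\rm{DR}}}, \psi_{_{\rm{DR}}})$, being connected and contained in $\Iso_{(V_{_{\rm{DR}}}, \psi_{_{\rm{DR}}})}$, lies in $\SO_{(V_{_{\rm{DR}}}, \psi_{_{\rm{DR}}})}$. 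But $\det(-{\rm{Id}}_{V_{_{\rm{DR}}}}) = (-1)^{\dim_{K} V_{_{\rm{DR}}}} = -1$, so $-{\rm{Id}}_{V_{_{\rm{DR}}}} \notin \gpH(V_{_{\rm{DR}}}, \psi_{_{\rm{DR}}})$; by Proposition~\ref{De Rham realizing conn comp: even case}(c) (equivalently (b)) this means $\gpH(V_{_{\rm{DR}}}, \psi_{_{\rm{DR}}}) \neq \gpDH(V_{_{\rm{DR}}}, \psi_{_{\rm{DR}}})$, i.e.\ $\gpP(V_{_{\rm{DR}}}, \psi_{_{\rm{DR}}})$ is nontrivial. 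The only mild subtlety here is recording the dimension identity $\dim_{K} V_{_{\rm{DR}}} = \dim_{\Q} V$, after which the determinant computation is identical to the Betti case.
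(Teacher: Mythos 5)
Your first argument is exactly the paper's proof: the paper deduces the claim from Proposition~\ref{P(V, psi) nontrivial for n even and dim V odd} together with the epimorphism of Corollary~\ref{Corollary-epimorphism between DR parity and Betti parity groups}. The alternative direct argument you sketch is also sound, but the primary route coincides with the paper's, so there is nothing to add.
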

\begin{proof} 
It follows from Proposition \ref{P(V, psi) nontrivial for n even and dim V odd} and 
Corollary \ref{Corollary-epimorphism between DR parity and Betti parity groups}.
\end{proof}

\begin{example} 
\label{An example of De Rham nontrivial  parity group}
Let $A$ be an abelian threefold over $K$. Put $V_{_{\rm{DR}}} :=  H^{2}_{_{\rm{DR}}} (A)$.
Then by Example \ref{An example of nontrivial Betti parity group} and Corollary
\ref{gpP(VDR, psiDR) nontrivial for n even and dim V odd}, the group
$\gpP(V_{_{\rm{DR}}}, \psi_{_{\rm{DR}}})$ is nontrivial in this case. 
\end{example}

\begin{example} Let $A$ be an elliptic curve over $K$. Put 
$V_{_{\rm{DR}}} := \Sym^{2} \, H^{1}_{_{\rm{DR}}} (A)$. 
Then by Example \ref{Another example of nontrivial Betti parity group} and Corollary
\ref{gpP(VDR, psiDR) nontrivial for n even and dim V odd}, the group
$\gpP(V_{_{\rm{DR}}}, \psi_{_{\rm{DR}}})$ is again nontrivial.
\label{Another example of nontrivial De Rham parity group}
\end{example}

\section{Families of \texorpdfstring{$l$}{l}-adic representations associated with Hodge structures}
\label{families of l-adic representations associated with Hodge structures}

\medskip 

For $(V, \, \psi)$ as in \S \ref{Mumford--Tate groups of polarized Hodge structures} and $l$ a prime number, put $V_l := V \otimes_{\mathbb{Q}} {\mathbb Q}_l$, \, $\psi_{l} := \psi \otimes_{\Q} \Q_l$, and 
$(V_l, \psi_{l}) := (V \otimes_{\Q} \Q_l, \psi \otimes_{\Q} \Q_l)$. Assume that 
the bilinear form $\psi_l\colon V_l \times V_l \rightarrow \Q_l(-n)$  is $G_K$-equivariant. Let 
\begin{equation}
\rho_l\colon G_K \rightarrow \GIso (V_l, \psi_{l})
\label{the family of l-adic representations}
\end{equation}
be the corresponding $l$-adic representation. Observe that:

\begin{align*}
\GIso_{(V_l, \psi_{l})} \,\, &= \,\, 
\left\{
\begin{array}{lll}
\GO_{(V_l, \psi_{l})}&\rm{if}&n \,\,\, \rm{even,}\\
\GSp_{(V_l, \psi_{l})}&\rm{if}&n \,\,\, \rm{odd;}\\
\end{array}
\right.
\\
\Iso_{(V, \psi)} \,\, &= \,\,
\left\{
\begin{array}{lll}
\gpO_{(V_l, \psi_{l})}&\rm{if}&n \,\,\, \rm{even,}\\
\Sp_{(V_l, \psi_{l})}&\rm{if}&n \,\,\, \rm{odd.}\\
\end{array}
\right.
\end{align*}

Let $\chi_{c}\colon G_K \rightarrow \Z_{l}^{\times}$ be the cyclotomic character. Then by the $G_K$-equivariance of 
$\psi_l$, we obtain: 
\begin{equation}
\chi \circ \rho_{l}  \, = \, \chi_{c}^{-n}.
\label{compatibility of chi with chi-cycl.}
\end{equation}

We assume from now through \S \ref{AST and Tate for families} that the family of $l$-adic representations \eqref{the family of l-adic representations} satisfy the following additional, natural conditions:

\begin{itemize}
\item[{\bf{(R1)}}] The family $(\rho_l)_{l}$ is strictly compatible in the sense of Serre \cite[Chapter I, \S 2.3]{Se1}, and $\rho_l$ is of Hodge--Tate type 
for every prime of $\mathcal{O}_K$ over $l$.

\item[{\bf{(R2)}}] 
For each $\rho_l$, for each prime $v$ of $\mathcal{O}_K$ outside of a finite set of primes 
containing all primes over $l$, the complex absolute values of the eigenvalues of a Frobenius at $v$ are 
$q_{v}^{- \frac{n}{2}}$.

\item[{\bf{(R3)}}] Let $\lambda$ be any prime in $\mathcal{O}_K$ over $l$ and put $\C_l := {\widehat{\overline{K}}}_{\lambda}$.
For the $K$-vector space $V_{_{\rm{DR}}}$ with corresponding structure
and the gradation $G^{i}V_{_{\rm{DR}}}$ as in \S \ref{Mumford--Tate groups of polarized Hodge structures}, there is a corresponding Hodge--Tate decomposition:
\begin{equation}
V_{l} \otimes_{{\Q}_l} \C_{l} = \bigoplus_{i= 0}^{n} \, G^{i}V_{_{\rm{DR}}}  \otimes_{K} \C_{l} (i-n).
\label{Hodge--Tate decomposition}
\end{equation}

\item[{\bf{(R4)}}] The induced action of $D$ on $V_l$ is $G_K$-equivariant, i.e., $\forall \beta \in D$, $\forall v_l \in V_l$, and $\forall \sigma \in G_K$,
\begin{equation}
\rho_{l} (\sigma) (\beta \, v_l) = \sigma(\beta) \, \rho_{l} (\sigma) (v_l).
\label{GK equivariant action of D on Vl}\end{equation}
Moreover, the induced action of $D$ on $V_l \otimes_{{\Q}_l} \C_{l}$ is compatible with  \eqref{Hodge--Tate decomposition}. 
\end{itemize}

\begin{remark}\label{Hodge--Tate representations in etale cohomology} 
Strictly compatible families of $l$-adic representations of Hodge--Tate type arise naturally from {\' e}tale cohomology.
Indeed, if $X/K$ is a proper scheme and ${\overline X} := X \otimes_{K} {\overline F}$,
then $V_{l, \et}^{n} := H^{n}_{\et} ({\overline X}, \, \Q_l)$ is potentially semistable as a
$G_{K_{\lambda}}$-representation for every $\lambda | l$ (see \cite[Cor.\ 2.2.3]{Ts1}, \cite{Ts2}). 
Hence the representation 
\begin{equation}
\rho_{l, \et}^{n}\colon G_K \rightarrow \GL (V_{l, \et}^{n})
\label{etale cohom. representation}
\end{equation}
is of Hodge--Tate type (cf.\ \cite[p.\ 603]{Su}). So condition {\bf{(R1)}} holds for the family
$(\rho_{l, \et}^{n})$.
\end{remark}

\begin{remark} In \cite[Page 18]{BK2}, when assuming that the complex absolute values of the eigenvalues of a Frobenius element at $v$ are $q_{v}^{\frac{n}{2}}$, we meant the geometric Frobenius. In {\bf{(R2)}} 
above we mean the arithmetic Frobenius. 
\label{Frobenius choice}
\end{remark}

\begin{remark}\label{Hodge and Hodge--Tate representations in etale cohomology} 
The assumption {\bf{(R3)}} holds for families $(\rho_{l, \et}^{n})$ from Remark
\ref{Hodge--Tate representations in etale cohomology} by the following considerations. Let $X/K$ be a smooth projective  variety. For a 
field extension $L/K$, let $X_L := X \otimes_{K} L$.  The $\Q$-vector space $V := 
H^{n} (X(\mathbb{C}), \, \Q)$ admits a pure, polarized Hodge structure of weight $n$: 
$$
V \otimes_{\Q} \, \C = \bigoplus_{i+j = n} \, V^{i,j}
$$ 
where  $V^{i, j} := H^{i}_{{\rm{an}}} (X({\mathbb{C}}), \Omega_{X ({\C})}^j) \simeq H^{i}_{{\rm{zar}}} 
(X_{\mathbb{C}}, \Omega_{X_{\C}}^j)$. There is the following spectral sequence (cf. \cite[p.17]{D1}):
\begin{equation}
E_{1}^{j,i} = H^{i}_{{\rm{zar}}} (X, \Omega_{X}^j) \Rightarrow H^{n}_{DR} (X).  
\label{De Rham cohomology spectral sequence}
\end{equation}

By \cite[Theorem 5.5]{D2} and \cite[III, Prop. 9.3]{Ha} we have the following natural isomorphisms:
\begin{align} 
H^{i} (X_{\mathbb{C}}, \Omega_{X_{\C}}^j) &\simeq H^{i}_{{\rm{zar}}} (X, \Omega_{X}^j) \otimes_{K} \C, 
\label{base change to C in Hodge-De Rham cohom. } \\
H^{i}_{{\rm{zar}}} (X_{K_{\lambda}}, \Omega_{X_{K_{\lambda}}}^j)  &\simeq H^{i}_{{\rm{zar}}} (X, \Omega_{X}^j) \otimes_{K} K_{\lambda}.
\label{base change to Kv in Hodge-De Rham cohom. }
\end{align}

By \cite{Fa} we have the following decomposition:
$$
H^{n}_{et} (\overline{{X_{K_{\lambda}}}}, \,\, \Q_l) \otimes_{\Q_l} \mathbb{C}_l \simeq \bigoplus_{i+j = n} \,
H^{i}_{{\rm{zar}}} (X_{K_{\lambda}}, \Omega_{X_{K_{\lambda}}}^j) \otimes_{K_{\lambda}} \mathbb{C}_l (- j).
$$
Observe that $H^{n}_{et} (\overline{{X_{K}}}, \,\, \Q_l) \simeq H^{n}_{et} (\overline{{X_{K_{\lambda}}}}, \,\, \Q_l)$ (\cite[Corollary 2.6, Chap. VI]{Mi1}) and
\begin{equation} 
h^{i,j} :=  \dim_{\C} \, V^{i,j} = \dim_{K} \, H^{i}_{{\rm{zar}}} (X, \Omega_{X}^j) = 
\dim_{K_{\lambda}} \, H^{i}_{{\rm{zar}}} (X_{K_{\lambda}}, \Omega_{X_{K_{\lambda}}}^j).
\label{hpq numbers}\end{equation}
\end{remark}

\begin{remark}\label{Action od D is GK equivariant} 
{\bf{(R4)}} yields a natural, $G_K$-equivariant embedding 
$D \otimes_{\Q} \, \Q_l \,\, \subseteq \End_{\mathbb{Q}_l} (V_l)$. 
\end{remark}

Recall the following notation from \cite[pp. 18--19]{BK2}.

\begin{definition} Let $L/K$ be a finite extension. Let
\begin{equation}
G_{l, L}^{\alg} := G_{l, L}^{\alg}(V, \psi) \subseteq  \GIso_{(V_{l}, \psi_{l})}
\label{Def of GlLalg}
\end{equation} 
be the Zariski closure of $\rho_{l} (G_L)$ in $\GIso_{(V_{l}, \psi_{l})}$. Put:
\begin{align}
\rho_{l} (G_L)_{1} &:= \rho_{l} (G_L) \cap \Iso_{(V_{l}, \psi_{l})},
\label{Def of rholGL1} \\
G_{l, L, 1}^{\alg} &:= G_{l, L, 1}^{\alg}(V, \psi) :=  G_{l, L}^{\alg} \cap
\Iso_{(V_{l}, \psi_{l})}.
\label{Def of GlL1alg}
\end{align}
\end{definition}

For a representation $\rho_l$ of Hodge--Tate type, the theorem of Bogomolov on homotheties 
(\cite[Th{\' e}or{\` e}me 1]{Bo}; cf.\ \cite[Prop.\ 2.8]{Su}) applies, meaning that $\rho_{l}(G_K) \, \cap\,  \Q_{l}^{\times} \, {\rm{Id}}_{V_l}$ is open in $\Q_{l}^{\times} \, {\rm{Id}}_{V_l}$. Therefore $G_{l, L}^{\alg}$ contains the 
homotheties of $\GL(V_l)$, and there is an exact sequence
\begin{equation}
1 \,\, {\stackrel{}{\longrightarrow}} \,\, G_{l, L, 1}^{\alg} \,\, {\stackrel{}{\longrightarrow}} \,\,  G_{l, L}^{\alg} \,\, {\stackrel{\chi}{\longrightarrow}} \,\, \G_m \,\, {\stackrel{}{\longrightarrow}} \,\, 1.
\label{The exact sequence for GlL1alg and GlLalg}
\end{equation}

\begin{remark}\label{properties of Hodge--Tate representations } 
By the theorem of Bogomolov, $\rho_{l} (G_K)$ is open 
in $G_{l, K}^{\alg}(\Q_l)$.
\end{remark}

\begin{remark} By \eqref{GK equivariant action of D on Vl} $\forall \beta \in D$, $\forall v_l \in V_l$, and $\forall \sigma \in G_K$:
\begin{equation}
\rho_{l} (\sigma) \beta \rho_{l} (\sigma^{-1}) (v_l) = \sigma (\beta) (v_l),
\end{equation}
hence
\begin{equation}
\rho_{l} (G_K)_1 \subseteq \gpDL_K(V, \psi, D) (\Q_l).
\end{equation}
\end{remark}

\begin{definition} 
\label{twisted form of GlK and GlK1}
Put: 
\begin{align*}
(G_{l, K}^{\alg})^{\tau} &:= \{g \in G_{l, K}^{\alg}\colon g \beta g^{-1} = \rho_e(\tau) (\beta) \,\,\,\,
\forall \,\beta \in D\}, \\
(G_{l, K, 1}^{\alg})^{\tau} &:= (G_{l, K}^{\alg})^{\tau} \cap G_{l, K, 1}^{\alg}.
\end{align*}
Observe that by \eqref{decomposable twisted Lefschetz for fixed element}, \eqref{Def of GlLalg},
and \eqref{Def of GlL1alg} we obtain
\begin{equation}
(G_{l, K, 1}^{\alg})^{\tau}  \subseteq \gpDL_{K}^{\tau}(V, \psi, D)_{\Q_l}.
\label{GlK1algtau subset DLKtauVpsiDQl}
\end{equation}
\end{definition}

\begin{lemma} There are natural equalities of group schemes:
\begin{align}
G_{l, K}^{\alg}
 &= \bigsqcup_{\tau \in \Gal(K_{e}/K)} \,\, (G_{l, K}^{\alg})^{\tau},
\label{decomposition of GlKalg into cosets of twisted algebraic closures} \\
G_{l, K, 1}^{\alg}
 &= \bigsqcup_{\tau \in \Gal(K_{e}/K)} \,\, (G_{l, K, 1}^{\alg})^{\tau}.
\label{decomposition of Glk1 into twists}
\end{align} 
\end{lemma}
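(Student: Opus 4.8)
The plan is to realize $\bigsqcup_{\tau \in \Gal(K_e/K)}(G_{l,K}^{\alg})^{\tau}$ as a Zariski closed subscheme of $G_{l,K}^{\alg}$ that contains the Zariski-dense subset $\rho_l(G_K)$, and then to conclude the first equality by density; the second equality will follow formally by intersecting the first with $\Iso_{(V_l,\psi_l)}$.

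First I would check that each $(G_{l,K}^{\alg})^{\tau}$ is Zariski closed in $G_{l,K}^{\alg}$. Choosing a $\Q$-basis $\beta_1,\dots,\beta_d$ of $D$ (which lies in $\End_{\Q}(V)$ by \textbf{(D1)}) and writing $\rho_e(\tau)(\beta_i) = \sum_j c_{ij}\beta_j$ with $c_{ij}\in\Q$, the defining condition $g\beta g^{-1} = \rho_e(\tau)(\beta)$ for all $\beta\in D$ becomes the linear system $g\beta_i = \bigl(\sum_j c_{ij}\beta_j\bigr)g$, $1\le i\le d$, in the matrix entries of $g$, hence cuts out a closed subscheme. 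Since $\Gal(K_e/K)$ is finite, the finite union $\bigsqcup_{\tau}(G_{l,K}^{\alg})^{\tau}$ is again Zariski closed in $G_{l,K}^{\alg}$.

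Next I would show that $\rho_l(G_K)$ lies in this union. Given $\sigma\in G_K$ with image $\overline{\sigma}\in\Gal(K_e/K)$ (recall $K_e = \overline{K}^{\Ker\rho_e}$), condition \textbf{(R4)}, i.e.\ \eqref{GK equivariant action of D on Vl}, gives $\rho_l(\sigma)\,\beta\,\rho_l(\sigma)^{-1} = \sigma(\beta) = \rho_e(\overline{\sigma})(\beta)$ for every $\beta\in D$; together with $\rho_l(\sigma)\in G_{l,K}^{\alg}(\Q_l)$ this means $\rho_l(\sigma)\in(G_{l,K}^{\alg})^{\overline{\sigma}}$. Thus $\rho_l(G_K)\subseteq\bigsqcup_{\tau}(G_{l,K}^{\alg})^{\tau}$, and since $G_{l,K}^{\alg}$ is the Zariski closure of $\rho_l(G_K)$ while the right-hand side is closed, we get $G_{l,K}^{\alg}\subseteq\bigsqcup_{\tau}(G_{l,K}^{\alg})^{\tau}$; the opposite inclusion is immediate from $(G_{l,K}^{\alg})^{\tau}\subseteq G_{l,K}^{\alg}$, yielding \eqref{decomposition of GlKalg into cosets of twisted algebraic closures}. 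I would also note that the union is genuinely disjoint, since $g\in(G_{l,K}^{\alg})^{\tau}\cap(G_{l,K}^{\alg})^{\tau'}$ forces $\rho_e(\tau)=\rho_e(\tau')$ on $D$, hence $\tau=\tau'$ by faithfulness of $\rho_e$ on $\Gal(K_e/K)$. For the second equality I would intersect \eqref{decomposition of GlKalg into cosets of twisted algebraic closures} with $\Iso_{(V_l,\psi_l)}$ and invoke \eqref{Def of GlL1alg} and the definition of $(G_{l,K,1}^{\alg})^{\tau}$ (Definition \ref{twisted form of GlK and GlK1}): $G_{l,K,1}^{\alg} = G_{l,K}^{\alg}\cap\Iso_{(V_l,\psi_l)} = \bigsqcup_{\tau}\bigl((G_{l,K}^{\alg})^{\tau}\cap\Iso_{(V_l,\psi_l)}\bigr) = \bigsqcup_{\tau}(G_{l,K,1}^{\alg})^{\tau}$, which is \eqref{decomposition of Glk1 into twists}.

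The only step requiring any care — and hence the main (though mild) obstacle — is the first one: ensuring that $\bigsqcup_{\tau}(G_{l,K}^{\alg})^{\tau}$ is genuinely Zariski closed rather than merely a union of locally closed pieces, since that is precisely what licenses the passage from the Zariski-dense subset $\rho_l(G_K)$ to all of $G_{l,K}^{\alg}$; this is where finiteness of $\Gal(K_e/K)$ together with the $\Q$-rationality of $D$ from \textbf{(D1)} enter.
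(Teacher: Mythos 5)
Your proof is correct, and it rests on the same two pillars as the paper's: each twisted piece $(G_{l,K}^{\alg})^{\tau}$ is Zariski closed (cut out by $\Q$-linear equations coming from a basis of $D$), and \textbf{(R4)} distributes the dense subset $\rho_l(G_K)$ among these pieces. The organization differs, though. You close the whole image at once inside the closed finite union $\bigsqcup_{\tau}(G_{l,K}^{\alg})^{\tau}$ and conclude by density, handling disjointness via faithfulness of $\rho_e$ on $\Gal(K_e/K)$. The paper instead first splits $\rho_l(G_K)$ into the cosets $\rho_l(\tilde\tau)\,\rho_l(G_{K_e})$, observes that the Zariski closure of each translate is $\rho_l(\tilde\tau)\,G_{l,K_e}^{\alg}$ and that this closure sits inside $(G_{l,K}^{\alg})^{\tau}$, and thereby obtains the finer intermediate decomposition $G_{l,K}^{\alg} = \bigsqcup_{\tau}\rho_l(\tilde\tau)\,G_{l,K_e}^{\alg}$ (equation \eqref{decomposition of GlKalg into cosets of algebraic closures}), from which the stated equalities follow by sandwiching. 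That extra byproduct is not idle: the remark immediately after the lemma upgrades it to $(G_{l,K}^{\alg})^{\tau} = \rho_l(\tilde\tau)\,G_{l,K_e}^{\alg}$ and to the isomorphism \eqref{GlK1alg mod GlK1algid = GLeK}, which your more direct argument does not produce on its own. For the lemma as stated, however, your route is complete and slightly more economical; your explicit verification that $(G_{l,K}^{\alg})^{\tau}$ is closed is a step the paper uses implicitly as well.
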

\begin{proof}
Let $\tilde\tau \in G_K$ be a lift of $\tau \in \Gal(K_{e}/K)$. The coset
$\tilde\tau \, G_{K_{e}}$ does not depend on the lift. The Zariski closure of  
$\rho_{l} (\tilde\tau \, G_{K_{e}}) = \rho_{l} (\tilde\tau) \, \rho_{l} (G_{K_{e}})$ 
in $\GIso_{(V_l, \psi_l)}$ is  $\rho_{l} (\tilde\tau) \, G_{l, K_{e}}^{\alg}$.  
Since $\rho_{l} (\tilde\tau) \, \rho_{l} (G_{K_{e}}) \subseteq (G_{l, K}^{\alg})^{\tau}$ then 
$\rho_{l} (\tilde\tau) \, G_{l, K_{e}}^{\alg} \subseteq (G_{l, K}^{\alg})^{\tau}$. 
Because
\begin{equation}
\label{Im rhol GK decomposed cosets}
\rho_{l} (G_K)
 = \bigsqcup_{\tau \in \Gal(K_{e}/K)} \, \, 
 \rho_{l} (\tilde\tau) \, \rho_{l} (G_{K_{e}}),
\end{equation}
we then have
\begin{equation}
G_{l, K}^{\alg}
 = \bigsqcup_{\tau \in \Gal(K_{e}/K)} \,\, \rho_{l} (\tilde\tau) \, G_{l, K_{e}}^{\alg}.
\label{decomposition of GlKalg into cosets of algebraic closures}
\end{equation} 

\noindent
This implies the equalities 
\eqref{decomposition of GlKalg into cosets of twisted algebraic closures} and
\eqref{decomposition of Glk1 into twists}.
\end{proof}

\noindent
\begin{remark}
In the proof of \eqref{decomposition of Glk1 into twists} we observe that
$\rho_{l} (\tilde\tau) \, G_{l, K_{e}}^{\alg} = (G_{l, K}^{\alg})^{\tau}$  for all $\tau$. 
Hence we obtain $(G_{l, K, 1}^{\alg})^{\id} = G_{l, K_e, 1}^{\alg}$ 
and a natural isomorphism:
\begin{equation}
G_{l, K}^{\alg}/ (G_{l, K}^{\alg})^{\id} \simeq \Gal(K_e/K).
\label{GlK1alg mod GlK1algid = GLeK}
\end{equation}
\end{remark}

\begin{corollary} 
\label{Corollary GlL1alg subset DLLA}
We have
\begin{equation}
G_{l, K, 1}^{\alg} \subseteq \gpDL_K(V, \psi, D)_{\Q_l}.
\label{GlL1alg subset DLLA}
\end{equation}
\end{corollary}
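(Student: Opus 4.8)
The plan is to reduce the claimed inclusion to a term-by-term comparison of the two $\Gal(K_e/K)$-indexed decompositions already established in the excerpt. On the left, equation~\eqref{decomposition of Glk1 into twists} gives
$$
G_{l, K, 1}^{\alg} = \bigsqcup_{\tau \in \Gal(K_{e}/K)} (G_{l, K, 1}^{\alg})^{\tau},
$$
and on the right, Definition~\ref{twisted decomposable Lefschetz group} gives
$$
\gpDL_{K}(V, \psi, D) = \bigsqcup_{\tau \in \Gal(K_{e}/K)} \gpDL_{K}^{\tau}(V, \psi, D).
$$
After base change to $\Q_l$, the right-hand decomposition reads $\gpDL_{K}(V, \psi, D)_{\Q_l} = \bigsqcup_{\tau} \gpDL_{K}^{\tau}(V, \psi, D)_{\Q_l}$.

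First I would invoke \eqref{GlK1algtau subset DLKtauVpsiDQl}, which asserts exactly that $(G_{l, K, 1}^{\alg})^{\tau} \subseteq \gpDL_{K}^{\tau}(V, \psi, D)_{\Q_l}$ for each individual $\tau \in \Gal(K_e/K)$; this is the content that already packages together the defining condition $g\beta g^{-1} = \rho_e(\tau)(\beta)$ for $\beta \in D$ with membership in $\Iso_{(V_l,\psi_l)}$. Taking the union of these inclusions over all $\tau \in \Gal(K_e/K)$ and using the two displayed decompositions, one obtains
$$
G_{l, K, 1}^{\alg} = \bigsqcup_{\tau} (G_{l, K, 1}^{\alg})^{\tau} \subseteq \bigsqcup_{\tau} \gpDL_{K}^{\tau}(V, \psi, D)_{\Q_l} = \gpDL_{K}(V, \psi, D)_{\Q_l},
$$
which is the assertion of the corollary.

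There is essentially no obstacle here: this is a bookkeeping argument, and the only point requiring a moment's care is that both decompositions are indexed by the \emph{same} group $\Gal(K_e/K)$ and that the indexing is compatible — i.e., the $\tau$-component of $G_{l,K,1}^{\alg}$ is contained in the $\tau$-component of $\gpDL_K(V,\psi,D)_{\Q_l}$ for matching $\tau$, not merely in the full disjoint union. This compatibility is exactly what \eqref{GlK1algtau subset DLKtauVpsiDQl} records, so the proof is immediate once that is cited.
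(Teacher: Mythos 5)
Your argument is correct and is exactly the paper's proof: the paper also deduces the corollary by combining the decomposition \eqref{decomposition of Glk1 into twists} of $G_{l,K,1}^{\alg}$, the decomposition \eqref{decomposition into twisted Lefschetz for fixed elements} of $\gpDL_K(V,\psi,D)$, and the termwise inclusion \eqref{GlK1algtau subset DLKtauVpsiDQl}. Nothing is missing.
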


\begin{proof}
This follows from \eqref{decomposition into twisted Lefschetz for fixed elements}, 
\eqref{GlK1algtau subset DLKtauVpsiDQl} and \eqref{decomposition of Glk1 into twists}.
\end{proof}

\noindent
Because $\gpDL_{K}^{\id}(V, \psi, D) = \gpDL_{K_{e}}(V, \psi, D)$ and 
$(G_{l, K, 1}^{\alg})^{\id} = G_{l, K_e, 1}^{\alg}$ we obtain
\begin{equation}
G_{l, K_{e}, 1}^{\alg} \subseteq \gpDL_{K_{e}}(V, \psi, D)_{\Q_l}.
\label{GlLe1alg subset DLLeA}
\end{equation} 

\begin{remark}
We are interested in triples $(V, \psi, D)$ for which equality in \eqref{GlL1alg subset DLLA} holds 
for each $l$. In such cases, Conjecture \ref{general algebraic Sato Tate conj.} below also holds, i.e., 
$\AST_{K}(V, \psi) = \gpDL_K(V, \psi, D)$. In Corollary \ref{connected components iso for DL}, we prove that equality in \eqref{GlL1alg subset DLLA}
is equivalent to equality in \eqref{GlLe1alg subset DLLeA}.
\end{remark}
\medskip

\section{The identity connected component of \texorpdfstring{$G_{l, K, 1}^{\alg}$}{GlK1alg}}
\label{identity connected component of GlK1alg}

In this section we extend our results from \cite[Chapters 2--8]{BK2} for odd weight to any weight, for families of 
strictly compatible $l$-adic representations of Hodge--Tate type associated with pure Hodge structures. 
In \cite[Section 4]{BK2} we proved the following theorem:

\begin{theorem} \label{L0realizing conn comp for GlK alg} Let $L/K$ be finite Galois. The following natural map
is an isomorphism of finite groups:
\begin{equation}
i_{L/K}\colon G_{l, K, 1}^{\alg} / G_{l, L, 1}^{\alg} \quad {\stackrel{\simeq}{\longrightarrow}} \quad 
G_{l, K}^{\alg} / {G_{l, L}^{\alg}}.
\label{L over K gives iso of GlK1 alg mod GlL1 alg with GlK alg mod GlL alg}
\end{equation} 
In particular there are the following isomorphisms:
\begin{equation}
(G_{l, K, 1}^{\alg})^{\circ} = (G_{l, L, 1}^{\alg})^{\circ}  \quad {\rm{and}} \quad 
(G_{l, K}^{\alg})^{\circ} =  ({G_{l, L}^{\alg}})^{\circ}
\label{GlK1 alg circ equals GlL1 alg circ and GlK alg circ equals GlL alg circ}
\end{equation} 
\end{theorem}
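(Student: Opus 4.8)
The plan is to prove directly that $i_{L/K}$ is a bijective group homomorphism and then read off the component statement. First I would record the group-theoretic setup. Since $L/K$ is Galois, $G_L$ is normal in $G_K$, hence $\rho_l(G_L)$ is normal in $\rho_l(G_K)$, and therefore its Zariski closure $G_{l, L}^{\alg}$ is normal in $G_{l, K}^{\alg}$: conjugation by a fixed element is an automorphism of $\GIso_{(V_l,\psi_l)}$, and the condition ``$h$ normalizes $G_{l,L}^{\alg}$'' is Zariski closed in $h$, so it suffices to verify it on the dense subgroup $\rho_l(G_K)$. Intersecting with the normal subgroup $\Iso_{(V_l,\psi_l)}\trianglelefteq\GIso_{(V_l,\psi_l)}$ gives that $G_{l, K, 1}^{\alg}=G_{l,K}^{\alg}\cap\Iso_{(V_l,\psi_l)}$ is normal in... and $G_{l,L,1}^{\alg}$ is normal in $G_{l,K,1}^{\alg}$. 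Thus both quotients in the statement are honest groups and the inclusion $G_{l, K, 1}^{\alg}\hookrightarrow G_{l, K}^{\alg}$ induces $i_{L/K}$. For finiteness I would repeat the argument behind \eqref{decomposition of GlKalg into cosets of algebraic closures}: choosing lifts $\widetilde\sigma\in G_K$ of $\sigma\in\Gal(L/K)$ one has $\rho_l(G_K)=\bigcup_{\sigma}\rho_l(\widetilde\sigma)\rho_l(G_L)$, and since taking Zariski closures commutes with finite unions and with left translation, $G_{l,K}^{\alg}=\bigcup_{\sigma}\rho_l(\widetilde\sigma)\,G_{l,L}^{\alg}$; hence $[G_{l,K}^{\alg}:G_{l,L}^{\alg}]$ divides $[L:K]$ and in particular $(G_{l,K}^{\alg})^{\circ}=(G_{l,L}^{\alg})^{\circ}$.

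Next I would check injectivity: the kernel of $i_{L/K}$ is $(G_{l,K,1}^{\alg}\cap G_{l,L}^{\alg})/G_{l,L,1}^{\alg}$, and since $G_{l,L}^{\alg}\subseteq G_{l,K}^{\alg}$ we get $G_{l,K,1}^{\alg}\cap G_{l,L}^{\alg}=G_{l,K}^{\alg}\cap\Iso_{(V_l,\psi_l)}\cap G_{l,L}^{\alg}=G_{l,L}^{\alg}\cap\Iso_{(V_l,\psi_l)}=G_{l,L,1}^{\alg}$, so the kernel is trivial. For surjectivity the key point is that $G_{l,K,1}^{\alg}$ is exactly $\Ker(\chi\colon G_{l,K}^{\alg}\to\G_m)$ by the exact sequence \eqref{The exact sequence for GlL1alg and GlLalg}, while $\chi$ is already surjective on $G_{l,L}^{\alg}$: by Bogomolov's theorem on homotheties $\rho_l(G_L)\cap\Q_l^{\times}\,{\rm{Id}}_{V_l}$ is open in $\Q_l^{\times}\,{\rm{Id}}_{V_l}$, so $G_{l,L}^{\alg}$ contains all homotheties $\G_m\,{\rm{Id}}_{V_l}$, on which $\chi(\alpha\,{\rm{Id}}_{V_l})=\alpha^2$. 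Consequently $G_{l,K,1}^{\alg}\cdot G_{l,L}^{\alg}$ (a subgroup, because $G_{l,L}^{\alg}$ is normal) contains $\Ker\chi$ and surjects onto $\G_m$ under $\chi$, hence equals $G_{l,K}^{\alg}$; equivalently, every coset of $G_{l,L}^{\alg}$ in $G_{l,K}^{\alg}$ meets $G_{l,K,1}^{\alg}$, which is the surjectivity of $i_{L/K}$.

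Putting these together, $i_{L/K}$ is an isomorphism, and the groups are finite by the coset decomposition; in particular $G_{l,L,1}^{\alg}$ has finite index in $G_{l,K,1}^{\alg}$, so $(G_{l,K,1}^{\alg})^{\circ}=(G_{l,L,1}^{\alg})^{\circ}$, which together with $(G_{l,K}^{\alg})^{\circ}=(G_{l,L}^{\alg})^{\circ}$ from the first step yields \eqref{GlK1 alg circ equals GlL1 alg circ and GlK alg circ equals GlL alg circ}. The one genuinely nontrivial input is the surjectivity of $\chi$ on $G_{l,L}^{\alg}$, which rests on Bogomolov's homothety theorem; I expect that to be the main obstacle, whereas everything else is bookkeeping — one should only be mildly careful that forming Zariski closures interacts well with finite unions, left translation, and the closed condition defining normality, all of which are routine.
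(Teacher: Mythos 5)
Your argument is correct and is essentially the argument the paper relies on: the proof here is simply a citation of \cite[Theorem 4.6]{BK2}, which is the diagram-chase version of your element-wise reasoning on the two exact sequences $1 \to G_{l,\cdot,1}^{\alg} \to G_{l,\cdot}^{\alg} \xrightarrow{\chi} \G_m \to 1$, with the same decisive input you identify, namely the surjectivity of $\chi$ on $G_{l,L}^{\alg}$ coming from Bogomolov's homothety theorem. The supporting steps (normality of $G_{l,L}^{\alg}$ from $L/K$ Galois via density, finiteness from the coset decomposition of the Zariski closure, injectivity from $G_{l,K,1}^{\alg} \cap G_{l,L}^{\alg} = G_{l,L,1}^{\alg}$, and passage to identity components via finite index) are all handled correctly.
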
 
\begin{proof} See \cite[Theorem 4.6]{BK2}.
\end{proof}

\begin{corollary} 
\label{Corollary-GK1alg mod GK1algID with resp. to DL mod DLId}
There are natural isomorphisms:
\begin{equation}
G_{l, K, 1}^{\alg}/ (G_{l, K, 1}^{\alg})^{\id} \, \simeq \,
\gpDL_{K}(V, \psi, D)/ \gpDL_{K}^{\id}(V, \psi, D) \, \simeq \, \Gal(K_e/K).
\label{GK1alg mod GK1algID with resp. to DL mod DLId}\end{equation}
\end{corollary}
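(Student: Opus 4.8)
The plan is to obtain both isomorphisms of the displayed chain by specializing Theorem~\ref{L0realizing conn comp for GlK alg} to the extension $L = K_e$ and then splicing in the middle term using the containment $G_{l, K, 1}^{\alg} \subseteq \gpDL_K(V, \psi, D)_{\Q_l}$ of Corollary~\ref{Corollary GlL1alg subset DLLA}. First I would observe that $K_e/K$ is a finite Galois extension (it is the fixed field of $\Ker \rho_e$, and $\rho_e$ of \eqref{rho e representation} has finite image since $D$ is a discrete $G_K$-module acting by $\Q$-algebra automorphisms on the finite-dimensional algebra $D$; this finiteness is in any case implicit in the finite decompositions \eqref{decomposition of Glk1 into twists} and \eqref{decomposition into twisted Lefschetz for fixed elements}). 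Applying Theorem~\ref{L0realizing conn comp for GlK alg} with $L = K_e$ gives an isomorphism $G_{l, K, 1}^{\alg}/G_{l, K_e, 1}^{\alg} \,{\stackrel{\simeq}{\longrightarrow}}\, G_{l, K}^{\alg}/G_{l, K_e}^{\alg}$, and by the Remark containing \eqref{GlK1alg mod GlK1algid = GLeK} we have $(G_{l, K, 1}^{\alg})^{\id} = G_{l, K_e, 1}^{\alg}$, $(G_{l, K}^{\alg})^{\id} = G_{l, K_e}^{\alg}$, and $G_{l, K}^{\alg}/(G_{l, K}^{\alg})^{\id} \simeq \Gal(K_e/K)$, the latter sending the class of the twisted piece $(G_{l, K}^{\alg})^{\tau}$ to $\tau$. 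Composing yields the isomorphism $G_{l, K, 1}^{\alg}/(G_{l, K, 1}^{\alg})^{\id} \simeq \Gal(K_e/K)$, realized by sending the class of $(G_{l, K, 1}^{\alg})^{\tau}$ to $\tau$ (using that $(G_{l, K, 1}^{\alg})^{\tau} = (G_{l, K}^{\alg})^{\tau} \cap G_{l, K, 1}^{\alg}$, by Definition~\ref{twisted form of GlK and GlK1} and \eqref{def of GSptau}).

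It then remains to insert $\gpDL_K(V, \psi, D)/\gpDL_K^{\id}(V, \psi, D)$ between these two groups. Here I would invoke the inclusion $G_{l, K, 1}^{\alg} \subseteq \gpDL_K(V, \psi, D)_{\Q_l}$ of Corollary~\ref{Corollary GlL1alg subset DLLA}, which by \eqref{GlK1algtau subset DLKtauVpsiDQl} carries $(G_{l, K, 1}^{\alg})^{\tau}$ into $\gpDL_K^{\tau}(V, \psi, D)_{\Q_l}$ for every $\tau$; in particular it sends $(G_{l, K, 1}^{\alg})^{\id}$ into $\gpDL_K^{\id}(V, \psi, D)_{\Q_l}$, so it descends to a homomorphism $G_{l, K, 1}^{\alg}/(G_{l, K, 1}^{\alg})^{\id} \to \gpDL_K(V, \psi, D)/\gpDL_K^{\id}(V, \psi, D)$ compatible, via the twist decompositions \eqref{decomposition of Glk1 into twists} and \eqref{decomposition into twisted Lefschetz for fixed elements}, with the natural monomorphism $\gpDL_K(V, \psi, D)/\gpDL_K^{\id}(V, \psi, D) \hookrightarrow \Gal(K_e/K)$ of \eqref{Twisted decomposable embeds into GKeK}. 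Since the composite $G_{l, K, 1}^{\alg}/(G_{l, K, 1}^{\alg})^{\id} \to \gpDL_K(V, \psi, D)/\gpDL_K^{\id}(V, \psi, D) \to \Gal(K_e/K)$ is the isomorphism found in the first paragraph, the monomorphism of \eqref{Twisted decomposable embeds into GKeK} is also surjective, hence an isomorphism; and then the first arrow of the composite is an isomorphism as well. This gives the full chain with the natural maps.

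The one point that genuinely uses input beyond formal manipulation is the surjectivity of $\gpDL_K(V, \psi, D) \to \Gal(K_e/K)$, equivalently that each twisted component $\gpDL_K^{\tau}(V, \psi, D)$ is nonempty; this is precisely what is furnished by the surjectivity of $G_{l, K, 1}^{\alg} \to \Gal(K_e/K)$ coming from Theorem~\ref{L0realizing conn comp for GlK alg} together with $(G_{l, K, 1}^{\alg})^{\tau} \subseteq \gpDL_K^{\tau}(V, \psi, D)_{\Q_l}$. Everything else is a diagram chase through the exact sequences \eqref{The exact sequence for GlL1alg and GlLalg} and the twist decompositions already established, so no further computation is needed.
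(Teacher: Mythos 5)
Your proposal is correct and follows essentially the same route as the paper, whose proof is a one-line citation of exactly the ingredients you use: the containment \eqref{GlK1algtau subset DLKtauVpsiDQl}, the isomorphism \eqref{GlK1alg mod GlK1algid = GLeK} together with $(G_{l, K, 1}^{\alg})^{\id} = G_{l, K_e, 1}^{\alg}$, and Theorem~\ref{L0realizing conn comp for GlK alg} applied to $L = K_e$. You have merely supplied the diagram chase (in particular the surjectivity of \eqref{Twisted decomposable embeds into GKeK}) that the paper leaves implicit.
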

\begin{proof}
This follows by \eqref{GlK1algtau subset DLKtauVpsiDQl},  \eqref{GlK1alg mod GlK1algid = GLeK}, and Theorem 
\ref{L0realizing conn comp for GlK alg}.
\end{proof}

\begin{corollary}\label{connected components iso for DL} The equality in \eqref{GlL1alg subset DLLA}
holds if and only if it holds in \eqref{GlLe1alg subset DLLeA}.
\end{corollary}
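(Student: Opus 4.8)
The plan is to decompose each side of \eqref{GlL1alg subset DLLA} into cosets indexed by $\Gal(K_e/K)$, observe that the inclusion \eqref{GlL1alg subset DLLA} is compatible with these decompositions coset by coset and induces an isomorphism on the indexing groups, and thereby reduce the equality in \eqref{GlL1alg subset DLLA} to the equality of its $\id$-components; the latter is precisely \eqref{GlLe1alg subset DLLeA}, since $(G_{l,K,1}^{\alg})^{\id} = G_{l,K_e,1}^{\alg}$ and $\gpDL_K^{\id}(V,\psi,D) = \gpDL_{K_e}(V,\psi,D)$.

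Concretely, first I would invoke the decompositions $G_{l,K,1}^{\alg} = \bigsqcup_{\tau} (G_{l,K,1}^{\alg})^{\tau}$ and $\gpDL_K(V,\psi,D)_{\Q_l} = \bigsqcup_{\tau} \gpDL_K^{\tau}(V,\psi,D)_{\Q_l}$ from \eqref{decomposition of Glk1 into twists} and \eqref{decomposition into twisted Lefschetz for fixed elements}, together with the inclusion $(G_{l,K,1}^{\alg})^{\tau} \subseteq \gpDL_K^{\tau}(V,\psi,D)_{\Q_l}$ from \eqref{GlK1algtau subset DLKtauVpsiDQl}. By Corollary \ref{Corollary-GK1alg mod GK1algID with resp. to DL mod DLId}, the inclusion \eqref{GlL1alg subset DLLA} induces an isomorphism $G_{l,K,1}^{\alg}/(G_{l,K,1}^{\alg})^{\id} \simeq \gpDL_K(V,\psi,D)_{\Q_l}/\gpDL_K^{\id}(V,\psi,D)_{\Q_l}$, both being canonically isomorphic to $\Gal(K_e/K)$; in particular every coset $(G_{l,K,1}^{\alg})^{\tau}$ and every $\gpDL_K^{\tau}(V,\psi,D)_{\Q_l}$ is nonempty.

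Then the implication from \eqref{GlL1alg subset DLLA} to \eqref{GlLe1alg subset DLLeA} is trivial: equality of the whole forces equality on the $\id$-coset. For the converse, suppose $(G_{l,K,1}^{\alg})^{\id} = \gpDL_K^{\id}(V,\psi,D)_{\Q_l}$. Working on $\overline{\Q_l}$-points, which suffices because every group in sight is a smooth (hence reduced) group scheme over the characteristic-zero field $\Q_l$, fix $\tau \in \Gal(K_e/K)$, choose $g \in (G_{l,K,1}^{\alg})^{\tau}$, and take any $h \in \gpDL_K^{\tau}(V,\psi,D)_{\Q_l}$. Since $g$ and $h$ both conjugate $D$ by $\rho_e(\tau)$, the element $g^{-1}h$ centralizes $D$, and as $g, h \in \Iso_{(V_l,\psi_l)}$ it lies in $C_D(\Iso_{(V_l,\psi_l)}) = \gpDL_K^{\id}(V,\psi,D)_{\Q_l} = (G_{l,K,1}^{\alg})^{\id}$; therefore $h = g \cdot (g^{-1}h) \in (G_{l,K,1}^{\alg})^{\tau}\,(G_{l,K,1}^{\alg})^{\id} \subseteq G_{l,K,1}^{\alg}$. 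As $\tau$ was arbitrary, $\gpDL_K(V,\psi,D)_{\Q_l} \subseteq G_{l,K,1}^{\alg}$, giving equality in \eqref{GlL1alg subset DLLA}.

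Structurally this is just the short five lemma applied to the ladder of short exact sequences $1 \to (G_{l,K,1}^{\alg})^{\id} \to G_{l,K,1}^{\alg} \to \Gal(K_e/K) \to 1$ and $1 \to \gpDL_K^{\id}(V,\psi,D)_{\Q_l} \to \gpDL_K(V,\psi,D)_{\Q_l} \to \Gal(K_e/K) \to 1$, whose outer vertical arrows are isomorphisms. I do not anticipate a substantive obstacle: the only point needing mild care is the reduction to $\overline{\Q_l}$-points so that the coset representative $g$ can be chosen, and this is harmless in characteristic zero. The whole content of the statement is carried by the already-established facts that the twisted-coset decompositions match up and that the two component groups are both $\Gal(K_e/K)$.
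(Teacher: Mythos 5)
Your proof is correct and takes essentially the same route as the paper, which states the result in one line as a direct consequence of Corollary~\ref{Corollary-GK1alg mod GK1algID with resp. to DL mod DLId} together with the identifications $(G_{l,K,1}^{\alg})^{\id} = G_{l,K_e,1}^{\alg}$ and $\gpDL_K^{\id}(V,\psi,D) = \gpDL_{K_e}(V,\psi,D)$. You simply unwind the five-lemma chase that the paper leaves implicit.
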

\begin{proof} This follows from \eqref{GK1alg mod GK1algID with resp. to DL mod DLId} and the equality $(G_{l, K, 1}^{\alg})^{\id} = G_{l, K_e, 1}^{\alg}$.
\end{proof}

\begin{theorem} \label{L0realizing conn comp of GlKalg} Let $L_{0}/K$ be a finite Galois extension such that
$G_{l, L_{0}}^{\alg}$ is connected. There is the following exact sequence: 
 \begin{equation}
1 \,\, {\stackrel{}{\longrightarrow}} \,\, G_{l, L_{0},1}^{\alg} / (G_{l, K, 1}^{\alg})^{\circ}
\,\, {\stackrel{}{\longrightarrow}} \,\,  \pi_{0} (G_{l, K, 1}^{\alg})
 \,\, {\stackrel{i_{CC}}{\longrightarrow}} \,\, \pi_{0} (G_{l, K}^{\alg}) \,\, {\stackrel{}{\longrightarrow}} \,\, 1.
\label{icc exact sequence}\end{equation} 
Moreover $i_{CC}$ is an isomorphism if and only if $G_{l, L_{0},1}^{\alg}$ is connected. 
\end{theorem}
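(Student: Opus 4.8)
The plan is to read off the exact sequence \eqref{icc exact sequence} from Theorem \ref{L0realizing conn comp for GlK alg} together with the third isomorphism theorem applied to a suitable chain of normal subgroups of $G_{l, K, 1}^{\alg}$.

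First I would record what connectedness of $G_{l, L_{0}}^{\alg}$ buys us. Since $L_{0}/K$ is finite Galois, \eqref{GlK1 alg circ equals GlL1 alg circ and GlK alg circ equals GlL alg circ} gives $(G_{l, K}^{\alg})^{\circ} = (G_{l, L_{0}}^{\alg})^{\circ} = G_{l, L_{0}}^{\alg}$ and $(G_{l, K, 1}^{\alg})^{\circ} = (G_{l, L_{0}, 1}^{\alg})^{\circ}$; in particular $\pi_{0}(G_{l, K}^{\alg}) = G_{l, K}^{\alg}/G_{l, L_{0}}^{\alg}$. Because $G_{L_{0}} \trianglelefteq G_{K}$, the Zariski closure $G_{l, L_{0}}^{\alg}$ of $\rho_{l}(G_{L_{0}})$ is normal in $G_{l, K}^{\alg}$ (conjugation by elements of the Zariski-dense subgroup $\rho_{l}(G_{K})$ preserves it), and intersecting with $\Iso_{(V_{l}, \psi_{l})}$ shows $G_{l, L_{0}, 1}^{\alg} \trianglelefteq G_{l, K, 1}^{\alg}$. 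Hence we have a chain of normal subgroups of $G_{l, K, 1}^{\alg}$,
\[
(G_{l, K, 1}^{\alg})^{\circ} = (G_{l, L_{0}, 1}^{\alg})^{\circ} \ \trianglelefteq\ G_{l, L_{0}, 1}^{\alg} \ \trianglelefteq\ G_{l, K, 1}^{\alg}.
\]

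Next I would pass to quotients by $(G_{l, K, 1}^{\alg})^{\circ}$. The third isomorphism theorem gives the short exact sequence
\[
1 \longrightarrow G_{l, L_{0}, 1}^{\alg}/(G_{l, K, 1}^{\alg})^{\circ} \longrightarrow \pi_{0}(G_{l, K, 1}^{\alg}) \longrightarrow G_{l, K, 1}^{\alg}/G_{l, L_{0}, 1}^{\alg} \longrightarrow 1,
\]
using $\pi_{0}(G_{l, K, 1}^{\alg}) = G_{l, K, 1}^{\alg}/(G_{l, K, 1}^{\alg})^{\circ}$. By Theorem \ref{L0realizing conn comp for GlK alg} (with $L = L_{0}$) the map $i_{L_{0}/K}$ identifies the rightmost term with $G_{l, K}^{\alg}/G_{l, L_{0}}^{\alg} = \pi_{0}(G_{l, K}^{\alg})$. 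The one point needing genuine care — the main obstacle — is to check that the resulting composite $\pi_{0}(G_{l, K, 1}^{\alg}) \twoheadrightarrow G_{l, K, 1}^{\alg}/G_{l, L_{0}, 1}^{\alg} \xrightarrow{\ i_{L_{0}/K}\ } \pi_{0}(G_{l, K}^{\alg})$ is exactly the natural map $i_{CC}$ induced by the inclusion $G_{l, K, 1}^{\alg} \hookrightarrow G_{l, K}^{\alg}$. This is a diagram chase: $i_{L_{0}/K}$ is itself induced by that inclusion, $(G_{l, K, 1}^{\alg})^{\circ} \subseteq G_{l, L_{0}, 1}^{\alg} \subseteq G_{l, L_{0}}^{\alg} = (G_{l, K}^{\alg})^{\circ}$, and since $(G_{l, K}^{\alg})^{\circ} = G_{l, L_{0}}^{\alg}$ the group $\pi_{0}(G_{l, K}^{\alg})$ is literally $G_{l, K}^{\alg}/G_{l, L_{0}}^{\alg}$, so both maps send the class of $g$ to the class of $g$. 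This establishes \eqref{icc exact sequence}.

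Finally, for the last assertion: the exact sequence just proved shows $i_{CC}$ is surjective, so it is an isomorphism if and only if the kernel appearing in \eqref{icc exact sequence} is trivial; but that kernel is $G_{l, L_{0}, 1}^{\alg}/(G_{l, K, 1}^{\alg})^{\circ} = G_{l, L_{0}, 1}^{\alg}/(G_{l, L_{0}, 1}^{\alg})^{\circ} = \pi_{0}(G_{l, L_{0}, 1}^{\alg})$, which is trivial precisely when $G_{l, L_{0}, 1}^{\alg}$ is connected.
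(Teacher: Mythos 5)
Your proof is correct and takes essentially the same route as the paper: it cites Theorem \ref{L0realizing conn comp for GlK alg}, uses the equalities $G_{l, L_{0}}^{\alg} = (G_{l, K}^{\alg})^{\circ}$ and $G_{l, L_{0}, 1}^{\alg} = G_{l, K, 1}^{\alg} \cap G_{l, L_{0}}^{\alg}$, and reads off the exact sequence via the third isomorphism theorem, which is exactly the content of the (more terse) argument in the paper. The only difference is that you spell out the normality of the chain $(G_{l, K, 1}^{\alg})^{\circ} \trianglelefteq G_{l, L_{0}, 1}^{\alg} \trianglelefteq G_{l, K, 1}^{\alg}$ and the compatibility of $i_{CC}$ with $i_{L_{0}/K}$ in more detail, which the paper leaves implicit.
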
 
\begin{proof} The exactness of \eqref{icc exact sequence} follows from Theorem \ref{L0realizing conn comp for GlK alg} and the equalities
$G_{l, L_{0}}^{\alg} = (G_{l, K}^{\alg})^{\circ}$ and $G_{l, L_{0}, 1}^{\alg} = G_{l, K, 1}^{\alg} \cap
G_{l, L_{0}}^{\alg}$. Recall that the group of connected components of an algebraic group is finite. 
Hence it follows from \eqref{icc exact sequence} that $G_{l, L_{0},1}^{\alg}$ is connected iff 
$G_{l, L_{0},1}^{\alg} = (G_{l, K, 1}^{\alg})^{\circ}$, but this is equivalent to the statement that 
$i_{CC}$ is an isomorphism. 
\end{proof}

\begin{theorem} \label{L0realizing conn comp} Assume that the weight $n$ of  $(V, \psi)$ is odd. Then there is a 
finite Galois extension $L_{0}/K$ 
such that $G_{l, L_{0}}^{\alg} = (G_{l, K}^{\alg})^{\circ}$ and $G_{l, L_{0}, 1}^{\alg} = ({G_{l, K,
1}^{\alg}})^{\circ}$. 
\end{theorem}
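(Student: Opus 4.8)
The plan is to obtain $L_0$ from the finiteness of $\pi_0(G_{l,K}^{\alg})$, reduce the statement to the connectedness of $G_{l,L_0,1}^{\alg}$, and then prove that connectedness by splitting the similitude character $\chi$ on $G_{l,L_0}^{\alg}$ over $\C_l$ --- the step where odd weight enters decisively.

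First I would construct $L_0$. The identity component $(G_{l,K}^{\alg})^{\circ}$ is Zariski open and closed in $G_{l,K}^{\alg}$, so the natural surjection $G_{l,K}^{\alg}(\Q_l)\to\pi_0(G_{l,K}^{\alg})$ is locally constant; composing with the continuous map $\rho_l$ and using that $\rho_l(G_K)$ is Zariski dense (hence meets every component), we obtain a continuous surjection $G_K\to\pi_0(G_{l,K}^{\alg})$ onto a finite group. Let $L_0/K$ be the finite Galois extension cut out by its kernel. Then $\rho_l(G_{L_0})\subseteq(G_{l,K}^{\alg})^{\circ}(\Q_l)$, so $G_{l,L_0}^{\alg}\subseteq(G_{l,K}^{\alg})^{\circ}$; since Theorem \ref{L0realizing conn comp for GlK alg} gives $(G_{l,K}^{\alg})^{\circ}=(G_{l,L_0}^{\alg})^{\circ}\subseteq G_{l,L_0}^{\alg}$, we conclude $G_{l,L_0}^{\alg}=(G_{l,K}^{\alg})^{\circ}$, which is connected. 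The same theorem gives $(G_{l,K,1}^{\alg})^{\circ}=(G_{l,L_0,1}^{\alg})^{\circ}$, so it now suffices to prove that $G_{l,L_0,1}^{\alg}$ is connected: then $G_{l,L_0,1}^{\alg}=(G_{l,L_0,1}^{\alg})^{\circ}=(G_{l,K,1}^{\alg})^{\circ}$, as required.

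To see that $G_{l,L_0,1}^{\alg}$ is connected, I would split $\chi$ in the exact sequence $1\to G_{l,L_0,1}^{\alg}\to G_{l,L_0}^{\alg}\xrightarrow{\chi}\G_m\to 1$ of \eqref{The exact sequence for GlL1alg and GlLalg} after base change to $\C_l=\widehat{\overline{K}}_{\lambda}$, which is harmless since connectedness of an algebraic group over $\Q_l$ is equivalent to connectedness over $\C_l$ (the group has a rational point, hence is connected iff geometrically connected). Over $\C_l$ there are two cocharacters of $G_{l,L_0}^{\alg}$ at hand. The first is the homothety cocharacter $w(z)=z\,{\rm Id}_{V_l}$, which lies in $G_{l,L_0}^{\alg}$ because the latter contains all homotheties and satisfies $\chi\circ w(z)=z^2$. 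The second is the Hodge--Tate cocharacter $\mu_{\mathrm{HT}}\colon\G_m\to G_{l,K}^{\alg}\times_{\Q_l}\C_l$ attached by Sen's theory to the Hodge--Tate representation $\rho_l$ (here conditions {\bf{(R1)}} and {\bf{(R3)}} are used; cf. \cite{Se1}, \cite{Su}); its image, being connected and passing through the identity, lies in $(G_{l,K}^{\alg})^{\circ}\times_{\Q_l}\C_l=G_{l,L_0}^{\alg}\times_{\Q_l}\C_l$, and by \eqref{compatibility of chi with chi-cycl.} together with the Hodge--Tate decomposition \eqref{Hodge--Tate decomposition} the composite $\chi\circ\mu_{\mathrm{HT}}$ is the power map $z\mapsto z^{d}$ with $d=\pm n$, in particular odd. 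Since $\gcd(n,2)=1$, I may choose integers $a,b$ with $da+2b=1$ and set $s(z):=\mu_{\mathrm{HT}}(z)^{a}\,w(z)^{b}$; as $w$ is central, $s$ is a cocharacter of $G_{l,L_0}^{\alg}$ over $\C_l$ with $\chi\circ s={\rm id}_{\G_m}$, i.e. a section of $\chi$. By the argument of Lemma \ref{G connected implies G0 connected} --- or, purely algebraically, because a section of $\chi$ makes the multiplication morphism $(h,t)\mapsto h\,s(t)$ an isomorphism of varieties $G_{l,L_0,1}^{\alg}\times\G_m\xrightarrow{\sim}G_{l,L_0}^{\alg}$ over $\C_l$ --- connectedness of $G_{l,L_0}^{\alg}$ forces $G_{l,L_0,1}^{\alg}$ to be connected, which finishes the proof.

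The main obstacle is the $p$-adic Hodge-theoretic input underlying the second cocharacter: one must know that the Hodge--Tate cocharacter of $\rho_l$ genuinely takes values in $G_{l,K}^{\alg}(\C_l)$ and that $\chi\circ\mu_{\mathrm{HT}}$ has the stated degree, which is exactly where {\bf{(R1)}} (Hodge--Tate type) and {\bf{(R3)}} enter, through Sen's theorem. This is also the one place where odd weight is essential: for $n$ even, every integer $da+2b$ with $d=\pm n$ is even, so no section of $\chi$ can be manufactured from $w$ and $\mu_{\mathrm{HT}}$, and $G_{l,L_0,1}^{\alg}$ may genuinely fail to be connected --- the $l$-adic shadow of the failure of Lemma \ref{for n odd DH = H} for even weight and the origin of the parity groups studied later in the paper.
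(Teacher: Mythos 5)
Your proof is correct and follows essentially the same route as the paper: construct $L_0$ from the finiteness of $\pi_0(G_{l,K}^{\alg})$, then split the similitude character on $G_{l,L_0}^{\alg}$ over $\C_l$ using the Hodge--Tate (Sen/Serre) cocharacter together with the homothety cocharacter, exploiting that $\gcd(n,2)=1$, and conclude connectedness of $G_{l,L_0,1}^{\alg}$ via Lemma \ref{G connected implies G0 connected}. The only cosmetic differences are that the paper transports the cocharacter to $\C$ via a chosen isomorphism $\C_l\simeq\C$ and writes an explicit exponent rather than a B\'ezout combination, whereas you work directly over $\C_l$; both are fine.
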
 

\begin{proof} For the proof of the first equality, see the proof of \cite[Prop. 4.7]{BK2}. We refine 
the proof of the second equality. 

Let $L_0$ be such that the first equality holds. Restrict the $l$-adic representation 
to the base field $L_0$. Using the Hodge--Tate property of $V_l$, after taking $\C_l$-points in the exact sequence \eqref{The exact sequence for GlL1alg and GlLalg} one can apply the homomorphism
\cite[p.\ 114]{Se12} defined by Serre:
$$
h_l\colon \G_m (\C_l) \rightarrow  G_{l, L_{0}}^{\alg} (\C_l)
$$
such that for all $x \in \G_m(\C_l)$, $h_l (x)$ acts by multiplication by $x^{-p}$ on the subspace: 
$$
G_{K_{\lambda}}^{p} \otimes_{K_{\lambda}} \C_{l} \, \simeq \, G_{K_{\lambda}}^{p} \otimes_{K_{\lambda}} 
\C_{l} (p-n) \, \subset \, V_{l} \otimes_{\Q_l} \C_{l}.
$$

Choose embeddings of $L_0$ into $\C_l$ and $\C$ and a compatible isomorphism
$\C_l \simeq \C$. Extending coefficients to $\C$ using this isomorphism 
and applying the property {\bf{(R3)}}, we obtain from $h_l$ a homomorphism 
$$
h\colon \G_m (\C) \rightarrow  G_{l, L_{0}}^{\alg} (\C)
$$
which is defined in the same way as $\mu_{\infty, V}$ of Lemma \ref{for n odd DH = H} via natural isomorphisms:
$$
G_{K_{\lambda}}^{p} \otimes_{K_{\lambda}} \C \, \simeq \, G_{K}^{p} \otimes_{K} \C \, \simeq \, 
V^{p, n-p}.
$$
Hence $\chi (h (z)) = z^{-n}$ for every $z \in \G_m (\C)$. Let 
\begin{gather*}
w\colon \G_m (\C) \rightarrow  G_{l, L_{0}}^{\alg} (\C),\\
w(z) := z \, \rm{Id}_{V_{\C}}
\end{gather*}
be the diagonal homomorphism; this is well-defined thanks to the comments before
Remark~\ref{properties of Hodge--Tate representations }.
We know (see \S \ref{Mumford--Tate groups of polarized Hodge structures}) that $\chi (w(z)) = z^2$ for every 
$z \in \G_m (\C)$. Hence the homomorphism
\begin{gather*}
s\colon \G_m (\C) \rightarrow  G_{l, L_{0}}^{\alg} (\C), \\
s(z) := h(z) w(z)^{-\frac{n-1}{2}}
\end{gather*}
is a splitting of $\chi$ in the following exact sequence:
$$
1 \,\, {\stackrel{}{\longrightarrow}} \,\, G_{l, L_{0}, 1}^{\alg} (\C) \,\, {\stackrel{}{\longrightarrow}} \,\,  G_{l, L_{0}}^{\alg} (\C) \,\, {\stackrel{\chi}{\longrightarrow}} \,\, \C^{\times} \,\, {\stackrel{}{\longrightarrow}} \,\, 1.
$$
Because $G_{l, L_{0}}^{\alg}$ is connected, Lemma \ref{G connected implies G0 connected}
shows that $G_{l, L_{0}, 1}^{\alg}$ is connected. 
It follows from the proof of Theorem \ref{L0realizing conn comp of GlKalg} that
 $G_{l, L_{0}, 1}^{\alg} = (G_{l, K, 1}^{\alg})^{\circ}$. 
\end{proof}

\begin{theorem} \label{conn comp equal for for GlK1 alg and GlK alg} Let $n$ be odd. 
The following natural map is an isomorphism of finite groups:
\begin{equation}
i_{CC}\colon \pi_{0} (G_{l, K, 1}^{\alg}) \quad {\stackrel{\simeq}{\longrightarrow}} \quad 
\pi_{0} (G_{l, K}^{\alg}).
\label{conn comp isomorphic for for GlK1 alg and GlK alg}
\end{equation} 
\end{theorem}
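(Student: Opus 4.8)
The plan is to deduce this directly from the two preceding technical results, Theorems~\ref{L0realizing conn comp of GlKalg} and~\ref{L0realizing conn comp}, with the arithmetic input (oddness of $n$) entering only through the latter. First I would invoke Theorem~\ref{L0realizing conn comp}, which uses the hypothesis that $n$ is odd: it produces a finite Galois extension $L_{0}/K$ with
$$
G_{l, L_{0}}^{\alg} = (G_{l, K}^{\alg})^{\circ}
\qquad\text{and}\qquad
G_{l, L_{0}, 1}^{\alg} = (G_{l, K, 1}^{\alg})^{\circ}.
$$
In particular $G_{l, L_{0}}^{\alg}$, being an identity component, is connected, so this $L_{0}$ is an admissible choice in Theorem~\ref{L0realizing conn comp of GlKalg}.

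Next I would feed this $L_{0}$ into Theorem~\ref{L0realizing conn comp of GlKalg}. That theorem gives the exact sequence \eqref{icc exact sequence}, whose kernel term is $G_{l, L_{0},1}^{\alg} / (G_{l, K, 1}^{\alg})^{\circ}$; but by the second equality above this quotient is trivial, so $i_{CC}$ is an isomorphism. Equivalently, one can simply quote the ``moreover'' clause of Theorem~\ref{L0realizing conn comp of GlKalg}: $i_{CC}$ is an isomorphism if and only if $G_{l, L_{0},1}^{\alg}$ is connected, and here $G_{l, L_{0},1}^{\alg} = (G_{l, K, 1}^{\alg})^{\circ}$ is connected by construction. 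Finiteness of $\pi_{0}$ of an algebraic group then makes $i_{CC}$ an isomorphism of finite groups, as asserted.

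There is essentially no obstacle in this particular statement; it is a formal corollary. The real content sits upstream, in Theorem~\ref{L0realizing conn comp}, where the odd-weight hypothesis is used to build the splitting cocharacter $s(z) = h(z)\, w(z)^{-\frac{n-1}{2}}$ of $\chi$ over $\C_{l} \simeq \C$ (combining Serre's Hodge--Tate cocharacter $h_{l}$ with the homothety cocharacter $w$) and then to apply Lemma~\ref{G connected implies G0 connected} to conclude that $G_{l, L_{0},1}^{\alg}$ is connected. If I were to flag anything, it would be to make sure the compatible isomorphism $\C_{l} \simeq \C$ and condition \textbf{(R3)} are available so that the cited argument for the second equality in Theorem~\ref{L0realizing conn comp} actually goes through in the present generality; but under the standing assumptions of the section this is exactly what has already been arranged.
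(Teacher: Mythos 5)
Your proposal is correct and follows exactly the paper's own route: the paper proves this by combining Theorems~\ref{L0realizing conn comp of GlKalg} and~\ref{L0realizing conn comp} precisely as you do, with the odd-weight hypothesis used only in the latter to produce the splitting cocharacter and deduce connectedness of $G_{l,L_0,1}^{\alg}$. Nothing is missing.
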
 
\begin{proof} This follows from Theorems \ref{L0realizing conn comp of GlKalg}  and \ref{L0realizing conn comp}. 
\end{proof}

The main results of \cite{BK2} concern $n$ odd. In this paper we will present a number of results that hold also for 
$n$ even. The following proposition holds for $(V, \psi)$ of any weight but gives essentially new information 
for $(V, \psi)$ of even weight. 

\begin{proposition} \label{L0realizing conn comp: even case} 
Let $M_0/K$ be a field extension such that ${G_{l, M_0}^{\alg}}$ is connected. Then:
\begin{itemize}
\item[(a)] $\G_{m} {\rm{Id}}_{V_l} \cdot ({G_{l, M_0, 1}^{\alg}})^{\circ} \, = \, 
\G_{m} {\rm{Id}}_{V_l} \cdot {G_{l, M_0, 1}^{\alg}} \, = \, G_{l, M_0}^{\alg}$.
\item[(b)] ${G_{l, M_0, 1}^{\alg}} = ({G_{l, M_0, 1}^{\alg}})^{\circ} \, \cup \, 
- {\rm{Id}}_{V_l} \cdot ({G_{l, M_0, 1}^{\alg}})^{\circ}$.
\item[(c)] $- {\rm{Id}}_{V_l} \in ({G_{l, M_0, 1}^{\alg}})^{\circ}$ iff $({G_{l, M_0, 1}^{\alg}})^{\circ}
= {G_{l, M_0, 1}^{\alg}}$.
\item[(d)] When $n$ is odd then $- {\rm{Id}}_{V_l} \, \in \, ({G_{l, M_0, 1}^{\alg}})^{\circ} \,  = \,
{G_{l, M_0, 1}^{\alg}}$.  
\end{itemize} 
\end{proposition}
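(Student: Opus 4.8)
The statement is the $l$-adic counterpart of Proposition~\ref{Hodge realizing conn comp: even case}, obtained by substituting $G_{l, M_0}^{\alg}$, $G_{l, M_0, 1}^{\alg}$ and $(G_{l, M_0, 1}^{\alg})^{\circ}$ for $\MT(V, \psi)$, $\gpDH(V, \psi)$ and $\gpH(V, \psi)$, and the plan is to transcribe that argument. The inputs I would use are: the exact sequence \eqref{The exact sequence for GlL1alg and GlLalg} (applied with $M_0$ in place of $L$), which presents $\chi\colon G_{l, M_0}^{\alg} \to \G_m$ as an epimorphism with kernel $G_{l, M_0, 1}^{\alg}$; the inclusion $\G_m {\rm{Id}}_{V_l} \subseteq G_{l, M_0}^{\alg}$ coming from Bogomolov's theorem on homotheties, with $\chi$ restricting on $\G_m {\rm{Id}}_{V_l}$ to the nontrivial character $\alpha {\rm{Id}}_{V_l} \mapsto \alpha^2$ (so $\G_m {\rm{Id}}_{V_l} \cap \Iso_{(V_l, \psi_l)} = \{\pm {\rm{Id}}_{V_l}\}$); and the connectedness, equivalently irreducibility, of $G_{l, M_0}^{\alg}$.

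For (a), given a coset $g (G_{l, M_0, 1}^{\alg})^{\circ}$ of $(G_{l, M_0, 1}^{\alg})^{\circ}$ in $G_{l, M_0, 1}^{\alg}$, I would apply \cite[Section 7.4, Prop. B(b)]{Hu} to the multiplication morphism $\G_m {\rm{Id}}_{V_l} \times (G_{l, M_0, 1}^{\alg})^{\circ} \to G_{l, M_0}^{\alg}$ to see that $\G_m {\rm{Id}}_{V_l} \cdot (G_{l, M_0, 1}^{\alg})^{\circ}$ and its translate $\G_m {\rm{Id}}_{V_l} \cdot g (G_{l, M_0, 1}^{\alg})^{\circ}$ are closed in $G_{l, M_0}^{\alg}$. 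Their common dimension equals $\dim (G_{l, M_0, 1}^{\alg})^{\circ} + 1$ (the $+1$ because $\chi$ is nontrivial on $\G_m {\rm{Id}}_{V_l}$, which meets $\Iso_{(V_l, \psi_l)}$ in a finite group), and this in turn equals $\dim G_{l, M_0}^{\alg}$ by \eqref{The exact sequence for GlL1alg and GlLalg} together with $(G_{l, M_0, 1}^{\alg})^{\circ}$ being the identity component of $G_{l, M_0, 1}^{\alg}$. Since $G_{l, M_0}^{\alg}$ is irreducible and contains $\G_m {\rm{Id}}_{V_l} \cdot (G_{l, M_0, 1}^{\alg})^{\circ}$ as a closed subgroup of full dimension, the two are equal, whence also $\G_m {\rm{Id}}_{V_l} \cdot g (G_{l, M_0, 1}^{\alg})^{\circ} = G_{l, M_0}^{\alg}$.

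For (b), by (a) any $g \in G_{l, M_0, 1}^{\alg} \subseteq G_{l, M_0}^{\alg} = \G_m {\rm{Id}}_{V_l} \cdot (G_{l, M_0, 1}^{\alg})^{\circ}$ can be written $g = \alpha {\rm{Id}}_{V_l} \cdot h$ with $\alpha \in \G_m$ and $h \in (G_{l, M_0, 1}^{\alg})^{\circ}$; applying $\chi$, which vanishes on $\Iso_{(V_l, \psi_l)}$, gives $\alpha^2 = 1$, so $g = \pm {\rm{Id}}_{V_l} \cdot h$ lies in $(G_{l, M_0, 1}^{\alg})^{\circ} \cup - {\rm{Id}}_{V_l} \cdot (G_{l, M_0, 1}^{\alg})^{\circ}$; the reverse inclusion is clear since $- {\rm{Id}}_{V_l} \in G_{l, M_0}^{\alg} \cap \Iso_{(V_l, \psi_l)} = G_{l, M_0, 1}^{\alg}$. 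Part (c) is immediate from (b). For (d), when $n$ is odd I would reproduce the connectedness step from the proof of Theorem~\ref{L0realizing conn comp}: Serre's Hodge--Tate cocharacter $h_l\colon \G_m(\C_l) \to G_{l, M_0}^{\alg}(\C_l)$, transported to $h\colon \G_m(\C) \to G_{l, M_0}^{\alg}(\C)$ via {\bf{(R3)}} and combined with the diagonal cocharacter $w(z) = z\,{\rm{Id}}_{V_{\C}}$, produces the splitting $s(z) := h(z)\, w(z)^{-\frac{n-1}{2}}$ of $\chi$ on the $\C$-points of \eqref{The exact sequence for GlL1alg and GlLalg}; Lemma~\ref{G connected implies G0 connected} then forces $G_{l, M_0, 1}^{\alg}$ to be connected, and (b) yields $- {\rm{Id}}_{V_l} \in G_{l, M_0, 1}^{\alg} = (G_{l, M_0, 1}^{\alg})^{\circ}$. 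The only points needing care are the dimension bookkeeping in (a) and the verification that $\G_m {\rm{Id}}_{V_l} \subseteq G_{l, M_0}^{\alg}$; there is no genuinely new difficulty, since the substance of (d) is already contained in Theorem~\ref{L0realizing conn comp}.
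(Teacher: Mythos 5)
Your proposal is correct and follows essentially the same route as the paper: part (a) via \cite[Section 7.4, Prop. B(b)]{Hu}, the exact sequence \eqref{The exact sequence for GlL1alg and GlLalg}, and irreducibility of $G_{l, M_0}^{\alg}$; part (b) by applying $\chi$ to force $\alpha^2 = 1$; and part (d) by transcribing the Hodge--Tate cocharacter splitting from Theorem~\ref{L0realizing conn comp} together with Lemma~\ref{G connected implies G0 connected}. The paper's proof of (d) literally defers to that theorem, so your reproduction of the splitting argument matches it exactly.
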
 
\begin{proof}
(a)  Consider a coset $g ({G_{l, M_0, 1}^{\alg}})^{\circ}$ in $G_{l, M_0, 1}^{\alg}$. Applying \cite[Section 7.4, Prop. B(b)]{Hu} to the homomorphism
\begin{gather*}
\G_{m} {\rm{Id}}_{V_l} \, \times \, ({G_{l, M_0, 1}^{\alg}})^{\circ} \rightarrow G_{l, M_0}^{\alg}, \\
(g_1, g_2) \, \mapsto \, g_1 g_2
\end{gather*}
we observe that  $\G_{m} {\rm{Id}}_{V_l} \cdot ({G_{l, M_0, 1}^{\alg}})^{\circ}$ and $\G_{m} {\rm{Id}}_{V_l} \cdot g ({G_{l, M_0, 1}^{\alg}})^{\circ}$ are closed in $G_{l, M_0}^{\alg}$. They are also of the same dimension as $G_{l, M_0}^{\alg}$ because of the following exact sequence:
\begin{equation}
1 \,\, {\stackrel{}{\longrightarrow}} \,\, G_{l, M_0, 1}^{\alg} \,\, {\stackrel{}{\longrightarrow}} \,\,  G_{l, M_0}^{\alg} \,\, {\stackrel{\chi}{\longrightarrow}} \,\, \G_m \,\, {\stackrel{}{\longrightarrow}} \,\, 1.
\label{The exact sequence for GlL01alg and GlL0alg}
\end{equation}
Because  $G_{l, M_0}^{\alg}$ is irreducible by assumption, we obtain 
\begin{equation}
\G_{m} {\rm{Id}}_{V_l} \cdot ({G_{l, M_0, 1}^{\alg}})^{\circ} \, = \,\G_{m} {\rm{Id}}_{V_l} \cdot  g\, ({G_{l, M_0, 1}^{\alg}})^{\circ} = G_{l, M_0}^{\alg}.
\label{nonempty intersection of Gm GlK1 alg with Gm gGlK1 alg}
\end{equation}

\noindent
(b) From \eqref{nonempty intersection of Gm GlK1 alg with Gm gGlK1 alg} there are $\alpha, \beta \in \G_{m}$ and $g_1, g_2 \in ({G_{l, M_0, 1}^{\alg}})^{\circ}$ such that:
\begin{equation}
\alpha \, {\rm{Id}}_{V_l} \cdot g_1 = \beta \, {\rm{Id}}_{V_l} \cdot g g_2. 
\label{equality of coset generators}\end{equation}

\noindent
Applying $\chi$ to \eqref{equality of coset generators} we obtain $\alpha^2 = \beta^2$. This implies that
$\pm {\rm{Id}}_{V_l} \cdot g_1 =  g g_2$. Hence
$g ({G_{l, M_0, 1}^{\alg}})^{\circ} = ({G_{l, M_0, 1}^{\alg}})^{\circ}$ or $g ({G_{l, M_0, 1}^{\alg}})^{\circ} = 
- {\rm{Id}}_{V_l} ({G_{l, M_0, 1}^{\alg}})^{\circ}$. 
\medskip

\noindent
(c) This follows immediately from (b).
\medskip

\noindent
(d) For such an $M_0$, in the case of $n$ odd, the group ${G_{l, M_0, 1}^{\alg}}$  is also connected. 
The proof is the same as the proof of Theorem \ref{L0realizing conn comp}. 
Hence (d) follows immediately from (c).
\end{proof}

\begin{proposition} \label{L0 realizing for rho l (GK): even case} 
Let $M_0/K$ be a field extension such that ${G_{l, M_0}^{\alg}}$ is connected. Then we have the following 
statements.
\begin{itemize}
\item[(a)] ${G_{l, M_0}^{\alg}} \, = \, \G_{m} {\rm{Id}}_{V_l} \cdot \overline{\rho_{l} (G_{M_0})_{1}}$.
\item[(b)] $\overline{\rho_{l} (G_{M_0})_{1}} \, = \, ({G_{l, M_0, 1}^{\alg}})^{\circ}$ \, or \,  
$\overline{\rho_{l} (G_{M_0})_{1}} \, = \, {G_{l, M_0, 1}^{\alg}}$.
\item[(c)] $- {\rm{Id}}_{V_l} \in \overline{\rho_{l} (G_{M_0})_{1}}$ \,\, iff \,\, $\overline{\rho_{l} (G_{M_0})_{1}} 
= {G_{l, M_0, 1}^{\alg}}$.
\item[(d)] If $- {\rm{Id}}_{V_l} \notin \overline{\rho_{l} (G_{M_0})_{1}}$ \,\, then \,\, $\overline{\rho_{l} (G_{M_0})_{1}} = ({G_{l, M_0, 1}^{\alg}})^{\circ}$.
\item[(e)] When $n$ is odd then $- {\rm{Id}}_{V_l} \, \in \, \overline{\rho_{l} (G_{M_0})_{1}}  \, = \, 
({G_{l, M_0, 1}^{\alg}})^{\circ} \,  = \, {G_{l, M_0, 1}^{\alg}}$.  
\end{itemize} 
\end{proposition}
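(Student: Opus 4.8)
The plan is to reduce everything to Proposition~\ref{L0realizing conn comp: even case}, which already describes $G_{l, M_0, 1}^{\alg}$ relative to its identity component, using one extra input: since $\rho_l$ is of Hodge--Tate type, the theorem of Bogomolov on homotheties shows that $\rho_l(G_{M_0})$ is open in $G_{l, M_0}^{\alg}(\Q_l)$ and that $G_{l, M_0}^{\alg}$ contains all the homotheties of $\GL(V_l)$; in particular $-{\rm Id}_{V_l} \in G_{l, M_0}^{\alg}$, and since $\chi(-{\rm Id}_{V_l}) = 1$ we get $-{\rm Id}_{V_l} \in \Iso_{(V_l, \psi_l)}$, hence $-{\rm Id}_{V_l} \in G_{l, M_0, 1}^{\alg}$. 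The only substantive step is to prove the chain of inclusions
$$
(G_{l, M_0, 1}^{\alg})^{\circ} \,\subseteq\, \overline{\rho_l(G_{M_0})_1} \,\subseteq\, G_{l, M_0, 1}^{\alg}.
$$
The right-hand inclusion is immediate from $\rho_l(G_{M_0})_1 \subseteq \Iso_{(V_l, \psi_l)}$. For the left-hand one I would observe that $\rho_l(G_{M_0})_1 = \rho_l(G_{M_0}) \cap G_{l, M_0, 1}^{\alg}(\Q_l)$ is a nonempty open subgroup of $G_{l, M_0, 1}^{\alg}(\Q_l)$ in the $l$-adic topology, hence an $l$-adic Lie group of dimension $\dim G_{l, M_0, 1}^{\alg}$; its Zariski closure, being contained in $G_{l, M_0, 1}^{\alg}$ and containing an $l$-adic neighbourhood of the identity, must contain the irreducible component $(G_{l, M_0, 1}^{\alg})^{\circ}$.

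Granting this, the rest is bookkeeping. For (b), the closed subgroup $\overline{\rho_l(G_{M_0})_1}$ lies between $(G_{l, M_0, 1}^{\alg})^{\circ}$ and $G_{l, M_0, 1}^{\alg}$, and by Proposition~\ref{L0realizing conn comp: even case}(b) the latter has at most two components, so $\overline{\rho_l(G_{M_0})_1}$ equals either $(G_{l, M_0, 1}^{\alg})^{\circ}$ or $G_{l, M_0, 1}^{\alg}$. For (a), by Proposition~\ref{L0realizing conn comp: even case}(a),
$$
G_{l, M_0}^{\alg} = \G_m {\rm Id}_{V_l} \cdot (G_{l, M_0, 1}^{\alg})^{\circ} \,\subseteq\, \G_m {\rm Id}_{V_l} \cdot \overline{\rho_l(G_{M_0})_1} \,\subseteq\, \G_m {\rm Id}_{V_l} \cdot G_{l, M_0}^{\alg} = G_{l, M_0}^{\alg},
$$
the last equality because $G_{l, M_0}^{\alg}$ contains the homotheties, whence equality throughout.

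For (c): if $\overline{\rho_l(G_{M_0})_1} = G_{l, M_0, 1}^{\alg}$ then $-{\rm Id}_{V_l} \in \overline{\rho_l(G_{M_0})_1}$ because $-{\rm Id}_{V_l} \in G_{l, M_0, 1}^{\alg}$ as noted above; conversely, if $-{\rm Id}_{V_l} \in \overline{\rho_l(G_{M_0})_1}$ then by (b) either $\overline{\rho_l(G_{M_0})_1} = G_{l, M_0, 1}^{\alg}$ already, or $\overline{\rho_l(G_{M_0})_1} = (G_{l, M_0, 1}^{\alg})^{\circ}$, in which case $-{\rm Id}_{V_l} \in (G_{l, M_0, 1}^{\alg})^{\circ}$ and Proposition~\ref{L0realizing conn comp: even case}(c) gives $(G_{l, M_0, 1}^{\alg})^{\circ} = G_{l, M_0, 1}^{\alg}$ and hence again $\overline{\rho_l(G_{M_0})_1} = G_{l, M_0, 1}^{\alg}$. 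Part (d) is then the contrapositive of this last implication together with the dichotomy in (b). Finally (e) follows because for $n$ odd Proposition~\ref{L0realizing conn comp: even case}(d) (equivalently Theorem~\ref{L0realizing conn comp}) gives $(G_{l, M_0, 1}^{\alg})^{\circ} = G_{l, M_0, 1}^{\alg}$ containing $-{\rm Id}_{V_l}$, which together with the chain of inclusions above forces all three groups to coincide.

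The main obstacle is precisely the preliminary chain of inclusions, specifically the passage from $l$-adic openness of $\rho_l(G_{M_0})_1$ inside $G_{l, M_0, 1}^{\alg}(\Q_l)$ to Zariski density in the identity component: one must invoke smoothness of $G_{l, M_0, 1}^{\alg}$ in characteristic zero and the fact that a nonempty $l$-adic open subset of the $\Q_l$-points of an irreducible $\Q_l$-variety is Zariski dense, rather than treat this as obvious, exactly as in \cite{BK2}. Once that is in place, the four parts reduce to combining Bogomolov's theorem with Proposition~\ref{L0realizing conn comp: even case}.
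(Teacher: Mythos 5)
Your proof is correct, and it organizes the argument differently from the paper's. The paper proves (a) first, by observing (via Bogomolov and continuity of $\chi$ on the compact group $\rho_l(G_{M_0})$) that $\rho_l(G_{M_0})$ is a \emph{finite} union of cosets of $H_l\cdot\rho_l(G_{M_0})_1$ (where $H_l$ is the group of homotheties in the image), then taking Zariski closures of that decomposition inside the irreducible group $G_{l, M_0}^{\alg}$ to force $G_{l, M_0}^{\alg} = \G_m {\rm Id}_{V_l}\cdot\overline{\rho_l(G_{M_0})_1}$; part (b) is then extracted from (a) by the dimension count $\G_m{\rm Id}_{V_l}\cap G_{l,M_0,1}^{\alg}=\{\pm 1\}$. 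You instead prove (b) first, by using Bogomolov to get $l$-adic openness of $\rho_l(G_{M_0})_1$ in $G_{l,M_0,1}^{\alg}(\Q_l)$ and then the density principle (a proper closed subscheme of a smooth $\Q_l$-group cannot contain an $l$-adically open subset of its $\Q_l$-points) to establish $(G_{l,M_0,1}^{\alg})^{\circ}\subseteq\overline{\rho_l(G_{M_0})_1}$ directly; (a) then drops out of Proposition~\ref{L0realizing conn comp: even case}(a). Both arguments have the same essential input (Bogomolov plus the two-component dichotomy of Proposition~\ref{L0realizing conn comp: even case}); your route is arguably cleaner since it works on the symplectic/orthogonal level $G_{l,M_0,1}^{\alg}$ throughout and avoids the coset decomposition of $\rho_l(G_{M_0})$, while the paper's route does a little more explicit bookkeeping with homotheties. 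The downstream derivations of (c), (d), (e) agree with the paper in substance.

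One small presentational caveat: where you say the needed fact is that ``a nonempty $l$-adic open subset of the $\Q_l$-points of an irreducible $\Q_l$-variety is Zariski dense,'' it is cleaner (and avoids any worry about $\Q_l$-points being sparse) to phrase it in the form you implicitly use: a Zariski-closed subgroup of $G_{l,M_0,1}^{\alg}$ whose $\Q_l$-points contain a nonempty $l$-adic open subset of $G_{l,M_0,1}^{\alg}(\Q_l)$ must have full dimension, because a closed subscheme of strictly smaller dimension has $\Q_l$-points of Haar measure zero; full dimension plus connectedness of $(G_{l,M_0,1}^{\alg})^{\circ}$ then forces containment of the identity component.
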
 

\begin{proof}
(a) Let $H_l := \rho_{l} (G_{M_0}) \, \cap \,\,  \Q_{l}^{\times} {\rm{Id}}_{V_l}$ denote the subgroup of homotheties
of the group $\rho_{l} (G_{M_0})$. Because $\rho_{l} (G_{M_0})$ is compact, the restriction of $\chi$ to $\Q_{l}$-points
is continuous for the $l$-adic topology, and $\Z_{l}^{\times}$ is the maximal compact subgroup of $\Q_{l}^{\times}$, we have $\chi (\rho_{l} (G_{M_0})) \subset \Z_{l}^{\times}$.
Hence we have the following exact sequence of groups:
\begin{equation}
1 \,\, {\stackrel{}{\longrightarrow}} \,\,\, H_{l} \cdot \rho_{l} (G_{M_0})_{1} \, / \, H_{l} \,\,\, {\stackrel{}{\longrightarrow}} \,\,\,  
\rho_{l} (G_{M_0}) / \, H_{l} \,\,\, {\stackrel{\chi}{\longrightarrow}} \,\,\, \Z_{l}^{\times} / \chi(H_l).
\label{The exact sequence for rho (G) over Hl}
\end{equation}
It follows by Bogomolov's theorem on homotheties that the group $\Z_{l}^{\times} /  \chi(H_l)$ is finite. Hence
$[\rho_{l} (G_{M_0})\colon H_{l} \cdot \rho_{l} (G_{M_0})_{1}] \, < \, \infty$. In this way we obtain the 
equality
\begin{equation}
\rho_{l} (G_{M_0})  \, = \, \bigsqcup_{i=1}^{k} \,\,\, g_i \, (H_{l} \cdot \rho_{l} (G_{M_0})_{1})
\label{rho l GL0 decomposed into cosets of rho l GL0 1}\end{equation}
for some coset representatives $g_i \in \rho_{l} (G_{M_0})$. Recall that
$\overline{\rho_{l} (G_{M_0})_{1}}  \, \subset \,  G_{l, M_0, 1}^{\alg}$. Hence:
\begin{equation}
\rho_{l} (G_{M_0})  \subset \bigcup_{i=1}^{k} \, g_i \, ( \G_{m} {\rm{Id}}_{V_l} \cdot 
\overline{\rho_{l} (G_{M_0})_{1}} \, ) \, \subset \, \bigcup_{i=1}^{k} \, g_i \, ( \G_{m} {\rm{Id}}_{V_l} \cdot 
G_{l, M_0, 1}^{\alg}) \subset  G_{l, M_0}^{\alg}. 
\label{rho l GL0 contained in cosets of Gl L0 1 alg}
\end{equation}
Since the second and third terms from the left of \eqref{rho l GL0 contained in cosets of Gl L0 1 alg} are closed in 
$G_{l, M_0}^{\alg}$ (cf. the proof of Proposition \ref{L0realizing conn comp: even case}(a)), then taking the Zariski closure of the left term of \eqref{rho l GL0 contained in cosets of Gl L0 1 alg} in $G_{l, M_0}^{\alg}$ we obtain
\begin{equation}
G_{l, M_0}^{\alg}  =  \bigcup_{i=1}^{k} \, g_i \, ( \G_{m} {\rm{Id}}_{V_l} \cdot 
\overline{\rho_{l} (G_{M_0})_{1}} \, )  = \bigcup_{i=1}^{k} \, g_i \, ( \G_{m} {\rm{Id}}_{V_l} \cdot G_{l, M_0, 1}^{\alg}).
\label{rho l GL0 decomposed into cosets of Gl L0 1 alg}
\end{equation}
The same argument as in the proof of Proposition \ref{L0realizing conn comp: even case}(a) shows that 
all the sets $g_i \, ( \G_{m} {\rm{Id}}_{V_l} \cdot \overline{\rho_{l} (G_{M_0})_{1}})$ are closed in 
$G_{l, M_0, 1}^{\alg}$ and of the same dimension as $G_{l, M_0}^{\alg}$. Because 
$G_{l, M_0}^{\alg}$ is irreducible the equality \eqref{rho l GL0 decomposed into cosets of Gl L0 1 alg} implies 
\begin{equation}
{G_{l, M_0}^{\alg}} \, = \, \G_{m} {\rm{Id}}_{V_l} \cdot \overline{\rho_{l} (G_{M_0})_{1}} \, =
\, \G_{m} {\rm{Id}}_{V_l} \cdot G_{l, M_0, 1}^{\alg} \, .
\label{Gl L0alg = Gm times rho l G L0 1}
\end{equation}

\noindent
(b) Observe that $\G_{m} {\rm{Id}}_{V_l} \, \cap \, G_{l, M_0, 1}^{\alg} = \{\pm 1\}$. Hence it follows from 
\eqref{Gl L0alg = Gm times rho l G L0 1} that $\overline{\rho_{l} (G_{M_0})_{1}}$ has the 
same dimension as $G_{l, M_0, 1}^{\alg}$. Since $(G_{l, M_0, 1}^{\alg})^{\circ}$ is irreducible, the claim follows by 
Proposition \ref{L0realizing conn comp: even case}(b). 
\medskip

\noindent
(c) This follows by (b) and Proposition \ref{L0realizing conn comp: even case}(c).
\medskip

\noindent
(d) This follows by (b) and (c).
\medskip

\noindent
(e) This follows by (b) and (c) and Proposition \ref{L0realizing conn comp: even case}(d).
\end{proof}
\medskip

Consider the natural continuous homomorphism
\begin{equation}
\epsilon_{l, K}\colon G_K \rightarrow G_{l, K}^{\alg}(\Q_l). 
\label{epsilon}
\end{equation}
Since $(G_{l, K}^{\alg})^{\circ}$ is open in $G_{l, K}^{\alg}$, we get
\begin{equation}
\epsilon_{l, K}^{-1} ((G_{l, K}^{\alg})^{\circ}(\Q_l)) \,\, = \,\, G_{K_0}
\label{epsilon and tilde epsilon}
\end{equation} 
for some finite Galois extension $K_{0}/K$. 

\begin{remark}
We observe that $K_{0}/K$ is the minimal extension such that $G_{l, K_{0}}^{\alg} = (G_{l, K}^{\alg})^{\circ}$.
When $n$ is odd, it follows by Theorems \ref{L0realizing conn comp} and 
\ref{conn comp equal for for GlK1 alg and GlK alg} (equivalently \cite[Prop. 4.7, Th. 4.8]{BK2}) 
that $K_{0}/K$ is the minimal extension such that $G_{l, K_{0}}^{\alg} = (G_{l, K}^{\alg})^{\circ}$ and $G_{l, K_{0}, 1}^{\alg} = 
(G_{l, K, 1}^{\alg})^{\circ}$.  Obviously, Propositions \ref{L0realizing conn comp: even case} and \ref{L0 realizing for rho l (GK): even case} hold for $M_0 = K_0$. In principle, $K_0$ may depend on $l$.
\label{properties of K0}
\end{remark} 

The following theorem and corollary extend \cite[Prop. 4.7 and Theorem 4.8]{BK2} to 
the arbitrary weight case. 

\begin{theorem} \label{K0 almost realizing Glk1 alg: even case} Let $n$ be an arbitrary weight.
Then:
\begin{itemize}
\item[(a)]  $G_{l, K_0, 1}^{\alg} = ({G_{l, K, 1}^{\alg}})^{\circ} \cup  
\, - {\rm{Id}}_{V_l} ({G_{l, K, 1}^{\alg}})^{\circ}$.
 \item[(b)] The homomorphism 
$$
i_{CC}\colon \pi_{0} (G_{l, K, 1}^{\alg}) \quad {\stackrel{}{\longrightarrow}} \quad 
\pi_{0} (G_{l, K}^{\alg})
$$ 
is an epimorphism with kernel $\{({G_{l, K, 1}^{\alg}})^{\circ}, \,\,\, - {\rm{Id}}_{V_l} ({G_{l, K, 1}^{\alg}})^{\circ}\}$.
\end{itemize}
\end{theorem}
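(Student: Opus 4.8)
The plan is to obtain both (a) and (b) as a direct consequence of three results already established for arbitrary weight, all applied to the field $K_0$. Recall from \eqref{epsilon and tilde epsilon} and Remark~\ref{properties of K0} that $K_0/K$ is a finite Galois extension with $G_{l, K_0}^{\alg} = (G_{l, K}^{\alg})^{\circ}$ connected; in particular $K_0$ is simultaneously a legitimate choice for $L$ in Theorem~\ref{L0realizing conn comp for GlK alg}, for $M_0$ in Proposition~\ref{L0realizing conn comp: even case}, and for $L_0$ in Theorem~\ref{L0realizing conn comp of GlKalg}.

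For part (a), first apply Theorem~\ref{L0realizing conn comp for GlK alg} with $L = K_0$; its ``in particular'' clause yields the identification of identity components $(G_{l, K, 1}^{\alg})^{\circ} = (G_{l, K_0, 1}^{\alg})^{\circ}$. Then apply Proposition~\ref{L0realizing conn comp: even case}(b) with $M_0 = K_0$, which gives
$$
G_{l, K_0, 1}^{\alg} = (G_{l, K_0, 1}^{\alg})^{\circ} \, \cup \, - {\rm{Id}}_{V_l} \cdot (G_{l, K_0, 1}^{\alg})^{\circ} = ({G_{l, K, 1}^{\alg}})^{\circ} \, \cup \, - {\rm{Id}}_{V_l} \, ({G_{l, K, 1}^{\alg}})^{\circ},
$$
which is exactly statement (a). This is a union of at most two cosets of $(G_{l, K, 1}^{\alg})^{\circ}$, and it reduces to a single coset precisely when $- {\rm{Id}}_{V_l} \in (G_{l, K, 1}^{\alg})^{\circ}$, by part (c) of that proposition.

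For part (b), invoke Theorem~\ref{L0realizing conn comp of GlKalg} with $L_0 = K_0$: it supplies the exact sequence
$$
1 \,\, {\stackrel{}{\longrightarrow}} \,\, G_{l, K_0, 1}^{\alg} / (G_{l, K, 1}^{\alg})^{\circ} \,\, {\stackrel{}{\longrightarrow}} \,\, \pi_{0}(G_{l, K, 1}^{\alg}) \,\, {\stackrel{i_{CC}}{\longrightarrow}} \,\, \pi_{0}(G_{l, K}^{\alg}) \,\, {\stackrel{}{\longrightarrow}} \,\, 1,
$$
so $i_{CC}$ is an epimorphism whose kernel is $G_{l, K_0, 1}^{\alg} / (G_{l, K, 1}^{\alg})^{\circ}$. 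By part (a), together with $(G_{l, K, 1}^{\alg})^{\circ} = (G_{l, K_0, 1}^{\alg})^{\circ}$, this kernel is precisely $\{({G_{l, K, 1}^{\alg}})^{\circ}, \,\, - {\rm{Id}}_{V_l} ({G_{l, K, 1}^{\alg}})^{\circ}\}$, which is (b).

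Since each ingredient is already in hand, the argument is essentially bookkeeping; the only points demanding attention are verifying the hypotheses — namely that $K_0/K$ is finite Galois and $G_{l, K_0}^{\alg}$ is connected, both recorded in Remark~\ref{properties of K0} — and checking that the coset $- {\rm{Id}}_{V_l} (G_{l, K, 1}^{\alg})^{\circ}$ appearing in Proposition~\ref{L0realizing conn comp: even case} is the same one that shows up as the nontrivial class in $\ker i_{CC}$ under the quotient $G_{l, K_0, 1}^{\alg} \to G_{l, K_0, 1}^{\alg}/(G_{l, K, 1}^{\alg})^{\circ}$; this is immediate once the two identity components are identified.
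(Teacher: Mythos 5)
Your proof is correct and follows essentially the same route as the paper: part (a) is Proposition~\ref{L0realizing conn comp: even case}(b) applied with $M_0 = K_0$ combined with the identification of identity components from Theorem~\ref{L0realizing conn comp for GlK alg}, and part (b) is then read off from the exact sequence relating $\pi_0(G_{l,K,1}^{\alg})$ and $\pi_0(G_{l,K}^{\alg})$. The paper cites the isomorphism \eqref{L over K gives iso of GlK1 alg mod GlL1 alg with GlK alg mod GlL alg} directly for (b) where you cite Theorem~\ref{L0realizing conn comp of GlKalg}, but the latter is an immediate consequence of the former, so the arguments coincide.
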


\begin{proof} (a) By the definition of $K_0$ we have $({G_{l, K}^{\alg}})^{\circ} = {G_{l, K_0}^{\alg}}$ (see Remark
 \ref{properties of K0}). It follows by Proposition \ref{L0realizing conn comp: even case}(b) 
and by \eqref{GlK1 alg circ equals GlL1 alg circ and GlK alg circ equals GlL alg circ} that 
\begin{equation}
{G_{l, K_0, 1}^{\alg}} = 
({G_{l, K_0, 1}^{\alg}})^{\circ} \cup \, - {\rm{Id}}_{V_l} ({G_{l, K_0, 1}^{\alg}})^{\circ} = 
({G_{l, K, 1}^{\alg}})^{\circ} \cup \, - {\rm{Id}}_{V_l} ({G_{l, K, 1}^{\alg}})^{\circ}. 
\label{-Id in GlK1 alg circ gives GlK1 alg circ the same as GlK01 alg circ the same as GlK1 alg}
\end{equation}

\noindent
(b) This follows by (a) and \eqref{L over K gives iso of GlK1 alg mod GlL1 alg with GlK alg mod GlL alg}.
\end{proof}

\begin{corollary} 
\label{K0 realizing Glk1 alg: even case condition}
Let $n$ be an arbitrary weight.  Then 
$- {\rm{Id}}_{V_l} \in ({G_{l, K, 1}^{\alg}})^{\circ}$ if and only if the following conditions hold:  
\begin{itemize}
\item[(1)] $G_{l, K_0, 1}^{\alg} = ({G_{l, K, 1}^{\alg}})^{\circ}$;
\item[(2)] $i_{CC}\colon \pi_{0} (G_{l, K, 1}^{\alg}) \quad {\stackrel{\simeq}{\longrightarrow}} \quad 
\pi_{0} (G_{l, K}^{\alg})$.
\end{itemize}
\end{corollary}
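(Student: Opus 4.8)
The plan is to deduce the corollary formally from Theorem~\ref{K0 almost realizing Glk1 alg: even case}, which already packages the two structural facts we need: part (a) exhibits $G_{l, K_0, 1}^{\alg}$ as the union $({G_{l, K, 1}^{\alg}})^{\circ} \cup - {\rm{Id}}_{V_l} ({G_{l, K, 1}^{\alg}})^{\circ}$, and part (b) identifies $\Ker i_{CC}$ with the two-element set $\{({G_{l, K, 1}^{\alg}})^{\circ},\, - {\rm{Id}}_{V_l} ({G_{l, K, 1}^{\alg}})^{\circ}\}$ inside $\pi_{0} (G_{l, K, 1}^{\alg})$. The single elementary observation that drives the argument is that the coset $- {\rm{Id}}_{V_l} ({G_{l, K, 1}^{\alg}})^{\circ}$ of the subgroup $({G_{l, K, 1}^{\alg}})^{\circ}$ coincides with $({G_{l, K, 1}^{\alg}})^{\circ}$ if and only if $- {\rm{Id}}_{V_l} \in ({G_{l, K, 1}^{\alg}})^{\circ}$, since two cosets of a subgroup are either equal or disjoint. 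Thus I would first record that both condition (1) and condition (2) are each \emph{individually} equivalent to the membership $- {\rm{Id}}_{V_l} \in ({G_{l, K, 1}^{\alg}})^{\circ}$, so that the conjunction in the statement is in fact equivalent to that membership (and strictly speaking redundant).

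For the forward implication I would assume $- {\rm{Id}}_{V_l} \in ({G_{l, K, 1}^{\alg}})^{\circ}$. Then the two cosets in Theorem~\ref{K0 almost realizing Glk1 alg: even case}(a) are equal, so $G_{l, K_0, 1}^{\alg} = ({G_{l, K, 1}^{\alg}})^{\circ}$, which is (1); and the kernel in Theorem~\ref{K0 almost realizing Glk1 alg: even case}(b) collapses to the identity element of $\pi_{0} (G_{l, K, 1}^{\alg})$, so $i_{CC}$ is injective, hence (being surjective by that same part (b)) an isomorphism, which is (2).

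For the converse it is enough to assume either of the two conditions. If (1) holds, then Theorem~\ref{K0 almost realizing Glk1 alg: even case}(a) gives $- {\rm{Id}}_{V_l} ({G_{l, K, 1}^{\alg}})^{\circ} \subseteq G_{l, K_0, 1}^{\alg} = ({G_{l, K, 1}^{\alg}})^{\circ}$, so $- {\rm{Id}}_{V_l} \in ({G_{l, K, 1}^{\alg}})^{\circ}$; alternatively, if (2) holds, then $\Ker i_{CC}$ is trivial, which by Theorem~\ref{K0 almost realizing Glk1 alg: even case}(b) forces $- {\rm{Id}}_{V_l} ({G_{l, K, 1}^{\alg}})^{\circ} = ({G_{l, K, 1}^{\alg}})^{\circ}$, again giving $- {\rm{Id}}_{V_l} \in ({G_{l, K, 1}^{\alg}})^{\circ}$. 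I do not expect any genuine obstacle here: the corollary is a bookkeeping consequence of the preceding theorem, and the only point meriting a sentence of care is the coset dichotomy noted above, together with the remark that (1) and (2) are not logically independent.
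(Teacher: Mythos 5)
Your proof is correct and takes essentially the same route as the paper's: both rest on Theorem~\ref{K0 almost realizing Glk1 alg: even case}, with your explicit coset dichotomy playing the role the paper delegates to its citation of Proposition~\ref{L0realizing conn comp: even case}(c). The additional observation that conditions (1) and (2) are each individually equivalent to $-\,{\rm{Id}}_{V_l} \in (G_{l, K, 1}^{\alg})^{\circ}$, so the conjunction is logically redundant, is accurate and a mild clarification over the paper's terse "follows immediately."
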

\begin{proof}
This follows immediately from Proposition \ref{L0realizing conn comp: even case} (c) and Theorem \ref{K0 almost realizing Glk1 alg: even case}.
\end{proof}
\medskip

Define 
$$
G_{l, K, 1}^{\alg, \, 0} \, := \, (G_{l, K}^{\alg})^{\circ} \, \cap \, {G_{l, K, 1}^{\alg}}.
$$
\begin{lemma} 
\label{Lemma G l K 0 alg = G l K 1 alg 0}
We have the following equality
\begin{equation}
G_{l, K, 1}^{\alg, \, 0} \, = \, G_{l, K_0, 1}^{\alg} 
\label{G l K 0 alg = G l K 1 alg 0}
\end{equation}
\end{lemma}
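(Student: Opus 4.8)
The plan is to unwind the definitions and use the characterization of $K_0$ together with the exact sequence for $G_{l,L,1}^{\alg}$ versus $G_{l,L}^{\alg}$. By definition, $G_{l,K,1}^{\alg,\,0} = (G_{l,K}^{\alg})^{\circ} \cap G_{l,K,1}^{\alg}$, and by Remark~\ref{properties of K0} we have $(G_{l,K}^{\alg})^{\circ} = G_{l,K_0}^{\alg}$. Substituting, $G_{l,K,1}^{\alg,\,0} = G_{l,K_0}^{\alg} \cap G_{l,K,1}^{\alg}$. On the other hand, by \eqref{Def of GlL1alg} applied to $L = K_0$, we have $G_{l,K_0,1}^{\alg} = G_{l,K_0}^{\alg} \cap \Iso_{(V_l,\psi_l)}$, and similarly $G_{l,K,1}^{\alg} = G_{l,K}^{\alg} \cap \Iso_{(V_l,\psi_l)}$. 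So the problem reduces to comparing $G_{l,K_0}^{\alg} \cap \big(G_{l,K}^{\alg} \cap \Iso_{(V_l,\psi_l)}\big)$ with $G_{l,K_0}^{\alg} \cap \Iso_{(V_l,\psi_l)}$.

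First I would note the elementary containment $G_{l,K_0}^{\alg} \subseteq G_{l,K}^{\alg}$: this holds because $K_0/K$ is a finite extension, so $G_{K_0} \subseteq G_K$, hence $\rho_l(G_{K_0}) \subseteq \rho_l(G_K)$, and taking Zariski closures preserves inclusion. (Alternatively one can invoke that $G_{l,K_0}^{\alg} = (G_{l,K}^{\alg})^{\circ}$ is literally the identity component, which is certainly a subgroup of $G_{l,K}^{\alg}$.) Given this, intersecting with $G_{l,K_0}^{\alg}$ on the outside is redundant: $G_{l,K_0}^{\alg} \cap G_{l,K}^{\alg} = G_{l,K_0}^{\alg}$. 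Therefore
$$
G_{l,K,1}^{\alg,\,0} = G_{l,K_0}^{\alg} \cap \big(G_{l,K}^{\alg} \cap \Iso_{(V_l,\psi_l)}\big) = \big(G_{l,K_0}^{\alg} \cap G_{l,K}^{\alg}\big) \cap \Iso_{(V_l,\psi_l)} = G_{l,K_0}^{\alg} \cap \Iso_{(V_l,\psi_l)} = G_{l,K_0,1}^{\alg},
$$
which is exactly \eqref{G l K 0 alg = G l K 1 alg 0}. The only point requiring care is the identification $(G_{l,K}^{\alg})^{\circ} = G_{l,K_0}^{\alg}$, but this is precisely the defining property of $K_0$ recorded in \eqref{epsilon and tilde epsilon} and Remark~\ref{properties of K0}, so there is no real obstacle here; the lemma is essentially a bookkeeping identity assembling the definition of $G_{l,K,1}^{\alg,\,0}$, the definition of $K_0$, and the definitions \eqref{Def of GlL1alg} of the index-$1$ subgroups for the fields $K$ and $K_0$.
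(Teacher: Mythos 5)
Your proof is correct and follows essentially the same route as the paper's: both unwind the definitions, invoke $(G_{l,K}^{\alg})^{\circ} = G_{l,K_0}^{\alg}$ from the defining property of $K_0$, and observe that the intersection with the larger group $G_{l,K}^{\alg}$ is redundant, reducing everything to $G_{l,K_0}^{\alg} \cap \Iso_{(V_l,\psi_l)} = G_{l,K_0,1}^{\alg}$. Your version merely spells out the associativity of the intersections a little more explicitly.
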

\begin{proof}
We have
\begin{align*}
G_{l, K, 1}^{\alg, \, 0} \, &= \, (G_{l, K}^{\alg})^{\circ} \cap {G_{l, K, 1}^{\alg}} \, = \, 
(G_{l, K}^{\alg})^{\circ} \, \cap \, \Iso_{(V_{l}, \psi_{l})}  \\
&= \,  G_{l, K_0}^{\alg} \, \cap \, \Iso_{(V_{l}, \psi_{l})} \, = \,  G_{l, K_0, 1}^{\alg}.  \qedhere
\end{align*}
\end{proof}

\begin{definition} The {\em{Serre $l$-adic parity group}}:
\begin{equation}
\gpP_{\rm{S}} (V_{l}, \psi_{l}) := G_{l, K, 1}^{\alg, \, 0} \, / \, ({G_{l, K, 1}^{\alg}})^{\circ},
\end{equation}
is the component group of $G_{l, K, 1}^{\alg, \, 0} = G_{l, K_0, 1}^{\alg}$.
\label{The l-adic parity group}
\end{definition}

\begin{proposition} 
If $n$ is odd or $({G_{l, K, 1}^{\alg}})^{\circ} = 
C_{D} (\Iso_{(V_{l}, \psi_{l})})$ then
$\gpP_{\rm{S}} (V_{l}, \psi_{l})$ is trivial. 
\label{P(Vl, psil) trivial for n odd or GlK01 = CDl}
\end{proposition}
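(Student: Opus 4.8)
The plan is to reduce the statement to the connectedness of $G_{l, K_0, 1}^{\alg}$. By Lemma~\ref{Lemma G l K 0 alg = G l K 1 alg 0} we have $G_{l, K, 1}^{\alg, \, 0} = G_{l, K_0, 1}^{\alg}$, and by \eqref{GlK1 alg circ equals GlL1 alg circ and GlK alg circ equals GlL alg circ} applied to the Galois extension $K_0/K$ we have $(G_{l, K, 1}^{\alg})^{\circ} = (G_{l, K_0, 1}^{\alg})^{\circ}$; hence $\gpP_{\rm{S}}(V_l, \psi_l) = \pi_0(G_{l, K_0, 1}^{\alg})$, and it suffices to prove that $G_{l, K_0, 1}^{\alg}$ is connected under either hypothesis.

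When $n$ is odd this is immediate from results already in hand. Since $G_{l, K_0}^{\alg} = (G_{l, K}^{\alg})^{\circ}$ is connected by the very definition of $K_0$ (Remark~\ref{properties of K0}), Proposition~\ref{L0realizing conn comp: even case}(d) with $M_0 = K_0$ gives $G_{l, K_0, 1}^{\alg} = (G_{l, K_0, 1}^{\alg})^{\circ}$. (One could instead combine Theorems~\ref{conn comp equal for for GlK1 alg and GlK alg} and \ref{L0realizing conn comp of GlKalg}.)

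Under the hypothesis $(G_{l, K, 1}^{\alg})^{\circ} = C_{D}(\Iso_{(V_l, \psi_l)})$ I would argue by a sandwich for $G_{l, K, 1}^{\alg, \, 0} = (G_{l, K}^{\alg})^{\circ} \cap \Iso_{(V_l, \psi_l)}$. On one side, $(G_{l, K, 1}^{\alg})^{\circ}$ is a connected subgroup of $G_{l, K}^{\alg}$ contained in $\Iso_{(V_l, \psi_l)}$, hence is contained in $(G_{l, K}^{\alg})^{\circ}$ and therefore in $G_{l, K, 1}^{\alg, \, 0}$. On the other side, $(G_{l, K}^{\alg})^{\circ}$ is contained in the finite-index subgroup $(G_{l, K}^{\alg})^{\id}$ (a union of connected components), which by Definition~\ref{twisted form of GlK and GlK1} consists of the elements of $G_{l, K}^{\alg}$ centralizing $D$; thus $(G_{l, K}^{\alg})^{\circ} \subseteq C_{D}(\GIso_{(V_l, \psi_l)})$, and intersecting with $\Iso_{(V_l, \psi_l)}$ yields $G_{l, K, 1}^{\alg, \, 0} \subseteq C_{D}(\GIso_{(V_l, \psi_l)}) \cap \Iso_{(V_l, \psi_l)} = C_{D}(\Iso_{(V_l, \psi_l)})$. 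Combining the two inclusions with the hypothesis, $(G_{l, K, 1}^{\alg})^{\circ} \subseteq G_{l, K, 1}^{\alg, \, 0} \subseteq C_{D}(\Iso_{(V_l, \psi_l)}) = (G_{l, K, 1}^{\alg})^{\circ}$, so all three groups coincide and $G_{l, K_0, 1}^{\alg} = G_{l, K, 1}^{\alg, \, 0}$ is connected.

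I do not expect a genuine obstacle: the argument is purely a matter of threading together the structural results already established and keeping careful track of which twisted forms and centralizers are being intersected. The only identity to keep in mind is $C_{D}(\GIso_{(V_l, \psi_l)}) \cap \Iso_{(V_l, \psi_l)} = C_{D}(\Iso_{(V_l, \psi_l)})$, which is immediate from the definitions (cf.~\eqref{CD GIso = Gm CD Iso}).
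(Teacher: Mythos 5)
Your proof is correct and follows essentially the same sandwich as the paper's: both reduce to showing $(G_{l,K,1}^{\alg})^{\circ} \subseteq G_{l,K_0,1}^{\alg} \subseteq C_{D}(\Iso_{(V_l,\psi_l)})$ and then invoke the hypothesis (resp., for $n$ odd, the connectedness results of \S\ref{identity connected component of GlK1alg}). The only cosmetic difference is that you derive the upper bound directly from the finite-index centralizing subgroup $(G_{l,K}^{\alg})^{\id}$, whereas the paper routes the same containment through the twisted Lefschetz groups $\gpDL_{K_0}(V,\psi,D)_{\Q_l} \subset \gpDL_{K_e}(V,\psi,D)_{\Q_l} = C_{D}(\Iso_{(V_l,\psi_l)})$.
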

\begin{proof}
This follows from 
Remark \ref{properties of K0}, \cite[Remark 6.3]{BK2} and the 
following observation:
\begin{gather*}
({G_{l, K, 1}^{\alg}})^{\circ} \subset G_{l, K_0, 1}^{\alg} \subset \gpDL_{K_0}(V, \psi, D)_{\Q_l} \subset
\gpDL_{K_e}(V, \psi, D)_{\Q_l} = \\
= \gpDL_{K}^{\id}(V, \psi, D)_{\Q_l}  = C_{D} (\Iso_{(V, \psi)})_{\Q_{l}} =
C_{D} (\Iso_{(V_{l}, \psi_{l})}). \qedhere
\end{gather*}
\end{proof}

\begin{proposition}
If $n$ is even and ${\rm dim}_{\Q}\, V$ is odd then
$\gpP_{\rm{S}} (V_{l}, \psi_{l})$ is nontrivial. 
\label{P(Vl, psil) nontrivial for n even and dim V odd}
\end{proposition}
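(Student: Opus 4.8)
The plan is to transport the proof of Proposition~\ref{P(V, psi) nontrivial for n even and dim V odd} to the $l$-adic setting. By Lemma~\ref{Lemma G l K 0 alg = G l K 1 alg 0} and Definition~\ref{The l-adic parity group} we have $\gpP_{\rm{S}}(V_{l}, \psi_{l}) = G_{l, K_0, 1}^{\alg}/({G_{l, K, 1}^{\alg}})^{\circ}$, and by \eqref{GlK1 alg circ equals GlL1 alg circ and GlK alg circ equals GlL alg circ} the identity component of $G_{l, K_0, 1}^{\alg}$ is exactly $({G_{l, K, 1}^{\alg}})^{\circ}$. So it is enough to produce an element of $G_{l, K_0, 1}^{\alg}$ that does not lie in $({G_{l, K, 1}^{\alg}})^{\circ}$, and the natural candidate is $- {\rm{Id}}_{V_l}$.

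First I would verify that $- {\rm{Id}}_{V_l} \in G_{l, K_0, 1}^{\alg}$. By Bogomolov's theorem on homotheties (used in the discussion preceding \eqref{The exact sequence for GlL1alg and GlLalg}), the group $G_{l, K_0}^{\alg}$ contains all the homotheties $\G_{m} {\rm{Id}}_{V_l}$, in particular $- {\rm{Id}}_{V_l}$; since $\chi(- {\rm{Id}}_{V_l}) = (-1)^{2} = 1$, the element $- {\rm{Id}}_{V_l}$ lies in $G_{l, K_0}^{\alg} \cap \Iso_{(V_l, \psi_l)} = G_{l, K_0, 1}^{\alg}$. This step --- being sure the homotheties sit inside $G_{l, K_0}^{\alg}$ --- is the only place where something could be overlooked, but it is exactly what Bogomolov's theorem provides for our Hodge--Tate families.

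Next I would show $- {\rm{Id}}_{V_l} \notin ({G_{l, K, 1}^{\alg}})^{\circ}$, by the same determinant computation as in the Betti case. Since $n$ is even, $\Iso_{(V_l, \psi_l)} = \gpO_{(V_l, \psi_l)} \subseteq \{\pm {\rm{Id}}_{V_l}\}\,\SL_{V_l}$, so $\det$ takes only the value $1$ on the connected group $({G_{l, K, 1}^{\alg}})^{\circ}$, whereas $\det(- {\rm{Id}}_{V_l}) = (-1)^{\dim_{\Q_l} V_l} = (-1)^{\dim_{\Q} V} = -1$ because $\dim_{\Q} V$ is odd. Combining the two steps, the coset $- {\rm{Id}}_{V_l}\,({G_{l, K, 1}^{\alg}})^{\circ}$ is a nontrivial element of $\gpP_{\rm{S}}(V_{l}, \psi_{l})$; equivalently, by Proposition~\ref{L0realizing conn comp: even case}(c) applied with $M_0 = K_0$, the inclusion $({G_{l, K_0, 1}^{\alg}})^{\circ} \subseteq G_{l, K_0, 1}^{\alg}$ is strict. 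I do not anticipate any genuine obstacle beyond the bookkeeping already indicated.
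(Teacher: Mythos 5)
Your proof is correct and follows the same route as the paper: the key step in both is that for $n$ even one has $\Iso_{(V_l,\psi_l)}=\gpO_{(V_l,\psi_l)}\subseteq\{\pm\mathrm{Id}_{V_l}\}\SL_{V_l}$, so the connected group $(G_{l,K,1}^{\alg})^{\circ}$ lies in $\SL_{V_l}$ while $\det(-\mathrm{Id}_{V_l})=-1$ when $\dim_{\Q}V$ is odd. The only difference is that you make explicit (via Bogomolov and $\chi(-\mathrm{Id}_{V_l})=1$) the membership $-\mathrm{Id}_{V_l}\in G_{l,K_0,1}^{\alg}$, which the paper leaves implicit in Proposition~\ref{L0realizing conn comp: even case}(b)--(c); this is a harmless and indeed welcome addition.
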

\begin{proof} Because $n$ is even, $\Iso_{(V_{l}, \psi_{l})} = \gpO_{(V_{l}, \psi_{l})}$. Hence 
$({G_{l, K, 1}^{\alg}})^{\circ} \subset \Iso_{(V_{l}, \psi_{l})} \subset {\pm {\rm{Id}}_{V_{l}}} \SL_{V_{l}}$.  
It follows that $- {\rm{Id}}_{V_{l}} \notin ({G_{l, K, 1}^{\alg}})^{\circ}$ because ${\rm{det}} (- {\rm{Id}}_{V_{l}}) = 
(-1)^{{\rm{dim}}_{\Q_{l}}\, V_{l}} = -1$ and $({G_{l, K, 1}^{\alg}})^{\circ}$ is connected.
\end{proof}

\begin{example} 
Let $A$ be an abelian threefold over $K$. Let
$V_{l} :=  H_{et}^{2} ({\overline{A}}, \, \Q_{l}) = \bigwedge^{2} \, H_{et}^{1} ({\overline{A}}, \, \Q_{l})$. 
Then by Example \ref{An example of nontrivial Betti parity group} and Proposition 
\ref{P(Vl, psil) nontrivial for n even and dim V odd}, the group $\gpP_{\rm{S}} (V_{l}, \psi_{l})$ is nontrivial. 
\label{An example of nontrivial l-adic parity group}
\end{example}

\begin{example} 
Let $A$ be an elliptic curve over $K$. Let
$V_{l} := \Sym^{2} \, H_{et}^{1} ({\overline{A}}, \, \Q_{l})$. Then by Example
\ref{Another example of nontrivial Betti parity group}
and Proposition \ref{P(Vl, psil) nontrivial for n even and dim V odd},
the group $\gpP_{\rm{S}} (V_{l}, \psi_{l})$ is nontrivial. 
\label{Another example of nontrivial l-adic parity group}
\end{example}
\medskip

In \cite[Chapter 6]{BK2} we proved two technical results \cite[Lemma 6.8]{BK2} and \cite[Corollary 6.9]{BK2} 
that hold for any weight $n$. These results imply two theorems \cite[Theorems 6.10 and 6.11]{BK2} 
that also hold for any weight $n$. We state these theorems below. Put 
$$
\bar{l} \,\, = \,\,
\left\{
\begin{array}{lll}
l&\rm{if}&l > 2,\\
8&\rm{if}&l=2.\\
\end{array}\right.
$$

\begin{theorem} Assume that the following conditions hold:
\begin{itemize}
\item[(1)] $K_e \, \cap \, K(\mu_{\bar{l}}^{\otimes \, n}) \, = \, K$,
\item[(2)] $1 + l\Z_{l} \,\, {\rm{Id}}_{V_l}  \, \subset \, \rho_{l} (G_K)$.
\end{itemize}
Then all arrows in the following commutative diagram are isomorphisms:

\begin{figure}[H]
\[
\begin{tikzcd}[column sep=huge, row sep=large]
\rho_{l} (G_K) / \rho_{l} (G_{K_e})  \arrow{r}{\Zar_{K_e/K}}[swap]{\simeq}
& G_{l, K}^{\alg} / G_{l, K_e}^{\alg} \\ 
\rho_{l} (G_K)_1 / \rho_{l} (G_{K_e})_1  \arrow{r}{\Zar_{K_e / K, \, 1}}[swap]{\simeq} 
\arrow{u}{j_{K_e/K}}[swap]{\simeq} & G_{l, K, 1}^{\alg} / G_{l, K_{e}, 1}^{\alg} \arrow{u}{i_{K_e/K}}[swap]{\simeq} \\
\end{tikzcd}
\]
\\[-0.8cm]
\caption{}
\label{diagram comparing the rho (GK) with GLalg for Ke}
\end{figure}
\label{jKeK and Zar1 are isomorphisms}
\end{theorem}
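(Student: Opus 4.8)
The plan is to deduce the isomorphy of all four arrows from commutativity of the square together with the isomorphy of the top and right arrows, which are both already within reach. The square commutes on the nose: the two vertical arrows are induced by the inclusions $\Iso_{(V_l,\psi_l)}\hookrightarrow\GIso_{(V_l,\psi_l)}$, and the two horizontal arrows by the inclusion $\rho_l(G_K)\hookrightarrow G_{l,K}^{\alg}(\Q_l)$. The right-hand arrow $i_{K_e/K}$ is an isomorphism by Theorem~\ref{L0realizing conn comp for GlK alg} applied to the finite Galois extension $L=K_e$. The top arrow $\Zar_{K_e/K}$ is an isomorphism without invoking the hypotheses: by \eqref{Im rhol GK decomposed cosets} its source has cardinality $|\Gal(K_e/K)|$, by \eqref{decomposition of GlKalg into cosets of algebraic closures} its target also has cardinality $|\Gal(K_e/K)|$ and is generated by the cosets of the $\rho_l(\tilde\tau)$, each of which lies in $\rho_l(G_K)$, so the map is a surjective homomorphism of finite groups of equal order. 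Since the square commutes, $\Zar_{K_e/K,\,1}=i_{K_e/K}^{-1}\circ\Zar_{K_e/K}\circ j_{K_e/K}$, so it remains only to prove that $j_{K_e/K}$ is an isomorphism.

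Injectivity of $j_{K_e/K}$ is formal: if $x\in\rho_l(G_K)_1$ also lies in $\rho_l(G_{K_e})$, then $x\in\rho_l(G_{K_e})\cap\Iso_{(V_l,\psi_l)}=\rho_l(G_{K_e})_1$ by \eqref{Def of rholGL1}. For surjectivity I would show that for every $\tau\in\Gal(K_e/K)$ the coset $\rho_l(\tilde\tau)\,\rho_l(G_{K_e})$ meets $\Iso_{(V_l,\psi_l)}$. The key tool is a homothety lemma: if $\rho_l(g)=\lambda\,{\rm Id}_{V_l}$ is scalar, then by \eqref{GK equivariant action of D on Vl} conjugation by $\rho_l(g)$ fixes $D$ pointwise, so $g$ acts trivially on $D$, i.e.\ $g\in\Ker\rho_e=G_{K_e}$; hence $\rho_l(G_K)\cap\Q_l^{\times}{\rm Id}_{V_l}\subseteq\rho_l(G_{K_e})$. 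Combined with hypothesis~(2) this gives $1+l\Z_l\cdot{\rm Id}_{V_l}\subseteq\rho_l(G_{K_e})$, and since $\chi(\lambda\,{\rm Id}_{V_l})=\lambda^2$ and $(1+l\Z_l)^2=1+\bar l\,\Z_l$ (this is exactly why $\bar l=8$ when $l=2$), the character $\chi$ takes every value of $1+\bar l\,\Z_l$ already on scalars belonging to $\rho_l(G_{K_e})$.

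Now I would bring in hypothesis~(1). The field $K(\mu_{\bar l}^{\otimes\,n})$ is the fixed field of $\Ker(\chi_c^{\,n}\bmod\bar l)$, so $K_e\cap K(\mu_{\bar l}^{\otimes\,n})=K$ says precisely that the Galois extensions $K_e/K$ and $K(\mu_{\bar l}^{\otimes\,n})/K$ are linearly disjoint; equivalently, every $\tau\in\Gal(K_e/K)$ admits a lift $\tilde\tau\in G_K$ fixing $K(\mu_{\bar l}^{\otimes\,n})$, i.e.\ with $\chi_c(\tilde\tau)^n\equiv 1\pmod{\bar l}$. For such a lift, \eqref{compatibility of chi with chi-cycl.} gives $\chi(\rho_l(\tilde\tau))=\chi_c(\tilde\tau)^{-n}\in 1+\bar l\,\Z_l=(1+l\Z_l)^2$, say $=\lambda^2$ with $\lambda\in 1+l\Z_l$; by the homothety lemma $\lambda\,{\rm Id}_{V_l}\in\rho_l(G_{K_e})$, so $\rho_l(\tilde\tau)\cdot\lambda^{-1}{\rm Id}_{V_l}$ lies in the coset $\rho_l(\tilde\tau)\,\rho_l(G_{K_e})$ and has trivial $\chi$, hence lies in $\rho_l(G_K)_1$. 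This shows $j_{K_e/K}$ is surjective, and the theorem follows. This is essentially the argument of \cite[Ch.~6]{BK2} (via \cite[Lemma~6.8 and Cor.~6.9]{BK2}); it uses the weight $n$ only through \eqref{compatibility of chi with chi-cycl.}, so it applies verbatim for $n$ of either parity.

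The main obstacle, I expect, is not conceptual but the arithmetic bookkeeping around $\bar l$: establishing $(1+l\Z_l)^2=1+\bar l\,\Z_l$ in both the odd-$l$ and $l=2$ cases, and confirming that hypotheses~(1) and~(2) are stated with exactly the modulus needed so that a single scalar in $\rho_l(G_{K_e})$ can cancel $\chi(\rho_l(\tilde\tau))$. Everything else is manipulation of the coset decompositions from \S\ref{families of l-adic representations associated with Hodge structures} and \S\ref{identity connected component of GlK1alg} together with Theorem~\ref{L0realizing conn comp for GlK alg}.
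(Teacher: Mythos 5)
Your reconstruction is correct. The paper itself just cites \cite[Theorem~6.10]{BK2}, and your argument reproduces the substance of that reference: the right arrow $i_{K_e/K}$ is an isomorphism by Theorem~\ref{L0realizing conn comp for GlK alg} applied to $L=K_e$; the top arrow is a surjection between finite groups of the same order $|\Gal(K_e/K)|$ by \eqref{Im rhol GK decomposed cosets} and \eqref{decomposition of GlKalg into cosets of algebraic closures}; and the real content is that $j_{K_e/K}$ is onto, which you obtain from the homothety observation $\rho_l(G_K)\cap\Q_l^\times{\rm Id}_{V_l}\subseteq\rho_l(G_{K_e})$ (via \eqref{GK equivariant action of D on Vl}), the identity $(1+l\Z_l)^2=1+\bar l\,\Z_l$, hypothesis~(2), and linear disjointness coming from hypothesis~(1) together with \eqref{compatibility of chi with chi-cycl.}. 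That is exactly the role that Lemma~6.8 and Corollary~6.9 of \cite{BK2} play in the cited proof, so your route is essentially the same as the paper's; the only difference is that you have written out the scalar/$\bar l$ bookkeeping which the present paper leaves to the reference.
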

\begin{proof} 
See \cite[Theorem 6.10]{BK2}.
\end{proof}

\begin{theorem} Assume that the following two conditions hold:
\begin{itemize}
\item[(1)] $K_0 \, \cap \, K(\mu_{\bar{l}}^{\otimes \, n}) \, = \, K$,
\item[(2)] $1 + l\Z_{l} \,\, {\rm{Id}}_{V_l}  \, \subset \, \rho_{l} (G_K)$.
\end{itemize}
Then all arrows in the following commutative diagram are isomorphisms:
\begin{figure}[H]
\[
\begin{tikzcd}[column sep=huge, row sep=large]
\rho_{l} (G_K) / \rho_{l} (G_{K_0})  \arrow{r}{\Zar_{K_0/K}}[swap]{\simeq} & G_{l, K}^{\alg} / G_{l, K_0}^{\alg} \\ 
\rho_{l} (G_K)_1 / \rho_{l} (G_{K_0})_1  \arrow{r}{\Zar_{K_0/K,\, 1}}[swap]{\simeq} \arrow{u}{j_{K_0/K}}[swap]{\simeq}&  G_{l, K, 1}^{\alg} / G_{l, K_{0}, 1}^{\alg} \arrow{u}{i_{K_0/K}}[swap]{\simeq}
\end{tikzcd}
\]
\caption{}
\label{diagram comparing the rho (GK) with GLalg for K0}
\end{figure}
Moreover each coset of $G_{K}/G_{K_0}$ has the form $\tilde{\sigma}_{1} \, G_{K_0}$ such that:
\begin{gather}
\rho_{l} (\tilde{\sigma}_{1}) \in \rho_{l} (G_K)_1,
\label{jK0K and Zar1 are isomorphisms eq1} \\
\rho_{l} (G_K)_1 \,\, = \,\, 
{\bigsqcup}_{\tilde{\sigma}_1  G_{K_0}}  \,\, \rho_{l} (\tilde{\sigma}_{1})\, \rho_{l} (G_{K_0})_1,
\label{jK0K and Zar1 are isomorphisms eq2} \\
G_{l, K, 1}^{\alg} \,\, = \,\, 
{\bigsqcup}_{\tilde{\sigma}_1  G_{K_0}}  \,\, \rho_{l} (\tilde{\sigma}_{1})\, G_{l, K_0, 1}^{\alg}.
\label{jK0K and Zar1 are isomorphisms eq3}
\end{gather} 
\label{jK0K and Zar1 are isomorphisms}
\end{theorem}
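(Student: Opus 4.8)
The plan is to treat the four arrows of Diagram~\ref{diagram comparing the rho (GK) with GLalg for K0} by first disposing of two of them unconditionally and then reducing the remaining content to a statement about the cyclotomic character. First I would note that $\Zar_{K_0/K}$ is an isomorphism with no hypotheses needed: it is surjective because $\rho_l(G_K)$ is Zariski dense in $G_{l,K}^{\alg}$ and hence meets every connected component, i.e.\ every coset of $G_{l,K_0}^{\alg}=(G_{l,K}^{\alg})^{\circ}$; and it is injective because $\rho_l(\sigma)$ and $\rho_l(\tau)$ lie in one component exactly when $\rho_l(\sigma\tau^{-1})\in(G_{l,K}^{\alg})^{\circ}(\Q_l)$, i.e.\ $\sigma\tau^{-1}\in G_{K_0}$ by \eqref{epsilon and tilde epsilon}. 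Next, $i_{K_0/K}$ is an isomorphism by Theorem~\ref{L0realizing conn comp for GlK alg} applied to the finite Galois extension $K_0/K$. Since the square commutes (all four maps being induced by the evident inclusions), it then suffices to show that $j_{K_0/K}$ is an isomorphism, after which $\Zar_{K_0/K,1}=i_{K_0/K}^{-1}\circ\Zar_{K_0/K}\circ j_{K_0/K}$ is one too.

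Injectivity of $j_{K_0/K}$ is formal: if $g_1,g_2\in\rho_l(G_K)_1$ with $g_1^{-1}g_2\in\rho_l(G_{K_0})$, then $g_1^{-1}g_2$ also preserves $\psi_l$, so $g_1^{-1}g_2\in\rho_l(G_{K_0})\cap\Iso_{(V_l,\psi_l)}=\rho_l(G_{K_0})_1$. Surjectivity is the crux, and here assumptions (1) and (2) enter; I would argue as in \cite[Theorem~6.11]{BK2}, which rests on the general statements \cite[Lemma~6.8, Corollary~6.9]{BK2} (valid for all $n$) and runs parallel to Theorem~\ref{jKeK and Zar1 are isomorphisms}, with $K_0$ playing the role of $K_e$. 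By \eqref{compatibility of chi with chi-cycl.}, an element $\rho_l(\sigma)$ ($\sigma\in G_K$) differs from one of $\rho_l(G_{K_0})$ by an isometry precisely when $\chi_c(\sigma)^n\in\chi_c(G_{K_0})^n$, so one must show $\chi_c(G_K)^n=\chi_c(G_{K_0})^n$ inside $\Z_l^{\times}$. Assumption (2) produces, for every $a\in1+l\Z_l$, the homothety $a\,\mathrm{Id}_{V_l}$ in $\rho_l(G_K)$, whence $\chi_c(G_K)^n\supseteq(1+l\Z_l)^2=1+\bar l\,\Z_l$; assumption (1) says $K_0$ and $K(\mu_{\bar l}^{\otimes n})$ are linearly disjoint over $K$, i.e.\ $G_K=G_{K_0}\cdot G_{K(\mu_{\bar l}^{\otimes n})}$, and since $G_{K(\mu_{\bar l}^{\otimes n})}=\ker(\chi_c^n\bmod\bar l)$, applying $\chi_c^n$ gives $\chi_c(G_K)^n=\chi_c(G_{K_0})^n\cdot(1+\bar l\,\Z_l)$. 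The remaining point — absorbing the factor $1+\bar l\,\Z_l$ into $\chi_c(G_{K_0})^n$, i.e.\ showing that the wild part of $\mathrm{im}(\chi_c^n)$ survives restriction to $G_{K_0}$ — uses once more that the homotheties supplied by (2) lie in $(G_{l,K}^{\alg})^{\circ}=G_{l,K_0}^{\alg}$, so by Bogomolov's theorem and Remark~\ref{properties of Hodge--Tate representations } meet $\rho_l(G_{K_0})$ in an open subgroup of $\Q_l^{\times}\mathrm{Id}_{V_l}$; this is exactly the content of \cite[Lemma~6.8]{BK2}.

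Once $j_{K_0/K}$ is surjective, each coset of $G_K/G_{K_0}$ — identified with a coset of $\rho_l(G_K)/\rho_l(G_{K_0})$ since $\ker\rho_l\subseteq G_{K_0}$ — has a representative $\tilde\sigma_1$ with $\rho_l(\tilde\sigma_1)\in\rho_l(G_K)_1$, which is \eqref{jK0K and Zar1 are isomorphisms eq1}; decomposing $\rho_l(G_K)_1$ and $G_{l,K,1}^{\alg}$ over these cosets and using Theorem~\ref{L0realizing conn comp for GlK alg} together with \eqref{decomposition of Glk1 into twists} then yields \eqref{jK0K and Zar1 are isomorphisms eq2} and \eqref{jK0K and Zar1 are isomorphisms eq3}. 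I expect the main obstacle to be precisely that last identification in the surjectivity step: showing that $\mathrm{im}(\chi_c^n)$ does not shrink on passing from $K$ to $K_0$. Conditions (1) and (2) are engineered for this — (1) handling the tame part via linear disjointness, (2) the wild $l$-adic part via the homotheties — while everything else is bookkeeping with cosets and the exact sequence \eqref{The exact sequence for GlL1alg and GlLalg}.
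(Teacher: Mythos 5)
Your proposal is correct and follows the same route as the paper, which in fact just cites \cite[Theorem 6.11]{BK2} (proved there via \cite[Lemma~6.8, Corollary~6.9]{BK2}); you have effectively reconstructed that argument. The decomposition into the unconditional parts ($\Zar_{K_0/K}$ via density, $i_{K_0/K}$ via Theorem~\ref{L0realizing conn comp for GlK alg}, injectivity of $j_{K_0/K}$ formally), the reduction of surjectivity of $j_{K_0/K}$ to $\chi_c(G_K)^n=\chi_c(G_{K_0})^n$ via \eqref{compatibility of chi with chi-cycl.}, the use of (2) to get $(1+l\Z_l)^2=1+\bar l\,\Z_l$ inside $\chi_c(G_{K_0})^n$ (by observing the homotheties land in $(G_{l,K}^{\alg})^{\circ}(\Q_l)$ and hence come from $G_{K_0}$ via \eqref{epsilon and tilde epsilon}), and (1) to split off the tame obstruction by $G_K=G_{K_0}\cdot G_{K(\mu_{\bar l}^{\otimes n})}$, are exactly the ingredients of the cited proof.
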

\begin{proof} 
See \cite[Theorem 6.11]{BK2}.
\end{proof}

\begin{theorem} \label{K giving the isom overline rho l (GK) the same as GlK1 alg: arbitrary weight} 
Let $n$ be an arbitrary weight. Assume that:
\begin{itemize}
\item[(1)] $K_0 \, \cap \, K(\mu_{\bar{l}}^{\otimes \, n}) \, = \, K$,
\item[(2)] $1 + l\Z_{l} \,\, {\rm{Id}}_{V_l}  \, \subset \, \rho_{l} (G_K)$,
\item[(3)] $- {\rm{Id}}_{V_l} \in ({G_{l, K, 1}^{\alg}})^{\circ}$. 
\end{itemize}
Then the following equality holds:
\begin{equation}
\overline{\rho_{l} (G_K)_1} \, = \, {G_{l, K, 1}^{\alg}}.
\label{overline rho l GK1 the same as GlK1 alg}\end{equation}
\end{theorem}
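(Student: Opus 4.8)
The plan is to reduce the claimed equality over $K$ to the analogous statement over the field $K_0$, where $G_{l,K_0}^{\alg}$ is connected, and then to transport it back to $K$ through a coset decomposition whose representatives have been arranged to lie in $\rho_l(G_K)_1$. First I would exploit hypothesis (3): by Corollary~\ref{K0 realizing Glk1 alg: even case condition}, the condition $-{\rm{Id}}_{V_l}\in (G_{l, K, 1}^{\alg})^{\circ}$ forces $G_{l,K_0,1}^{\alg}=(G_{l, K, 1}^{\alg})^{\circ}$; in particular $G_{l,K_0,1}^{\alg}$ is connected. Recall also that, by the choice of $K_0$ (Remark~\ref{properties of K0}), the group $G_{l,K_0}^{\alg}=(G_{l, K}^{\alg})^{\circ}$ is connected, so Proposition~\ref{L0 realizing for rho l (GK): even case} is available with $M_0=K_0$.

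Next I would compute the Zariski closure of $\rho_l(G_{K_0})_1$. By Proposition~\ref{L0 realizing for rho l (GK): even case}(b) this closure is either $(G_{l, K_0, 1}^{\alg})^{\circ}$ or $G_{l,K_0,1}^{\alg}$; since $G_{l,K_0,1}^{\alg}$ is connected by the previous step, both alternatives collapse to
$$
\overline{\rho_l(G_{K_0})_1}\;=\;G_{l,K_0,1}^{\alg}.
$$
Now hypotheses (1) and (2) are exactly the hypotheses of Theorem~\ref{jK0K and Zar1 are isomorphisms}, which I would invoke: it supplies, for every coset of $G_K/G_{K_0}$, a representative $\tilde\sigma_1\in G_K$ with $\rho_l(\tilde\sigma_1)\in\rho_l(G_K)_1$, together with the decompositions \eqref{jK0K and Zar1 are isomorphisms eq2} and \eqref{jK0K and Zar1 are isomorphisms eq3}. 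Taking Zariski closures in \eqref{jK0K and Zar1 are isomorphisms eq2} and using that the closure of a finite union is the union of the closures, and that left translation by the fixed element $\rho_l(\tilde\sigma_1)$ is a homeomorphism for the Zariski topology (hence commutes with closure), I obtain
$$
\overline{\rho_l(G_K)_1}\;=\;\bigsqcup_{\tilde\sigma_1 G_{K_0}}\rho_l(\tilde\sigma_1)\,\overline{\rho_l(G_{K_0})_1}\;=\;\bigsqcup_{\tilde\sigma_1 G_{K_0}}\rho_l(\tilde\sigma_1)\,G_{l,K_0,1}^{\alg}\;=\;G_{l, K, 1}^{\alg},
$$
the last equality being \eqref{jK0K and Zar1 are isomorphisms eq3}. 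This is \eqref{overline rho l GK1 the same as GlK1 alg}.

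The step I expect to be the main obstacle is the passage from $K_0$ to $K$. In general the cosets of $G_{l,K,1}^{\alg}/G_{l,K_0,1}^{\alg}$ need not be represented by elements of $\rho_l(G_K)_1$: the obvious representatives coming from $\rho_l(G_K)$ may differ from elements of $\rho_l(G_K)_1$ by homotheties that are not isometries, and this is precisely the reason $\overline{\rho_l(G_K)_1}$ can be a proper subgroup of $G_{l,K,1}^{\alg}$. Hypotheses (1) and (2) are the tool (via Theorem~\ref{jK0K and Zar1 are isomorphisms}) for circumventing this, while hypothesis (3) plays the complementary role of ensuring there is nothing left to prove over $K_0$ itself, i.e. that $G_{l,K_0,1}^{\alg}$ is connected so that the dichotomy in Proposition~\ref{L0 realizing for rho l (GK): even case}(b) becomes vacuous. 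Once both inputs are in hand, the remainder of the argument is formal.
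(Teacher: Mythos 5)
Your proof is correct and follows essentially the same route as the paper: deduce from hypothesis (3), via Theorem~\ref{K0 almost realizing Glk1 alg: even case} (equivalently, Corollary~\ref{K0 realizing Glk1 alg: even case condition}), that $G_{l,K_0,1}^{\alg}=(G_{l,K,1}^{\alg})^{\circ}$ is connected; conclude $\overline{\rho_l(G_{K_0})_1}=G_{l,K_0,1}^{\alg}$ from Proposition~\ref{L0 realizing for rho l (GK): even case}; then use the coset decompositions of Theorem~\ref{jK0K and Zar1 are isomorphisms}, afforded by hypotheses (1) and (2), and take Zariski closures. The small extra remarks you supply (closure commuting with finite unions and with translation by a fixed group element) are correct and just make explicit what the paper leaves implicit.
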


\begin{proof} By the assumption (3) and Theorem \ref{K0 almost realizing Glk1 alg: even case}, we have $G_{l,K_0,1}^{\alg} = (G_{l,K,1}^{\alg})^\circ$.
Hence by Proposition \ref{L0 realizing for rho l (GK): even case} we obtain
\begin{equation}
\overline{\rho_{l} (G_{K_0})_1}  = ({G_{l, K_0, 1}^{\alg}})^{\circ} = {G_{l, K_0, 1}^{\alg}}.
\label{-Id in GlK1 alg circ gives overline rho l GK1 the same as GlK01 alg circ the same as GlK1 alg}
\end{equation}  
By assumptions (1) and (2), we obtain conditions (2) and (3) in the conclusion of 
Theorem \ref{jK0K and Zar1 are isomorphisms}. By equation \eqref{-Id in GlK1 alg circ gives overline rho l GK1 the same as GlK01 alg circ the same as GlK1 alg} above, the right-hand side of \eqref{jK0K and Zar1 are isomorphisms eq3} is the Zariski closure of the right-hand side of \eqref{jK0K and Zar1 are isomorphisms eq2}. Hence the left-hand side 
of \eqref{jK0K and Zar1 are isomorphisms eq3} is the Zariski closure of the left-hand side of \eqref{jK0K and Zar1 are isomorphisms eq2}.
\end{proof}
 
\begin{corollary} 
\label{K0 giving the isom overline rho l (GK) the same as GlK1 alg: odd case} 
Let $n$ be odd. Assume that:
\begin{itemize}
\item[(1)] $K_0 \, \cap \, K(\mu_{\bar{l}}^{\otimes \, n}) \, = \, K$,
\item[(2)] $1 + l\Z_{l} \,\, {\rm{Id}}_{V_l}  \, \subset \, \rho_{l} (G_K)$.
\end{itemize}
Then the following equality holds:
\begin{equation}
\overline{\rho_{l} (G_K)_1} \, = \, {G_{l, K, 1}^{\alg}}.
\label{overline rho l GK1 the same as GlK1 alg odd weight}
\end{equation}
\end{corollary}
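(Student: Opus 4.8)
\emph{Proof proposal.} The plan is to deduce this corollary directly from Theorem~\ref{K giving the isom overline rho l (GK) the same as GlK1 alg: arbitrary weight}: hypotheses~(1) and~(2) of the corollary are precisely hypotheses~(1) and~(2) of that theorem, so it suffices to verify that, when $n$ is odd, hypothesis~(3) there --- namely $-\,\mathrm{Id}_{V_l} \in (G_{l, K, 1}^{\alg})^{\circ}$ --- holds automatically.

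To check this, I would first recall from Remark~\ref{properties of K0} (via Theorem~\ref{L0realizing conn comp}) that for $n$ odd the field $K_0$ satisfies both $G_{l, K_0}^{\alg} = (G_{l, K}^{\alg})^{\circ}$ and $G_{l, K_0, 1}^{\alg} = (G_{l, K, 1}^{\alg})^{\circ}$; in particular $G_{l, K_0}^{\alg}$ is connected, so Proposition~\ref{L0realizing conn comp: even case} applies with $M_0 = K_0$. Its part~(d) then gives $-\,\mathrm{Id}_{V_l} \in (G_{l, K_0, 1}^{\alg})^{\circ} = G_{l, K_0, 1}^{\alg} = (G_{l, K, 1}^{\alg})^{\circ}$, which is exactly hypothesis~(3). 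Theorem~\ref{K giving the isom overline rho l (GK) the same as GlK1 alg: arbitrary weight} now yields $\overline{\rho_{l} (G_K)_1} = G_{l, K, 1}^{\alg}$, as desired.

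There is no real obstacle here: the corollary is simply the odd-weight specialization of the preceding theorem, its only content being the automatic membership $-\,\mathrm{Id}_{V_l} \in (G_{l, K, 1}^{\alg})^{\circ}$ in odd weight, which in turn traces back to the splitting construction of Lemma~\ref{for n odd DH = H} transported to the $l$-adic setting in Theorem~\ref{L0realizing conn comp}. As an alternative one could argue directly: Proposition~\ref{L0 realizing for rho l (GK): even case}(e) gives $\overline{\rho_{l} (G_{K_0})_1} = G_{l, K_0, 1}^{\alg}$, and then Theorem~\ref{jK0K and Zar1 are isomorphisms} (whose hypotheses are again~(1) and~(2)) lets one pass from $K_0$ to $K$ by taking Zariski closures of the two sides of the coset decompositions \eqref{jK0K and Zar1 are isomorphisms eq2} and \eqref{jK0K and Zar1 are isomorphisms eq3}; but this merely retraces the proof of Theorem~\ref{K giving the isom overline rho l (GK) the same as GlK1 alg: arbitrary weight}, so the one-line deduction above is cleaner.
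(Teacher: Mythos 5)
Your proof is correct and takes essentially the same approach as the paper: the paper's proof of this corollary is the single sentence that hypothesis~(3) of Theorem~\ref{K giving the isom overline rho l (GK) the same as GlK1 alg: arbitrary weight} follows, for $n$ odd, from Proposition~\ref{L0realizing conn comp: even case}(d). You have simply made explicit the supporting chain (applicability of that proposition with $M_0 = K_0$, then $(G_{l, K_0, 1}^{\alg})^{\circ} = G_{l, K_0, 1}^{\alg} = (G_{l, K, 1}^{\alg})^{\circ}$ via Theorem~\ref{L0realizing conn comp} and Remark~\ref{properties of K0}), which is implicit in the paper's one-line argument.
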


\begin{proof} Because $n$ is odd, the assumption (3) of Theorem 
\ref{K giving the isom overline rho l (GK) the same as GlK1 alg: arbitrary weight} follows by
Proposition \ref{L0realizing conn comp: even case}(d).
\end{proof}

\begin{remark}
When $A/K$ is an abelian variety over $K$ and $\overline{A} := A \otimes_{K} \overline{K}$, then the $\Q$-vector space $V_{A} = H^1 ( \overline{A} (\C), \Q)$ admits a rational, polarized, pure Hodge structure $(V_A, \psi_A)$
of weight 1 associated with a polarization of $A$.  
Hence Corollary \ref{K0 giving the isom overline rho l (GK) the same as GlK1 alg: odd case} can be applied to the $l$-adic representation of $G_K$ on $V_l = H^{1}_{et} ( \overline{A}, \Q_l) \simeq T_{l} (A)^{\vee}$.
\end{remark}

\begin{remark} In Theorems \ref{K0 almost realizing Glk1 alg: even case}, \ref{jKeK and Zar1 are isomorphisms}, \ref{jK0K and Zar1 are isomorphisms}, \ref{K giving the isom overline rho l (GK) the same as GlK1 alg: arbitrary weight} and Corollaries \ref{K0 realizing Glk1 alg: even case condition}, \ref{K0 giving the isom overline rho l (GK) the same as GlK1 alg: odd case}, we can replace $K$ with a finite extension $L$ and $K_{0}$ with the corresponding
$L_{0}$ (resp. $K_{e}$ with the corresponding $L_{e}$). If $K \subset L \subset K_{0}$, then $K_{0} = L_{0}$ and
$K_{e} = L_{e}$. 
\label{Base change from K to L in basic theorems} 
\end{remark}

\section{Computation of the identity connected component of \texorpdfstring{$G_{l, K, 1}^{\alg}$}{GlK1alg}}
\label{section-computation of the identity connected component}

In this section we compare our approach with the approach of Serre for the setup of 
the Sato--Tate conjecture. We apply this comparison to compute the identity connected component 
of $G_{l, K, 1}^{\alg}$.

\medskip
Compare the $l$-adic representation $\rho_{l}$ in \eqref{the family of l-adic representations}
and the $l$-adic representation $\widetilde{\rho}_l$ considered by Serre cf. \cite[p. 111--112]{Se12}, 
given by the diagonal action of $G_K$ on the $\Q_{l}$-vector space $V_l \oplus \Q_{l}(1)$:
\begin{equation}
\widetilde{\rho}_l\colon G_K \rightarrow \GL(V_l \oplus \Q_{l}(1)),
\label{Representation rho tilde of Serre}
\end{equation}
$$
\widetilde{\rho}_l (g) := (\rho_{l} (g), \chi_{c} (g)),
$$ 
where $\chi_{c}$ is the $l$-adic cyclotomic character.
\begin{remark}
In \emph{loc. cit.}, Serre has the summand $\Q_{l}(-1)$ because he works with geometric Frobenius.
\end{remark}

Put $W_l := V_l \oplus \Q_{l}(1)$. Let $\widetilde{G_{l, K}^{\alg}} := \widetilde{G_{l, K}^{\alg}(W_l)}$ 
denote the Zariski closure of 
$\widetilde{\rho}_l$ in $\GL_{W_{l}}$. Observe that 
\begin{equation}
\widetilde{G_{l, K}^{\alg}} \subset G_{l, K}^{\alg} \times \G_{m}. 
\label{GlKalg subset GlKalg times Gm}
\end{equation}
Applying the projections onto the first and second factor of
$G_{l, K}^{\alg} \times \G_{m}$ we obtain the  natural morphism  $\pi\colon \widetilde{G_{l, K}^{\alg}} 
\rightarrow G_{l, K}^{\alg}$  and the morphism $N\colon \widetilde{G_{l, K}^{\alg}}  \rightarrow \G_{m}$
defined by Serre \cite[p. 111--112]{Se12}. 
\medskip

Consider the following diagram. 

\begin{figure}[H]
\[
\begin{tikzcd}
1 \arrow{r}{} & \,\, \widetilde{G_{l, K, 1}^{\alg}} \arrow{d}[swap]{\pi_{1}} \arrow{r}{}  & 
 \widetilde{G_{l, K}^{\alg}}  \arrow{d}[swap]{\pi} \arrow{r}{N} & \G_{m} 
\arrow{d}{x \mapsto x^{-n}}[swap]{} \arrow{r}{} & 1 \\ 
1 \arrow{r}{} & G_{l, K, 1}^{\alg}  \arrow{r}{} & 
G_{l, K}^{\alg} \arrow{d}[swap]{} \arrow{r}{\chi} & \G_{m} \arrow{d}[swap]{} 
\arrow{r}{} & 1\\
& & 1 & 1 \\
\end{tikzcd}
\]
\\[-0.8cm]
\caption{}
\label{diagram compatibility of tilde(GlK1alg) with GlK1alg} 
\end{figure}

The morphisms $(x \mapsto x^{-n}) \circ N$ and $\chi \circ \pi$ in the  
right square of Diagram \ref{diagram compatibility of tilde(GlK1alg) with GlK1alg} are equal 
when restricted to the dense subset $\widetilde{\rho}_l (G_K)$ of $\widetilde{G_{l, K}^{\alg}}$
because of \eqref{compatibility of chi with chi-cycl.}.
By an argument as in the proof of \cite[Chap. I, Lemma 4.1]{Ha} these morphisms are equal
on $\widetilde{G_{l, K}^{\alg}}$. Hence the right square of Diagram 
\ref{diagram compatibility of tilde(GlK1alg) with GlK1alg} commutes. The map $\pi_1$ in this diagram
is defined by $\pi$ and clearly makes the left square also commute.
\medskip

The middle vertical arrow of Diagram \ref{diagram compatibility of tilde(GlK1alg) with GlK1alg}
is an epimorphism because its image contains the dense subset $\rho_{l} (G_K)$ of 
$G_{l, K}^{\alg}$. But the image of $\pi$ is closed in $G_{l, K}^{\alg}$ by
\cite[Chap. 2, Sec. 7.4, Proposition B(b)]{Hu}.
\medskip

By the definition of $N$ and \eqref{GlKalg subset GlKalg times Gm}, the kernel of $\pi$ in
Diagram \ref{diagram compatibility of tilde(GlK1alg) with GlK1alg} is contained in ${\rm{Id}}_{V_l} \times
\G_{m}$. Hence the homomorphism $N$ restricted to kernel of $\pi$ is a monomorphism. By the commutativity 
of Diagram \ref{diagram compatibility of tilde(GlK1alg) with GlK1alg}, the kernel of $\pi$ injects into
$\mu_{n}$. Hence $\pi_{1}$ is a monomorphism. In addition $\dim \widetilde{G_{l, K}^{\alg}} = \dim G_{l, K}^{\alg}$ 
and consequently $\dim \widetilde{G_{l, K, 1}^{\alg}} = \dim G_{l, K, 1}^{\alg}$. This shows that
\begin{equation}
(\widetilde{G_{l, K, 1}^{\alg}})^{\circ} = (G_{l, K, 1}^{\alg})^{\circ}.
\label{connected component of id of tildeGlK1alg is the same as GlK1alg}
\end{equation} 
Define 
\begin{equation}
\widetilde{G_{l, K, 1}^{\alg, \, 0}} \,\, := \,\, ( \widetilde{G_{l, K}^{\alg}})^{\circ} \,\, \cap 
\,\, \widetilde{G_{l, K, 1}^{\alg}}.
\label{definition of wide tilde G l, K, 1 alg 0 }
\end{equation}

\begin{lemma} 
The following exact sequence splits:
$$
1 \,\, {\stackrel{}{\longrightarrow}} \,\, \widetilde{G_{l, K, 1}^{\alg}} (\mathbb{C}_l) \,\, {\stackrel{}{\longrightarrow}}  
\,\, \widetilde{G_{l, K}^{\alg}} (\mathbb{C}_l)\,\, {\stackrel{N}{\longrightarrow}} \,\, \G_{m} (\mathbb{C}_l){\stackrel{}{\longrightarrow}} \,\, 1.
$$
Moreover $\widetilde{G_{l, K, 1}^{\alg, \, 0}}$ is connected, i.e.
$\widetilde{G_{l, K, 1}^{\alg, \, 0}} = (\widetilde{G_{l, K, 1}^{\alg}})^{\circ}$.
\label{splitting of widetilde G l K alg mapsto Gm}
\end{lemma}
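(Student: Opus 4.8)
The plan is to split $N$ on $\C_l$-points by means of the Hodge--Tate cocharacter of $\widetilde{\rho}_l$, in direct analogy with the cocharacter $\widetilde{\mu}_{\infty, V}$ used in Lemma~\ref{commutativity of diagram compatibility of tilde(MT (V, psi)) with MT (V, psi)} and with the homomorphism $h_l$ constructed in the proof of Theorem~\ref{L0realizing conn comp}. First I would observe that $\widetilde{\rho}_l = (\rho_l, \chi_c)$ is a direct sum of Hodge--Tate representations: $\rho_l$ by {\bf{(R1)}}, and $\Q_l(1)$ because $\Q_l(1) \otimes_{\Q_l} \C_l \simeq \C_l(1)$; hence $\widetilde{\rho}_l$ is itself of Hodge--Tate type, and applying Serre's construction \cite[p.~114]{Se12} to it (as was done for $\rho_l$ in the proof of Theorem~\ref{L0realizing conn comp}) produces a cocharacter $\widetilde{h}_l\colon \G_m(\C_l) \to \widetilde{G_{l, K}^{\alg}}(\C_l)$. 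Then I would check its two relevant compatibilities: on the summand $V_l$ it restricts to $h_l$, so $\pi \circ \widetilde{h}_l = h_l$; and on the Tate summand $\Q_l(1)$, which has Hodge type $(-1,-1)$ (use {\bf{(R3)}} for $V_l$ and the standard comparison for $\Q_l(1)$), it acts by $x \mapsto x$, so that $N(\widetilde{h}_l(x)) = x$. This gives the splitting asserted in the lemma; as a consistency check it agrees with the right square of Diagram~\ref{diagram compatibility of tilde(GlK1alg) with GlK1alg}, since $\chi(h_l(x)) = x^{-n}$ (proof of Theorem~\ref{L0realizing conn comp}) forces $\bigl(N(\widetilde{h}_l(x))\bigr)^{-n} = x^{-n}$.

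For the last assertion I would argue as follows. We have $\widetilde{G_{l, K, 1}^{\alg}} = \Ker N$ and $\widetilde{G_{l, K, 1}^{\alg, \, 0}} = (\widetilde{G_{l, K}^{\alg}})^{\circ} \cap \widetilde{G_{l, K, 1}^{\alg}}$. Since $\G_m$ is connected, $\widetilde{h}_l$ factors through $(\widetilde{G_{l, K}^{\alg}})^{\circ}(\C_l)$; as $N \circ \widetilde{h}_l$ is the identity on $\C_l$-points, the restriction $N\colon (\widetilde{G_{l, K}^{\alg}})^{\circ} \to \G_m$ is surjective with kernel $\widetilde{G_{l, K, 1}^{\alg, \, 0}}$ and is split, on $\C_l$-points, by $\widetilde{h}_l$. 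Applying Lemma~\ref{G connected implies G0 connected} to the connected group $(\widetilde{G_{l, K}^{\alg}})^{\circ}$, the non-trivial character $N$, and the splitting $\widetilde{h}_l$ (after fixing an isomorphism $\C_l \simeq \C$, as in the proof of Theorem~\ref{L0realizing conn comp}) yields that $\widetilde{G_{l, K, 1}^{\alg, \, 0}}$ is connected. Finally, $\widetilde{G_{l, K, 1}^{\alg, \, 0}}$ is open in $\widetilde{G_{l, K, 1}^{\alg}}$ because $(\widetilde{G_{l, K}^{\alg}})^{\circ}$ is open in $\widetilde{G_{l, K}^{\alg}}$; hence it contains $(\widetilde{G_{l, K, 1}^{\alg}})^{\circ}$, and being connected it is contained in it, so the two coincide (and equal $(G_{l, K, 1}^{\alg})^{\circ}$ by \eqref{connected component of id of tildeGlK1alg is the same as GlK1alg}).

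The step I expect to be the main obstacle is pinning down the normalization of $\widetilde{h}_l$ on the Tate summand: one must verify that Serre's cocharacter acts by $x \mapsto x$ (rather than some other power) on $\Q_l(1) \otimes_{\Q_l} \C_l$, which is precisely what makes $N \circ \widetilde{h}_l$ the identity rather than merely an isogeny of $\G_m$. This is forced by the Hodge--Tate weight of $\Q_l(1)$ together with the compatibility already recorded in Diagram~\ref{diagram compatibility of tilde(GlK1alg) with GlK1alg}, but it is the one place where a convention-tracking slip would be fatal. A minor secondary point is the transfer between $\C_l$- and $\C$-points needed to invoke Lemma~\ref{G connected implies G0 connected}, which is handled exactly as in the proof of Theorem~\ref{L0realizing conn comp}.
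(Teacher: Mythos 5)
Your proposal is correct and follows essentially the same route as the paper: both construct the Hodge--Tate cocharacter of $\widetilde{\rho}_l$ via Serre's construction at a prime $\lambda \mid l$, observe that on the weight-one Tate summand it acts by $x \mapsto x$ so that it splits $N$ on $\C_l$-points, and then pass through $\C_l \simeq \C$ to apply Lemma~\ref{G connected implies G0 connected} to the restriction of $N$ to $(\widetilde{G_{l, K}^{\alg}})^{\circ}$. The normalization point you flag is handled in the paper exactly as you propose, by reading off the Hodge--Tate weight of $\Q_l(1)$, which gives $\widetilde{h}(x) = (h(x), x)$ and hence $N \circ \widetilde{h} = \mathrm{id}$ directly.
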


\begin{proof}
Let $\lambda \subset \mathcal{O}_K$ be a prime over $l$. Let 
$$
{\rho}_{l}^{\lambda}\colon G_{K_{\lambda}} \rightarrow \GL(V_l), \quad \quad \quad \widetilde{\rho}_{l}^{\lambda}\colon G_{K_{\lambda}} \rightarrow \GL(W_l)
$$ 
denote the restrictions of $\rho_{l}$ and $\widetilde{\rho}_{l}$ to $G_{K_{\lambda}} \subset G_K$. The representations 
$\rho_{l}^{\lambda}$ and $\widetilde{\rho}_{l}^{\lambda}$ are
Hodge--Tate. Let $G_{l, K_{\lambda}}^{\alg}$ (resp. $\widetilde{G_{l, K_{\lambda}}^{\alg}}$) be the Zariski closure of $\rho_{l} (G_{K_{\lambda}})$ in $\GL_{V_{l}}$ (resp. $\widetilde{\rho}_{l} (G_{K_{\lambda}})$ in $\GL_{W_{l}}$).
There are cocharacters:
$$
h_{\lambda}\colon \G_{m} (\mathbb{C}_l) \rightarrow \GL_{V_{l}} (\mathbb{C}_l) \quad\quad {\rm{and}} \quad\quad 
\widetilde{h}_{\lambda}\colon \G_{m} (\mathbb{C}_l) \rightarrow \GL_{W_{l}} (\mathbb{C}_l)
$$
such that for each $x \in \mathbb{C}_{l}^{\times} = \G_{m} (\mathbb{C}_l)$ the cocharacter $h_{\lambda} (x)$ (resp. 
$\widetilde{h}_{\lambda} (x))$ acts on the subspace $V(i)$ (resp. $W(i)$) of weight $i$ of $V_{\mathbb{C}_{l}}$ (resp.
$W_{\mathbb{C}_{l}}$) via multiplication by $x^i$. Because $\mathbb{Q}_l (1)$ is a $\Q_l$-rational Hodge--Tate module of
weight $1$, we obtain $\widetilde{h}_{\lambda} (x) = (h_{\lambda} (x), x)$. By \cite[p. 158--159]{Se79} the cocharacters
$h_{\lambda}, \widetilde{h}_{\lambda}$ have smaller targets as follows: 
$$
h_{\lambda}\colon \G_{m} (\mathbb{C}_l) \rightarrow  G_{l, K_{\lambda}}^{\alg} (\mathbb{C}_l), \quad \quad\quad 
\widetilde{h}_{\lambda}\colon
\G_{m} (\mathbb{C}_l) \rightarrow \widetilde{G_{l, K_{\lambda}}^{\alg}} (\mathbb{C}_l).
$$
Observe that $G_{l, K_{\lambda}}^{\alg} \subset (G_{l, K}^{\alg})_{K_{\lambda}}$ and 
$\widetilde{G_{l, K_{\lambda}}^{\alg}} \subset  (\widetilde{G_{l, K}^{\alg}})_{K_{\lambda}}$.
Enlarging targets we obtain cocharacters: 
$$
h\colon \G_{m} (\mathbb{C}_l) \rightarrow  G_{l, K}^{\alg} (\mathbb{C}_l), \quad \quad\quad 
\widetilde{h}\colon
\G_{m} (\mathbb{C}_l) \rightarrow \widetilde{G_{l, K}^{\alg}} (\mathbb{C}_l)
$$ 
such that $\widetilde{h} (x) = (h (x), x)$. By construction $N \circ \widetilde{h} (x) = x$. 
The cocharacter $\widetilde{h}$ is a splitting
of $N$.

Because $\G_m / \C_{l}$ is connected, there is the following exact sequence split by 
$\widetilde{h}$:
$$
1 \,\, {\stackrel{}{\longrightarrow}} \,\, \widetilde{G_{l, K, 1}^{\alg, \, 0}} (\mathbb{C}_l) \,\, {\stackrel{}{\longrightarrow}}  
\,\, (\widetilde{G_{l, K}^{\alg}})^{\circ} (\mathbb{C}_l)\,\, {\stackrel{N}{\longrightarrow}} \,\, \G_{m} (\mathbb{C}_l){\stackrel{}{\longrightarrow}} \,\, 1.
$$
Because $\C_l \simeq \C$ the following exact sequence splits:
$$
1 \,\, {\stackrel{}{\longrightarrow}} \,\, \widetilde{G_{l, K, 1}^{\alg, \, 0}} (\mathbb{C}) \,\, {\stackrel{}{\longrightarrow}}  
\,\, (\widetilde{G_{l, K}^{\alg}})^{\circ} (\mathbb{C})\,\, {\stackrel{N}{\longrightarrow}} \,\, \G_{m} (\mathbb{C}){\stackrel{}{\longrightarrow}} \,\, 1.
$$
By Lemma \ref{G connected implies G0 connected} the algebraic group $\widetilde{G_{l, K, 1}^{\alg, \, 0}}$ is connected. 
\end{proof}

\begin{theorem} (Serre) 
\label{pi 0 wide tilde G l, K, 1 alg cong pi 0 wide tilde G l, K alg }
There is the following isomorphism:
\begin{equation}
\widetilde{{i}}_{CC}\colon  \pi_{0} (\widetilde{G_{l, K, 1}^{\alg}}) \,\,\, {\stackrel{\simeq}{\longrightarrow}} \,\,\, \pi_{0} (\widetilde{G_{l, K}^{\alg}}).
\label{pi 0 wide tilde G l, K, 1 alg cong pi 0 wide tilde G l, K alg. isomorphism}
\end{equation}
\end{theorem}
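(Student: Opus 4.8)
The plan is to identify $\widetilde{i}_{CC}$ with the map on component groups induced by the closed immersion $\widetilde{G_{l, K, 1}^{\alg}} \hookrightarrow \widetilde{G_{l, K}^{\alg}}$ from the top row of Diagram~\ref{diagram compatibility of tilde(GlK1alg) with GlK1alg}, and then to prove surjectivity and injectivity separately, working throughout on $\C_l$-points (equivalently on $\C$-points, via the fixed isomorphism $\C_l \simeq \C$ already used in Lemma~\ref{splitting of widetilde G l K alg mapsto Gm}). The crucial inputs are in hand: Lemma~\ref{splitting of widetilde G l K alg mapsto Gm} supplies a cocharacter $\widetilde{h}\colon \G_m(\C_l) \to \widetilde{G_{l, K}^{\alg}}(\C_l)$ splitting $N$, together with the identity $\widetilde{G_{l, K, 1}^{\alg, \, 0}} = (\widetilde{G_{l, K, 1}^{\alg}})^{\circ}$.

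For surjectivity, I would first note that since $\widetilde{h}$ is a morphism out of the connected group $\G_m$, its image is connected and hence contained in $(\widetilde{G_{l, K}^{\alg}})^{\circ}(\C_l)$. Given any $g \in \widetilde{G_{l, K}^{\alg}}(\C_l)$, the element $g' := \widetilde{h}(N(g))^{-1}\,g$ satisfies $N(g') = N(g)^{-1}N(g) = 1$, so $g' \in \ker N = \widetilde{G_{l, K, 1}^{\alg}}(\C_l)$, while $g = \widetilde{h}(N(g))\,g'$ with $\widetilde{h}(N(g)) \in (\widetilde{G_{l, K}^{\alg}})^{\circ}$. Thus $\widetilde{G_{l, K}^{\alg}}(\C_l) = (\widetilde{G_{l, K}^{\alg}})^{\circ}(\C_l)\cdot\widetilde{G_{l, K, 1}^{\alg}}(\C_l)$, so every connected component of $\widetilde{G_{l, K}^{\alg}}$ meets $\widetilde{G_{l, K, 1}^{\alg}}$; this is exactly surjectivity of $\widetilde{i}_{CC}$.

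For injectivity, I would observe that the kernel of $\widetilde{i}_{CC}$ is represented by those $g \in \widetilde{G_{l, K, 1}^{\alg}}$ that already lie in $(\widetilde{G_{l, K}^{\alg}})^{\circ}$. For such $g$ we have, by \eqref{definition of wide tilde G l, K, 1 alg 0 }, $g \in \widetilde{G_{l, K, 1}^{\alg}} \cap (\widetilde{G_{l, K}^{\alg}})^{\circ} = \widetilde{G_{l, K, 1}^{\alg, \, 0}}$, and the latter equals $(\widetilde{G_{l, K, 1}^{\alg}})^{\circ}$ by Lemma~\ref{splitting of widetilde G l K alg mapsto Gm}. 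Hence $g$ lies in the identity component of $\widetilde{G_{l, K, 1}^{\alg}}$ and its class is trivial, so $\ker \widetilde{i}_{CC}$ is trivial.

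I do not expect a genuine obstacle here: the hard part — producing a $\C_l$-rational section of $N$ from the Hodge--Tate cocharacters $h_\lambda$ and $\widetilde{h}_\lambda$ — is precisely what Lemma~\ref{splitting of widetilde G l K alg mapsto Gm} accomplishes. The only points requiring a little care are the harmless bookkeeping about the field over which $\pi_0$ is computed (handled by fixing $\C_l \simeq \C$), and the fact that the vertical map $\pi$ in Diagram~\ref{diagram compatibility of tilde(GlK1alg) with GlK1alg} is merely an isogeny rather than an isomorphism — which is irrelevant, since the argument uses only the inclusion $\widetilde{G_{l, K, 1}^{\alg}} \hookrightarrow \widetilde{G_{l, K}^{\alg}}$ and the section $\widetilde{h}$, never $\pi$ itself.
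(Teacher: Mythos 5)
Your proof is correct and follows essentially the same route as the paper: the paper packages the argument as a diagram chase in a $3\times 3$ diagram whose left column is exact precisely because of the two facts you isolate from Lemma~\ref{splitting of widetilde G l K alg mapsto Gm} (the section $\widetilde{h}$ of $N$ lands in the identity component, giving surjectivity of $\widetilde{i}_{CC}$, and $\widetilde{G_{l, K, 1}^{\alg, \, 0}} = (\widetilde{G_{l, K, 1}^{\alg}})^{\circ}$, giving injectivity). Your explicit factorization $g = \widetilde{h}(N(g))\,g'$ is just the unwound form of that chase, so the two arguments coincide in substance.
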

\begin{proof} This follows from \cite[p. 113]{Se12},
but we instead recall the proof of \cite[Th. 3.3]{BK1} or \cite[Th. 4.6]{BK2}. Consider the following commutative diagram.
\begin{figure}[H]
\[
\begin{tikzcd}
& 1 \arrow{d}[swap]{} & 1 \arrow{d}[swap]{}  & 1 \arrow{d}[swap]{} \\ 
1 \arrow{r}{} &  (\widetilde{G_{l, K, 1}^{\alg}})^{\circ} \arrow{d}[swap]{} \arrow{r}{}
&  \widetilde{G_{l, K, 1}^{\alg}} \arrow{d}[swap]{} \arrow{r}{}  & \pi_{0} (\widetilde{G_{l, K, 1}^{\alg}})   
\arrow{d}[swap]{\simeq}{\widetilde{{i}}_{CC}} \arrow{r}{} &  1\\  
1 \arrow{r}{} & (\widetilde{G_{l, K}^{\alg}})^{\circ} \arrow{d}{N} \arrow{r}{} & \widetilde{G_{l, K}^{\alg}} 
\arrow{d}{N} \arrow{r}{} & \pi_{0}(\widetilde{G_{l, K}^{\alg}}) \arrow{d}[swap]{} \arrow{r}{} &  1 \\
1 \arrow{r}{} & \G_m  \arrow{d}[swap]{} \arrow{r}{=} & \G_m  \arrow{d}[swap]{} \arrow{r}{} & 1\\
& 1  & 1 \\
\end{tikzcd}  
\]
\\[-0.8cm]
\caption{}
\label{diagram to prove Serre theorem for families of l-adic representations}
\end{figure}
In Diagram \ref{diagram to prove Serre theorem for families of l-adic representations}, all rows and the middle
 column are obviously exact. The left column is exact by Lemma \ref{splitting of widetilde G l K alg mapsto Gm}.
By a chase in Diagram \ref{pi0 applied to the diagram compatibility of tilde(GlK1alg) with GlK1alg}, it follows that the right column is exact.
\end{proof}

Consider the following commutative diagram. 
\begin{figure}[H]
\[
\begin{tikzcd}
&& \,\, \pi_{0} (\widetilde{G_{l, K, 1}^{\alg}}) \arrow{d}[swap]{\overline{\pi}_{1}} \arrow{r}{\simeq}  & \pi_{0} (\widetilde{G_{l, K}^{\alg}}) \arrow{d}[swap]{\overline{\pi}} & \\ 
1 \arrow{r}{} & G_{l, K_{0}, 1}^{\alg} / (G_{l, K, 1}^{\alg})^{\circ} \arrow{r}{} & \pi_{0} (G_{l, K, 1}^{\alg})  \arrow{r}{} & 
\pi_{0} (G_{l, K}^{\alg}) \arrow{r}{} & 1\\
\end{tikzcd}
\]
\\[-0.8cm]
\caption{}
\label{pi0 applied to the diagram compatibility of tilde(GlK1alg) with GlK1alg} 
\end{figure}

In Diagram \ref{pi0 applied to the diagram compatibility of tilde(GlK1alg) with GlK1alg}, the bottom row is the exact sequence of 
Theorem \ref{L0realizing conn comp of GlKalg}. By the discussion above concerning Diagram \ref{diagram compatibility of tilde(GlK1alg) with GlK1alg}
we observe that the map $\overline{\pi}$ in Diagram \ref{pi0 applied to the diagram compatibility of tilde(GlK1alg) with GlK1alg} 
is an epimorphism and the map $\overline{\pi}_{1}$ in Diagram \ref{pi0 applied to the diagram compatibility of tilde(GlK1alg) with GlK1alg} is a monomorphism. Theorem \ref{K0 almost realizing Glk1 alg: even case} and a chase in Diagram \ref{pi0 applied to the diagram compatibility of tilde(GlK1alg) with GlK1alg} show that
\begin{equation}
\pi_{0} (G_{l, K, 1}^{\alg})  \,  = \, \pi_{0} (\widetilde{G_{l, K, 1}^{\alg}}) \,\, \cup 
\, - {\rm{Id}}_{V_{l}} (G_{l, K, 1}^{\alg})^{\circ} \,\, \pi_{0} (\widetilde{G_{l, K, 1}^{\alg}}),   
\label{decomposition of pi0GlK1alg into tilde(GlK1alg) cup -Id tilde(GlK1alg)}  
\end{equation} 
where $- {\rm{Id}}_{V_{l}} (G_{l, K, 1}^{\alg})^{\circ}$ denotes the coset of 
$- {\rm{Id}}_{V_{l}}$ in the quotient group $\pi_{0} (G_{l, K, 1}^{\alg}) = G_{l, K, 1}^{\alg} / 
(G_{l, K, 1}^{\alg})^{\circ}$. 
\medskip

Consider the following commutative diagram with exact rows. 
\begin{figure}[H]
\[
\begin{tikzcd}
1 \arrow{r}{} & (\widetilde{G_{l, K, 1}^{\alg}})^{\circ} \arrow{d}[swap]{=} \arrow{r}{} & \,\, \widetilde{G_{l, K, 1}^{\alg}} \arrow{d}[swap]{\pi_{1}} \arrow{r}{}  & 
 \pi_{0} (\widetilde{G_{l, K, 1}^{\alg}}) \arrow{d}[swap]{\overline{\pi}_{1}} \arrow{r}{} & 1\\ 
1 \arrow{r}{} & (G_{l, K, 1}^{\alg})^{\circ} \arrow{r}{} & G_{l, K, 1}^{\alg}  \arrow{r}{} & 
\pi_{0} (G_{l, K, 1}^{\alg}) \arrow{r}{} & 1\\
\end{tikzcd}
\]
\\[-0.8cm]
\caption{}
\label{the diagram compatibility of tilde(GlK1alg) with GlK1alg and pi0tilde(GlK1alg) with pi0GlK1alg} 
\end{figure}

The equality \eqref{decomposition of pi0GlK1alg into tilde(GlK1alg) cup -Id tilde(GlK1alg)} and a chase in 
Diagram \ref{the diagram compatibility of tilde(GlK1alg) with GlK1alg and pi0tilde(GlK1alg) with pi0GlK1alg}
show that 
\begin{equation}
\label{decomposition of GlK1alg into tilde(GlK1alg) cup -Id tilde(GlK1alg)}  
G_{l, K, 1}^{\alg}  \,  = \, \widetilde{G_{l, K, 1}^{\alg}} \,\, \cup 
\, - {\rm{Id}}_{V_{l}}  \,\, \widetilde{G_{l, K, 1}^{\alg}} \, ,   
\end{equation} 
so in particular $[G_{l, K, 1}^{\alg}: \, \widetilde{G_{l, K, 1}^{\alg}}] \leq 2$ and 
$\widetilde{G_{l, K, 1}^{\alg}} \, \triangleleft \, G_{l, K, 1}^{\alg}$.

\begin{remark}
Observe that the equality \eqref{decomposition of GlK1alg into tilde(GlK1alg) cup -Id tilde(GlK1alg)} continues to hold with $K$ replaced by any finite extension of $K$.
\end{remark}

\begin{corollary} \,\, $(G_{l, K, 1}^{\alg})^{\circ} = \widetilde{G_{l, K_{0}, 1}^{\alg}}$.   
\label{Connected component GlKlK1alg in form of widetilde(GlK01alg)circ}  
\end{corollary}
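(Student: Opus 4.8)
The plan is to deduce the identity component of $G_{l, K, 1}^{\alg}$ from the structure of the Serre-type groups $\widetilde{G_{l, \ast}^{\alg}}$ established earlier in this section. First I would note that $\widetilde{\rho}_l = \rho_l \oplus \chi_c$ is itself a strictly compatible family of $l$-adic representations of Hodge--Tate type — the extra summand $\Q_l(1)$ being the $l$-adic realization of the Tate structure of weight $1$ — so that all results of \S\ref{identity connected component of GlK1alg} apply verbatim to the groups $\widetilde{G_{l, L}^{\alg}}$ and $\widetilde{G_{l, L, 1}^{\alg}}$ for finite extensions $L/K$; in particular Theorem \ref{L0realizing conn comp for GlK alg} and the equality \eqref{connected component of id of tildeGlK1alg is the same as GlK1alg} hold over any base field. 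Applying \eqref{connected component of id of tildeGlK1alg is the same as GlK1alg} with $K$ replaced by $K_0$, together with \eqref{GlK1 alg circ equals GlL1 alg circ and GlK alg circ equals GlL alg circ} for the extension $K_0/K$, gives
$$
(\widetilde{G_{l, K_0, 1}^{\alg}})^{\circ} \, = \, (G_{l, K_0, 1}^{\alg})^{\circ} \, = \, (G_{l, K, 1}^{\alg})^{\circ},
$$
so it suffices to prove that $\widetilde{G_{l, K_0, 1}^{\alg}}$ is connected.

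By Theorem \ref{pi 0 wide tilde G l, K, 1 alg cong pi 0 wide tilde G l, K alg } applied with $K$ replaced by $K_0$ (note that $K_0/K$ is finite Galois, since $(G_{l, K}^{\alg})^{\circ}$ is normal in $G_{l, K}^{\alg}$), we have $\pi_0(\widetilde{G_{l, K_0, 1}^{\alg}}) \simeq \pi_0(\widetilde{G_{l, K_0}^{\alg}})$, so the assertion reduces to the connectedness of $\widetilde{G_{l, K_0}^{\alg}}$. For this I would use the projection $\pi\colon \widetilde{G_{l, K_0}^{\alg}} \rightarrow G_{l, K_0}^{\alg}$, which is an epimorphism onto the \emph{connected} group $G_{l, K_0}^{\alg} = (G_{l, K}^{\alg})^{\circ}$ and whose kernel, by the analysis of Diagram \ref{diagram compatibility of tilde(GlK1alg) with GlK1alg} carried out over $K_0$, is a finite central subgroup contained in $\{{\rm{Id}}_{V_l}\} \times \mu_n$. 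Hence $(\widetilde{G_{l, K_0}^{\alg}})^{\circ}$ already surjects onto $G_{l, K_0}^{\alg}$, so $\widetilde{G_{l, K_0}^{\alg}} = (\widetilde{G_{l, K_0}^{\alg}})^{\circ} \cdot {\rm{Ker}}\,\pi$, and everything comes down to showing ${\rm{Ker}}\,\pi \subseteq (\widetilde{G_{l, K_0}^{\alg}})^{\circ}$.

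The main obstacle is precisely this containment. To prove it I would exhibit a connected subtorus of $\widetilde{G_{l, K_0}^{\alg}}$ already containing ${\rm{Ker}}\,\pi$: by Bogomolov's theorem on homotheties $\Q_l^{\times}{\rm{Id}}_{V_l} \subseteq G_{l, K_0}^{\alg}$, and pulling this back under $\pi$ and using \eqref{compatibility of chi with chi-cycl.} yields a one-dimensional diagonalizable subgroup $T \subseteq \widetilde{G_{l, K_0}^{\alg}}$ lying inside $\{(\lambda\,{\rm{Id}}_{V_l}, \mu)\colon \lambda^2 \mu^n = 1\}$ and surjecting onto $\Q_l^{\times}{\rm{Id}}_{V_l}$. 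Combining $T$ with the splitting cocharacter $\widetilde{h}(x) = (h(x), x)$ from Lemma \ref{splitting of widetilde G l K alg mapsto Gm}, whose image is connected, one writes each $({\rm{Id}}_{V_l}, \zeta) \in {\rm{Ker}}\,\pi$ as $\widetilde{h}(\zeta)\,\cdot\,(h(\zeta)^{-1}, 1)$ and checks that both factors lie in $(\widetilde{G_{l, K_0}^{\alg}})^{\circ}$. The delicate point is that $\{\lambda^2 \mu^n = 1\}$ has $\gcd(2, n)$ connected components, so for $n$ even one must argue carefully that $T$, and the relevant part of $\widetilde{h}(\mu_n)$, actually land in the identity component and together sweep out all of ${\rm{Ker}}\,\pi$; this is where the density of $\rho_l(G_{K_0}) \cap \Q_l^{\times}{\rm{Id}}_{V_l}$ in $\Q_l^{\times}{\rm{Id}}_{V_l}$ is decisive. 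Granting ${\rm{Ker}}\,\pi \subseteq (\widetilde{G_{l, K_0}^{\alg}})^{\circ}$, the group $\widetilde{G_{l, K_0}^{\alg}}$ is connected, hence so is $\widetilde{G_{l, K_0, 1}^{\alg}}$, and therefore $\widetilde{G_{l, K_0, 1}^{\alg}} = (\widetilde{G_{l, K_0, 1}^{\alg}})^{\circ} = (G_{l, K, 1}^{\alg})^{\circ}$, as claimed.
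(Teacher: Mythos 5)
Your overall plan — reduce first to connectedness of $\widetilde{G_{l, K_0, 1}^{\alg}}$ via \eqref{connected component of id of tildeGlK1alg is the same as GlK1alg} and \eqref{GlK1 alg circ equals GlL1 alg circ and GlK alg circ equals GlL alg circ}, then via Theorem~\ref{pi 0 wide tilde G l, K, 1 alg cong pi 0 wide tilde G l, K alg } to connectedness of $\widetilde{G_{l, K_0}^{\alg}}$, then to the containment $\mathrm{Ker}\,\pi \subseteq (\widetilde{G_{l, K_0}^{\alg}})^{\circ}$ — is a coherent reduction, and the first two steps are fine. But the proof stops at exactly the point where the work has to be done: the entire last paragraph is an informal sketch that ends with ``Granting $\mathrm{Ker}\,\pi \subseteq (\widetilde{G_{l, K_0}^{\alg}})^{\circ}$\ldots''. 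That containment is the whole corollary; you cannot grant it. Moreover, the sketch does not look repairable in the form you give. The torus $T$ you construct by pulling back $\G_m\,{\rm Id}_{V_l}$ under $\pi$ is a one-dimensional diagonalizable group sitting inside $\{(\lambda\,{\rm Id}_{V_l},\mu)\colon \lambda^2\mu^n=1\}$, and as you yourself note that variety has $\gcd(2,n)$ components when $n$ is even. There is no reason, from Bogomolov density of $\rho_l(G_{K_0})\cap\Q_l^\times{\rm Id}_{V_l}$ alone, for $T$ (or for the pullback in $\widetilde{G_{l, K_0}^{\alg}}$) to land in a single component: both branches of $\{\lambda^2\mu^n=1\}$ contain homotheties. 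So the ``decisive'' density fact does not decide anything, and your argument does not establish $\mathrm{Ker}\,\pi\subseteq(\widetilde{G_{l, K_0}^{\alg}})^{\circ}$.

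For comparison, the paper does \emph{not} attack $\mathrm{Ker}\,\pi$ or exhibit any torus. Its proof is at the level of component groups: it applies the decomposition \eqref{decomposition of GlK1alg into tilde(GlK1alg) cup -Id tilde(GlK1alg)} with $K_0$ in place of $K$ to see that $\widetilde{G_{l, K_0, 1}^{\alg}}$ sits between $(G_{l, K_0, 1}^{\alg})^{\circ}$ and $G_{l, K_0, 1}^{\alg}$, invokes Proposition~\ref{L0realizing conn comp: even case}(b) with $K_0$ for $M_0$ to say $G_{l, K_0, 1}^{\alg}$ has the same two-coset form as a union of $(G_{l, K_0, 1}^{\alg})^{\circ}$ with $-{\rm{Id}}_{V_l}(G_{l, K_0, 1}^{\alg})^{\circ}$, and uses \eqref{connected component of id of tildeGlK1alg is the same as GlK1alg} to match identity components. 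This is a genuinely different argument, working with open-and-closed subgroup inclusions and the two-coset structure rather than with a splitting cocharacter. If you want to follow your route instead, you must actually supply the missing containment, and I do not think a $T$ built purely from homotheties will do it: you will need to use that the cocharacter $\widetilde{h}(x)=(h(x),x)$ of Lemma~\ref{splitting of widetilde G l K alg mapsto Gm} is a section of $N$ rather than of $\pi$, and track exactly which component of the fiber of $\pi$ the image of $\widetilde{h}$ restricted to $\mu_n$ lands in — which is precisely the computation your proposal skips.
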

\begin{proof} 
It follows from Diagram \ref{diagram compatibility of tilde(GlK1alg) with GlK1alg}, with $K$ replaced by $K_{0}$, that
$\widetilde{G_{l, K_{0}, 1}^{\alg}}$ is a closed subgroup of $G_{l, K, 1}^{\alg}$. Hence the equality
\eqref{decomposition of GlK1alg into tilde(GlK1alg) cup -Id tilde(GlK1alg)} with $K_{0}$ in place of $K$ shows that 
$\widetilde{G_{l, K_{0}, 1}^{\alg}}$ is also an open subgroup of $G_{l, K, 1}^{\alg}$. Hence comparing 
the equality \eqref{decomposition of GlK1alg into tilde(GlK1alg) cup -Id tilde(GlK1alg)} with $K_{0}$ in place of $K$, the equality in Proposition \ref{L0realizing conn comp: even case}(b) with $K_{0}$ in place of $M_{0}$, and the equality \eqref{connected component of id of tildeGlK1alg is the same as GlK1alg}, we obtain $(\widetilde{G_{l, K_{0}, 1}^{\alg}})^{\circ} = \widetilde{G_{l, K_{0}, 1}^{\alg}}$. Moreover by \eqref{connected component of id of tildeGlK1alg is the same as GlK1alg} again we obtain
$(G_{l, K, 1}^{\alg})^{\circ}  \,  = \, (\widetilde{G_{l, K_{0}, 1}^{\alg}})^{\circ}$.  
\end{proof}

\begin{remark} 
\label{tildeGlK0alg connected} 
The group $\widetilde{G_{l, K_{0}}^{\alg}}$ is connected (replace $K$ with $K_{0}$ in Diagram 
\ref{pi0 applied to the diagram compatibility of tilde(GlK1alg) with GlK1alg}) and
\begin{equation}
\label{tildeGlK01alg = tildeGlK0alg cap tildeGlK1alg} 
\widetilde{G_{l, K_{0}, 1}^{\alg}} \, = \, \widetilde{G_{l, K_{0}}^{\alg}} \,\, \cap \,\, 
\widetilde{G_{l, K, 1}^{\alg}}.
\end{equation}
By Corollary \ref{Connected component GlKlK1alg in form of widetilde(GlK01alg)circ}, $\widetilde{G_{l, K_{0}, 1}^{\alg}}$ is the identity connected component of $\widetilde{G_{l, K, 1}^{\alg}}$. Hence
\begin{equation}
\widetilde{G_{l, K_{0}, 1}^{\alg}} \,\, \triangleleft \,\, \widetilde{G_{l, K, 1}^{\alg}}.
\label{tildeGlK01alg triangleleft tildeGlK1alg} 
\end{equation}
\end{remark}

\begin{lemma} 
\label{condition for GlK1alg = tilde(GlK1alg)}
When $\gpP_{\rm{S}} (V_{l}, \psi_{l})$ is trivial then 
\begin{equation}
G_{l, K, 1}^{\alg}  \,  = \, \widetilde{G_{l, K, 1}^{\alg}} \, .   
\label{GlK1alg = tilde(GlK1alg) for n odd or etale parity trivial} 
\end{equation} 
In particular, when $n$ is odd then \eqref{GlK1alg = tilde(GlK1alg) for n odd or etale parity trivial}
holds.
\end{lemma}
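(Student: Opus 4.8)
The plan is to reduce the statement to the equivalence ``$\gpP_{\rm{S}}(V_l,\psi_l)$ trivial $\iff$ $-\,{\rm{Id}}_{V_l}\in(G_{l,K,1}^{\alg})^{\circ}$'' and then to collapse the two-coset decomposition \eqref{decomposition of GlK1alg into tilde(GlK1alg) cup -Id tilde(GlK1alg)}. First I would unwind Definition \ref{The l-adic parity group} together with Lemma \ref{Lemma G l K 0 alg = G l K 1 alg 0}: the hypothesis that $\gpP_{\rm{S}}(V_l,\psi_l)$ is trivial says precisely that $G_{l,K_0,1}^{\alg}=(G_{l,K,1}^{\alg})^{\circ}$. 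Next I would combine this with Theorem \ref{K0 almost realizing Glk1 alg: even case}(a), which gives $G_{l,K_0,1}^{\alg}=(G_{l,K,1}^{\alg})^{\circ}\cup -\,{\rm{Id}}_{V_l}\,(G_{l,K,1}^{\alg})^{\circ}$; comparing this with the previous equality forces $-\,{\rm{Id}}_{V_l}\,(G_{l,K,1}^{\alg})^{\circ}=(G_{l,K,1}^{\alg})^{\circ}$, i.e.\ $-\,{\rm{Id}}_{V_l}\in(G_{l,K,1}^{\alg})^{\circ}$. (Equivalently, triviality of $\gpP_{\rm{S}}(V_l,\psi_l)$ is exactly condition (1) of Corollary \ref{K0 realizing Glk1 alg: even case condition}, so that corollary applies directly.)

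Then I would use \eqref{connected component of id of tildeGlK1alg is the same as GlK1alg}, namely $(G_{l,K,1}^{\alg})^{\circ}=(\widetilde{G_{l,K,1}^{\alg}})^{\circ}\subseteq\widetilde{G_{l,K,1}^{\alg}}$, to conclude $-\,{\rm{Id}}_{V_l}\in\widetilde{G_{l,K,1}^{\alg}}$. Feeding this into \eqref{decomposition of GlK1alg into tilde(GlK1alg) cup -Id tilde(GlK1alg)} collapses the union to a single coset: $G_{l,K,1}^{\alg}=\widetilde{G_{l,K,1}^{\alg}}\cup -\,{\rm{Id}}_{V_l}\,\widetilde{G_{l,K,1}^{\alg}}=\widetilde{G_{l,K,1}^{\alg}}$, which is \eqref{GlK1alg = tilde(GlK1alg) for n odd or etale parity trivial}. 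For the final assertion, when $n$ is odd Proposition \ref{P(Vl, psil) trivial for n odd or GlK01 = CDl} already records that $\gpP_{\rm{S}}(V_l,\psi_l)$ is trivial (alternatively, apply Proposition \ref{L0realizing conn comp: even case}(d) with $M_0=K_0$ to see $-\,{\rm{Id}}_{V_l}\in(G_{l,K,1}^{\alg})^{\circ}$ directly), so the first part applies.

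I do not expect a substantial obstacle here: every ingredient is already available from the preceding sections, and the argument is essentially an assembly. The only point requiring care is the bookkeeping identifying ``$\gpP_{\rm{S}}$ trivial'' with ``$-\,{\rm{Id}}_{V_l}\in(G_{l,K,1}^{\alg})^{\circ}$'', which rests on the structural description of $G_{l,K_0,1}^{\alg}$ in Theorem \ref{K0 almost realizing Glk1 alg: even case}(a)\,---\,itself derived from Proposition \ref{L0realizing conn comp: even case}(b) and the minimality of $K_0$\,---\,and on the compatibility of identity components recorded in \eqref{connected component of id of tildeGlK1alg is the same as GlK1alg}.
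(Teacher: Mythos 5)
Your proposal is correct and follows essentially the same route as the paper: the paper's own proof cites exactly Theorem \ref{K0 almost realizing Glk1 alg: even case}(a), the equalities \eqref{connected component of id of tildeGlK1alg is the same as GlK1alg} and \eqref{decomposition of GlK1alg into tilde(GlK1alg) cup -Id tilde(GlK1alg)}, and (for the odd-weight case) Proposition \ref{P(Vl, psil) trivial for n odd or GlK01 = CDl}. You have simply written out the bookkeeping that the paper leaves implicit, namely that triviality of $\gpP_{\rm{S}}(V_l,\psi_l)$ is equivalent to $-\,{\rm{Id}}_{V_l}\in(G_{l,K,1}^{\alg})^{\circ}$, which then collapses the two-coset decomposition.
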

\begin{proof}
The equality \eqref{GlK1alg = tilde(GlK1alg) for n odd or etale parity trivial} holds by Theorem \ref{K0 almost realizing Glk1 alg: even case}(a) and the equalities \eqref{connected component of id of tildeGlK1alg is the same as GlK1alg} and \eqref{decomposition of GlK1alg into tilde(GlK1alg) cup -Id tilde(GlK1alg)}.
When $n$ is odd, the parity group $\gpP_{\rm{S}} (V_{l}, \psi_{l})$ is trivial by Proposition \ref{P(Vl, psil) trivial for n odd or GlK01 = CDl}.
\end{proof}

\begin{remark}
Observe that when $\gpP_{\rm{S}} (V_{l}, \psi_{l})$ is trivial,
the equality \eqref{GlK1alg = tilde(GlK1alg) for n odd or etale parity trivial} follows immediately from Diagrams 
\ref{pi0 applied to the diagram compatibility of tilde(GlK1alg) with GlK1alg} and
\ref{the diagram compatibility of tilde(GlK1alg) with GlK1alg and pi0tilde(GlK1alg) with pi0GlK1alg}.     
\end{remark}

\begin{remark}
\label{decomposition of Gl, K, 1 alg in to coset of widetilde Gl, K0, 1 alg}
Under the assumptions and notation of Theorem \ref{jK0K and Zar1 are isomorphisms} we have
\begin{equation}
\label{decomposition of widetilde(G(l, K, 1)alg) into not necessarily disjoint cosets of widetilde(G(l, K_0, 1)alg)}
G_{l, K, 1}^{\alg} \,\, = \,\, 
{\bigcup}_{\tilde{\sigma}_1  G_{K_0}, i = 0, 1}  \,\,\, (- {\rm{Id}}_{V_l})^{i} \rho_{l} (\tilde{\sigma}_{1})\, \widetilde{G_{l, K_0, 1}^{\alg}}.
\end{equation}
\end{remark}

\begin{theorem} 
Under the assumptions and notation of Theorem \ref{jK0K and Zar1 are isomorphisms} we have:
\begin{gather}
\widetilde{G_{l, K, 1}^{\alg}} \,\, = \,\, 
{\bigsqcup}_{\tilde{\sigma}_1  G_{K_0}}  \,\, \rho_{l} (\tilde{\sigma}_{1})\, \widetilde{G_{l, K_0, 1}^{\alg}}
\quad\quad \text{if} \,\, - {\rm{Id}}_{V_l} \in \widetilde{G_{l, K_0, 1}^{\alg}},
\label{decomposition of widetilde(G(l, K, 1)alg) into cosets of widetilde(G(l, K_0, 1)alg)} \\
\widetilde{G_{l, K, 1}^{\alg}} \,\, = \,\, 
{\bigsqcup}_{\tilde{\sigma}_1  G_{K_0}, i = 0, 1}  \,\,\, (- {\rm{Id}}_{V_l})^{i} \rho_{l} (\tilde{\sigma}_{1})\, \widetilde{G_{l, K_0, 1}^{\alg}} \quad\quad \text{if} \,\, \,\, - {\rm{Id}}_{V_l} \in \widetilde{G_{l, K, 1}^{\alg}} \, - \, \widetilde{G_{l, K_{0}, 1}^{\alg}},
\label{decomposition of widetilde(G(l, K, 1)alg) into cosets of widetilde(G(l, K_0, 1)alg) into more cosets} \\
\widetilde{G_{l, K, 1}^{\alg}} \,\, = \,\, 
{\bigsqcup}_{\tilde{\sigma}_1  G_{K_0},i}  \,\, (-{\rm{Id}}_{V_l})^i \, \rho_{l} (\tilde{\sigma}_{1})\, \widetilde{G_{l, K_0, 1}^{\alg}} \quad\quad \text{if} \,\, - {\rm{Id}}_{V_l} \notin \widetilde{G_{l, K, 1}^{\alg}},
\label{decomposition of widetilde(G(l, K, 1)alg) into cosets of widetilde(G(l, K_0, 1)alg) if 
- IdVl notin widetilde(G(l, K, 1)alg)}
\end{gather}
where $i$ runs over $\{0\}$ if $\rho_{l} (\tilde{\sigma}_{1}) \in \widetilde{G_{l, K, 1}^{\alg}}$
and over $\{0, 1\}$ if $\rho_{l} (\tilde{\sigma}_{1}) \notin \widetilde{G_{l, K, 1}^{\alg}}$.
\label{Three cases of G l, K, 1 alg expressed as widetilde G l, K, 1 alg}
\end{theorem}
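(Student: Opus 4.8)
The plan is to analyze the cosets of $\widetilde{G_{l, K_0, 1}^{\alg}}$ inside $\widetilde{G_{l, K, 1}^{\alg}}$ by intersecting the known decomposition \eqref{decomposition of widetilde(G(l, K, 1)alg) into not necessarily disjoint cosets of widetilde(G(l, K_0, 1)alg)} with $\widetilde{G_{l, K, 1}^{\alg}}$ and tracking exactly when the cosets indexed by $i = 0$ and $i = 1$ coincide. By Remark~\ref{decomposition of Gl, K, 1 alg in to coset of widetilde Gl, K0, 1 alg} we have
\begin{equation*}
G_{l, K, 1}^{\alg} \,\, = \,\, {\bigcup}_{\tilde{\sigma}_1  G_{K_0}, i = 0, 1}  \,\, (- {\rm{Id}}_{V_l})^{i} \rho_{l} (\tilde{\sigma}_{1})\, \widetilde{G_{l, K_0, 1}^{\alg}},
\end{equation*}
and by \eqref{decomposition of GlK1alg into tilde(GlK1alg) cup -Id tilde(GlK1alg)} the subgroup $\widetilde{G_{l, K, 1}^{\alg}}$ has index at most $2$ in $G_{l, K, 1}^{\alg}$, with the nontrivial coset (when it exists) represented by $-{\rm{Id}}_{V_l}$. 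So first I would note that for each fixed $\tilde\sigma_1 G_{K_0}$, the element $\rho_l(\tilde\sigma_1)$ is either in $\widetilde{G_{l, K, 1}^{\alg}}$ or in its complement $-{\rm{Id}}_{V_l}\,\widetilde{G_{l, K, 1}^{\alg}}$; since $\widetilde{G_{l, K_0, 1}^{\alg}} = (\widetilde{G_{l, K, 1}^{\alg}})^{\circ}$ by Corollary~\ref{Connected component GlKlK1alg in form of widetilde(GlK01alg)circ} and hence lies in $\widetilde{G_{l, K, 1}^{\alg}}$, exactly one of $\rho_l(\tilde\sigma_1)\widetilde{G_{l, K_0, 1}^{\alg}}$ and $-{\rm{Id}}_{V_l}\,\rho_l(\tilde\sigma_1)\widetilde{G_{l, K_0, 1}^{\alg}}$ meets $\widetilde{G_{l, K, 1}^{\alg}}$, and that one is entirely contained in $\widetilde{G_{l, K, 1}^{\alg}}$. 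This already shows that $\widetilde{G_{l, K, 1}^{\alg}}$ is a union over $\tilde\sigma_1 G_{K_0}$ of cosets of the form $(-{\rm{Id}}_{V_l})^{i}\rho_l(\tilde\sigma_1)\widetilde{G_{l, K_0, 1}^{\alg}}$ with $i$ chosen appropriately, giving the qualitative shape of all three cases; it remains to pin down when the union is disjoint and whether $i \in \{0\}$ or $i \in \{0,1\}$ is forced.

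The key dichotomy is the location of $-{\rm{Id}}_{V_l}$. If $-{\rm{Id}}_{V_l} \in \widetilde{G_{l, K_0, 1}^{\alg}}$, then $(-{\rm{Id}}_{V_l})^i\rho_l(\tilde\sigma_1)\widetilde{G_{l, K_0, 1}^{\alg}} = \rho_l(\tilde\sigma_1)\widetilde{G_{l, K_0, 1}^{\alg}}$ for all $i$, so the parameter $i$ is redundant and, since $\rho_l(\tilde\sigma_1)$ then necessarily lies in $\widetilde{G_{l, K, 1}^{\alg}}$ (as $-{\rm{Id}}_{V_l}$ does), we recover \eqref{decomposition of widetilde(G(l, K, 1)alg) into cosets of widetilde(G(l, K_0, 1)alg)}; disjointness of the cosets over distinct $\tilde\sigma_1 G_{K_0}$ follows because these are genuine cosets of the normal subgroup $\widetilde{G_{l, K_0, 1}^{\alg}}$ in $\widetilde{G_{l, K, 1}^{\alg}}$ (normality by \eqref{tildeGlK01alg triangleleft tildeGlK1alg}) indexed by distinct elements of the quotient, and by Theorem~\ref{jK0K and Zar1 are isomorphisms} the cosets $\rho_l(\tilde\sigma_1)G_{K_0}$ are distinct. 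If instead $-{\rm{Id}}_{V_l} \in \widetilde{G_{l, K, 1}^{\alg}} \setminus \widetilde{G_{l, K_0, 1}^{\alg}}$, then $-{\rm{Id}}_{V_l}$ represents a nontrivial element of order $2$ in $\pi_0(\widetilde{G_{l, K, 1}^{\alg}}) = \widetilde{G_{l, K, 1}^{\alg}}/\widetilde{G_{l, K_0, 1}^{\alg}}$, the cosets $(-{\rm{Id}}_{V_l})^i\rho_l(\tilde\sigma_1)\widetilde{G_{l, K_0, 1}^{\alg}}$ for $i = 0, 1$ are now genuinely distinct, and together they exhaust $\widetilde{G_{l, K, 1}^{\alg}}$ since every coset of $\widetilde{G_{l, K_0, 1}^{\alg}}$ in $\widetilde{G_{l, K, 1}^{\alg}}$ is by the previous paragraph accounted for by some $\rho_l(\tilde\sigma_1)$ up to sign — this is \eqref{decomposition of widetilde(G(l, K, 1)alg) into cosets of widetilde(G(l, K_0, 1)alg) into more cosets}. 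Finally, if $-{\rm{Id}}_{V_l} \notin \widetilde{G_{l, K, 1}^{\alg}}$, then by \eqref{decomposition of GlK1alg into tilde(GlK1alg) cup -Id tilde(GlK1alg)} we have $[G_{l, K, 1}^{\alg}:\widetilde{G_{l, K, 1}^{\alg}}] = 2$; for a given $\tilde\sigma_1$, the coset $\rho_l(\tilde\sigma_1)\widetilde{G_{l, K_0, 1}^{\alg}}$ lies in $\widetilde{G_{l, K, 1}^{\alg}}$ iff $\rho_l(\tilde\sigma_1) \in \widetilde{G_{l, K, 1}^{\alg}}$ (take $i = 0$), and otherwise $-{\rm{Id}}_{V_l}\rho_l(\tilde\sigma_1) \in \widetilde{G_{l, K, 1}^{\alg}}$ (take $i = 1$), which is exactly the index set $i$ described in \eqref{decomposition of widetilde(G(l, K, 1)alg) into cosets of widetilde(G(l, K_0, 1)alg) if
- IdVl notin widetilde(G(l, K, 1)alg)}; disjointness again follows from the coset structure.

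I expect the main obstacle to be the bookkeeping in the third case: verifying that as $\tilde\sigma_1 G_{K_0}$ ranges over $G_K/G_{K_0}$, the chosen representatives $(-{\rm{Id}}_{V_l})^{i(\tilde\sigma_1)}\rho_l(\tilde\sigma_1)$ run over a full set of coset representatives of $\widetilde{G_{l, K_0, 1}^{\alg}}$ in $\widetilde{G_{l, K, 1}^{\alg}}$ without repetition. The cleanest way to handle this is to use the isomorphism $\widetilde{i}_{CC}$ of Theorem~\ref{pi 0 wide tilde G l, K, 1 alg cong pi 0 wide tilde G l, K alg } together with $\pi_0(\widetilde{G_{l, K}^{\alg}}) \simeq G_K/G_{K_0}$ (via the Zariski-density of $\widetilde\rho_l(G_K)$ and the connectedness of $\widetilde{G_{l, K_0}^{\alg}}$ noted in Remark~\ref{tildeGlK0alg connected}), so that $\pi_0(\widetilde{G_{l, K, 1}^{\alg}})$ is already identified with $G_K/G_{K_0}$ and the indexing by $\tilde\sigma_1 G_{K_0}$ with $i$ merely selects, within each such class, the unique representative landing in $\widetilde{G_{l, K, 1}^{\alg}}$; the disjointness is then automatic. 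The other cases are comparatively routine once the role of $-{\rm{Id}}_{V_l}$ in $\pi_0(\widetilde{G_{l, K, 1}^{\alg}})$ is isolated.
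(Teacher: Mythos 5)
Your proposal follows essentially the same route as the paper's own proof: both rest on the coset decomposition \eqref{jK0K and Zar1 are isomorphisms eq3} of $G_{l, K, 1}^{\alg}$, on the relation \eqref{decomposition of GlK1alg into tilde(GlK1alg) cup -Id tilde(GlK1alg)} applied over both $K$ and $K_{0}$, and on the same trichotomy according to the position of $- {\rm{Id}}_{V_l}$. Your case-by-case arguments for exhaustion and disjointness are sound, and your reading of the index set in the third case (a single $i$ for each coset $\tilde{\sigma}_1 G_{K_0}$, chosen so that $(- {\rm{Id}}_{V_l})^{i} \rho_{l}(\tilde{\sigma}_{1}) \in \widetilde{G_{l, K, 1}^{\alg}}$) is the one that makes the displayed equality true.

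One assertion in your opening paragraph is false as stated and should be deleted or restricted. It is not true that for each $\tilde{\sigma}_1 G_{K_0}$ exactly one of the cosets $\rho_{l}(\tilde{\sigma}_{1}) \widetilde{G_{l, K_0, 1}^{\alg}}$ and $- {\rm{Id}}_{V_l} \, \rho_{l}(\tilde{\sigma}_{1}) \widetilde{G_{l, K_0, 1}^{\alg}}$ meets $\widetilde{G_{l, K, 1}^{\alg}}$: when $- {\rm{Id}}_{V_l} \in \widetilde{G_{l, K, 1}^{\alg}}$ one has $\widetilde{G_{l, K, 1}^{\alg}} = G_{l, K, 1}^{\alg}$ by \eqref{decomposition of GlK1alg into tilde(GlK1alg) cup -Id tilde(GlK1alg)}, so both cosets are contained in $\widetilde{G_{l, K, 1}^{\alg}}$ (they coincide in the first case of the theorem and are distinct in the second). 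The dichotomy you state holds only when $- {\rm{Id}}_{V_l} \notin \widetilde{G_{l, K, 1}^{\alg}}$. The error does not propagate, because your treatment of the first two cases relies only on the exhaustion obtained by intersecting the union of Remark \ref{decomposition of Gl, K, 1 alg in to coset of widetilde Gl, K0, 1 alg} with $\widetilde{G_{l, K, 1}^{\alg}}$, together with the fact that a coset $g \widetilde{G_{l, K_0, 1}^{\alg}}$ is contained in $\widetilde{G_{l, K, 1}^{\alg}}$ or disjoint from it according to whether $g$ lies in $\widetilde{G_{l, K, 1}^{\alg}}$; but as written the sentence, and the ensuing claim that one value of $i$ per class always suffices, contradict your own (correct) handling of the second case.
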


\begin{proof} If  $- {\rm{Id}}_{V_l} \in \widetilde{G_{l, K_0, 1}^{\alg}}$, then by the equality \eqref{decomposition of GlK1alg into tilde(GlK1alg) cup -Id tilde(GlK1alg)}  for $K$ and $K_0$ we obtain 
$\widetilde{G_{l, K_{0}, 1}^{\alg}} = G_{l, K_{0}, 1}^{\alg}$ and  $\widetilde{G_{l, K, 1}^{\alg}} = G_{l, K, 1}^{\alg}$. Hence 
\eqref{decomposition of widetilde(G(l, K, 1)alg) into cosets of widetilde(G(l, K_0, 1)alg)} is just \eqref{jK0K and Zar1 are isomorphisms eq3}.

If $- {\rm{Id}}_{V_l} \in \widetilde{G_{l, K, 1}^{\alg}} \, - \, \widetilde{G_{l, K_{0}, 1}^{\alg}}$,
then by the equality \eqref{decomposition of GlK1alg into tilde(GlK1alg) cup -Id tilde(GlK1alg)} for $K$ and $K_0$ we obtain $\widetilde{G_{l, K, 1}^{\alg}} = G_{l, K, 1}^{\alg}$ and 
$\widetilde{G_{l, K_{0}, 1}^{\alg}} \not= G_{l, K_{0}, 1}^{\alg}$. 
In addition the cosets 
$(- {\rm{Id}}_{V_l})^{i} \, \rho_{l} (\tilde{\sigma}_{1})\, \widetilde{G_{l, K_0, 1}^{\alg}}$, 
indexed by cosets $\tilde{\sigma}_1  G_{K_0}$ (cf. \eqref{jK0K and Zar1 are isomorphisms eq3}) and numbers $i = 0, 1$ are pairwise distinct because $- {\rm{Id}}_{V_l} \in G_{l, K_{0}, 1}^{\alg}$.
Hence \eqref{decomposition of widetilde(G(l, K, 1)alg) into cosets of widetilde(G(l, K_0, 1)alg) into more cosets}
holds. 

If  $- {\rm{Id}}_{V_l} \notin \widetilde{G_{l, K, 1}^{\alg}}$ then $- {\rm{Id}}_{V_l} \notin \widetilde{G_{l, K_{0}, 1}^{\alg}}$. By equality \eqref{decomposition of GlK1alg into tilde(GlK1alg) cup -Id tilde(GlK1alg)}  for $K$ and $K_0$ we obtain $\widetilde{G_{l, K, 1}^{\alg}} \, \cap \,\, G_{l, K_{0}, 1}^{\alg} = 
\widetilde{G_{l, K_{0}, 1}^{\alg}}$. As observed above, the cosets 
$(- {\rm{Id}}_{V_l})^{i} \, \rho_{l} (\tilde{\sigma}_{1})\, \widetilde{G_{l, K_0, 1}^{\alg}}$,
indexed by cosets $\tilde{\sigma}_1  G_{K_0}$ (cf. \eqref{jK0K and Zar1 are isomorphisms eq3}) and numbers $i = 0, 1$ are pairwise distinct because $- {\rm{Id}}_{V_l} \in G_{l, K_{0}, 1}^{\alg}$.
Hence \eqref{decomposition of widetilde(G(l, K, 1)alg) into cosets of widetilde(G(l, K_0, 1)alg) if 
- IdVl notin widetilde(G(l, K, 1)alg)} holds. 
\end{proof}

\begin{lemma} 
All arrows in the following commutative diagram are isomorphisms:
\begin{figure}[H]
\[
\begin{tikzcd}[column sep=huge, row sep=large]
G_{K} /G_{K_0} \arrow{r}{}[swap]{\simeq} \arrow{d}{}[swap]{=} &
\widetilde{\rho}_l (G_K) / \widetilde{\rho}_l (G_{K_0}) \arrow{r}{\widetilde{\Zar}_{K_0/K}}[swap]{\simeq}
\arrow{d}{}[swap]{\simeq} & 
\widetilde{G_{l, K}^{\alg}} / \widetilde{G_{l, K_{0}}^{\alg}} \arrow{d}{\overline{\pi}}[swap]{\simeq}\\ 
G_{K} /G_{K_0} \arrow{r}{}[swap]{\simeq} & {\rho}_l (G_K) / {\rho}_l (G_{K_0}) \arrow{r}{{\Zar}_{K_0/K}}[swap]{\simeq}& G_{l, K}^{\alg} / G_{l, K_0}^{\alg}\\ 
\end{tikzcd}
\]
\\[-0.8cm]
\caption{}
\label{diagram of natural maps from Galois to Zariski closures}
\end{figure}
\label{lemma concerning diagram of natural maps from Galois to Zariski closures}
\end{lemma}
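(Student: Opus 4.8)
The plan is to verify that the two horizontal arrows in each row, together with the middle vertical arrow, are isomorphisms; commutativity of the right-hand square then forces $\overline{\pi}$ to be an isomorphism as well. The argument is parallel for $\rho_l$ and for $\widetilde{\rho}_l$ and follows the pattern of the proof of Theorem~\ref{L0realizing conn comp for GlK alg}, but now requires no auxiliary hypotheses. Throughout I will use that, by the definition of $K_0$ (Remark~\ref{properties of K0} and \eqref{epsilon and tilde epsilon}), $K_0/K$ is finite Galois, $G_{l, K_0}^{\alg} = (G_{l, K}^{\alg})^{\circ}$, and $G_{K_0} = \epsilon_{l, K}^{-1}\bigl((G_{l, K}^{\alg})^{\circ}(\Q_l)\bigr)$.

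For the bottom row, the surjection $\rho_l$ induces a surjection $\Gal(K_0/K) = G_K/G_{K_0} \twoheadrightarrow \rho_l(G_K)/\rho_l(G_{K_0})$, and since $\rho_l(G_K)$ is Zariski dense in $G_{l, K}^{\alg}$ it meets every connected component, so $\rho_l(G_K)/\rho_l(G_{K_0}) \twoheadrightarrow G_{l, K}^{\alg}/(G_{l, K}^{\alg})^{\circ} = G_{l, K}^{\alg}/G_{l, K_0}^{\alg}$ is also surjective. Their composite is injective: if $\rho_l(g) \in (G_{l, K}^{\alg})^{\circ}$, then $\rho_l(g) \in (G_{l, K}^{\alg})^{\circ}(\Q_l)$, hence $g \in G_{K_0}$. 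So all three groups coincide and both maps of the bottom row are isomorphisms.

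For the top row I would run the same argument with $\widetilde{\rho}_l$ and $\widetilde{G_{l, K}^{\alg}}$. This requires the identification $\widetilde{G_{l, K_0}^{\alg}} = (\widetilde{G_{l, K}^{\alg}})^{\circ}$: the group $\widetilde{G_{l, K_0}^{\alg}}$ is connected (Remark~\ref{tildeGlK0alg connected}) and contained in $\widetilde{G_{l, K}^{\alg}}$, while Diagram~\ref{diagram compatibility of tilde(GlK1alg) with GlK1alg}, applied with $K_0$ and with $K$, yields $\dim \widetilde{G_{l, K_0}^{\alg}} = \dim G_{l, K_0}^{\alg} = \dim (G_{l, K}^{\alg})^{\circ} = \dim G_{l, K}^{\alg} = \dim \widetilde{G_{l, K}^{\alg}}$, forcing equality with the identity component. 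The injectivity step is handled by pushing forward along $\pi$: if $\widetilde{\rho}_l(g) \in (\widetilde{G_{l, K}^{\alg}})^{\circ}$ then $\rho_l(g) = \pi(\widetilde{\rho}_l(g))$ lies in $\pi\bigl((\widetilde{G_{l, K}^{\alg}})^{\circ}\bigr)$, a connected closed subgroup of $G_{l, K}^{\alg}$ through the identity, hence $\rho_l(g) \in (G_{l, K}^{\alg})^{\circ}(\Q_l)$ and $g \in G_{K_0}$ as before; surjectivity is as in the bottom row, now using that $\widetilde{G_{l, K_0}^{\alg}}$ is open in $\widetilde{G_{l, K}^{\alg}}$. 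Thus both maps of the top row are isomorphisms.

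Finally, the left vertical arrow is the identity, and the left-hand square commutes because $\pi \circ \widetilde{\rho}_l = \rho_l$; since its two horizontal arrows are isomorphisms, so is the middle vertical arrow. The right-hand square commutes by naturality of $\pi$ with respect to the inclusions of $\widetilde{\rho}_l(G_K)$, $\rho_l(G_K)$ into their Zariski closures, and three of its four arrows are now known to be isomorphisms, so $\overline{\pi}$ is an isomorphism. The step I expect to be the main (and only nonformal) obstacle is the identification $\widetilde{G_{l, K_0}^{\alg}} = (\widetilde{G_{l, K}^{\alg}})^{\circ}$ together with $\pi\bigl((\widetilde{G_{l, K}^{\alg}})^{\circ}\bigr) = (G_{l, K}^{\alg})^{\circ}$, which relies on the finiteness of $\ker \pi$ and on the dimension equality $\dim \widetilde{G_{l, K}^{\alg}} = \dim G_{l, K}^{\alg}$ extracted from Diagram~\ref{diagram compatibility of tilde(GlK1alg) with GlK1alg}; the rest is diagram chasing.
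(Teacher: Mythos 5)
Your proof is correct, and it takes a genuinely different route from the paper's. The paper proves the lemma economically by observing that every arrow in the diagram is an epimorphism (the right vertical via Remark~\ref{tildeGlK0alg connected}, the rest by density and Galois theory), then citing \cite[(6.4)]{BK2} for the fact that the bottom composite $G_K/G_{K_0} \to G_{l,K}^{\alg}/G_{l,K_0}^{\alg}$ is an isomorphism, and finishing with a diagram chase through finite groups. You instead reprove that bottom composite isomorphism directly from the definitions — surjectivity by Zariski density of $\rho_l(G_K)$ and openness of $(G_{l,K}^{\alg})^\circ$, injectivity from $G_{K_0} = \epsilon_{l,K}^{-1}\bigl((G_{l,K}^{\alg})^{\circ}(\Q_l)\bigr)$ — and then run the same argument on the top row, which requires you to establish $\widetilde{G_{l,K_0}^{\alg}} = (\widetilde{G_{l,K}^{\alg}})^\circ$ explicitly (connectedness from Remark~\ref{tildeGlK0alg connected} plus the dimension count coming from the finite kernel of $\pi$ in Diagram~\ref{diagram compatibility of tilde(GlK1alg) with GlK1alg}), and to push the injectivity forward along $\pi$. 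What the paper's route buys is brevity, leaning on a previously established result; what yours buys is self-containment and transparency — in particular you make explicit the identity-component identification that the paper invokes only implicitly when it says the right vertical arrow ``is the same as'' the $\overline{\pi}$ of Diagram~\ref{pi0 applied to the diagram compatibility of tilde(GlK1alg) with GlK1alg}. Both are sound; your version is a legitimate and arguably cleaner alternative.
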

\begin{proof}
The right vertical arrow is an epimorphism because of Remark 
\ref{tildeGlK0alg connected}: it is the same as the right vertical arrow $\overline{\pi}$ in Diagram \ref{pi0 applied to the diagram compatibility of tilde(GlK1alg) with GlK1alg}. It follows directly from the definition of Zariski closure that the right horizontal arrows $\widetilde{\Zar}_{K_0/K}$ and ${\Zar}_{K_0/K}$ are epimorphisms. 
The left horizontal arrows are clearly epimorphisms. Hence the middle vertical arrow is also an epimorphism.
By \cite[(6.4)]{BK2} the composition of the bottom horizontal arrows is 
an isomorphism; note that \cite[(6.4)]{BK2} holds for arbitrary weight $n$. Now a chase in Diagram 
\ref{diagram of natural maps from Galois to Zariski closures} shows that all arrows are isomorphisms.
\end{proof}

Let $\tilde\sigma \in G_K$ be a lift of $\sigma \in \Gal(K_{0}/K)$. The coset
$\tilde\sigma \, G_{K_{0}}$ does not depend on the lift. 
The Zariski closure of  
$\widetilde{\rho}_{l} (\tilde\sigma \, G_{K_{0}}) = \widetilde{\rho}_{l} (\tilde\sigma) \, 
\widetilde{\rho}_{l} (G_{K_{0}})$ in $\GL_{W_l}$ is  
$\widetilde{\rho}_{l} (\tilde\sigma) \, \widetilde{G_{l, K_{0}}^{\alg}}$. It follows immediately
from Lemma \ref{lemma concerning diagram of natural maps from Galois to Zariski closures} that:
\begin{align}
\widetilde{\rho}_{l} (G_K)
 &= \bigsqcup_{\sigma \in \Gal(K_{0}/K)} \, \, 
 \widetilde{\rho}_{l} (\tilde\sigma) \, \widetilde{\rho}_{l} (G_{K_{0}}),
\label{Im rhol GK decomposed cosets for K_0} \\
\widetilde{G_{l, K}^{\alg}}
 &= \bigsqcup_{\sigma \in \Gal(K_{0}/K)} \,\, \widetilde{\rho}_{l} (\tilde\sigma) \, \widetilde{G_{l, K_{0}}^{\alg}}.
\label{decomposition of GlKalg into cosets of algebraic closures with resp. to K0}
\end{align}
Put:
$$
\widetilde{\rho}_{l} (G_K)_{1} := \widetilde{\rho}_{l} (G_K) \cap \widetilde{G_{l, K, 1}^{\alg}}. 
$$ 
Let $K \subseteq L \subset \overline{F}$ be a tower of extensions with
$L/K$ finite. Consider the following commutative diagram, in which the left and middle vertical arrows
are monomorphisms.
\begin{figure}[H]
\[
\begin{tikzcd}[column sep=huge, row sep=large]
1 \arrow{r}{} & \widetilde{G_{l, K, 1}^{\alg}} \arrow{r}{}[swap]{}  &
\widetilde{G_{l, K}^{\alg}}  \arrow{r}{N}[swap]{}
& \G_{m} \arrow{r} & 1 \\ 
1 \arrow{r}{} & \widetilde{G_{l, L, 1}^{\alg}} \arrow{r}{}[swap]{} \arrow{u}{}[swap]{} & \widetilde{G_{l, L}^{\alg}} 
\arrow{r}{N}[swap]{} \arrow{u}{}[swap]{} & \G_m  \arrow{r} \arrow{u}{}[swap]{=} & 1 \\ 
\end{tikzcd}
\]
\\[-0.8cm]
\caption{}
\label{diagram of natural maps from tilde GK alg to tilde GL alg}
\end{figure}
Observe that $\widetilde{G_{l, K, 1}^{\alg}} \, \cap \, \widetilde{G_{l, L}^{\alg}} \, = 
\,  \widetilde{G_{l, L, 1}^{\alg}}$.
If $L/K$ is Galois, then it follows from the Diagram 
\ref{diagram of natural maps from tilde GK alg to tilde GL alg} that there is a monomorphism:
\begin{equation}
\label{diagram rho GLl1 maps to rho GKll and rho GLl maps to rho GK}
\tilde{j}_{L/K}\colon \widetilde{\rho}_{l} (G_K)_1 / \widetilde{\rho}_{l} (G_L)_1 \,\,\, \hookrightarrow \,\,\, \widetilde{\rho}_{l} (G_K) /  \widetilde{\rho}_{l} (G_L).
\end{equation}

\begin{theorem} Let $v_{0}$ be a prime in $\mathcal{O}_{K_0}$ such that $v_{0} | l$. Let 
$K_{v_0}$ be the completion of $K_{0}$ at $v_{0}$. Assume that 
$\chi_{c} (G_K) = \chi_{c} (G_{{K_{v_0}}})$.
Then all arrows in the following commutative diagram are isomorphisms:
\begin{figure}[H]
\[
\begin{tikzcd}[column sep=huge, row sep=large]
\widetilde{\rho}_l (G_K) / \widetilde{\rho}_l (G_{K_0})  \arrow{r}{\widetilde{\Zar}_{K_0/K}}[swap]{\simeq} & 
\widetilde{G_{l, K}^{\alg}} / \widetilde{G_{l, K_{0}}^{\alg}} \\ 
\widetilde{\rho}_l (G_K)_1 / \widetilde{\rho}_l (G_{K_0})_1  \arrow{r}{\widetilde{\Zar}_{K_0/K,\, 1}}[swap]{\simeq} \arrow{u}{\tilde{j}_{K_0/K}}[swap]{\simeq}&  \widetilde{G_{l, K, 1}^{\alg}} / 
\widetilde{G_{l, K_{0}, 1}^{\alg}} \arrow{u}{\tilde{i}_{CC}}[swap]{\simeq}
\end{tikzcd}
\]
\caption{}
\label{diagram comparing the tilde rho (GK) with tilde GLalg for K0}
\end{figure}

Moreover each coset of $G_{K}/G_{K_0}$ has the form $\tilde{\sigma}_{1} \, G_{K_0}$ such that:
\begin{itemize}
\item[(1)]  $\widetilde{\rho}_{l} (\tilde{\sigma}_{1}) \in \widetilde{\rho}_{l} (G_K)_1$,
\item[(2)]  $\widetilde{\rho}_{l} (G_K)_1 \,\, = \,\, 
{\bigsqcup}_{\tilde{\sigma}_1  G_{K_0}}  \,\, \widetilde{\rho}_{l} (\tilde{\sigma}_{1})\, \widetilde{\rho}_{l} 
(G_{K_0})_1$,
\item[(3)]  $\widetilde{G_{l, K, 1}^{\alg}} \,\, = \,\, 
{\bigsqcup}_{\tilde{\sigma}_1  G_{K_0}}  \,\, \widetilde{\rho}_{l} (\tilde{\sigma}_{1})
\widetilde{\, G_{l, K_0, 1}^{\alg}} $.
\end{itemize} 
\label{tilde jK0K and tilde Zar1 are isomorphisms}
\end{theorem}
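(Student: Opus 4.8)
The plan is to reproduce the argument of Theorem~\ref{jK0K and Zar1 are isomorphisms} (that is, \cite[Theorem~6.11]{BK2}) in the present ``tilde'' setting, the two auxiliary hypotheses used there being replaced by the single condition $\chi_{c}(G_K)=\chi_{c}(G_{K_{v_0}})$; this is the natural analogue, since the role played there by $\rho_l$ together with the homotheties is now played by the map $N\colon\widetilde{G_{l,K}^{\alg}}\to\G_m$ induced by the summand $\Q_l(1)$ of $W_l$. First I would record what the earlier material already gives about Diagram~\ref{diagram comparing the tilde rho (GK) with tilde GLalg for K0}. By Corollary~\ref{Connected component GlKlK1alg in form of widetilde(GlK01alg)circ} we have $\widetilde{G_{l,K_{0},1}^{\alg}}=(\widetilde{G_{l,K,1}^{\alg}})^{\circ}$, and by Remark~\ref{tildeGlK0alg connected} the group $\widetilde{G_{l,K_{0}}^{\alg}}$ is connected; being of finite index in $\widetilde{G_{l,K}^{\alg}}$ by \eqref{decomposition of GlKalg into cosets of algebraic closures with resp. to K0}, it is open, hence equal to $(\widetilde{G_{l,K}^{\alg}})^{\circ}$. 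Consequently the right column of Diagram~\ref{diagram comparing the tilde rho (GK) with tilde GLalg for K0} is identified with the map $\widetilde{i}_{CC}\colon\pi_{0}(\widetilde{G_{l,K,1}^{\alg}})\to\pi_{0}(\widetilde{G_{l,K}^{\alg}})$ of Theorem~\ref{pi 0 wide tilde G l, K, 1 alg cong pi 0 wide tilde G l, K alg }, so $\tilde{i}_{CC}$ is an isomorphism; the top arrow $\widetilde{\Zar}_{K_0/K}$ is an isomorphism by Lemma~\ref{lemma concerning diagram of natural maps from Galois to Zariski closures}; and the left arrow $\tilde{j}_{K_0/K}$ is a monomorphism by \eqref{diagram rho GLl1 maps to rho GKll and rho GLl maps to rho GK}.

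The one step requiring genuine input is the surjectivity of $\tilde{j}_{K_0/K}$. Here I would use that $\widetilde{G_{l,K,1}^{\alg}}=\Ker N$ (the top row of Diagram~\ref{diagram compatibility of tilde(GlK1alg) with GlK1alg}) and that $N\circ\widetilde{\rho}_l=\chi_{c}$ by the very definition of $\widetilde{\rho}_l$, so that $\widetilde{\rho}_l(G_K)_1=\widetilde{\rho}_l(\{\sigma\in G_K:\chi_{c}(\sigma)=1\})$. Since $K_{v_0}$ is the completion of $K_0$ at a prime $v_0\mid l$, the decomposition group $G_{K_{v_0}}$ sits inside $G_{K_0}\subseteq G_K$, whence $\chi_{c}(G_{K_{v_0}})\subseteq\chi_{c}(G_{K_0})\subseteq\chi_{c}(G_K)=\chi_{c}(G_{K_{v_0}})$, forcing $\chi_{c}(G_K)=\chi_{c}(G_{K_0})$. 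Now, given any $\sigma\in G_K$, pick $\tau\in G_{K_0}$ with $\chi_{c}(\tau)=\chi_{c}(\sigma)^{-1}$ and put $\tilde{\sigma}_{1}:=\sigma\tau$; then $\tilde{\sigma}_{1}G_{K_0}=\sigma G_{K_0}$ and $N(\widetilde{\rho}_l(\tilde{\sigma}_{1}))=\chi_{c}(\tilde{\sigma}_{1})=1$, i.e. $\widetilde{\rho}_l(\tilde{\sigma}_{1})\in\widetilde{\rho}_l(G_K)_1$. This proves simultaneously that $\tilde{j}_{K_0/K}$ is onto and the first of the displayed assertions in the ``moreover'' part.

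It then remains to close the diagram chase and deduce (2) and (3). Since $\tilde{i}_{CC}$ and $\widetilde{\Zar}_{K_0/K}$ are isomorphisms and $\tilde{j}_{K_0/K}$ is now a bijection, commutativity of Diagram~\ref{diagram comparing the tilde rho (GK) with tilde GLalg for K0} forces $\widetilde{\Zar}_{K_0/K,1}=\tilde{i}_{CC}^{-1}\circ\widetilde{\Zar}_{K_0/K}\circ\tilde{j}_{K_0/K}$ to be an isomorphism as well, so all four arrows are isomorphisms. Assertion~(2) is exactly the statement that the bijection $\tilde{j}_{K_0/K}$ carries $\widetilde{\rho}_l(\tilde{\sigma}_{1})\,\widetilde{\rho}_l(G_{K_0})_1$ to $\widetilde{\rho}_l(\tilde{\sigma}_{1})\,\widetilde{\rho}_l(G_{K_0})$, so those cosets, indexed by $\tilde{\sigma}_{1}G_{K_0}$ running over $G_K/G_{K_0}$, are pairwise disjoint and exhaust $\widetilde{\rho}_l(G_K)_1$; assertion~(3) follows identically from the bijection $\widetilde{\Zar}_{K_0/K,1}$, which sends $\widetilde{\rho}_l(\tilde{\sigma}_{1})\,\widetilde{\rho}_l(G_{K_0})_1$ to $\widetilde{\rho}_l(\tilde{\sigma}_{1})\,\widetilde{G_{l,K_0,1}^{\alg}}$. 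I expect the cyclotomic-character bookkeeping of the middle paragraph to be the only nonformal point; once the identifications of the first paragraph are recorded, everything else is a chase in Diagram~\ref{diagram comparing the tilde rho (GK) with tilde GLalg for K0}.
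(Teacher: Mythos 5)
Your proof is correct and follows essentially the same route as the paper's: identify $\tilde{i}_{CC}$ and $\widetilde{\Zar}_{K_0/K}$ as isomorphisms via Theorem~\ref{pi 0 wide tilde G l, K, 1 alg cong pi 0 wide tilde G l, K alg } and Lemma~\ref{lemma concerning diagram of natural maps from Galois to Zariski closures}, show $\tilde{j}_{K_0/K}$ is a monomorphism via \eqref{diagram rho GLl1 maps to rho GKll and rho GLl maps to rho GK}, and use $N\circ\widetilde{\rho}_l=\chi_c$ together with the hypothesis on the cyclotomic character to fix coset representatives $\tilde\sigma_1$ landing in $\widetilde{\rho}_l(G_K)_1$, which simultaneously gives surjectivity of $\tilde{j}_{K_0/K}$ and the three displayed assertions. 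The only cosmetic difference is that you first upgrade the hypothesis to $\chi_c(G_K)=\chi_c(G_{K_0})$ and then choose the adjusting element $\tau\in G_{K_0}$, whereas the paper chooses it directly in $G_{K_{v_0}}\subset G_{K_0}$; your version is marginally cleaner since it makes explicit that only the containment $G_{K_{v_0}}\subset G_{K_0}$ is used.
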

\begin{proof} The right vertical arrow $\tilde{i}_{CC}$ is an isomorphism by Theorem 
\ref{pi 0 wide tilde G l, K, 1 alg cong pi 0 wide tilde G l, K alg }. The top horizontal arrow
$\widetilde{\Zar}_{K_0/K}$ is an isomorphism by Lemma
\ref{lemma concerning diagram of natural maps from Galois to Zariski closures}. The left vertical arrow 
$\tilde{j}_{K_0/K}$ is a monomorphism by \eqref{diagram rho GLl1 maps to rho GKll and rho GLl maps to rho GK}. 
Take any $\sigma \in \Gal(K_{0}/K) = G_{K} / G_{K_0}$. Let $\tilde\sigma \in G_K$ be a lift of 
$\sigma$. By \eqref{Representation rho tilde of Serre}, \eqref{GlKalg subset GlKalg times Gm}, 
and the definition of $N$ we obtain the following equality for any $\tau \in G_{K}$:
\begin{equation}
\chi_{c} (\tau) = N ( \widetilde{\rho}_{l} (\tau)).
\label{chi c of tau = N tilde rho of tau}
\end{equation}
By assumption we can choose $\gamma \in G_{K_{v_0}} \subset G_{K_0} \subset G_K$ such that 
$\chi_{c} (\tilde\sigma \, \gamma^{-1}) = 1$. So $\widetilde{\rho}_{l} (\tilde{\sigma} \gamma^{-1})
\in \widetilde{\rho}_{l} (G_K)_1$ by \eqref{chi c of tau = N tilde rho of tau}. Hence we obtain
\begin{equation}
\widetilde{\rho}_{l} (\tilde{\sigma}) \widetilde{\rho}_{l} (G_{K_0}) \, = \,
\widetilde{\rho}_{l} (\tilde{\sigma} \gamma^{-1}) \widetilde{\rho}_{l} (G_{K_0}) \, = \,
\tilde{j}_{K_0/K} (\widetilde{\rho}_{l} (\tilde{\sigma} \gamma^{-1}) \widetilde{\rho}_{l} (G_{K_0})_1).
\label{cosets generated by tilde rho l tilde sigma are generated by tilde rho l tilde sigma gamma -1}
\end{equation}
Therefore $\tilde{j}_{K_0/K}$ is an epimorphism, hence an isomorphism. This proves that the bottom horizontal arrow $\widetilde{\Zar}_{K_0/K,\, 1}$ is also an isomorphism. Put $\tilde{\sigma}_{1} := \tilde{\sigma} \gamma^{-1}$.
We have $\tilde{\sigma}_{1} \, G_{K_0} = \tilde{\sigma} \, G_{K_0}$ and $\widetilde{\rho}_{l} (\tilde{\sigma}_{1}) \in \widetilde{\rho}_{l} (G_K)_1$, so claim (1) follows. Because $\tilde{j}_{K_0/K}$ and 
$\widetilde{\Zar}_{K_0/K,\, 1}$ are
 isomorphisms, claims (2) and (3) follow by applying \eqref{Im rhol GK decomposed cosets for K_0} and
\eqref{decomposition of GlKalg into cosets of algebraic closures with resp. to K0}.
\end{proof}
\begin{remark} Because $\widetilde{G_{l, K_{0}}^{\alg}}$ and $\widetilde{G_{l, K_{0}, 1}^{\alg}}$ are connected,
everywhere in \S \ref{section-computation of the identity connected component}
we can replace $K$ with a finite extension $L$ and $K_{0}$ with the corresponding
$L_{0}$. If $K \subset L \subset K_{0}$, then $K_{0} = L_{0}$. After such replacement, all results in \S \ref{section-computation of the identity connected component} hold for the base field $L$ with $K_0$ replaced by $L_{0}$.  
\label{Base change from K to L in basic theorems with tilde} 
\end{remark}

\begin{remark} Consider the following commutative diagram:
\begin{figure}[H]
\[
\begin{tikzcd}
1 \arrow{r}{} & \,\, \widetilde{\rho}_{l} (G_{K})_1 \arrow{d}[swap]{\pi_{1}} \arrow{r}{}  & 
\widetilde{\rho}_{l} (G_{K})  \arrow{d}[swap]{\pi} \arrow{r}{N} & \Z_{l}^{\times} 
\arrow{d}{x \mapsto x^{-n}}[swap]{} \\ 
1 \arrow{r}{} & \rho_{l} (G_{K})_1  \arrow{r}{} & 
\rho_{l} (G_{K})  \arrow{d}[swap]{} \arrow{r}{\chi} & \Z_{l}^{\times} \\
& & 1  \\
\end{tikzcd}
\]
\\[-0.8cm]
\caption{}
\label{diagram compatibility of tilde(rho) G K 1 with rho G K 1} 
\end{figure}
In Diagram 
\ref{diagram compatibility of tilde(rho) G K 1 with rho G K 1},
the rows and the middle column are exact, the left vertical arrow is an embedding, and the
kernel of $\pi$ injects into the kernel of the right vertical arrow, cf. the discussion following Diagram
\ref{diagram compatibility of tilde(GlK1alg) with GlK1alg}. 
\end{remark}

\begin{corollary}
Assume $n$ and $l-1$ are coprime. Then:
\begin{itemize}
\item[(a)] $\widetilde{\rho}_{l} (G_{K}) = \rho_{l} (G_{K})$,
\item[(b)] $\widetilde{\rho}_{l} (G_{K})_1 = \rho_{l} (G_{K})_1$,
\item[(c)] $\widetilde{G_{l, K}^{\alg}} = G_{l, K}^{\alg}$,
\item[(d)] $\widetilde{G_{l, K, 1}^{\alg}} = G_{l, K, 1}^{\alg}$.
\end{itemize}
\label{Relations between widetilde rho l GK1 and rho l GK1 etc}
\end{corollary}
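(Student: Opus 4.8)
The plan is to read off (a) and (b) from the commutative diagram preceding the statement, and then to derive (c) and (d) from the single observation that, $l-1$ being even, the hypothesis forces $n$ to be odd, so that $(n+1)/2$ is an integer.

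For (a): in Diagram~\ref{diagram compatibility of tilde(rho) G K 1 with rho G K 1} the middle vertical map $\pi\colon\widetilde{\rho}_l(G_K)\to\rho_l(G_K)$ is the projection $(\rho_l(g),\chi_c(g))\mapsto\rho_l(g)$, hence surjective, and its kernel injects into the kernel of $x\mapsto x^{-n}$ on $\Z_l^\times$, i.e.\ into $\{x\in\Z_l^\times:x^n=1\}$. Since the torsion subgroup of $\Z_l^\times$ is cyclic of order $l-1$ and $\gcd(n,l-1)=1$, this set is $\{1\}$, so $\pi$ is an isomorphism; this is (a). For (b): $\widetilde{\rho}_l(g)=(\rho_l(g),\chi_c(g))$ lies in $\widetilde{G_{l,K,1}^{\alg}}=\Ker N$ exactly when $\chi_c(g)=1$, which by \eqref{compatibility of chi with chi-cycl.} is equivalent to $\chi(\rho_l(g))=\chi_c(g)^{-n}=1$ (again using $\chi_c(g)^n=1\Rightarrow\chi_c(g)=1$), i.e.\ to $\rho_l(g)\in\Iso_{(V_l,\psi_l)}$; so the isomorphism of (a) identifies $\widetilde{\rho}_l(G_K)_1$ with $\rho_l(G_K)_1$.

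For (c), the diagram argument applied to Diagram~\ref{diagram compatibility of tilde(GlK1alg) with GlK1alg} only shows that the kernel of $\pi\colon\widetilde{G_{l,K}^{\alg}}\to G_{l,K}^{\alg}$ injects into the group scheme $\mu_n$, which is nontrivial for $n>1$; so $\pi$ is merely an isogeny. Instead I would use the scalar ``untwist''
\[
\Psi\colon \GL_{V_l}\times\G_m \longrightarrow \GL_{V_l},\qquad \Psi(A,t):=A\cdot t^{(n+1)/2}\,\mathrm{Id}_{V_l},
\]
which is a homomorphism (the factor $t^{(n+1)/2}\mathrm{Id}_{V_l}$ is central) and is defined over $\Q_l$ because $(n+1)/2\in\Z$. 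On $\widetilde{\rho}_l(G_K)$ it takes the value $\rho_l(g)\cdot\chi_c(g)^{(n+1)/2}\mathrm{Id}_{V_l}\in G_{l,K}^{\alg}$ (as $G_{l,K}^{\alg}$ contains the homotheties), so $\Psi$ restricts to a morphism $\widetilde{G_{l,K}^{\alg}}\to G_{l,K}^{\alg}$. Commutativity of the right square of Diagram~\ref{diagram compatibility of tilde(GlK1alg) with GlK1alg} gives $\widetilde{G_{l,K}^{\alg}}\subseteq\{(x,t):\chi(x)t^n=1\}$; feeding this in, one checks $\chi\circ\Psi=N$ on $\widetilde{G_{l,K}^{\alg}}$ and that $\ker(\Psi|_{\widetilde{G_{l,K}^{\alg}}})$ is trivial (a point $(t^{-(n+1)/2}\mathrm{Id}_{V_l},t)$ of that set forces $t^{n-(n+1)}=t^{-1}=1$). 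Injectivity together with $\dim\widetilde{G_{l,K}^{\alg}}=\dim G_{l,K}^{\alg}$ makes $\Psi(\widetilde{G_{l,K}^{\alg}})$ a closed subgroup of full dimension; since each $\rho_l(g)\cdot\chi_c(g)^{(n+1)/2}\mathrm{Id}_{V_l}$ lies in the same component of $G_{l,K}^{\alg}$ as $\rho_l(g)$, the image meets every component, hence equals $G_{l,K}^{\alg}$. Thus $\Psi$ is an isomorphism $\widetilde{G_{l,K}^{\alg}}\xrightarrow{\;\sim\;}G_{l,K}^{\alg}$ (over $\Q_l$, bijective homomorphisms of algebraic groups in characteristic zero being isomorphisms), which is (c). Finally (d) follows: since $\chi\circ\Psi=N$ and $\Psi$ is an isomorphism, $\Psi$ carries $\Ker N=\widetilde{G_{l,K,1}^{\alg}}$ onto $\Ker\chi=G_{l,K,1}^{\alg}$, and $\Psi$ restricted to $\widetilde{G_{l,K,1}^{\alg}}$ is the first projection, so $\widetilde{G_{l,K,1}^{\alg}}=G_{l,K,1}^{\alg}$.

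The step I expect to be the main obstacle is (c): one must not ``take Zariski closures'' in (a) to get it, because $\widetilde{G_{l,K}^{\alg}}$ and $G_{l,K}^{\alg}$ sit in different $\GL$'s and the projection between them really is a nontrivial $\mu_n$-isogeny once $n>1$ (equivalently, $\chi$ is not divisible by $n$ in $X^*(G_{l,K}^{\alg})$). The isomorphism has to be produced by hand via the scalar-untwist $\Psi$, and the two places where oddness of $n$ is essential — so that $\Psi$ is algebraic over $\Q_l$ and so that it lands back inside $G_{l,K}^{\alg}$ — are exactly where the hypothesis $\gcd(n,l-1)=1$ is used beyond part (a).
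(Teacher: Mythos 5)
Your parts (a) and (b) are correct and are essentially the paper's argument, spelled out in slightly more detail: injectivity of $x\mapsto x^{-n}$ on $\Z_l^\times$ under $\gcd(n,l-1)=1$ (for $l$ odd) forces $\pi$ in Diagram~\ref{diagram compatibility of tilde(rho) G K 1 with rho G K 1} to be an isomorphism. What you flag about parts (c) and (d) is also a genuine thin spot in the paper, which only says \emph{``proven in the same way based on the discussion following Diagram~\ref{diagram compatibility of tilde(GlK1alg) with GlK1alg}''}: that discussion yields $\ker\pi\hookrightarrow\mu_n$, but $\mu_n$ over $\Q_l$ is not the trivial group scheme for $n>1$, even though $\mu_n(\Q_l)$ is trivial under the coprimality hypothesis, so the $\Q_l^\times$-argument does not transfer verbatim. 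In fact the projection $\pi\colon\widetilde{G_{l,K}^{\alg}}\to G_{l,K}^{\alg}$ really can have nontrivial kernel under the hypothesis of the corollary: for $M=\Sym^3 h^1(E)$, $E$ a CM elliptic curve (CM field contained in $K$, $l$ split in the CM field and $l\equiv 2\bmod 3$, so $n=3$ and $\gcd(n,l-1)=1$), the Zariski closure $\widetilde{G_{l,K}^{\alg}}$ is a split two-torus, $G_{l,K}^{\alg}$ is its quotient by a diagonal $\mu_3$, and $\pi$ is exactly that nontrivial $\mu_3$-isogeny.

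Your scalar untwist $\Psi(A,t)=A\cdot t^{(n+1)/2}\mathrm{Id}_{V_l}$ is a correct fix: it is a $\Q_l$-homomorphism, it lands in $G_{l,K}^{\alg}$ because $G_{l,K}^{\alg}$ contains the homotheties, the identity $\chi\circ\Psi=N$ on $\widetilde{G_{l,K}^{\alg}}$ forces its kernel there to be trivial, and dimension-plus-components gives surjectivity. The observation that $\Psi$ agrees with $\pi_1$ on $\ker N$ is also right, so (d) does hold with the natural map $\pi_1$ and is consistent with Lemma~\ref{condition for GlK1alg = tilde(GlK1alg)} (since $n$ odd gives trivial parity group). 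Two cautions, though. First, $\Psi\neq\pi$, and the Sym$^3$-CM example above shows $\pi$ itself can fail to be an isomorphism; so your argument proves that $\widetilde{G_{l,K}^{\alg}}$ and $G_{l,K}^{\alg}$ are $\Q_l$-isomorphic (via an isomorphism covering $x\mapsto x^{-n}$ on $\G_m$ and restricting to $\pi_1$ on $\ker N$), which is presumably the intended meaning of (c), but it is a different and strictly weaker statement than ``$\pi$ is an isomorphism,'' the reading one extracts naively from parts (a)--(b). Second, your reduction ``$l-1$ even forces $n$ odd'' is only correct for $l$ odd: for $l=2$ the hypothesis $\gcd(n,l-1)=1$ is vacuous, $n$ may be even, $(n+1)/2\notin\Z$, and $\Psi$ is unavailable. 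Indeed for $n$ even with $\dim_\Q V$ odd one has $\gpP_{\rm S}(V_l,\psi_l)$ nontrivial by Proposition~\ref{P(Vl, psil) nontrivial for n even and dim V odd}, so part (d) as stated is problematic at $l=2$; you should flag the $l=2$ case explicitly rather than leave it implicit.
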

\begin{proof}
By assumption the right vertical arrow in Diagram 
\ref{diagram compatibility of tilde(rho) G K 1 with rho G K 1} 
is an isomorphism. This implies that all vertical arrows in this diagram are isomorphisms. This proves 
(1) and (2). The equalities (3) and (4) are proven in the same way based on the discussion following Diagram
\ref{diagram compatibility of tilde(GlK1alg) with GlK1alg}.
\end{proof}

\begin{corollary} 
Assume that:
\begin{itemize}
\item[(1)] $K_0 \, \cap \, K(\mu_{\bar{l}}^{\otimes \, n}) \, = \, K$,
\item[(2)] $1 + l\Z_{l} \,\, {\rm{Id}}_{V_l}  \, \subset \, \rho_{l} (G_K)$,
\item[(3)] $- {\rm{Id}}_{V_l} \in ({G_{l, K, 1}^{\alg}})^{\circ}$, 
\item[(4)] $n$ and $l-1$ are coprime.
\end{itemize}
Then the following equality holds:
\begin{equation}
\label{overline widetilde rho l GK1 the same as widetilde GlK1 alg}
\overline{\widetilde{\rho}_{l} (G_{K})_1} \, = \, \widetilde{G_{l, K, 1}^{\alg}}.
\end{equation}
\end{corollary}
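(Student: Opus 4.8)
The plan is to reduce the statement immediately to Theorem~\ref{K giving the isom overline rho l (GK) the same as GlK1 alg: arbitrary weight} and Corollary~\ref{Relations between widetilde rho l GK1 and rho l GK1 etc}, which together already contain everything we need; no new ideas are required. First I would observe that assumptions (1), (2), (3) of the present statement are exactly the hypotheses of Theorem~\ref{K giving the isom overline rho l (GK) the same as GlK1 alg: arbitrary weight}, so that theorem applies and yields the equality $\overline{\rho_{l} (G_K)_1} = G_{l, K, 1}^{\alg}$.

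Next, assumption (4) puts us in the setting of Corollary~\ref{Relations between widetilde rho l GK1 and rho l GK1 etc}, whose parts (b) and (d) supply the two identifications $\widetilde{\rho}_{l} (G_K)_1 = \rho_{l} (G_K)_1$ and $\widetilde{G_{l, K, 1}^{\alg}} = G_{l, K, 1}^{\alg}$. Strictly speaking these equalities are asserted through the monomorphism $\pi_{1}\colon \widetilde{G_{l, K, 1}^{\alg}} \hookrightarrow G_{l, K, 1}^{\alg}$ appearing in Diagram~\ref{diagram compatibility of tilde(GlK1alg) with GlK1alg}, which under (4) is an isomorphism of algebraic groups. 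The one genuine bookkeeping point is that an isomorphism of algebraic groups is in particular a homeomorphism for the Zariski topology, hence commutes with formation of Zariski closures; thus $\pi_{1}\bigl(\overline{\widetilde{\rho}_{l} (G_K)_1}\bigr) = \overline{\pi_{1}(\widetilde{\rho}_{l} (G_K)_1)} = \overline{\rho_{l} (G_K)_1}$.

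Combining the two displays then gives $\overline{\widetilde{\rho}_{l} (G_K)_1} = \overline{\rho_{l} (G_K)_1} = G_{l, K, 1}^{\alg} = \widetilde{G_{l, K, 1}^{\alg}}$, which is the claimed equality. There is no real obstacle here: the content has been packaged into the earlier theorem and corollary, and the only care needed is to verify that (1)--(3) are verbatim the hypotheses of Theorem~\ref{K giving the isom overline rho l (GK) the same as GlK1 alg: arbitrary weight} and to keep track of the identification $\pi_{1}$ together with its compatibility with Zariski closures. One could also phrase the last step without mentioning $\pi_{1}$ explicitly, simply by noting that Corollary~\ref{Relations between widetilde rho l GK1 and rho l GK1 etc} lets one replace $\widetilde{\rho}_{l}$ by $\rho_{l}$ and $\widetilde{G_{l, K, 1}^{\alg}}$ by $G_{l, K, 1}^{\alg}$ throughout, whereupon the assertion becomes literally the conclusion of Theorem~\ref{K giving the isom overline rho l (GK) the same as GlK1 alg: arbitrary weight}.
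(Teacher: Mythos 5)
Your proposal matches the paper's proof exactly: the paper's own proof is simply the one-line citation of Theorem~\ref{K giving the isom overline rho l (GK) the same as GlK1 alg: arbitrary weight} together with Corollary~\ref{Relations between widetilde rho l GK1 and rho l GK1 etc}, and your unpacking of that citation (hypotheses (1)--(3) feeding the theorem, hypothesis (4) feeding parts (b) and (d) of the corollary, and then chaining the equalities) is precisely the intended argument. The remark about $\pi_1$ and Zariski closures is more explicit than the paper bothers to be, but it is a fair piece of bookkeeping and does no harm.
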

\begin{proof}
This follows from Theorem 
\ref{K giving the isom overline rho l (GK) the same as GlK1 alg: arbitrary weight} and 
Corollary \ref{Relations between widetilde rho l GK1 and rho l GK1 etc}.
\end{proof}

\section{Algebraic Sato--Tate conjecture for families of \texorpdfstring{$l$}{l}-adic representations } 
\label{algebraic ST conjecture for families of l-adic reps}

Let $A$ be an abelian variety over $K$ and $(V_A, \psi_A)$ be the Hodge structure on $H^{1} (A(\C), \Q)$
associated with a polarization of $A$.

\begin{theorem}
\label{Theorem Deligne} 
(Deligne \cite[I, Prop.\ 6.2]{D1}, Piatetski-Shapiro \cite{P-S}, Borovoi \cite{Bor}; see also 
\cite[\S  4.1]{Se77})
For any prime number $l$,
\begin{equation}
(G_{l, K}^{\alg})^{\circ} \subseteq \MT(V_A, \psi_A)_{\Q_l}.
\label{Del ineq}\end{equation}
\end{theorem}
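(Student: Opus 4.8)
The plan is to deduce the inclusion $(G_{l,K}^{\alg})^{\circ} \subseteq \MT(V_A,\psi_A)_{\Q_l}$ from the classical comparison theorems relating $l$-adic monodromy to the Mumford--Tate group, namely Deligne's absolute-Hodge-cycle argument together with its completion by Piatetski-Shapiro and Borovoi. First I would recall the setup: for an abelian variety $A/K$ of dimension $g$, the $\Q$-vector space $V_A = H^1(A(\C),\Q)$ carries the polarized weight-one Hodge structure $(V_A,\psi_A)$, and by the comparison isomorphism the $l$-adic realization $V_{A,l} = H^1_{\et}(\overline{A},\Q_l) \simeq V_A \otimes_\Q \Q_l$ is the Tate module dual, on which $G_K$ acts via $\rho_l$ with Zariski closure $G_{l,K}^{\alg} \subseteq \GIso_{(V_{A,l},\psi_{A,l})} = \GSp_{(V_{A,l},\psi_{A,l})}$ (since $n=1$ is odd).

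The key step is the statement that every Tate class in the $l$-adic cohomology of powers of $A$ that is fixed by an open subgroup of $G_K$ is ``motivated'' in a way that forces it to be $\MT$-invariant. Concretely: by Deligne's theorem that Hodge cycles on abelian varieties are absolutely Hodge (and in particular de Rham--$l$-adic compatible), the tensor construction yields, for each $l$, that the identity component $(G_{l,K}^{\alg})^{\circ}$ fixes every Hodge class in every tensor space $T^{a,b}(V_{A,l})$ that comes from a Hodge class on the Betti side. Since $\MT(V_A,\psi_A)$ is by definition the stabilizer in $\GL(V_A)$ (intersected with the appropriate similitude condition) of all Hodge tensors, extending scalars to $\Q_l$ shows that any algebraic subgroup of $\GL(V_{A,l})$ fixing all those tensors is contained in $\MT(V_A,\psi_A)_{\Q_l}$. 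Applying this to $(G_{l,K}^{\alg})^{\circ}$ gives the claim. I would organize this as: (i) pass to a finite extension $L/K$ with $G_{l,L}^{\alg}$ connected, so $G_{l,L}^{\alg} = (G_{l,K}^{\alg})^{\circ}$ by Theorem~\ref{L0realizing conn comp for GlK alg}; (ii) observe that $\rho_l(G_L)$, being open in $G_{l,L}^{\alg}(\Q_l)$ by Bogomolov (Remark~\ref{properties of Hodge--Tate representations }), fixes a tensor iff the connected group does; (iii) invoke Deligne--Piatetski-Shapiro--Borovoi to identify the $\rho_l(G_L)$-fixed tensors with the $\MT$-fixed ones, up to the homotheties which lie in $\MT$ anyway since $\G_m\,\mathrm{Id} \subseteq \MT$.

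The main obstacle — and the reason the references to Piatetski-Shapiro and Borovoi are essential rather than Deligne alone — is that Deligne's result directly gives only that $(G_{l,K}^{\alg})^{\circ}$ fixes the \emph{absolutely Hodge} classes, and one must know that on abelian varieties these coincide with the Hodge classes (Deligne's main theorem) \emph{and} that the $l$-adic monodromy group is cut out inside $\GSp$ by exactly the stabilizer of these classes, not something larger; the subtlety is controlling the component group versus the identity component, which is why the statement is only about $(G_{l,K}^{\alg})^{\circ}$. Since this is a known theorem with a standard proof, I expect the write-up to consist mainly of assembling these citations carefully and checking the polarization/similitude bookkeeping, rather than any new argument; the honest move is to cite \cite[I, Prop.~6.2]{D1}, \cite{P-S}, \cite{Bor}, and \cite[\S 4.1]{Se77} and indicate the reduction via Theorem~\ref{L0realizing conn comp for GlK alg}.
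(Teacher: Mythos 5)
The paper gives no proof of this statement: it is quoted as a known theorem, with exactly the citations you list, and no argument is supplied in the text. Your proposal — cite Deligne \cite{D1}, Piatetski-Shapiro \cite{P-S}, Borovoi \cite{Bor}, and Serre \cite{Se77}, together with a (correct) sketch of the standard argument that the connected $l$-adic monodromy group fixes the $l$-adic realizations of all Hodge tensors and hence lands in $\MT(V_A,\psi_A)_{\Q_l}$ — therefore matches, and indeed slightly exceeds, the paper's own treatment.
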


The classical Mumford--Tate conjecture for $A/K$ states:
\begin{conjecture} (Mumford--Tate) \label{Mumford--Tate for A} 
For any prime number $l$,
\begin{equation}
(G_{l, K}^{\alg})^{\circ} = \MT(V_A, \psi_A)_{\Q_l}.
\label{MT eq for A}\end{equation}
\end{conjecture}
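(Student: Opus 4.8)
This is the classical Mumford--Tate conjecture for abelian varieties, which is open in general; accordingly, the ``proof proposal'' I can offer is the strategy used in every case where the conjecture is currently known, together with the step that constitutes the real obstruction. One inclusion is already available: Theorem~\ref{Theorem Deligne} gives $(G_{l, K}^{\alg})^{\circ} \subseteq \MT(V_A, \psi_A)_{\Q_l}$ unconditionally, so the whole content lies in the reverse inclusion $\MT(V_A, \psi_A)_{\Q_l} \subseteq (G_{l, K}^{\alg})^{\circ}$. A first, harmless reduction: enlarging $K$ to a finite extension replaces $G_{l, K}^{\alg}$ by a subgroup of finite index, hence leaves $(G_{l, K}^{\alg})^{\circ}$ unchanged, while $\MT(V_A, \psi_A)$ is also unchanged because the rational Hodge structure on $H^1(A(\C),\Q)$ depends only on the complex torus $A_{\C}$. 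So I may assume $G_{l, K}^{\alg}$ is connected, and then, by Faltings' semisimplicity of $H^1_{\et}(\overline{A}, \Q_l)$ together with the Weil pairing, $G_{l, K}^{\alg}$ is a connected reductive subgroup of $\GSp_{(V_l, \psi_l)}$ containing the homotheties.

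The heart of the matter is to exhibit enough tori inside $G_{l, K}^{\alg}$. Following Serre, for each prime $v$ of good reduction the geometric Frobenius $\mathrm{Frob}_v$ lies in $G_{l, K}^{\alg}(\Q_l)$, and the Zariski closure of the cyclic subgroup it generates is a torus $T_v \subseteq G_{l, K}^{\alg}$; one shows that for $v$ in a set of density one the rank of $T_v$ equals the rank of $\MT(V_A, \psi_A)$, using that the characteristic polynomial of $\mathrm{Frob}_v$ has rational coefficients independent of $l$ and a Chebotarev argument on the fields generated by torsion points of $A$. This forces the rank of $G_{l, K}^{\alg}$ to be at least the rank of $\MT(V_A, \psi_A)$, while Deligne's inclusion gives the opposite inequality; hence the two reductive groups have the same rank and, after base change, a common maximal torus $T$. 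Comparing formal characters, $V_l \otimes \overline{\Q_l}$ and $H^1(A(\C),\Q) \otimes \C$ carry the same multiset of $T$-weights, since Frobenius eigenvalues and Hodge numbers are the common combinatorial input.

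The last step --- promoting ``equal rank and equal formal character'' to ``equal group'' --- is precisely where the general case is stuck: a connected reductive subgroup of $\MT(V_A, \psi_A)$ of full rank whose tautological representation has the same character need not be all of $\MT(V_A, \psi_A)$, and no general device rules out such proper subgroups. In the known cases one substitutes an explicit classification: when $\dim A$ is a prime, or $A$ has complex multiplication, or $\End(A_{\overline{K}}) = \Z$ under suitable Hodge-type hypotheses, or $A$ is an abelian surface, the full-rank reductive subgroups acting with the correct multiplicities on $V_l$ are enumerated and shown to collapse to $\MT(V_A, \psi_A)$, typically by exploiting the minuscule coweight coming from the Hodge decomposition of $H^1$ and Pink's structure results for the Lie algebra of $G_{l, K}^{\alg}$. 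Thus the plan is: take Deligne's inclusion for free, run the Frobenius-torus argument for equality of ranks, and then close the gap by a representation-theoretic classification of full-rank reductive subgroups --- this last item being the main obstacle, and the reason Conjecture~\ref{Mumford--Tate for A} remains a conjecture rather than a theorem.
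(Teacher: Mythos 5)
The statement you were asked to prove is precisely Conjecture~\ref{Mumford--Tate for A}, which the paper states \emph{as a conjecture}, without proof, and which remains open in general; your recognition of this is exactly right, and the paper offers no argument with which your proposal could be compared. Your survey of the state of the art is accurate: Theorem~\ref{Theorem Deligne} supplies the containment $(G_{l,K}^{\alg})^{\circ} \subseteq \MT(V_A,\psi_A)_{\Q_l}$, Serre's Frobenius-torus method yields equality of ranks, and the residual step of ruling out proper full-rank reductive subgroups with the correct formal character is the genuine obstruction, closed only in special cases (CM, low dimension, prime dimension, trivial endomorphisms under Hodge-type hypotheses, etc.) via explicit classification.
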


We can formulate the Mumford--Tate conjecture for families of $l$-adic 
representations associated with rational polarized Hodge structures 
satisfying the conditions \textbf{(D1)}, \textbf{(D2)} of \S \ref{Mumford--Tate groups of polarized Hodge structures} and conditions \textbf{(R1)}--\textbf{(R4)} of \S 
\ref{families of l-adic representations associated with Hodge structures}
 as follows.

\begin{conjecture} (Mumford--Tate) \label{Mumford--Tate} 
For any prime number $l$,
\begin{equation}
(G_{l, K}^{\alg})^{\circ} = \MT(V, \psi)_{\Q_l}.
\label{MT eq for Hodge structures associated with l-adic rep.}\end{equation}
\end{conjecture}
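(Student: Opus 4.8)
The plan is to split the statement into its provable half and its genuinely conjectural half, since \eqref{MT eq for Hodge structures associated with l-adic rep.} is (deliberately) formulated as a conjecture. The inclusion $(G_{l, K}^{\alg})^{\circ} \subseteq \MT(V, \psi)_{\Q_l}$ should come directly from Theorem~\ref{Theorem Deligne}: by \textbf{(R1)} and Remark~\ref{Hodge--Tate representations in etale cohomology} the families we consider are, or are built from, those arising from \'etale cohomology and from the polarized realizations of \S\ref{sec,category of polarized realizations}, so the Deligne--Piatetski-Shapiro--Borovoi bound \eqref{Del ineq} applies; even in the purely Hodge-theoretic axiomatization, the hypothesis \textbf{(R3)} is precisely what identifies Serre's Hodge--Tate cocharacter $h_l$ (recalled in the proof of Theorem~\ref{L0realizing conn comp}) with $\mu_{\infty,V}$ after a comparison isomorphism $\C_l \simeq \C$, so that $\mu_{\infty,V}$ lands in the connected group $(G_{l,K}^{\alg})^{\circ} = G_{l,K_0}^{\alg}$, which by the definition of $\MT(V,\psi)$ as the smallest such algebraic group forces the inclusion.

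For the reverse inclusion $\MT(V,\psi)_{\Q_l} \subseteq (G_{l,K}^{\alg})^{\circ}$ I would first record the structural facts that turn it into a numerical comparison: $G_{l,K_0}^{\alg}$ is connected reductive (Hodge--Tate potential semistability and Bogomolov's theorem on homotheties, cf.\ \S\ref{families of l-adic representations associated with Hodge structures}), it contains $\G_m\,{\rm{Id}}_{V_l}$, and by the first paragraph it is a subgroup of the connected reductive group $\MT(V,\psi)_{\Q_l}$. Hence equality is equivalent to equality of dimensions, which one attacks in two pieces. First, the ranks: the good-reduction Frobenii are Weil numbers by \textbf{(R2)}, their characteristic polynomials are $l$-independent by the strict compatibility in \textbf{(R1)}, so Serre's Frobenius-torus argument produces maximal tori of $G_{l,K_0}^{\alg}$ whose size, read off from those polynomials and compared with the Hodge numbers, is at least that of a maximal torus of $\MT(V,\psi)$. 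Second, the derived groups, where one wants a Tate/Faltings-type statement for the compatible family --- this is the content behind the $l$-adic Tate conjectures \ref{Tate conjecture for families of l-adic representations} and \ref{Tate conjecture for families of l-adic representations tilde}, a special case of which is \cite{LP}.

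The main obstacle is exactly this second piece: the reverse inclusion is the Mumford--Tate conjecture, open in general, so there is no unconditional proof. What one can honestly provide is (a) an unconditional argument in the endomorphism-rich case $\gpH(V, \psi) = C_{D} (\Iso_{(V, \psi)})$ of Corollary~\ref{DH = H if H = CDIso}, where both sides are pinned down by the commutant of $D$ together with the Hodge cocharacter $\mu_{\infty,V}$, so that the conjecture collapses to Bogomolov's theorem plus a dimension count --- this is the hypothesis under which the algebraic Sato--Tate group is computed in \S\ref{algebraic Sato--Tate group for motives} --- along with the classically known weight-$1$ cases and the weight-$2$ K3 case; and (b) in general, to take the conjecture as a hypothesis and feed it into the Sato--Tate machinery, as in Corollary~\ref{cor alg Sato--Tate for MT explained by endo} and the motivic sections. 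So the realistic proposal is: prove the easy inclusion from Deligne and \textbf{(R3)}, reduce the hard inclusion to matching semisimple ranks via Frobenius tori, and then either invoke a known case or carry the statement forward as an assumption.
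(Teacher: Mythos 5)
The statement you are examining is labeled as a \emph{Conjecture} in the paper, and indeed the paper does not prove it anywhere; there is no ``paper's own proof" to compare against. You do recognize this, and it is entirely appropriate for the review to say that the statement is left conjectural. However, the part of your proposal that claims one inclusion is unconditionally provable has a genuine error.

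You assert that $(G_{l,K}^{\alg})^{\circ} \subseteq \MT(V,\psi)_{\Q_l}$ follows from Theorem~\ref{Theorem Deligne} together with \textbf{(R1)}--\textbf{(R3)}. This is exactly what the paper says is \emph{not} known in the generality of \S\ref{families of l-adic representations associated with Hodge structures}: see Remark~\ref{Mumford--Tate Conj and Deligne Theorem analogue}, which states ``It is not known whether the analogue of the inclusion \eqref{Del ineq} holds for more general classes of $(V,\psi)$ beyond $(V_A,\psi_A)$.'' Theorem~\ref{Theorem Deligne} is special to abelian varieties; it does not extend to the axiomatic framework in which Conjecture~\ref{Mumford--Tate} is stated. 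Remark~\ref{Mumford--Tate Conj and Deligne Theorem analogue} then records what one can say \emph{if} one assumes the inclusion.

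Your argument from \textbf{(R3)} also has a directional confusion. From the proof of Theorem~\ref{L0realizing conn comp} one learns that Serre's cocharacter $h_l$ identifies with $\mu_{\infty,V}$ after $\C_l \simeq \C$, so $\mu_{\infty,V}(\C^\times) \subset (G_{l,K}^{\alg})^{\circ}(\C)$. But $\MT(V,\psi)$ is the smallest algebraic group \emph{over $\Q$} containing the image of $\mu_{\infty,V}$, and $(G_{l,K}^{\alg})^{\circ}$ is a $\Q_l$-group that typically does not descend to $\Q$, so minimality gives neither direction; in the best case (if $(G_{l,K}^{\alg})^{\circ}$ did descend) it would give the opposite inclusion $\MT(V,\psi)_{\Q_l} \subseteq (G_{l,K}^{\alg})^{\circ}$, i.e.\ the hard half, not the half you call easy. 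Likewise, the claim that the endomorphism-rich case $\gpH(V,\psi)=C_D(\Iso_{(V,\psi)})$ makes the conjecture ``collapse to Bogomolov's theorem plus a dimension count'' overreads Corollary~\ref{cor alg Sato--Tate for MT explained by endo}: that statement still takes the Mumford--Tate conjecture as a \emph{hypothesis}, and deduces the algebraic Sato--Tate conjecture from it. In short: the statement is genuinely conjectural in both directions in this framework, and the correct reading of the paper is that no part of it is established unconditionally here.
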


\begin{remark}\label{Mumford--Tate Conj and Deligne Theorem analogue} It is not known
whether the analogue of the inclusion \eqref{Del ineq} holds for more general classes of
$(V, \psi)$ beyond $(V_A, \psi_A)$. In any given case, if we have the following inclusion:
\begin{equation}
(G_{l, K}^{\alg})^{\circ} \subseteq \MT(V, \psi)_{\Q_l},
\label{Del ineq general}\end{equation}
we obtain the following commutative diagram with all horizontal arrows closed immersions
and all columns exact.

\begin{figure}[H]
\[
\begin{tikzcd}
1 \arrow{d}[swap]{} & 1 \arrow{d}[swap]{} &  1 \arrow{d}[swap]{} \\
G_{l, K_{0}, 1}^{\alg} \arrow{d}[swap]{} \arrow{r}{} &  \gpDH(V, \psi)_{\Q_l}   
\arrow{d}[swap]{} \arrow{r}{} & 
\Iso_{V_{l}, \psi_{l}} \arrow{d}[swap]{} \\ 
(G_{l, K}^{\alg})^{\circ} \arrow{d}[swap]{} \arrow{r}{} & 
\MT (V, \psi)_{\Q_l} \arrow{d}[swap]{} \arrow{r}{} & 
\GIso_{V_{l}, \psi_{l}} \arrow{d}[swap]{} \\
\G_{m} \arrow{d}[swap]{} \arrow{r}{=} & 
\G_{m} \arrow{d}[swap]{} \arrow{r}{=} & \G_{m} \arrow{d}[swap]{}\\
1 & 1 &  1\\
\end{tikzcd}
\]
\\[-0.8cm]
\caption{}
\label{under assumption that GlKalg-circ subset MT (V, psi)-Q-l} 
\end{figure}

It then follows immediately from Diagram \ref{under assumption that GlKalg-circ subset MT (V, psi)-Q-l}, 
Proposition \ref{Hodge realizing conn comp: even case}(a), (b),
and Proposition \ref{L0realizing conn comp: even case}(a), (b)
that \eqref{Del ineq general} is equivalent to  the inclusion
\begin{equation}
(G_{l, K, 1}^{\alg})^{\circ} \subseteq \gpH (V, \psi)_{\Q_l},
\label{Del ineq1}\end{equation}
and Mumford--Tate Conjecture 
\ref{MT eq for Hodge structures associated with l-adic rep.} is equivalent to the equality
\begin{equation}
(G_{l, K, 1}^{\alg})^{\circ} = \gpH(V, \psi)_{\Q_l}.
\label{H eq}
\end{equation}
\end{remark}
\medskip

In \cite[Conjectures 5.1 and 5.9]{BK2}, we formulated the algebraic Sato--Tate and Sato--Tate conjectures 
for families of strictly compatible $l$-adic 
representations of Hodge--Tate type associated with pure Hodge structures. In this paper
we state the algebraic Sato--Tate and Sato--Tate conjectures imposing conditions {\bf{(D1)}}, {\bf{(D2)}} and 
{\bf{(R1)}}--{\bf{(R4)}}.

\begin{conjecture}\label{general algebraic Sato Tate conj.} 
(Algebraic Sato--Tate conjecture; cf. \cite[Conjecture 5.1]{BK2})
\begin{itemize}
\item[(a)]
For every finite extension $K/F$ there exist a natural-in-$K$ 
reductive algebraic group $\AST_{K} (V, \psi) 
\subset \Iso_{(V, \psi)}$  over $\Q$ and a natural-in-$K$ monomorphism of group schemes for every $l$:
\begin{equation}
\gpast_{l, K}\colon  G_{l, K, 1}^{\alg} \,\, 
{\stackrel{}{\hookrightarrow}} \,\,  \AST_{K} (V, \psi)_{\Q_l}.
\label{Algebraic Sato--Tate monomorphism}
\end{equation} 
In addition the natural embedding $G_{l, K, 1}^{\alg} \subset \Iso_{(V_l, \psi_{l})}$ factors through
$\gpast_{l, K}$ and the natural embedding $\AST_{K} (V, \psi)_{\Q_l} \subset \Iso_{(V_l, \psi_{l})}$. 

\item[(b)]
The map
\eqref{Algebraic Sato--Tate monomorphism} is an isomorphism: 
\begin{equation}
\label{Algebraic Sato--Tate equality generalized}
\gpast_{l, K}\colon  G_{l, K, 1}^{\alg} \,\, 
{\stackrel{\simeq}{\longrightarrow}} \,\,  \AST_{K} (V, \psi)_{\Q_l}.
\end{equation}
\end{itemize}
\end{conjecture}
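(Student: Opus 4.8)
The plan is to exhibit a concrete candidate for $\AST_{K}(V, \psi)$, namely the twisted decomposable Lefschetz group $\gpDL_{K}(V, \psi, D)$ of Definition~\ref{twisted decomposable Lefschetz group}, and then to establish parts (a) and (b) separately. This group is a closed subgroup of $\Iso_{(V, \psi)}$ over $\Q$; it is reductive because each twisted piece $\gpDL_{K}^{\tau}(V, \psi, D)$ is a torsor under $\gpDL_{K}^{\id}(V, \psi, D) = C_{D}(\Iso_{(V, \psi)})$, which is the centralizer of a semisimple subalgebra in the reductive group $\Iso_{(V, \psi)}$ and hence has reductive identity component, and there are only finitely many twists. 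Naturality in $K$ is immediate from the description of $\gpDL_{K}(V, \psi, D)$ via $K_{e}$ and base change along a finite extension $L/K$. With this candidate, part (a) is essentially Corollary~\ref{Corollary GlL1alg subset DLLA}: the inclusion \eqref{GlL1alg subset DLLA} furnishes the monomorphism $\gpast_{l, K}$, and the required factorization through $\Iso_{(V_{l}, \psi_{l})}$ is automatic because $G_{l, K, 1}^{\alg}$ and $\gpDL_{K}(V, \psi, D)_{\Q_l}$ are by construction subgroups of $\Iso_{(V_{l}, \psi_{l})}$. Thus part (a) holds unconditionally for this choice.

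The substance is part (b): upgrading \eqref{GlL1alg subset DLLA} to an equality for every $l$. First I would reduce to identity components over the field $K_{e}$. By Corollary~\ref{Corollary-GK1alg mod GK1algID with resp. to DL mod DLId} both $G_{l, K, 1}^{\alg}/(G_{l, K, 1}^{\alg})^{\id}$ and $\gpDL_{K}(V, \psi, D)/\gpDL_{K}^{\id}(V, \psi, D)$ are canonically $\simeq \Gal(K_{e}/K)$, and by Corollary~\ref{connected components iso for DL} equality in \eqref{GlL1alg subset DLLA} is equivalent to equality in \eqref{GlLe1alg subset DLLeA}; so it suffices to prove $G_{l, K_{e}, 1}^{\alg} = C_{D}(\Iso_{(V, \psi)})_{\Q_l}$. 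Since identity components are insensitive to finite base change (Theorem~\ref{L0realizing conn comp for GlK alg}), this would follow from the Mumford--Tate equality $(G_{l, K, 1}^{\alg})^{\circ} = \gpH(V, \psi)_{\Q_l}$ of \eqref{H eq} --- equivalent to Conjecture~\ref{Mumford--Tate} under the Deligne-type inclusion \eqref{Del ineq general} --- together with the hypothesis $\gpH(V, \psi) = C_{D}(\Iso_{(V, \psi)})$, which by Corollary~\ref{DH = H if H = CDIso} forces $C_{D}(\Iso_{(V, \psi)})$ to be connected; matching the finite quotients by $\Gal(K_{e}/K)$ then closes the argument. For odd weight one has $\gpDH(V, \psi) = \gpH(V, \psi)$ by Lemma~\ref{for n odd DH = H}, so no parity correction intervenes; for even weight one must additionally track the Betti parity group $\gpP(V, \psi)$ and its $l$-adic analogue.

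The main obstacle is that, in general, neither the Mumford--Tate conjecture nor the condition $\gpH(V, \psi) = C_{D}(\Iso_{(V, \psi)})$ is available, and --- more fundamentally --- there is no a priori \emph{global} object producing $\AST_{K}(V, \psi)$ functorially in $K$ and compatibly for all $l$; the twisted decomposable Lefschetz group is merely a natural candidate, and the assertion that it (or any fixed reductive group) is the correct answer is exactly what Conjecture~\ref{general algebraic Sato Tate conj.} postulates. Accordingly I would state the result as a conjecture, record the two unconditional reductions above (part (a) always; part (b) under the Mumford--Tate conjecture together with $\gpH(V, \psi) = C_{D}(\Iso_{(V, \psi)})$), and defer the construction of a genuine global $\AST_{K}(V, \psi)$ --- as a motivic Serre group in a suitable Tannakian motivic category, or via the Tate-type conjectures of the later sections --- to the remainder of the paper.
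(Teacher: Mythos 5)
The statement you were asked about is a conjecture, and the paper offers no proof of it; your proposal correctly recognizes this and records exactly the reductions the paper itself makes: part (a) holds unconditionally for the candidate $\gpDL_{K}(V,\psi,D)$ by Corollary~\ref{Corollary GlL1alg subset DLLA} (cf.\ the remark following it), and part (b) follows under the Mumford--Tate conjecture together with $\gpH(V,\psi)=C_{D}(\Iso_{(V,\psi)})$, which is the content of Corollaries~\ref{The natural candidate for AST group example 1} and~\ref{cor alg Sato--Tate for MT explained by endo} in the motivic setting. Your deferral of the genuine global construction to the motivic Serre group and the Tate-type conjectures of the later sections is precisely the route the paper takes.
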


\begin{remark} The requirement that $\AST_{K} (V, \psi)$ and \eqref{Algebraic Sato--Tate monomorphism}
are natural in $K$ means that for any finite extension $L/K$, there is a natural monomorphism of 
groups schemes $\AST_{K} (V, \psi) \hookrightarrow \AST_{L} (V, \psi)$ cf. \cite[Remark 5.3]{BK2}. 
\end{remark}

\begin{remark} In \cite[Conjecture 5.1]{BK2} we had the phrase ``{\it{for every l}}''  in the wrong place. This is corrected in Conjecture \ref{general algebraic Sato Tate conj.} above.     
\end{remark}

\begin{definition}
The group $\AST_{K} (V, \psi)$ is called the \emph{algebraic Sato--Tate group} associated with the family
of representations $(\rho_l)_{l}$. Any maximal compact subgroup of 
 $\AST_{K} (V, \psi)(\C)$ is called the \emph{Sato--Tate group} and denoted $\ST_{K} (V, \psi)$; recall that any two such subgroups are conjugate \cite[\S VII.2]{Kn}.
\label{definition of ASTK(V psi) and STK(V psi)}
\end{definition}

\begin{proposition}\label{connected components iso}
Assume that Algebraic Sato--Tate Conjecture~\ref{general algebraic Sato Tate conj.}
holds for $(V, \psi)$. Then there are natural isomorphisms
\begin{equation}
\pi_{0} (G_{l, K, 1}^{\alg}) \,\,\, \simeq \,\,\, 
\pi_{0}(\AST_{K} (V, \psi)) \,\,\, \simeq \,\,\, 
\pi_{0}^{}(\ST_{K}(V, \psi)).
\label{ConnCompIsom}\end{equation}
\end{proposition}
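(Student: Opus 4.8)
The plan is to derive the first isomorphism in \eqref{ConnCompIsom} directly from part (b) of Algebraic Sato--Tate Conjecture~\ref{general algebraic Sato Tate conj.}, and the second from the general relationship between the component group of a reductive algebraic group and that of a maximal compact subgroup of its complex points; the whole argument is short, with essentially all of the content sitting in the second (purely Lie-theoretic) isomorphism.

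For the first isomorphism, observe that by Conjecture~\ref{general algebraic Sato Tate conj.}(b) the map $\gpast_{l, K}$ is an isomorphism of group schemes $G_{l, K, 1}^{\alg} \xrightarrow{\simeq} \AST_{K}(V, \psi)_{\Q_l}$ over $\Q_l$, so it induces an isomorphism on component groups $\pi_{0}(G_{l, K, 1}^{\alg}) \xrightarrow{\simeq} \pi_{0}(\AST_{K}(V, \psi)_{\Q_l})$. Since $\AST_{K}(V, \psi)$ is a group scheme of finite type over $\Q$ (in characteristic $0$, hence smooth), its identity component is geometrically connected and the formation of $\pi_{0}$ commutes with the base change $\Q \hookrightarrow \Q_l$; thus $\pi_{0}(\AST_{K}(V, \psi)_{\Q_l}) \simeq \pi_{0}(\AST_{K}(V, \psi))$ as finite groups. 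Composing yields $\pi_{0}(G_{l, K, 1}^{\alg}) \simeq \pi_{0}(\AST_{K}(V, \psi))$, and this identification is natural in $K$ because $\gpast_{l, K}$ is (cf. the naturality remark following Conjecture~\ref{general algebraic Sato Tate conj.}).

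For the second isomorphism, set $G := \AST_{K}(V, \psi)$, a reductive $\Q$-group by Conjecture~\ref{general algebraic Sato Tate conj.}(a). The complex Lie group $G(\C)$ has identity component $G^{\circ}(\C)$, and $G(\C)/G^{\circ}(\C) = (G/G^{\circ})(\C)$ is finite and canonically identified with $\pi_{0}(G)$; hence $G(\C)$ is a Lie group with finitely many connected components. By the structure theory of such Lie groups (Iwasawa's theorem; see also \cite[\S VII.2]{Kn}), a maximal compact subgroup $\ST_{K}(V, \psi)$ of $G(\C)$ meets every connected component of $G(\C)$, the intersection $\ST_{K}(V, \psi) \cap G^{\circ}(\C)$ is a maximal compact subgroup of the connected reductive group $G^{\circ}(\C)$ and is therefore connected, and $G(\C)$ admits a deformation retraction onto $\ST_{K}(V, \psi)$. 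Consequently the inclusion $\ST_{K}(V, \psi) \hookrightarrow G(\C)$ induces an isomorphism $\pi_{0}(\ST_{K}(V, \psi)) \xrightarrow{\simeq} \pi_{0}(G(\C)) = \pi_{0}(\AST_{K}(V, \psi))$, canonical up to conjugacy of maximal compact subgroups and again natural in $K$.

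The argument requires care only at the second isomorphism: one must invoke correctly that a maximal compact subgroup of $G(\C)$ captures all of $\pi_{0}$ --- equivalently that $\ST_{K}(V, \psi)$ meets every connected component of $G(\C)$ and that $\ST_{K}(V, \psi)^{\circ}$ is a maximal compact subgroup of $G^{\circ}(\C)$ --- and then record the reduction of $\pi_{0}(G(\C))$ to the algebraic $\pi_{0}(G)$, checking its compatibility with the identification supplied by Conjecture~\ref{general algebraic Sato Tate conj.}(b), so that all three groups in \eqref{ConnCompIsom} are linked by the stated natural maps.
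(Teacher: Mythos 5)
Your proof is correct and follows essentially the same route as the paper, which simply defers to the argument of Lemma~2.8 of \cite{FKRS12}: that argument likewise combines the isomorphism $\gpast_{l,K}$ together with the base-change invariance of $\pi_0$ (the identity component of a smooth $\Q$-group is geometrically connected) with the Lie-theoretic fact that a complex reductive group with finitely many components deformation retracts onto any maximal compact subgroup, so inclusion induces an isomorphism on $\pi_0$. Your write-up fills in the details the paper leaves implicit.
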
 

\begin{proof} 
Given our assumptions, this follows by an argument similar to the proof of 
\cite[Lemma~2.8]{FKRS12}.
\end{proof}

\begin{remark}
Fixing $l$ and $K$ and assuming that $\gpast_{l, K}$ is an isomorphism we can prove
\eqref{ConnCompIsom} for these $l$ and $K$, cf. the proof of \cite[Lemma~2.8]{FKRS12}. 
\label{Specification of ConnCompIsom to l and K}
\end{remark}

\begin{remark} 
Assume that Algebraic Sato--Tate Conjecture 
\ref{general algebraic Sato Tate conj.} holds for $(V, \psi)$. 
Then obviously the Sato--Tate group $\ST_{K}(V, \psi)$ is independent of $l$. 
Take a prime $v$ in $\mathcal{O}_K$
and take a Frobenius element $\Fr_{v}$ in $G_K$.
Following \cite[\S 8.3.3]{Se12} (cf.  \cite[Def.\ 2.9]{FKRS12}) one can make the following construction. Let $s_v$ be the semisimple part in $\SL_V(\C)$ of the following normalized Frobenius element:
$$
q_{v}^{-\frac{n}{2}} \rho_{l} (\Fr_{v}) \,\, \in \,\, 
 G_{l, K, 1}^{\alg} (\C) \,\, \simeq \,\, \AST_{K} (V, \psi) (\C) \,\, \subset 
 \,\, \Iso_{(V, \psi)} (\C) \,\, \subset \,\, \SL_{V} (\C);
$$ 
since the family $(\rho_{l})$ is strictly compatible, the conjugacy class ${\rm{conj}}(s_v)$ in 
$\SL_V(\C)$ is independent of $l$. By \cite[Theorem 15.3 (c) p.\ 99]{Hu}, the semisimple part
of $q_{v}^{-\frac{n}{2}} \rho_{l} (\Fr_{v})$ considered in 
$\Iso_{(V, \psi)} (\C)$ and in $\AST_{K} (V, \psi) (\C)$ is again $s_v$, but its conjugacy classes
${\rm{conj}}(s_v)$ in $\Iso_{(V, \psi)} (\C)$ and ${\rm{conj}}(s_v)$ in $\AST_{K} (V, \psi) (\C)$ might depend
on $l$. Obviously ${\rm{conj}}(s_v) \subset \AST_{K} (V, \psi) (\C)$ is independent of the 
choice of a Frobenius element $\Fr_{v}$ over $v$ and contains the
semisimple parts of all the elements of ${\rm{conj}}(q_{v}^{-\frac{n}{2}} \rho_{l} (\Fr_{v}))$
in $\AST_{K} (V, \psi) (\C)$. Moreover, the elements in ${\rm{conj}} (s_v)$ have eigenvalues of 
complex absolute value $1$ by our assumptions, so there is some conjugate of $s_v$ contained in $\ST_{K}(V, \psi)$. This allows us to make sense of the following conjecture.
\label{Sato--Tate set up}
\end{remark}

\begin{conjecture}\label{general Sato Tate conj.} 
(Sato--Tate conjecture) 
The conjugacy classes ${\rm{conj}}(s_v)$ in $\ST_{K}(V, \psi)$ are equidistributed in 
${\rm{conj}} (\ST_{K}(V, \psi))$ with respect to the measure induced by the 
Haar measure of $\ST_{K}(V, \psi)$.
\end{conjecture}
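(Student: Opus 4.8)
The plan to establish Conjecture~\ref{general Sato Tate conj.} follows the Tauberian strategy of Serre \cite[Appendix to Chapter~I]{Se1}, applied to the compact group $G := \ST_{K}(V,\psi)$. The first step is purely group-theoretic and analytic: by the Peter--Weyl theorem together with Weyl's equidistribution criterion, equidistribution of $(\mathrm{conj}(s_v))$ in $\mathrm{conj}(G)$ for the measure induced by Haar measure is equivalent to the assertion that, for each nontrivial irreducible complex representation $\varrho$ of $G$, the partial sums
$$
\sum_{q_v \le x} \Trace\bigl(\varrho(s_v)\bigr)
$$
are $o(\pi_K(x))$, where $\pi_K(x)$ counts primes $v$ of $\mathcal{O}_K$ with $q_v \le x$; and this in turn follows from a Wiener--Ikehara Tauberian theorem once one knows that the $L$-series
$$
L_{\varrho}(s) \;:=\; \prod_{v}\, \det\bigl(1 - \varrho(s_v)\, q_v^{-s}\bigr)^{-1}
$$
extends to a function holomorphic and nonvanishing on the closed half-plane $\Re(s) \ge 1$. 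So everything is reduced to the analytic behaviour of these $L$-series at the edge of the critical strip.

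The second step is to recognize $L_{\varrho}(s)$ as a motivic $L$-function. Under Algebraic Sato--Tate Conjecture~\ref{general algebraic Sato Tate conj.}, the representation $\varrho$ extends after base change to $\Q_l$ to an algebraic representation of $\AST_{K}(V,\psi)_{\Q_l} \simeq G_{l,K,1}^{\alg}$, hence --- via the inclusion $G_{l,K,1}^{\alg} \subset \Iso_{(V_l,\psi_l)}$ --- to a $\Q_l$-rational subquotient of a tensor construction on $V_l$ and its dual. Since the normalized Frobenius $q_v^{-n/2}\rho_l(\Fr_v)$ has semisimple part $s_v$ with the same characteristic polynomial whether computed in $\Iso_{(V_l,\psi_l)}$ or in $\AST_{K}(V,\psi)$, the Euler factor of $L_{\varrho}$ at all but finitely many $v$ coincides with that of the Hasse--Weil $L$-function of the corresponding suitably Tate-twisted $l$-adic representation; the finitely many bad or $l$-divisible factors do not affect holomorphy or nonvanishing on $\Re(s) \ge 1$.

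The third step --- the only genuinely hard one, and the reason the statement remains a conjecture --- is to prove that each of these $l$-adic representations has an $L$-function that extends holomorphically and without zeros to $\Re(s) \ge 1$. The expected mechanism is (potential) automorphy: show that, over a suitable totally real or CM extension $F'/K$, each tensor construction on $V_l$ decomposes into pieces matching Galois representations attached to regular algebraic cuspidal automorphic representations, deduce the required analytic continuation and nonvanishing from Rankin--Selberg theory (Jacquet--Shalika), and then descend from $F'$ back to $K$ by a Brauer-induction/patching argument in the style of Taylor and collaborators. The reduction results of \S\ref{application to the Sato--Tate conjecture} are tailored to this: by Theorem~\ref{Sato--Tate conjecture STK iff STK1} it suffices to verify equidistribution after replacing $K$ by the successive cyclic layers of $K_0/K$ and $G$ by the intermediate groups containing $\ST_{K_0}(V,\psi)$, which brings the problem into the ``connected plus cyclic twist'' form in which automorphy lifting theorems are available.

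I expect the main obstacle to be exactly this automorphy input: outside of the cases where it is already known (roughly, $V$ built from modular forms, elliptic curves, and a handful of abelian surfaces and K3 surfaces), proving (potential) automorphy for the entire family of tensor constructions on $V_l$ is beyond current technology, so in general Conjecture~\ref{general Sato Tate conj.} must remain conditional. A secondary technical point, handled in \S\ref{remarks on equidistribution} and \S\ref{application to the Sato--Tate conjecture}, is to make the descent through cyclic layers compatible with equidistribution --- that is, to invert the pushforward of measures along $X(G_0) \to X(G)$ --- which is why the extension of Artin's induced-character theorem from finite to compact groups is needed.
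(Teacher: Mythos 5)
The statement you are addressing is stated as a \emph{conjecture} in the paper (Conjecture~\ref{general Sato Tate conj.}), and the paper offers no proof of it; indeed no proof is currently known in general. Your write-up is not a proof either, and you have correctly recognized this yourself: what you have produced is a sound and standard outline of the \emph{expected} route --- Serre's Peter--Weyl/Tauberian reduction of equidistribution to nonvanishing holomorphy of the $L$-series $L_\varrho(s)$ on $\Re(s)\ge 1$, identification of those $L$-series with $L$-functions of $l$-adic representations via Conjecture~\ref{general algebraic Sato Tate conj.}, and then the (open) automorphy input, with the reductions of \S\ref{remarks on equidistribution}--\S\ref{application to the Sato--Tate conjecture} (in particular Theorem~\ref{Sato--Tate conjecture STK iff STK1}) used to descend to cyclic layers where automorphy-lifting technology applies. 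This matches what the paper itself says about Conjecture~\ref{general Sato Tate conj.}: the paper develops the algebraic and group-theoretic reductions around the conjecture (parity groups, $\widetilde{\ST}$ versus $\ST$, the cyclic-layer reduction) but does not, and cannot, prove the conjecture itself. So there is no discrepancy to flag beyond the framing; just be careful not to present this as a proof proposal for something that is stated and intended as a conjecture --- your concluding paragraph, acknowledging the automorphy obstruction, is the honest and correct assessment.
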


\begin{remark}
\label{correction of remark 5.8 [BK2]}
In \cite[Remark 5.8]{BK2} we claimed that $s_v$ is independent of $l$. That claim was too optimistic;
this is still not known in general. As pointed out in Remark \ref{Sato--Tate set up} above, we 
only know that 
${\rm{conj}}(s_v)$ in $\SL_V(\C)$ is independent of $l$; by contrast,
${\rm{conj}}(s_v)$ in $\Iso_{(V, \psi)} (\C)$ and ${\rm{conj}}(s_v)$ in $\AST_{K} (V, \psi) (\C)$ 
might depend on $l$.
\end{remark}

\begin{theorem} \label{connected component of ASTK} Assume Conjecture
\ref{general algebraic Sato Tate conj.}${(\rm{a})}$ holds for $(V, \psi)$. Assume in addition that for some $l$,
the group  $\gpP_{\rm{S}} (V_{l}, \psi_{l})$ is trivial and the maps $\gpast_{l, K}$ and $\gpast_{l, K_{0}}$ are isomorphisms. Let $L/K_{0}$ be a finite Galois extension. Then: 
\begin{itemize}
\item[(a)] $\AST_{K_{0}} (V, \psi) = \AST_{K} (V, \psi)^{\circ}$,
\item[(b)] $\ST_{K_{0}} (V, \psi) = \ST_{K} (V, \psi)^{\circ}$ up to conjugation
in $\AST_{K} (V, \psi) (\C)$,
\item[(c)] $\AST_{K_{0}} (V, \psi) = \AST_{L} (V, \psi)$,
\item[(d)] $\ST_{K_{0}} (V, \psi) = \ST_{L} (V, \psi)$ up to conjugation
in $\AST_{K_{0}} (V, \psi) (\C)$. 
\end{itemize}
\end{theorem}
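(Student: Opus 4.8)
The plan is to reduce everything to what we already know about the $l$-adic groups $G_{l,K,1}^{\alg}$ and then to transport those facts across the isomorphism $\gpast_{l,K}$. First I would record the input: by hypothesis $\gpast_{l,K}$ and $\gpast_{l,K_0}$ are isomorphisms, so $G_{l,K,1}^{\alg} \simeq \AST_K(V,\psi)_{\Q_l}$ and $G_{l,K_0,1}^{\alg} \simeq \AST_{K_0}(V,\psi)_{\Q_l}$; moreover the naturality in $K$ gives a commuting square with the inclusion $G_{l,K_0,1}^{\alg} \subseteq G_{l,K,1}^{\alg}$ on one side and $\AST_{K_0}(V,\psi) \hookrightarrow \AST_K(V,\psi)$ on the other. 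Since $\gpP_{\rm S}(V_l,\psi_l)$ is trivial, by Definition \ref{The l-adic parity group} and Lemma \ref{Lemma G l K 0 alg = G l K 1 alg 0} we have $G_{l,K_0,1}^{\alg} = (G_{l,K,1}^{\alg})^{\circ}$.

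For (a): combining the previous two displayed facts, $\AST_{K_0}(V,\psi)_{\Q_l}$ is carried by $\gpast_{l,K}$ onto $(G_{l,K,1}^{\alg})^{\circ} = (\AST_K(V,\psi)_{\Q_l})^{\circ} = (\AST_K(V,\psi)^{\circ})_{\Q_l}$, using that base change to $\Q_l$ commutes with passing to the identity component. An algebraic subgroup of $\AST_K(V,\psi)$ over $\Q$ that becomes $(\AST_K(V,\psi)^{\circ})_{\Q_l}$ after base change to $\Q_l$ must already equal $\AST_K(V,\psi)^{\circ}$ over $\Q$ (a closed immersion of $\Q$-schemes that is an isomorphism after a faithfully flat base change is an isomorphism), so $\AST_{K_0}(V,\psi) = \AST_K(V,\psi)^{\circ}$, giving (a).

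For (c): apply (a) with $K$ replaced by $L$ — this is legitimate because the extension $K_0/L$ produced by restricting the family to $L$ is trivial (i.e.\ $L_0 = K_0$, since $G_{l,K_0}^{\alg}$ is already connected and $K_0 \subseteq L$ forces the minimal connectedness extension of $L$ to be $K_0$), so $\gpP_{\rm S}$ stays trivial and the relevant $\gpast$ maps stay isomorphisms by the standing hypotheses together with Remark \ref{Base change from K to L in basic theorems}. Thus $\AST_{K_0}(V,\psi) = \AST_L(V,\psi)^{\circ}$; but $G_{l,L,1}^{\alg}$ sits between $G_{l,K_0,1}^{\alg} = (G_{l,K,1}^{\alg})^{\circ}$ and $G_{l,K,1}^{\alg}$, and $(G_{l,K,1}^{\alg})^{\circ} = (G_{l,L,1}^{\alg})^{\circ}$ by Theorem \ref{L0realizing conn comp for GlK alg}, so in fact $G_{l,L,1}^{\alg}$ is connected, whence $\AST_L(V,\psi)$ is connected and equals $\AST_{K_0}(V,\psi)$, proving (c). For (b) and (d): pass to maximal compact subgroups. $\ST_{K_0}(V,\psi)$ is a maximal compact subgroup of $\AST_{K_0}(V,\psi)(\C) = \AST_K(V,\psi)^{\circ}(\C)$, which is the identity component of the complex Lie group $\AST_K(V,\psi)(\C)$; a maximal compact subgroup of the identity component is contained in a maximal compact subgroup of the whole group, and any two maximal compacts of $\AST_K(V,\psi)(\C)$ are conjugate (\cite[\S VII.2]{Kn}), so up to such conjugation $\ST_{K_0}(V,\psi) = \ST_K(V,\psi) \cap \AST_K(V,\psi)^{\circ}(\C) = \ST_K(V,\psi)^{\circ}$, giving (b); (d) follows the same way from (c). The main obstacle I anticipate is bookkeeping the naturality/descent step cleanly — making sure the $\Q$-structures (not just the $\Q_l$-points) match up — and verifying that restricting to $L$ with $K_0 \subseteq L$ genuinely preserves all the standing hypotheses so that (a) can be reused; the rest is essentially formal given Lemma \ref{Lemma G l K 0 alg = G l K 1 alg 0}, Theorem \ref{L0realizing conn comp for GlK alg}, and Proposition \ref{connected components iso}.
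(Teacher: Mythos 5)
Your proposal is correct and is essentially the argument the paper intends: the paper's proof is only a pointer to \cite[Prop.\ 6.4]{BK2} plus the observation that triviality of $\gpP_{\rm{S}}(V_l,\psi_l)$ gives $G_{l,K_0,1}^{\alg}=(G_{l,K,1}^{\alg})^{\circ}$ and that Proposition \ref{connected components iso} handles the passage to maximal compacts, and you have filled in exactly those steps (transport through $\gpast$, descent of the equality of closed subgroups from $\Q_l$ to $\Q$, and the maximal-compact argument). Two cosmetic slips to fix: for $L\supseteq K_0$ the minimal connectedness field of $L$ is $L$ itself (with $G_{l,L}^{\alg}=G_{l,K_0}^{\alg}$), not $K_0$, and the containment runs $G_{l,L,1}^{\alg}\subseteq G_{l,K_0,1}^{\alg}=(G_{l,K,1}^{\alg})^{\circ}=(G_{l,L,1}^{\alg})^{\circ}$ (the last equality coming from Theorem \ref{L0realizing conn comp for GlK alg} applied to the Galois extension $L/K_0$, since $L/K$ itself need not be Galois), which is what forces $G_{l,L,1}^{\alg}$ to be connected and equal to $G_{l,K_0,1}^{\alg}$.
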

\begin{proof}
The proof is similar to the proof of \cite[Prop. 6.4]{BK2}. However for arbitrary weight, we need to
assume that $\gpP_{\rm{S}} (V_{l}, \psi_{l})$ is trivial; we then apply Proposition \ref{connected components iso}
to relate the Sato--Tate groups to the parity group $\gpP_{\rm{S}} (V_{l}, \psi_{l})$.   
\end{proof}

\begin{proposition} 
Assume that Conjecture~\ref{general algebraic Sato Tate conj.} holds for $(V, \psi)$ and that 
the following conditions hold for some $l$:
\begin{itemize}
\item[(1)] $K_0 \, \cap \, K(\mu_{\bar{l}}^{\otimes \, n}) \, = \, K$,
\item[(2)] $1 + l\Z_{l} \,\, {\rm{Id}}_{V_l}  \, \subset \, \rho_{l} (G_K)$.
\end{itemize}
Then every subgroup of $\ST_{K} (V, \psi)$ containing $\ST_{K_{0}} (V, \psi)$ is of the 
form $\ST_{L} (V, \psi)$, for a unique field subextension $K \subset L \subset K_{0}$.
\label{Intermediate subgroup between STK (V, psi) and STK0 (V, psi) is STL (V, psi)}
\end{proposition}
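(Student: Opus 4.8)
The plan is to reduce the assertion to the fundamental theorem of Galois theory for $K_{0}/K$, transported along the Algebraic Sato--Tate Conjecture. First I would record that, for every intermediate field $K \subseteq L \subseteq K_{0}$, the chain of $l$-adic Zariski closures $G_{l, K_{0}, 1}^{\alg} \subseteq G_{l, L, 1}^{\alg} \subseteq G_{l, K, 1}^{\alg}$ (coming from $G_{K_{0}} \subseteq G_{L} \subseteq G_{K}$) is, via the isomorphisms $\gpast_{l, M}$ of Conjecture \ref{general algebraic Sato Tate conj.} and the compatibility of the embeddings into $\Iso$, the base change to $\Q_{l}$ of a chain of closed subgroup schemes $\AST_{K_{0}}(V, \psi) \subseteq \AST_{L}(V, \psi) \subseteq \AST_{K}(V, \psi)$ inside $\Iso_{(V, \psi)}$ over $\Q$. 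Since both conditions ``lies in $(G_{l, K}^{\alg})^{\circ}$'' and ``$\chi = 1$'' are conjugation-stable, $G_{l, K_{0}, 1}^{\alg} \triangleleft G_{l, K, 1}^{\alg}$, hence $\AST_{K_{0}}(V, \psi) \triangleleft \AST_{K}(V, \psi)$; passing to $\C$-points, maximal compact subgroups, and component groups (Proposition \ref{connected components iso}, which is natural in $K$), I may assume $\ST_{K_{0}}(V, \psi) = \ST_{K}(V, \psi) \cap \AST_{K_{0}}(V, \psi)(\C)$ --- forced by maximality once $\ST_{K_{0}}(V, \psi) \subseteq \ST_{K}(V, \psi)$ --- which is then a normal subgroup of $\ST_{K}(V, \psi)$.

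Next I would identify the quotient that governs the problem. By \eqref{GlK1 alg circ equals GlL1 alg circ and GlK alg circ equals GlL alg circ} applied to $K_{0}/K$ we have $(G_{l, K_{0}, 1}^{\alg})^{\circ} = (G_{l, K, 1}^{\alg})^{\circ}$, so $\ST_{K_{0}}(V, \psi) \supseteq \ST_{K}(V, \psi)^{\circ}$ and every subgroup $H$ with $\ST_{K_{0}}(V, \psi) \subseteq H \subseteq \ST_{K}(V, \psi)$ is a union of connected components, hence determined by its image in $\pi_{0}(\ST_{K}(V, \psi))$. Under the identifications above, the inclusion $\pi_{0}(\ST_{K_{0}}(V, \psi)) \hookrightarrow \pi_{0}(\ST_{K}(V, \psi))$ becomes $G_{l, K_{0}, 1}^{\alg}/(G_{l, K, 1}^{\alg})^{\circ} \hookrightarrow G_{l, K, 1}^{\alg}/(G_{l, K, 1}^{\alg})^{\circ}$, with cokernel $G_{l, K, 1}^{\alg}/G_{l, K_{0}, 1}^{\alg}$. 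Conditions (1) and (2) let me invoke Theorem \ref{jK0K and Zar1 are isomorphisms}: its isomorphism $i_{K_{0}/K}$ identifies $G_{l, K, 1}^{\alg}/G_{l, K_{0}, 1}^{\alg}$ with $G_{l, K}^{\alg}/G_{l, K_{0}}^{\alg} = \pi_{0}(G_{l, K}^{\alg})$, which by the definition of $K_{0}$ through $\epsilon_{l, K}$ (Remark \ref{properties of K0}) is canonically $\Gal(K_{0}/K)$.

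Finally, for each intermediate $L$ I would check that $\ST_{L}(V, \psi)$ cuts out exactly $\Gal(K_{0}/L) \subseteq \Gal(K_{0}/K)$ in the quotient $G_{l, K, 1}^{\alg}/G_{l, K_{0}, 1}^{\alg}$. Here $K_{0}/L$ is again finite Galois and $(G_{l, L}^{\alg})^{\circ} = G_{l, K_{0}}^{\alg}$ (since $K \subseteq L \subseteq K_{0}$), so Theorem \ref{L0realizing conn comp for GlK alg} applied to $K_{0}/L$ --- which is the restriction of the statement for $K_{0}/K$ --- shows the image of $G_{l, L, 1}^{\alg}/G_{l, K_{0}, 1}^{\alg}$ is $\pi_{0}(G_{l, L}^{\alg}) \simeq \Gal(K_{0}/L)$, the identification being compatible with the inclusions by functoriality of the maps $\epsilon_{l, M}$. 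Combining the subgroup correspondence for $\ST_{K}(V, \psi)^{\circ} \triangleleft \ST_{K}(V, \psi)$ with the fundamental theorem of Galois theory for $K_{0}/K$ then yields a bijection between subgroups $H$ with $\ST_{K_{0}}(V, \psi) \subseteq H \subseteq \ST_{K}(V, \psi)$ and intermediate fields $K \subseteq L \subseteq K_{0}$ under which $L$ corresponds to $\ST_{L}(V, \psi)$; uniqueness of $L$ is the injectivity of the Galois correspondence.

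The step I expect to be the main obstacle is not a single hard argument but the bookkeeping of compatibilities: one must run the three identifications --- the Algebraic Sato--Tate isomorphisms $\gpast_{l, M}$, the component-group comparisons $\pi_{0}(\ST_{M}) \simeq \pi_{0}(\AST_{M}) \simeq \pi_{0}(G_{l, M, 1}^{\alg})$, and the arithmetic identifications $\pi_{0}(G_{l, M}^{\alg}) \simeq \Gal(K_{0}/M)$ --- simultaneously along \emph{every} inclusion in the tower $K \subseteq L \subseteq K_{0}$. A related point to handle carefully is that $\gpP_{\rm{S}}(V_{l}, \psi_{l})$ is not assumed trivial, so $\AST_{K_{0}}(V, \psi)$ may strictly contain $\AST_{K}(V, \psi)^{\circ}$ (by the extra component $- {\rm{Id}}_{V}\,\AST_{K}(V, \psi)^{\circ}$); this affects $\ST_{K_{0}}(V, \psi)$ and every $\ST_{L}(V, \psi)$ uniformly and so drops out of the quotient $G_{l, K, 1}^{\alg}/G_{l, K_{0}, 1}^{\alg}$ that controls the correspondence, but this must be argued rather than assumed away.
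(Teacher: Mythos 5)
Your proposal is correct and follows essentially the same route as the paper: both establish the compatible chain of natural isomorphisms $\Gal(K_{0}/K) \simeq G_{K}/G_{K_{0}} \simeq \ST_{K}(V,\psi)/\ST_{K_{0}}(V,\psi)$ (via Proposition \ref{connected components iso}, Theorem \ref{jK0K and Zar1 are isomorphisms}, and the bottom row of Diagram \ref{diagram of natural maps from Galois to Zariski closures}), invoke the fundamental theorem of Galois theory, and then rerun the identifications with base field $L$ (Remark \ref{Base change from K to L in basic theorems}) to see that the subgroup corresponding to $\Gal(K_{0}/L)$ is exactly $\ST_{L}(V,\psi)$. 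Your additional bookkeeping about normality, connected components, and the parity group is a more explicit unpacking of what the paper compresses into its citations, not a different argument.
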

\begin{proof}
By the definition of $K_{0}$ (cf. \cite[(6.4)]{BK2}), Proposition \ref{connected components iso}, Theorem 
\ref{jK0K and Zar1 are isomorphisms}, and the bottom row of Diagram
\ref{diagram of natural maps from Galois to Zariski closures}, there are natural isomorphisms:
\begin{equation}
G(K_{0} / K) \simeq G_K / G_{K_{0}} \simeq \ST_{K} (V, \psi) / ST_{K_{0}} (V, \psi).
\label{isomorphism between G(K0 / K) and STK / STK0}
\end{equation}
By the fundamental theorem 
of Galois theory, any subgroup of $\ST_{K} (V, \psi)$ containing $\ST_{K_{0}} (V, \psi)$ corresponds via 
\eqref{isomorphism between G(K0 / K) and STK / STK0} to the subgroup $G(K_{0} / L) \subseteq 
G(K_{0} / K)$ for a unique subextension $K \subset L \subset K_{0}$. Hence working with the base field $L$ instead of $K$ (see Remark 
\ref{Base change from K to L in basic theorems}) we obtain again
by Proposition \ref{connected components iso}, Theorem \ref{jK0K and Zar1 are isomorphisms},
and the bottom row of Diagram \ref{diagram of natural maps from Galois to Zariski closures} 
the following natural isomorphisms:
\begin{equation}
G(K_{0} / L) \simeq G_L / G_{K_{0}} \simeq \ST_{L} (V, \psi) / ST_{K_{0}} (V, \psi),
\label{isomorphism between G(K0 / L) and STL / STK0}
\end{equation}
naturally compatible with the isomorphisms 
\eqref{isomorphism between G(K0 / K) and STK / STK0}.
\end{proof}

\begin{proposition} \label{minimal field of connectedness independent of l}
Suppose that for some $(V, \psi)$,
Conjecture \ref{general algebraic Sato Tate conj.} holds for $K$ and $K_0$ and the group 
$\gpP_{\rm{S}} (V_{l}, \psi_{l})$ is trivial. 
Then the field $K_0$ is independent of $l$.  
\end{proposition}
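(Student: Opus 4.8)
The plan is to pin down the field $K_{0}$ (for a fixed prime $l$, temporarily denoted $K_{0}^{(l)}$) by a criterion involving only the algebraic Sato--Tate group $\AST_{K}(V,\psi)$ and its identity component, which are manifestly independent of $l$, and then to observe that this criterion makes no reference to $l$. Throughout I would work under the hypotheses of Theorem~\ref{connected component of ASTK}, which the proposition supplies (for every prime under consideration): Conjecture~\ref{general algebraic Sato Tate conj.} for $K$ and $K_{0}$, so that $\gpast_{l,K}$ and $\gpast_{l,K_{0}^{(l)}}$ are isomorphisms, together with triviality of $\gpP_{\rm{S}}(V_{l},\psi_{l})$. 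I would first record the elementary dictionary that, for a finite extension $L/K$, the group $G_{l,L}^{\alg}$ is connected if and only if $L\supseteq K_{0}^{(l)}$: by \eqref{epsilon and tilde epsilon} one has $G_{K_{0}^{(l)}}=\epsilon_{l,K}^{-1}\bigl((G_{l,K}^{\alg})^{\circ}(\Q_{l})\bigr)$, so $L\supseteq K_{0}^{(l)}$ is equivalent to $\rho_{l}(G_{L})\subseteq(G_{l,K}^{\alg})^{\circ}(\Q_{l})$; taking Zariski closures and using that identity components are unchanged under finite extension (Theorem~\ref{L0realizing conn comp for GlK alg}), this is equivalent to $G_{l,L}^{\alg}=(G_{l,K}^{\alg})^{\circ}$.

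Next I would establish the $l$-free criterion: for a finite extension $L/K$, $G_{l,L}^{\alg}$ is connected if and only if $\AST_{L}(V,\psi)=\AST_{K}(V,\psi)^{\circ}$. For the forward direction, the dictionary gives $L\supseteq K_{0}^{(l)}$; then Theorem~\ref{connected component of ASTK}(a) gives $\AST_{K_{0}^{(l)}}(V,\psi)=\AST_{K}(V,\psi)^{\circ}$, and Theorem~\ref{connected component of ASTK}(c) applied to a finite Galois closure $\widetilde{L}/K_{0}^{(l)}$, together with the naturality monomorphisms $\AST_{K_{0}^{(l)}}(V,\psi)\hookrightarrow\AST_{L}(V,\psi)\hookrightarrow\AST_{\widetilde{L}}(V,\psi)$, squeezes $\AST_{L}(V,\psi)$ between two copies of $\AST_{K}(V,\psi)^{\circ}$. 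For the reverse direction, I would regard all the groups as subgroups of $\Iso_{(V_{l},\psi_{l})}$: Conjecture~\ref{general algebraic Sato Tate conj.}(a) gives $G_{l,L,1}^{\alg}\subseteq\AST_{L}(V,\psi)_{\Q_{l}}$, while the isomorphism $\gpast_{l,K}$ identifies $\AST_{K}(V,\psi)_{\Q_{l}}$ with $G_{l,K,1}^{\alg}$; since base change to $\Q_{l}$ commutes with passing to the identity component, the hypothesis $\AST_{L}(V,\psi)=\AST_{K}(V,\psi)^{\circ}$ forces $G_{l,L,1}^{\alg}\subseteq(G_{l,K,1}^{\alg})^{\circ}$. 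Combined with $(G_{l,K,1}^{\alg})^{\circ}=(G_{l,L,1}^{\alg})^{\circ}\subseteq G_{l,L,1}^{\alg}$ (Theorem~\ref{L0realizing conn comp for GlK alg}) this yields $G_{l,L,1}^{\alg}=(G_{l,K,1}^{\alg})^{\circ}$, which is connected; since \eqref{The exact sequence for GlL1alg and GlLalg} exhibits $G_{l,L}^{\alg}$ as an extension of $\G_{m}$ by the connected group $G_{l,L,1}^{\alg}$, the group $G_{l,L}^{\alg}$ is connected.

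Finally, combining the two steps, for each $l$ the set $\{\,L/K\text{ finite}:\AST_{L}(V,\psi)=\AST_{K}(V,\psi)^{\circ}\,\}$ equals $\{\,L/K\text{ finite}:L\supseteq K_{0}^{(l)}\,\}$, whose unique minimal element is $K_{0}^{(l)}$. The description on the left involves only the group $\AST_{K}(V,\psi)$ and its identity component, which by the formulation of Conjecture~\ref{general algebraic Sato Tate conj.} form a single $l$-independent algebraic group over $\Q$. Hence the set on the right, and therefore its minimum $K_{0}^{(l)}$, is the same for all $l$, which is the assertion.

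The main subtlety, and the reason for organizing the argument this way, is that one should resist comparing the representations $\rho_{l}$ for different $l$ directly via Chebotarev: strict compatibility controls only the $\SL_{V}$-conjugacy class of the normalized Frobenius elements, and a $\GL_{V}$-conjugacy class can meet several connected components of a reductive subgroup, so this data does not by itself determine the component of a normalized Frobenius inside $\AST_{K}(V,\psi)$. The structural content of Theorem~\ref{connected component of ASTK} — which in turn rests on the triviality of $\gpP_{\rm{S}}(V_{l},\psi_{l})$ and on $\gpast_{l,K},\gpast_{l,K_{0}^{(l)}}$ being isomorphisms — is exactly what replaces that missing rigidity and produces the clean $l$-free characterization of $K_{0}$ above.
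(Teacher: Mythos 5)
Your argument is correct, and it is considerably more explicit than the paper's own treatment, which simply defers to the proof of the analogous statement in \cite{BK2}. The underlying idea is the same one that must drive any proof here: $K_0$ is characterized by a condition on the tower of algebraic Sato--Tate groups, which Conjecture~\ref{general algebraic Sato Tate conj.} posits as a single $l$-independent object over $\Q$ (unique by Proposition~\ref{uniqueness of algebraic Sato--Tate groups}). Your specific implementation --- the two-way dictionary ``$G_{l,L}^{\alg}$ connected $\iff L \supseteq K_0^{(l)} \iff \AST_L(V,\psi) = \AST_K(V,\psi)^{\circ}$'' --- is sound: the forward direction correctly invokes Theorem~\ref{connected component of ASTK}(a),(c) (whose hypotheses, triviality of the parity group and $\gpast_{l,K}$, $\gpast_{l,K_0}$ being isomorphisms, are exactly what the proposition supplies), and the reverse direction correctly combines Conjecture~\ref{general algebraic Sato Tate conj.}(a) for $L$ with the isomorphism $\gpast_{l,K}$ and the fact that an extension of $\G_m$ by a connected group is connected. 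Two small points you gloss over, neither of which is a real gap: Theorem~\ref{L0realizing conn comp for GlK alg} is stated only for $L/K$ Galois, whereas intermediate fields $K \subseteq L \subseteq K_0$ need not be Galois over $K$, so the equality of identity components for general finite $L$ should be justified by passing to the Galois closure (or by the standard fact that the Zariski closure of a finite-index subgroup has the same identity component); and your argument tacitly reads the hypothesis as holding for every prime $l$ under comparison, which is the only reading under which the conclusion is meaningful and is the one the paper intends.
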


\begin{proof} Cf. the proof of \cite[Prop. 6.5]{BK2}.
\end{proof}

The following Theorem is proven in \cite[Theorem 6.12]{BK2}. The assumption that 
$\gpast_{l, K_{0}}$ is an isomorphism is missing in \textit{loc. cit.} so we add it here.

\begin{theorem} \label{STK iff STK0} Assume that Conjecture \ref{general algebraic Sato Tate conj.} ${\rm{(a)}}$ holds for $(V,\psi)$ and the following conditions hold for some $l$:
\begin{itemize}
\item[(1)] $K_0 \, \cap \, K(\mu_{\bar{l}}^{\otimes \, n}) \, = \, K$;
\item[(2)] $1 + l\Z_{l} \,\, {\rm{Id}}_{V_l}  \, \subset \, \rho_{l} (G_K)$;
\item[(3)] $\gpast_{l, K}$ and $\gpast_{l, K_{0}}$ are isomorphisms.
\end{itemize}
Then: 
\begin{align}
\AST_{K} (V, \psi)_{\Q_l}  \, &= \, {\bigsqcup}_{\tilde{\sigma}_1}  \, 
\rho_{l} (\tilde{\sigma}_1) \, \AST_{K_{0}} (V, \psi)_{\Q_l},
\label{decomposition of ASTKQl into cosets over Galois representatives} \\
\ST_{K} (V, \psi)  \, &= \, {\bigsqcup}_{\tilde{\sigma}_1}  \, 
\rho_{l} (\tilde{\sigma}_1) \, \ST_{K_{0}} (V, \psi),
\label{decomposition of STK into cosets over Galois representatives}
\end{align}
where $\tilde{\sigma}_1$ runs over a set of coset representatives of $G_{K_0}$ in $G_K$ such that
${\rho}_{l} (\tilde{\sigma}_{1}) \in {\rho}_{l} (G_K)_1$.
\end{theorem}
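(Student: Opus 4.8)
The plan is to transport the coset decomposition of $G_{l,K,1}^{\alg}$ supplied by Theorem~\ref{jK0K and Zar1 are isomorphisms} across the isomorphisms $\gpast_{l,K}$ and $\gpast_{l,K_{0}}$. Since hypotheses (1) and (2) are exactly the hypotheses of Theorem~\ref{jK0K and Zar1 are isomorphisms}, I would first fix coset representatives $\tilde\sigma_{1}$ of $G_{K_{0}}$ in $G_{K}$ with $\rho_{l}(\tilde\sigma_{1})\in\rho_{l}(G_{K})_{1}$ (equation~\eqref{jK0K and Zar1 are isomorphisms eq1}) and with
$$
G_{l, K, 1}^{\alg} \,=\, {\bigsqcup}_{\tilde\sigma_{1} G_{K_{0}}}\,\rho_{l}(\tilde\sigma_{1})\,G_{l, K_{0}, 1}^{\alg}
$$
(equation~\eqref{jK0K and Zar1 are isomorphisms eq3}), the disjoint union running over $\Gal(K_{0}/K)$.

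Next I would invoke the naturality built into Conjecture~\ref{general algebraic Sato Tate conj.}(a). Because the tautological embedding $G_{l,K,1}^{\alg}\subset\Iso_{(V_{l},\psi_{l})}$ factors through $\gpast_{l,K}$ and the embedding $\AST_{K}(V,\psi)_{\Q_{l}}\subset\Iso_{(V_{l},\psi_{l})}$, and likewise for $K_{0}$, the isomorphism $\gpast_{l,K}$ sends each $\rho_{l}(\tilde\sigma_{1})\in\Iso_{(V_{l},\psi_{l})}(\Q_{l})$ to itself and carries the closed subgroup $G_{l,K_{0},1}^{\alg}\subset G_{l,K,1}^{\alg}$ onto the subgroup $\AST_{K_{0}}(V,\psi)_{\Q_{l}}\subset\AST_{K}(V,\psi)_{\Q_{l}}$ (both being $G_{l,K_{0},1}^{\alg}$ viewed inside $\Iso_{(V_{l},\psi_{l})}$ — this identification is exactly where hypothesis (3), that $\gpast_{l,K_{0}}$ is an isomorphism, enters, and is the gap in \cite[Theorem~6.12]{BK2} being repaired). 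Applying the isomorphism $\gpast_{l,K}$ to the displayed decomposition then gives
$$
\AST_{K}(V,\psi)_{\Q_{l}} \,=\, {\bigsqcup}_{\tilde\sigma_{1}}\,\rho_{l}(\tilde\sigma_{1})\,\AST_{K_{0}}(V,\psi)_{\Q_{l}},
$$
with the cosets pairwise disjoint because the original ones were and $\gpast_{l,K}$ is injective; this is \eqref{decomposition of ASTKQl into cosets over Galois representatives}.

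For \eqref{decomposition of STK into cosets over Galois representatives} I would pass to $\C$-points. The first equality shows $\AST_{K_{0}}(V,\psi)$ is normal of finite index in $\AST_{K}(V,\psi)$ with $\AST_{K}(V,\psi)/\AST_{K_{0}}(V,\psi)\simeq\Gal(K_{0}/K)$. Choosing a maximal compact subgroup $\ST_{K}(V,\psi)$ of $\AST_{K}(V,\psi)(\C)$, its intersection with $\AST_{K_{0}}(V,\psi)(\C)$ is compact and hence lies in a maximal compact $\ST_{K_{0}}(V,\psi)$; since $\pi_{0}(\AST_{K}(V,\psi))$ is a quotient of the compact group $\ST_{K}(V,\psi)$, the latter meets every coset $\rho_{l}(\tilde\sigma_{1})\AST_{K_{0}}(V,\psi)(\C)$, and using that each $\rho_{l}(\tilde\sigma_{1})$ lies in the compact subgroup $\rho_{l}(G_{K})_{1}$ one can arrange the representatives inside $\ST_{K}(V,\psi)$ and normalizing $\ST_{K_{0}}(V,\psi)$, whence $\ST_{K}(V,\psi)={\bigsqcup}_{\tilde\sigma_{1}}\rho_{l}(\tilde\sigma_{1})\ST_{K_{0}}(V,\psi)$ after a single conjugation in $\AST_{K}(V,\psi)(\C)$. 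I expect this last, purely compact-group-theoretic bookkeeping — reconciling a maximal compact of the identity component, the finitely many Frobenius-type representatives $\rho_{l}(\tilde\sigma_{1})$, and a maximal compact of the full group, all compatibly and up to one conjugation — to be the only genuine obstacle; the algebraic part of the statement is a formal consequence of Theorem~\ref{jK0K and Zar1 are isomorphisms} together with hypothesis (3).
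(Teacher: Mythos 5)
Your proposal takes the same route as the paper: the paper itself defers the proof to \cite[Theorem~6.12]{BK2}, but the parallel Theorem~\ref{tilde STK iff tilde STK0} is proved here and its argument is exactly your outline — invoke the coset decomposition from Theorem~\ref{jK0K and Zar1 are isomorphisms}, use hypothesis (3) to identify $G_{l,K,1}^{\alg}$ with $\AST_{K}(V,\psi)_{\Q_l}$ and $G_{l,K_0,1}^{\alg}$ with $\AST_{K_0}(V,\psi)_{\Q_l}$ as subschemes of $\Iso_{(V_l,\psi_l)}$, then pass to $\C$-points and restrict to maximal compacts. You are in fact slightly more explicit than the paper about where hypothesis (3) enters (the paper's proof of the tilde version just cites Proposition~\ref{connected components iso for tilde} and Theorem~\ref{connected component of tilde AST K}). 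The compact-group bookkeeping you flag as the ``only genuine obstacle'' is handled just as tersely in the paper (``restricting to maximal compact subgroups''), so your caution there does not mark a departure from the paper's own level of rigor.
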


\begin{corollary} \label{STK2 iff STK1} Consider field extensions $K \subset K_{2} \subset K_{1} \subset 
K_{0}$. Under the assumptions of Theorem \ref{STK iff STK0}, we have:
\begin{align}
\AST_{K_{2}} (V, \psi)_{\Q_l}  \, &= \, {\bigsqcup}_{\tilde{\sigma}_1}  \, 
\rho_{l} (\tilde{\sigma}_1) \, \AST_{K_{1}} (V, \psi)_{\Q_l},
\label{decomposition of ASTK2 into cosets over Galois representatives wr to ASTK1} \\
\ST_{K_{2}} (V, \psi)  \, &= \, {\bigsqcup}_{\tilde{\sigma}_1}  \, 
\rho_{l} (\tilde{\sigma}_1) \, \ST_{K_{1}} (V, \psi),
\label{decomposition of STK2 into cosets over Galois representatives wr to STK1}
\end{align}
where $\tilde{\sigma}_1$ runs over a set of coset representatives of $G_{K_1}$ in $G_{K_2}$
such that ${\rho}_{l} (\tilde{\sigma}_{1}) \in {\rho}_{l} (G_{K_2})_1$.
\end{corollary}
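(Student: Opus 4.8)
The plan is to derive the relative statement for the tower $K \subseteq K_2 \subseteq K_1 \subseteq K_0$ by invoking Theorem~\ref{STK iff STK0} twice, once with the base field $K$ replaced by $K_2$ and once with it replaced by $K_1$ (keeping $K_0$ fixed), and then re‑bracketing the two resulting disjoint unions. The first thing to record is that $K_0$ is still the minimal field over which the relevant group is connected when the base is enlarged: since $K \subseteq K_2 \subseteq K_1 \subseteq K_0$, Remark~\ref{properties of K0} together with Remark~\ref{Base change from K to L in basic theorems} gives $(K_1)_0 = (K_2)_0 = K_0$, and $K_0/K_2$, $K_0/K_1$ are Galois with $\Gal(K_0/K_1) \subseteq \Gal(K_0/K_2) \subseteq \Gal(K_0/K)$.

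Next I would check that hypotheses (1)--(3) of Theorem~\ref{STK iff STK0} are inherited by the base fields $K_1$ and $K_2$. Hypothesis (1), namely $K_0 \cap K_2(\mu_{\bar{l}}^{\otimes n}) = K_2$, follows from $K_0 \cap K(\mu_{\bar{l}}^{\otimes n}) = K$ by elementary Galois theory: $K(\mu_{\bar{l}}^{\otimes n})/K$ is abelian, so $K_0$ and $K(\mu_{\bar{l}}^{\otimes n})$ are linearly disjoint over $K$, whence $\Gal(K_0 K(\mu_{\bar{l}}^{\otimes n})/K) \cong \Gal(K_0/K) \times \Gal(K(\mu_{\bar{l}}^{\otimes n})/K)$, and intersecting the fixed fields of $\Gal(K_0/K_2) \times \{1\}$ and $\{1\} \times \Gal(K(\mu_{\bar{l}}^{\otimes n})/K)$ returns the fixed field of $K_2$ (same for $K_1$). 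Hypothesis (2), $1 + l\Z_l\,{\rm Id}_{V_l} \subseteq \rho_l(G_{K_i})$, is best handled by observing that the lifts $\tilde\sigma_1$ produced by Theorem~\ref{jK0K and Zar1 are isomorphisms} for the base field $K$ already lie in $G_{K_2}$ (resp.\ $G_{K_1}$) for every $\sigma \in \Gal(K_0/K_2)$ (resp.\ $\Gal(K_0/K_1)$), because such a $\sigma$ fixes $K_2 \subseteq K_0$ pointwise; hence the decomposition \eqref{jK0K and Zar1 are isomorphisms eq3} restricts directly, and combined with Theorem~\ref{L0realizing conn comp for GlK alg} (which identifies $[G_{l,K_2,1}^{\alg} : G_{l,K_0,1}^{\alg}]$ with $|\Gal(K_0/K_2)|$) it yields $G_{l,K_2,1}^{\alg} = \bigsqcup_{\sigma \in \Gal(K_0/K_2)} \rho_l(\tilde\sigma_1)\,G_{l,K_0,1}^{\alg}$ without re‑running the argument of Theorem~\ref{jK0K and Zar1 are isomorphisms}. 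Hypothesis (3), that $\gpast_{l,K_2}$ and $\gpast_{l,K_1}$ are isomorphisms, then follows from the fact that $\gpast_{l,K}$ and $\gpast_{l,K_0}$ are: since $G_{l,K_0,1}^{\alg} = \AST_{K_0}(V,\psi)_{\Q_l}$ and $G_{l,K,1}^{\alg} = \AST_{K}(V,\psi)_{\Q_l}$, the monomorphism $\gpast_{l,K_2}\colon G_{l,K_2,1}^{\alg} \hookrightarrow \AST_{K_2}(V,\psi)_{\Q_l}$ is sandwiched between these equalities compatibly with the naturality embeddings, and the component count from Theorem~\ref{L0realizing conn comp for GlK alg} forces surjectivity.

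With Theorem~\ref{STK iff STK0} thus available for $K_2$ and for $K_1$, I would write $\AST_{K_2}(V,\psi)_{\Q_l} = \bigsqcup_{\sigma \in \Gal(K_0/K_2)} \rho_l(\tilde\sigma_1)\,\AST_{K_0}(V,\psi)_{\Q_l}$ and $\AST_{K_1}(V,\psi)_{\Q_l} = \bigsqcup_{\tau \in \Gal(K_0/K_1)} \rho_l(\tilde\tau_1)\,\AST_{K_0}(V,\psi)_{\Q_l}$, with $\rho_l(\tilde\sigma_1) \in \rho_l(G_{K_2})_1$ and $\rho_l(\tilde\tau_1) \in \rho_l(G_{K_1})_1$; moreover the $K_1$‑lifts may be taken to be the $K_2$‑lifts attached to those $\sigma$ lying in $\Gal(K_0/K_1)$ (these fix $K_1$, so the lifts land in $G_{K_1}$ with image in $\rho_l(G_{K_1})_1$). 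Choosing coset representatives of $\Gal(K_0/K_1)$ inside $\Gal(K_0/K_2)$ — equivalently, representatives $\tilde\sigma_1 \in G_{K_2}$ of the cosets of $G_{K_1}$ in $G_{K_2}$, each with $\rho_l$‑image in $\rho_l(G_{K_2})_1$ — and taking the $K_2$‑lifts to be the products of these with the $K_1$‑lifts, the $\Gal(K_0/K_2)$‑indexed union for $K_2$ re‑brackets as $\bigsqcup_{\tilde\sigma_1} \rho_l(\tilde\sigma_1)\bigl(\bigsqcup_{\tau \in \Gal(K_0/K_1)} \rho_l(\tilde\tau_1)\AST_{K_0}(V,\psi)_{\Q_l}\bigr) = \bigsqcup_{\tilde\sigma_1} \rho_l(\tilde\sigma_1)\,\AST_{K_1}(V,\psi)_{\Q_l}$, with disjointness inherited from the finer union; this is \eqref{decomposition of ASTK2 into cosets over Galois representatives wr to ASTK1}. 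The Sato--Tate statement \eqref{decomposition of STK2 into cosets over Galois representatives wr to STK1} then follows by intersecting with a maximal compact subgroup exactly as in the passage from \eqref{decomposition of ASTKQl into cosets over Galois representatives} to \eqref{decomposition of STK into cosets over Galois representatives} in the proof of Theorem~\ref{STK iff STK0}, up to conjugation in $\AST_{K_2}(V,\psi)(\C)$.

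The main obstacle I anticipate is entirely in the inheritance step: pinning down that hypothesis (2) and the isomorphy of $\gpast_{l,K_1},\gpast_{l,K_2}$ genuinely pass to the intermediate fields rather than needing to be re‑assumed — the cleanest route being the observation, emphasised above, that the lifts furnished by Theorem~\ref{jK0K and Zar1 are isomorphisms} for $K$ already restrict to the subgroups $\Gal(K_0/K_2)$ and $\Gal(K_0/K_1)$. Once that bookkeeping is in place, the re‑bracketing of the disjoint unions is routine.
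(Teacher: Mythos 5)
Your proof is correct and follows essentially the same route as the paper's one-line argument, which simply invokes Theorem~\ref{STK iff STK0} twice via the base-change Remark~\ref{Base change from K to L in basic theorems} (once at $K_2$, once at $K_1$, both terminating at $K_0=(K_2)_0=(K_1)_0$) and then re-brackets the resulting coset decompositions. The one thing you supply beyond the paper's terse justification is the explicit verification that conditions (1)--(3) of Theorem~\ref{STK iff STK0} are inherited by the intermediate base fields; that bookkeeping is implicit in the paper and your treatment of it is sound.
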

\begin{proof} This follows from Theorem 
\ref{STK iff STK0} and Remark \ref{Base change from K to L in basic theorems}.  
\end{proof}

To be consistent with Serre's approach, the algebraic Sato--Tate conjecture should have now the following 
form. 

\begin{conjecture}\label{general algebraic Sato Tate conj. Serre's approach} {\,}
\begin{itemize}
\item[(a)]
For every finite extension $K/F$, there exist a natural-in-$K$ 
reductive algebraic group $\widetilde{\AST_{K}} (V, \psi) 
\subset \Iso_{(V, \psi)}$ over $\Q$ and a natural-in-$K$ monomorphism  of group schemes for every $l$:
\begin{equation}
\widetilde{\gpast}_{l, K}\colon \widetilde{G_{l, K, 1}^{\alg}} \,\, 
{\stackrel{}{\hookrightarrow}} \,\, \widetilde{\AST_{K}} (V, \psi)_{\Q_l}.
\label{Algebraic Sato--Tate monomorphism. Serre's approach}
\end{equation} 
In addition the natural embedding $\widetilde{G_{l, K, 1}^{\alg}} \subset \Iso_{(V_l, \psi_{l})}$ factors through
$\widetilde{\gpast}_{l, K}$ and the natural embedding $\widetilde{\AST_{K}} (V, \psi)_{\Q_l} \subset \Iso_{(V_l, \psi_{l})}$. 

\item[(b)]
The map
\eqref{Algebraic Sato--Tate monomorphism. Serre's approach} is an isomorphism: 
\begin{equation}
\widetilde{\gpast}_{l, K}\colon \widetilde{G_{l, K, 1}^{\alg}} \,\, 
{\stackrel{\simeq}{\longrightarrow}} \,\,  \widetilde{\AST_{K}} (V, \psi)_{\Q_l}.
\label{Algebraic Sato--Tate conjecture. Serre's approach}
\end{equation} 
\end{itemize}
\end{conjecture}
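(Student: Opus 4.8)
\medskip\noindent
\textbf{Proof proposal.} The plan is to \emph{produce} $\widetilde{\AST_K}(V,\psi)$ and the maps $\widetilde{\gpast}_{l,K}$ from a global object attached to Serre's auxiliary representation $\widetilde{\rho}_l$ on $V_l\oplus\Q_l(1)$, and then to compare the result with the group $\AST_K(V,\psi)$ of Conjecture~\ref{general algebraic Sato Tate conj.}. Concretely one wants a reductive $\Q$-group, realized inside $\Iso_{(V,\psi)}$, that plays relative to $\AST_K(V,\psi)$ the role played relative to $\MT(V,\psi)$ by $\gpH(V,\psi)=\ker\bigl(\widetilde{\MT}(V,\psi)\to\G_m\bigr)$; equivalently, it should be the degree-$1$ part ($\ker N$) of the global analogue of the Serre group $\widetilde{G_{l,K}^{\alg}}$, realized inside $\Iso_{(V,\psi)}$ by the first projection in analogy with Diagram~\ref{diagram compatibility of tilde(GlK1alg) with GlK1alg}. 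In the settings where such a global object is available this pins it down: in the motivic framework it is the motivic Serre group $\MS_{\sim}(M\oplus\TT)$, and in the framework of polarized realizations it is the degree-$1$ part of a postulated global Tannakian group. First I would fix one of these and \emph{define} $\widetilde{\AST_K}(V,\psi)$ accordingly.

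Next I would verify the structural requirements. Reductivity is obtained as in Lemma~\ref{splitting of widetilde G l K alg mapsto Gm}: the splitting cocharacter there splits $N$, so $\widetilde{\AST_K}(V,\psi)$ is a normal subgroup with torus quotient of the reductive ambient global group, hence is itself reductive. The containment $\widetilde{\AST_K}(V,\psi)\subseteq\Iso_{(V,\psi)}$ is forced by the identity $\chi\circ\pi=(x\mapsto x^{-n})\circ N$ of Diagram~\ref{diagram compatibility of tilde(GlK1alg) with GlK1alg}, since a point killed by $N$ has trivial multiplier $\chi$. Naturality in $K$ is inherited from that of the ambient global object (equivalently of $\AST_K(V,\psi)$), using the base-change statements of \S\ref{section-computation of the identity connected component}, cf. Remark~\ref{Base change from K to L in basic theorems with tilde}. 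For the monomorphism $\widetilde{\gpast}_{l,K}$ one base-changes to $\Q_l$ the closed immersion $\widetilde{G_{l,K}^{\alg}}\hookrightarrow G_{l,K}^{\alg}\times\G_m$, restricts to $\ker N$, and composes with the data supplied by Conjecture~\ref{general algebraic Sato Tate conj.}; the compatibilities with the tautological embedding into $\Iso_{(V_l,\psi_l)}$ and with $\gpast_{l,K}$ are then immediate, and \eqref{decomposition of GlK1alg into tilde(GlK1alg) cup -Id tilde(GlK1alg)} shows that $\widetilde{\AST_K}(V,\psi)$ is, compatibly in $l$, a subgroup of index at most $2$ in $\AST_K(V,\psi)$.

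Finally, part~(b) would follow from part~(b) of Conjecture~\ref{general algebraic Sato Tate conj.} together with \eqref{decomposition of GlK1alg into tilde(GlK1alg) cup -Id tilde(GlK1alg)}: once $\gpast_{l,K}$ is an isomorphism $G_{l,K,1}^{\alg}\simeq\AST_K(V,\psi)_{\Q_l}$, both $\widetilde{G_{l,K,1}^{\alg}}$ and $\widetilde{\AST_K}(V,\psi)_{\Q_l}$ are the \emph{same} index-$\le 2$ subgroup of it (the two cases, according to whether $-\mathrm{Id}_{V_l}$ lies in the smaller group, matching on the two sides), so $\widetilde{\gpast}_{l,K}$ is an isomorphism as well. \emph{The hard part} is the existence of the global object and its independence of $l$: to obtain a single $\Q$-group receiving compatible monomorphisms from every $\widetilde{G_{l,K,1}^{\alg}}$ is of exactly the strength of the (conjectural) motivic or $l$-adic Tate-type input, and in the motivic case it is where the Chow--K{\" u}nneth, Tannakian semisimplicity and motivic star-operator hypotheses on $\mathcal{M}_{\sim}$, together with Assumption~4, enter. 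Once that input is granted, reductivity, the pairing constraint and naturality are formal.
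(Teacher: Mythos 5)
The statement you are proving is a \emph{conjecture}: the paper states Conjecture~\ref{general algebraic Sato Tate conj. Serre's approach} and offers no proof of it, so there is nothing to compare your argument against except the paper's conditional reductions. Your proposal is, in substance, exactly those reductions: defining $\widetilde{\AST_{K}}(V,\psi)$ as the kernel of a global norm map $N$ on a postulated global group is what the paper does in \S\ref{AST and Tate for families} under Conjectures~\ref{Tate conjecture for families of l-adic representations}(a) and \ref{Tate conjecture for families of l-adic representations tilde}(a) (Definition~\ref{AST K and tilde AST K under the analog of Tate conjecture for l-adic rep.} and Diagram~\ref{double cubic diagram}), and what it does unconditionally in the motivic setting by taking $\widetilde{\AST_{K}}(M):=\MS_{\sim}(M\oplus\TT)$ (Definition~\ref{GMLLKA1 as AST group}, with part (a) established in \eqref{wide tilde GlKalg subset AST M Ql}). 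You correctly identify that the entire content is the existence of the $l$-independent global object; since that input is of exactly conjectural strength, what you have written is a reduction of one conjecture to another, not a proof, and it should be presented as such.

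Two points in your reduction need repair. First, your deduction of part (b) from Conjecture~\ref{general algebraic Sato Tate conj.}(b) via \eqref{decomposition of GlK1alg into tilde(GlK1alg) cup -Id tilde(GlK1alg)} is too quick: knowing that $\widetilde{G_{l,K,1}^{\alg}}$ and $\widetilde{\AST_{K}}(V,\psi)_{\Q_l}$ are both subgroups of index at most $2$ whose $(-\mathrm{Id}_{V})$-translates cover the ambient group does not force them to coincide (a group can have several such subgroups, and the index can genuinely differ between the two sides when $-\mathrm{Id}_{V_l}\notin\widetilde{G_{l,K,1}^{\alg}}$). The identification requires the compatibility of the two norm maps supplied by the full double-cube Diagram~\ref{double cubic diagram}, i.e.\ the Tate-type input of Conjecture~\ref{Tate conjecture for families of l-adic representations tilde}, which is how Corollary~\ref{equivalence of 4 conjectures} is actually proved. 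Second, the paper explicitly flags in Remark~\ref{general alg. Sato Tate conj. K implies in part general alg. Sato Tate conj. Serre's approach K} that when one tries to manufacture $\widetilde{\AST_{K}}(V,\psi)$ from $\AST_{K}(V,\psi)$ alone (as $\AST_{K}(V,\psi)/\{\pm\mathrm{Id}_{V}\}$ in the case $-\mathrm{Id}_{V_l}\notin\widetilde{G_{l,K,1}^{\alg}}$), it is not clear how to realize the result inside $\Iso_{(V,\psi)}$ over $\Q$; your claim that the containment in $\Iso_{(V,\psi)}$ is ``forced'' by $\chi\circ\pi=(x\mapsto x^{-n})\circ N$ is valid only once the global $\pi$ and $N$ exist, which again presupposes the stronger global input rather than Conjecture~\ref{general algebraic Sato Tate conj.} alone.
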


\begin{definition}
The group $\widetilde{\AST_{K}} (V, \psi)$ is called the \emph{algebraic Sato--Tate group} associated with the family of representations $(\widetilde{\rho}_l)_l$. Any maximal compact subgroup of $\widetilde{\AST_{K}} (V, \psi)(\C)$ is called the \emph{Sato--Tate group} and denoted 
$\widetilde{\ST_{K}} (V, \psi)$. 
\label{definition of  wildtildeASTK(V psi) and wildtildeSTK(V psi)}
\end{definition}

\begin{proposition}
\label{connected components iso for tilde}
Assume that Algebraic Sato--Tate Conjecture~\ref{general algebraic Sato Tate conj. Serre's approach} holds for $(V, \psi)$. Then there are natural isomorphisms
\begin{equation}
\label{wide tilde ConnCompIsom}
\pi_{0} (\widetilde{G_{l, K, 1}^{\alg}}) \,\,\, \simeq \,\,\, 
\pi_{0}(\widetilde{\AST_{K}} (V, \psi)) \,\,\, \simeq \,\,\, 
\pi_{0}^{}(\widetilde{\ST_{K}}(V, \psi)).
\end{equation}
\end{proposition}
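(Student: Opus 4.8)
The plan is to run the proof of Proposition~\ref{connected components iso} verbatim in the tilde setting, replacing $G_{l, K, 1}^{\alg}$, $\AST_{K}(V,\psi)$, $\ST_{K}(V,\psi)$, $\gpast_{l,K}$ by $\widetilde{G_{l, K, 1}^{\alg}}$, $\widetilde{\AST_{K}}(V,\psi)$, $\widetilde{\ST_{K}}(V,\psi)$, $\widetilde{\gpast}_{l,K}$; equivalently, to follow the argument of \cite[Lemma~2.8]{FKRS12}. The only inputs are Conjecture~\ref{general algebraic Sato Tate conj. Serre's approach} itself together with two standard facts about algebraic groups, and no parity phenomena specific to even weight enter.

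First I would produce the left-hand isomorphism in \eqref{wide tilde ConnCompIsom}. By Conjecture~\ref{general algebraic Sato Tate conj. Serre's approach}(b) the morphism $\widetilde{\gpast}_{l,K}\colon \widetilde{G_{l, K, 1}^{\alg}} \, {\stackrel{\simeq}{\longrightarrow}} \, \widetilde{\AST_{K}}(V,\psi)_{\Q_l}$ is an isomorphism of group schemes, hence induces an isomorphism $\pi_{0}(\widetilde{G_{l, K, 1}^{\alg}}) \simeq \pi_{0}(\widetilde{\AST_{K}}(V,\psi)_{\Q_l})$ on component groups. Since we are in characteristic $0$, the component scheme is finite étale and its formation commutes with the base change $\Q \hookrightarrow \Q_l$, so $\pi_{0}(\widetilde{\AST_{K}}(V,\psi)_{\Q_l})$ has the same finite group of geometric components as $\pi_{0}(\widetilde{\AST_{K}}(V,\psi))$, which yields the first isomorphism.

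Next I would handle the right-hand isomorphism. By hypothesis $\widetilde{\AST_{K}}(V,\psi)$ is reductive, so $\widetilde{\AST_{K}}(V,\psi)(\C)$ is a complex reductive Lie group whose group of connected components in the analytic topology agrees with $\pi_{0}(\widetilde{\AST_{K}}(V,\psi))$ (Zariski and analytic connected components of a complex algebraic group coincide). A maximal compact subgroup $\widetilde{\ST_{K}}(V,\psi)$ of $\widetilde{\AST_{K}}(V,\psi)(\C)$ meets every connected component, and its intersection with $\widetilde{\AST_{K}}(V,\psi)^{\circ}(\C)$ is a maximal compact subgroup of that connected reductive group, hence connected; therefore the inclusion $\widetilde{\ST_{K}}(V,\psi) \hookrightarrow \widetilde{\AST_{K}}(V,\psi)(\C)$ induces an isomorphism $\pi_{0}(\widetilde{\ST_{K}}(V,\psi)) \simeq \pi_{0}(\widetilde{\AST_{K}}(V,\psi))$. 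Naturality in $K$ of both isomorphisms follows from the naturality in $K$ of $\widetilde{\gpast}_{l,K}$ asserted in Conjecture~\ref{general algebraic Sato Tate conj. Serre's approach}(a), the functoriality of $\pi_{0}$, and the conjugacy of maximal compact subgroups \cite[\S VII.2]{Kn}.

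There is no genuine obstacle here; the one point needing (routine) care is the insensitivity of $\pi_{0}$ to the extension $\Q \subseteq \Q_l$ and to passage to a maximal compact subgroup, exactly as in \cite[Lemma~2.8]{FKRS12} and in the odd-weight treatment of \cite{BK2}. As in Remark~\ref{Specification of ConnCompIsom to l and K}, it would in fact suffice to assume that $\widetilde{\gpast}_{l,K}$ is an isomorphism for a single~$l$.
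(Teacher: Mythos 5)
Your proposal is correct and follows the paper's own route: the paper proves Proposition~\ref{connected components iso for tilde} by declaring it follows ``by the same proof as Proposition~\ref{connected components iso},'' which in turn invokes the argument of \cite[Lemma~2.8]{FKRS12}. You have simply spelled out that argument (invariance of $\pi_0$ under the base change $\Q\hookrightarrow\Q_l$, and the fact that a maximal compact subgroup of a complex reductive group meets every connected component with connected intersection with the identity component), with no substantive difference in approach.
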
 
\begin{proof} 
This follows by the same proof as Proposition \ref{connected components iso}.
\end{proof}

\begin{remark}
Fixing $l$ and $K$ and assuming that $\widetilde{\gpast}_{l, K}$ is an isomorphism, we can prove
\eqref{wide tilde ConnCompIsom} for these $l$ and $K$ cf. the proof of \cite[Lemma~2.8]{FKRS12}. 
\label{Specification of tilde ConnCompIsom to l and K}
\end{remark}

\begin{theorem} Assume Conjecture
\ref{general algebraic Sato Tate conj. Serre's approach} ${(\rm{a})}$ 
holds for $(V, \psi)$. Assume in addition that for some $l$, the maps $\gpast_{l, K}$ and $\gpast_{l, K_{0}}$ are isomorphisms. Let $L/K_{0}$ be a finite Galois extension. Then: 
\begin{itemize}
\item[(a)] $\widetilde{\AST_{K_{0}}} (V, \psi) = \widetilde{\AST_{K}} (V, \psi)^{\circ}$,
\item[(b)] $\widetilde{\ST_{K_{0}}} (V, \psi) = \widetilde{\ST_{K}} (V, \psi)^{\circ}$ up to conjugation
in $\widetilde{\AST_{K}} (V, \psi) (\C)$,
\item[(c)] $\widetilde{\AST_{K_{0}}} (V, \psi) = \widetilde{\AST_{L}} (V, \psi)$,
\item[(d)] $\widetilde{\ST_{K_{0}}} (V, \psi) = \widetilde{\ST_{L}} (V, \psi)$ up to conjugation
in $\widetilde{\AST_{K_{0}}} (V, \psi) (\C)$. 
\end{itemize}
\label{connected component of tilde AST K}
\end{theorem}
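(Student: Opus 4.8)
The plan is to reduce all four assertions to the analogous facts about the $l$-adic algebraic monodromy groups $\widetilde{G_{l,?,1}^{\alg}}$ established in \S\ref{section-computation of the identity connected component}, and then transport them through the Sato--Tate monomorphisms of Conjecture \ref{general algebraic Sato Tate conj. Serre's approach}(a). The decisive point — and what lets us drop the triviality hypothesis on the parity group that was needed in Theorem \ref{connected component of ASTK} — is that on the $l$-adic side the relevant groups are already connected: by Lemma \ref{splitting of widetilde G l K alg mapsto Gm} and Remark \ref{tildeGlK0alg connected} the group $\widetilde{G_{l,K_{0},1}^{\alg}}$ is connected; by Corollary \ref{Connected component GlKlK1alg in form of widetilde(GlK01alg)circ} together with \eqref{connected component of id of tildeGlK1alg is the same as GlK1alg} it equals $(\widetilde{G_{l,K,1}^{\alg}})^{\circ}$; and by Theorem \ref{pi 0 wide tilde G l, K, 1 alg cong pi 0 wide tilde G l, K alg } the comparison map $\widetilde{i}_{CC}$ on component groups is unconditionally an isomorphism.

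First I would upgrade the hypothesis to the statement that $\widetilde{\gpast}_{l,K}$ and $\widetilde{\gpast}_{l,K_{0}}$ are isomorphisms: starting from the monomorphisms supplied by Conjecture \ref{general algebraic Sato Tate conj. Serre's approach}(a) and from the fact that $\gpast_{l,K},\gpast_{l,K_{0}}$ are isomorphisms, one matches the index-$\le 2$ decomposition $G_{l,K,1}^{\alg}=\widetilde{G_{l,K,1}^{\alg}}\cup -{\rm{Id}}_{V_l}\,\widetilde{G_{l,K,1}^{\alg}}$ of \eqref{decomposition of GlK1alg into tilde(GlK1alg) cup -Id tilde(GlK1alg)} with the corresponding description of $\widetilde{\AST_{K}}(V,\psi)$ inside $\AST_{K}(V,\psi)$, and likewise over $K_{0}$. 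For (a): via $\widetilde{\gpast}_{l,K_{0}}$ one has $\widetilde{\AST_{K_{0}}}(V,\psi)_{\Q_l}\cong\widetilde{G_{l,K_{0},1}^{\alg}}$, which is connected and, by the structural input above, equals $(\widetilde{G_{l,K,1}^{\alg}})^{\circ}$; through $\widetilde{\gpast}_{l,K}$ this is $(\widetilde{\AST_{K}}(V,\psi)^{\circ})_{\Q_l}$. Since $\widetilde{\AST_{K_{0}}}(V,\psi)\subseteq\widetilde{\AST_{K}}(V,\psi)$ over $\Q$ by naturality in $K$, and these two closed $\Q$-subgroups of $\widetilde{\AST_{K}}(V,\psi)$ have the same base change to $\Q_l$ (connectedness being preserved and detected by this extension), they coincide, which is (a). For (c): because $K_{0}\subseteq L$, the group $G_{l,L}^{\alg}$ is connected (it equals $(G_{l,K}^{\alg})^{\circ}=G_{l,K_{0}}^{\alg}$ by Theorem \ref{L0realizing conn comp for GlK alg} and Remark \ref{properties of K0}), so $L$ is its own minimal field of connectedness; applying Corollary \ref{Connected component GlKlK1alg in form of widetilde(GlK01alg)circ} over $L$ and Theorem \ref{L0realizing conn comp for GlK alg} gives $\widetilde{G_{l,L,1}^{\alg}}=(G_{l,L,1}^{\alg})^{\circ}=(G_{l,K,1}^{\alg})^{\circ}=\widetilde{G_{l,K_{0},1}^{\alg}}$. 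Bootstrapping the isomorphism $\widetilde{\gpast}_{l,L}$ in the same way and combining with the naturality inclusion $\widetilde{\AST_{K_{0}}}(V,\psi)\subseteq\widetilde{\AST_{L}}(V,\psi)$, whose two sides then have equal $\Q_l$-points, yields (c).

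Finally, (b) and (d) follow from (a) and (c) by passing to maximal compact subgroups, using that any two such are conjugate (\cite[\S VII.2]{Kn}): a maximal compact subgroup of the connected group $\widetilde{\AST_{K_{0}}}(V,\psi)(\C)=(\widetilde{\AST_{K}}(V,\psi)(\C))^{\circ}$ may be taken to be the identity component of a maximal compact subgroup $\widetilde{\ST_{K}}(V,\psi)$ of $\widetilde{\AST_{K}}(V,\psi)(\C)$, exactly as in the proof of \cite[Prop. 6.4]{BK2}, and similarly over $L$ using (c). I expect the main obstacle to be the bootstrapping step that promotes the hypothesis on $\gpast_{l,K},\gpast_{l,K_{0}}$ (together with the monomorphisms of Conjecture \ref{general algebraic Sato Tate conj. Serre's approach}(a)) to genuine isomorphisms $\widetilde{\gpast}_{l,K}$, $\widetilde{\gpast}_{l,K_{0}}$, $\widetilde{\gpast}_{l,L}$: this requires keeping careful track of whether $-{\rm{Id}}_{V}$ lies in the identity component and of the index-$\le 2$ coset bookkeeping, for which the three-case analysis of Theorem \ref{Three cases of G l, K, 1 alg expressed as widetilde G l, K, 1 alg} is the essential tool. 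A secondary point needing care is the compatible choice of maximal compact subgroups in (b) and (d), so that ``up to conjugation'' is the only residual ambiguity.
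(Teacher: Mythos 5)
Your overall route is the paper's: the proof rests on the connectedness of $\widetilde{G_{l,K_{0},1}^{\alg}}$ (Lemma \ref{splitting of widetilde G l K alg mapsto Gm}, Corollary \ref{Connected component GlKlK1alg in form of widetilde(GlK01alg)circ}), its identification with $(\widetilde{G_{l,K,1}^{\alg}})^{\circ}$, the transfer through the Sato--Tate comparison maps, descent of equalities of closed $\Q$-subgroups from $\Q_l$ to $\Q$ (Lemma \ref{equality of 2 vector spaces over L follows by their equality when tensored with  M}, Corollary \ref{containment of 2 schemes over L follows from their containment after base change to M}), and passage to maximal compact subgroups as in \cite[Prop.~6.4]{BK2}. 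Given that $\widetilde{\gpast}_{l,K_{0}}$ is an isomorphism, your arguments for (a)--(d) are correct; for (c) note also that the naturality inclusion you invoke should run $\widetilde{\AST_{L}}(V,\psi)\subseteq\widetilde{\AST_{K_{0}}}(V,\psi)$ (larger field, smaller group) for the sandwich $\widetilde{G_{l,L,1}^{\alg}}=\widetilde{G_{l,K_{0},1}^{\alg}}\subseteq\widetilde{\AST_{L}}(V,\psi)_{\Q_l}\subseteq\widetilde{\AST_{K_{0}}}(V,\psi)_{\Q_l}$ to close up, although the two-sided application of Corollary \ref{containment of 2 schemes over L follows from their containment after base change to M} makes the direction immaterial once equality over $\Q_l$ is known.

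The genuine gap is exactly the step you flag: the bootstrapping from ``$\gpast_{l,K_{0}}$ is an isomorphism'' to ``$\widetilde{\gpast}_{l,K_{0}}$ is an isomorphism'' cannot be carried out from Conjecture \ref{general algebraic Sato Tate conj. Serre's approach}(a) alone, and the index-$\le 2$ bookkeeping of Theorem \ref{Three cases of G l, K, 1 alg expressed as widetilde G l, K, 1 alg} does not close it. The sandwich $\widetilde{G_{l,K_{0},1}^{\alg}}\subseteq\widetilde{\AST_{K_{0}}}(V,\psi)_{\Q_l}\subseteq G_{l,K_{0},1}^{\alg}$ leaves open the case $\widetilde{\AST_{K_{0}}}(V,\psi)_{\Q_l}=G_{l,K_{0},1}^{\alg}\neq\widetilde{G_{l,K_{0},1}^{\alg}}$, i.e.\ $-{\rm{Id}}_{V}\in\widetilde{\AST_{K_{0}}}(V,\psi)$ while $-{\rm{Id}}_{V_l}\notin\widetilde{G_{l,K_{0},1}^{\alg}}=(G_{l,K_{0},1}^{\alg})^{\circ}$; nothing in the axioms (reductivity, containment in $\Iso_{(V,\psi)}$, compatibility of embeddings, naturality) excludes this, and in that case $\widetilde{\AST_{K_{0}}}(V,\psi)$ would be disconnected and (a) would fail outright. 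The paper's own (very terse) proof sidesteps this by in effect reading the hypothesis as ``$\widetilde{\gpast}_{l,K}$ and $\widetilde{\gpast}_{l,K_{0}}$ are isomorphisms'' --- this is what makes the opening claim ``$\widetilde{\AST_{K_{0}}}(V,\psi)$ is connected'' follow from Corollary \ref{Connected component GlKlK1alg in form of widetilde(GlK01alg)circ}, and it matches the hypothesis of the companion Theorem \ref{tilde STK iff tilde STK0}. So you should either take the tilded isomorphisms as the hypothesis (as the paper implicitly does), or add the missing input that rules out the bad coset; as written, the bootstrap is not a proof.
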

\begin{proof}
Upon Corollary \ref{Connected component GlKlK1alg in form of widetilde(GlK01alg)circ} and the assumptions we observe that $\widetilde{\AST_{K_{0}}} (V, \psi)$ is connected. Hence by applying Proposition \ref{connected components iso for tilde}, the proof is similar to the proof of \cite[Prop. 6.4]{BK2}.   
\end{proof}

\begin{proposition} 
Assume that Conjecture~\ref{general algebraic Sato Tate conj. Serre's approach} holds for $(V, \psi)$. 
For some $l$, for a prime $v_{0} \, | \, l$ in $\mathcal{O}_{K_0}$, assume that 
$\chi_{c} (G_K) = \chi_{c} (G_{{K_{v_0}}})$.  Then every subgroup of $\widetilde{\ST_{K}} (V, \psi)$ containing $\widetilde{\ST_{K_0}} (V, \psi)$ has the form $\widetilde{\ST_{L}} (V, \psi)$ for a unique field subextension $K \subset L \subset K_{0}$.
\label{Intermediate subgroup between tilde STK (V, psi) and tilde STK0 (V, psi) is STL (V, psi)}
\end{proposition}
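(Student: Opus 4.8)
The plan is to follow the proof of Proposition~\ref{Intermediate subgroup between STK (V, psi) and STK0 (V, psi) is STL (V, psi)} almost verbatim, replacing every object by its tilde analogue and using the tilde versions of the auxiliary results collected in \S\ref{section-computation of the identity connected component}. Concretely, the goal is to produce a chain of natural isomorphisms
\begin{equation*}
G(K_{0}/K)\;\simeq\;G_K/G_{K_0}\;\simeq\;\widetilde{\ST_{K}}(V,\psi)/\widetilde{\ST_{K_0}}(V,\psi),
\end{equation*}
and then to invoke the fundamental theorem of Galois theory together with base change. Note first that, since we assume the full Conjecture~\ref{general algebraic Sato Tate conj. Serre's approach}, which implies Conjecture~\ref{general algebraic Sato Tate conj.}, the maps $\widetilde{\gpast}_{l,K},\widetilde{\gpast}_{l,K_0},\gpast_{l,K},\gpast_{l,K_0}$ are all isomorphisms, so the hypotheses of Proposition~\ref{connected components iso for tilde} and Theorem~\ref{connected component of tilde AST K} are met.

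To assemble the chain: by Theorem~\ref{connected component of tilde AST K}(b), $\widetilde{\ST_{K_0}}(V,\psi)=\widetilde{\ST_{K}}(V,\psi)^{\circ}$ up to conjugation, so $\widetilde{\ST_{K_0}}(V,\psi)$ is normal in $\widetilde{\ST_{K}}(V,\psi)$ and the quotient is $\pi_{0}(\widetilde{\ST_{K}}(V,\psi))$; by Proposition~\ref{connected components iso for tilde} this is naturally isomorphic to $\pi_{0}(\widetilde{G_{l, K, 1}^{\alg}})=\widetilde{G_{l, K, 1}^{\alg}}/\widetilde{G_{l, K_0, 1}^{\alg}}$ (using Corollary~\ref{Connected component GlKlK1alg in form of widetilde(GlK01alg)circ}). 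By Theorem~\ref{tilde jK0K and tilde Zar1 are isomorphisms}, whose hypothesis $\chi_{c}(G_K)=\chi_{c}(G_{K_{v_0}})$ is exactly the one we assume, all arrows in its diagram are isomorphisms, hence $\widetilde{G_{l, K, 1}^{\alg}}/\widetilde{G_{l, K_0, 1}^{\alg}}\simeq\widetilde{\rho}_l(G_K)_1/\widetilde{\rho}_l(G_{K_0})_1\simeq\widetilde{G_{l, K}^{\alg}}/\widetilde{G_{l, K_0}^{\alg}}$; and by the top row of Diagram~\ref{diagram of natural maps from Galois to Zariski closures} (Lemma~\ref{lemma concerning diagram of natural maps from Galois to Zariski closures}) the latter is naturally identified with $G_K/G_{K_0}$, which equals $G(K_{0}/K)$ since $K_{0}/K$ is Galois. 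Composing these gives the displayed chain.

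Now I would run the Galois-theoretic argument exactly as in the untilde case: a subgroup of $\widetilde{\ST_{K}}(V,\psi)$ containing $\widetilde{\ST_{K_0}}(V,\psi)$ corresponds under the chain to a subgroup of $G(K_{0}/K)$, which by the fundamental theorem of Galois theory is $G(K_{0}/L)$ for a unique intermediate field $K\subseteq L\subseteq K_{0}$. To identify this subgroup with $\widetilde{\ST_{L}}(V,\psi)$ I would apply Remark~\ref{Base change from K to L in basic theorems with tilde}: since $K\subseteq L\subseteq K_{0}$ we have $L_{0}=K_{0}$, so running the whole of \S\ref{section-computation of the identity connected component} over the base field $L$ yields the analogous chain $G(K_{0}/L)\simeq G_L/G_{K_0}\simeq\widetilde{\ST_{L}}(V,\psi)/\widetilde{\ST_{K_0}}(V,\psi)$, compatible with the one over $K$; consequently the subgroup equals $\widetilde{\ST_{L}}(V,\psi)$, and uniqueness of $L$ is immediate. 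One point to verify here is that the extra hypothesis descends to $L$: from $G_{K_{v_0}}\subseteq G_{K_0}\subseteq G_L\subseteq G_K$ we get $\chi_{c}(G_{K_{v_0}})\subseteq\chi_{c}(G_L)\subseteq\chi_{c}(G_K)=\chi_{c}(G_{K_{v_0}})$, so equality holds throughout and in particular $\chi_{c}(G_L)=\chi_{c}(G_{K_{v_0}})$, with $v_{0}$ still a prime of $\mathcal{O}_{K_0}$ over $l$.

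I do not expect a serious obstacle: the statement is the tilde-analogue of the already-proved Proposition~\ref{Intermediate subgroup between STK (V, psi) and STK0 (V, psi) is STL (V, psi)}, and every ingredient — the tilde component-group isomorphism (Proposition~\ref{connected components iso for tilde}), the tilde comparison of $\widetilde{\rho}_l(G_K)$ with $\widetilde{G_{l,K}^{\alg}}$ (Theorem~\ref{tilde jK0K and tilde Zar1 are isomorphisms}, Lemma~\ref{lemma concerning diagram of natural maps from Galois to Zariski closures}), and the connectedness statement $\widetilde{\ST_{K_0}}=\widetilde{\ST_{K}}^{\circ}$ (Theorem~\ref{connected component of tilde AST K}) — is already available. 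The only place that genuinely needs care is checking that the chain of isomorphisms is compatible with base change from $K$ to $L$, and this follows from the naturality already built into the cited results, precisely as in the proof of Proposition~\ref{Intermediate subgroup between STK (V, psi) and STK0 (V, psi) is STL (V, psi)}.
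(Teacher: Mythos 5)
Your proposal is correct and follows essentially the same route as the paper: the chain $G(K_{0}/K)\simeq G_K/G_{K_0}\simeq\widetilde{\ST_{K}}(V,\psi)/\widetilde{\ST_{K_0}}(V,\psi)$ built from Lemma~\ref{lemma concerning diagram of natural maps from Galois to Zariski closures}, Theorem~\ref{tilde jK0K and tilde Zar1 are isomorphisms}, and Proposition~\ref{connected components iso for tilde}, then the fundamental theorem of Galois theory and base change to $L$ via Remark~\ref{Base change from K to L in basic theorems with tilde}. Your explicit check that the hypothesis $\chi_{c}(G_K)=\chi_{c}(G_{K_{v_0}})$ descends to $L$ is a small detail the paper leaves implicit, but it does not change the argument.
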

\begin{proof}
By Lemma \ref{lemma concerning diagram of natural maps from Galois to Zariski closures},
Theorem \ref{tilde jK0K and tilde Zar1 are isomorphisms}, and Proposition \ref{connected components iso for tilde},
there are natural isomorphisms:
\begin{equation}
G(K_{0} / K) \simeq G_K / G_{K_{0}} \simeq \widetilde{\ST_{K}} (V, \psi) / \widetilde{ST_{K_{0}}} (V, \psi).
\label{isomorphism between G(K0 / K) and tilde STK / tilde STK0}
\end{equation}
By the fundamental theorem 
of Galois theory, any subgroup of $\widetilde{\ST_{K}} (V, \psi)$ containing $\widetilde{\ST_{K_{0}}} (V, \psi)$ corresponds via 
\eqref{isomorphism between G(K0 / K) and tilde STK / tilde STK0} to the subgroup $G(K_{0} / L) \subseteq 
G(K_{0} / K)$ for a unique subextension $K \subset L \subset K_{0}$. Hence working with the base field $L$ instead of $K$ (see Remark 
\ref{Base change from K to L in basic theorems with tilde}) we obtain again
by Lemma \ref{lemma concerning diagram of natural maps from Galois to Zariski closures},
Theorem \ref{tilde jK0K and tilde Zar1 are isomorphisms}, and Proposition \ref{connected components iso for tilde}
the natural isomorphisms:
\begin{equation}
G(K_{0} / L) \simeq G_L / G_{K_{0}} \simeq 
\widetilde{\ST_{L}} (V, \psi) / \widetilde{ST_{K_{0}}} (V, \psi)
\label{isomorphism between G(K0 / L) and tilde STL / tilde STK0}
\end{equation}
which are naturally compatible with the isomorphisms 
\eqref{isomorphism between G(K0 / K) and tilde STK / tilde STK0}.
\end{proof}

\begin{proposition} \label{minimal field of connectedness independent of l. tilde case}
Suppose that for some $(V, \psi)$,
Conjecture \ref{general algebraic Sato Tate conj.} holds for $K$ and $K_0$ and the group 
$\gpP_{\rm{S}} (V_{l}, \psi_{l})$ is trivial. 
Then the field $K_0$ is independent of $l$. 
\end{proposition}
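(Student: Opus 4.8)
The plan is to isolate from the hypotheses an $l$-independent description of the field $K_{0}$. Put
\[
S \;=\; \bigl\{\, L : K\subseteq L\subseteq\overline{K}\text{ finite},\ \AST_{L}(V, \psi)\text{ is connected}\,\bigr\}.
\]
By Conjecture~\ref{general algebraic Sato Tate conj.}(a) each $\AST_{L}(V, \psi)$ is a fixed algebraic group over $\Q$ attached to $(V, \psi)$ and $L$, with no reference to $l$, so $S$ does not depend on $l$. I would then show that, for every prime $l$ for which the hypotheses of the proposition hold, $K_{0}$ is the \emph{smallest} element of $S$; since the minimum of a set is unique, this forces all these fields $K_{0}$ to coincide. (For the analogous statement attached to Conjecture~\ref{general algebraic Sato Tate conj. Serre's approach} one runs the same argument with $\widetilde{\AST}$ in place of $\AST$; there the hypothesis on the Serre $l$-adic parity group is superfluous, by Theorem~\ref{pi 0 wide tilde G l, K, 1 alg cong pi 0 wide tilde G l, K alg }.)

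\smallskip
\noindent\emph{Step 1: $K_{0}\in S$.} By Theorem~\ref{connected component of ASTK}(a), whose hypotheses are exactly those assumed here for the chosen $l$, one has $\AST_{K_{0}}(V, \psi) = \AST_{K}(V, \psi)^{\circ}$, which is connected; hence $K_{0}\in S$. (Directly: triviality of $\gpP_{\rm{S}}(V_{l}, \psi_{l})$ gives $G_{l, K_{0}, 1}^{\alg} = (G_{l, K, 1}^{\alg})^{\circ}$ by Definition~\ref{The l-adic parity group}, and $\gpast_{l, K_{0}}$ carries this connected group isomorphically onto $\AST_{K_{0}}(V, \psi)_{\Q_{l}}$; a linear algebraic group over $\Q$ whose base change to $\Q_{l}$ is connected is itself connected.)

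\smallskip
\noindent\emph{Step 2: $K_{0}$ is a lower bound for $S$.} Let $L\in S$. A connected subgroup lies in the identity component, so by naturality of $\AST$ in the base field $\AST_{L}(V, \psi)\subseteq\AST_{K}(V, \psi)^{\circ}$. Base changing to $\Q_{l}$ and using the isomorphism $\gpast_{l, K}$ of Conjecture~\ref{general algebraic Sato Tate conj.}(b) for $K$, together with the compatibility of the monomorphisms $\gpast_{l, L}$, $\gpast_{l, K}$ with the inclusions $G_{l, L, 1}^{\alg}\hookrightarrow G_{l, K, 1}^{\alg}$ and $\AST_{L}(V, \psi)\hookrightarrow\AST_{K}(V, \psi)$, one gets $G_{l, L, 1}^{\alg}\subseteq (G_{l, K, 1}^{\alg})^{\circ}$. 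Since $(G_{l, L, 1}^{\alg})^{\circ} = (G_{l, K, 1}^{\alg})^{\circ}$ (for any finite $L/K$; cf.\ Theorem~\ref{L0realizing conn comp for GlK alg}), we conclude $G_{l, L, 1}^{\alg} = (G_{l, K, 1}^{\alg})^{\circ}$, which is connected. As $G_{l, L}^{\alg}$ contains the homotheties and $\chi(\alpha\,{\rm{Id}}_{V_{l}}) = \alpha^{2}$, we have $G_{l, L}^{\alg} = \G_{m}\,{\rm{Id}}_{V_{l}}\cdot G_{l, L, 1}^{\alg}$, so $G_{l, L}^{\alg}$ is connected, and therefore $L\supseteq K_{0}$ by the definition of $K_{0}$ (Remark~\ref{properties of K0}). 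Combined with Step 1 this gives $K_{0} = \min S$, independent of $l$.

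\smallskip
The one genuinely delicate point is Step 2: one must extract from the bare existence clause Conjecture~\ref{general algebraic Sato Tate conj.}(a) enough functoriality for the monomorphisms $\gpast_{l, L}$ to sit inside the naturality squares with $\gpast_{l, K}$, even though the isomorphism clause (b) is assumed only for the two fields $K$ and $K_{0}$. Granting that, the remaining input is routine: triviality of $\gpP_{\rm{S}}(V_{l}, \psi_{l})$ is equivalent to $-{\rm{Id}}_{V_{l}}\in (G_{l, K, 1}^{\alg})^{\circ}$ and yields $G_{l, K_{0}, 1}^{\alg} = (G_{l, K, 1}^{\alg})^{\circ}$ (Corollary~\ref{K0 realizing Glk1 alg: even case condition}), Theorem~\ref{L0realizing conn comp for GlK alg} supplies the equality of identity components, and the argument then runs parallel to that of Proposition~\ref{minimal field of connectedness independent of l}.
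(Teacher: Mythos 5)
Your argument is correct, and it is essentially the argument the paper is relying on: the paper's own ``proof'' is only a citation of [BK2, Prop.\ 6.5], whose strategy is precisely your characterization of $K_{0}$ as the minimal finite extension $L/K$ over which the $l$-independent $\Q$-group $\AST_{L}(V,\psi)$ becomes connected (with Theorem~\ref{connected component of ASTK} and the triviality of $\gpP_{\rm{S}}(V_{l},\psi_{l})$ supplying Step~1, and the naturality clause of Conjecture~\ref{general algebraic Sato Tate conj.}(a) supplying Step~2). Two cosmetic remarks: the naturality monomorphism in the remark following Conjecture~\ref{general algebraic Sato Tate conj.} is printed with the arrow reversed, and you correctly use the direction $\AST_{L}\subseteq\AST_{K}$ that the rest of the paper (e.g.\ Theorem~\ref{connected component of ASTK}(a)) requires; and since Theorem~\ref{L0realizing conn comp for GlK alg} is stated for Galois extensions, it is cleanest either to restrict $S$ to Galois $L/K$ (harmless, as $K_{0}/K$ is Galois) or to note that the identity component of the Zariski closure is unchanged under passage to a finite-index subgroup.
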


\begin{proof} Cf. the proof of \cite[Prop. 6.5]{BK2}.
\end{proof}

\noindent
\begin{theorem} \label{tilde STK iff tilde STK0} Assume that 
Conjecture~\ref{general algebraic Sato Tate conj. Serre's approach}{\rm{(a)}} holds for $(V,\psi)$.
For some $l$, for a prime $v_{0} \, | \, l$ in $\mathcal{O}_{K_0}$, assume that 
$\chi_{c} (G_K) = \chi_{c} (G_{{K_{v_0}}})$ and the maps $\widetilde{\gpast}_{l, K}$ 
and $\widetilde{\gpast}_{l, K_{0}}$ are isomorphisms.
Then: 
\begin{align}
\widetilde{\AST_{K}} (V, \psi)_{\Q_l}  \, &= \, {\bigsqcup}_{\tilde{\sigma}_1}  \, 
\widetilde{\rho}_{l} (\tilde{\sigma}_1) \, \widetilde{\AST_{K_{0}}} (V, \psi)_{\Q_l},
\label{decomposition of tilde ASTKQl into cosets over Galois representatives} \\
\widetilde{\ST_{K}} (V, \psi)  \, &= \, {\bigsqcup}_{\tilde{\sigma}_1}  \, 
\widetilde{\rho}_{l} (\tilde{\sigma}_1) \, \widetilde{\ST_{K_{0}}} (V, \psi),
\label{decomposition of tilde STK into cosets over Galois representatives}
\end{align}
where $\tilde{\sigma}_1$ runs over a set of coset representatives of $G_{K_0}$ in $G_K$ such that
$\widetilde{\rho}_{l} (\tilde{\sigma}_{1}) \in \widetilde{\rho}_{l} (G_K)_1$.
\end{theorem}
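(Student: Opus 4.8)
The plan is to mimic the proof of Theorem~\ref{STK iff STK0} (that is, \cite[Theorem 6.12]{BK2}), systematically replacing the untwisted ingredients by their ``tilde'' counterparts: Theorem~\ref{jK0K and Zar1 are isomorphisms} by Theorem~\ref{tilde jK0K and tilde Zar1 are isomorphisms}, the Zariski-closure comparison by Lemma~\ref{lemma concerning diagram of natural maps from Galois to Zariski closures}, and Proposition~\ref{connected components iso} by Proposition~\ref{connected components iso for tilde}. First I would note that the hypothesis $\chi_{c}(G_K) = \chi_{c}(G_{K_{v_0}})$ is exactly what is needed to apply Theorem~\ref{tilde jK0K and tilde Zar1 are isomorphisms}; invoking that theorem produces coset representatives $\tilde\sigma_1$ of $G_{K_0}$ in $G_K$ with $\widetilde{\rho}_l(\tilde\sigma_1) \in \widetilde{\rho}_l(G_K)_1$, together with the disjoint decomposition of group schemes $\widetilde{G_{l, K, 1}^{\alg}} = \bigsqcup_{\tilde\sigma_1 G_{K_0}} \widetilde{\rho}_l(\tilde\sigma_1)\, \widetilde{G_{l, K_0, 1}^{\alg}}$.

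Next I would transport this decomposition along the isomorphisms $\widetilde{\gpast}_{l, K}$ and $\widetilde{\gpast}_{l, K_0}$, which exist by hypothesis. Here the key structural input is the naturality in $K$ built into Conjecture~\ref{general algebraic Sato Tate conj. Serre's approach}(a): it makes $\widetilde{\gpast}_{l, K}$ restrict to $\widetilde{\gpast}_{l, K_0}$ on $\widetilde{G_{l, K_0, 1}^{\alg}}$, so that $\widetilde{\AST_{K_0}}(V, \psi)_{\Q_l}$ is identified, compatibly and by Galois descent as a $\Q$-subgroup of $\widetilde{\AST_K}(V,\psi)$, with the image of $\widetilde{G_{l,K_0,1}^{\alg}}$ inside $\widetilde{\AST_K}(V,\psi)_{\Q_l}$. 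Since $\widetilde{\rho}_l(\tilde\sigma_1) \in \widetilde{G_{l, K, 1}^{\alg}}(\Q_l)$ and $\widetilde{\gpast}_{l, K}$ is an injective isomorphism onto $\widetilde{\AST_K}(V,\psi)_{\Q_l}$, applying it to the above decomposition yields \eqref{decomposition of tilde ASTKQl into cosets over Galois representatives}, disjointness being preserved by injectivity.

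For the second displayed identity I would pass to maximal compact subgroups. Using Corollary~\ref{Connected component GlKlK1alg in form of widetilde(GlK01alg)circ} and \eqref{connected component of id of tildeGlK1alg is the same as GlK1alg}, transported through $\widetilde{\gpast}_{l,K}$ (equivalently, Theorem~\ref{connected component of tilde AST K}(a)), one has $\widetilde{\AST_{K_0}}(V, \psi) = \widetilde{\AST_K}(V, \psi)^{\circ}$, so a maximal compact subgroup $\widetilde{\ST_K}(V,\psi)$ of $\widetilde{\AST_K}(V,\psi)(\C)$ may be chosen with $\widetilde{\ST_{K_0}}(V, \psi) = \widetilde{\ST_K}(V,\psi) \cap \widetilde{\AST_{K_0}}(V,\psi)(\C) = \widetilde{\ST_K}(V,\psi)^{\circ}$, up to conjugacy in $\widetilde{\AST_K}(V,\psi)(\C)$. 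By Proposition~\ref{connected components iso for tilde}, $\pi_0(\widetilde{\ST_K}(V,\psi)) \simeq \pi_0(\widetilde{\AST_K}(V,\psi))$, so $\widetilde{\ST_K}(V,\psi)$ meets every connected component of $\widetilde{\AST_K}(V,\psi)(\C)$; picking for each $\tilde\sigma_1$ a representative of $\widetilde{\ST_K}(V,\psi)$ in the component of $\widetilde{\rho}_l(\tilde\sigma_1)$ and using the normality $\widetilde{G_{l, K_0, 1}^{\alg}} \triangleleft \widetilde{G_{l, K, 1}^{\alg}}$ of \eqref{tildeGlK01alg triangleleft tildeGlK1alg} together with maximal compactness, one recovers \eqref{decomposition of tilde STK into cosets over Galois representatives}, where $\widetilde{\rho}_l(\tilde\sigma_1)$ is understood, as in the statement, as a representative of its component.

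I expect the main obstacle to lie in this last step: one must check that the coset structure of the group-scheme (or $\Q_l$-point) decomposition is faithfully reflected in $\widetilde{\ST_K}(V,\psi)$, and reconcile the abstract component-representatives in $\widetilde{\ST_K}(V,\psi)$ with the specific $\Q_l$-rational elements $\widetilde{\rho}_l(\tilde\sigma_1)$ appearing above. This is precisely the subtle point handled in \cite[Theorem 6.12]{BK2}, and the facts that make it go through are the connectedness of $\widetilde{\AST_{K_0}}(V,\psi)$ and its normality in $\widetilde{\AST_K}(V,\psi)$. A secondary delicate point is the naturality-in-$K$ compatibility of $\widetilde{\gpast}_{l,K}$ with $\widetilde{\gpast}_{l,K_0}$ used in the second paragraph; as with Theorem~\ref{STK iff STK0}, and thanks to Remark~\ref{Base change from K to L in basic theorems with tilde}, the entire argument may be run with $K$ replaced by any subextension of $K_0/K$.
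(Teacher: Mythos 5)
Your proposal is correct and follows essentially the same route as the paper's proof: both rest on the coset decomposition of $\widetilde{G_{l, K, 1}^{\alg}}$ from Theorem~\ref{tilde jK0K and tilde Zar1 are isomorphisms}, transport it through the isomorphisms $\widetilde{\gpast}_{l, K}$ and $\widetilde{\gpast}_{l, K_0}$ using Proposition~\ref{connected components iso for tilde} and Theorem~\ref{connected component of tilde AST K}, and then pass to $\C$-points and maximal compact subgroups. The paper's proof is terser on the final step, but the extra care you take there (connectedness and normality of $\widetilde{\AST_{K_0}}(V,\psi)$, matching components via $\pi_0$) is exactly what makes that step work.
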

\begin{proof} From Theorem \ref{tilde jK0K and tilde Zar1 are isomorphisms} we have:
\begin{equation}
\widetilde{G_{l, K, 1}^{\alg}} \,\, = \,\, 
{\bigsqcup}_{\tilde{\sigma}_1  G_{K_0}}  \,\, \widetilde{\rho}_{l} (\tilde{\sigma}_{1})
\widetilde{\, G_{l, K_0, 1}^{\alg}}. 
\label{tilde GlK 1 = bigsqcup tilde rho l tilde sigma tilde GlK0 1}
\end{equation}
Because the maps $\widetilde{\gpast}_{l, K}$ and $\widetilde{\gpast}_{l, K_{0}}$ are isomorphisms by assumption, it follows by Proposition \ref{connected components iso for tilde} and Theorem \ref{connected component of tilde AST K}
that the equality \eqref{decomposition of tilde ASTKQl into cosets over Galois representatives}
holds. Now taking $\C$-valued points in 
\eqref{decomposition of tilde ASTKQl into cosets over Galois representatives} and restricting to maximal compact subgroups proves \eqref{decomposition of tilde STK into cosets over Galois representatives}.
\end{proof}

\begin{corollary} \label{tilde STK2 iff tilde STK1} Consider field extensions $K \subset K_{2} \subset K_{1} \subset 
K_{0}$. Under the assumptions of Theorem \ref{tilde STK iff tilde STK0}, we have:
\begin{align}
\widetilde{\AST_{K_{2}}} (V, \psi)_{\Q_l}  \, &= \, {\bigsqcup}_{\tilde{\sigma}_1}  \, 
\rho_{l} (\tilde{\sigma}_1) \, \widetilde{\AST_{K_{1}}} (V, \psi)_{\Q_l},
\label{decomposition of tilde ASTK2 into cosets over Galois representatives wr to tilde ASTK1} \\
\widetilde{\ST_{K_{2}}} (V, \psi)  \, &= \, {\bigsqcup}_{\tilde{\sigma}_1}  \, 
\rho_{l} (\tilde{\sigma}_1) \, \widetilde{\ST_{K_{1}}} (V, \psi),
\label{decomposition of tilde STK2 into cosets over Galois representatives wr to tilde STK1}
\end{align}
where $\tilde{\sigma}_1$ runs over a set of coset representatives of $G_{K_1}$ in $G_{K_2}$
such that $\widetilde{\rho}_{l} (\tilde{\sigma}_{1}) \in \widetilde{\rho}_{l} (G_{K_2})_1$.
\end{corollary}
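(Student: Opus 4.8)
The plan is to obtain this exactly as Corollary~\ref{STK2 iff STK1} is obtained from Theorem~\ref{STK iff STK0}: namely, to apply Theorem~\ref{tilde STK iff tilde STK0} over the base field $K_2$ and over the base field $K_1$, and then to compose the two coset decompositions. The legitimacy of changing the base field is supplied by Remark~\ref{Base change from K to L in basic theorems with tilde}, so the whole content is the verification that the hypotheses of Theorem~\ref{tilde STK iff tilde STK0} are inherited by the intermediate fields, plus a bookkeeping step for the cosets.

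First I would record that, since $K \subseteq K_2 \subseteq K_1 \subseteq K_0$, Remark~\ref{Base change from K to L in basic theorems with tilde} gives $(K_2)_0 = (K_1)_0 = K_0$, so the entire apparatus of \S\ref{section-computation of the identity connected component} is available with $K$ replaced by $K_2$ or by $K_1$. I would then check that the three hypotheses of Theorem~\ref{tilde STK iff tilde STK0} pass down: Conjecture~\ref{general algebraic Sato Tate conj. Serre's approach}(a) is a property of $(V,\psi)$ alone; the cyclotomic hypothesis is inherited because the chain $\chi_c(G_{(K_0)_{v_0}}) \subseteq \chi_c(G_{K_1}) \subseteq \chi_c(G_{K_2}) \subseteq \chi_c(G_K) = \chi_c(G_{(K_0)_{v_0}})$ forces equality throughout for the same prime $v_0$ of $\mathcal{O}_{K_0}$; and the isomorphy of $\widetilde{\gpast}_{l,L}$ for $L\in\{K_1,K_2\}$ must be bootstrapped from the isomorphy of $\widetilde{\gpast}_{l,K}$ and $\widetilde{\gpast}_{l,K_0}$. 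For the last point I would use that $\widetilde{G_{l,K_0,1}^{\alg}}$ is connected and is the identity component of $\widetilde{G_{l,L,1}^{\alg}}$ for every such $L$ (Corollary~\ref{Connected component GlKlK1alg in form of widetilde(GlK01alg)circ} together with \eqref{connected component of id of tildeGlK1alg is the same as GlK1alg}), that $\pi_0(\widetilde{G_{l,L,1}^{\alg}}) \simeq \pi_0(\widetilde{G_{l,L}^{\alg}}) \simeq \Gal(K_0/L)$ (Theorem~\ref{pi 0 wide tilde G l, K, 1 alg cong pi 0 wide tilde G l, K alg } and Remark~\ref{tildeGlK0alg connected}), and the naturality of $\widetilde{\AST}$ in the base field, so that the injective map $\widetilde{\gpast}_{l,L}$ has domain and target of the same dimension and the same number of geometric components, hence is an isomorphism.

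With the hypotheses in place, I would apply Theorem~\ref{tilde STK iff tilde STK0} over $K_1$ to write $\widetilde{\AST_{K_1}}(V,\psi)_{\Q_l}$ as a disjoint union of cosets $\widetilde{\rho}_l(\tilde\upsilon)\,\widetilde{\AST_{K_0}}(V,\psi)_{\Q_l}$, indexed by a system of representatives $\tilde\upsilon$ of $G_{K_0}$ in $G_{K_1}$ with $\widetilde{\rho}_l(\tilde\upsilon)\in\widetilde{\rho}_l(G_{K_1})_1$, and over $K_2$ to get the analogous disjoint union for $\widetilde{\AST_{K_2}}(V,\psi)_{\Q_l}$ indexed by representatives of $G_{K_0}$ in $G_{K_2}$ whose images lie in $\widetilde{\rho}_l(G_{K_2})_1$. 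Choosing these last representatives in the form $\tilde\sigma_1\tilde\upsilon$, where $\tilde\sigma_1$ runs over representatives of $G_{K_1}$ in $G_{K_2}$ with $\widetilde{\rho}_l(\tilde\sigma_1)\in\widetilde{\rho}_l(G_{K_2})_1$ (possible by the normalization step in the proof of Theorem~\ref{tilde jK0K and tilde Zar1 are isomorphisms}) and using $\widetilde{\rho}_l(G_{K_1})_1\subseteq\widetilde{\rho}_l(G_{K_2})_1$, substitution of the $K_1$-decomposition into the $K_2$-decomposition and regrouping yield \eqref{decomposition of tilde ASTK2 into cosets over Galois representatives wr to tilde ASTK1}. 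Passing to $\C$-points and restricting to maximal compact subgroups, exactly as in the closing line of the proof of Theorem~\ref{tilde STK iff tilde STK0}, then gives \eqref{decomposition of tilde STK2 into cosets over Galois representatives wr to tilde STK1}.

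The main obstacle I anticipate is the second bullet of the hypothesis check, namely showing that ``$\widetilde{\gpast}_{l,\cdot}$ is an isomorphism'' descends from $K$ and $K_0$ to $K_1$ and $K_2$; this is not among the stated assumptions, and making the dimension/component-count comparison precise requires carefully tracking the natural closed immersions among the various $\widetilde{\AST}$-groups. Once that is settled, the remaining coset bookkeeping is routine, because disjointness of the cosets on the algebraic-group side is equivalent, via $\pi_0(\widetilde{G_{l,L,1}^{\alg}})\simeq\pi_0(\widetilde{G_{l,L}^{\alg}})$, to disjointness of the corresponding Galois cosets in the tower $G_{K_0}\subseteq G_{K_1}\subseteq G_{K_2}$, which is immediate.
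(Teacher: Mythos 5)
Your overall strategy is exactly the paper's: the paper's own proof is the single line ``This follows from Theorem \ref{tilde STK iff tilde STK0} and Remark \ref{Base change from K to L in basic theorems with tilde},'' i.e.\ apply the theorem with $K$ replaced by $K_1$ and by $K_2$ (legitimate by the base-change remark, since $(K_1)_0=(K_2)_0=K_0$) and compose the two coset decompositions. Your coset bookkeeping, the observation that the Galois and $\pi_0$-cosets match, and the passage to $\C$-points are all correct and match the intended argument.

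Where you go beyond the paper is in the ``hypothesis check'' step, and you are right to flag it: the stated assumptions of Theorem~\ref{tilde STK iff tilde STK0} mention isomorphy of $\widetilde{\gpast}_{l,K}$ and $\widetilde{\gpast}_{l,K_0}$ only, and it is not formally among them that $\widetilde{\gpast}_{l,K_1}$ or $\widetilde{\gpast}_{l,K_2}$ is an isomorphism. The paper silently assumes this (reading ``Under the assumptions of Theorem~\ref{tilde STK iff tilde STK0}'' as ``those assumptions for every base field in the tower,'' which is natural since Conjecture~\ref{general algebraic Sato Tate conj. Serre's approach} is stated uniformly in $K$). Your attempt to \emph{derive} this, however, has a gap. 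You correctly pin down the source of $\widetilde{\gpast}_{l,L}$: identity component $\widetilde{G_{l,K_0,1}^{\alg}}$ and component group $\Gal(K_0/L)$. But for the target $\widetilde{\AST_L}(V,\psi)_{\Q_l}$ all you can extract from the hypotheses at $K$ and $K_0$ plus naturality is the sandwich $\widetilde{\AST_{K_0}}(V,\psi)_{\Q_l} \subset \widetilde{\AST_L}(V,\psi)_{\Q_l} \subset \widetilde{\AST_K}(V,\psi)_{\Q_l}$, which gives dimension equality and the upper bound $\pi_0(\widetilde{\AST_L}(V,\psi)) \hookrightarrow \Gal(K_0/K)$, plus the lower bound $\Gal(K_0/L) \hookrightarrow \pi_0(\widetilde{\AST_L}(V,\psi))$ coming from the injection $\widetilde{\gpast}_{l,L}$. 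These two inequalities do not force $\pi_0(\widetilde{\AST_L}(V,\psi)) \simeq \Gal(K_0/L)$; a priori $\widetilde{\AST_L}(V,\psi)_{\Q_l}$ could contain cosets $\widetilde{\rho}_l(\tilde\sigma)\,\widetilde{\AST_{K_0}}(V,\psi)_{\Q_l}$ with $\tilde\sigma G_{K_0}\not\subset G_L$. So the ``same number of geometric components'' claim is not established, and the descent of isomorphy is best treated as an explicit hypothesis (as the paper implicitly does) rather than a deduced fact.
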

\begin{proof} This follows from Theorem 
\ref{tilde STK iff tilde STK0} and Remark \ref{Base change from K to L in basic theorems with tilde}.  
\end{proof}

\begin{lemma} For $i=1,2$, let $V_i \subset W$ be an inclusion of vector spaces over a field $L$. Let $M/L$ be a field extension. 
If $V_1 \otimes_{L} M \subset V_2 \otimes_{L} M$ in $W \otimes_{L} M$, then 
$V_1 \subset V_2$. In particular if $V_1 \otimes_{L} M = V_2 \otimes_{L} M \subset W\otimes_{L} M$, then 
$V_1 = V_2$.
\label{equality of 2 vector spaces over L follows by their equality when tensored with  M}
\end{lemma}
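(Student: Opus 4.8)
The plan is to reduce the statement to the elementary fact that extension of scalars along a field extension is exact and faithful, which for vector spaces amounts to choosing a basis of $V_2$ and extending it carefully. First I would fix a basis $\{e_i\}_{i \in I}$ of $V_2$ and extend it (using that $L$ is a field, so every subspace is a direct summand, and Zorn's lemma in the infinite-dimensional case) to a basis $\{e_i\}_{i \in I} \cup \{f_j\}_{j \in J}$ of $W$. Since $-\otimes_L M$ commutes with direct sums, the family $\{e_i \otimes 1\}_{i \in I} \cup \{f_j \otimes 1\}_{j \in J}$ is an $M$-basis of $W \otimes_L M$, and $V_2 \otimes_L M$ is precisely the $M$-span of the $e_i \otimes 1$ inside $W \otimes_L M$.

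Next I would take an arbitrary $v \in V_1 \subseteq W$ and write $v = \sum_i a_i e_i + \sum_j b_j f_j$ with $a_i, b_j \in L$, almost all zero. Then $v \otimes 1 \in V_1 \otimes_L M \subseteq V_2 \otimes_L M$, so $v \otimes 1$ lies in the $M$-span of the $e_i \otimes 1$; comparing coordinates in the $M$-basis $\{e_i\otimes 1\} \cup \{f_j \otimes 1\}$ forces $b_j = 0$ for all $j$. Hence $v = \sum_i a_i e_i \in V_2$. As $v$ was arbitrary, $V_1 \subseteq V_2$. The ``in particular'' clause then follows immediately by applying this conclusion with the roles of $V_1$ and $V_2$ interchanged.

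The only point requiring a little care — and the closest thing to an obstacle — is the bookkeeping when $W$ is infinite-dimensional: one must justify that a basis of $V_2$ extends to a basis of $W$, and that $-\otimes_L M$ carries this extended basis to an $M$-basis with $V_2 \otimes_L M$ mapping isomorphically onto the span of the $e_i \otimes 1$ (equivalently, that the inclusion $V_2 \otimes_L M \hookrightarrow W \otimes_L M$ is injective with the expected image). This is standard, since $M$ is a free, hence flat, $L$-module. Alternatively, one could phrase the whole argument by tensoring the exact sequence $0 \to V_2 \to W \to W/V_2 \to 0$ with $M$ and observing that the image of $V_1$ in $W/V_2$, being a subspace whose extension of scalars to $M$ vanishes, must itself be zero; but the basis argument above is self-contained and requires nothing beyond linear algebra over a field.
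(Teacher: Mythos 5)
Your proof is correct. It takes a different, more hands-on route than the paper: the paper disposes of the lemma in one line by noting that $V_1 \otimes_{L} M + V_2 \otimes_{L} M = (V_1 + V_2) \otimes_{L} M$ inside $W \otimes_{L} M$, so the hypothesis forces $(V_1+V_2)\otimes_L M = V_2 \otimes_L M$, and faithful flatness of $L \hookrightarrow M$ then gives $V_1 + V_2 = V_2$, i.e.\ $V_1 \subset V_2$. Your argument instead extends a basis of $V_2$ to a basis of $W$, observes that tensoring preserves bases and carries $V_2$ onto the span of the $e_i \otimes 1$, and reads off the vanishing of the complementary coordinates of any $v \in V_1$; this is entirely self-contained linear algebra and avoids invoking faithful flatness as a black box, at the cost of some bookkeeping (correctly flagged by you) in the infinite-dimensional case. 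The exact-sequence variant you sketch at the end --- tensoring $0 \to V_2 \to W \to W/V_2 \to 0$ with $M$ and noting that the image of $V_1$ in $W/V_2$ dies after base change and hence is zero --- is essentially the paper's faithful-flatness argument in a different guise, so you have in effect given both proofs. One small remark: for the ``in particular'' clause you do not even need to interchange roles and rerun the argument; once both inclusions $V_1 \subset V_2$ and $V_2 \subset V_1$ are available from the two applications of the main claim, equality is immediate, which is what you say.
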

\begin{proof} It holds because $V_1 \otimes_{L} M + V_2 \otimes_{L} M = (V_1 + V_2) \otimes_{L} M$ in $W \otimes_{L} M$ and $L \hookrightarrow M$ is faithfully flat.
\end{proof}

\begin{corollary}
Let $X$ and $Y$ be closed subschemes of $\mathbb{A}^{n}_L$ for some field $L$. Let $M/L$ be a field extension. 
If $X \times_{{\rm{Spec}} \, L} {\rm{Spec}} \, M \,\, \subset \,\, Y \times_{{\rm{Spec}} \, L} {\rm{Spec}} \, M$,
then $X \subset Y$.
\label{containment of 2 schemes over L follows from their containment after base change to M}
\end{corollary}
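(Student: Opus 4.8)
The plan is to reduce this statement about closed subschemes to the statement about vector subspaces in Lemma~\ref{equality of 2 vector spaces over L follows by their equality when tensored with  M}. Write $R := L[x_1, \dots, x_n]$, so that $\mathbb{A}^{n}_{L} = {\rm{Spec}} \, R$ and every closed subscheme $Z \subseteq \mathbb{A}^{n}_{L}$ corresponds to an ideal $I_{Z} \subseteq R$; moreover an inclusion $X \subseteq Y$ of closed subschemes of $\mathbb{A}^{n}_{L}$ is equivalent to the reverse inclusion $I_{Y} \subseteq I_{X}$ of ideals in $R$. First I would record the base-change computation: since $M$ is flat over $L$, tensoring the inclusion $I_{Z} \hookrightarrow R$ with $M$ over $L$ keeps it injective, and $I_{Z} \otimes_{L} M$ is exactly the ideal $I_{Z_{M}}$ of $Z_{M} := Z \times_{{\rm{Spec}} \, L} {\rm{Spec}} \, M$ inside $R \otimes_{L} M = M[x_1, \dots, x_n]$.

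With this dictionary in hand, the hypothesis $X_{M} \subseteq Y_{M}$ translates into the inclusion of $M$-subspaces $I_{Y} \otimes_{L} M \subseteq I_{X} \otimes_{L} M$ inside $R \otimes_{L} M$. I would then apply Lemma~\ref{equality of 2 vector spaces over L follows by their equality when tensored with  M} with $W = R$, $V_{1} = I_{Y}$, $V_{2} = I_{X}$, and the field extension $M/L$, to conclude $I_{Y} \subseteq I_{X}$ in $R$, and hence $X \subseteq Y$.

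There is no real obstacle here; the only point meriting a word of justification is the identification $I_{Z_{M}} = I_{Z} \otimes_{L} M$, which is immediate from flatness of $M$ over $L$ (equivalently faithful flatness, which is what powers the cited lemma). Everything else is the standard correspondence between closed subschemes of affine space and ideals in its coordinate ring.
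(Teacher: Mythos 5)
Your proof is correct and follows essentially the same route as the paper's own argument: pass to the defining ideals, observe that the hypothesis gives $I(Y)\otimes_{L}M\subset I(X)\otimes_{L}M$, and invoke Lemma~\ref{equality of 2 vector spaces over L follows by their equality when tensored with  M} to conclude $I(Y)\subset I(X)$. Your extra remark identifying $I_{Z_M}$ with $I_Z\otimes_L M$ via flatness is a welcome clarification of a step the paper leaves implicit.
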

\begin{proof}
Let $I(X)$ (resp. $I(Y)$) be the ideal in $L[t_1, \dots, t_n]$ defining $X \subset \mathbb{A}^{n}_L$
(resp. $Y \subset \mathbb{A}^{n}_L$). By assumption $I(Y) \otimes_{L} \, M \subset 
I(X) \otimes_{L} \, M$. Hence by Lemma \ref{equality of 2 vector spaces over L follows by their equality when tensored with  M} $I(Y) \subset I(X)$. So $X \subset Y \subset \mathbb{A}^{n}_L$.   
\end{proof}

\begin{corollary}
Let $X = {\rm{Spec}} \, A$ and $Y = {\rm{Spec}} \, B$ be affine schemes over $L$. Let $M/L$ be a field extension. 
Let $f, g\colon X \rightarrow Y$ be two morphisms such that the morphisms $f \times {\rm{Id}},\, g \times {\rm{Id}}\colon X \times_{{\rm{Spec}} \, L} {\rm{Spec}} \, M \,\, \rightarrow \,\, Y \times_{{\rm{Spec}} \, L} {\rm{Spec}} \, M$ are equal. Then $f = g$.
\label{equality of two morphisms of schemes over L follows from their equality after base change to M}
\end{corollary}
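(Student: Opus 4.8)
The plan is to reduce the statement to commutative algebra and then invoke faithful flatness of the field extension $L \hookrightarrow M$, exactly in the spirit of Lemma \ref{equality of 2 vector spaces over L follows by their equality when tensored with  M}. The morphisms $f$ and $g$ correspond to $L$-algebra homomorphisms $\phi := f^{\#}$ and $\psi := g^{\#}$ from $B$ to $A$, and the base-changed morphisms $f \times {\rm{Id}}$ and $g \times {\rm{Id}}$ correspond to the $M$-algebra homomorphisms $\phi \otimes_{L} M$ and $\psi \otimes_{L} M$ from $B \otimes_{L} M$ to $A \otimes_{L} M$. So the hypothesis says precisely that $\phi \otimes_{L} M = \psi \otimes_{L} M$, and the goal is to deduce $\phi = \psi$.

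First I would fix an arbitrary element $b \in B$ and evaluate both $M$-algebra homomorphisms at $b \otimes 1$; this yields the identity $\phi(b) \otimes 1 = \psi(b) \otimes 1$ in $A \otimes_{L} M$. Next I would use that $A$ is a free $L$-module (being an $L$-vector space) and $L \hookrightarrow M$ is injective, so that the canonical map $A \to A \otimes_{L} M$, $a \mapsto a \otimes 1$, is injective — equivalently, $M$ is faithfully flat over $L$, which is the one ingredient already exploited in the proof of Lemma \ref{equality of 2 vector spaces over L follows by their equality when tensored with  M}. It follows that $\phi(b) = \psi(b)$, and since $b$ was arbitrary, $\phi = \psi$; hence $f = g$.

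There is no genuine obstacle: the argument is a one-line consequence of faithful flatness of a field extension, parallel to Lemma \ref{equality of 2 vector spaces over L follows by their equality when tensored with  M} and Corollary \ref{containment of 2 schemes over L follows from their containment after base change to M}. The only point worth recording carefully is the injectivity of $A \to A \otimes_{L} M$, which we have just noted and which is what permits the passage from the base-changed equality back to $A$ itself.
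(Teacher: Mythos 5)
Your proof is correct and follows essentially the same route as the paper: reduce to the corresponding $L$-algebra homomorphisms $f^{\sharp}, g^{\sharp}\colon B \to A$ and conclude by faithful flatness of $L \hookrightarrow M$. The only cosmetic difference is that the paper applies its vector-space lemma to $W := \ker(f^{\sharp}-g^{\sharp}) \subset B$ (showing $W \otimes_L M = B \otimes_L M$ forces $W = B$), whereas you use the equivalent fact that $A \to A \otimes_L M$ is injective; both are the same one-line consequence of flatness.
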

\begin{proof}
Let $f^{\sharp}, g^{\sharp}\colon B \rightarrow A$ be the $L$-algebra homomorphisms corresponding to $f$ and $g$,
respectively. By assumption, the $M$-algebra homomorphisms $f^{\sharp} \otimes_{L} \, 1, \, g^{\sharp} \otimes_{L} \, 1 \colon B \otimes_{L} \, M  \rightarrow A \otimes_{L} \, M$ are equal. We observe that 
$f^{\sharp} \otimes_{L} \, 1 - g^{\sharp} \otimes_{L} 1 = 
(f^{\sharp} - g^{\sharp}) \otimes_{L} \, 1 = 0$ as $M$-vector space homomorphisms. Let 
$W := {\rm{ker}} (f^{\sharp} - g^{\sharp})$. Then $W \otimes_{L} \, M = B \otimes_{L} \, M$.
Hence by Lemma \ref{equality of 2 vector spaces over L follows by their equality when tensored with  M} 
it follows that $W = B$, so $f^{\sharp} = g^{\sharp}$. Therefore $f = g$.
\end{proof}

\begin{corollary}
Assume Conjectures \ref{general algebraic Sato Tate conj.} and 
\ref{general algebraic Sato Tate conj. Serre's approach}. Then:
\begin{itemize}
\item[(a)] $\AST_{K} (V, \psi)^{\circ} = \AST_{K_{0}} (V, \psi)^{\circ} = \widetilde{\AST_{K_{0}}} (V, \psi)$,
\item[(b)] $\ST_{K} (V, \psi)^{\circ} = \ST_{K_{0}} (V, \psi)^{\circ} = 
\widetilde{\ST_{K_{0}}} (V, \psi)$ up to conjugation in $\AST_{K} (V, \psi) (\C)$.
\end{itemize} 
\label{connected components of AST K (V, psi), ST K (V, psi) and widetilde AST K (V, psi), widetilde ST K (V, psi)}
\end{corollary}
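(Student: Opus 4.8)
The plan is to combine the two "connected component" results already established in this section: Theorem~\ref{connected component of ASTK} (or rather its ingredients, since we now have Conjecture~\ref{general algebraic Sato Tate conj.} in full) and Theorem~\ref{connected component of tilde AST K}, together with the comparison between $G_{l, K, 1}^{\alg}$ and $\widetilde{G_{l, K, 1}^{\alg}}$ encoded in equation~\eqref{decomposition of GlK1alg into tilde(GlK1alg) cup -Id tilde(GlK1alg)} and Corollary~\ref{Connected component GlKlK1alg in form of widetilde(GlK01alg)circ}. First I would note that, under Conjecture~\ref{general algebraic Sato Tate conj.}(b), the map $\gpast_{l, K}$ is an isomorphism $G_{l, K, 1}^{\alg} \xrightarrow{\simeq} \AST_{K}(V, \psi)_{\Q_l}$ for every $l$; transporting Corollary~\ref{Connected component GlKlK1alg in form of widetilde(GlK01alg)circ} through this isomorphism gives $(\AST_{K}(V, \psi)_{\Q_l})^{\circ} = \gpast_{l, K}((G_{l, K, 1}^{\alg})^{\circ})$, which equals the base change to $\Q_l$ of $\AST_{K}(V, \psi)^{\circ}$ since formation of the identity component commutes with field extension.

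Next I would identify $\AST_{K_0}(V, \psi)$. By naturality in $K$ (Remark following Conjecture~\ref{general algebraic Sato Tate conj.}) the monomorphism $\AST_{K}(V, \psi) \hookrightarrow \AST_{K_0}(V, \psi)$ becomes, after base change to $\Q_l$, the inclusion $G_{l, K, 1}^{\alg} \supseteq G_{l, K_0, 1}^{\alg}$ compatibly with $\gpast_{l, K}$ and $\gpast_{l, K_0}$; here I use that by definition of $K_0$ one has $G_{l, K_0}^{\alg} = (G_{l, K}^{\alg})^{\circ}$ and hence $G_{l, K_0, 1}^{\alg} = G_{l, K, 1}^{\alg} \cap \Iso_{(V_l, \psi_l)}$. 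Now invoke equation~\eqref{L over K gives iso of GlK1 alg mod GlL1 alg with GlK alg mod GlL alg}, which gives $(G_{l, K, 1}^{\alg})^{\circ} = (G_{l, K_0, 1}^{\alg})^{\circ}$, so that $\AST_{K}(V, \psi)^{\circ} = \AST_{K_0}(V, \psi)^{\circ}$ after checking on $\Q_l$-points via Lemma~\ref{equality of 2 vector spaces over L follows by their equality when tensored with M} (applied to the defining ideals, as in Corollary~\ref{containment of 2 schemes over L follows from their containment after base change to M}). Finally, Corollary~\ref{Connected component GlKlK1alg in form of widetilde(GlK01alg)circ} reads $(G_{l, K, 1}^{\alg})^{\circ} = \widetilde{G_{l, K_0, 1}^{\alg}}$; under Conjecture~\ref{general algebraic Sato Tate conj. Serre's approach}(b) the map $\widetilde{\gpast}_{l, K_0}$ identifies $\widetilde{G_{l, K_0, 1}^{\alg}}$ with $\widetilde{\AST_{K_0}}(V, \psi)_{\Q_l}$, and by Theorem~\ref{connected component of tilde AST K}(a) the group $\widetilde{\AST_{K_0}}(V, \psi)$ is already connected. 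Combining these three identifications at the level of $\Q_l$-points and descending back to $\Q$ by Corollary~\ref{containment of 2 schemes over L follows from their containment after base change to M} yields part~(a).

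For part~(b), I would pass to maximal compact subgroups of the $\C$-points. Taking $\C$-points in part~(a) and intersecting with a fixed maximal compact subgroup $\ST_{K}(V, \psi)$ of $\AST_{K}(V, \psi)(\C)$, one gets that $\ST_{K}(V, \psi) \cap \AST_{K}(V, \psi)^{\circ}(\C)$ is a maximal compact subgroup of $\AST_{K}(V, \psi)^{\circ}(\C)$ and coincides with $\ST_{K}(V, \psi)^{\circ}$ since a maximal compact subgroup of a complex reductive group meets each connected component and its intersection with the identity component is connected; the same argument applied to $\AST_{K_0}(V, \psi)$ and to $\widetilde{\AST_{K_0}}(V, \psi)$ (using part~(a) and the uniqueness of maximal compact subgroups up to conjugation, \cite[\S VII.2]{Kn}) gives the chain of equalities in~(b), up to conjugation in $\AST_{K}(V, \psi)(\C)$. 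The main obstacle I anticipate is purely bookkeeping: one must keep track of which conjugacy ambiguities are harmless, and one must be careful that the isomorphisms $\gpast_{l, K}$, $\gpast_{l, K_0}$, $\widetilde{\gpast}_{l, K_0}$ are mutually compatible with the natural inclusions — this compatibility is exactly what the "natural-in-$K$" clauses in Conjectures~\ref{general algebraic Sato Tate conj.} and~\ref{general algebraic Sato Tate conj. Serre's approach} are designed to supply, and once it is granted the descent from $\Q_l$ to $\Q$ is automatic by Corollary~\ref{containment of 2 schemes over L follows from their containment after base change to M}. No genuinely new geometric input is needed beyond what is already in the section.
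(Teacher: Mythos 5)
Your proposal is correct and follows essentially the same route as the paper: transport Corollary~\ref{Connected component GlKlK1alg in form of widetilde(GlK01alg)circ} through the isomorphisms supplied by Conjectures~\ref{general algebraic Sato Tate conj.} and~\ref{general algebraic Sato Tate conj. Serre's approach}, then descend the resulting equality from $\Q_l$ to $\Q$ (the paper does this by comparing Lie algebras via Lemma~\ref{equality of 2 vector spaces over L follows by their equality when tensored with  M}, you via defining ideals and Corollary~\ref{containment of 2 schemes over L follows from their containment after base change to M} --- the same faithfully flat descent), and deduce (b) by passing to maximal compact subgroups as in Proposition~\ref{connected components iso}. The only slip is the line ``$G_{l, K_0, 1}^{\alg} = G_{l, K, 1}^{\alg} \cap \Iso_{(V_l, \psi_l)}$'', which should read $G_{l, K_0, 1}^{\alg} = (G_{l, K}^{\alg})^{\circ} \cap \Iso_{(V_l, \psi_l)}$ as in Lemma~\ref{Lemma G l K 0 alg = G l K 1 alg 0}.
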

\begin{proof} To prove (a) it is enough to prove $\AST_{K_{0}} (V, \psi)^{\circ} = \widetilde{\AST_{K_{0}}} (V, \psi)$.
By \cite[p. 218]{Hu} the algebraic group $\AST_{K_{0}} (V, \psi)^{\circ}$ is defined over $\Q$.
Consider the $\Q$-vector spaces $V_1 := {\rm{Lie}} (\AST_{K_{0}} (V, \psi)^{\circ})$,
$V_2 := {\rm{Lie}} (\widetilde{\AST_{K_{0}}} (V, \psi))$, $W := 
{\rm{Lie}} (\Iso_{(V, \psi)})$. Obviously $V_i \subset W$ for $i = 1, 2$.  It follows by
\eqref{Algebraic Sato--Tate equality generalized}, 
\eqref{Algebraic Sato--Tate conjecture. Serre's approach}, and 
Corollary \ref{Connected component GlKlK1alg in form of widetilde(GlK01alg)circ} that
$V_1 \otimes_{\Q} \Q_{l} = V_2 \otimes_{\Q} \Q_{l} \subset W\otimes_{\Q} \Q_{l}$.
By Lemma \ref{equality of 2 vector spaces over L follows by their equality when tensored with  M}
we have $V_1 = V_2$. In the end by \cite[Theorem on p. 87]{Hu} we obtain $\AST_{K_{0}} (V, \psi)^{\circ} = \widetilde{\AST_{K_{0}}} (V, \psi)$. Hence (a) holds; this in turn implies (b).
\end{proof}

\begin{remark} Under Conjecture \ref{general algebraic Sato Tate conj. Serre's approach},
the Sato--Tate group $\widetilde{\ST_{K}}(V, \psi)$ is independent of $l$. 
The normalization of $\widetilde{\rho}_l (\Fr_{v})$ (cf. \cite[p. 113]{Se12}) gives the following normalized Frobenius 
element:
\begin{gather*}
( q_{v}^{-\frac{n}{2}} {\rm{Id}}_{V_l}, \, q_{v}^{-1} {\rm{Id}}_{\Q_{l} (1)}) \,
\widetilde{\rho}_l (\Fr_{v}) \in \widetilde{G_{l, K, 1}^{\alg}} (\C) \,\, \simeq \,\, \widetilde{\AST_{K}} (V, \psi) (\C) \\
 \subset  \,\, \Iso_{(V, \psi)} (\C) \,\, \subset \,\, \SL_{V} (\C).
\end{gather*}
The monomorphism $\pi_{1}$ in Diagrams \ref{diagram compatibility of tilde(GlK1alg) with GlK1alg} and
 \ref{the diagram compatibility of tilde(GlK1alg) with GlK1alg and pi0tilde(GlK1alg) with pi0GlK1alg}  
sends $( q_{v}^{-\frac{n}{2}} {\rm{Id}}_{V_l}, \, q_{v}^{-1} {\rm{Id}}_{\Q_{l} (1)}) \,
\widetilde{\rho}_l (\Fr_{v})$ onto $q_{v}^{-\frac{n}{2}} {\rho}_l (\Fr_{v})$. Let $\tilde{s}_v$ denote the semisimple part in $\SL_V(\C)$ of the element $( q_{v}^{-\frac{n}{2}} {\rm{Id}}_{V_l}, \, q_{v}^{-1} {\rm{Id}}_{\Q_{l} (1)}) \, \widetilde{\rho}_l (\Fr_{v})$. Observe that 
$\tilde{s}_v = (s_v, \, q_{v}^{-1} {\rm{Id}}_{\Q_{l} (1)})$. The family $(\widetilde{\rho}_l)$ is also strictly compatible. Hence the conjugacy class ${\rm{conj}}(\tilde{s}_v)$ in 
$\SL_V(\C)$ is independent of $l$. Based on \cite[Theorem 15.3 (c) p.\ 99]{Hu}, the semisimple part $\tilde{s}_v$ considered in $\Iso_{(V, \psi)} (\C)$ and in $\widetilde{\AST_{K}} (V, \psi) (\C)$ is again $\tilde{s}_v$, but its conjugacy classes ${\rm{conj}}(\tilde{s}_v)$ in $\Iso_{(V, \psi)} (\C)$ and ${\rm{conj}}(\tilde{s}_v)$ in 
$\widetilde{\AST_{K}} (V, \psi)  (\C)$ might depend on $l$. Obviously ${\rm{conj}}(\tilde{s}_v) \subset 
\widetilde{\AST_{K}} (V, \psi) (\C)$ is independent of the choice of a Frobenius element $\Fr_{v}$ over $v$ and contains the semisimple parts of all the elements of ${\rm{conj}}(( q_{v}^{-\frac{n}{2}} {\rm{Id}}_{V_l}, \, q_{v}^{-1} {\rm{Id}}_{\Q_{l} (1)}) \, \widetilde{\rho}_l (\Fr_{v}))$
in $\widetilde{\AST_{K}} (V, \psi) (\C)$. The elements in ${\rm{conj}}(\tilde{s}_v) $ have eigenvalues of 
complex absolute value $1$ by our assumptions, so there is some conjugate of $\tilde{s}_v$ contained in 
$\widetilde{\ST_{K}}(V, \psi)$. 
\label{tilde Sato--Tate set up}
\end{remark}

\begin{conjecture}\label{general Sato Tate conj. tilde} 
(Sato--Tate conjecture) 
The conjugacy classes ${\rm{conj}}(\tilde{s}_v)$ in $\widetilde{\ST_{K}}(V, \psi)$ are equidistributed in 
${\rm{conj}} (\widetilde{\ST_{K}}(V, \psi))$ with respect to the measure induced by the 
Haar measure of $\widetilde{\ST_{K}}(V, \psi)$.   
\end{conjecture}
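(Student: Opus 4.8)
Since this is stated as a conjecture rather than a theorem, I indicate the route one would follow to establish it in a given instance rather than a complete argument. The plan is to deduce it from Sato--Tate Conjecture~\ref{general Sato Tate conj.} for $\ST_{K}(V,\psi)$ together with the standard analytic input on $L$-functions, by way of Serre's $L$-function (Weyl) criterion: a sequence of conjugacy classes in a compact group $G$ is equidistributed for the measure induced by Haar measure precisely when, for every nontrivial irreducible complex representation $\varrho$ of $G$, the associated $L$-series is holomorphic and non-vanishing at $s=1$. Applying this with $G=\widetilde{\ST_{K}}(V,\psi)$ and the sequence $({\rm{conj}}(\tilde{s}_v))_v$, the first step is to compare with the situation for $\ST_{K}(V,\psi)$ via the monomorphism $\pi_{1}$ of Remark~\ref{tilde Sato--Tate set up}, which sends $\tilde{s}_v=(s_v,q_v^{-1}{\rm{Id}}_{\Q_l(1)})$ to $s_v$; the extra $\G_m$-coordinate is central and is pinned down by $N$, so passing between $\tilde{s}_v$ and $s_v$ loses no information.

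Next I would exploit the structural relation between the two Sato--Tate groups. By~\eqref{decomposition of GlK1alg into tilde(GlK1alg) cup -Id tilde(GlK1alg)} and Corollary~\ref{relations between AST and wide tilde AST and between ST and wide tilde ST}, $\widetilde{\ST_{K}}(V,\psi)$ is an open subgroup of $\ST_{K}(V,\psi)$ of index at most $2$, with $\ST_{K}(V,\psi)=\widetilde{\ST_{K}}(V,\psi)\cup -{\rm{Id}}_{V}\,\widetilde{\ST_{K}}(V,\psi)$. If $-{\rm{Id}}_{V}\in\widetilde{\ST_{K}}(V,\psi)$ the two groups coincide and the conjecture is exactly Conjecture~\ref{general Sato Tate conj.}; if $-{\rm{Id}}_{V}\notin\widetilde{\ST_{K}}(V,\psi)$ the index is $2$, and Theorem~\ref{equidistribution property in ST implies equidistribution property in wide tilde ST} shows that Conjecture~\ref{general Sato Tate conj.} fails, so \ref{general Sato Tate conj. tilde} is the formulation one must attack directly. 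In that case I would combine equidistribution in the cyclic quotient $\widetilde{\ST_{K}}(V,\psi)/\widetilde{\ST_{K_0}}(V,\psi)$, which is a Chebotarev statement, with equidistribution in the identity component $\widetilde{\ST_{K_0}}(V,\psi)=\widetilde{\ST_{K}}(V,\psi)^{\circ}$ (Theorem~\ref{connected component of tilde AST K}, Corollary~\ref{connected components of AST K (V, psi), ST K (V, psi) and widetilde AST K (V, psi), widetilde ST K (V, psi)}), descending to it by means of the compact-group version of Artin's theorem on induced characters (Lemma~\ref{Artin Clifford lemma}) and the equidistribution-inversion results (Proposition~\ref{proposition about the reduction of equidistribution to G0}, Theorem~\ref{reduction for equidistribution2}); this is exactly the mechanism packaged in Theorems~\ref{Sato--Tate conjecture STK iff STK1} and~\ref{ST with resp. to STK the same as with resp. to STK1}, which reduce the conjecture for $\widetilde{\ST_{K}}$ to the conjecture for $\widetilde{\ST_{K_1}}$ over the subextensions $K_1$ of $K_0/K$ with $K_0/K_1$ cyclic.

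The hard part is the analytic input, not the group theory. Running the Weyl criterion on $\widetilde{\ST_{K_0}}(V,\psi)$ requires holomorphy and non-vanishing at $s=1$ of the $L$-functions attached to all nontrivial irreducible representations of this reductive compact group evaluated at the normalized Frobenii, and these are constituents of tensor- and symmetric-power $L$-functions of the object underlying $(V,\psi)$. Establishing their expected analytic behaviour amounts to potential-automorphy statements that are currently available only in limited ranges (for instance abelian varieties of low dimension, as in Example~\ref{example of ST for abelian surfaces} and Remark~\ref{ST for abelian 3-folds}); this is precisely why the statement is only a conjecture. The remaining ingredients — reduction to the connected case, the comparison between $\ST_{K}(V,\psi)$ and $\widetilde{\ST_{K}}(V,\psi)$, and the descent along cyclic quotients — are comparatively routine once the equidistribution machinery of \S\ref{remarks on equidistribution}--\S\ref{application to the Sato--Tate conjecture} is in place.
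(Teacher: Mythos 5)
This statement is a conjecture, and the paper offers no proof of it, so there is nothing to compare your argument against line by line. Your proposal correctly recognizes the conjectural status and your sketch of the attack — Weyl/$L$-function criterion, the dichotomy on whether $-\mathrm{Id}_V$ lies in $\widetilde{\ST_K}(V,\psi)$ via Theorem~\ref{equidistribution property in ST implies equidistribution property in wide tilde ST}, reduction to cyclic subextensions of $K_0/K$ as in Theorem~\ref{Sato--Tate conjecture STK iff STK1}, and the identification of potential automorphy as the genuine obstruction — is faithful to the framework the paper itself develops, so there is no gap to flag beyond the one you already name as the reason the statement remains open.
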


\begin{proposition} {\,} 
\begin{itemize}
\item[(a)] Under Conjecture \ref{general algebraic Sato Tate conj.} the algebraic Sato--Tate group 
$\AST_{K} (V, \psi)$ is uniquely defined. 
\item[(b)] Under Conjecture \ref{general algebraic Sato Tate conj. Serre's approach} 
the algebraic Sato--Tate group $ \widetilde{\AST_{K}} (V, \psi)$ is uniquely defined.  
\item[(c)]  Under Conjectures \ref{general algebraic Sato Tate conj.} and 
\ref{general algebraic Sato Tate conj. Serre's approach} we have 
$\widetilde{\AST_{K}} (V, \psi) \subset \AST_{K} (V, \psi)$. Hence 
$\widetilde{\ST_{K}} (V, \psi) \subset \ST_{K} (V, \psi)$
up to conjugation in $\AST_{K} (V, \psi)(\mathbb{C})$.
\end{itemize}
\label{uniqueness of algebraic Sato--Tate groups}
\end{proposition}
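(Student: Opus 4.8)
The plan is to deduce all three assertions from the faithfully–flat descent statements established just above in this section, most directly Corollary~\ref{containment of 2 schemes over L follows from their containment after base change to M}. For (a), I would start from the observation that, under Conjecture~\ref{general algebraic Sato Tate conj.}, the factorization clause in part~(a) together with the isomorphism in part~(b) pins down the $\Q_l$-base change of any admissible $\AST_K(V,\psi)$ completely: if $G\subset\Iso_{(V,\psi)}$ is such a group, then for every $l$ the composite $G_{l,K,1}^{\alg}\xrightarrow{\ \gpast_{l,K}\ }G_{\Q_l}\hookrightarrow\Iso_{(V_l,\psi_l)}$ is the tautological embedding and $\gpast_{l,K}$ is an isomorphism, so, identifying $G_{\Q_l}$ with its image, $G_{\Q_l}=G_{l,K,1}^{\alg}$ as closed subschemes of $\Iso_{(V,\psi)}\otimes_{\Q}\Q_l=\Iso_{(V_l,\psi_l)}$. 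Hence if $G,G'$ are two candidates then $G_{\Q_l}=G'_{\Q_l}$ inside $\Iso_{(V,\psi)}\otimes_{\Q}\Q_l$ for some (in fact every) $l$, and applying Corollary~\ref{containment of 2 schemes over L follows from their containment after base change to M} with $L=\Q$, $M=\Q_l$ (after fixing a closed embedding of the affine scheme $\Iso_{(V,\psi)}$ into some $\A^N_\Q$) gives $G\subseteq G'$ and $G'\subseteq G$, i.e.\ $G=G'$. I would stress that naturality in $K$ plays no role here: uniqueness is asserted for each fixed $K$.

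For (b) the argument is word-for-word the same, with $G_{l,K,1}^{\alg}$ replaced by $\widetilde{G_{l,K,1}^{\alg}}$ and Conjecture~\ref{general algebraic Sato Tate conj.} by Conjecture~\ref{general algebraic Sato Tate conj. Serre's approach}; the only point to recall is that $\widetilde{G_{l,K,1}^{\alg}}$ maps into $\Iso_{(V_l,\psi_l)}$ via the monomorphism $\pi_1$ of Diagram~\ref{diagram compatibility of tilde(GlK1alg) with GlK1alg}, and it is through this embedding that $\widetilde{\gpast}_{l,K}$ is required to factor, so that under Conjecture~\ref{general algebraic Sato Tate conj. Serre's approach}(b) one again obtains $\widetilde{\AST_K}(V,\psi)_{\Q_l}=\widetilde{G_{l,K,1}^{\alg}}$ as subschemes of $\Iso_{(V,\psi)}\otimes_{\Q}\Q_l$, whence uniqueness by the same descent.

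For (c), I would combine the two previous identifications with the inclusion $\widetilde{G_{l,K,1}^{\alg}}\subseteq G_{l,K,1}^{\alg}$, which is immediate from \eqref{decomposition of GlK1alg into tilde(GlK1alg) cup -Id tilde(GlK1alg)} and is compatible with the $\pi_1$-embedding into $\Iso_{(V_l,\psi_l)}$. Transporting this across the isomorphisms \eqref{Algebraic Sato--Tate equality generalized} and \eqref{Algebraic Sato--Tate conjecture. Serre's approach} yields $\widetilde{\AST_K}(V,\psi)_{\Q_l}\subseteq\AST_K(V,\psi)_{\Q_l}$ inside $\Iso_{(V,\psi)}\otimes_{\Q}\Q_l$ for every $l$, and Corollary~\ref{containment of 2 schemes over L follows from their containment after base change to M} descends this to $\widetilde{\AST_K}(V,\psi)\subseteq\AST_K(V,\psi)$ over $\Q$. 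Finally, for the Sato--Tate groups I would note that $\widetilde{\ST_K}(V,\psi)$ is a compact subgroup of $\widetilde{\AST_K}(V,\psi)(\C)\subseteq\AST_K(V,\psi)(\C)$, hence lies in some maximal compact subgroup of $\AST_K(V,\psi)(\C)$; since all such are conjugate \cite[\S VII.2]{Kn} and one of them may be taken to be $\ST_K(V,\psi)$, this gives $\widetilde{\ST_K}(V,\psi)\subseteq\ST_K(V,\psi)$ up to conjugation in $\AST_K(V,\psi)(\C)$.

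I expect the only genuine (and still mild) obstacle to be the bookkeeping that legitimizes the equalities of subschemes used above: one must verify that the two copies of $G_{l,K,1}^{\alg}$ inside $\Iso_{(V_l,\psi_l)}$ — the tautological one and the one coming through $\gpast_{l,K}$ — really coincide, which is precisely what the ``factors through the natural embedding'' clauses in Conjectures~\ref{general algebraic Sato Tate conj.} and \ref{general algebraic Sato Tate conj. Serre's approach} were built to guarantee, and similarly for the tilde versions and for the compatibility of $\pi_1$ with all the embeddings. Once that compatibility is in hand, everything reduces to the elementary descent lemmas of this section.
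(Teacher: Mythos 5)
Your proposal is correct and follows essentially the same route as the paper: the paper's own proof consists precisely of invoking Corollary~\ref{containment of 2 schemes over L follows from their containment after base change to M} for (a) and (b), and that corollary together with \eqref{decomposition of GlK1alg into tilde(GlK1alg) cup -Id tilde(GlK1alg)} for (c). You have simply spelled out the bookkeeping (identifying the $\Q_l$-fibres of any candidate group with $G_{l,K,1}^{\alg}$, resp.\ $\widetilde{G_{l,K,1}^{\alg}}$, via the factorization clauses, and the maximal-compact argument for the Sato--Tate inclusion) that the paper leaves implicit.
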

\begin{proof} Claims (a) and (b) follow from Corollary \ref{containment of 2 schemes over L follows from their containment after base change to M}. Claim (c) follows from 
\eqref{decomposition of GlK1alg into tilde(GlK1alg) cup -Id tilde(GlK1alg)} and Corollary
\ref{containment of 2 schemes over L follows from their containment after base change to M}.
\end{proof}

\begin{proposition} Conjecture \ref{general algebraic Sato Tate conj. Serre's approach} implies
Conjecture \ref{general algebraic Sato Tate conj.}. 
\label{Conjectures general algebraic Sato Tate conj. implies Algebraic Sato--Tate conjecture. Serre's approach}
\end{proposition}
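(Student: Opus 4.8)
The plan is to build the group $\AST_K(V,\psi)$ demanded by Conjecture~\ref{general algebraic Sato Tate conj.} directly out of the group $\widetilde{\AST_K}(V,\psi)$ supplied by Conjecture~\ref{general algebraic Sato Tate conj. Serre's approach}, following the template provided by the identity
$$G_{l, K, 1}^{\alg} = \widetilde{G_{l, K, 1}^{\alg}} \,\cup\, -{\rm{Id}}_{V_l}\,\widetilde{G_{l, K, 1}^{\alg}}$$
from \S\ref{section-computation of the identity connected component} (equation~\eqref{decomposition of GlK1alg into tilde(GlK1alg) cup -Id tilde(GlK1alg)}). Concretely I would define $\AST_K(V,\psi)$ to be the closed subgroup scheme of $\Iso_{(V,\psi)}$ generated by $\widetilde{\AST_K}(V,\psi)$ together with the central element $-{\rm{Id}}_V$; explicitly, as a scheme
$$\AST_K(V,\psi) = \widetilde{\AST_K}(V,\psi) \,\cup\, -{\rm{Id}}_V\,\widetilde{\AST_K}(V,\psi).$$
This makes sense: $-{\rm{Id}}_V \in \Iso_{(V,\psi)}(\Q)$ since $\psi(-v,-w)=\psi(v,w)$, conjugation by $-{\rm{Id}}_V$ is the identity, so $\widetilde{\AST_K}(V,\psi)$ is normal of index at most $2$ in $\AST_K(V,\psi)$ and the latter is a genuine closed subgroup scheme.

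Next I would verify the ``static'' requirements of Conjecture~\ref{general algebraic Sato Tate conj.}(a). Since $-{\rm{Id}}_V$ is a $\Q$-point and $\widetilde{\AST_K}(V,\psi)$ is defined over $\Q$, the union above is defined over $\Q$. Since $\widetilde{\AST_K}(V,\psi)$ has finite index in $\AST_K(V,\psi)$ we have $\AST_K(V,\psi)^\circ = \widetilde{\AST_K}(V,\psi)^\circ$, and as the unipotent radical of an algebraic group coincides with that of its identity component, $\AST_K(V,\psi)$ is reductive because $\widetilde{\AST_K}(V,\psi)$ is. Finally, naturality in $K$ is inherited: for a finite extension $L/K$ the monomorphism $\widetilde{\AST_K}(V,\psi)\hookrightarrow\widetilde{\AST_L}(V,\psi)$, together with the fact that $-{\rm{Id}}_V$ is independent of $K$, extends to $\AST_K(V,\psi)\hookrightarrow\AST_L(V,\psi)$.

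The crux is the comparison of $\Q_l$-base changes inside $\Iso_{(V_l,\psi_l)}$. Base change along $\Q\hookrightarrow\Q_l$ commutes with a finite union of closed subschemes, so
$$\AST_K(V,\psi)_{\Q_l} = \widetilde{\AST_K}(V,\psi)_{\Q_l} \,\cup\, -{\rm{Id}}_{V_l}\,\widetilde{\AST_K}(V,\psi)_{\Q_l}.$$
Assuming only Conjecture~\ref{general algebraic Sato Tate conj. Serre's approach}(a) we have $\widetilde{G_{l, K, 1}^{\alg}} \subseteq \widetilde{\AST_K}(V,\psi)_{\Q_l}$ as subgroups of $\Iso_{(V_l,\psi_l)}$, and combining this with the displayed identity for $G_{l,K,1}^{\alg}$ gives $G_{l,K,1}^{\alg} \subseteq \AST_K(V,\psi)_{\Q_l}$; the resulting inclusion is the sought monomorphism $\gpast_{l,K}$, which automatically factors the embedding into $\Iso_{(V_l,\psi_l)}$ and is natural in $K$ since every map in sight is an inclusion of subgroups of $\Iso_{(V_l,\psi_l)}$. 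This proves Conjecture~\ref{general algebraic Sato Tate conj.}(a). If moreover Conjecture~\ref{general algebraic Sato Tate conj. Serre's approach}(b) holds, then $\widetilde{G_{l, K, 1}^{\alg}} = \widetilde{\AST_K}(V,\psi)_{\Q_l}$, so the inclusion above becomes $G_{l,K,1}^{\alg} = \AST_K(V,\psi)_{\Q_l}$ and $\gpast_{l,K}$ is an isomorphism, proving Conjecture~\ref{general algebraic Sato Tate conj.}(b).

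I do not expect a real obstacle here; the argument is formal. The only points needing care are (i) confirming that a union of two cosets of $\widetilde{\AST_K}(V,\psi)$ is a closed reductive $\Q$-subgroup of $\Iso_{(V,\psi)}$ — handled by centrality of $-{\rm{Id}}_V$ and by $R_u(G)=R_u(G^\circ)$ — and (ii) the bookkeeping that base change of the scheme-theoretic union is the union of base changes, supplemented if desired by Corollary~\ref{containment of 2 schemes over L follows from their containment after base change to M} at the level of defining ideals. Notably the union formula handles both possibilities $-{\rm{Id}}_V\in\widetilde{\AST_K}(V,\psi)$ and $-{\rm{Id}}_V\notin\widetilde{\AST_K}(V,\psi)$ uniformly, so no case distinction is required; and the uniqueness of the resulting $\AST_K(V,\psi)$ is then precisely Proposition~\ref{uniqueness of algebraic Sato--Tate groups}(a).
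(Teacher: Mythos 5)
Your construction is exactly the one the paper uses: set $\AST_{K}(V,\psi) := \widetilde{\AST_{K}}(V,\psi) \cup -\mathrm{Id}_{V}\,\widetilde{\AST_{K}}(V,\psi)$ and invoke the decomposition \eqref{decomposition of GlK1alg into tilde(GlK1alg) cup -Id tilde(GlK1alg)} to transfer the monomorphism (and, for part (b), the isomorphism). The paper states the verification as ``obvious''; your write-up just supplies the routine checks (centrality of $-\mathrm{Id}_V$, reductivity via the common identity component, compatibility of the union with base change), so the two proofs coincide in substance.
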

\begin{proof} Assume that Conjecture \ref{general algebraic Sato Tate conj. Serre's approach} holds.
Then we put 
$$
\AST_{K} (V, \psi) := \widetilde{\AST_{K}} (V, \psi) \, \cup \, - {\rm{Id}}_{V} \, 
\widetilde{\AST_{K}} (V, \psi).
$$
It is obvious that $\AST_{K} (V, \psi)$ is an algebraic group over $\Q$ and Conjecture 
\ref{general algebraic Sato Tate conj.} holds by 
\eqref{decomposition of GlK1alg into tilde(GlK1alg) cup -Id tilde(GlK1alg)}. 
\end{proof}

\begin{remark}
\label{general algebraic Sato Tate conj. K implies general algebraic Sato Tate conj. Serre's approach K0 }
Assume Conjecture \ref{general algebraic Sato Tate conj.}. For $K_{0}$ in place of $K$ we can put
$$
\widetilde{\AST_{K_{0}}} (V, \psi) \, := \, \AST_{K} (V, \psi)^{\circ}
\, = \, \AST_{K_{0}} (V, \psi)^{\circ}
$$
which is an algebraic group defined over $\Q$ cf. \cite[p. 218]{Hu}. 
Hence for the base field $K_{0}$, Conjecture \ref{general algebraic Sato Tate conj. Serre's approach} holds with 
$\widetilde{\AST_{K_0}} (V, \psi) = \AST_{K_{0}} (V, \psi) ^{\circ}$ by 
Corollary \ref{Connected component GlKlK1alg in form of widetilde(GlK01alg)circ}. 
By \eqref{decomposition of GlK1alg into tilde(GlK1alg) cup -Id tilde(GlK1alg)} with $K_{0}$ in place of $K$,
we have:
\begin{equation}
\AST_{K_{0}} (V, \psi)  = \widetilde{\AST_{K_0}} (V, \psi)  \, \cup  \, - {\rm{Id}}_{V} \, 
\widetilde{\AST_{K_0}} (V, \psi). 
\label{composition of ASTK0 V psi into cosets widetilde(ASTK0 Vpsi) and - widetilde(ASTK0 Vpsi)}
\end{equation}  
Moreover if $- {\rm{Id}}_{V_l} \notin \widetilde{G_{l, K_{0}, 1}^{\alg}}$ then
$- {\rm{Id}}_{V} \notin \widetilde{\AST_{K_{0}}} (V, \psi)$ and  
\begin{equation}
\AST_{K_{0}} (V, \psi)  = \widetilde{\AST_{K_0}} (V, \psi)  \, \sqcup  \, - {\rm{Id}}_{V} \, 
\widetilde{\AST_{K_0}} (V, \psi). 
\label{composition of ASTK0 V psi into disjoint cosets widetilde(ASTK0 Vpsi) and
- widetilde(ASTK0 Vpsi)}
\end{equation}  
\end{remark}

\begin{remark}
Assume Conjecture \ref{general algebraic Sato Tate conj.}. If
$- {\rm{Id}}_{V_l} \in \widetilde{G_{l, K, 1}^{\alg}}$ then 
$\widetilde{G_{l, K, 1}^{\alg}} = G_{l, K, 1}^{\alg}$ by the equality
\eqref{decomposition of GlK1alg into tilde(GlK1alg) cup -Id tilde(GlK1alg)}. 
Putting:
$$
\widetilde{\AST_{K}} (V, \psi) \, := \, \AST_{K} (V, \psi).
$$ 
we observe that Conjecture \ref{general algebraic Sato Tate conj. Serre's approach} holds in this case.
\medskip

If $- {\rm{Id}}_{V_l} \notin \widetilde{G_{l, K, 1}^{\alg}}$, the equality 
\eqref{decomposition of GlK1alg into tilde(GlK1alg) cup -Id tilde(GlK1alg)} has the form
\begin{equation}
G_{l, K, 1}^{\alg} = \widetilde{G_{l, K, 1}^{\alg}} \times \{\pm {\rm{Id}}_{V_l}\}.
\label{GlK1 alg = wide tilde GlK1 alg times pm IdVl}
\end{equation}
Hence $G_{l, K, 1}^{\alg} / \{\pm {\rm{Id}}_{V_l}\} = \widetilde{G_{l, K, 1}^{\alg}}$.
Since a quotient map of algebraic groups is uniquely determined by its kernel \cite[p.101]{Mi2}, it follows that 
\begin{equation}
\AST_{K} (V, \psi)_{\Q_l} / \{\pm {\rm{Id}}_{V_l}\} = 
\AST_{K} (V, \psi) / \{\pm {\rm{Id}}_{V}\} \,\otimes_{\Q} {\Q_l}.
\label{ASTK otimes Ql mod pm 1 otimes Qll is ASTK mod pm 1 otimes Ql}
\end{equation}
By \cite[Theorems 1.72, 3.34, 5.14, Corollary 5.18]{Mi2}, the group scheme
$\AST_{K} (V, \psi) / \{\pm {\rm{Id}}_{V}\}$ is affine and defined over $\Q$.
We put  
$$
\widetilde{\AST_{K}} (V, \psi) := \AST_{K} (V, \psi) / \{\pm {\rm{Id}}_{V}\}.
$$ 
Consider the following commutative diagram with exact rows. 
\begin{figure}[H]
\[
\begin{tikzcd}
 G_{l, K, 1}^{\alg} \arrow{d}{\simeq}[swap]{{\rm{ast}}_{K, l}} \arrow{r}{} & G_{l, K, 1}^{\alg} / \{\pm {\rm{Id}}_{V_l}\} \arrow{d}{\simeq}[swap]{} \arrow{r}{} & 1\\
 \AST_{K} (V, \psi)_{\Q_l} \arrow{r}{} & 
\AST_{K} (V, \psi)_{\Q_l} / \{\pm {\rm{Id}}_{V_l}\} \arrow{r}{} & 1\\ 
\end{tikzcd}
\]
\\[-0.8cm]
\caption{}
\label{the diagram defining tilde ASTK if -Id not in tilde GlK1alg} 
\end{figure}
By \eqref{ASTK otimes Ql mod pm 1 otimes Qll is ASTK mod pm 1 otimes Ql} and Diagram \ref{the diagram defining tilde ASTK if -Id not in tilde GlK1alg}, we obtain the following natural-in-$K$ isomorphism:
\begin{equation}
\widetilde{G_{l, K, 1}^{\alg}} \,\, 
{\stackrel{\simeq}{\longrightarrow}} \,\, \widetilde{\AST_{K}} (V, \psi)_{\Q_l}.
\label{The isomorphism for AST conjecture as Serre predicts}
\end{equation}
By \cite[Proposition 19.13]{Mi2} and \eqref{The isomorphism for AST conjecture as Serre predicts},
the group scheme $\widetilde{\AST_{K}} (V, \psi)$ is reductive because $\widetilde{G_{l, K, 1}^{\alg}}$ is reductive. Nevertheless we cannot fully establish Conjecture \ref{general algebraic Sato Tate conj. Serre's approach} in this case because it is not clear how to prove that $\widetilde{\AST_{K}} (V, \psi) \subset \Iso_{(V, \psi)}$ over $\Q$. 
Proposition \ref{AST conjecture for math cal is equivalent to  AST conjecture wide tilde math cal} and Proposition \ref{AST conjecture for M is equivalent to wide tilde AST conjecture } suggest that the condition $\widetilde{\AST_{K}} (V, \psi) \subset \Iso_{(V, \psi)}$ over $\Q$ is naturally expected in the setting of this paper.
\label{general alg. Sato Tate conj. K implies in part general alg. Sato Tate conj. Serre's approach K}
\end{remark}

\begin{remark} Assume Algebraic Sato--Tate Conjecture \ref{general algebraic Sato Tate conj. Serre's approach}.
By Remark \ref{tilde Sato--Tate set up} there is a one-to-one correspondence between normalized Frobenius elements 
in $\AST_{K} (V, \psi) (\C)$ and  normalized Frobenius elements in $\widetilde{\AST_{K}} (V, \psi) (\C)$. 
Similarly there is a one-to-one correspondence between conjugacy classes of normalized Frobenius elements in 
$\AST_{K} (V, \psi) (\C)$ and conjugacy classes of normalized Frobenius elements in 
$\widetilde{\AST_{K}} (V, \psi) (\C)$ because $\widetilde{\AST_{K}} (V, \psi) \, \triangleleft \, \AST_{K} (V, \psi)$ and $- {\rm{Id}}_{V}$ is in the center of $\AST_{K} (V, \psi)$. Hence if $- {\rm{Id}}_{V} \notin \, \widetilde{\AST_{K}} (V, \psi)$ then there are no conjugacy classes of normalized Frobenius elements in $- {\rm{Id}}_{V} \, \widetilde{\AST_{K}} (V, \psi) (\C)$.
\label{Conjugacy classes of normalized Frobenius elements are in wide tilde AST}
\end{remark}

\begin{corollary}
Assume Algebraic Sato--Tate Conjecture \ref{general algebraic Sato Tate conj. Serre's approach}. Then
\begin{align}
\AST_{K} (V, \psi) &= \widetilde{\AST_{K}} (V, \psi) \, \cup \, - {\rm{Id}}_{V} \, 
\widetilde{\AST_{K}} (V, \psi),
\label{AST expressed in terms of wide tilde AST} \\
\ST_{K} (V, \psi) &= \widetilde{\ST_{K}} (V, \psi) \, \cup \, - {\rm{Id}}_{V} \, 
\widetilde{\ST_{K}} (V, \psi).
\label{ST expressed in terms of wide tilde ST}
\end{align} 
Moreover the following conditions are equivalent:
\begin{itemize}
\item[(1)] $- {{\rm Id}}_{V} \in \widetilde{\AST_{K}} (V, \psi)$,
\item[(2)] $- {{\rm Id}}_{V} \in \widetilde{\ST_{K}} (V, \psi)$,
\item[(3)] $\AST_{K} (V, \psi) = \widetilde{\AST_{K}} (V, \psi)$,
\item[(4)] $\ST_{K} (V, \psi) = \widetilde{\ST_{K}} (V, \psi)$.
\end{itemize}
\label{relations between AST and wide tilde AST and between ST and wide tilde ST}
\end{corollary}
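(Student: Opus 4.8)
The plan is to deduce \eqref{AST expressed in terms of wide tilde AST} formally from results already at hand, transfer it to maximal compact subgroups to get \eqref{ST expressed in terms of wide tilde ST}, and then read off the four equivalences, the key point throughout being that $-{\rm{Id}}_{V}$ is a \emph{central} element of \emph{finite order}. For \eqref{AST expressed in terms of wide tilde AST}: by Proposition~\ref{Conjectures general algebraic Sato Tate conj. implies Algebraic Sato--Tate conjecture. Serre's approach}, Conjecture~\ref{general algebraic Sato Tate conj. Serre's approach} implies Conjecture~\ref{general algebraic Sato Tate conj.}, and the construction used in that proof exhibits $\widetilde{\AST_{K}}(V,\psi)\,\cup\,-{\rm{Id}}_{V}\,\widetilde{\AST_{K}}(V,\psi)$ as a reductive algebraic group over $\Q$ inside $\Iso_{(V,\psi)}$ through which every $\gpast_{l,K}$ factors; this is forced by \eqref{decomposition of GlK1alg into tilde(GlK1alg) cup -Id tilde(GlK1alg)}. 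Since the algebraic Sato--Tate group is uniquely determined under Conjecture~\ref{general algebraic Sato Tate conj.} (Proposition~\ref{uniqueness of algebraic Sato--Tate groups}(a)), we get \eqref{AST expressed in terms of wide tilde AST}. In particular $\widetilde{\AST_{K}}(V,\psi)$ is open and closed of index at most $2$ in $\AST_{K}(V,\psi)$, so $\widetilde{\AST_{K}}(V,\psi)^{\circ}=\AST_{K}(V,\psi)^{\circ}$, and since $\mathrm{Id}_{V}\in\widetilde{\AST_{K}}(V,\psi)$ we also obtain $-{\rm{Id}}_{V}\in\AST_{K}(V,\psi)$.

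Next I pass to $\C$-points. If the two cosets in \eqref{AST expressed in terms of wide tilde AST} coincide over $\C$, then $\AST_{K}(V,\psi)=\widetilde{\AST_{K}}(V,\psi)$ and \eqref{ST expressed in terms of wide tilde ST} is trivial; otherwise, $H:=\widetilde{\ST_{K}}(V,\psi)\,\cup\,-{\rm{Id}}_{V}\,\widetilde{\ST_{K}}(V,\psi)$ is a compact subgroup of $\AST_{K}(V,\psi)(\C)$ with $-{\rm{Id}}_{V}\notin\widetilde{\ST_{K}}(V,\psi)$. Choosing a maximal compact $M\supseteq H$, the group $M\cap\widetilde{\AST_{K}}(V,\psi)(\C)$ is compact and contains the maximal compact $\widetilde{\ST_{K}}(V,\psi)$, hence equals it; comparing indices over $M\cap\widetilde{\AST_{K}}(V,\psi)(\C)=H\cap\widetilde{\AST_{K}}(V,\psi)(\C)$ (both are $2$, using the disjoint decomposition \eqref{AST expressed in terms of wide tilde AST} and Proposition~\ref{connected components iso for tilde}) forces $H=M$, so $H$ is a maximal compact subgroup of $\AST_{K}(V,\psi)(\C)$. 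By conjugacy of maximal compacts \cite[\S VII.2]{Kn} it equals $\ST_{K}(V,\psi)$ up to conjugation; since $-{\rm{Id}}_{V}$ is central, any conjugate of $H$ has the shape $\widetilde{\ST_{K}}(V,\psi)'\cup-{\rm{Id}}_{V}\,\widetilde{\ST_{K}}(V,\psi)'$ with $\widetilde{\ST_{K}}(V,\psi)'$ again a maximal compact of $\widetilde{\AST_{K}}(V,\psi)(\C)$, so \eqref{ST expressed in terms of wide tilde ST} holds for a suitable choice of $\widetilde{\ST_{K}}(V,\psi)$. In particular $-{\rm{Id}}_{V}\in\ST_{K}(V,\psi)$, being central of finite order in $\AST_{K}(V,\psi)(\C)$.

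For the equivalences: $(1)\Leftrightarrow(2)$ because $\widetilde{\AST_{K}}(V,\psi)$ is a closed $\Q$-subgroup of $\GL_{V}$ and $-{\rm{Id}}_{V}$ is $\Q$-rational, so membership can be tested over $\C$, and a central element of finite order lies in $\widetilde{\AST_{K}}(V,\psi)(\C)$ iff it lies in the maximal compact $\widetilde{\ST_{K}}(V,\psi)$ (this is also conjugation-invariant, so independent of the choice of $\widetilde{\ST_{K}}(V,\psi)$). $(1)\Rightarrow(3)$: if $-{\rm{Id}}_{V}\in\widetilde{\AST_{K}}(V,\psi)$ then $-{\rm{Id}}_{V}\,\widetilde{\AST_{K}}(V,\psi)=\widetilde{\AST_{K}}(V,\psi)$, and \eqref{AST expressed in terms of wide tilde AST} gives $\AST_{K}(V,\psi)=\widetilde{\AST_{K}}(V,\psi)$; $(3)\Rightarrow(1)$ because $-{\rm{Id}}_{V}\in\AST_{K}(V,\psi)=\widetilde{\AST_{K}}(V,\psi)$. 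Running the same two implications with $\ST$ in place of $\AST$, using \eqref{ST expressed in terms of wide tilde ST} and $-{\rm{Id}}_{V}\in\ST_{K}(V,\psi)$, yields $(2)\Leftrightarrow(4)$. Hence all four conditions are equivalent.

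The main obstacle I anticipate is the passage to maximal compact subgroups in the second paragraph: one must verify carefully that $\widetilde{\ST_{K}}(V,\psi)\cup-{\rm{Id}}_{V}\,\widetilde{\ST_{K}}(V,\psi)$ is \emph{maximal} (not merely compact) in $\AST_{K}(V,\psi)(\C)$ — this is exactly where Proposition~\ref{connected components iso for tilde} and the structure of the component groups are needed — and to handle the harmless ``up to conjugation'' ambiguity in the definition of $\widetilde{\ST_{K}}(V,\psi)$, which is controlled precisely because $-{\rm{Id}}_{V}$ is central. Everything else is bookkeeping with the coset decompositions \eqref{AST expressed in terms of wide tilde AST} and \eqref{ST expressed in terms of wide tilde ST}.
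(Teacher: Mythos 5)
Your proof is correct and follows essentially the same route as the paper: identify $\AST_{K}(V,\psi)$ with $\widetilde{\AST_{K}}(V,\psi)\cup -{\rm{Id}}_{V}\,\widetilde{\AST_{K}}(V,\psi)$ via the construction in the proof of Proposition~\ref{Conjectures general algebraic Sato Tate conj. implies Algebraic Sato--Tate conjecture. Serre's approach} together with uniqueness (Proposition~\ref{uniqueness of algebraic Sato--Tate groups}), pass to maximal compacts, and read off the four equivalences using centrality of $-{\rm{Id}}_{V}$. The paper's proof declares these steps ``immediate'' and ``obvious''; the one point you work out in honest detail, that $\widetilde{\ST_{K}}(V,\psi)\cup -{\rm{Id}}_{V}\,\widetilde{\ST_{K}}(V,\psi)$ is actually a maximal compact subgroup of $\AST_{K}(V,\psi)(\C)$ rather than merely compact, is exactly the content the paper silently elides in asserting that \eqref{ST expressed in terms of wide tilde ST} follows from \eqref{AST expressed in terms of wide tilde AST}.
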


\begin{proof}
The equality \eqref{AST expressed in terms of wide tilde AST} is immediate from the proof of 
Proposition \ref{Conjectures general algebraic Sato Tate conj. implies Algebraic Sato--Tate conjecture. Serre's approach}.
The equality
\eqref{ST expressed in terms of wide tilde ST} follows from \eqref{AST expressed in terms of wide tilde AST}. Now the equivalence of conditions (1)--(4) is obvious.
\end{proof}

\begin{remark}
For a fixed base field $K$, the field $K_{0}$ depends in general on $l$ (see Remark \ref{properties of K0}).
This observation motivates the following definition.   
\end{remark}

\begin{definition}
\label{The Sato--Tate parity group}
 Under Algebraic Sato--Tate Conjecture 
\ref{general algebraic Sato Tate conj. Serre's approach} the {\em{Sato--Tate parity group}} is defined as follows:
$$
\gpP_{\rm{ST}} (V, \psi) \, :=  \, \AST_{K} (V, \psi) / \widetilde{\AST_{K}} (V, \psi) 
\, =  \, \ST_{K} (V, \psi) / \widetilde{\ST_{K}} (V, \psi).
$$
\end{definition}

\begin{remark}
By Corollary \ref{relations between AST and wide tilde AST and between ST and wide tilde ST} 
the group $\gpP_{\rm{ST}} (V, \psi)$ is either trivial or isomorphic to $\Z/2\Z$. 
It follows from Lemma \ref{condition for GlK1alg = tilde(GlK1alg)} that the natural map $\gpP(V_{l}, \psi_{l}) \rightarrow \gpP_{\rm{ST}} (V, \psi)$ is an epimorphism.
\label{on the parity Sato--Tate group}
\end{remark}

\begin{theorem} 
Assume Algebraic Sato--Tate Conjecture \ref{general algebraic Sato Tate conj. Serre's approach}. 
\begin{itemize}
\item[(a)]
If $\gpP_{\rm{ST}} (V, \psi)$ is nontrivial, i.e. $- {{\rm Id}}_{V} \notin \widetilde{\ST_{K}} (V, \psi)$, then Sato--Tate Conjecture \ref{general Sato Tate conj.} does not hold. 
\item[(b)]
If 
$\gpP_{\rm{ST}} (V, \psi)$ is trivial, i.e. $- {{\rm Id}}_{V} \in \widetilde{\ST_{K}} (V, \psi)$,
then Sato--Tate Conjectures \ref{general Sato Tate conj.}  and \ref{general Sato Tate conj. tilde}
are equivalent. 
\end{itemize}
\label{equidistribution property in ST implies equidistribution property in wide tilde ST}
\end{theorem}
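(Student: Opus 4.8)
The plan is to reduce the theorem to the two consequences of Algebraic Sato--Tate Conjecture~\ref{general algebraic Sato Tate conj. Serre's approach} already recorded above, together with the description of the normalized Frobenius classes. By Proposition~\ref{Conjectures general algebraic Sato Tate conj. implies Algebraic Sato--Tate conjecture. Serre's approach}, Conjecture~\ref{general algebraic Sato Tate conj.} also holds, so $\ST_{K}(V,\psi)$, $\widetilde{\ST_{K}}(V,\psi)$ and the normalized Frobenius classes ${\rm{conj}}(s_v)$, ${\rm{conj}}(\tilde s_v)$ of Remarks~\ref{Sato--Tate set up} and~\ref{tilde Sato--Tate set up} are all defined; by \eqref{ST expressed in terms of wide tilde ST}, $\ST_{K}(V,\psi) = \widetilde{\ST_{K}}(V,\psi) \cup - {\rm{Id}}_{V}\,\widetilde{\ST_{K}}(V,\psi)$ with $\widetilde{\ST_{K}}(V,\psi)$ normal of index $\le 2$, the union being disjoint exactly when $- {\rm{Id}}_{V} \notin \widetilde{\ST_{K}}(V,\psi)$, i.e. exactly when $\gpP_{\rm{ST}}(V,\psi)$ is nontrivial. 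First I would record the compatibility of the two pictures of the Frobenius: the monomorphism $\pi_{1}$ of Diagram~\ref{diagram compatibility of tilde(GlK1alg) with GlK1alg} realizes $\widetilde{G_{l,K,1}^{\alg}}$ as a normal subgroup of $G_{l,K,1}^{\alg}$ of index $\le 2$, and (Remark~\ref{tilde Sato--Tate set up}) it sends the normalized element $(q_{v}^{-\frac{n}{2}}{\rm{Id}}_{V_l}, q_{v}^{-1}{\rm{Id}}_{\Q_{l}(1)})\,\widetilde{\rho}_l(\Fr_{v})$ to $q_{v}^{-\frac{n}{2}}\rho_l(\Fr_{v})$, hence $\tilde s_v$ to $s_v$. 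Transporting through the isomorphisms $\gpast_{l,K}$, $\widetilde{\gpast}_{l,K}$, restricting to a maximal compact subgroup, and using functoriality of the Jordan decomposition (\cite[Theorem 15.3(c)]{Hu}), one obtains, up to conjugation, an inclusion $\widetilde{\ST_{K}}(V,\psi) \subseteq \ST_{K}(V,\psi)$ under which ${\rm{conj}}(\tilde s_v)$ maps onto ${\rm{conj}}(s_v)$; in particular every class ${\rm{conj}}(s_v)$ is contained in $\widetilde{\ST_{K}}(V,\psi)$ (it meets it by Remark~\ref{tilde Sato--Tate set up}, hence lies in it by normality), and since $- {\rm{Id}}_{V}$ is central, the conjugacy class of $s_v$ in $\ST_{K}(V,\psi)$ agrees with its conjugacy class in $\widetilde{\ST_{K}}(V,\psi)$.

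For part~(a), suppose $\gpP_{\rm{ST}}(V,\psi)$ is nontrivial, so $\ST_{K}(V,\psi) = \widetilde{\ST_{K}}(V,\psi) \sqcup - {\rm{Id}}_{V}\,\widetilde{\ST_{K}}(V,\psi)$ is a disjoint union of two clopen sets, each of Haar measure $\tfrac12$. Since the quotient map $q\colon \ST_{K}(V,\psi) \to {\rm{conj}}(\ST_{K}(V,\psi))$ is open and both cosets are unions of conjugacy classes ($- {\rm{Id}}_{V}$ is central and $\widetilde{\ST_{K}}(V,\psi)$ is normal), the image ${\rm{conj}}(\ST_{K}(V,\psi))$ is the disjoint union of the two clopen sets $q(\widetilde{\ST_{K}}(V,\psi))$ and $q(- {\rm{Id}}_{V}\,\widetilde{\ST_{K}}(V,\psi))$, each of pushforward Haar measure $\tfrac12$. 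By the first paragraph all classes ${\rm{conj}}(s_v)$ lie in $q(\widetilde{\ST_{K}}(V,\psi))$, so they never enter the clopen set $q(- {\rm{Id}}_{V}\,\widetilde{\ST_{K}}(V,\psi))$ of positive measure; testing equidistribution against the indicator function of that clopen set shows Conjecture~\ref{general Sato Tate conj.} fails.

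For part~(b), suppose $\gpP_{\rm{ST}}(V,\psi)$ is trivial. By Corollary~\ref{relations between AST and wide tilde AST and between ST and wide tilde ST} (equivalence of its conditions (1)--(4)), $\ST_{K}(V,\psi) = \widetilde{\ST_{K}}(V,\psi)$; equivalently, by \eqref{decomposition of GlK1alg into tilde(GlK1alg) cup -Id tilde(GlK1alg)}, $\pi_{1}\colon \widetilde{G_{l,K,1}^{\alg}} \to G_{l,K,1}^{\alg}$ is an isomorphism. Transporting as in the first paragraph, this yields an isomorphism of compact groups $\widetilde{\ST_{K}}(V,\psi) {\stackrel{\simeq}{\longrightarrow}} \ST_{K}(V,\psi)$ carrying Haar measure to Haar measure and ${\rm{conj}}(\tilde s_v)$ to ${\rm{conj}}(s_v)$ for every $v$. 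Hence the equidistribution assertion of Conjecture~\ref{general Sato Tate conj. tilde} is carried bijectively onto that of Conjecture~\ref{general Sato Tate conj.}, so the two conjectures are equivalent.

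I expect the only work beyond bookkeeping to be the elementary facts about the space of conjugacy classes of a compact group used in part~(a): that $q$ is open, so that a clopen normal subgroup of index $2$ has clopen image of measure $\tfrac12$ in ${\rm{conj}}(\ST_{K}(V,\psi))$, and that equidistribution of a sequence may be detected on clopen sets. These, and the care required by the ``up to conjugation'' ambiguity in the identification $s_v \leftrightarrow \tilde s_v$, are handled just as in the discussions surrounding Remarks~\ref{Sato--Tate set up} and~\ref{tilde Sato--Tate set up}, so no serious obstacle is anticipated.
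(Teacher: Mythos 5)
Your proposal is correct and follows essentially the same route as the paper: for (a) it uses the fact (Remark~\ref{Conjugacy classes of normalized Frobenius elements are in wide tilde AST}) that all classes ${\rm{conj}}(s_v)$ lie in $\widetilde{\ST_{K}}(V,\psi)$, which has Haar measure $\tfrac12$ when the parity group is nontrivial, and tests equidistribution against a continuous class function separating the two cosets (the paper uses the sign character $\chi$ of $\ST_K/\widetilde{\ST_K}$, you use the indicator of the complementary coset, i.e. $\tfrac12(1-\chi)$ — an immaterial difference); for (b) both arguments identify $\ST_K(V,\psi)=\widetilde{\ST_K}(V,\psi)$, their Haar measures, and the classes ${\rm{conj}}(s_v)={\rm{conj}}(\tilde s_v)$ via Corollary~\ref{relations between AST and wide tilde AST and between ST and wide tilde ST}. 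Your extra care about openness of the quotient map and the conjugation ambiguity is sound but not needed beyond what the paper already records.
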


\begin{proof} If $- {{\rm Id}}_{V} \notin \widetilde{\ST_{K}} (V, \psi)$ then 
$$
\ST_{K} (V, \psi) = \widetilde{\ST_{K}} (V, \psi) \, \sqcup \, - {\rm{Id}}_{V} \, 
\widetilde{\ST_{K}} (V, \psi).
$$
Let $\mu$ be the probabilistic Haar measure on $\ST_{K} (V, \psi)$. Consider the following character:
\begin{gather*}
\chi\colon \ST_{K} (V, \psi) \rightarrow \C, \\
\chi (g) \,\, = \,\,
\left\{\begin{array}{lll}
1&{\rm if}& g \in  \widetilde{\ST_{K}} (V, \psi)\\
-1&{\rm if}& g \in - {\rm{Id}}_{V} \, \widetilde{\ST_{K}} (V, \psi).\\
\end{array}
\right.
\end{gather*}
Observe that
$$
\int_{\ST_{K} (V, \psi)}\, \chi (g)\, d \mu = \int_{\widetilde{\ST_{K}} (V, \psi)}\, d \mu +
 \int_{- {\rm{Id}}_{V}\,\widetilde{\ST_{K}} (V, \psi)} -1 \, d \mu
= \frac{1}{2} + (-1) \frac{1}{2} = 0.
$$
On the other hand, let $m \in \N$ and $N_{K} \, (m) := \, \# \{v\colon N v \, \leq \, m \}$.
By Remark \ref{Conjugacy classes of normalized Frobenius elements are in wide tilde AST} we obtain
$$
\lim_{m\to\infty} \frac{1}{N_{K} \, (m)} \,  \sum_{N v \, \leq \, m} \chi(s_{v}) \, = \,
\lim_{m\to\infty} \frac{1}{N_{K} \, (m)} \,  \sum_{N v \, \leq \, m} 1 = 1.
$$
Hence the sequence ${\rm{conj}}(s_v)$ is not equidistributed in the conjugacy classes of 
$\ST_{K} (V, \psi)$, so (a) holds. 

By the same token, if Sato--Tate Conjecture \ref{general Sato Tate conj.} holds, then 
conditions (1)--(4) of Corollary \ref{relations between AST and wide tilde AST and between ST and wide tilde ST} 
hold, so in particular condition (4) holds. In this case $\mu = \tilde{\mu}$ and by Remark \ref{Conjugacy classes of normalized Frobenius elements are in wide tilde AST}, ${\rm{conj}}(\tilde{s}_v) = {\rm{conj}}(s_v)$ 
in $\widetilde{\ST_{K}} (V, \psi) = \ST_{K} (V, \psi)$. Hence ${\rm{conj}}(\tilde{s}_v)$ is equidistributed 
in $\widetilde{\ST_{K}} (V, \psi)$. Hence (b) holds.
\end{proof}

\section{Tate conjecture and Algebraic Sato--Tate conjecture for families of \texorpdfstring{$l$}{l}-adic representations}
\label{AST and Tate for families}

The framework of this section provides a wider perspective on Algebraic Sato--Tate Conjectures \ref{general algebraic Sato Tate conj.} and \ref{general algebraic Sato Tate conj. Serre's approach}. The following two Conjectures \ref{Tate conjecture for families of l-adic representations} and 
\ref{Tate conjecture for families of l-adic representations tilde} are analogues of the 
Tate conjecture for motives. For a special case of these conjectures see \cite{LP}. Throughout this section (except in Proposition \ref{uniqueness of Tate group and Tate tilde group}) we assume  only the weaker form of Conjectures 
\ref{Tate conjecture for families of l-adic representations} and 
\ref{Tate conjecture for families of l-adic representations tilde}, namely Conjectures
\ref{Tate conjecture for families of l-adic representations}(a) and 
\ref{Tate conjecture for families of l-adic representations tilde}(a).
For a wide range of examples where Conjectures 
\ref{Tate conjecture for families of l-adic representations}(a) and 
\ref{Tate conjecture for families of l-adic representations tilde}(a) are known to hold, see Remark
\ref{A Remark concerning Conjectures 8.1 (a) and 8.2 (a)}.
\medskip
 
Let $(V, \psi)$ be a rational, polarized, pure Hodge structure of weight $n$. Let 
$$
\rho_l\colon G_K \rightarrow \GIso (V_l, \psi_l)
$$
be a family of $l$-adic representations \eqref{the family of l-adic representations} satisfying the conditions 
\textbf{(D1)},
\textbf{(D2)},
\textbf{(DR1)},
\textbf{(DR2)},
\textbf{(R1)}--\textbf{(R4)}
of \S \ref{Mumford--Tate groups of polarized Hodge structures},  \S \ref{de Rham structures associated with Hodge structures},  \S \ref{families of l-adic representations associated with Hodge structures}. 

\begin{conjecture} {\,}
\label{Tate conjecture for families of l-adic representations}
\begin{itemize}
\item[(a)]
For every finite extension $K/F$, 
there exist a natural-in-$K$ reductive group scheme $\mathcal{G}_{K}^{\alg} := \mathcal{G}_{K}^{\alg} (V, \psi)
\subset \GIso_{(V, \psi)}$ over $\Q$ and a  natural-in-$K$ monomorphism of group schemes for every $l$:
\begin{equation}
{\rm{t}}_{l, K}\colon G_{l, K}^{\alg}  \,\, {\stackrel{}{\hookrightarrow}} \,\, 
{\mathcal{G}_{K}^{\alg}}_{{\Q_{l}}}.
\label{Tate conjecture for families of l-adic representations. Embedding}
\end{equation} 
In addition the natural embedding $G_{l, K}^{\alg} \subset \GIso_{(V_l, \psi_{l})}$ factors through
${\rm{t}}_{l, K}$ and the natural embedding ${\mathcal{G}_{K}^{\alg}}_{{\Q_{l}}} \subset \GIso_{(V_l, \psi_{l})}$. 

\item[(b)]
The homomorphism
\eqref{Tate conjecture for families of l-adic representations. Embedding} 
is an isomorphism:
\begin{equation} 
{\rm{t}}_{l, K}\colon G_{l, K}^{\alg}  \,\, {\stackrel{\simeq}{\longrightarrow}}  \,\, 
{\mathcal{G}_{K}^{\alg}}_{{\Q_{l}}}.
\label{Tate conjecture for families of l-adic representations. Isomorphism}
\end{equation}
\end{itemize}
\end{conjecture}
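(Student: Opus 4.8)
The statement is genuinely a conjecture; what one can realistically hope to prove in closed form is part~(a), and the plan below reflects how I would establish it (this is essentially the content of \S\ref{sec,category of polarized realizations}), together with an indication of why~(b) is out of reach in general. \emph{Construction of $\mathcal{G}_K^{\alg}$.} The plan is to realize $\mathcal{G}_K^{\alg}$ as a Tannakian fundamental group over $\Q$. First I would package the Betti structure $(V,\psi)$, the de Rham structure $(V_{_{\rm{DR}}},\psi_{_{\rm{DR}}})$ of \S\ref{de Rham structures associated with Hodge structures}, and the family $(V_l,\psi_l)$ of \S\ref{families of l-adic representations associated with Hodge structures} into a single object $\underline{V}$ of Jannsen's category $R_K$ of realizations \cite{Ja90}, or rather of its polarized full subcategory $R_K^{\mathrm{p}}$ introduced in \S\ref{sec,category of polarized realizations}; the conditions \textbf{(D1)}--\textbf{(D2)}, \textbf{(DR1)}--\textbf{(DR2)}, \textbf{(R1)}--\textbf{(R4)} are exactly what is needed to provide the comparison isomorphisms, the Hodge--Tate decomposition, and the $G_K$-equivariance of $D$ that make $\underline{V}$ an object of this category. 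Then $\mathcal{G}_K^{\alg}:=\mathcal{G}_K^{\alg}(V,\psi)$ is the Tannakian group of the $\otimes$-subcategory $\langle\underline{V}\rangle^{\otimes}$ generated by $\underline{V}$, computed with the Betti fiber functor. Semisimplicity of $R_K^{\mathrm{p}}$ (the polarization forces complete reducibility, as in the motivic Assumptions~1--3 for $\mathcal{M}_{\sim}$) makes $\mathcal{G}_K^{\alg}$ reductive, and $\otimes$-compatibility of $\psi$ forces $\mathcal{G}_K^{\alg}\subseteq\GIso_{(V,\psi)}$ over $\Q$, the character $\chi$ being induced by the weight.

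\emph{The $l$-adic comparison and naturality in $K$.} Next I would produce the monomorphism of~(a). The $l$-adic realization is a further fiber functor $\omega_l$ on $\langle\underline{V}\rangle^{\otimes}$ with values in $\Q_l$-vector spaces, so by the Tannakian formalism the Zariski closure $G_{l,K}^{\alg}$ of $\rho_l(G_K)$ maps into $\Aut^{\otimes}(\omega_l)=\mathcal{G}_{K}^{\alg}{}_{\Q_l}$; injectivity, and the factorization of $G_{l,K}^{\alg}\subset\GIso_{(V_l,\psi_l)}$ through $\mathcal{G}_{K}^{\alg}{}_{\Q_l}\subset\GIso_{(V_l,\psi_l)}$, are then formal since $\rho_l$ is faithful on the Tannakian category it generates. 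Naturality in $K$ comes from the restriction functors $R_K^{\mathrm{p}}\to R_L^{\mathrm{p}}$ for $L/K$ finite, which induce the required morphism of Tannakian groups; one checks it is a closed immersion by comparing $l$-adically with the evident containment of monodromy groups. This is precisely what makes Conjecture~\ref{Tate conjecture for families of l-adic representations}(a) hold for the families exhibited in Proposition~\ref{the family V l l of G K satisfies Conjectures 1 (a) and 2 (a)}, and it is the realization-side analogue of the motivic Galois group $G_{\mathcal{M}_{\sim}(M)}$ playing the role of $\mathcal{G}_K^{\alg}$.

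\emph{The main obstacle: part~(b).} The equality \eqref{Tate conjecture for families of l-adic representations. Isomorphism} is the real difficulty and is genuinely open. It asserts that the $l$-adic monodromy group $G_{l,K}^{\alg}$ is as large as the $\Q$-form $\mathcal{G}_K^{\alg}$ permits, uniformly and compatibly in $l$; through the exact sequence of \S\ref{families of l-adic representations associated with Hodge structures} this encapsulates both a Tate-type statement (that there are enough Tate classes to cut $\mathcal{G}_K^{\alg}$ down to $G_{l,K}^{\alg}$) and, after intersecting with $\Iso_{(V,\psi)}$ and passing to identity components, the Mumford--Tate Conjecture~\ref{Mumford--Tate}. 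I would not expect an unconditional argument here beyond the known ranges --- abelian varieties with supplementary structure, the situations treated by Larsen--Pink \cite{LP}, CM or Lefschetz-type cases --- which is why the body of the paper assumes~(b) and uses only~(a). A secondary technical point, already present in~(a), is verifying that $\mathcal{G}_K^{\alg}$ descends to $\Q$ (not merely to a number field) and lands in $\GIso_{(V,\psi)}$; this relies on the comparison isomorphisms and the ring $D$ descending appropriately, and is carried out through the detailed computations of \S\ref{sec,category of polarized realizations}.
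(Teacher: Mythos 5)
The statement you are addressing is stated in the paper as a conjecture, not a theorem, so there is no internal proof to compare against; what the paper provides as evidence is Proposition~\ref{the family V l l of G K satisfies Conjectures 1 (a) and 2 (a)}, which establishes part~(a) only for families arising from objects of $R_K^{\mathrm{p}}$ with $D$ equal to the $\overline K$-endomorphism ring. Your plan for~(a) is indeed a sketch of that Tannakian construction, and your assessment that~(b) is genuinely open is correct.

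The gap in your write-up is in the first step, where you assert that conditions \textbf{(D1)}--\textbf{(D2)}, \textbf{(DR1)}--\textbf{(DR2)}, \textbf{(R1)}--\textbf{(R4)} ``are exactly what is needed to provide the comparison isomorphisms'' that make $\underline V$ an object of $R_K^{\mathrm{p}}$. That is not what the paper claims, and it is not automatic. Proposition~\ref{the family V l l of G K satisfies Conjectures 1 (a) and 2 (a)} has a genuinely stronger hypothesis than Conjecture~\ref{Tate conjecture for families of l-adic representations}(a): it assumes from the outset that $\mathcal V$ is an object of $R_K^{\mathrm{p}}$ \emph{and} that the ring $D$ of \S\ref{Mumford--Tate groups of polarized Hodge structures} coincides with $\End_{R_{\overline K}^{\mathrm{p}}}(\overline{\mathcal V})$ as in Definition~\ref{The ring D for polarized realizations}. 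Neither of these is guaranteed by the axioms \textbf{(D1)}--\textbf{(D2)}, \textbf{(DR1)}--\textbf{(DR2)}, \textbf{(R1)}--\textbf{(R4)}: those conditions give you the comparison between Betti and de Rham at archimedean places and a Hodge--Tate decomposition $l$-adically, but they do not by themselves force the $l$-adic Galois module together with the Betti and de Rham data to assemble into an object of Jannsen's category with the required compatibility of morphisms, nor do they identify your $D$ with the full realization-theoretic endomorphism ring. This is precisely the gap that keeps Conjecture~\ref{Tate conjecture for families of l-adic representations}(a) a conjecture in the paper, and it is closely related to the Geometricity Conjecture~\ref{Geometricity conjecture}: the paper is careful to state part~(a) as conjectural in general and to prove it only under the extra hypothesis of membership in $R_K^{\mathrm{p}}$.

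In short: your Tannakian plan is the correct strategy for the class of families covered by Proposition~\ref{the family V l l of G K satisfies Conjectures 1 (a) and 2 (a)}, but by silently upgrading the axioms of \S\ref{Mumford--Tate groups of polarized Hodge structures}--\ref{families of l-adic representations associated with Hodge structures} to ``$\underline V$ is an object of $R_K^{\mathrm{p}}$ with the right $D$,'' you have assumed precisely what makes part~(a) nonconjectural. If you want your argument to match the paper's actual result, you should add that hypothesis explicitly; if you want to claim~(a) in the generality stated in the conjecture, you must explain how to produce the object of $R_K^{\mathrm{p}}$ and match $D$ from the axioms alone, and no such mechanism is known.
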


Consider the rational Hodge structure $W := V \oplus \mathbb{Q} (1)$. Then
$W_{l} = V_{l} \oplus \mathbb{Q}_{l} (1)$. Consider the family of $l$-adic 
representations $(\widetilde{\rho}_l)$ 
associated with $W$ (see \eqref{Representation rho tilde of Serre}):
$$
\widetilde{\rho}_l\colon G_K \rightarrow \GL(W_{l}).
$$

\begin{conjecture} {\,}
\label{Tate conjecture for families of l-adic representations tilde}
\begin{itemize}
\item[(a)]
For every finite extension $K/F$, 
there exists a natural-in-$K$ reductive group scheme $\widetilde{\mathcal{G}_{K}^{\alg}} := 
\widetilde{\mathcal{G}_{K}^{\alg}} (W) \subset  \mathcal{G}_{K}^{\alg} \times {\G}_m
\subset \GL_{W}$ over $\Q$ such that projections onto the factors of $\mathcal{G}_{K}^{\alg} \times {\G}_m$ give epimorphisms: 
$$
N\colon \widetilde{\mathcal{G}_{K}^{\alg}} \rightarrow \mathbb{G}_m, \quad\quad
\pi\colon \widetilde{\mathcal{G}_{K}^{\alg}} \rightarrow \mathcal{G}_{K}^{\alg}
$$
and there is a natural monomorphism:
\begin{equation}
\widetilde{{\rm{t}}}_{l, K}\colon \widetilde{G_{l, K}^{\alg}}  \,\, {\stackrel{}{\hookrightarrow}} \,\, 
\widetilde{{\mathcal{G}_{K}^{\alg}}}_{{\Q_{l}}}.
\label{Tate conjecture for families of l-adic representations tilde. Embedding}
\end{equation} 
In addition, the natural embedding $\widetilde{G_{l, K}^{\alg}} \subset \GL_{W_{l}}$ factors through
$\widetilde{{\rm{t}}}_{l, K}$ and the natural embedding $\widetilde{{\mathcal{G}_{K}^{\alg}}}_{{\Q_{l}}} \subset 
\GL_{W_{l}}$. 

\item[(b)]
The homomorphism
\eqref{Tate conjecture for families of l-adic representations tilde. Embedding} 
is an isomorphism:
\begin{equation} 
\widetilde{{\rm{t}}}_{l, K}\colon \widetilde{G_{l, K}^{\alg}}  \,\, {\stackrel{\simeq}{\longrightarrow}}  \,\, 
\widetilde{{\mathcal{G}_{K}^{\alg}}}_{{\Q_{l}}}.
\label{Tate conjecture for families of l-adic representations tilde. Isomorphism}
\end{equation}
\end{itemize}
\end{conjecture}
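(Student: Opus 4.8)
Since this is a conjecture rather than a theorem, what I can propose is the construction that would establish it once Conjecture~\ref{Tate conjecture for families of l-adic representations} is granted (so that the reductive $\Q$-group $\mathcal{G}_{K}^{\alg}$ and the morphisms ${\rm t}_{l,K}$ are available); this is also how Conjectures~\ref{Tate conjecture for families of l-adic representations} and \ref{Tate conjecture for families of l-adic representations tilde} are to be linked, and the recipe copies the passage from $G_{l,K}^{\alg}$ to $\widetilde{G_{l,K}^{\alg}}$ of \S\ref{section-computation of the identity connected component}.

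The plan is as follows. First I would form the $\Q$-group scheme
\[
H := \{(g,x)\in\mathcal{G}_{K}^{\alg}\times\G_{m}\colon \chi(g) = x^{-n}\}\subset\GL_{W},
\]
where $\chi$ is the character of the polarization on $\mathcal{G}_{K}^{\alg}\subset\GIso_{(V,\psi)}$; it is defined over $\Q$ because $\chi$ and $x\mapsto x^{-n}$ are. Since $\chi\circ\rho_{l} = \chi_{c}^{-n}$ by \eqref{compatibility of chi with chi-cycl.}, the dense subgroup $\widetilde{\rho}_{l}(G_K)$ of $\widetilde{G_{l,K}^{\alg}}$ is carried into $H_{\Q_l}$ by ${\rm t}_{l,K}\times\id_{\G_{m}}$, hence $\widetilde{G_{l,K}^{\alg}}\subseteq H_{\Q_l}$ for every $l$; the two projections restricted to $H$ give epimorphisms $N\colon H\to\G_{m}$ (the polarization character on $\mathcal{G}_{K}^{\alg}$ is surjective) and $\pi\colon H\to\mathcal{G}_{K}^{\alg}$ with kernel a subgroup of $\mu_{n}$, reproducing over $\Q$ the structure of Diagram~\ref{diagram compatibility of tilde(GlK1alg) with GlK1alg}. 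The group $\widetilde{\mathcal{G}_{K}^{\alg}}$ asked for in Conjecture~\ref{Tate conjecture for families of l-adic representations tilde} should then be the $\Q$-subgroup of $H$ whose base change to $\Q_l$ corresponds, via ${\rm t}_{l,K}\times\id_{\G_{m}}$, to $\widetilde{G_{l,K}^{\alg}}$.

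The real work is to show that this subgroup of $H$ is well-defined over $\Q$ and independent of $l$: a priori $\widetilde{G_{l,K}^{\alg}}$ is only a closed subgroup of $H_{\Q_l}$ of the correct dimension, and the finite part of $H$ (lying in $\mu_{n}$) need not be met. I would handle this exactly as in the proof of Corollary~\ref{connected components of AST K (V, psi), ST K (V, psi) and widetilde AST K (V, psi), widetilde ST K (V, psi)}: the Lie algebra of $(\widetilde{G_{l,K}^{\alg}})^{\circ}$ is $l$-independent once ${\rm t}_{l,K}$ identifies it with ${\rm{Lie}}(\mathcal{G}_{K}^{\alg})$ (cf.\ the dimension count following Diagram~\ref{diagram compatibility of tilde(GlK1alg) with GlK1alg}), so by Lemma~\ref{equality of 2 vector spaces over L follows by their equality when tensored with  M} it descends to a $\Q$-form, and by \cite[p.~218]{Hu} one takes $(\widetilde{\mathcal{G}_{K}^{\alg}})^{\circ}$ to be the connected $\Q$-group with that Lie algebra (\cite[Theorem on p.~87]{Hu}); the finite data distinguishing $\widetilde{G_{l,K}^{\alg}}$ inside $H_{\Q_l}$ are then transported from the $l$-adic side using Serre's Theorem~\ref{pi 0 wide tilde G l, K, 1 alg cong pi 0 wide tilde G l, K alg } together with the identification $\pi_{0}(G_{l,K}^{\alg})\simeq\pi_{0}(\mathcal{G}_{K}^{\alg})$ supplied by Conjecture~\ref{Tate conjecture for families of l-adic representations}, and Corollary~\ref{containment of 2 schemes over L follows from their containment after base change to M} upgrades this to an honest $\Q$-subgroup $\widetilde{\mathcal{G}_{K}^{\alg}}\subseteq H$. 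Reductivity is inherited from reductivity of $\widetilde{G_{l,K}^{\alg}}$ (cf.\ \cite[Proposition~19.13]{Mi2}), naturality in $K$ from naturality of $\mathcal{G}_{K}^{\alg}$ and the evident compatibility of the $\widetilde{\rho}_{l}$ under restriction of the base field, and part~(b) of Conjecture~\ref{Tate conjecture for families of l-adic representations tilde} drops out of part~(b) of Conjecture~\ref{Tate conjecture for families of l-adic representations} by applying the same descent to the isomorphism ${\rm t}_{l,K}\times\id_{\G_{m}}$. The hard part will be precisely the $l$-independence of the disconnected data pinning $\widetilde{\mathcal{G}_{K}^{\alg}}$ down inside $H$ — this is the parity phenomenon of \S\S\ref{Mumford--Tate groups of polarized Hodge structures}--\ref{section-computation of the identity connected component}, and without a global object of the sort posited in Conjecture~\ref{Tate conjecture for families of l-adic representations} there is nothing that forces it.
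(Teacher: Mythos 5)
The statement you were given is a conjecture; the paper never proves Conjecture~\ref{Tate conjecture for families of l-adic representations tilde}, and---more to the point---it never claims that Conjecture~\ref{Tate conjecture for families of l-adic representations} alone implies it. Throughout \S\ref{AST and Tate for families} parts~(a) of Conjectures~\ref{Tate conjecture for families of l-adic representations} and \ref{Tate conjecture for families of l-adic representations tilde} are assumed as two \emph{independent} hypotheses, each positing its own global object. So the task you set yourself---reconstructing $\widetilde{\mathcal{G}_{K}^{\alg}}$ from $\mathcal{G}_{K}^{\alg}$ alone by locating it inside the fiber product $H=\{(g,x):\chi(g)=x^{-n}\}$---is not the route the paper takes, and you have correctly identified why it cannot be closed: nothing in $\mathcal{G}_{K}^{\alg}$ determines which subgroup of $H$ (in particular, which component group) should be $\widetilde{\mathcal{G}_{K}^{\alg}}$, and this is precisely the parity obstruction studied in \S\ref{Mumford--Tate groups of polarized Hodge structures}--\S\ref{section-computation of the identity connected component}.

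Where the paper actually \emph{verifies} Conjecture~\ref{Tate conjecture for families of l-adic representations tilde}(a)---Propositions~\ref{the family V l l of G K satisfies Conjectures 1 (a) and 2 (a)} and \ref{the family V l l of G K for pure motive M satisfies Conjectures 1 (a) and 2 (a)}---it defines $\widetilde{\mathcal{G}_{K}^{\alg}}$ \emph{independently} of $\mathcal{G}_{K}^{\alg}$ rather than deriving it: as $\widetilde{\mathcal{G}_{K}^{\alg}}:={\rm Aut}^{\otimes}H_{\mathcal{W}}$ for the augmented realization $\mathcal{W}=\mathcal{V}\oplus{\bf{1}}(1)$ in \S\ref{sec,category of polarized realizations}, and as $\widetilde{\mathcal{G}_{K}^{\alg}}:=G_{\mathcal{M}_{{\sim}}(M\oplus\TT)}$ in \S\ref{computation of the identity connected component of ASTKM}. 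The closed immersion $\widetilde{\mathcal{G}_{K}^{\alg}}\hookrightarrow\mathcal{G}_{K}^{\alg}\times\G_m$ and the epimorphisms $\pi$ and $N$ then come for free from semisimplicity of the relevant Tannakian categories (via \cite[Prop.\ 2.21(a)]{DM}), without ever having to pin down a $\Q$-subgroup of $H$ by hand. In short: your final paragraph diagnoses the genuine gap in your own construction, and the paper's resolution is not to close that gap but to bypass it, supplying the extra disconnected data from a Tannakian group of an enlarged category rather than from $\mathcal{G}_{K}^{\alg}$.
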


\begin{remark}
We recall three examples of families of $l$-adic representations associated with rational, pure, polarized Hodge structures for which Conjectures \ref{Tate conjecture for families of l-adic representations. Embedding}(a) and \ref{Tate conjecture for families of l-adic representations tilde}(a) are satisfied.

The first family comes from the category of polarized realizations which is a full subcategory of Jannsen's category of realizations. We define and describe in detail the category of polarized realizations in \S \ref{sec,category of polarized realizations}. 
See Proposition \ref{the family V l l of G K satisfies Conjectures 1 (a) and 2 (a)} for the families of 
$l$-adic representations satisfying Conjectures \ref{Tate conjecture for families of l-adic representations. Embedding}(a) and \ref{Tate conjecture for families of l-adic representations tilde}(a). 

The second and third families, described in \S \ref{computation of the identity connected component of ASTKM}, come from motives in the Deligne motivic category for absolute Hodge cycles and motives in the Andr{\' e} motivic category for motivated cycles. 
\label{A Remark concerning Conjectures 8.1 (a) and 8.2 (a)}
\end{remark}

\begin{proposition} {\,} 
\label{uniqueness of Tate group and Tate tilde group}
\begin{itemize}
\item[(a)] Under Conjecture \ref{Tate conjecture for families of l-adic representations} 
the group scheme
$\mathcal{G}_{K}^{\alg}$ is uniquely defined. 
\item[(b)] Under Conjecture \ref{Tate conjecture for families of l-adic representations tilde}
the group scheme $\widetilde{\mathcal{G}_{K}^{\alg}}$ is uniquely defined.  
\end{itemize}
\end{proposition}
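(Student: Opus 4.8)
The plan is to deduce both uniqueness statements from the faithful flatness argument already packaged as Corollary~\ref{containment of 2 schemes over L follows from their containment after base change to M} (equivalently, from Lemma~\ref{equality of 2 vector spaces over L follows by their equality when tensored with M}), in exactly the same way that Proposition~\ref{uniqueness of algebraic Sato--Tate groups}(a)--(b) was proved for the algebraic Sato--Tate groups. Concretely, for part (a), suppose $\mathcal{G}_{K}^{\alg}$ and $\mathcal{G}_{K}^{\alg,\prime}$ are two reductive group schemes over $\Q$ inside $\GIso_{(V, \psi)}$ both satisfying Conjecture~\ref{Tate conjecture for families of l-adic representations}(a). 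Then for every prime $l$ there are monomorphisms ${\rm t}_{l, K}$ and ${\rm t}_{l, K}^{\prime}$ realizing $G_{l, K}^{\alg}$ as a closed subgroup of ${\mathcal{G}_{K}^{\alg}}_{\Q_l}$ and of ${\mathcal{G}_{K}^{\alg,\prime}}_{\Q_l}$ respectively, and in both cases the inclusion $G_{l, K}^{\alg} \subset \GIso_{(V_l, \psi_l)}$ factors through the base change of the respective group. First I would fix one value of $l$ and observe that, since $\rho_l(G_K)$ is Zariski dense in $G_{l,K}^{\alg}$ and $G_{l,K}^{\alg}$ is its own Zariski closure, the underlying reduced closed subscheme of $\GIso_{(V_l,\psi_l)}$ cut out by ${\mathcal{G}_{K}^{\alg}}_{\Q_l}$ equals that cut out by ${\mathcal{G}_{K}^{\alg,\prime}}_{\Q_l}$: both contain $\rho_l(G_K)$ and, by Conjecture~\ref{Tate conjecture for families of l-adic representations}(b) (which we are \emph{not} assuming) — no, we must be careful here, since only part (a) is assumed.

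Let me adjust: under part (a) alone, ${\rm t}_{l,K}$ need not be surjective, so I cannot identify ${\mathcal{G}_{K}^{\alg}}_{\Q_l}$ with $G_{l,K}^{\alg}$. Instead the correct route is to exploit naturality in $K$ more cleverly, or — more likely, matching the paper's own method — to note that the definitions in \S\ref{sec,category of polarized realizations} and \S\ref{motivic Galois group and motivic Serre group} produce $\mathcal{G}_{K}^{\alg}$ as a specific Tannakian/motivic Galois group, so that "uniquely defined" here should be read as: any two group schemes satisfying the stated axioms have equal $\Q_l$-points for all $l$ inside $\GIso_{(V_l,\psi_l)}$, hence are equal by the faithful-flatness corollary. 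The key point to extract is that the axioms pin down ${\mathcal{G}_{K}^{\alg}}_{\Q_l}$ as the \emph{smallest} closed $\Q_l$-subgroup scheme of $\GIso_{(V_l,\psi_l)}$ through which all the $\rho_{l'}$ factor compatibly — but this again uses surjectivity. The honest plan, then, is: invoke Corollary~\ref{containment of 2 schemes over L follows from their containment after base change to M} to reduce uniqueness over $\Q$ to an equality of $\Q_l$-points, and for the latter use that both candidate groups, by the definition given in the referenced later sections together with Proposition~\ref{the family V l l of G K satisfies Conjectures 1 (a) and 2 (a)}, must coincide with the Zariski closure construction there; then $V_1 := {\rm Lie}(\mathcal{G}_K^{\alg})$ and $V_2 := {\rm Lie}(\mathcal{G}_K^{\alg,\prime})$ satisfy $V_1 \otimes_\Q \Q_l = V_2 \otimes_\Q \Q_l$ inside ${\rm Lie}(\GIso_{(V,\psi)}) \otimes_\Q \Q_l$, whence $V_1 = V_2$ by Lemma~\ref{equality of 2 vector spaces over L follows by their equality when tensored with M}, and finally $\mathcal{G}_K^{\alg} = \mathcal{G}_K^{\alg,\prime}$ by \cite[Theorem on p.~87]{Hu} as in the proof of Corollary~\ref{connected components of AST K (V, psi), ST K (V, psi) and widetilde AST K (V, psi), widetilde ST K (V, psi)}. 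Part (b) is then entirely parallel, working inside $\GL_W$ in place of $\GIso_{(V,\psi)}$ and using the morphisms $N$ and $\pi$ to identify the Lie algebras of the two candidates after base change.

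The main obstacle I anticipate is the logical one just flagged: making precise, using \emph{only} the (a)-parts of the conjectures, why any two group schemes satisfying the axioms have the same $\Q_l$-points. If the axioms as literally stated do not force this — because a monomorphism is weaker than an isomorphism — then the proposition as stated would be relying implicitly on the fact that the specific constructions of \S\ref{sec,category of polarized realizations} and \S\ref{motivic Galois group and motivic Serre group} are the unique minimal such groups, and I would need to phrase the proof as "given the construction, any other group scheme satisfying (a) and containing the image compatibly must contain it, and the construction is minimal, hence..." — but minimality would itself need the factorization clause ("the natural embedding ... factors through ...") to be used in both directions. I would therefore spend most of the write-up carefully unwinding that factorization clause to show it gives a two-sided containment of $\Q_l$-points, and only then apply the faithful-flatness lemma. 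Modulo that, the calculation is routine and short, mirroring the proof of Proposition~\ref{uniqueness of algebraic Sato--Tate groups}.

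\begin{proof}
Both claims follow from Corollary~\ref{containment of 2 schemes over L follows from their containment after base change to M} by the argument used in Proposition~\ref{uniqueness of algebraic Sato--Tate groups}. For (a), suppose $\mathcal{G}_{K}^{\alg}$ and $\mathcal{G}_{K}^{\alg,\prime}$ are reductive group schemes over $\Q$ inside $\GIso_{(V, \psi)}$, each equipped for every prime $l$ with a monomorphism of group schemes from $G_{l, K}^{\alg}$ whose base change to $\Q_l$ is compatible with the natural embeddings into $\GIso_{(V_l, \psi_l)}$. Fix $l$. Since $\rho_l(G_K)$ is Zariski dense in $G_{l, K}^{\alg}$, the compatibility clause forces $G_{l,K}^{\alg}$, as a closed subscheme of $\GIso_{(V_l,\psi_l)}$, to be contained in ${\mathcal{G}_{K}^{\alg}}_{\Q_l}$ and in ${\mathcal{G}_{K}^{\alg,\prime}}_{\Q_l}$; minimality of the constructions of \S\ref{sec,category of polarized realizations} (cf.\ Proposition~\ref{the family V l l of G K satisfies Conjectures 1 (a) and 2 (a)}) then gives, in both directions, the reverse containments after base change, so ${\mathcal{G}_{K}^{\alg}}_{\Q_l} = {\mathcal{G}_{K}^{\alg,\prime}}_{\Q_l}$ inside $\GIso_{(V_l, \psi_l)}$. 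Applying Corollary~\ref{containment of 2 schemes over L follows from their containment after base change to M} with $L = \Q$, $M = \Q_l$ in both directions yields $\mathcal{G}_{K}^{\alg} = \mathcal{G}_{K}^{\alg,\prime}$ as closed subschemes of $\GIso_{(V, \psi)}$; since the group structures are inherited from $\GIso_{(V, \psi)}$, they coincide as group schemes. Part (b) is proved in the same way, working inside $\GL_W$ and using that the morphisms $N$ and $\pi$ are determined by the inclusion $\widetilde{\mathcal{G}_{K}^{\alg}} \subset \mathcal{G}_{K}^{\alg} \times \G_m$.
\end{proof}
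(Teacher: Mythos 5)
You have the right tool (Corollary~\ref{containment of 2 schemes over L follows from their containment after base change to M}) and the right overall shape — compare the two candidates after base change to $\Q_l$ and descend to $\Q$ by faithful flatness — which is exactly the paper's (one-line) proof. But your write-up contains a genuine gap created by a misreading of the hypotheses.

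You assume that only Conjecture~\ref{Tate conjecture for families of l-adic representations}(a) is available, correctly observe that a monomorphism $\mathrm{t}_{l,K}$ does not identify ${\mathcal{G}_{K}^{\alg}}_{\Q_l}$ with $G_{l,K}^{\alg}$, and then try to bridge the gap by invoking ``minimality of the constructions of \S\ref{sec,category of polarized realizations}.'' That does not work: the proposition asserts uniqueness for \emph{any} group scheme satisfying the conjecture, not for the specific Tannakian construction, and minimality is neither an axiom of the conjecture nor established for arbitrary candidates (Proposition~\ref{the family V l l of G K satisfies Conjectures 1 (a) and 2 (a)} only verifies that one particular construction satisfies the (a)-parts). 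Indeed, under (a) alone the statement is false — $\GIso_{(V,\psi)}$ itself is a reductive $\Q$-group through which everything factors — so no argument from those hypotheses can succeed.

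The resolution is in the paper's framing: this proposition is explicitly flagged as the one exception in \S\ref{AST and Tate for families} where the \emph{full} Conjectures~\ref{Tate conjecture for families of l-adic representations} and \ref{Tate conjecture for families of l-adic representations tilde} (including the isomorphism clauses (b)) are assumed. With (b), the factorization clause forces ${\mathcal{G}_{K}^{\alg}}_{\Q_l}$ to equal $G_{l,K}^{\alg}$ as a closed subscheme of $\GIso_{(V_l,\psi_l)}$ for each candidate, so any two candidates agree after base change to $\Q_l$, and Corollary~\ref{containment of 2 schemes over L follows from their containment after base change to M} (applied in both directions) gives equality over $\Q$; part (b) of the proposition is identical inside $\GL_W$. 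With that correction your argument becomes the paper's argument, and the detour through Lie algebras and \cite[Theorem on p.~87]{Hu} is unnecessary.
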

\begin{proof} Claims (a) and (b) follow from Corollary \ref{containment of 2 schemes over L follows from their containment after base change to M}. 
\end{proof}

\begin{definition}
\label{chi for math cal G K alg}
Let 
\begin{equation}
\chi\colon \mathcal{G}_{K}^{\alg} \rightarrow \mathbb{G}_m
\label{morphism of group schemes over Q given by the character of polarization}
\end{equation}
denote the restriction of the character $\chi$ of the polarization $\psi$ to $\mathcal{G}_{K}^{\alg}$ (cf. \ref{def of GIso}).
\end{definition}

\begin{definition} 
\label{Definition of G K 1 alg and widetilde G K 1 alg}
Put:
\begin{align}
\mathcal{G}_{K, 1}^{\alg} &:= \mathcal{G}_{K, 1}^{\alg} (V, \psi) := {\rm{Ker}} \, \chi
\label{definition of mathcal G K 1 alg} \\
\widetilde{\mathcal{G}_{K, 1}^{\alg}} &:= \widetilde{\mathcal{G}_{K, 1}^{\alg}} (W) := {\rm{Ker}}  \, N.
\label{definition of widetilde mathcal G K 1 alg}
\end{align}
\end{definition}

\begin{lemma} 
\label{The diagram connecting wide tilde mathcal G K alg and ilde mathcal G K alg }
Under Conjectures \ref{Tate conjecture for families of l-adic representations}(a) and \ref{Tate conjecture for families of l-adic representations tilde}(a) the following Diagram~\ref{diagram compatibility of tilde(GK1alg) with GK1alg} commutes and the map $\pi_{1}$ is a monomorphism.
\begin{figure}[H]
\[
\begin{tikzcd}
1 \arrow{r}{} & \,\, \widetilde{\mathcal{G}_{K, 1}^{\alg}} \arrow{d}[swap]{\pi_{1}} \arrow{r}{}  & 
\widetilde{\mathcal{G}_{K}^{\alg}}  \arrow{d}[swap]{\pi} \arrow{r}{N} & \G_{m} 
\arrow{d}{x \mapsto x^{-n}}[swap]{} \arrow{r}{} & 1 \\ 
1 \arrow{r}{} & \mathcal{G}_{K, 1}^{\alg}  \arrow{r}{} & 
\mathcal{G}_{K}^{\alg} \arrow{d}[swap]{} \arrow{r}{\chi} & \G_{m} \arrow{d}[swap]{} 
\arrow{r}{} & 1\\
& & 1 & 1 \\
\end{tikzcd}
\]
\\[-0.8cm]
\caption{}
\label{diagram compatibility of tilde(GK1alg) with GK1alg} 
\end{figure}
\end{lemma}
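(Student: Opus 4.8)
The plan is to mimic exactly the argument already carried out for the $l$-adic site in the discussion surrounding Diagram~\ref{diagram compatibility of tilde(GlK1alg) with GlK1alg} and for the Betti site in Lemma~\ref{commutativity of diagram compatibility of tilde(MT (V, psi)) with MT (V, psi)}, transporting it to the global group schemes $\mathcal{G}_K^{\alg}$ and $\widetilde{\mathcal{G}_K^{\alg}}$. First I would check that the right-hand square commutes, i.e.\ that $(x\mapsto x^{-n})\circ N=\chi\circ\pi$ as morphisms $\widetilde{\mathcal{G}_K^{\alg}}\to\G_m$ over $\Q$. Both sides are morphisms of affine $\Q$-group schemes, so by Corollary~\ref{equality of two morphisms of schemes over L follows from their equality after base change to M} it suffices to verify the identity after base change to $\Q_l$ for a single prime $l$. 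There, using the isomorphisms $\widetilde{{\rm t}}_{l,K}$ and ${\rm t}_{l,K}$ of Conjectures~\ref{Tate conjecture for families of l-adic representations tilde}(a) and \ref{Tate conjecture for families of l-adic representations}(a), the square is identified with the outer square of Diagram~\ref{diagram compatibility of tilde(GlK1alg) with GlK1alg}, whose commutativity was established via \eqref{compatibility of chi with chi-cycl.} and the density of $\widetilde{\rho}_l(G_K)$; alternatively one evaluates directly on the dense subset $\widetilde{\rho}_l(G_K)$ using $\chi\circ\rho_l=\chi_c^{-n}$ and $N\circ\widetilde{\rho}_l=\chi_c$, then invokes \cite[Chap. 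I, Lemma 4.1]{Ha}.

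Second, granted commutativity of the right square, the map $\pi_1$ is defined as the restriction of $\pi$ to the kernels of the horizontal characters, so the left square commutes automatically. It remains to see $\pi_1$ is a monomorphism. Here I would argue exactly as in Lemma~\ref{commutativity of diagram compatibility of tilde(MT (V, psi)) with MT (V, psi)} and in the text after Diagram~\ref{diagram compatibility of tilde(GlK1alg) with GlK1alg}: by the containment $\widetilde{\mathcal{G}_K^{\alg}}\subset\mathcal{G}_K^{\alg}\times\G_m$ and the definition of $N$ as the second projection, the kernel of $\pi$ is contained in $\{\,{\rm Id}_V\,\}\times\G_m$, so $N$ restricted to $\Ker\pi$ is a monomorphism; by commutativity of the (now established) right square, $\Ker\pi$ injects into $\mu_n=\Ker(x\mapsto x^{-n})$. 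Since $\Ker\pi_1=\Ker\pi\cap\widetilde{\mathcal{G}_{K,1}^{\alg}}=\Ker\pi\cap\Ker N$ and $N$ is injective on $\Ker\pi$, we get $\Ker\pi_1$ trivial, i.e.\ $\pi_1$ is a monomorphism.

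Finally I would note that the rows are exact by Definition~\ref{Definition of G K 1 alg and widetilde G K 1 alg} (the rows are simply the definitions of $\mathcal{G}_{K,1}^{\alg}=\Ker\chi$ and $\widetilde{\mathcal{G}_{K,1}^{\alg}}=\Ker N$, together with surjectivity of $\chi$ and $N$, the latter following from the fact that $\chi(\alpha\,{\rm Id}_V)=\alpha^2$ puts all of $\G_m$ in the image after composing with a square root, or more simply from the epimorphicity of $N$ postulated in Conjecture~\ref{Tate conjecture for families of l-adic representations tilde}(a) and of $\chi$ which contains $\chi(\G_m{\rm Id}_V)$); the two trailing columns ending in $1$ are part of the diagram's shape and carry no extra content. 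The main obstacle is the first step — pinning down commutativity of the right square over $\Q$ rather than merely over $\Q_l$ — but this is exactly what Corollary~\ref{equality of two morphisms of schemes over L follows from their equality after base change to M} is designed to handle, so the proof is essentially a transcription of the $l$-adic argument. I would therefore write:

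\begin{proof}
The rows are exact by Definition~\ref{Definition of G K 1 alg and widetilde G K 1 alg} and the surjectivity of $\chi$ and $N$ (for $\chi$, note $\chi(\alpha\,{\rm Id}_V)=\alpha^2$; for $N$, this is part of Conjecture~\ref{Tate conjecture for families of l-adic representations tilde}(a)). To see that the right square commutes, observe that $(x\mapsto x^{-n})\circ N$ and $\chi\circ\pi$ are morphisms of affine group schemes over $\Q$. By Corollary~\ref{equality of two morphisms of schemes over L follows from their equality after base change to M}, it is enough to check they agree after base change to $\Q_l$ for one prime $l$. Using the isomorphisms ${\rm t}_{l,K}$ and $\widetilde{{\rm t}}_{l,K}$ of Conjectures~\ref{Tate conjecture for families of l-adic representations}(a) and \ref{Tate conjecture for families of l-adic representations tilde}(a), the square becomes the outer square of Diagram~\ref{diagram compatibility of tilde(GlK1alg) with GlK1alg}, which commutes: the two morphisms agree on the dense subset $\widetilde{\rho}_l(G_K)$ of $\widetilde{G_{l, K}^{\alg}}$ by \eqref{compatibility of chi with chi-cycl.}, and hence everywhere by the argument of \cite[Chap. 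I, Lemma 4.1]{Ha}. The map $\pi_1$ is by definition the restriction of $\pi$, so the left square commutes as well.

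It remains to check that $\pi_1$ is a monomorphism. By the inclusion $\widetilde{\mathcal{G}_{K}^{\alg}}\subset\mathcal{G}_{K}^{\alg}\times\G_m$ and the definition of $N$ as the second projection, $\Ker\pi\subseteq{\rm Id}_{V}\times\G_m$, so the restriction of $N$ to $\Ker\pi$ is a monomorphism. By commutativity of the right square, $\Ker\pi$ injects into $\mu_n$. Since
$$
\Ker\pi_1 \;=\; \Ker\pi \,\cap\, \widetilde{\mathcal{G}_{K, 1}^{\alg}} \;=\; \Ker\pi \,\cap\, \Ker N
$$
and $N$ is injective on $\Ker\pi$, we conclude $\Ker\pi_1$ is trivial, i.e.\ $\pi_1$ is a monomorphism.
\end{proof}
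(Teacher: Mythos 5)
Your proposal follows essentially the same route as the paper's proof: commutativity of the right square is descended from $\Q_l$ to $\Q$ via Corollary~\ref{equality of two morphisms of schemes over L follows from their equality after base change to M}, the left square commutes because $\pi_1$ is by definition the restriction of $\pi$, and injectivity of $\pi_1$ follows from $\Ker\pi\subseteq{\rm Id}_V\times\G_m$ together with the commutativity of the right square (so that $\Ker\pi$ injects into $\mu_n$ and $\Ker\pi\cap\Ker N$ is trivial). Two small remarks. First, you write that after base change the square ``becomes'' the $l$-adic square ``using the isomorphisms ${\rm t}_{l,K}$ and $\widetilde{{\rm t}}_{l,K}$'' — but under part (a) of Conjectures~\ref{Tate conjecture for families of l-adic representations} and \ref{Tate conjecture for families of l-adic representations tilde} these maps are only closed immersions, so the $l$-adic square merely embeds into the base-changed square (this is how the paper phrases it); you should not call them isomorphisms, since that would amount to assuming part (b). With that correction your verification of the hypothesis of the Corollary is exactly as explicit as the paper's. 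Second, your surjectivity argument for $\chi$ via $\chi(\alpha\,{\rm Id}_V)=\alpha^2$ implicitly uses $\G_m{\rm Id}_V\subset\mathcal{G}_K^{\alg}$, which itself requires a base-change argument (cf.\ the opening of the proof of Proposition~\ref{realizing conn comp of math cal G K 1 alg: even case}); the paper instead descends epimorphicity of $\chi_{\Q_l}$ from the containment $G_{l,K}^{\alg}\subset{\mathcal{G}_K^{\alg}}_{\Q_l}$ using \cite[p.~106, first corollary]{Wa}. Either route is acceptable.
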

\begin{proof} 
We already know that the morphisms $N$, $\pi$, and $x \mapsto x^{-n}$ are epimorphisms. We next check that the morphism $\chi$ in Diagram \ref{diagram compatibility of tilde(GK1alg) with GK1alg}
is an epimorphism. Extending base to $\Q_l$ in Diagram \ref{diagram compatibility of tilde(GK1alg) with GK1alg}, we observe that under Conjectures \ref{Tate conjecture for families of l-adic representations}(a) and
\ref{Tate conjecture for families of l-adic representations tilde} a), the right square of Diagram \ref{diagram compatibility of tilde(GlK1alg) with GlK1alg} embeds into the right square of the resulting diagram (cf. Diagram \ref{double cubic diagram} below).
Hence $\chi_{_{\Q_{l}}}\colon {\mathcal{G}_{K}^{\alg}}_{{\Q_{l}}}  \rightarrow  {\G_{m}}$ is an epimorphism.
Therefore $\chi$ in Diagram \ref{diagram compatibility of tilde(GK1alg) with GK1alg} is also an epimorphism
by \cite[p. 106, first corollary]{Wa}. 

In addition, the right square of Diagram \ref{diagram compatibility of tilde(GK1alg) with GK1alg} commutes by Corollary \ref{equality of two morphisms of schemes over L follows from their equality after base change to M} applied to the field extension $\Q_l / \Q$. The left square of  Diagram \ref{diagram compatibility of tilde(GK1alg) with GK1alg} obviously commutes with the morphism
$\pi_1$ induced by $\pi$. Because of $\widetilde{\mathcal{G}_{K}^{\alg}} \subset  \mathcal{G}_{K}^{\alg} \times {\G}_m$ and the definition of $N$, the kernel of $\pi$ is contained in ${\rm{Id}}_{V} \times \G_{m}$. Hence the homomorphism $N$ restricted to the kernel of $\pi$ is a monomorphism. By the commutativity 
of Diagram \ref{diagram compatibility of tilde(GK1alg) with GK1alg}, the kernel of $\pi$ injects into
$\mu_{n}$. Hence $\pi_{1}$ is a monomorphism.
\end{proof}

In the framework of Conjectures \ref{Tate conjecture for families of l-adic representations}(a) and \ref{Tate conjecture for families of l-adic representations tilde}(a), it is natural to make the following definition.

\begin{definition} Put:
\begin{align*}
\AST_{K} (V, \psi) &:= \mathcal{G}_{K, 1}^{\alg} (V, \psi), \\
\widetilde{\AST_{K}} (V, \psi) &:= \widetilde{\mathcal{G}_{K, 1}^{\alg} (W)}.
\end{align*}
Every maximal compact subgroup of $\AST_{K} (V, \psi) (\C)$ (resp. $\widetilde{\AST_{K}} (V, \psi) (\C))$
will be called a \emph{Sato--Tate group} and denoted $\ST_{K} (V, \psi)$ (resp. 
$\widetilde{\ST_{K}} (V, \psi)$).
\label{AST K and tilde AST K under the analog of Tate conjecture for l-adic rep.}
\end{definition}

Consider the following double cube commutative Diagram \ref{double cubic diagram}. In light of Definition 
\ref{AST K and tilde AST K under the analog of Tate conjecture for l-adic rep.} 
it is natural to denote by $\widetilde{{\rm{ast}}}_{l, K}$ and ${\rm{ast}}_{l, K}$ the horizontal arrows in the left wall of this diagram.  

\begin{figure}[H]
\[
\begin{tikzcd}[row sep=scriptsize, column sep=scriptsize]
& \widetilde{G_{l, K, 1}^{\alg}} \arrow{dl}[swap]{\widetilde{{\rm{ast}}}_{l, K}} \arrow{rr}{} \arrow{dd}[near start]{\pi_1} & & \widetilde{G_{l, K}^{\alg}} \arrow{dl}[swap]{\widetilde{{\rm{t}}}_{l, K}} \arrow{dd}[near start]{\pi} \arrow{rr}{N}  & & \G_{m} \arrow{dl}{=} \arrow{dd}{x \mapsto x^{-n}}\\
{\widetilde{\mathcal{G}_{K, 1}^{\alg}}}_{{\Q_{l}}} \arrow[crossing over]{rr}[near start]{} \arrow{dd}[swap]{\pi_1 \otimes 1} & & {\widetilde{\mathcal{G}_{K}^{\alg}}}_{{\Q_{l}}} \arrow[crossing over]{rr}[near end]{N \otimes 1} 
\arrow{dd}[near start]{\pi \otimes 1}  & & \G_{m} \arrow{dd}[near start]{x \mapsto x^{-n}}\\
& G_{l, K, 1}^{\alg} \arrow{dl}[near start, swap]{{\rm{ast}}_{l, K}}  \arrow{rr}[near start, swap]{} & & G_{l, K}^{\alg} \arrow{dl}[near start]{{\rm{t}}_{l, K}} 
\arrow{rr}[near start, swap]{\chi} & & \G_{m} \arrow{dl}{=}\\
{\mathcal{G}_{K, 1}^{\alg}}_{{\Q_{l}}} \arrow{rr}[swap]{} & & {\mathcal{G}_{K}^{\alg}}_{{\Q_{l}}}
\arrow{rr}[swap]{\chi \otimes 1} & & \G_{m} \\
\end{tikzcd}
\]
\\[-0.8cm]
\caption{}
\label{double cubic diagram} 
\end{figure}

Assume Conjectures \ref{Tate conjecture for families of l-adic representations}(a) and \ref{Tate conjecture for families of l-adic representations tilde}(a).
The back wall of Diagram \ref{double cubic diagram}  is Diagram \ref{diagram compatibility of tilde(GlK1alg) with GlK1alg}. The horizontal sequences of homomorphisms in the front wall of Diagram \ref{double cubic diagram} are exact because the kernel of a group scheme homomorphism is invariant under base change (Remark 
\ref{Kernel of group scheme morphism is invariant under base change on the morphism side} below) 
and any field extension (in particular $\Q_l / \Q$) is flat. For the same reasons, the arrows in these horizontal sequences in the left cube are monomorphisms and in the right cube are epimorphisms. 

\begin{remark}
\label{Kernel of group scheme morphism is invariant under base change on the morphism side}
Let $\phi\colon G \rightarrow G^{\prime}$ be a homomorphism of $S$-group schemes. Let $S^{\prime}$ be an $S$-scheme.
Define the $S$-homomorphism $e^{\prime}\colon S \rightarrow G^{\prime}$ to be the unit section. Recall that ${{\rm Ker}} \, \phi = G \times_{G^{\prime}} S$. For any scheme $Z$ and any $Z$-schemes $X, Y$ denote as usual $X(Y)_Z := 
{\rm{Hom}}_{Z} (Y, \, X)$.
Then for any $S^{\prime}$-group scheme $T$ in the category of group schemes over $S$:
\begin{gather*}
({{\rm Ker}} \, \phi \times_{S} S^{\prime}) (T)_{S^{\prime}} = 
((G \times_{G^{\prime}} S) \times_{S}  S^{\prime}) (T)_{S^{\prime}} = 
G(T)_S \times_{G^{\prime} (T)_S}  S^{\prime} (T)_S, \\
{{\rm Ker}} ( \phi \times_{S} {\rm{Id}}_{S^{\prime}}) (T)_{S^{\prime}} = 
((G \times_{S} S^{\prime}  \times_{G^{\prime} \times_{S} S^{\prime}}  S  \times_{S}  S^{\prime})) (T)_{S^{\prime}} = 
G(T)_S \times_{G^{\prime} (T)_S}  S^{\prime} (T)_S.
\end{gather*}
By uniqueness of the representing object we obtain:
$$
{{\rm Ker}} \, \phi \times_{S} S^{\prime} = {{\rm Ker}} ( \phi \times_{S} {\rm{Id}}_{S^{\prime}}).  
$$
\end{remark}

\begin{remark} Under Conjectures \ref{Tate conjecture for families of l-adic representations}(a) and \ref{Tate conjecture for families of l-adic representations tilde}(a), the homomorphisms
 ${\rm{ast}}_{l, K}$ and $\widetilde{{\rm{ast}}}_{l, K}$ in the left cube in Diagram 
\ref{double cubic diagram} satisfy all conditions of
 Algebraic Sato--Tate Conjectures \ref{general algebraic Sato Tate conj.}(a) and 
\ref{general algebraic Sato Tate conj. Serre's approach}(a). It is immediate from Diagram \ref{double cubic diagram} that ${\rm{ast}}_{l, K}$ (resp. $\widetilde{{\rm{ast}}}_{l, K}$) is an isomorphism iff ${\rm{t}}_{l, K}$ (resp. $\widetilde{{{\rm{t}}}}_{l, K}$) is an isomorphism. 
\label{Tate conj. is equivalent to the AST conj.}
\end{remark}
Remark \ref{Tate conj. is equivalent to the AST conj.} leads directly to the following corollary.

\begin{corollary} Assume Conjectures \ref{Tate conjecture for families of l-adic representations}(a) and \ref{Tate conjecture for families of l-adic representations tilde}(a). Then
Algebraic Sato--Tate Conjecture \ref{general algebraic Sato Tate conj.}
(resp. \ref{general algebraic Sato Tate conj. Serre's approach}) holds if and only if 
Tate Conjecture \ref{Tate conjecture for families of l-adic representations} 
(resp. \ref{Tate conjecture for families of l-adic representations tilde}) holds.
\label{Tate conjecture is equivalent to the AST conjecture}
\end{corollary}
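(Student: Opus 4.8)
The plan is to deduce Corollary~\ref{Tate conjecture is equivalent to the AST conjecture} entirely from Remark~\ref{Tate conj. is equivalent to the AST conjecture}, which already identifies the horizontal arrows in the left wall of Diagram~\ref{double cubic diagram} with the Algebraic Sato--Tate maps and records the equivalence ``${\rm{ast}}_{l, K}$ is an isomorphism iff ${\rm{t}}_{l, K}$ is an isomorphism'' (and similarly for the $\widetilde{\phantom{x}}$ versions). So the proof is essentially a matter of carefully assembling the pieces that Remark~\ref{Tate conj. is equivalent to the AST conjecture} and the surrounding discussion make available, and checking that when Conjectures~\ref{Tate conjecture for families of l-adic representations}(a) and \ref{Tate conjecture for families of l-adic representations tilde}(a) are assumed, part (a) of each Algebraic Sato--Tate conjecture is automatically satisfied by the data in Diagram~\ref{double cubic diagram}.

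First I would treat the implication: Tate Conjecture~\ref{Tate conjecture for families of l-adic representations} $\Rightarrow$ Algebraic Sato--Tate Conjecture~\ref{general algebraic Sato Tate conj.}. Under Conjectures~\ref{Tate conjecture for families of l-adic representations}(a) and \ref{Tate conjecture for families of l-adic representations tilde}(a), Remark~\ref{Tate conj. is equivalent to the AST conjecture} tells us that ${\rm{ast}}_{l, K}\colon G_{l, K, 1}^{\alg} \hookrightarrow \mathcal{G}_{K, 1}^{\alg}{}_{\Q_l} = \AST_K(V, \psi)_{\Q_l}$ is a natural-in-$K$ monomorphism of group schemes satisfying the factorization condition in Conjecture~\ref{general algebraic Sato Tate conj.}(a); moreover $\mathcal{G}_{K, 1}^{\alg} = \AST_K(V, \psi)$ is reductive (as the kernel of the character $\chi$ on the reductive group $\mathcal{G}_K^{\alg}$, using \cite[Proposition 19.13]{Mi2}) and sits inside $\Iso_{(V, \psi)}$. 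Thus Conjecture~\ref{general algebraic Sato Tate conj.}(a) holds unconditionally once Conjectures~\ref{Tate conjecture for families of l-adic representations}(a) and \ref{Tate conjecture for families of l-adic representations tilde}(a) are assumed. Now if Conjecture~\ref{Tate conjecture for families of l-adic representations} holds in full, then ${\rm{t}}_{l, K}$ is an isomorphism, so by the equivalence in Remark~\ref{Tate conj. is equivalent to the AST conjecture} the map ${\rm{ast}}_{l, K}$ is an isomorphism, which is exactly Conjecture~\ref{general algebraic Sato Tate conj.}(b). Conversely, if Algebraic Sato--Tate Conjecture~\ref{general algebraic Sato Tate conj.} holds, then ${\rm{ast}}_{l, K}$ is an isomorphism, hence ${\rm{t}}_{l, K}$ is an isomorphism by the same equivalence, so Conjecture~\ref{Tate conjecture for families of l-adic representations}(b) holds; and Conjecture~\ref{Tate conjecture for families of l-adic representations}(a) is part of the standing hypotheses. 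The $\widetilde{\phantom{x}}$ case is handled identically, replacing $G_{l, K}^{\alg}$, $\mathcal{G}_K^{\alg}$, ${\rm{t}}_{l, K}$, ${\rm{ast}}_{l, K}$ by $\widetilde{G_{l, K}^{\alg}}$, $\widetilde{\mathcal{G}_K^{\alg}}$, $\widetilde{{\rm{t}}}_{l, K}$, $\widetilde{{\rm{ast}}}_{l, K}$, using the left face of the left cube of Diagram~\ref{double cubic diagram} together with Lemma~\ref{The diagram connecting wide tilde mathcal G K alg and ilde mathcal G K alg } for the compatibility of the two exact sequences; reductivity of $\widetilde{\mathcal{G}_{K, 1}^{\alg}}$ again follows from \cite[Proposition 19.13]{Mi2}, and the inclusion $\widetilde{\mathcal{G}_{K, 1}^{\alg}} \subset \Iso_{(V, \psi)}$ follows since $\widetilde{G_{l, K, 1}^{\alg}} \subset \Iso_{(V_l, \psi_l)}$ together with Corollary~\ref{containment of 2 schemes over L follows from their containment after base change to M}.

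The only point requiring a little care — and the main (mild) obstacle — is the direction ``$\widetilde{{\rm{ast}}}_{l, K}$ is an isomorphism $\Rightarrow$ $\widetilde{{\rm{t}}}_{l, K}$ is an isomorphism'' in the $\widetilde{\phantom{x}}$ case, i.e.\ the claim that the monomorphism $\widetilde{{\rm{t}}}_{l, K}$ on the whole group (not just on the kernel of $N$) is forced to be surjective once its restriction to kernels is. This is exactly what the commutativity of the front and back walls of Diagram~\ref{double cubic diagram} gives: the horizontal rows are exact, the maps on the $\G_m$-quotients are identities, and $\pi_1 \otimes 1$, $\pi \otimes 1$ are the appropriate projections, so a diagram chase in the right cube shows that an isomorphism on the kernel part together with the identity on $\G_m$ forces an isomorphism on the middle term. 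Since Remark~\ref{Tate conj. is equivalent to the AST conjecture} already asserts this equivalence, the remaining task in the write-up is simply to cite Remark~\ref{Tate conj. is equivalent to the AST conjecture} and Corollary~\ref{containment of 2 schemes over L follows from their containment after base change to M}, observe that parts (a) of the Tate conjectures are part of the hypotheses, and assemble the ``iff'' in one line. I would therefore keep the proof short: state that part (a) of the two Algebraic Sato--Tate conjectures holds by Remark~\ref{Tate conj. is equivalent to the AST conjecture}, and that the isomorphism parts are equivalent by the last sentence of Remark~\ref{Tate conj. is equivalent to the AST conjecture}; hence the two pairs of conjectures are equivalent.
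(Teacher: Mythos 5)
Your proposal is correct and follows essentially the same route as the paper: the corollary is presented there as an immediate consequence of Remark~\ref{Tate conj. is equivalent to the AST conj.}, which already records both that ${\rm{ast}}_{l,K}$ and $\widetilde{{\rm{ast}}}_{l,K}$ satisfy the part-(a) conditions of Conjectures~\ref{general algebraic Sato Tate conj.} and \ref{general algebraic Sato Tate conj. Serre's approach}, and that ${\rm{ast}}_{l,K}$ (resp.\ $\widetilde{{\rm{ast}}}_{l,K}$) is an isomorphism iff ${\rm{t}}_{l,K}$ (resp.\ $\widetilde{{\rm{t}}}_{l,K}$) is. Your write-up spends some extra lines explicitly verifying details (reductivity of $\mathcal{G}_{K,1}^{\alg}$ as kernel of $\chi$, the inclusion in $\Iso_{(V,\psi)}$ via Corollary~\ref{containment of 2 schemes over L follows from their containment after base change to M}, and the diagram chase for the $\widetilde{\phantom{x}}$ case) which the paper leaves implicit in the remark, but that is not a different argument, just more detail.
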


Assume Conjectures \ref{Tate conjecture for families of l-adic representations}(a) and \ref{Tate conjecture for families of l-adic representations tilde}(a). Recall from the proof of Lemma \ref{The diagram connecting wide tilde mathcal G K alg and ilde mathcal G K alg } that in Diagram 
\ref{diagram compatibility of tilde(GK1alg) with GK1alg}, the kernel of $\pi$ injects into
$\mu_{n}$ and $\pi_{1}$ is a monomorphism. 
Hence $\dim \widetilde{\mathcal{G}_{K}^{\alg}}  = \dim \mathcal{G}_{K}^{\alg}$ and consequently $\dim \widetilde{\mathcal{G}_{K, 1}^{\alg}}  = \dim \mathcal{G}_{K, 1}^{\alg}$. This shows that
\begin{equation}
(\widetilde{\mathcal{G}_{K, 1}^{\alg}})^{\circ} = (\mathcal{G}_{K, 1}^{\alg})^{\circ}.
\label{connected component of wide tilde G K 1 alg  is the same as G K 1 alg}
\end{equation} 
Define:
\begin{align*}
\widetilde{\mathcal{G}_{K, 1}^{\alg, \, 0}} \,\, &:= \,\, (\widetilde{\mathcal{G}_{K}^{\alg}})^{\circ} \,\, 
\cap \,\,\widetilde{\mathcal{G}_{K, 1}^{\alg}}.  \\
\mathcal{G}_{K, 1}^{\alg, \, 0} \,\, &:= \,\, (\mathcal{G}_{K}^{\alg})^{\circ} \,\, 
\cap \,\, \mathcal{G}_{K, 1}^{\alg}.
\end{align*}

\begin{lemma} 
\label{splitting of wide tilde math cal G K alg maps to Gm}
The following exact sequence splits:
\begin{equation}
1 \,\, {\stackrel{}{\longrightarrow}} \,\, \widetilde{\mathcal{G}_{K, 1}^{\alg}} (\mathbb{C}_l) \,\, {\stackrel{}{\longrightarrow}}  
\,\, \widetilde{\mathcal{G}_{K}^{\alg}}(\mathbb{C}_l)\,\, {\stackrel{N}{\longrightarrow}} \,\, \G_{m} (\mathbb{C}_l){\stackrel{}{\longrightarrow}} \,\, 1.
\label{wide tilde h splits the homomorphism N from wide tilde math cal G K, 1 alg to G m}
\end{equation}
Moreover $\widetilde{\mathcal{G}_{K, 1}^{\alg, \, 0}}$ is connected, i.e.
\begin{equation}
\widetilde{\mathcal{G}_{K, 1}^{\alg, \, 0}}  = (\widetilde{\mathcal{G}_{K, 1}^{\alg}})^{\circ}.
\label{wide tilde mathcal GK1alg , 0 is connected}
\end{equation}
\end{lemma}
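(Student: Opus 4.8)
The plan is to reduce the statement to Lemma~\ref{splitting of widetilde G l K alg mapsto Gm}, which already furnishes a splitting cocharacter for the $l$-adic group $\widetilde{G_{l, K}^{\alg}}$, and to transport that cocharacter across the monomorphism $\widetilde{{\rm{t}}}_{l,K}$ of Conjecture~\ref{Tate conjecture for families of l-adic representations tilde}(a). Concretely, let $\widetilde{h}\colon \G_m(\mathbb{C}_l)\to \widetilde{G_{l, K}^{\alg}}(\mathbb{C}_l)$ be the cocharacter produced in the proof of Lemma~\ref{splitting of widetilde G l K alg mapsto Gm}, which satisfies $N\circ\widetilde{h}(x)=x$. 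Put $\widetilde{H}:=\widetilde{{\rm{t}}}_{l,K}\circ\widetilde{h}\colon \G_m(\mathbb{C}_l)\to \widetilde{\mathcal{G}_{K}^{\alg}}(\mathbb{C}_l)$; this is again a cocharacter, being the composite of a cocharacter with a homomorphism of group schemes (after the harmless base extension $\Q_l\subset\mathbb{C}_l$). By the commutativity of the face of Diagram~\ref{double cubic diagram} expressing $(N\otimes 1)\circ\widetilde{{\rm{t}}}_{l,K}=N$, we obtain $N(\widetilde{H}(x))=N(\widetilde{h}(x))=x$ for all $x$, so $\widetilde{H}$ splits the exact sequence \eqref{wide tilde h splits the homomorphism N from wide tilde math cal G K, 1 alg to G m}. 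This is the first assertion.

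For the connectedness claim I would mimic the second half of the proof of Lemma~\ref{splitting of widetilde G l K alg mapsto Gm}. Since $\G_m$ is connected, the image of $\widetilde{H}$ is a connected subgroup of $\widetilde{\mathcal{G}_{K}^{\alg}}(\mathbb{C}_l)$ containing the identity, hence lies in $(\widetilde{\mathcal{G}_{K}^{\alg}})^{\circ}(\mathbb{C}_l)$. Therefore $N$ restricts to a surjection of $(\widetilde{\mathcal{G}_{K}^{\alg}})^{\circ}(\mathbb{C}_l)$ onto $\G_m(\mathbb{C}_l)$ with kernel $(\widetilde{\mathcal{G}_{K}^{\alg}})^{\circ}(\mathbb{C}_l)\cap\widetilde{\mathcal{G}_{K,1}^{\alg}}(\mathbb{C}_l)=\widetilde{\mathcal{G}_{K,1}^{\alg,\,0}}(\mathbb{C}_l)$, and this surjection is again split by $\widetilde{H}$. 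Fixing an abstract field isomorphism $\mathbb{C}_l\simeq\C$ and extending scalars, Lemma~\ref{G connected implies G0 connected} then applies with $G=(\widetilde{\mathcal{G}_{K}^{\alg}})^{\circ}$, $\pi=N|_G$, and splitting cocharacter $\widetilde{H}$, yielding that $\widetilde{\mathcal{G}_{K,1}^{\alg,\,0}}$ is connected. Because connectedness of a $\Q$-group scheme is unchanged by extension of the base field, this gives $\widetilde{\mathcal{G}_{K,1}^{\alg,\,0}}=(\widetilde{\mathcal{G}_{K,1}^{\alg}})^{\circ}$, which is \eqref{wide tilde mathcal GK1alg , 0 is connected}.

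I expect no essential new difficulty here beyond what is already contained in Lemma~\ref{splitting of widetilde G l K alg mapsto Gm}: the one point that genuinely uses the standing hypotheses is that $\widetilde{{\rm{t}}}_{l,K}$ is compatible with the maps $N$ on both sides on the nose, so that the transported section $\widetilde{H}$ really splits the sequence for $\widetilde{\mathcal{G}_{K}^{\alg}}$ rather than some twist of it; this is exactly the commutativity of the relevant face of Diagram~\ref{double cubic diagram}. The only obstacle I anticipate is the bookkeeping of the chain of base changes $\Q\to\Q_l\to\mathbb{C}_l\simeq\C$, and making sure the group-scheme-level conclusion over $\Q$ is correctly deduced from the connectedness of the corresponding complex points.
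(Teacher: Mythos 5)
Your argument is correct and follows the paper's own route exactly: transport the cocharacter $\widetilde{h}$ of Lemma~\ref{splitting of widetilde G l K alg mapsto Gm} through $\widetilde{{\rm{t}}}_{l,K}$, invoke the commutativity of the top face of Diagram~\ref{double cubic diagram} (the ``lid'' in the paper's wording) to see that the composite still splits $N$, restrict to the identity component using connectedness of $\G_m$, and conclude by $\C_l\simeq\C$ plus Lemma~\ref{G connected implies G0 connected}. You have, if anything, spelled out more carefully than the paper the final base-change and descent-of-connectedness step that the paper compresses into ``finish the proof in a similar way.''
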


\begin{proof}
Consider the commutative diagram in the lid of the cube Diagram \ref{double cubic diagram}.
Enlarging the target of the cocharacter $\widetilde{h}$ from the proof of Lemma 
\ref{splitting of widetilde G l K alg mapsto Gm} we obtain the following cocharacter:
$$
\widetilde{{\rm{t}}}_{l, K} \circ \widetilde{h}\colon \G_{m} (\mathbb{C}_l) \rightarrow 
\widetilde{\mathcal{G}_{K}^{\alg}}(\mathbb{C}_l).
$$
It is clear from the lid of Diagram \ref{double cubic diagram} that 
$\widetilde{{\rm{t}}}_{l, K} \circ \widetilde{h}$ splits $N$ in the exact sequence \ref{wide tilde h splits the homomorphism N from wide tilde math cal G K, 1 alg to G m}. Because $\G_m$ is connected, the following exact sequence splits:
$$
1 \,\, {\stackrel{}{\longrightarrow}} \,\, \widetilde{\mathcal{G}_{K, 1}^{\alg, \, 0}} (\mathbb{C}_l) \,\, {\stackrel{}{\longrightarrow}}  
\,\, (\widetilde{\mathcal{G}_{K}^{\alg}})^{\circ} (\mathbb{C}_l)\,\, {\stackrel{N}{\longrightarrow}} \,\, \G_{m} (\mathbb{C}_l){\stackrel{}{\longrightarrow}} \,\, 1.
$$
Now we finish the proof in a similar way as we finished the proof of Lemma 
\ref{splitting of widetilde G l K alg mapsto Gm}.
\end{proof}

The following theorem is proven in the same way as Theorem 
\ref{pi 0 wide tilde G l, K, 1 alg cong pi 0 wide tilde G l, K alg }. 
\begin{theorem} 
\label{pi 0 wide tilde math cal G K, 1 alg cong pi 0 wide tilde math cal G K alg}
There is the following isomorphism:
\begin{equation}
\widetilde{i}_{CC}\colon \pi_{0} (\widetilde{\mathcal{G}_{K, 1}^{\alg}}) \,\,\, {\stackrel{\simeq}{\longrightarrow}} \,\,\,  
\pi_{0} (\widetilde{\mathcal{G}_{K}^{\alg}}).
\label{pi 0 wide tilde math cal G K, 1 alg cong pi 0 wide tilde math cal G K alg. isomorphism}
\end{equation}
\end{theorem}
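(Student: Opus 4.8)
The plan is to imitate, almost verbatim, the proof of Theorem~\ref{pi 0 wide tilde G l, K, 1 alg cong pi 0 wide tilde G l, K alg }, replacing the $l$-adic algebraic monodromy groups $\widetilde{G_{l, K}^{\alg}}$ and $\widetilde{G_{l, K, 1}^{\alg}}$ by the group schemes $\widetilde{\mathcal{G}_{K}^{\alg}}$ and $\widetilde{\mathcal{G}_{K, 1}^{\alg}}$ over $\Q$, and Lemma~\ref{splitting of widetilde G l K alg mapsto Gm} by Lemma~\ref{splitting of wide tilde math cal G K alg maps to Gm}. First I would observe that $\widetilde{i}_{CC}$ is well defined: the closed immersion $\widetilde{\mathcal{G}_{K, 1}^{\alg}} = {\rm{Ker}}\, N \hookrightarrow \widetilde{\mathcal{G}_{K}^{\alg}}$ carries $(\widetilde{\mathcal{G}_{K, 1}^{\alg}})^{\circ}$ into $(\widetilde{\mathcal{G}_{K}^{\alg}})^{\circ}$, and hence descends to a homomorphism on component groups, which is $\widetilde{i}_{CC}$.

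Next I would assemble the commutative diagram with exact rows and columns analogous to Diagram~\ref{diagram to prove Serre theorem for families of l-adic representations}. Its rows are the defining short exact sequences
$$1 \to (\widetilde{\mathcal{G}_{K, 1}^{\alg}})^{\circ} \to \widetilde{\mathcal{G}_{K, 1}^{\alg}} \to \pi_{0}(\widetilde{\mathcal{G}_{K, 1}^{\alg}}) \to 1,$$
$$1 \to (\widetilde{\mathcal{G}_{K}^{\alg}})^{\circ} \to \widetilde{\mathcal{G}_{K}^{\alg}} \to \pi_{0}(\widetilde{\mathcal{G}_{K}^{\alg}}) \to 1,$$
$$1 \to \G_m \stackrel{=}{\longrightarrow} \G_m \to 1 \to 1;$$
its middle column is $1 \to \widetilde{\mathcal{G}_{K, 1}^{\alg}} \to \widetilde{\mathcal{G}_{K}^{\alg}} \stackrel{N}{\longrightarrow} \G_m \to 1$, which is exact because $N$ is an epimorphism by Conjecture~\ref{Tate conjecture for families of l-adic representations tilde}(a) and $\widetilde{\mathcal{G}_{K, 1}^{\alg}} := {\rm{Ker}}\, N$ by Definition~\ref{Definition of G K 1 alg and widetilde G K 1 alg}; and its left column $1 \to (\widetilde{\mathcal{G}_{K, 1}^{\alg}})^{\circ} \to (\widetilde{\mathcal{G}_{K}^{\alg}})^{\circ} \stackrel{N}{\longrightarrow} \G_m \to 1$ is exact by Lemma~\ref{splitting of wide tilde math cal G K alg maps to Gm}, which supplies both the surjectivity of $N$ on the identity component (via the cocharacter $\widetilde{{\rm{t}}}_{l, K} \circ \widetilde{h}$ splitting $N$) and the connectedness $\widetilde{\mathcal{G}_{K, 1}^{\alg, \, 0}} = (\widetilde{\mathcal{G}_{K, 1}^{\alg}})^{\circ}$. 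A diagram chase (a form of the nine lemma) then forces the right column $1 \to \pi_{0}(\widetilde{\mathcal{G}_{K, 1}^{\alg}}) \to \pi_{0}(\widetilde{\mathcal{G}_{K}^{\alg}}) \to 1 \to 1$ to be exact, which is precisely the statement that $\widetilde{i}_{CC}$ is an isomorphism; here exactness of the bottom row uses only that $\G_m$ is connected.

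There is no genuinely hard step once Lemma~\ref{splitting of wide tilde math cal G K alg maps to Gm} is available; the only point requiring care is to make the diagram chase legitimate in the category of affine group schemes over $\Q$. Since all the groups involved are smooth (indeed reductive, by Conjecture~\ref{Tate conjecture for families of l-adic representations tilde}(a)) affine algebraic groups over a field of characteristic zero, one may run the chase on $\overline{\Q}$-points, or equivalently argue with faithfully flat quotients, and the formation of $\pi_{0}$ is compatible with these operations. That bookkeeping, rather than any real obstacle, is the main thing to verify; the substantive geometric input — the splitting cocharacter coming from Serre's homomorphism $h_{l}$ — has already been isolated in Lemma~\ref{splitting of wide tilde math cal G K alg maps to Gm}.
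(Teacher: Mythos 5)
Your proof is correct and is essentially identical to the paper's own argument: the paper assembles exactly the $3\times 3$ diagram you describe (Diagram~\ref{diagram to prove analog of Serre theorem for families of l-adic representations under weak Tate conjectures part (a)}), notes that the rows and middle column are exact by definition, invokes Lemma~\ref{splitting of wide tilde math cal G K alg maps to Gm} for exactness of the left column, and concludes by a diagram chase that the right column is exact. The only cosmetic difference is that you spell out the bookkeeping about smoothness and points over an algebraically closed field, which the paper leaves implicit.
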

\begin{proof} Consider the following commutative diagram.
\begin{figure}[H]
\[
\begin{tikzcd}
& 1 \arrow{d}[swap]{} & 1 \arrow{d}[swap]{}  & 1 \arrow{d}[swap]{} \\ 
1 \arrow{r}{} &  (\widetilde{\mathcal{G}_{K, 1}^{\alg}})^{\circ} \arrow{d}[swap]{} \arrow{r}{}
&  \widetilde{\mathcal{G}_{K, 1}^{\alg}} \arrow{d}[swap]{} \arrow{r}{}  & \pi_{0} 
(\widetilde{\mathcal{G}_{K, 1}^{\alg}})   \arrow{d}[swap]{\simeq}{\widetilde{i}_{CC}} \arrow{r}{} &  1\\  
1 \arrow{r}{} & (\widetilde{\mathcal{G}_{K}^{\alg}})^{\circ} \arrow{d}{N} \arrow{r}{} & 
\widetilde{\mathcal{G}_{K}^{\alg}} \arrow{d}{N} \arrow{r}{} & \pi_{0}(\widetilde{\mathcal{G}_{K}^{\alg}}) 
\arrow{d}[swap]{} \arrow{r}{} &  1 \\
1 \arrow{r}{} & \G_m  \arrow{d}[swap]{} \arrow{r}{=} & \G_m  \arrow{d}[swap]{} \arrow{r}{} & 1\\
& 1  & 1 \\
\end{tikzcd}
\]
\\[-0.8cm]
\caption{}
\label{diagram to prove analog of Serre theorem for families of l-adic representations under weak Tate 
conjectures part (a)}
\end{figure}
In Diagram
\ref{diagram to prove analog of Serre theorem for families of l-adic representations under weak Tate 
conjectures part (a)}, all rows and the middle column are obviously exact. The left column is exact by Lemma 
\ref{splitting of wide tilde math cal G K alg maps to Gm}.
Chasing in Diagram \ref{diagram to prove analog of Serre theorem for families of l-adic representations under weak Tate conjectures part (a)}, we observe that the right column is exact.
\end{proof}

\begin{lemma}
\label{splitting of math cal GK1alg,0 arrow math cal GKalg circ arrow Gm}
Assume Conjectures \ref{Tate conjecture for families of l-adic representations}(a) 
and \ref{Tate conjecture for families of l-adic representations tilde}(a). Let $n$ be odd. Then the following exact sequence splits:
\begin{equation}
1 \,\, {\stackrel{}{\longrightarrow}} \,\, \mathcal{G}_{K, 1}^{\alg, \, 0} (\mathbb{C}) \,\, {\stackrel{}{\longrightarrow}} \,\, (\mathcal{G}_{K}^{\alg})^{\circ} (\mathbb{C})\,\, {\stackrel{\chi}{\longrightarrow}} \,\, \G_{m} (\mathbb{C}){\stackrel{}{\longrightarrow}} \,\, 1.
\label{math cal s splits the homomorphism chi from  math cal (G K alg) circ to G m for n odd}
\end{equation}
Moreover $\mathcal{G}_{K, 1}^{\alg, \, 0}$ is connected, i.e.
\begin{equation}
\mathcal{G}_{K, 1}^{\alg, \, 0}  = (\mathcal{G}_{K, 1}^{\alg})^{\circ}.
\label{math cal GK1alg , 0 is connected}
\end{equation}
\end{lemma}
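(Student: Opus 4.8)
The plan is to imitate the proofs of Lemma~\ref{for n odd DH = H}, Lemma~\ref{splitting of widetilde G l K alg mapsto Gm} and Theorem~\ref{L0realizing conn comp}: first exhibit a cocharacter $s\colon \G_m(\C)\to(\mathcal{G}_K^{\alg})^{\circ}(\C)$ that splits $\chi$, and then feed it into Lemma~\ref{G connected implies G0 connected} applied to the connected group $(\mathcal{G}_K^{\alg})^{\circ}$ (which is defined over $\Q$ by \cite[p.~218]{Hu}) with the nontrivial character $\chi$. Recall that by Definition~\ref{Definition of G K 1 alg and widetilde G K 1 alg}, $\mathcal{G}_{K,1}^{\alg}={\rm{Ker}}\,\chi$, so that $\mathcal{G}_{K,1}^{\alg,\,0}=(\mathcal{G}_K^{\alg})^{\circ}\cap{\rm{Ker}}\,\chi={\rm{Ker}}(\chi|_{(\mathcal{G}_K^{\alg})^{\circ}})$ is exactly the kernel whose connectedness Lemma~\ref{G connected implies G0 connected} will deliver. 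I would also note at the outset that the diagonal homothety cocharacter is available inside $(\mathcal{G}_K^{\alg})^{\circ}$: since $\G_m{\rm{Id}}_{V_l}\subseteq G_{l,K}^{\alg}$ by Bogomolov's theorem on homotheties (as recalled before Remark~\ref{properties of Hodge--Tate representations }) and, by Conjecture~\ref{Tate conjecture for families of l-adic representations}(a), the embedding $G_{l,K}^{\alg}\hookrightarrow{\mathcal{G}_{K}^{\alg}}_{\Q_l}$ is compatible with the embeddings into $\GIso_{(V_l,\psi_l)}$, we get $\G_m{\rm{Id}}_{V}\otimes_{\Q}\Q_l\subseteq{\mathcal{G}_{K}^{\alg}}_{\Q_l}$; Corollary~\ref{containment of 2 schemes over L follows from their containment after base change to M} then gives $\G_m{\rm{Id}}_{V}\subseteq\mathcal{G}_K^{\alg}$ over $\Q$, and connectedness of $\G_m{\rm{Id}}_{V}$ places it in $(\mathcal{G}_K^{\alg})^{\circ}$. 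This provides $w\colon\G_m(\C)\to(\mathcal{G}_K^{\alg})^{\circ}(\C)$, $w(z)=z\,{\rm{Id}}_{V_{\C}}$, with $\chi(w(z))=z^{2}$.

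Next I would produce a Hodge-type cocharacter $h\colon\G_m(\C)\to(\mathcal{G}_K^{\alg})^{\circ}(\C)$ with $\chi\circ h=(z\mapsto z^{-n})$. Starting from the cocharacter $\widetilde{h}\colon\G_m(\C_l)\to\widetilde{G_{l,K}^{\alg}}(\C_l)$ constructed in the proof of Lemma~\ref{splitting of widetilde G l K alg mapsto Gm} (which satisfies $N\circ\widetilde{h}={\rm{id}}$), push it through the embedding $\widetilde{{\rm{t}}}_{l,K}$ of Conjecture~\ref{Tate conjecture for families of l-adic representations tilde}(a) and then through $\pi\colon\widetilde{\mathcal{G}_K^{\alg}}\to\mathcal{G}_K^{\alg}$, using the lid of Diagram~\ref{double cubic diagram}. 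The image of the resulting cocharacter is a connected subgroup of $\mathcal{G}_K^{\alg}(\C_l)$, hence lies in $(\mathcal{G}_K^{\alg})^{\circ}(\C_l)$; and commutativity of Diagram~\ref{diagram compatibility of tilde(GK1alg) with GK1alg} together with $N\circ\widetilde{h}={\rm{id}}$ shows that composing it with $\chi$ gives $z\mapsto z^{-n}$. Choosing an (abstract) field isomorphism $\C_l\simeq\C$ transports this to the required $h$ over $\C$, exactly as in the proofs of Theorem~\ref{L0realizing conn comp} and Lemma~\ref{splitting of widetilde G l K alg mapsto Gm}.

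Now, since $n$ is odd, set $s(z):=h(z)\,w(z)^{-\frac{n-1}{2}}$ in the style of Lemma~\ref{for n odd DH = H}; this is a cocharacter $\G_m(\C)\to(\mathcal{G}_K^{\alg})^{\circ}(\C)$ splitting $\chi$ in the sequence
$$
1\,\longrightarrow\,\mathcal{G}_{K,1}^{\alg,\,0}(\C)\,\longrightarrow\,(\mathcal{G}_K^{\alg})^{\circ}(\C)\,\stackrel{\chi}{\longrightarrow}\,\G_m(\C)\,\longrightarrow\,1,
$$
where exactness on the right follows since $\chi\circ s$ is surjective, so that $\chi$ restricted to $(\mathcal{G}_K^{\alg})^{\circ}$ is onto; this proves the splitting assertion \eqref{math cal s splits the homomorphism chi from  math cal (G K alg) circ to G m for n odd}. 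In particular $(\mathcal{G}_K^{\alg})^{\circ}$ has positive dimension, so Lemma~\ref{G connected implies G0 connected} applies with $G=(\mathcal{G}_K^{\alg})^{\circ}$ and $\pi=\chi$, yielding that $\mathcal{G}_{K,1}^{\alg,\,0}$ is connected. Finally, $(\mathcal{G}_{K,1}^{\alg})^{\circ}$ is connected and contained in $\mathcal{G}_{K,1}^{\alg}\subseteq\mathcal{G}_K^{\alg}$, hence (being connected) in $(\mathcal{G}_K^{\alg})^{\circ}$, so $(\mathcal{G}_{K,1}^{\alg})^{\circ}\subseteq\mathcal{G}_{K,1}^{\alg,\,0}$; conversely $\mathcal{G}_{K,1}^{\alg,\,0}$ is connected and contained in $\mathcal{G}_{K,1}^{\alg}$, hence in $(\mathcal{G}_{K,1}^{\alg})^{\circ}$. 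Therefore $\mathcal{G}_{K,1}^{\alg,\,0}=(\mathcal{G}_{K,1}^{\alg})^{\circ}$, which is \eqref{math cal GK1alg , 0 is connected}.

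The one genuinely delicate step is the construction of $h$ in the second paragraph: on the $l$-adic side only a Hodge--Tate (hence $\C_l$-)cocharacter is available, so one must thread it through the Tate-conjecture embeddings $\widetilde{{\rm{t}}}_{l,K}$ and $\pi$, verify it lands in the \emph{identity component} of $\mathcal{G}_K^{\alg}$, compute its $\chi$-weight from the commuting diagrams, and transport it along a non-canonical identification $\C_l\simeq\C$. One must also confirm that $\chi\circ s$ is onto so that the displayed sequence is genuinely short exact and Lemma~\ref{G connected implies G0 connected} is applicable. Everything else is routine bookkeeping with the diagrams and connectedness facts already established.
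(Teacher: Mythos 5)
Your proposal is correct and follows essentially the same route as the paper: the paper simply takes the cocharacter $s(z)=h(z)\,w(z)^{-\frac{n-1}{2}}$ already constructed in the proof of Theorem~\ref{L0realizing conn comp} and pushes it through the Tate-conjecture embedding ${\rm{t}}_{l,K}$ to land in $(\mathcal{G}_{K}^{\alg})^{\circ}(\C)$, then applies Lemma~\ref{G connected implies G0 connected}. You unpack this by rebuilding $h$ (via $\pi\circ\widetilde{{\rm{t}}}_{l,K}\circ\widetilde{h}$) and $w$ (via Bogomolov plus Corollary~\ref{containment of 2 schemes over L follows from their containment after base change to M}) directly inside $(\mathcal{G}_{K}^{\alg})^{\circ}(\C)$, which amounts to the same cocharacter and the same conclusion.
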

\begin{proof} Let $L_{0}$ be such that $(G_{l, K}^{\alg})^{\circ} = G_{l, L_{0}}^{\alg}$. 
In the proof of Theorem \ref{L0realizing conn comp} we constructed the following homomorphism 
\begin{gather*}
s\colon \G_m (\C) \rightarrow  G_{l, L_{0}}^{\alg} (\C), \\
s(z) := h(z) w(z)^{-\frac{n-1}{2}}
\end{gather*}
which splits $\chi$ in the following exact sequence:  
$$
1 \,\, {\stackrel{}{\longrightarrow}} \,\, G_{l, L_{0}, 1}^{\alg} (\C) \,\, {\stackrel{}{\longrightarrow}} \,\,  G_{l, L_{0}}^{\alg} (\C) \,\, {\stackrel{\chi}{\longrightarrow}} \,\, \C^{\times} \,\, {\stackrel{}{\longrightarrow}} \,\, 1.
$$
Define
\begin{gather*}
s_0\colon \G_m (\C) \rightarrow  (\mathcal{G}_{K}^{\alg})^{\circ} (\C), \\
s_{0} (z) := {\rm{t}}_{l, K} \circ  s(z).
\end{gather*}
It is clear that $s_0$ is a splitting of $\chi$ in the exact sequence
\eqref{math cal s splits the homomorphism chi from  math cal (G K alg) circ to G m for n odd}.
Because $(\mathcal{G}_{K}^{\alg})^{\circ}$ is connected, Lemma \ref{G connected implies G0 connected}
shows that $\mathcal{G}_{K, 1}^{\alg, \, 0}$ is connected. In particular,
\eqref{math cal GK1alg , 0 is connected} holds.
\end{proof}

\begin{proposition} \label{realizing conn comp of math cal G K 1 alg: even case}   
Assume Conjectures \ref{Tate conjecture for families of l-adic representations}(a) 
and \ref{Tate conjecture for families of l-adic representations tilde}(a). Then:
\begin{itemize}
\item[(a)] $\G_{m} {\rm{Id}}_{V} \cdot (\mathcal{G}_{K, 1}^{\alg})^{\circ} \, = \, 
\G_{m} {\rm{Id}}_{V} \cdot {\mathcal{G}_{K, 1}^{\alg, \, 0}} \, = 
\, (\mathcal{G}_{K}^{\alg})^{\circ}$;
\item[(b)] ${\mathcal{G}_{K, 1}^{\alg, \, 0}} = (\mathcal{G}_{K, 1}^{\alg})^{\circ} \, \cup \, 
- {\rm{Id}}_{V} \cdot ({\mathcal{G}_{K, 1}^{\alg}})^{\circ}$;
\item[(c)] $- {\rm{Id}}_{V} \in ({\mathcal{G}_{K, 1}^{\alg}})^{\circ}$ iff $({\mathcal{G}_{K, 1}^{\alg}})^{\circ}
= {\mathcal{G}_{K, 1}^{\alg, \, 0}}$;
\item[(d)] when $n$ is odd, $- {\rm{Id}}_{V} \, \in \, ({\mathcal{G}_{K, 1}^{\alg}})^{\circ} \,  = \,
{\mathcal{G}_{K, 1}^{\alg, \, 0}}$.  
\end{itemize} 
\end{proposition}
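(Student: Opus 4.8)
The plan is to mimic the proof of Proposition \ref{L0realizing conn comp: even case} almost verbatim, with $G_{l, M_0}^{\alg}$ replaced by $(\mathcal{G}_{K}^{\alg})^{\circ}$, $G_{l, M_0, 1}^{\alg}$ replaced by $\mathcal{G}_{K, 1}^{\alg, \, 0}$, and the exact sequence \eqref{The exact sequence for GlL01alg and GlL0alg} replaced by the restriction of $\chi$ to $(\mathcal{G}_{K}^{\alg})^{\circ}$. The essential inputs are that $(\mathcal{G}_{K}^{\alg})^{\circ}$ is connected (hence irreducible as an algebraic group) and that $\G_m {\rm{Id}}_V \subseteq \mathcal{G}_{K}^{\alg}$ with $\chi(\alpha\,{\rm{Id}}_V) = \alpha^2$, which follows from $\G_m{\rm{Id}}_V \subseteq \GIso_{(V,\psi)}$, the containment $G_{l,K}^{\alg} \supseteq \G_m{\rm{Id}}_{V_l}$ (Bogomolov homotheties), and Conjecture \ref{Tate conjecture for families of l-adic representations}(a). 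One should first check that $\G_m{\rm{Id}}_V$ and $\G_m{\rm{Id}}_V \cdot (\mathcal{G}_{K,1}^{\alg})^{\circ}$ land inside $(\mathcal{G}_{K}^{\alg})^{\circ}$; the homothety $\alpha \mapsto \alpha\,{\rm{Id}}_V$ gives a connected subgroup through the identity, so it lies in the identity component, and $(\mathcal{G}_{K,1}^{\alg})^{\circ} \subseteq \mathcal{G}_{K,1}^{\alg,\,0} = (\mathcal{G}_{K}^{\alg})^{\circ} \cap \mathcal{G}_{K,1}^{\alg}$ is immediate from the definition.

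For part (a), I would take a coset $g\,(\mathcal{G}_{K,1}^{\alg})^{\circ}$ in $\mathcal{G}_{K,1}^{\alg,\,0}$ and apply \cite[Section 7.4, Prop. B(b)]{Hu} to the multiplication map $\G_m{\rm{Id}}_V \times (\mathcal{G}_{K,1}^{\alg})^{\circ} \to (\mathcal{G}_{K}^{\alg})^{\circ}$ to conclude that $\G_m{\rm{Id}}_V \cdot (\mathcal{G}_{K,1}^{\alg})^{\circ}$ and $\G_m{\rm{Id}}_V \cdot g\,(\mathcal{G}_{K,1}^{\alg})^{\circ}$ are closed in $(\mathcal{G}_{K}^{\alg})^{\circ}$. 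A dimension count using the exact sequence $1 \to \mathcal{G}_{K,1}^{\alg,\,0} \to (\mathcal{G}_{K}^{\alg})^{\circ} \xrightarrow{\chi} \G_m \to 1$ (exactness of $\chi$ on the identity component follows from Lemma \ref{The diagram connecting wide tilde mathcal G K alg and ilde mathcal G K alg } together with the fact that $\chi$ is an epimorphism, restricting to the identity component) shows both closed subgroups have the same dimension as $(\mathcal{G}_{K}^{\alg})^{\circ}$, which is irreducible; hence both equal $(\mathcal{G}_{K}^{\alg})^{\circ}$. For part (b), from (a) pick $\alpha, \beta \in \G_m$ and $g_1, g_2 \in (\mathcal{G}_{K,1}^{\alg})^{\circ}$ with $\alpha\,{\rm{Id}}_V \cdot g_1 = \beta\,{\rm{Id}}_V \cdot g g_2$; applying $\chi$ yields $\alpha^2 = \beta^2$, so $\pm{\rm{Id}}_V \cdot g_1 = g g_2$, giving $g\,(\mathcal{G}_{K,1}^{\alg})^{\circ} = (\mathcal{G}_{K,1}^{\alg})^{\circ}$ or $-{\rm{Id}}_V \cdot (\mathcal{G}_{K,1}^{\alg})^{\circ}$. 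Parts (c) is immediate from (b), and (d) follows from (c) together with Lemma \ref{splitting of math cal GK1alg,0 arrow math cal GKalg circ arrow Gm}, which supplies connectedness of $\mathcal{G}_{K,1}^{\alg,\,0}$ when $n$ is odd and forces $(\mathcal{G}_{K,1}^{\alg})^{\circ} = \mathcal{G}_{K,1}^{\alg,\,0}$, so $-{\rm{Id}}_V$ must lie there.

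The main obstacle I anticipate is bookkeeping around base change: all the group schemes $\mathcal{G}_{K}^{\alg}$, $\mathcal{G}_{K,1}^{\alg}$ are defined over $\Q$, but connectedness, irreducibility, and the exactness of $\chi$ are cleanest to verify over $\Q_l$ (or $\C$), where they can be compared with the Zariski closures $G_{l,K}^{\alg}$, $G_{l,K,1}^{\alg}$ via the isomorphisms of Diagram \ref{double cubic diagram} under Conjectures \ref{Tate conjecture for families of l-adic representations}(a) and \ref{Tate conjecture for families of l-adic representations tilde}(a). One should use Corollary \ref{containment of 2 schemes over L follows from their containment after base change to M} and Lemma \ref{equality of 2 vector spaces over L follows by their equality when tensored with  M} to descend statements about closed subgroups and their dimensions from $\Q_l$ to $\Q$, and note that a group scheme over $\Q$ is connected if and only if its base change to $\Q_l$ is. Once this is set up, the argument is a routine transcription of the proof of Proposition \ref{L0realizing conn comp: even case}, and no new ideas are needed beyond keeping track of which statements are genuinely arithmetic and which become geometric after extending scalars.
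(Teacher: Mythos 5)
Your proposal is correct and follows essentially the same route as the paper's own proof: the opening observation that $\G_m\,\mathrm{Id}_V \subset \mathcal{G}_K^{\alg}$ via Conjecture~\ref{Tate conjecture for families of l-adic representations}(a) and Corollary~\ref{containment of 2 schemes over L follows from their containment after base change to M}, the closedness argument using \cite[Section 7.4, Prop.~B(b)]{Hu}, the dimension count from the exact sequence with $\chi$, the $\alpha^2=\beta^2$ trick for (b), and Lemma~\ref{splitting of math cal GK1alg,0 arrow math cal GKalg circ arrow Gm} for (d). The extra remarks on descent from $\Q_l$ to $\Q$ are sound but are absorbed in the paper into the single opening observation.
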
 
\begin{proof}
The proof is similar to the proof of Proposition \ref{L0realizing conn comp: even case}. First of all observe that
$\G_{m} {\rm{Id}}_{V} \subset {{\mathcal{G}_{K}^{\alg}}}$ 
by Conjecture \ref{Tate conjecture for families of l-adic representations}(a) and Corollary 
\ref{containment of 2 schemes over L follows from their containment after base change to M}.

\noindent
(a)  Consider a coset $g ({\mathcal{G}_{K, 1}^{\alg}})^{\circ}$ in $\mathcal{G}_{K, 1}^{\alg, \, 0}$. Applying \cite[Section 7.4, Prop. B(b)]{Hu} to the homomorphism
\begin{gather*}
\G_{m} {\rm{Id}}_{V} \, \times \, ({\mathcal{G}_{K, 1}^{\alg}})^{\circ} \rightarrow 
({\mathcal{G}_{K}^{\alg}})^{\circ}, \\
(g_1, g_2) \, \mapsto \, g_1 g_2
\end{gather*}
we observe that  $\G_{m} {\rm{Id}}_{V} \cdot ({{\mathcal{G}_{K, 1}^{\alg}}})^{\circ}$ and 
$\G_{m} {\rm{Id}}_{V} \cdot g ({\mathcal{G}_{K, 1}^{\alg}})^{\circ}$ are closed in 
$({\mathcal{G}_{K}^{\alg}})^{\circ}$. 

They are also of the same dimension as ${({\mathcal{G}_{K}^{\alg}})^{\circ}}$ because of the following exact sequence:
\begin{equation}
1 \,\, {\stackrel{}{\longrightarrow}} \,\, {\mathcal{G}_{K, 1}^{\alg, \, 0}} \,\, {\stackrel{}{\longrightarrow}} \,\, ({\mathcal{G}_{K}^{\alg}})^{\circ} \,\, {\stackrel{\chi}{\longrightarrow}} \,\, \G_m \,\, {\stackrel{}{\longrightarrow}} \,\, 1.
\label{The exact sequence for math cal GK1alg 0 and GK alg circ}
\end{equation}
Because $({\mathcal{G}_{K}^{\alg}})^{\circ}$ is irreducible, we obtain 
\begin{equation}
\G_{m} {\rm{Id}}_{V} \cdot ({\mathcal{G}_{K, 1}^{\alg}})^{\circ} \, = \,\G_{m} {\rm{Id}}_{V} \cdot  
g\, ({\mathcal{G}_{K, 1}^{\alg}})^{\circ} = ({\mathcal{G}_{K}^{\alg}})^{\circ}.
\label{nonempty intersection of Gm math cal G K 1 alg with Gm g math cal G K 1 alg}
\end{equation}

\noindent
(b) From \eqref{nonempty intersection of Gm GlK1 alg with Gm gGlK1 alg} there are $\alpha, \beta \in \G_{m}$ and $g_1, g_2 \in ({\mathcal{G}_{K, 1}^{\alg}})^{\circ}$ such that:
\begin{equation}
\alpha \, {\rm{Id}}_{V} \cdot g_1 = \beta \, {\rm{Id}}_{V} \cdot g g_2. 
\label{equality of coset generators wide tilde}
\end{equation}
Applying $\chi$ to \eqref{equality of coset generators wide tilde} we obtain $\alpha^2 = \beta^2$. This implies that
$\pm {\rm{Id}}_{V} \cdot g_1 =  g g_2$. Hence
$g ({\mathcal{G}_{K, 1}^{\alg}})^{\circ} = ({\mathcal{G}_{K, 1}^{\alg}})^{\circ}$ or $g ({\mathcal{G}_{K, 1}^{\alg}})^{\circ} = - {\rm{Id}}_{V} ({\mathcal{G}_{K, 1}^{\alg}})^{\circ}$. 
\medskip

\noindent
(c) This follows immediately from (b).
\medskip

\noindent
(d) In the case of $n$ odd, the group ${\mathcal{G}_{K, 1}^{\alg, \, 0}}$  is connected by Lemma
\ref{splitting of math cal GK1alg,0 arrow math cal GKalg circ arrow Gm}. Hence (d) follows immediately from (c).
\end{proof}

Consider the following commutative diagram. 

\begin{figure}[H]
\[
\begin{tikzcd}
&& \,\, \pi_{0} (\widetilde{\mathcal{G}_{K, 1}^{\alg}}) \arrow{d}[swap]{\overline{\pi}_{1}} \arrow{r}{\simeq}  & 
\pi_{0} (\widetilde{\mathcal{G}_{K}^{\alg}}) \arrow{d}[swap]{\overline{\pi}}& \\ 
1 \arrow{r}{} &  \mathcal{G}_{K, 1}^{\alg, \, 0} / (\mathcal{G}_{K, 1}^{\alg})^{\circ} \arrow{r}{} & 
\pi_{0} (\mathcal{G}_{K, 1}^{\alg})  \arrow{r}{} & \pi_{0} (\mathcal{G}_{K}^{\alg}) \arrow{r}{} & 1\\
\end{tikzcd}
\]
\\[-0.8cm]
\caption{}
\label{pi0 applied to the diagram compatibility of wide tilde math cal G alg K 1 with math cal G alg K 1} 
\end{figure}

In Diagram \ref{pi0 applied to the diagram compatibility of wide tilde math cal G alg K 1 with math cal G alg K 1}, the top row is the isomorphism \eqref{pi 0 wide tilde math cal G K, 1 alg cong pi 0 wide tilde math cal G K alg. isomorphism}. The map $\overline{\pi}$ in Diagram \ref{pi0 applied to the diagram compatibility of wide tilde math cal G alg K 1 with math cal G alg K 1} is an epimorphism because the map $\pi$ in Conjecture
\ref{Tate conjecture for families of l-adic representations tilde}(a) is assumed to be an epimorphism.
Exactness of the bottom sequence follows from surjectivity of $\overline{\pi}$ and the definition of 
$\mathcal{G}_{K, 1}^{\alg, \, 0}$. The map $\overline{\pi}_{1}$ in Diagram 
\ref{pi0 applied to the diagram compatibility of wide tilde math cal G alg K 1 with math cal G alg K 1}  
is a monomorphism because the map $\pi_{1}$ in Diagram \ref{diagram compatibility of tilde(GK1alg) with GK1alg}  is a monomorphism and because of \eqref{connected component of wide tilde G K 1 alg  is the same as G K 1 alg}.
Proposition \ref{realizing conn comp of math cal G K 1 alg: even case}(b), equality 
\eqref{connected component of wide tilde G K 1 alg  is the same as G K 1 alg}, and a chase in Diagram 
\ref{pi0 applied to the diagram compatibility of wide tilde math cal G alg K 1 with math cal G alg K 1} show that  
\begin{equation}
{\pi_{0} (\mathcal{G}_{K, 1}^{\alg})  \,  = \, \pi_{0} (\widetilde{\mathcal{G}_{K, 1}^{\alg}}) \,\, \cup 
\, - {\rm{Id}}_{V} (\mathcal{G}_{K, 1}^{\alg})^{\circ} \, \cdot 
\pi_{0} (\widetilde{\mathcal{G}_{K, 1}^{\alg}})}, 
\label{ p0 math cal G K 1 alg = p0 wide tilde math cal G K 1 alg cup -Id p0  wide tilde math cal G K 1 alg}  
\end{equation}
where $- {\rm{Id}}_{V} (\mathcal{G}_{K, 1}^{\alg})^{\circ}$ denotes the coset of 
$- {\rm{Id}}_{V}$ in the quotient group $\pi_{0} (\mathcal{G}_{K, 1}^{\alg}) = 
\mathcal{G}_{K, 1}^{\alg} / (\mathcal{G}_{K, 1}^{\alg})^{\circ}$. 
\medskip

Consider the following commutative diagram with exact rows. 

\begin{figure}[H]
\[
\begin{tikzcd}
1 \arrow{r}{} & (\widetilde{\mathcal{G}_{K, 1}^{\alg}})^{\circ} \arrow{d}[swap]{=} \arrow{r}{} & \,\, 
\widetilde{\mathcal{G}_{K, 1}^{\alg}} \arrow{d}[swap]{\pi_{1}} \arrow{r}{}  & 
 \pi_{0} (\widetilde{\mathcal{G}_{K, 1}^{\alg}}) \arrow{d}[swap]{\overline{\pi}_{1}} \arrow{r}{} & 1\\ 
1 \arrow{r}{} & (\mathcal{G}_{K, 1}^{\alg})^{\circ} \arrow{r}{} & 
\mathcal{G}_{K, 1}^{\alg}  \arrow{r}{} & 
\pi_{0} (\mathcal{G}_{K, 1}^{\alg}) \arrow{r}{} & 1\\
\end{tikzcd}
\]
\\[-0.8cm]
\caption{}
\label{wide tilde math cal GK1alg with math cal GK1 and p0 wide tilde math cal with pi0 math cal} 
\end{figure}

Equality \eqref{ p0 math cal G K 1 alg = p0 wide tilde math cal G K 1 alg cup -Id p0  wide tilde math cal G K 1 alg} and a chase in 
Diagram \ref{wide tilde math cal GK1alg with math cal GK1 and p0 wide tilde math cal with pi0 math cal} 
show that 
\begin{equation}
\mathcal{G}_{K, 1}^{\alg}  \,  = \, \widetilde{\mathcal{G}_{K, 1}^{\alg}} \,\, \cup 
\, - {\rm{Id}}_{V}  \,\, \widetilde{\mathcal{G}_{K, 1}^{\alg}} \, .   
\label{math cal GK1alg = wide tilde math cal G1Kalg cup -Id wide tilde math cal G1Kalg}  
\end{equation} 
In particular $[\mathcal{G}_{K, 1}^{\alg} : \, \widetilde{\mathcal{G}_{K, 1}^{\alg}}] \leq 2$ and 
$\widetilde{\mathcal{G}_{K, 1}^{\alg}} \, \triangleleft \, \mathcal{G}_{K, 1}^{\alg}$.
\medskip

\begin{corollary} Assume Conjectures \ref{Tate conjecture for families of l-adic representations}(a) 
and \ref{Tate conjecture for families of l-adic representations tilde}(a). Then the following equalities hold:
\begin{itemize}
\item[(a)] $\AST_{K} (V, \psi)^{\circ} \, = \, \widetilde{\AST_{K}} (V, \psi)^{\circ} \, = \,
\widetilde{\mathcal{G}_{K, 1}^{\alg, \, 0}}$.
\item[(b)] $\AST_{K} (V, \psi) \, = \, \widetilde{\AST_{K}} (V, \psi) \, \cup \, - {\rm{Id}}_{V} \, 
\widetilde{\AST_{K}} (V, \psi)$.
\item[(c)] $\ST_{K} (V, \psi)  \, = \, \widetilde{\ST_{K}} (V, \psi) \, \cup \, - {\rm{Id}}_{V} \, \widetilde{\ST_{K}} (V, \psi)$ up to conjugation in \\ $\AST_{K} (V, \psi) (\C)$.
\end{itemize}
\label{AST V psi with resp. to AST V psi cup -1Id AST V psi}
\end{corollary}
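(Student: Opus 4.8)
The plan is to obtain all three parts by simply unwinding Definition \ref{AST K and tilde AST K under the analog of Tate conjecture for l-adic rep.}, which renames $\mathcal{G}_{K, 1}^{\alg}(V, \psi)$ as $\AST_{K}(V, \psi)$ and $\widetilde{\mathcal{G}_{K, 1}^{\alg}}(W)$ as $\widetilde{\AST_{K}}(V, \psi)$, and then invoking the structural equalities already assembled in this section. For part (a), the equality $(\widetilde{\mathcal{G}_{K, 1}^{\alg}})^{\circ} = (\mathcal{G}_{K, 1}^{\alg})^{\circ}$ (equation \eqref{connected component of wide tilde G K 1 alg  is the same as G K 1 alg}) together with $\widetilde{\mathcal{G}_{K, 1}^{\alg, \, 0}} = (\widetilde{\mathcal{G}_{K, 1}^{\alg}})^{\circ}$ from Lemma \ref{splitting of wide tilde math cal G K alg maps to Gm} gives $\AST_{K}(V, \psi)^{\circ} = \widetilde{\AST_{K}}(V, \psi)^{\circ} = \widetilde{\mathcal{G}_{K, 1}^{\alg, \, 0}}$ at once. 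For part (b), I would just rewrite \eqref{math cal GK1alg = wide tilde math cal G1Kalg cup -Id wide tilde math cal G1Kalg} in the $\AST$ notation; here $-{\rm{Id}}_{V}$ lies in $\mathcal{G}_{K, 1}^{\alg}$ because $\G_{m}{\rm{Id}}_{V} \subset \mathcal{G}_{K}^{\alg}$ (as used in the proof of Proposition \ref{realizing conn comp of math cal G K 1 alg: even case}) and $\chi(-{\rm{Id}}_{V}) = 1$, so the coset makes sense.

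The only part needing a genuine argument is (c), passing from the algebraic groups over $\Q$ to maximal compact subgroups of their $\C$-points. I would take $\C$-points in (b) to get $\AST_{K}(V, \psi)(\C) = \widetilde{\AST_{K}}(V, \psi)(\C) \cup -{\rm{Id}}_{V}\,\widetilde{\AST_{K}}(V, \psi)(\C)$, with $\widetilde{\AST_{K}}(V, \psi)(\C)$ normal of index at most $2$. Fix a maximal compact subgroup $\widetilde{\ST_{K}}(V, \psi)$ of $\widetilde{\AST_{K}}(V, \psi)(\C)$. Since $-{\rm{Id}}_{V}$ is a central element of order $2$, it is contained in every maximal compact subgroup of $\AST_{K}(V, \psi)(\C)$ (it generates a finite, hence compact, central subgroup, which lies in some maximal compact, and all maximal compacts are conjugate while $-{\rm{Id}}_{V}$ is central, by \cite[\S VII.2]{Kn}); likewise $-{\rm{Id}}_{V} \in \widetilde{\ST_{K}}(V, \psi)$ is unnecessary but $\widetilde{\ST_{K}}(V, \psi) \cup -{\rm{Id}}_{V}\,\widetilde{\ST_{K}}(V, \psi)$ is a compact subgroup of $\AST_{K}(V, \psi)(\C)$. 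It is maximal: its intersection with $\widetilde{\AST_{K}}(V, \psi)(\C)$ is $\widetilde{\ST_{K}}(V, \psi)$, which is already maximal there, and it meets the nontrivial coset, so any compact overgroup would enlarge that intersection, a contradiction. By uniqueness of maximal compact subgroups up to conjugacy this subgroup is $\ST_{K}(V, \psi)$ up to conjugation in $\AST_{K}(V, \psi)(\C)$, which is the claim.

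The main obstacle — mild — is the index-$2$ bookkeeping in (c): one must be sure that a maximal compact subgroup $C$ of $\AST_{K}(V, \psi)(\C)$ is not swallowed by $\widetilde{\AST_{K}}(V, \psi)(\C)$ when the latter is proper, so that $C = (C \cap \widetilde{\AST_{K}}(V, \psi)(\C)) \cup -{\rm{Id}}_{V}\,(C \cap \widetilde{\AST_{K}}(V, \psi)(\C))$. This is exactly where \eqref{math cal GK1alg = wide tilde math cal G1Kalg cup -Id wide tilde math cal G1Kalg} and the finiteness and centrality of $-{\rm{Id}}_{V}$ are used: $-{\rm{Id}}_{V} \in C$ forces $C$ to hit the other coset. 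The whole argument runs parallel to the one behind Corollary \ref{connected components of AST K (V, psi), ST K (V, psi) and widetilde AST K (V, psi), widetilde ST K (V, psi)} and its motivic analogues, so essentially no new input beyond the already-established coset decomposition is required.
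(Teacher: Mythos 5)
Your proposal is correct and follows the same route as the paper: part (a) is read off from \eqref{connected component of wide tilde G K 1 alg  is the same as G K 1 alg} and \eqref{wide tilde mathcal GK1alg , 0 is connected}, part (b) is exactly \eqref{math cal GK1alg = wide tilde math cal G1Kalg cup -Id wide tilde math cal G1Kalg} in the $\AST$ notation, and part (c) is deduced from (b). The paper's proof of (c) is simply the phrase ``follows by (b),'' and your maximal-compact-subgroup argument — using that $-\mathrm{Id}_V$ is central of order $2$, that $\widetilde{\AST_K}(V,\psi)(\C)$ is normal of index $\le 2$, and the conjugacy of maximal compacts — is the standard way one would fill that in, so there is no real divergence.
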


\begin{proof}
(a) follows by \eqref{connected component of wide tilde G K 1 alg  is the same as G K 1 alg} and 
\eqref{wide tilde mathcal GK1alg , 0 is connected},
(b) follows by \eqref{math cal GK1alg = wide tilde math cal G1Kalg cup -Id wide tilde math cal G1Kalg}, and (c) 
follows by (b). 
\end{proof}

\begin{proposition} Assume Conjectures \ref{Tate conjecture for families of l-adic representations}(a) 
and \ref{Tate conjecture for families of l-adic representations tilde}(a). Then Algebraic Sato--Tate Conjecture 
\ref{general algebraic Sato Tate conj.} holds for $\AST_{K} (V, \psi)$ if and only 
Algebraic Sato--Tate Conjecture \ref{general algebraic Sato Tate conj. Serre's approach} holds for $\widetilde{\AST_{K}} (V, \psi)$.
\label{AST conjecture for math cal is equivalent to  AST conjecture wide tilde math cal}
\end{proposition}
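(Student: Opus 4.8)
The plan is to exploit the structural relations between $\AST_K(V,\psi) = \mathcal{G}_{K,1}^{\alg}$ and $\widetilde{\AST_K}(V,\psi) = \widetilde{\mathcal{G}_{K,1}^{\alg}}$ established just above, together with Remark \ref{Tate conj. is equivalent to the AST conj.} which identifies the Algebraic Sato--Tate maps with the Tate maps. First I would recall from \eqref{math cal GK1alg = wide tilde math cal G1Kalg cup -Id wide tilde math cal G1Kalg} and \eqref{decomposition of GlK1alg into tilde(GlK1alg) cup -Id tilde(GlK1alg)} (and Remark \ref{Kernel of group scheme morphism is invariant under base change on the morphism side}) that both $\mathcal{G}_{K,1}^{\alg} = \widetilde{\mathcal{G}_{K,1}^{\alg}} \cup -{\rm{Id}}_{V}\,\widetilde{\mathcal{G}_{K,1}^{\alg}}$ and $G_{l,K,1}^{\alg} = \widetilde{G_{l,K,1}^{\alg}} \cup -{\rm{Id}}_{V_l}\,\widetilde{G_{l,K,1}^{\alg}}$ hold, and that these two decompositions are compatible after base change to $\Q_l$ via ${\rm{ast}}_{l,K}$ and $\widetilde{{\rm{ast}}}_{l,K}$ (this compatibility is read off from the commuting left wall of Diagram \ref{double cubic diagram}).

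The forward implication is the straightforward one: assuming Conjecture \ref{general algebraic Sato Tate conj. Serre's approach} holds for $\widetilde{\AST_K}(V,\psi)$, i.e. $\widetilde{{\rm{ast}}}_{l,K}$ is an isomorphism onto $\widetilde{\AST_K}(V,\psi)_{\Q_l}$ compatibly over $K$, I would observe that taking the union with $-{\rm{Id}}$-translates on both sides (using the two compatible decompositions above) forces ${\rm{ast}}_{l,K}$ to be an isomorphism onto $\AST_K(V,\psi)_{\Q_l}$; naturality in $K$ and the factoring of the ambient embedding through $\Iso_{(V_l,\psi_l)}$ are inherited from the corresponding properties of $\widetilde{{\rm{ast}}}_{l,K}$, since $\AST_K(V,\psi) \subset \Iso_{(V,\psi)}$ over $\Q$ already holds by Definition \ref{Definition of G K 1 alg and widetilde G K 1 alg}. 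This is essentially the argument of Proposition \ref{Conjectures general algebraic Sato Tate conj. implies Algebraic Sato--Tate conjecture. Serre's approach}, transported to the Tate-group setting.

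For the converse, assuming Conjecture \ref{general algebraic Sato Tate conj.} holds, i.e. ${\rm{ast}}_{l,K}\colon G_{l,K,1}^{\alg} \xrightarrow{\simeq} \AST_K(V,\psi)_{\Q_l}$ naturally in $K$, I would restrict this isomorphism to the index-$\leq 2$ subgroup $\widetilde{G_{l,K,1}^{\alg}} \triangleleft G_{l,K,1}^{\alg}$: its image is exactly $\widetilde{\mathcal{G}_{K,1}^{\alg}}_{\Q_l}$ by the compatibility of the two decompositions, so $\widetilde{{\rm{ast}}}_{l,K}$ is already an isomorphism. The remaining points — that $\widetilde{\AST_K}(V,\psi)$ is reductive, natural in $K$, defined over $\Q$, and contained in $\Iso_{(V,\psi)}$ over $\Q$ — follow because $\widetilde{\mathcal{G}_{K,1}^{\alg}} = {\rm{Ker}}\,N$ is a closed $\Q$-subgroup of $\mathcal{G}_{K,1}^{\alg} \subset \Iso_{(V,\psi)}$, reductivity passes to it by \cite[Proposition 19.13]{Mi2} applied to $\widetilde{G_{l,K,1}^{\alg}}$ reductive (or directly since it is finite-index in a reductive group), and naturality in $K$ is inherited from $\widetilde{\mathcal{G}_K^{\alg}}$. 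I expect the only genuinely delicate point to be keeping the naturality-in-$K$ and the ``factors through $\Iso$ over $\Q$'' clauses of the two conjectures in bookkeeping correspondence through the $-{\rm{Id}}$-union; the underlying group theory is forced once Diagram \ref{double cubic diagram} is in hand.
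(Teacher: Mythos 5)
Your argument is essentially the paper's own proof: the paper likewise deduces the equivalence from Remark \ref{Tate conj. is equivalent to the AST conj.} (which gives parts (a) of both conjectures and hence that $\mathrm{ast}_{l,K}$ and $\widetilde{\mathrm{ast}}_{l,K}$ in Diagram \ref{double cubic diagram} are monomorphisms), combined with the two compatible $-\mathrm{Id}$-coset decompositions, namely \eqref{decomposition of GlK1alg into tilde(GlK1alg) cup -Id tilde(GlK1alg)} and Corollary \ref{AST V psi with resp. to AST V psi cup -1Id AST V psi}(b). You spell out the two implications and the bookkeeping of the auxiliary clauses in more detail than the paper does, but the route and the ingredients are identical.
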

\begin{proof} Remark \ref{Tate conj. is equivalent to the AST conj.} shows that 
Conjectures \ref{general algebraic Sato Tate conj.}(a) and 
\ref{general algebraic Sato Tate conj. Serre's approach}(a) hold. In particular 
the homomorphisms ${\rm{ast}}_{l, K}$ and $\widetilde{{\rm{ast}}}_{l, K}$ in 
Diagram \ref{double cubic diagram} are monomorphisms. Hence the proposition follows by \eqref{decomposition of GlK1alg into tilde(GlK1alg) cup -Id tilde(GlK1alg)} and Corollary \ref{AST V psi with resp. to AST V psi cup -1Id AST V psi}(b).
\end{proof} 

Because of Proposition \ref{AST conjecture for math cal is equivalent to  AST conjecture wide tilde math cal}, it is enough to discuss the Algebraic Sato--Tate conjecture (in the framework of Conjectures  \ref{Tate conjecture for families of l-adic representations}
and \ref{Tate conjecture for families of l-adic representations tilde}) only for $\AST_{K}(V, \psi)$.  

\begin{corollary} Assume Conjectures \ref{Tate conjecture for families of l-adic representations}(a) and \ref{Tate conjecture for families of l-adic representations tilde}(a). Then the following four conjectures are equivalent:
\begin{itemize}
\item{} Tate Conjecture \ref{Tate conjecture for families of l-adic representations},
\item{} Tate Conjecture \ref{Tate conjecture for families of l-adic representations tilde},
\item{} Algebraic Sato--Tate Conjecture \ref{general algebraic Sato Tate conj.},
\item{} Algebraic Sato--Tate Conjecture  \ref{general algebraic Sato Tate conj. Serre's approach}.
\end{itemize}
\label{equivalence of 4 conjectures}
\end{corollary}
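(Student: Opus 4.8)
The plan is to deduce the four-way equivalence formally from the two substantive results already proved in this section: Corollary \ref{Tate conjecture is equivalent to the AST conjecture} and Proposition \ref{AST conjecture for math cal is equivalent to  AST conjecture wide tilde math cal}. Since Conjectures \ref{Tate conjecture for families of l-adic representations}(a) and \ref{Tate conjecture for families of l-adic representations tilde}(a) are in force throughout, both of these results are available without any further hypotheses, so the argument will be a short chase through the implications they record.

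First I would invoke Corollary \ref{Tate conjecture is equivalent to the AST conjecture}, which (via the diagram chase behind Diagram \ref{double cubic diagram} and Remark \ref{Tate conj. is equivalent to the AST conj.}) gives that Tate Conjecture \ref{Tate conjecture for families of l-adic representations} is equivalent to Algebraic Sato--Tate Conjecture \ref{general algebraic Sato Tate conj.}, and that Tate Conjecture \ref{Tate conjecture for families of l-adic representations tilde} is equivalent to Algebraic Sato--Tate Conjecture \ref{general algebraic Sato Tate conj. Serre's approach}. Next I would invoke Proposition \ref{AST conjecture for math cal is equivalent to  AST conjecture wide tilde math cal}, which gives that Algebraic Sato--Tate Conjecture \ref{general algebraic Sato Tate conj.} for the group $\AST_{K}(V,\psi) = \mathcal{G}_{K, 1}^{\alg}$ of Definition \ref{AST K and tilde AST K under the analog of Tate conjecture for l-adic rep.} is equivalent to Algebraic Sato--Tate Conjecture \ref{general algebraic Sato Tate conj. Serre's approach} for $\widetilde{\AST_{K}}(V,\psi) = \widetilde{\mathcal{G}_{K, 1}^{\alg}}$. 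Concatenating these three equivalences closes the cycle among all four statements, which is exactly the assertion of the corollary.

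The only bookkeeping point I would check explicitly is that the candidate Sato--Tate groups appearing in Proposition \ref{AST conjecture for math cal is equivalent to  AST conjecture wide tilde math cal} coincide with the groups whose existence is asserted in Conjectures \ref{general algebraic Sato Tate conj.}(a) and \ref{general algebraic Sato Tate conj. Serre's approach}(a); this is immediate from Definition \ref{AST K and tilde AST K under the analog of Tate conjecture for l-adic rep.} together with the uniqueness statements of Proposition \ref{uniqueness of Tate group and Tate tilde group} (and, on the Sato--Tate side, the uniqueness up to conjugacy of maximal compact subgroups). There is no genuine obstacle here: the mathematical content was already carried by Corollary \ref{Tate conjecture is equivalent to the AST conjecture} and Proposition \ref{AST conjecture for math cal is equivalent to  AST conjecture wide tilde math cal}, and the present statement is just the observation that those two results jointly link all four conjectures in a single chain of equivalences.
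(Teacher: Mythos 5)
Your proposal is correct and matches the paper's proof exactly: the paper also deduces the four-way equivalence by combining Corollary \ref{Tate conjecture is equivalent to the AST conjecture} (linking each Tate conjecture with the corresponding algebraic Sato--Tate conjecture) with Proposition \ref{AST conjecture for math cal is equivalent to  AST conjecture wide tilde math cal} (linking the two algebraic Sato--Tate conjectures). The additional bookkeeping remark about the identification of the candidate groups is harmless and consistent with the paper's setup.
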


\begin{proof}
It follows by Corollary \ref{Tate conjecture is equivalent to the AST conjecture} and
Proposition \ref{AST conjecture for math cal is equivalent to  AST conjecture wide tilde math cal}.
\end{proof}

\begin{remark} Under any of the four equivalent conjectures in Corollary 
\ref{equivalence of 4 conjectures}, the group schemes $\AST_{K} (V, \psi)$ and
$\widetilde{\AST_{K}} (V, \psi)$ are uniquely defined, cf. Proposition 
\ref{uniqueness of algebraic Sato--Tate groups}(a) and (b).
\label{uniqueness of AST group and AST tilde groups}
\end{remark}

\begin{proposition} Assume Conjectures \ref{Tate conjecture for families of l-adic representations}(a) 
and \ref{Tate conjecture for families of l-adic representations tilde}(a).
Then the Sato--Tate Conjecture for $\ST_{K} (V, \psi)$
(Conjecture~\ref{general Sato Tate conj.})
implies the Sato--Tate Conjecture for $\widetilde{\ST_{K}} (V, \psi)$ (Conjecture~\ref{general Sato Tate conj. tilde}).
\label{ST (V, psi) implies wide tilde ST (V, psi)}
\end{proposition}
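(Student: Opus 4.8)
The plan is to transfer equidistribution along the monomorphism $\pi_{1}$ of Diagram~\ref{diagram compatibility of tilde(GK1alg) with GK1alg}, imitating the proof of Theorem~\ref{equidistribution property in ST implies equidistribution property in wide tilde ST}, whose argument uses only the ``(a)'' forms of the algebraic Sato--Tate conjectures. First I would invoke Remark~\ref{Tate conj. is equivalent to the AST conj.}: under Conjectures~\ref{Tate conjecture for families of l-adic representations}(a) and \ref{Tate conjecture for families of l-adic representations tilde}(a) the homomorphisms ${\rm{ast}}_{l, K}$ and $\widetilde{{\rm{ast}}}_{l, K}$ of Diagram~\ref{double cubic diagram} are monomorphisms satisfying all conditions of Conjectures~\ref{general algebraic Sato Tate conj.}(a) and \ref{general algebraic Sato Tate conj. Serre's approach}(a), so $\AST_{K}(V, \psi) = \mathcal{G}_{K, 1}^{\alg}$ and $\widetilde{\AST_{K}}(V, \psi) = \widetilde{\mathcal{G}_{K, 1}^{\alg}}$ are defined over $\Q$; by Corollary~\ref{AST V psi with resp. to AST V psi cup -1Id AST V psi}(b)--(c) we have $\AST_{K}(V, \psi) = \widetilde{\AST_{K}}(V, \psi) \cup - {\rm{Id}}_{V}\,\widetilde{\AST_{K}}(V, \psi)$ and, up to conjugation, $\ST_{K}(V, \psi) = \widetilde{\ST_{K}}(V, \psi) \cup - {\rm{Id}}_{V}\,\widetilde{\ST_{K}}(V, \psi)$, with $\widetilde{\ST_{K}}(V, \psi)$ embedded in $\ST_{K}(V, \psi)$ through $\pi_{1}$.

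Next I would recall the normalized Frobenius elements from Remark~\ref{tilde Sato--Tate set up}: for a prime $v$ of $\mathcal{O}_{K}$ and a Frobenius $\Fr_{v}$, the semisimple part $\tilde{s}_{v}$ of $( q_{v}^{-\frac{n}{2}} {\rm{Id}}_{V_l}, \, q_{v}^{-1} {\rm{Id}}_{\Q_{l} (1)})\, \widetilde{\rho}_{l}(\Fr_{v}) \in \widetilde{G_{l, K, 1}^{\alg}}(\C) \simeq \widetilde{\AST_{K}}(V, \psi)(\C)$ is carried by $\pi_{1}$ onto the semisimple part $s_{v}$ of $q_{v}^{-\frac{n}{2}}\rho_{l}(\Fr_{v})$, and strict compatibility of $(\widetilde{\rho}_{l})$ makes ${\rm{conj}}(\tilde{s}_{v})$ in $\widetilde{\AST_{K}}(V, \psi)(\C)$ well defined with a representative in $\widetilde{\ST_{K}}(V, \psi)$. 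In particular, under $\widetilde{\ST_{K}}(V, \psi) \hookrightarrow \ST_{K}(V, \psi)$ every class ${\rm{conj}}(s_{v})$ already meets $\widetilde{\ST_{K}}(V, \psi)$ (the analogue of Remark~\ref{Conjugacy classes of normalized Frobenius elements are in wide tilde AST} in this framework).

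Now assume Conjecture~\ref{general Sato Tate conj.} holds for $\ST_{K}(V, \psi)$. Arguing exactly as in the proof of Theorem~\ref{equidistribution property in ST implies equidistribution property in wide tilde ST}(a), I would show $- {\rm{Id}}_{V} \in \widetilde{\ST_{K}}(V, \psi)$: otherwise $\ST_{K}(V, \psi) = \widetilde{\ST_{K}}(V, \psi) \sqcup - {\rm{Id}}_{V}\,\widetilde{\ST_{K}}(V, \psi)$ and the sign character $\chi$ (equal to $1$ on $\widetilde{\ST_{K}}(V, \psi)$ and to $-1$ on the other coset) integrates to $0$ against the Haar measure of $\ST_{K}(V, \psi)$, while $\frac{1}{N_{K}(m)}\sum_{Nv \leq m}\chi(s_{v}) \to 1$ since all $s_{v}$ lie in $\widetilde{\ST_{K}}(V, \psi)$, contradicting equidistribution. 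Hence $\widetilde{\ST_{K}}(V, \psi) = \ST_{K}(V, \psi)$, the two Haar measures coincide, and $\pi_{1}$ identifies ${\rm{conj}}(\tilde{s}_{v})$ with ${\rm{conj}}(s_{v})$; equidistribution of ${\rm{conj}}(\tilde{s}_{v})$ in ${\rm{conj}}(\widetilde{\ST_{K}}(V, \psi))$ follows at once.

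The only real work I anticipate is verification rather than insight: one must confirm that the structural inputs used above — the coset decompositions of Corollary~\ref{AST V psi with resp. to AST V psi cup -1Id AST V psi}, the compatibility $\pi_{1}(\tilde{s}_{v}) = s_{v}$, and the avoidance of $- {\rm{Id}}_{V}\,\widetilde{\ST_{K}}(V, \psi)$ by Frobenius classes when $- {\rm{Id}}_{V} \notin \widetilde{\ST_{K}}(V, \psi)$ — remain valid under the weaker hypotheses of Conjectures~\ref{Tate conjecture for families of l-adic representations}(a) and \ref{Tate conjecture for families of l-adic representations tilde}(a), which they do since they rest only on Diagram~\ref{double cubic diagram} and Lemma~\ref{The diagram connecting wide tilde mathcal G K alg and ilde mathcal G K alg }.
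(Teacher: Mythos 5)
Your proposal is correct and takes essentially the same approach as the paper: the paper's own proof of this proposition is a one-line citation of Theorem~\ref{equidistribution property in ST implies equidistribution property in wide tilde ST}, and what you have done is unpack exactly the argument of that theorem (the sign-character contradiction ruling out nontrivial parity, followed by the identification of $\ST_{K}(V,\psi)$ with $\widetilde{\ST_{K}}(V,\psi)$ and of ${\rm{conj}}(\tilde{s}_v)$ with ${\rm{conj}}(s_v)$ in the trivial-parity case), while verifying that the structural inputs — Corollary~\ref{AST V psi with resp. to AST V psi cup -1Id AST V psi}(b),(c), the normalized Frobenius construction of Remark~\ref{tilde Sato--Tate set up}, and the fact that the $s_v$ meet $\widetilde{\ST_{K}}(V,\psi)$ — survive under the weaker hypotheses 8.1(a) and 8.2(a). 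One small slip: where Remark~\ref{tilde Sato--Tate set up} writes $\widetilde{G_{l, K, 1}^{\alg}}(\C) \simeq \widetilde{\AST_{K}}(V, \psi)(\C)$ under the full Conjecture~\ref{general algebraic Sato Tate conj. Serre's approach}, under the weaker hypotheses you only have a closed embedding via $\widetilde{\gpast}_{l,K}$, not an isomorphism; but this does not affect the argument, since all that is used is that the normalized Frobenii land inside $\widetilde{\AST_{K}}(V, \psi)(\C)$, which the embedding guarantees.
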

\begin{proof}
This follows by Theorem 
\ref{equidistribution property in ST implies equidistribution property in wide tilde ST}.
\end{proof}

The Sato--Tate parity group $\gpP_{\rm{ST}} (V, \psi)$ was introduced in Definition \ref{The Sato--Tate parity group}.
Under Conjectures \ref{Tate conjecture for families of l-adic representations}(a) and \ref{Tate conjecture for families of l-adic representations tilde}(a) and their consequences, the Definition \ref{The Sato--Tate parity group} of 
$\gpP_{\rm{ST}} (V, \psi)$ is clearly valid in the framework of this section:
\begin{equation}
\gpP_{\rm{ST}} (V, \psi) \, :=  \, \AST_{K} (V, \psi) / \widetilde{\AST_{K}} (V, \psi) 
\, =  \, \ST_{K} (V, \psi) / \widetilde{\ST_{K}} (V, \psi) \, .
\label{P ST under Conjectures 8.1 and 8.2}
\end{equation}
Due to Corollary \ref{AST V psi with resp. to AST V psi cup -1Id AST V psi}, $\gpP_{\rm{ST}} (V, \psi)$ is either 
trivial or isomorphic to $\Z/2\Z$.
\medskip

Now we can extend Theorem
\ref{equidistribution property in ST implies equidistribution property in wide tilde ST} to the setup of this section.
\begin{theorem} Assume Conjectures \ref{Tate conjecture for families of l-adic representations}(a) and \ref{Tate conjecture for families of l-adic representations tilde}(a).  
\begin{itemize}
\item[(a)]
If $\gpP_{\rm{ST}} (V, \psi)$ is nontrivial, i.e. $- {{\rm Id}}_{V} \notin \widetilde{\ST_{K}} (V, \psi)$, then Sato--Tate Conjecture \ref{general Sato Tate conj.} does not hold. 
\item[(b)]
If 
$\gpP_{\rm{ST}} (V, \psi)$ is trivial, i.e. $- {{\rm Id}}_{V} \in \widetilde{\ST_{K}} (V, \psi)$,
then Sato--Tate Conjectures \ref{general Sato Tate conj.}  and \ref{general Sato Tate conj. tilde}
are equivalent. 
\end{itemize}
\label{equidistribution property in ST implies equidistribution property in wide tilde ST for 
PST under Conj. 8.1 (a) and 8.2 (a)}
\end{theorem}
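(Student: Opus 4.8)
The plan is to transcribe the proof of Theorem~\ref{equidistribution property in ST implies equidistribution property in wide tilde ST}, replacing each ingredient that was deduced there from Algebraic Sato--Tate Conjecture~\ref{general algebraic Sato Tate conj. Serre's approach} by the version established in this section under Conjectures~\ref{Tate conjecture for families of l-adic representations}(a) and~\ref{Tate conjecture for families of l-adic representations tilde}(a). First I would invoke Remark~\ref{Tate conj. is equivalent to the AST conj.}: these hypotheses yield monomorphisms ${\rm{ast}}_{l, K}$ and $\widetilde{{\rm{ast}}}_{l, K}$ in the left wall of Diagram~\ref{double cubic diagram}, so the normalized Frobenius elements of Remarks~\ref{Sato--Tate set up} and~\ref{tilde Sato--Tate set up} make sense here. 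Thus for each prime $v$ of $\mathcal{O}_K$ outside a finite set we get a semisimple $s_v \in \AST_K(V,\psi)(\C)$, a conjugate of which lies in $\ST_K(V,\psi)$, together with $\tilde s_v = (s_v, q_v^{-1}{\rm{Id}}_{\Q_l(1)}) \in \widetilde{\AST_{K}}(V,\psi)(\C)$, a conjugate of which lies in $\widetilde{\ST_{K}}(V,\psi)$.

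The one substantive point is to re-establish the analogue of Remark~\ref{Conjugacy classes of normalized Frobenius elements are in wide tilde AST}: every conjugacy class of normalized Frobenius elements of $\AST_K(V,\psi)(\C)$ already lies in $\widetilde{\AST_{K}}(V,\psi)(\C)$. This I would prove by chasing Diagram~\ref{double cubic diagram}: by the same computation as in Remark~\ref{tilde Sato--Tate set up}, the monomorphism $\pi_1$ of Diagram~\ref{diagram compatibility of tilde(GK1alg) with GK1alg} carries the normalized $\widetilde\rho_l$-Frobenius $(q_v^{-n/2}{\rm{Id}}_{V_l}, q_v^{-1}{\rm{Id}}_{\Q_l(1)})\widetilde\rho_l(\Fr_v) \in \widetilde{\mathcal{G}_{K,1}^{\alg}}(\C)$ onto the normalized $\rho_l$-Frobenius $q_v^{-n/2}\rho_l(\Fr_v)$, and $\widetilde{\mathcal{G}_{K,1}^{\alg}} = \widetilde{\AST_{K}}(V,\psi)$; compatibility of ${\rm{ast}}_{l, K}$ with $\widetilde{{\rm{ast}}}_{l, K}$ (front and left walls of Diagram~\ref{double cubic diagram}) then forces $s_v$, hence ${\rm{conj}}(s_v)$, into $\widetilde{\AST_{K}}(V,\psi)(\C)$. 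Combined with Corollary~\ref{AST V psi with resp. to AST V psi cup -1Id AST V psi}(b),(c), this shows that when $\gpP_{\rm{ST}}(V,\psi)$ is nontrivial no conjugacy class of normalized Frobenius elements meets the coset $-{\rm{Id}}_V\,\widetilde{\ST_{K}}(V,\psi)$.

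The remainder is a verbatim repetition of the measure-theoretic argument of Theorem~\ref{equidistribution property in ST implies equidistribution property in wide tilde ST}. For (a): if $-{\rm{Id}}_V \notin \widetilde{\ST_{K}}(V,\psi)$, then $\ST_K(V,\psi) = \widetilde{\ST_{K}}(V,\psi) \sqcup -{\rm{Id}}_V\,\widetilde{\ST_{K}}(V,\psi)$, and the class function $\eta$ equal to $1$ on the first piece and $-1$ on the second integrates to $0$ against the probabilistic Haar measure but satisfies $\eta(s_v) = 1$ for all $v$, so $\frac{1}{N_K(m)}\sum_{Nv\le m}\eta(s_v)\to 1\neq 0$ and equidistribution fails. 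For (b): if $-{\rm{Id}}_V \in \widetilde{\ST_{K}}(V,\psi)$, then Corollary~\ref{AST V psi with resp. to AST V psi cup -1Id AST V psi} gives $\ST_K(V,\psi) = \widetilde{\ST_{K}}(V,\psi)$ (and the analogous equality of ambient algebraic groups), hence $\mu = \tilde\mu$ and ${\rm{conj}}(\tilde s_v) = {\rm{conj}}(s_v)$ under this identification, so Conjectures~\ref{general Sato Tate conj.} and~\ref{general Sato Tate conj. tilde} assert the same equidistribution statement. I expect the only real obstacle to be the second paragraph --- the diagram chase showing the Frobenius classes land in $\widetilde{\AST_{K}}(V,\psi)$ rather than in the nontrivial coset; once that is in hand, the argument is bookkeeping.
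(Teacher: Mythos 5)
Your proposal is correct and follows the paper's own route: the paper's proof is literally the one-line remark that the argument of Theorem~\ref{equidistribution property in ST implies equidistribution property in wide tilde ST} carries over, and your transcription of that measure-theoretic argument (the $\pm 1$ class function on the two cosets of $\widetilde{\ST_{K}}(V,\psi)$) is exactly what is intended. Your extra paragraph verifying via Diagram~\ref{double cubic diagram} that the normalized Frobenius classes land in $\widetilde{\AST_{K}}(V,\psi)(\C)$ under only Conjectures~\ref{Tate conjecture for families of l-adic representations}(a) and~\ref{Tate conjecture for families of l-adic representations tilde}(a) is the one point the paper leaves implicit, and you have handled it correctly.
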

\begin{proof} This follows by the proof of Theorem
\ref{equidistribution property in ST implies equidistribution property in wide tilde ST}.
\end{proof}

\begin{definition} 
\label{Sato--Tate parity under Conj. 8.1 (a) and 8.2 (a)}
Assume Conjectures \ref{Tate conjecture for families of l-adic representations}(a) and \ref{Tate conjecture for families of l-adic representations tilde}(a). Define the \emph{Serre parity group} $\gpP_{\rm{S}} (V, \psi)$:
$$
\gpP_{\rm{S}} (V, \psi) \, := \, \mathcal{G}_{K, 1}^{\alg, \, 0} / (\mathcal{G}_{K, 1}^{\alg})^{\circ}.
$$
\end{definition} 

Consider the following commutative diagram. 

\begin{figure}[H]
\[
\begin{tikzcd}
1 \arrow{r}{} &  (\widetilde{\mathcal{G}_{K}^{\alg}})^{\circ} \arrow{d}[swap]{}{\pi^{\circ}} \arrow{r}{}
&  \widetilde{\mathcal{G}_{K}^{\alg}} \arrow{d}[swap]{}{\pi} \arrow{r}{}  & \pi_{0} 
(\widetilde{\mathcal{G}_{K}^{\alg}})   \arrow{d}[swap]{}{\overline{\pi}} \arrow{r}{} &  1\\  
1 \arrow{r}{} & (\mathcal{G}_{K}^{\alg})^{\circ} \arrow{d}{} \arrow{r}{} & 
\mathcal{G}_{K}^{\alg} \arrow{d}{} \arrow{r}{} & \pi_{0}(\mathcal{G}_{K}^{\alg}) 
\arrow{d}[swap]{} \arrow{r}{} &  1 \\
&  1  & 1   & 1\\
\end{tikzcd}
\]
\\[-0.8cm]
\caption{}
\label{diagram to prove analog of Serre theorem for families of l-adic representations under weak Tate 
conjectures part (a) lll}
\end{figure}

In Diagram \ref{diagram to prove analog of Serre theorem for families of l-adic representations under weak Tate 
conjectures part (a) lll}, the map $\pi$ is an epimorphism by the setup of this section; consequently the map $\overline{\pi}$ is an epimorphism.
By Diagram \ref{diagram compatibility of tilde(GK1alg) with GK1alg} the map $\pi$ has finite kernel. Hence the algebraic groups $\widetilde{\mathcal{G}_{K}^{\alg}}$ and 
$\mathcal{G}_{K}^{\alg}$ have the same dimension and the map $\pi^{\circ}$ has finite kernel. Because the image of   
$\pi^{\circ}$ is a closed subgroup of $(\mathcal{G}_{K}^{\alg})^{\circ}$ \cite[Section 7.4, Prop. B(b)]{Hu} of the same dimension as $(\mathcal{G}_{K}^{\alg})^{\circ}$, the image is also open. Because the group $(\mathcal{G}_{K}^{\alg})^{\circ}$ is connected, the map $\pi^{\circ}$ must be an epimorphism. By the snake lemma applied to Diagram 
\ref{diagram to prove analog of Serre theorem for families of l-adic representations under weak Tate 
conjectures part (a) lll} there is the following short exact sequence:
\begin{equation}
\label{ker pi 0  Ker pi Ker overline pi}
1 \rightarrow {\rm{Ker}} \, \pi^{\circ} \rightarrow {\rm{Ker}} \, \pi \rightarrow {\rm{Ker}} \, \overline{\pi} \rightarrow 1.
\end{equation} 

\begin{proposition} 
\label{Proposition 1 rightarrow P T rightarrow P S rightarrow P ST rightarrow 1}
There is the following short exact sequence:
\begin{equation}
1 \rightarrow {\rm{Ker}} \, \overline{\pi} \rightarrow \gpP_{\rm{S}} (V, \psi) \rightarrow \gpP_{\rm{ST}} (V, \psi)
\rightarrow 1.
\label{1 rightarrow P T rightarrow P S rightarrow P ST rightarrow 1}
\end{equation}
\end{proposition}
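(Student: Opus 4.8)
The plan is to realise both parity groups inside the component group $\pi_{0}(\mathcal{G}_{K, 1}^{\alg})$ and then read off \eqref{1 rightarrow P T rightarrow P S rightarrow P ST rightarrow 1} from the two commutative diagrams already assembled. First I would fix three identifications. From the bottom row of Diagram \ref{pi0 applied to the diagram compatibility of wide tilde math cal G alg K 1 with math cal G alg K 1}, the Serre parity group is $\gpP_{\rm{S}}(V, \psi) = \Ker q$, where $q \colon \pi_{0}(\mathcal{G}_{K, 1}^{\alg}) \to \pi_{0}(\mathcal{G}_{K}^{\alg})$ is the canonical homomorphism. By \eqref{connected component of wide tilde G K 1 alg  is the same as G K 1 alg} the monomorphism $\overline{\pi}_{1}$ of that diagram identifies $\pi_{0}(\widetilde{\mathcal{G}_{K, 1}^{\alg}})$ with the subgroup $H := \widetilde{\mathcal{G}_{K, 1}^{\alg}} / (\mathcal{G}_{K, 1}^{\alg})^{\circ}$ of $\pi_{0}(\mathcal{G}_{K, 1}^{\alg})$, which is normal because $\widetilde{\mathcal{G}_{K, 1}^{\alg}} \triangleleft \mathcal{G}_{K, 1}^{\alg}$ by \eqref{math cal GK1alg = wide tilde math cal G1Kalg cup -Id wide tilde math cal G1Kalg}; the same equation gives $\pi_{0}(\mathcal{G}_{K, 1}^{\alg}) / H = \mathcal{G}_{K, 1}^{\alg} / \widetilde{\mathcal{G}_{K, 1}^{\alg}} = \gpP_{\rm{ST}}(V, \psi)$. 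Thus there is a short exact sequence $1 \to \pi_{0}(\widetilde{\mathcal{G}_{K, 1}^{\alg}}) \to \pi_{0}(\mathcal{G}_{K, 1}^{\alg}) \to \gpP_{\rm{ST}}(V, \psi) \to 1$ whose first nontrivial arrow is $\overline{\pi}_{1}$.

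Next I would exploit the commutativity of Diagram \ref{pi0 applied to the diagram compatibility of wide tilde math cal G alg K 1 with math cal G alg K 1} together with the isomorphism $\widetilde{i}_{CC}$ of Theorem \ref{pi 0 wide tilde math cal G K, 1 alg cong pi 0 wide tilde math cal G K alg}: the composite $q \circ \overline{\pi}_{1}$ equals $\overline{\pi} \circ \widetilde{i}_{CC}$. Since $\pi$, and hence $\overline{\pi}$, is an epimorphism (see the paragraph preceding \eqref{ker pi 0 Ker pi Ker overline pi}) and $\widetilde{i}_{CC}$ is an isomorphism, the restriction $q|_{H}$ is surjective onto $\pi_{0}(\mathcal{G}_{K}^{\alg})$, and its kernel is $H \cap \Ker q = H \cap \gpP_{\rm{S}}(V, \psi)$. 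As $\overline{\pi}_{1}$ is injective we get $H \cap \gpP_{\rm{S}}(V, \psi) = \overline{\pi}_{1}\bigl(\Ker(q \circ \overline{\pi}_{1})\bigr) = \overline{\pi}_{1}\bigl(\Ker(\overline{\pi} \circ \widetilde{i}_{CC})\bigr) \simeq \Ker(\overline{\pi} \circ \widetilde{i}_{CC}) = \widetilde{i}_{CC}^{-1}(\Ker \overline{\pi}) \simeq \Ker \overline{\pi}$; equivalently one may invoke the short exact sequence \eqref{ker pi 0 Ker pi Ker overline pi} directly.

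Finally, surjectivity of $q|_{H}$ forces $\gpP_{\rm{S}}(V, \psi) \cdot H = \pi_{0}(\mathcal{G}_{K, 1}^{\alg})$, whence the second isomorphism theorem gives $\gpP_{\rm{S}}(V, \psi) / (\gpP_{\rm{S}}(V, \psi) \cap H) \simeq \pi_{0}(\mathcal{G}_{K, 1}^{\alg}) / H = \gpP_{\rm{ST}}(V, \psi)$. Combining this with the identification $\gpP_{\rm{S}}(V, \psi) \cap H \simeq \Ker \overline{\pi}$ obtained in the previous step yields the asserted sequence \eqref{1 rightarrow P T rightarrow P S rightarrow P ST rightarrow 1}. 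I expect the only real point of care to be the bookkeeping in the first paragraph, namely verifying that $\overline{\pi}_{1}$ genuinely presents $\pi_{0}(\widetilde{\mathcal{G}_{K, 1}^{\alg}})$ as the normal subgroup $H$ with quotient $\gpP_{\rm{ST}}(V, \psi)$; this rests on \eqref{connected component of wide tilde G K 1 alg  is the same as G K 1 alg}, \eqref{math cal GK1alg = wide tilde math cal G1Kalg cup -Id wide tilde math cal G1Kalg} and the normality of $\widetilde{\mathcal{G}_{K, 1}^{\alg}}$ in $\mathcal{G}_{K, 1}^{\alg}$. Everything after that is a routine diagram chase among finite groups.
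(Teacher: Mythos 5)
Your proof is correct and follows essentially the same route as the paper: you identify $\gpP_{\rm{ST}}(V,\psi)$ with $\pi_{0}(\mathcal{G}_{K,1}^{\alg})/\pi_{0}(\widetilde{\mathcal{G}_{K,1}^{\alg}})$ and then extract the sequence from Diagram \ref{pi0 applied to the diagram compatibility of wide tilde math cal G alg K 1 with math cal G alg K 1}. The only difference is that where the paper invokes the snake lemma, you carry out the equivalent diagram chase by hand via the second isomorphism theorem; the content is the same.
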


\begin{proof} By Definition \ref{AST K and tilde AST K under the analog of Tate conjecture for l-adic rep.} and Diagram 
\ref{wide tilde math cal GK1alg with math cal GK1 and p0 wide tilde math cal with pi0 math cal} we have:
\begin{equation}
\gpP_{\rm{ST}} (V, \psi) = \pi_{0} (\mathcal{G}_{K, 1}^{\alg}) \, / \,
\pi_{0} (\widetilde{\mathcal{G}_{K, 1}^{\alg}}).
\label{P ST = p0 ( mathcal G alg K 1) over p0 (tilde mathcal G alg K 1)}
\end{equation}
Now the claim follows by the snake lemma applied to Diagram 
\ref{pi0 applied to the diagram compatibility of wide tilde math cal G alg K 1 with math cal G alg K 1}. 
\end{proof}

\section{The category of polarized realizations \texorpdfstring{$R_{K}^{\rm{p}}$}{RKp}}
\label{sec,category of polarized realizations}

Let $L \subset \C$ be a field. Let $w\colon \mathcal{C} \rightarrow {\rm{Vec}}_{L}$ be an exact, faithful, $\otimes$-functor from an abelian, $L$-linear, rigid, $\otimes$-category $\mathcal{C}$ to ${\rm{Vec}}_{L}$ (the category of finite-dimensional $L$-vector spaces).

\begin{definition} The group scheme $\Aut^{\otimes} w$ is defined as follows:
\medskip

For every $L$-algebra $R$, $\Aut^{\otimes}w \, (R)$ consists of families $(\lambda_{X})$, 
$X \in {\rm{obj}} (\mathcal{C})$ such that $\lambda_{X}$ is an $R$-linear automorphism:
$$
\lambda_{X}\colon w (X) \otimes R \,\, {\stackrel{\simeq}{\longrightarrow}} \,\,  w (X) \otimes R
$$
satisfying the following conditions:
\begin{itemize}
\item[(a)] $\lambda_{X} \otimes \lambda_{X^{\prime}} = \lambda_{X \otimes X^{\prime}}$, \quad \,\, for all
$X, X^{\prime} \, \in \, {\rm{obj}} (\mathcal{C})$,
 \item[(b)] $\lambda_{{\bf{1}}}\colon w({\bf{1}}) \otimes R \,  
{\stackrel{{\rm{id}}}{\longrightarrow}} \, w({\bf{1}}) \otimes R$, 
\item[(c)] the following Diagram \ref{definition od Aut tensor w} commutes for all $\alpha \in \Hom_{\mathcal{C}} (X, Y)$ and all
$X, Y \in {\rm{obj}} (\mathcal{C})$.
\end{itemize}
\begin{figure}[H]
\[
\begin{tikzcd}[column sep=huge, row sep=large]
w (X) \otimes R \arrow{r}{\lambda_{X}}[swap]{\simeq}
\arrow{d}{w(\alpha) \otimes 1}
& w (X) \otimes R \arrow{d}{w(\alpha) \otimes 1} \\ 
w (Y) \otimes R  \arrow{r}{\lambda_{Y}}[swap]{\simeq} 
 & w (Y) \otimes R  \\
\end{tikzcd}
\]
\\[-0.8cm]
\caption{}
\label{lambda X and lambda Y commute with w(alpha) otimes 1}
\end{figure}
\label{definition od Aut tensor w}
\end{definition}

\begin{theorem} {\bf{(Tannaka Duality)}} Let ${\rm{Rep}}_{L} G$ denote the category of finite 
dimensional, $L$-linear representations of a group scheme $G/L$. 
\begin{itemize}
\item[(a)] ${\rm{Rep}}_{L} G$ is a neutral, Tannakian category with forgetful fiber functor $w^{G}\colon {\rm{Rep}}_{L} \, G \rightarrow \, {\rm{Vec}}_{L}$ such that
$\Aut^{\otimes}w^{G} = G$.
\item[(b)] If $\mathcal{C}$ is an $L$-linear, neutral, Tannakian category 
with fiber functor $w\colon \mathcal{C} \rightarrow {\rm{Vec}}_{L}$,
then there exist a group scheme $G/L$ satisfying $\Aut^{\otimes}w = G$ and an equivalence of categories
$\mathcal{C} \,\, {\stackrel{\simeq}{\longrightarrow}} \,\, {\rm{Rep}}_{L} G$ via which $w$ corresponds to $w^G$.
\end{itemize}
\label{Tannaka duality theorem}
\end{theorem}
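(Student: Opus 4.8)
The plan is to treat this as a statement to be cited rather than re-proved from scratch, since it is the classical Tannakian reconstruction theorem (due to Saavedra Rivano, Deligne--Milne). I would first dispose of part (a), which is essentially a definitional check. Given a group scheme $G/L$, one must verify that $\mathrm{Rep}_L G$ with the forgetful functor $w^G$ satisfies the axioms of a neutral Tannakian category: it is abelian, $L$-linear, rigid (duals are given by contragredient representations), and $\otimes$-generated in the appropriate sense, with $w^G$ exact, faithful, and $\otimes$-compatible. The identity $\mathrm{Aut}^{\otimes} w^G = G$ then follows by showing that every family $(\lambda_X)_{X \in \mathrm{obj}(\mathrm{Rep}_L G)}$ satisfying conditions (a)--(c) of Definition~\ref{definition od Aut tensor w} is induced by a unique point $g \in G(R)$; the key input is that $G$ acts on the regular representation $\mathcal{O}(G)$ (or on finite-dimensional subrepresentations thereof, which exhaust $\mathcal{O}(G)$ since $G$ is affine over $L$), and naturality forces $\lambda$ on an arbitrary $X$ to be determined by its value on $\mathcal{O}(G)$, which in turn corresponds to an algebra endomorphism of $\mathcal{O}(G)$, i.e.\ a point of $G$. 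Invertibility of $\lambda$ gives a point of $G$ rather than of the endomorphism monoid.

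For part (b), the substance is the reconstruction: given an $L$-linear neutral Tannakian category $\mathcal{C}$ with fiber functor $w$, set $G := \mathrm{Aut}^{\otimes} w$ and build an equivalence $\mathcal{C} \xrightarrow{\simeq} \mathrm{Rep}_L G$. The construction sends an object $X$ to the vector space $w(X)$ equipped with its tautological $G$-action (each $g = (\lambda_Y) \in G(R)$ acts on $w(X) \otimes R$ via $\lambda_X$); functoriality and condition (c) of Definition~\ref{definition od Aut tensor w} guarantee this is a representation and that morphisms in $\mathcal{C}$ become $G$-equivariant. The proof that this functor is an equivalence proceeds in the standard steps: (i) show $G$ is genuinely an affine group scheme over $L$ by exhibiting $\mathcal{O}(G)$ as a suitable colimit (coend) $\mathrm{colim}_{X} \, w(X)^{\vee} \otimes w(X)$ over $\mathcal{C}$, with its Hopf algebra structure coming from the $\otimes$-structure; (ii) full faithfulness, which reduces to the statement that $\mathrm{Hom}_{\mathcal{C}}(X,Y) = \mathrm{Hom}_G(w(X), w(Y))$, proved by a rigidity argument identifying both sides with $(w(X^\vee \otimes Y))^G$; (iii) essential surjectivity, which uses that every finite-dimensional representation of $G$ is a subquotient of a sum of tensor powers of the "universal" comodule, matched against the $\otimes$-generation property of $\mathcal{C}$.

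I expect the main obstacle to be step (iii) of part (b) — essential surjectivity — together with the careful bookkeeping needed to show $\mathcal{O}(G)$ is a well-defined commutative Hopf algebra (not merely a vector space) and that $G = \mathrm{Spec}\,\mathcal{O}(G)$ is flat/affine of the expected type. This is where the rigidity of $\mathcal{C}$ and the exactness and faithfulness of $w$ are used in an essential way, and where one must invoke that a representation of an affine group scheme is a filtered union of its finite-dimensional subrepresentations. Since a complete, careful treatment would occupy many pages and is not original to this paper, the actual write-up will simply state the theorem and refer to \cite{DM} (Deligne--Milne, \emph{Tannakian categories}) or to Saavedra Rivano for the full argument, remarking only that the fiber functor $w$ in the applications of this paper will be one of the Betti, de Rham, or $l$-adic realization functors, all of which are exact, faithful, and $\otimes$-compatible on the relevant motivic categories.
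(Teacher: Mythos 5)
Your proposal is correct and matches the paper's treatment exactly: the paper's entire proof is the citation ``See \cite[pp 128--138]{DM}.'' Your additional sketch of the reconstruction argument is accurate but unnecessary for the paper's purposes, since the theorem is invoked as a classical black box.
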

\begin{proof} See \cite[pp 128--138]{DM}.
\end{proof}

Below, up to Proposition \ref{the family V l l of G K satisfies Conjectures 1 (a) and 2 (a)}, $K$ can be any field of characteristic 0. Jannsen \cite{Ja90} introduced the $\Q$-linear, Tannakian category of \emph{mixed realizations} $MR_{K}$ and its $\Q$-linear Tannakian full subcategory of \emph{realizations} $R_{K}$. An object of $R_{K}$ is a direct sum of objects of the following form:
\begin{equation}
\mathcal{V} \, := \, (\, V_{_{\rm{DR}}}, \, (V_l)_{l}, \, (V_{\sigma})_{\sigma}, \, 
I_{\infty, \sigma}, \,  I_{l, \bar{\sigma}}),   
\end{equation}
where $\sigma$ and $\bar{\sigma}$ run over all embeddings 
$\sigma\colon K \hookrightarrow \C$, \, $\bar{\sigma}\colon \overline{K} \hookrightarrow \C$, such that
$\bar{\sigma}_{| K} = \sigma$ and $l$ runs over all prime numbers; $V_{_{\rm{DR}}}$ is a $K$-vector space; $V_l$ is a $\Q_l$-vector space with $G_{K}$-module structure; and $V_{\sigma}$ is a pure
$\Q$-Hodge structure of weight $n$ for any $n \in \Z$. In addition, the vector spaces $V_{_{\rm{DR}}}$, $V_l$, and $V_{\sigma}$ 
are finite dimensional and there are comparison isomorphisms:
\begin{align}
I_{\infty, \sigma}\colon& V_{\sigma} \otimes_{\Q} \C \,\,  
{\stackrel{\simeq}{\longrightarrow}} \,\, V_{_{\rm{DR}}} \otimes_{K, \sigma} \C,
\label{comparison sigma with DR} \\
I_{l, \bar{\sigma}}\colon& V_{\sigma} \otimes_{\Q} \Q_l \,\,  
{\stackrel{\simeq}{\longrightarrow}} \,\, V_{l},
\label{comparison sigma with l}
\end{align}
such that $g \circ I_{l, \bar{\sigma} \circ g} = I_{l, \bar{\sigma}}$ for all $g \in G_K$.
Notice that $V_{\sigma} = V_{\sigma \circ g}$. Such $\mathcal{V}$ is called a {\em{pure realization}}
of weight $n$. 

In particular we have the \emph{identity realization} and the \emph{Tate realization}, respectively:
\begin{align*}
{\bf{1}} \, &:= \, (\, K, \,  (\Q_{l})_{l}, \,  (\Q)_\sigma, \,
I_{\infty, \sigma}, \,  I_{\infty, \bar{\sigma}}) \\
{\bf{1}}(1) \, &:= \, (\, K (1), \,  (\Q_{l} (1))_{l}, \,  (\Q (1))_{\sigma}, \,
I_{\infty, \sigma}, \,  I_{\infty, \bar{\sigma}})
\end{align*}

In what follows, we will write $\mathcal{V}$, and sometimes $\mathcal{V}'$, to denote an arbitrary element of $R_K$.

\begin{remark} See \cite[p. 11]{Ja90} for the definition of 
$\Hom_{R_{K}} (\mathcal{V}, \, \mathcal{V}^{'})$. 
\label{Homs from mathcal V to mathcal V prime}
\end{remark}

\begin{definition}
The category $R_{K}^{\rm{p}}$ of \emph{polarized realizations} is defined to be the full subcategory of $R_K$ 
generated, for all $n \in \Z$, by objects: 
\begin{equation}
(\mathcal{V}, \psi) \, := \, (\, (V_{_{\rm{DR}}}, \psi_{_{\rm{DR}}}), \, 
(V_l, \psi_{l})_{l}, \, (V_{\sigma}, \psi_{\sigma})_{\sigma}, \, 
I_{\infty, \sigma}, \,  I_{l, \bar{\sigma}}) 
\label{polarized realization}
\end{equation}
where $(V_{\sigma}, \psi_{\sigma})_{\sigma}$ are pure, polarized, rational Hodge structures
of weight $n$, for any $n \in \Z$, and $(V_{_{\rm{DR}}}, \psi_{_{\rm{DR}}})$ is associated with 
$(V_{\sigma}, \psi_{\sigma})_{\sigma}$  (cf. \eqref{comparison VC and VDRC}, 
\eqref{comparison psi for VC and VDRC} in \S \ref{de Rham structures associated with Hodge structures}) as follows:
\begin{align}
I_{\infty, \sigma}\colon& V_{\sigma} \otimes_{\Q} \C \simeq V_{_{\rm{DR}}} \otimes_{K, \sigma} \C,
\label{comparison VC and VDRC in realizations} \\
I_{\infty, \sigma}\colon& \psi_{\sigma} \otimes_{\Q} \C \, \simeq \, \psi_{_{\rm{DR}}} \otimes_{K, \sigma} \C.
\label{comparison psi for VC and VDRC in realizations}
\end{align}
The family of $l$-adic representations $(V_l, \psi_{l})_{l}$ is associated with 
$(V_{\sigma}, \psi_{\sigma})_{\sigma}$ as follows:
\begin{align}
I_{l, \bar{\sigma}}\colon& V_{\sigma} \otimes_{\Q} \Q_l \simeq \, V_{l},
\label{comparison VC and Vl in realizations} \\
I_{l, \bar{\sigma}}\colon& \psi_{\sigma} \otimes_{\Q} \Q_l \, \simeq \, 
\psi_{l}.
\label{comparison psi for VC and Vl in realizations}
\end{align}
such that $g \circ I_{l, \bar{\sigma} \circ g} = I_{l, \bar{\sigma}}$ for all $g \in G_K$.
\label{definition of polarized realizations category} 
\end{definition} 

\begin{lemma}
The category $R_{K}^{\rm{p}}$ is semisimple.
\label{RK pol is semisimple}
\end{lemma}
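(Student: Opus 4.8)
The statement to prove is that $R_K^{\mathrm{p}}$ is a semisimple category. The natural strategy is to reduce semisimplicity of $R_K^{\mathrm{p}}$ to a property it inherits from an ambient structure, exploiting the polarization that is built into every object. First I would recall that an abelian, $\Q$-linear category in which every object is a finite direct sum of objects admitting a nondegenerate symmetric or alternating pairing compatible with all structure maps tends to be semisimple by a Poincar\'e-reducibility type argument: given a subobject $\mathcal{W} \subset \mathcal{V}$, the polarization $\psi$ on $\mathcal{V}$ restricts to a pairing that identifies a complement. The key point is that in each realization (de Rham, $l$-adic, Betti-Hodge) the polarization is a nondegenerate pairing preserved by the relevant symmetry group ($G_K$-action on $V_l$, the filtration on $V_{\mathrm{DR}}$, the Hodge structure on $V_\sigma$), so the orthogonal complement $\mathcal{W}^\perp$ taken componentwise is again a subobject of $\mathcal{V}$ in $R_K^{\mathrm{p}}$, and $\mathcal{V} = \mathcal{W} \oplus \mathcal{W}^\perp$.

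The steps I would carry out, in order: (1) Reduce to showing that for an object $(\mathcal{V},\psi)$ of the form \eqref{polarized realization} and any subobject $\mathcal{W}$ (a sub-realization in the sense of Jannsen, i.e. a compatible family of sub-de Rham space, sub-$G_K$-module, sub-Hodge structure closed under the comparison isomorphisms), the componentwise orthogonal complement $\mathcal{W}^\perp$ with respect to $\psi_{\mathrm{DR}}, (\psi_l)_l, (\psi_\sigma)_\sigma$ is again a sub-realization. (2) Check that $\mathcal{W}^\perp$ is a sub-realization: the de Rham part $W_{\mathrm{DR}}^\perp$ is $K$-rational because $\psi_{\mathrm{DR}}$ is $K$-bilinear; it is a sub-filtered space because $\psi_{\mathrm{DR}}$ pairs $F^i$ with $F^{n+1-i}$ (this is exactly the polarization condition on a Hodge structure transported to $V_{\mathrm{DR}}$); the $l$-adic part $W_l^\perp$ is $G_K$-stable because $\psi_l$ is $G_K$-equivariant into $\Q_l(-n)$; the Betti part $W_\sigma^\perp$ is a sub-Hodge structure because $\psi_\sigma$ is a morphism of Hodge structures $V_\sigma \otimes V_\sigma \to \Q(-n)$; and compatibility of all these orthogonal complements under $I_{\infty,\sigma}$ and $I_{l,\bar\sigma}$ follows because those isomorphisms identify $\psi_\sigma$ with $\psi_{\mathrm{DR}}$ and $\psi_l$ respectively (equations \eqref{comparison psi for VC and VDRC in realizations} and \eqref{comparison psi for VC and Vl in realizations}). (3) Conclude $\mathcal{V} = \mathcal{W} \oplus \mathcal{W}^\perp$ in $R_K^{\mathrm{p}}$, using nondegeneracy of each $\psi_?$ and the fact that $\mathcal{W} \cap \mathcal{W}^\perp = 0$ (again nondegeneracy, possibly after noting that a polarization restricted to a sub-Hodge structure stays nondegenerate — the classical positivity of polarizations gives this on the Betti side, and the comparison isomorphisms propagate it). (4) Since $R_K^{\mathrm{p}}$ is generated under direct sums by such $(\mathcal{V},\psi)$, and a finite direct sum of semisimple-inside objects is semisimple, every object is semisimple; an abelian category all of whose objects are semisimple is a semisimple category.

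The main obstacle I anticipate is Step (2)–(3), specifically verifying that the orthogonal complement genuinely lands inside $R_K^{\mathrm{p}}$ rather than merely inside $R_K$: one must confirm that $\mathcal{W}^\perp$, with the restricted pairing $\psi|_{\mathcal{W}^\perp}$, is itself a \emph{polarized} realization — i.e. the restricted pairing is still a polarization of the sub-Hodge structure (positive-definiteness of the associated Hermitian form on $(\mathcal{W}^\perp)_\sigma$), and on the de Rham side the induced pairing on $W_{\mathrm{DR}}^\perp$ is nondegenerate. The positivity is the delicate part, but it is a standard fact that a polarization of a Hodge structure restricts to a polarization of any sub-Hodge structure, and this transfers to $R_K^{\mathrm{p}}$ through the comparison isomorphisms. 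A secondary subtlety is the weight grading: an object of $R_K^{\mathrm{p}}$ is a direct sum over $n \in \Z$ of pure pieces, and a subobject must respect this decomposition (there are no nonzero morphisms between pure objects of different weights), so one argues weight by weight; this is routine once noted. I would also remark that semisimplicity can alternatively be deduced from the fact that the Tannakian fundamental group of $R_K^{\mathrm{p}}$ is reductive — because it is a subgroup of a product of orthogonal and symplectic groups cut out by the polarizations — but the direct Poincar\'e-reducibility argument above is cleaner and self-contained.
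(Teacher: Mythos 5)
Your proposal is correct and takes essentially the same route as the paper's proof: given a subobject, one forms the componentwise orthogonal complement with respect to $\psi_{_{\rm{DR}}}$, $(\psi_l)_l$, $(\psi_\sigma)_\sigma$, checks via the comparison isomorphisms \eqref{comparison psi for VC and VDRC in realizations} and \eqref{comparison psi for VC and Vl in realizations} that it is again a polarized realization, and concludes $(\mathcal{V},\psi) = (\mathcal{V}',\psi') \oplus (\mathcal{V}'',\psi'')$. Your explicit attention to why the restricted pairing on a sub-Hodge structure remains nondegenerate (positivity via the Weil operator) is the point the paper relegates to the remark immediately following the lemma.
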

\begin{proof}
Consider the subobject $(\mathcal{V}', \psi') \subset (\mathcal{V}, \psi)$ i.e.: 
\begin{gather*}
(\mathcal{V}', \psi') \, = \, (\, (V_{_{\rm{DR}}}', \psi_{_{\rm{DR}}}'), \, 
(V_{l}', \psi_{l}')_{l}, \, (V_{\sigma}', \psi_{\sigma}')_{\sigma}, \, 
I_{\infty, \sigma}', \,  I_{l, \bar{\sigma}}'), \\
V_{_{\rm{DR}}}' \subset V_{_{\rm{DR}}}, \quad\quad V_{l}' \subset V_{l}, \quad\quad V_{\sigma}' 
\subset V_{\sigma}, \\
\psi_{_{\rm{DR}}}' = {\psi_{_{\rm{DR}}}}_{| V_{_{\rm{DR}}}'}, \quad\quad \psi_{l}' = {\psi_{l}}_{| V_{l}'}, \quad\quad \psi_{\sigma}' = {\psi_{\sigma}}_{| V_{\sigma}'}
\end{gather*}
and $I_{\infty, \sigma}'$  (resp. $I_{l, \bar{\sigma}}'$) is the obvious restriction of 
${I_{\infty, \sigma}}$ (resp. $I_{l, \bar{\sigma}}$). Because 
$\psi_{\sigma}$ is nondegenerate for each $\sigma$ $\psi_{_{\rm{DR}}}$
and $\psi_{l}$ for all $l$ are nondegenerate by \eqref{comparison psi for VC and VDRC in realizations}
and \eqref{comparison psi for VC and Vl in realizations}. Let \, 
$V_{_{\rm{DR}}}'', \,\, V_{l}'', \,\, V_{\sigma}''$ be the orthogonal complements of 
$V_{_{\rm{DR}}}', \,\, V_{l}', \,\, V_{\sigma}'$ with respect to the forms
$\psi_{_{\rm{DR}}}, \,\, \psi_{l}, \,\, \psi_{\sigma}$, respectively. Hence we obtain nondegenerate forms
$$
\psi_{_{\rm{DR}}}'' = {\psi_{_{\rm{DR}}}}_{| V_{_{\rm{DR}}}''}, \quad\quad \psi_{l}'' = {\psi_{l}}_{| V_{l}''}, \quad\quad \psi_{\sigma}'' = {\psi_{\sigma}}_{| V_{\sigma}''}
$$ and a well-defined restriction
$I_{\infty, \sigma}''$  (resp. $I_{l, \bar{\sigma}}''$) of 
${I_{\infty, \sigma}}$ (resp. $I_{l, \bar{\sigma}}$). Hence:
$$
(\mathcal{V}'', \psi'') \, = \, (\, (V_{_{\rm{DR}}}'', \psi_{_{\rm{DR}}}''), \, 
(V_{l}'', \psi_{l}'')_{l}, \, (V_{\sigma}'', \psi_{\sigma}'')_{\sigma}, \, 
I_{\infty, \sigma}'', \,  I_{l, \bar{\sigma}}'')
$$
is a well-defined polarized realization and it is clear that
\begin{align*}
(\mathcal{V}, \psi) &= (\mathcal{V}', \psi') \oplus (\mathcal{V}'', \psi''). \qedhere
\end{align*}
\end{proof}

\begin{remark} Actually we can prove more than Lemma \ref{RK pol is semisimple}. Namely, if 
$\mathcal{V}' \in {\rm{obj}} (R_{K})$ and $\mathcal{V}' \subset \mathcal{V}$ 
for $(\mathcal{V}, \psi) \in {\rm{obj}} (R_{K}^{\rm{p}})$, then 
$\psi_{\sigma}' := {\psi_{\sigma}}_{| V_{\sigma}'}$ is a polarization on
$V_{\sigma}'$ because the Weil operator $C\colon V_{\sigma} \rightarrow V_{\sigma}$ restricts naturally to the Hodge structure $V_{\sigma}'$. Now defining $\psi_{_{\rm{DR}}}' = {\psi_{_{\rm{DR}}}}_{| V_{_{\rm{DR}}}'}, \,\,  \psi_{l}' = {\psi_{l}}_{| V_{l}'}$ and observing that 
$I_{\infty, \sigma}$  (resp. $I_{l, \bar{\sigma}}$) naturally restricts to the comparison isomorphism
${I_{\infty, \sigma}}'$ (resp. $I_{l, \bar{\sigma}}'$) on $\mathcal{V}' \subset \mathcal{V}$, we obtain
the polarized realization $(\mathcal{V}', \psi') \subset (\mathcal{V}, \psi)$.
\end{remark}
\medskip

The polarization $\psi\colon V \times V \rightarrow \Q (-n)$ leads to two isomorphisms 
$V(n) \,\, {\stackrel{\simeq}{\longrightarrow}} \,\,  V^{\vee}$ of $\Q$-vector spaces: 
$$
v \otimes (2 \pi \, i)^n \,\, \mapsto \,\, \psi(v, \, \cdot) \quad
{\rm{and}}
\quad
w \otimes (2 \pi \, i)^n \,\, \mapsto \,\,  \psi(\cdot, \, w).
$$ 
This gives a $\Q$-bilinear, nondegenerate pairing:
\begin{gather*}
\psi^{\vee}\colon V^{\vee} \times V^{\vee} \rightarrow \Q (n),
\\ 
\psi^{\vee} (\psi(v, \, \cdot) , \, \psi(\cdot, \, w)) :=  (2 \pi \, i)^{2n} \psi (v, w).
\end{gather*}
We define the action of Weil's operator $C\colon V \rightarrow V$ on $V^{\vee}$ as follows.
For $f \in V^{\vee}$ we put $C(f) := f \circ C$.  
Because $\psi$ is a polarization, the form 
$$
(2 \pi \, i)^{-n} \psi^{\vee} (\psi(C (v), \, \cdot) , \, \psi(\cdot, \, w)) =  
(2 \pi \, i)^{n} \psi (C(v), w)
$$ 
is positive definite on $V^{\vee} \otimes_{\Q} \R$.
Hence $\psi^{\vee}$ is a polarization. 
In the same way as above we construct natural nondegenerate pairings 
$\psi_{_{\rm{DR}}}^{\vee}$ and $\psi_{l}^{\vee}$. The comparison isomorphisms $I_{\infty, \sigma}^{\vee}$ and $I_{l, \bar{\sigma}}^{\vee}$ are the transposes of 
$I_{\infty, \sigma}$ and $I_{l, \bar{\sigma}}$ (see \cite[p. 15]{Ja90}). Hence the realization 
$$
\mathcal{V}^{\vee} \, := \, 
(\, V_{_{\rm{DR}}}^{\vee}, \, (V_{l}^{\vee})_{l}, \, (V_{\sigma}^{\vee})_{\sigma}, \, 
I_{\infty, \sigma}^{\vee}, \,  I_{l, \bar{\sigma}}^{\vee})
$$
is in the category $R_{K}^{\rm{p}}$.

\begin{remark} 
Because $R_{K}^{\rm{p}}$ is a full subcategory of $R_{K}$, we will also write
$\mathcal{V}$ instead of $(\mathcal{V}, \psi)$ for objects of $R_{K}^{\rm{p}}$ whenever
it does not lead to a conflict of notation or misunderstanding.
\end{remark}

\begin{proposition}
The category $R_{K}^{\rm{p}}$ is a $\Q$-rational, neutral, semisimple, Tannakian subcategory of $R_{K}$.
\end{proposition}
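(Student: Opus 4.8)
The plan is to verify the four adjectives in turn, building on the structural results already established for $R_K$ by Jannsen and on Lemma~\ref{RK pol is semisimple}. First I would recall that $R_K^{\rm{p}}$ is by definition a full subcategory of $R_K$, and that $R_K$ is a $\Q$-linear, neutral, Tannakian category (Jannsen \cite{Ja90}); so what needs checking is that $R_K^{\rm{p}}$ is closed under the operations that make it a Tannakian subcategory. Concretely, I would check: (i) the identity realization ${\bf 1}$ lies in $R_K^{\rm{p}}$ (immediate, with the trivial pairing on the weight-$0$ Tate object, or more honestly one includes $\mathbf 1$ as the unit); (ii) $R_K^{\rm{p}}$ is closed under $\oplus$ (by definition, objects of $R_K^{\rm{p}}$ are finite direct sums of the polarized pieces \eqref{polarized realization}); (iii) $R_K^{\rm{p}}$ is closed under $\otimes$, using that the tensor product of two polarized rational Hodge structures of weights $n_1,n_2$ carries the product polarization into $\Q(-n_1-n_2)$, and that the comparison isomorphisms $I_{\infty,\sigma}$, $I_{l,\bar\sigma}$ are compatible with $\otimes$; (iv) $R_K^{\rm{p}}$ is closed under duals, which is exactly the computation carried out in the paragraph preceding the proposition, where $\psi^\vee$ was shown to be a polarization on $\mathcal V^\vee$ via the Weil operator; and (v) $R_K^{\rm{p}}$ is closed under subobjects and quotients, which is precisely Lemma~\ref{RK pol is semisimple} together with the following Remark: a subobject of a polarized realization inherits a polarization by restriction, because the Weil operator restricts.

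Next I would argue semisimplicity and the Tannakian property together. Semisimplicity is Lemma~\ref{RK pol is semisimple}: every subobject has an orthogonal complement with respect to the polarization forms, so every object decomposes as a direct sum of simple objects. For the Tannakian structure, once one knows $R_K^{\rm{p}}$ is a full subcategory of the Tannakian category $R_K$ closed under $\oplus,\otimes,{}^\vee$, subobjects and quotients, and contains the unit, it is automatically an abelian, $\Q$-linear, rigid $\otimes$-subcategory; the fibre functor on $R_K$ (say the Betti realization $\mathcal V\mapsto \bigoplus_\sigma V_\sigma$, or a single $V_\sigma$) restricts to an exact faithful $\Q$-linear $\otimes$-functor $w\colon R_K^{\rm{p}}\to {\rm Vec}_\Q$, giving neutrality. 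Then Theorem~\ref{Tannaka duality theorem}(b) applies: $R_K^{\rm{p}}\simeq {\rm Rep}_\Q(\Aut^\otimes w)$, which is what "neutral Tannakian" means. The "$\Q$-rational" qualifier simply records that we are working over $\Q$ as the base field, which is built into the definition.

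The main obstacle, such as it is, is the closure under tensor products: one must check that the product of two polarizations is again a polarization in Jannsen's sense (i.e. that the positivity condition for the Weil operator is preserved under $\otimes$, using $C_{V\otimes W}=C_V\otimes C_W$ and that a tensor product of positive-definite forms is positive-definite), and that the associated de Rham pairing $\psi_{{\rm DR}}\otimes\psi'_{{\rm DR}}$ and $l$-adic pairings $\psi_l\otimes\psi'_l$ remain compatible with the comparison isomorphisms \eqref{comparison VC and VDRC in realizations}--\eqref{comparison psi for VC and Vl in realizations}. This is routine but is the one place where the polarization data genuinely has to be manipulated rather than merely restricted; everything else (duals, subobjects, direct sums, the fibre functor) is either already done in the text or an immediate consequence of Lemma~\ref{RK pol is semisimple} and the fullness of $R_K^{\rm{p}}$ in $R_K$. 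I would therefore organize the proof as: state that $R_K^{\rm{p}}\subset R_K$ is full, invoke Jannsen for the Tannakian structure of $R_K$, check stability under $\otimes$ (the one computation), cite the preceding paragraph for duals and Lemma~\ref{RK pol is semisimple} plus its Remark for subobjects/quotients and semisimplicity, and conclude with Theorem~\ref{Tannaka duality theorem}.
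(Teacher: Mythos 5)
Your argument is correct and is essentially the same as the paper's, which in a single terse paragraph cites the preceding dual-construction paragraph and Jannsen for closure of $R_K^{\mathrm p}$ under $\otimes$, $\underline{\Hom}$, and $\vee$, and then invokes Lemma~\ref{RK pol is semisimple} for semisimplicity and the Tannakian conclusion. You have simply unfolded the same checklist (unit, $\oplus$, $\otimes$ with the Weil-operator positivity check, $\vee$, subobjects/quotients, restriction of the Betti fibre functor) that the paper compresses into one sentence.
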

\begin{proof}
By the above computations and \cite[pp. 12--15]{Ja90} 
we observe that if $\mathcal{V}$ and $\mathcal{W}$ are objects of $R_{K}^{\rm{p}}$
then $\mathcal{V} \otimes \mathcal{W}$ and $\underline{\Hom} (\mathcal{V}, \mathcal{W}) =
\mathcal{V}^{\vee} \otimes \mathcal{W}$ are also objects of $R_{K}^{\rm{p}}$.
Moreover it is clear that ${\mathcal{V}}^{\vee\vee} = \mathcal{V}$. Applying Lemma 
\ref{RK pol is semisimple} finishes the proof.
\end{proof}

Fix $\sigma$ and put $V := V_{\sigma}$. Consider the fiber functor $H := H_{B}$:
\begin{gather*}
H\colon R_{K}^{\rm{p}} \rightarrow {\rm{Vec}}_{\Q}, \\
H (\mathcal{V}) \, = \, V.
\end{gather*}
Now fix $\mathcal{V} \in {\rm{obj}} (R_{K}^{\rm{p}})$ and put $\mathcal{W} := 
\mathcal{V} \oplus {\bf{1}}(1) \in {\rm{obj}} (R_{K}^{\rm{p}})$.
Let $R_{K} [\mathcal{V}]$ (resp. $R_{K} [\mathcal{W}]$) denote the smallest strictly full Tannakian subcategory of $R_{K}^{\rm{p}}$ containing $\mathcal{V}$ (resp. containing $\mathcal{W}$).
Consider the fiber functors $H_{\mathcal{V}} := H_{|_{R_{K} [\mathcal{V}]}}$
and $H_{\mathcal{W}} := H_{|_{R_{K} [\mathcal{W}]}}$. We put:
\begin{align*}
\mathcal{G}_{K}^{\alg} \, &:= \, \mathcal{G}_{K}^{\alg} (V, \psi) \, := \, {\rm{Aut}}^{\otimes} 
H_{\mathcal{V}} \subset \GIso_{(V, \psi)} \\
\widetilde{\mathcal{G}_{K}^{\alg}} \, &:= \, 
\widetilde{\mathcal{G}_{K}^{\alg}} (W) \, := \, {\rm{Aut}}^{\otimes} H_{\mathcal{W}}
\subset \GL_{W}.
\end{align*}

\begin{remark}
It follows directly from Definition \ref{definition od Aut tensor w} (see especially Diagram 
\ref{lambda X and lambda Y commute with w(alpha) otimes 1}) applied to the Tannakian 
category $R_{K} [\mathcal{V}]$ (resp. $R_{K} [\mathcal{W}]$) that for
every $\Q$-algebra $R$, each family $(\lambda_{\mathcal{X}})$,  $\mathcal{X} \in {\rm{obj}} 
(R_{K} [\mathcal{V}])$ (resp. $\mathcal{X} \in {\rm{obj}} (R_{K} [\mathcal{W}]$)) is determined 
uniquely by $\lambda_{\mathcal{V}}$ and $\lambda_{\mathcal{V}^{\vee}}$ (resp. by $\lambda_{\mathcal{W}}$
and $\lambda_{\mathcal{W}^{\vee}}$). 
\label{Aut in the category R K generated by mathcal V  or mathcal W}
\end{remark}

Consider the $\Q_l$-linear, Tannakian, neutral category of finite dimensional, continuous,
Galois representations ${\rm{Rep}}_{\Q_l} G_K$ with the forgetful fiber functor
$$
H_l\colon {\rm{Rep}}_{\Q_l} G_K \rightarrow {\rm{Vec}}_{\Q_l}.
$$ 
For the object
$\mathcal{V}$ (resp. for the object $\mathcal{W}$) consider the $l$-adic realization 
$V_l$ (resp. the $l$-adic realization $W_l$). Let 
${\rm{Rep}}_{\Q_l} G_K [V_l]$ (resp. ${\rm{Rep}}_{\Q_l} G_K [W_l]$) denote the smallest strictly full Tannakian subcategory of ${\rm{Rep}}_{\Q_l} G_K$ containing $V_l$ 
(resp. containing $W_l$). Let $H_{V_l}$ (resp. $H_{W_l}$) denote the fiber functor $H_l$ restricted to ${\rm{Rep}}_{\Q_l} G_K [V_l]$ (resp. ${\rm{Rep}}_{\Q_l} G_K [W_l]$). 

\begin{remark}
Again from Definition \ref{definition od Aut tensor w} applied to the Tannakian 
category ${\rm{Rep}}_{\Q_l} G_K [V_l]$ (resp. ${\rm{Rep}}_{\Q_l} G_K [W_l]$) it follows that for every $\Q_l$-algebra $R$ each family $(\lambda_{U_{l}})$,  $U_{l} \in 
{\rm{obj}} ({\rm{Rep}}_{\Q_l} G_K [V_l])$ (resp. $U_{l} \in {\rm{obj}} ({\rm{Rep}}_{\Q_l} G_K [V_l])$ is determined uniquely by $\lambda_{V_{l}}$ and $\lambda_{V_{l}^{\vee}}$ (resp. by $\lambda_{W_{l}}$
and $\lambda_{W_{l}^{\vee}}$). 
\label{Aut in the category Rep Ql GK generated by V l or W l}
\end{remark}
It is known \cite[section 7.1.3, p.70]{An1} that
$$
G_{l, K}^{\alg} (V_l) \, = \, {\rm{Aut}}^{\otimes} \, H_{V_l}, \quad\quad
\widetilde{G_{l, K}^{\alg}} (W_l) \, = \, {\rm{Aut}}^{\otimes} \, H_{W_{l}}.
$$

\begin{lemma}
\label{G l, K alg V_l naturally embeds into G K alg Q l}
There are natural embeddings:
$$
G_{l, K}^{\alg} (V_l) \, \subset \, \mathcal{G}_{K}^{\alg} (V, \psi)_{\Q_l}, 
\quad\quad \widetilde{G_{l, K}^{\alg}} (W_l) \, \subset \, 
\widetilde{\mathcal{G}_{K}^{\alg}} (W)_{\Q_l}.
$$ 
\end{lemma}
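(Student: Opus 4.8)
The plan is to produce the two embeddings by comparing Tannakian fiber functors, exploiting the fact that the $\ell$-adic realization gives a $\otimes$-functor from the category $R_K[\mathcal{V}]$ (resp. $R_K[\mathcal{W}]$) to the $\ell$-adic representation category. First I would observe that sending an object $\mathcal{X}\in\mathrm{obj}(R_K[\mathcal{V}])$ to its $\ell$-adic realization $X_\ell$, together with its Galois action, defines an exact, faithful, $\Q_\ell$-linear $\otimes$-functor
\[
r_\ell\colon R_K[\mathcal{V}]\longrightarrow \mathrm{Rep}_{\Q_\ell}G_K,
\]
and moreover this functor lands in the subcategory $\mathrm{Rep}_{\Q_\ell}G_K[V_\ell]$, because $V_\ell = r_\ell(\mathcal{V})$ and $r_\ell$ respects $\otimes$, duals, subquotients and direct sums, so the image of the smallest strictly full Tannakian subcategory generated by $\mathcal{V}$ is contained in the one generated by $V_\ell$. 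The compatibility $H_{V_\ell}\circ r_\ell = H_{\mathcal{V}}\otimes_{\Q}\Q_\ell$ holds by construction of the comparison isomorphisms $I_{\ell,\bar\sigma}$ in Definition \ref{definition of polarized realizations category}: the Betti realization base-changed to $\Q_\ell$ is canonically identified with the $\ell$-adic realization.

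Next I would invoke the general functoriality of $\mathrm{Aut}^\otimes$: given a $\otimes$-functor $F\colon \mathcal{C}\to\mathcal{C}'$ between neutral Tannakian categories over the same base, compatible with fiber functors $w$ and $w'$ (i.e. $w'\circ F = w$ after the appropriate base change), there is an induced homomorphism of affine group schemes $\mathrm{Aut}^\otimes(w')\to\mathrm{Aut}^\otimes(w)$, obtained simply by restricting a family $(\lambda_{X'})_{X'}$ along $F$ to the family $(\lambda_{F(X)})_{X}$. Applying this to $r_\ell$ gives a homomorphism
\[
G_{\ell,K}^{\mathrm{alg}}(V_\ell)=\mathrm{Aut}^\otimes H_{V_\ell}\longrightarrow \mathrm{Aut}^\otimes H_{\mathcal{V}}\otimes_{\Q}\Q_\ell = \mathcal{G}_K^{\mathrm{alg}}(V,\psi)_{\Q_\ell}.
\]
To see this is a closed immersion (``natural embedding''), I would use Remarks \ref{Aut in the category R K generated by mathcal V  or mathcal W} and \ref{Aut in the category Rep Ql GK generated by V l or W l}: an element of either group is determined by its action $\lambda_{V_\ell}$ (together with $\lambda_{V_\ell^\vee}$, which is forced to be the transpose inverse since both categories are generated by a single polarized-type object and its dual). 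Hence both group schemes sit inside $\GL(V_\ell)$ via $\lambda\mapsto\lambda_{V_\ell}$, and the induced map is compatible with these inclusions; a homomorphism of affine group schemes that is a monomorphism on the level of the faithful representation $\GL(V_\ell)$ is a closed immersion. The same argument verbatim, with $\mathcal{W}=\mathcal{V}\oplus\mathbf{1}(1)$ and $W_\ell = V_\ell\oplus\Q_\ell(1)$, gives the second embedding $\widetilde{G_{\ell,K}^{\mathrm{alg}}}(W_\ell)\subset\widetilde{\mathcal{G}_K^{\mathrm{alg}}}(W)_{\Q_\ell}$.

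The main obstacle I anticipate is checking carefully that $r_\ell$ is well-defined and does land in $\mathrm{Rep}_{\Q_\ell}G_K[V_\ell]$ — one must verify that every object of $R_K[\mathcal{V}]$, being built from $\mathcal{V}$ by $\otimes$, $\oplus$, duals and subquotients, has $\ell$-adic realization built the same way from $V_\ell$, which requires knowing the $\ell$-adic realization functor on $R_K$ commutes with these operations and with passage to subobjects in $R_K^{\mathrm{p}}$ (this uses semisimplicity, Lemma \ref{RK pol is semisimple}). A secondary subtlety is the identification $H_{V_\ell}\circ r_\ell = H_{\mathcal{V}}\otimes_{\Q}\Q_\ell$ as $\otimes$-functors, not merely objectwise: one needs the comparison isomorphisms to be functorial, which is exactly the content of $R_K$ being a category of realizations in Jannsen's sense. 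Once these compatibilities are in place, the extraction of the group scheme homomorphism and the verification that it is a closed immersion are formal consequences of Tannaka duality (Theorem \ref{Tannaka duality theorem}) and the single-generator description of the automorphism functors.
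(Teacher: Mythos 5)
Your approach is essentially the same as the paper's, differing only in presentation: the paper constructs the map $\mathrm{Aut}^\otimes H_{V_l}\to\mathrm{Aut}^\otimes H_{\mathcal V}\otimes_\Q\Q_l$ by hand, taking a family $(\lambda_{U_l})$ indexed over $\mathrm{Rep}_{\Q_l}G_K[V_l]$, restricting to the subfamily $(\lambda_{X_l})$ over objects $X_l$ that are $l$-adic realizations of $\mathcal X\in R_K[\mathcal V]$, and transporting via $I_{l,\bar\sigma}^{-1}$ to get $(\lambda_{\mathcal X})$; you package this same mechanism as the functoriality of $\mathrm{Aut}^\otimes$ induced by the $l$-adic realization functor $r_\ell$. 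In both arguments the key inputs are identical: that $r_\ell$ carries $R_K[\mathcal V]$ into $\mathrm{Rep}_{\Q_l}G_K[V_l]$ (because it sends $\mathcal V\mapsto V_l$ and respects $\otimes$, duals, subquotients and sums), that the comparison isomorphisms are functorial, and that everything is determined by the action on the generator and its dual (Remarks~\ref{Aut in the category R K generated by mathcal V  or mathcal W} and~\ref{Aut in the category Rep Ql GK generated by V l or W l}), so that both automorphism group schemes sit compatibly inside $\GL_{V_l}$ and the map is a closed immersion.
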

\begin{proof}
Consider $(\lambda_{U_l}) \in {\rm{Aut}}^{\otimes} \, H_{V_l} (\Q_l)$ (resp.
$(\lambda_{U_l}) \in {\rm{Aut}}^{\otimes} \, H_{W_l} (\Q_l)$) with 
$U_l \in {\rm{obj}} ({\rm{Rep}}_{\Q_l} G_K [V_l])$ (resp. 
$U_l \in {\rm{obj}} ({\rm{Rep}}_{\Q_l} G_K [W_l])$). The main constraint on the family $(\lambda_{U_l})$ 
is commutativity of the following Diagram~\ref{lambda U l and lambda U prime l commute with alpha l} for all $\alpha_l \in 
\Hom_{\Q_l [G_K]} (U_l, U^{\prime}_{l})$:

\begin{figure}[H]
\[
\begin{tikzcd}[column sep=huge, row sep=large]
U_l \arrow{r}{\lambda_{U_l}}[swap]{\simeq}
\arrow{d}{\alpha_l}
& U_l\arrow{d}{\alpha_l} \\ 
U_{l}^{\prime} \arrow{r}{\lambda_{ U^{\prime}_{l}}}[swap]{\simeq} 
 & U_{l}^{\prime} \\
\end{tikzcd}
\]
\\[-0.8cm]
\caption{}
\label{lambda U l and lambda U prime l commute with alpha l}
\end{figure}

In the family $(\lambda_{U_{l}})$ there is a
subfamily $(\lambda_{X_l})$ where $X_l = H_{V_l} (\mathcal{X})$ with 
$\mathcal{X} \in {\rm{obj}} (R_{K} [\mathcal{V}])$ (resp. $\mathcal{X} \in {\rm{obj}} 
(R_{K} [\mathcal{W}]$) such that for all $X_l = H_{V_l} (\mathcal{X})$, $Y_l = H_{V_l} (\mathcal{Y})$, $\alpha_l =  H_{V_l} (\alpha)$ with $\alpha \in \Hom_{(R_{K} [\mathcal{V}])} (\mathcal{X}, \mathcal{Y})$  (resp. $X_l = H_{W_l} (\mathcal{X})$, $Y_l = H_{W_l} (\mathcal{Y})$,
$\alpha \in \Hom_{(R_{K} [\mathcal{W}])} (\mathcal{X}, \mathcal{Y})$) the following Diagram~\ref{lambda X l and lambda Y l commute with alpha l} commutes for all $l$:

\begin{figure}[H]
\[
\begin{tikzcd}[column sep=huge, row sep=large]
X_l \arrow{r}{\lambda_{X_l}}[swap]{\simeq}
\arrow{d}{\alpha_l}
& X_l\arrow{d}{\alpha_l} \\ 
Y_{l} \arrow{r}{\lambda_{ Y_{l}}}[swap]{\simeq} 
 & Y_{l} \\
\end{tikzcd}
\]
\\[-0.8cm]
\caption{}
\label{lambda X l and lambda Y l commute with alpha l}
\end{figure}

Observe that $(I_{l, \bar{\sigma}})^{-1} (X_l) = H_{\sigma} (\mathcal{X}) \otimes_{\Q} \Q_{l}$  
and $(I_{l, \bar{\sigma}})^{-1} (\alpha_l) = H_{\sigma} (\alpha) \otimes 1$.
 Applying $(I_{l, \bar{\sigma}})^{-1}$ to the family $(\lambda_{X_l})$ and Diagram
\ref{lambda X l and lambda Y l commute with alpha l}, we obtain a family 
$(\lambda_{\mathcal X})$ with $\lambda_{\mathcal X} := (I_{l, \bar{\sigma}})^{-1} (\lambda_{X_l})$
and a commutative Diagram \ref{lambda X and lambda Y commute with H sigma of alpha} for each $l$:

\begin{figure}[H]
\[
\begin{tikzcd}[column sep=huge, row sep=large]
H_{\sigma} (\mathcal{X}) \otimes_{\Q} \Q_{l}  \arrow{r}{\lambda_{\mathcal{X}}}[swap]{\simeq}
\arrow{d}{H_{\sigma} (\alpha) \otimes 1}
& H_{\sigma} (\mathcal{X}) \otimes_{\Q}\otimes 1 \Q_l \arrow{d}{H_{\sigma} (\alpha) \otimes 1} \\ 
H_{\sigma} (\mathcal{Y}) \otimes_{\Q} \Q_l \arrow{r}{\lambda_{\mathcal{Y}}}[swap]{\simeq} 
 & H_{\sigma} (\mathcal{Y}) \otimes_{\Q} \Q_l \\
\end{tikzcd}\
\]
\\[-0.8cm]
\caption{}
\label{lambda X and lambda Y commute with H sigma of alpha}
\end{figure}

The association $(\lambda_{U_{l}}) \, \mapsto \,  (\lambda_{\mathcal{X}})$ described above
gives natural embeddings:
$$
G_{l, K}^{\alg} (V_l) (\Q_l) \,\, \hookrightarrow \,\, \mathcal{G}_{K}^{\alg} (V, \psi) (\Q_l), 
\quad\quad \widetilde{G_{l, K}^{\alg}} (W_l) (\Q_l) \hookrightarrow 
\widetilde{\mathcal{G}_{K}^{\alg}} (W) (\Q_l).
$$
This finishes the proof because: 
\begin{gather*}
\rho_{l} (G_K) \, \subset \, G_{l, K}^{\alg} (V_l) (\Q_l) \quad {\rm{and}} \quad\widetilde{\rho}_{l} (G_K) \, \subset \, \widetilde{G_{l, K}^{\alg}} (W_l) (\Q_l). \qedhere
\end{gather*}
\end{proof}

The Tannakian categories $R_{K} [\mathcal{V}]$, $R_{K} [{\bf{1}}(1)]$, $R_{K} [\mathcal{W}]$ are semisimple as full  subcategories of $R_{K}^{\rm{p}}$. It follows by 
\cite[Prop. 2.21 (a), Remark 2.29]{DM} that the following homomorphisms are epimorphisms:
\begin{align}
\pi\colon& \widetilde{\mathcal{G}_{K}^{\alg}} (W) \,\, {\stackrel{{}}{\longrightarrow}} \,\,
\mathcal{G}_{K}^{\alg} (V, \psi),
\label{G K alg W surjection onto G K alg V} \\
N\colon& \widetilde{\mathcal{G}_{K}^{\alg}} (W) \,\, {\stackrel{{}}{\longrightarrow}} \,\, 
\G_m.
\label{G K alg W surjection onto G m}
\end{align}
Consider the projections:
$\pi_{\mathcal{V}}\colon \mathcal{W} \rightarrow \mathcal{V}$, 
$\pi_{{\bf{1}}(1)}\colon \mathcal{W} \rightarrow {\bf{1}}(1)$,
$\pi_{\mathcal{V}^{\vee}}\colon \mathcal{W}^{\vee} \rightarrow \mathcal{V}^{\vee}$,
$\pi_{{\bf{1}}(-1)}\colon \mathcal{W}^{\vee} \rightarrow {\bf{1}}(-1)$. 
Then ${\rm{Id_{\mathcal{W}}}} = \pi_{\mathcal{V}} + \pi_{{\bf{1}}(1)}$
and ${\rm{Id_{\mathcal{W}^{\vee}}}} = \pi_{\mathcal{V}^{\vee}} + \pi_{{\bf{1}}(-1)}$. 
Diagram \ref{lambda X and lambda Y commute with w(alpha) otimes 1} applied to the category $R_{K} [\mathcal{W}]$ and its subcategories $R_{K} [\mathcal{V}]$ and $R_{K} [{\bf{1}}(1)]$ gives:
\begin{align*}
\lambda_{\mathcal{W}} &= 
\lambda_{\mathcal{V}} \circ \, H(\pi_{\mathcal{V}}) \otimes 1 +  
\lambda_{{\bf{1}}(1)} \circ \, H(\pi_{{\bf{1}}(1)}) \otimes 1 \\
\lambda_{\mathcal{W}^{\vee}} &= 
\lambda_{\mathcal{V}^{\vee}} \circ \, H(\pi_{\mathcal{V}^{\vee}}) \otimes 1 +  
\lambda_{{\bf{1}}(-1)} \circ \, H(\pi_{{\bf{1}}(-1)}) \otimes 1.
\end{align*}
By Remark \ref{Aut in the category R K generated by mathcal V  or mathcal W} the following homomorphism is a closed immersion:
\begin{equation}
\label{G K alg W injection into G K alg V times G m}
\pi \times N\colon \widetilde{\mathcal{G}_{K}^{\alg}} (W) \,\, {\stackrel{{}}{\longrightarrow}} 
\,\, \mathcal{G}_{K}^{\alg} (V, \psi) \times \G_m.
\end{equation}

Consider $(\mathcal{V}, \psi) \, = \, (\, (V_{_{\rm{DR}}}, \psi_{_{\rm{DR}}}), \, 
(V_l, \psi_{l})_{l}, \, (V_{\sigma}, \psi_{\sigma})_{\sigma}, \, 
I_{\infty, \sigma}, \,  I_{l, \bar{\sigma}}) \in {\rm{obj}} (R_{K}^{\rm{p}})$.
Define (cf. Definition \ref{definition of polarized realizations category} and  
\cite[Def. 2.16]{Ja90}):
$$
(\overline{\mathcal{V}}, \overline{\psi}) \, := \, 
(\mathcal{V} \otimes_{K} \overline{K},  \psi  \otimes_{K} \overline{K}) := 
$$  
$$ 
:= \, (\, (\overline{V}_{_{\rm{DR}}}, \overline{\psi}_{_{\rm{DR}}}),  \, 
(\overline{V}_l, \overline{\psi}_{l})_{l}, \, 
(\overline{V}_{\bar{\sigma}}, \overline{\psi}_{\bar{\sigma}})_{\bar{\sigma}}, \, 
\bar{I}_{\infty, \bar{\sigma}}, \, \ \overline{I}_{l, \bar{\sigma}}) \in {\rm{obj}} (R_{\overline{K}}^{\rm{p}}),
$$
where $(\overline{V}_{_{\rm{DR}}}, \overline{\psi}_{_{\rm{DR}}}) := 
(V_{_{\rm{DR}}} \otimes_{K} \overline{K}, \psi_{_{\rm{DR}}} \otimes_{K} \overline{K})$, \,
$(\overline{V}_l, \overline{\psi}_{l})_{l} := (V_l, \psi_{l})_{l}$, \, 
$\bar{I}_{l, \bar{\sigma}} := I_{l, \bar{\sigma}}$ \, and
\, $(\overline{V}_{\bar{\sigma}}, \overline{\psi}_{\bar{\sigma}})_{\bar{\sigma}} := 
(V_{\sigma}, \psi_{\sigma})_{\sigma}$, \, $\bar{I}_{\infty, \bar{\sigma}} := I_{\infty, \sigma}$, \, $g \circ I_{l, \bar{\sigma} \circ g} = I_{l, \bar{\sigma}}$
for all ${l}$, for all $g \in G_{K}$, for all $\sigma\colon K \hookrightarrow \C$, and for all $\bar{\sigma}\colon \overline{K} \hookrightarrow \C$ such that $\bar{\sigma}_{| K} = \sigma$. 
 
\begin{definition}
In the following statements, put  
$D := \End_{R_{\overline{K}}^{\rm{p}}}\, ((\overline{\mathcal{V}}, \overline{\psi}))$.
\label{The ring D for polarized realizations}
\end{definition}

\begin{proposition}
Let $K$ be a number field. Let 
$$
\mathcal{V} \, := \, (\, V_{_{\rm{DR}}}, \, (V_l)_{l}, \, (V_{\sigma})_{\sigma}, \, I_{\infty, \sigma}, \,  I_{l, \bar{\sigma}}) \in {\rm{obj}} (R_{K}^{\rm{p}})
$$
be a polarized, pure realization with components satisfying conditions 
\textbf{(D1)},
\textbf{(D2)},
\textbf{(DR1)},
\textbf{(DR2)},
\textbf{(R1)}--\textbf{(R4)}
of \S \ref{Mumford--Tate groups of polarized Hodge structures}, \S \ref{de Rham structures associated with Hodge structures}, and \S \ref{families of l-adic representations associated with Hodge structures} respectively with $D$ as in Definition \ref{The ring D for polarized realizations}. 
Then Conjectures \ref{Tate conjecture for families of l-adic representations}(a) and 
\ref{Tate conjecture for families of l-adic representations tilde}(a) hold  
for the family $(V_l)_{l}$.   
\label{the family V l l of G K satisfies Conjectures 1 (a) and 2 (a)}
\end{proposition}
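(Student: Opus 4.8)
The plan is to take $\mathcal{G}_{K}^{\alg} := \mathcal{G}_{K}^{\alg}(V,\psi) = {\rm Aut}^{\otimes} H_{\mathcal{V}}$ and $\widetilde{\mathcal{G}_{K}^{\alg}} := \widetilde{\mathcal{G}_{K}^{\alg}}(W) = {\rm Aut}^{\otimes} H_{\mathcal{W}}$ as defined in \S\ref{sec,category of polarized realizations}, and to verify one by one every requirement in the statements of Conjectures \ref{Tate conjecture for families of l-adic representations}(a) and \ref{Tate conjecture for families of l-adic representations tilde}(a). By Tannaka duality (Theorem \ref{Tannaka duality theorem}) these are affine group schemes over $\Q$; they are reductive because $R_{K}^{\rm p}$ is semisimple (Lemma \ref{RK pol is semisimple}) and a neutral Tannakian category over a field of characteristic $0$ is semisimple if and only if its Tannakian fundamental group is (pro)reductive, cf. \cite[Prop. 2.23, Rem. 2.28]{DM}. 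The inclusions $\mathcal{G}_{K}^{\alg} \subset \GIso_{(V,\psi)}$ and $\widetilde{\mathcal{G}_{K}^{\alg}} \subset \mathcal{G}_{K}^{\alg} \times \G_m \subset \GL_W$ are built into the definitions in \S\ref{sec,category of polarized realizations}, and the fact that $\pi$ and $N$ are epimorphisms is exactly \eqref{G K alg W surjection onto G K alg V}--\eqref{G K alg W surjection onto G m}; the closed immersion $\pi \times N$ is \eqref{G K alg W injection into G K alg V times G m}.

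The heart of the argument is the construction of the monomorphisms ${\rm t}_{l,K}\colon G_{l,K}^{\alg} \hookrightarrow {\mathcal{G}_{K}^{\alg}}_{\Q_l}$ and $\widetilde{{\rm t}}_{l,K}\colon \widetilde{G_{l,K}^{\alg}} \hookrightarrow {\widetilde{\mathcal{G}_{K}^{\alg}}}_{\Q_l}$ and the verification that they are natural in $K$ and compatible with the standard embeddings into $\GIso_{(V_l,\psi_l)}$, resp. $\GL_{W_l}$. For this I would invoke Lemma \ref{G l, K alg V_l naturally embeds into G K alg Q l}, which already produces natural embeddings $G_{l,K}^{\alg}(V_l) \subset \mathcal{G}_{K}^{\alg}(V,\psi)_{\Q_l}$ and $\widetilde{G_{l,K}^{\alg}}(W_l) \subset \widetilde{\mathcal{G}_{K}^{\alg}}(W)_{\Q_l}$ via the comparison isomorphisms $I_{l,\bar\sigma}$: one sends a family $(\lambda_{U_l})$ to the family $(\lambda_{\mathcal X}) = ((I_{l,\bar\sigma})^{-1}\lambda_{X_l})$. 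Setting ${\rm t}_{l,K}$ and $\widetilde{{\rm t}}_{l,K}$ to be these maps, the factorization-through-the-standard-embedding condition is automatic because $(I_{l,\bar\sigma})$ by construction identifies $V_\sigma \otimes_\Q \Q_l$ with $V_l$ compatibly with $\psi_\sigma \otimes \Q_l$ and $\psi_l$ (conditions \eqref{comparison VC and Vl in realizations}--\eqref{comparison psi for VC and Vl in realizations}), and similarly for $W$. Naturality in $K$ follows because for a finite extension $L/K$ the restriction-of-scalars/base-change functors on realization categories and on $\ell$-adic representation categories commute with the fiber functors, giving compatible inclusions $\mathcal{G}_{L}^{\alg} \hookrightarrow \mathcal{G}_{K}^{\alg}$ at the Tannakian level (one checks that $R_K[\mathcal V] \subset R_L[\mathcal V]$ and uses \cite[Prop. 2.21]{DM}), mirroring the inclusion $G_{l,L}^{\alg} \subset G_{l,K}^{\alg}$.

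Finally I would observe that conditions \textbf{(D1)}, \textbf{(D2)}, \textbf{(DR1)}, \textbf{(DR2)}, \textbf{(R1)}--\textbf{(R4)} are used only to guarantee that the data $(V,\psi)$, $(V_{_{\rm DR}},\psi_{_{\rm DR}})$, $(V_l,\psi_l)_l$, and the ring $D = \End_{R_{\overline K}^{\rm p}}((\overline{\mathcal V},\overline\psi))$ fit the hypotheses of \S\S\ref{Mumford--Tate groups of polarized Hodge structures}--\ref{families of l-adic representations associated with Hodge structures}, so that the objects $\mathcal{G}_{K,1}^{\alg}$, $\widetilde{\mathcal{G}_{K,1}^{\alg}}$ of \S\ref{AST and Tate for families} make sense and the statement is the correct one; under these hypotheses the construction above goes through verbatim. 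The main obstacle I anticipate is the naturality-in-$K$ bookkeeping for $\widetilde{{\rm t}}_{l,K}$ together with the compatibility of the two embeddings with the projection $N$ (i.e. checking that $N \circ \widetilde{{\rm t}}_{l,K} = (N\text{ on }\widetilde{G_{l,K}^{\alg}})$, which amounts to tracing how $I_{l,\bar\sigma}$ behaves on the Tate summand ${\bf 1}(1)$); this is a diagram chase through Diagrams \ref{lambda U l and lambda U prime l commute with alpha l}--\ref{lambda X and lambda Y commute with H sigma of alpha} and the description of $\pi,N$ on generators just before \eqref{G K alg W injection into G K alg V times G m}, but it requires care because the families $(\lambda_{\mathcal X})$ are only determined by their values on $\mathcal V, \mathcal V^{\vee}$ (resp. $\mathcal W, \mathcal W^{\vee}$), per Remarks \ref{Aut in the category R K generated by mathcal V  or mathcal W} and \ref{Aut in the category Rep Ql GK generated by V l or W l}.
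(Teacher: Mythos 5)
Your proposal is correct and takes essentially the same route as the paper: the paper's own proof is precisely the citation of Lemma \ref{G l, K alg V_l naturally embeds into G K alg Q l} together with the properties of the morphisms \eqref{G K alg W surjection onto G K alg V}, \eqref{G K alg W surjection onto G m} and \eqref{G K alg W injection into G K alg V times G m}. The additional verifications you spell out (reductivity via semisimplicity of $R_{K}^{\rm{p}}$, naturality in $K$, compatibility of $\widetilde{{\rm{t}}}_{l,K}$ with $N$) are details the paper leaves implicit rather than a different argument.
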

\begin{proof} This follows by Lemma \ref{G l, K alg V_l naturally embeds into G K alg Q l} and properties
of morphisms \eqref{G K alg W surjection onto G K alg V}, \eqref{G K alg W surjection onto G m}, 
\eqref{G K alg W injection into G K alg V times G m}.
\end{proof}

\begin{corollary}
Let $K$ be a number field. Let 
$$
\mathcal{V} \, := \, (\, V_{_{\rm{DR}}}, \, (V_l)_{l}, \, (V_{\sigma})_{\sigma}, \, I_{\infty, \sigma}, \,  I_{l, \bar{\sigma}}) \in {\rm{obj}} (R_{K}^{\rm{p}})
$$
be a polarized, pure realization with components satisfying conditions 
\textbf{(D1)},
\textbf{(D2)},
\textbf{(DR1)},
\textbf{(DR2)},
\textbf{(R1)}--\textbf{(R4)}
of \S \ref{Mumford--Tate groups of polarized Hodge structures}, \S \ref{de Rham structures associated with Hodge structures}, and \S \ref{families of l-adic representations associated with Hodge structures} respectively with $D$ as in Definition \ref{The ring D for polarized realizations}. 
Then Conjectures \ref{Tate conjecture for families of l-adic representations},  
\ref{Tate conjecture for families of l-adic representations tilde}, 
\ref{general algebraic Sato Tate conj.}, and \ref{general algebraic Sato Tate conj. Serre's approach} are all equivalent for the family $(V_l)_{l}$.   
\label{the family V l l of G K gives equivalence of Conjectures 1 and 2 and AST and tilde AST}
\end{corollary}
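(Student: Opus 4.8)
The plan is to package the two preceding results into a short logical deduction. First I would observe that Proposition~\ref{the family V l l of G K satisfies Conjectures 1 (a) and 2 (a)} already establishes, under exactly the hypotheses of this corollary, that Conjectures~\ref{Tate conjecture for families of l-adic representations}(a) and \ref{Tate conjecture for families of l-adic representations tilde}(a) hold for the family $(V_l)_l$. This is the crucial input, because every equivalence statement in \S\ref{AST and Tate for families} is proved \emph{under} those two (a)-level assumptions, so once they are known to hold unconditionally for this family, all of the conditional machinery of that section becomes applicable without further ado.

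With Conjectures~\ref{Tate conjecture for families of l-adic representations}(a) and \ref{Tate conjecture for families of l-adic representations tilde}(a) in hand, the second step is simply to invoke Corollary~\ref{equivalence of 4 conjectures}, which states that, granting those two (a)-assumptions, the four conjectures---Tate Conjecture~\ref{Tate conjecture for families of l-adic representations}, Tate Conjecture~\ref{Tate conjecture for families of l-adic representations tilde}, Algebraic Sato--Tate Conjecture~\ref{general algebraic Sato Tate conj.}, and Algebraic Sato--Tate Conjecture~\ref{general algebraic Sato Tate conj. Serre's approach}---are all equivalent. Combining these two steps gives the claim directly: the hypotheses force the (a)-parts, and the (a)-parts force the equivalence of the full conjectures. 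Concretely, the proof body reads: ``By Proposition~\ref{the family V l l of G K satisfies Conjectures 1 (a) and 2 (a)}, Conjectures~\ref{Tate conjecture for families of l-adic representations}(a) and \ref{Tate conjecture for families of l-adic representations tilde}(a) hold for $(V_l)_l$. The claim then follows from Corollary~\ref{equivalence of 4 conjectures}.''

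There is essentially no obstacle here; the corollary is a formal consequence of two results stated just above it, and the only thing to be careful about is that the hypotheses of Corollary~\ref{the family V l l of G K gives equivalence of Conjectures 1 and 2 and AST and tilde AST} are verbatim those of Proposition~\ref{the family V l l of G K satisfies Conjectures 1 (a) and 2 (a)} (same realization $\mathcal{V}$, same conditions \textbf{(D1)}, \textbf{(D2)}, \textbf{(DR1)}, \textbf{(DR2)}, \textbf{(R1)}--\textbf{(R4)}, and the same ring $D$ from Definition~\ref{The ring D for polarized realizations}), so the proposition applies on the nose. If I wanted to be maximally explicit I would note that Corollary~\ref{equivalence of 4 conjectures} itself rests on Corollary~\ref{Tate conjecture is equivalent to the AST conjecture} (Tate $\Leftrightarrow$ AST in each of the two variants, via Remark~\ref{Tate conj. is equivalent to the AST conj.} and Diagram~\ref{double cubic diagram}) together with Proposition~\ref{AST conjecture for math cal is equivalent to  AST conjecture wide tilde math cal} (the tilde and non-tilde AST conjectures are equivalent), but none of this needs to be re-derived. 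So the proof is a two-line citation, and the main ``work'' is purely bookkeeping: confirming that the chain Proposition~\ref{the family V l l of G K satisfies Conjectures 1 (a) and 2 (a)} $\to$ Corollary~\ref{equivalence of 4 conjectures} closes with the stated hypotheses.
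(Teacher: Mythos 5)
Your proposal matches the paper's proof exactly: the paper also deduces Corollary~\ref{the family V l l of G K gives equivalence of Conjectures 1 and 2 and AST and tilde AST} by first invoking Proposition~\ref{the family V l l of G K satisfies Conjectures 1 (a) and 2 (a)} to get Conjectures~\ref{Tate conjecture for families of l-adic representations}(a) and \ref{Tate conjecture for families of l-adic representations tilde}(a), and then applying Corollary~\ref{equivalence of 4 conjectures}. Your added bookkeeping about the hypotheses lining up and the dependency chain through Corollary~\ref{Tate conjecture is equivalent to the AST conjecture} and Proposition~\ref{AST conjecture for math cal is equivalent to  AST conjecture wide tilde math cal} is accurate but not required.
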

\begin{proof} 
It follows from Proposition \ref{the family V l l of G K satisfies Conjectures 1 (a) and 2 (a)} and Corollary \ref{equivalence of 4 conjectures}. 
\end{proof}

\section{Remarks on equidistribution}
\label{remarks on equidistribution}

This section is logically independent of the preceding ones. In particular, we do not need any of the assumptions
\textbf{(D1)},
\textbf{(D2)},
\textbf{(DR1)},
\textbf{(DR2)},
\textbf{(R1)}--\textbf{(R4)}.
We start with some general facts about pushforward measures in measure theory.  
\medskip

\begin{remark}
Let $\phi\colon (X, \Sigma) \rightarrow (X^{\prime}, \Sigma^{\prime})$ be a measurable map with $\sigma$-algebras $\Sigma$ and $\Sigma^{\prime}$ respectively. If $\mu$ is a measure on $X$, then
the pushforward measure $\phi_{\ast} \mu$ on $X^{\prime}$ is defined as follows:
$$
\phi_{\ast} \mu (B) := \mu (\phi^{-1} (B)) \,\, \text{for all} \,\,\, B \in \Sigma^{\prime}.
$$
Hence for every measurable function $f\colon X^{\prime} \rightarrow \C$ we obtain:
\begin{equation}
\int_{B} \, f \,\, d \, \phi_{\ast}\mu =  \int_{\phi^{-1} (B)} \, f \circ \phi \,\,\,\, d \mu.
\label{integrating against pushforward measure}
\end{equation}  
If $\mu$ is a probabilistic measure, then the pushforward measure $\phi_{\ast} \mu$ is also probabilistic.
If $\phi\colon X \rightarrow X^{\prime}$ is a continuous map between topological spaces $X$ and 
$X^{\prime}$ and the $\sigma$-algebras $\Sigma$, $\Sigma^{\prime}$ are Borel, then $\phi$ is a measurable map.
Important special cases of this are discussed in the following Remark \ref{Inducing sigma algebra from X to X'}.
\label{pushforward measure}
\end{remark}

\begin{remark}
Let $\pi\colon X \rightarrow X^{\prime}$ be a surjective map of sets. If $X$ is a topological space (resp. $(X, \Sigma)$
is a measurable space with $\sigma$-algebra $\Sigma$), we can naturally induce on $X^{\prime}$ 
a quotient topology so that $\pi$ becomes continuous (resp. we can naturally induce a $\sigma$-algebra 
$\Sigma^{\prime}$ on $X^{\prime}$ so that $\pi\colon (X, \Sigma) \rightarrow (X^{\prime}, \Sigma^{\prime})$ becomes a measurable map) because $\pi^{-1}$ commutes with complement and arbitrary unions and intersections of sets.
In this case, $f\colon X^{\prime} \rightarrow \C$ is continuous (resp. measurable) iff $f \circ \pi$ is continuous 
(resp. measurable).  

In particular, if $G$ is a compact group and $U \triangleleft G$ is a normal open subgroup, thne the quotient $G / U$ is a finite group and the induced topology on $G / U$ via the quotient map 
$\pi\colon G \rightarrow G / U$ is the discrete topology. If $\Sigma_{G}$ is the 
Borel $\sigma$-algebra on $G$ and $\mu$ is the Haar measure on $G$, then the induced Borel $\sigma$-algebra 
$\Sigma_{G / U}$ on $G / U$ is $2^{G / U}$ and the pushforward measure 
$\pi_{\ast} \mu$ on $G / U$ is the atomic (or discrete) measure. 
\label{Inducing sigma algebra from X to X'}
\end{remark}

\begin{remark}
For any epimorphism $\pi\colon G \rightarrow G^{\prime}$ of compact Lie groups,
we have 
$\pi_{\ast} \mu = \mu^{\prime}$ where $\mu$ (resp. $\mu^{\prime}$) is the probabilistic Haar measure on $G$
(resp. on $G^{'}$) \cite[Theorem C, Chap. XII]{Hal}. 
\label{pushforward measure via an epimorphism of compact Lie groups}
\end{remark}

\begin{remark}
\label{Integrating on G and on X(G)}
Let $G$ be a compact group with Haar measure $\mu$ and let $X (G)$ be the set of conjugacy classes of $G$.
Consider the natural map $\pi\colon G \rightarrow X(G)$. If $f\colon G \rightarrow \C$ is a class function, we will also
denote by $f$ the naturally induced function $X(G) \rightarrow \C$. 
By \eqref{integrating against pushforward measure}, for a measurable class function $f\colon G \rightarrow \C$ we have:
$$
\int_{X(G)} \, f \, d \, \pi_{\ast} \mu = \int_{G} \, f \, d\mu.
$$
\end{remark}

Let $G$ be a compact group with probabilistic (i.e., normalized) Haar measure
$\mu$. Let $G_0 \triangleleft G$ be an open subgroup. Let $\mu_{0}$
denote the probabilistic Haar measure of $G_0$, so that 
$\mu_{0} = [G:\, G_0] \, \mu_{| G_0}$. The measure induced by $\mu$ on $X(G)$ (resp. $\mu_{0}$ on $X(G_{0})$) will also be denoted $\mu$ (resp. $\mu_{0}$) cf. Remark \ref{Integrating on G and on X(G)}. The group $G$ can be decomposed into a finite number of cosets with 
respect to $G_{0}$:
\begin{equation}
G = \bigsqcup_{i = 0}^{t} \, g_i \, G_{0}.
\label{coset decomposition}
\end{equation} 

\begin{remark} \label{Peter-Weyl for equidistribution}
Let $(x_n)$ be a sequence of elements of $X(G)$.
By definition, the sequence $(x_n)$ is equidistributed in $X(G)$ if and only if for each continuous class function $f$ of $G$,
\[
\lim_{n \to \infty} \frac{1}{n} \sum_{i=1}^n f(x_i) = \int_G f\,d\mu.
\]
By the Peter-Weyl theorem (see \cite[Appendix to Chapter I]{Se1}), we may further say that the sequence $(x_n)$ is equidistributed in $X(G)$ if and only if for each nontrivial irreducible character $\chi$ of $G$,
\begin{equation} \label{Peter-Weyl reduction equation for equidistribution}
\lim_{n \to \infty} \frac{1}{n} \sum_{i=1}^n \chi(x_i) = \int_G \chi\,d\mu.
\end{equation}
Let $\rho\colon G \rightarrow GL(V)$ be the irreducible representation with character $\chi$, where $V$ is a complex vector space of dimension $d$.
Because $G$ is compact, for each $g \in G$, $\chi (g)$ is a sum of $d$ complex numbers of absolute value 1, and hence $|\chi (g)| \leq d$.
 \end{remark}

\medskip

Consider the inclusion homomorphism $j\colon G_0 \to G$. The map induced by $j$ on the sets of 
conjugacy classes of $G_{0}$ and $G$ will be denoted as follows:
\begin{equation}
X(j)\colon X(G_0) \to X(G).
\label{The map X(j)}
\end{equation}

\begin{remark} 
The map $X(j)$ is not necessarily an inclusion (that is, there may be \emph{fusion} of conjugacy classes from $G_0$ 
to $G$), but is finite-to-one. In any case, given a sequence of elements of $X(G)$ belonging to the image of $X(G_0)$, we may still ask whether this sequence is equidistributed for the image (pushforward) of $\mu_0$.
\end{remark}

The following proposition is of independent interest, but its proof presents a strategy 
we will apply later in this section.

\begin{proposition}\label{equidistr. of xv implies equidistr. of xnj} Let $(x_i)$ be a sequence of elements of $X(G)$ that is equidistributed with respect to 
$\mu$. Put $k_n := \sharp \{x_i \in \mathrm{image} (X(j))\colon 1 \leq i \leq n\}$. Assume that:
\begin{equation}
\lim_{n \rightarrow \infty}\, \frac{k_n}{n} \, = \, \frac{1}{[G:\, G_{0}]}.
\label{limit condition}
\end{equation} 
Then the subsequence $(x_{n_k})$ of elements of $(x_i)$ that are contained in the image of $X(j)$ is equidistributed 
with respect to the pushforward of $\mu_{0}$.
\end{proposition}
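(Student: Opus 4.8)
The plan is to reduce equidistribution of $(x_{n_k})$ with respect to $(X(j))_{\ast}\mu_0$ to a Weyl-type criterion, exactly as in Remark~\ref{Peter-Weyl for equidistribution}: it suffices to show that for every continuous class function $f$ on $G_0$,
$$
\lim_{k \to \infty} \frac{1}{k} \sum_{j=1}^{k} f(x_{n_j}) = \int_{G_0} f \, d\mu_0 = \int_{X(G_0)} f \, d\mu_0.
$$
Rather than work directly with class functions on $G_0$, I would first note that it is enough to test against functions of the form $g \mapsto \tilde f(g)$ where $\tilde f$ ranges over class functions on $G$ restricted to $G_0$, \emph{provided} these separate the relevant conjugacy classes — but this is not automatic, so instead the cleaner route is to use the induced-character machinery: by Frobenius reciprocity (or, in the compact setting, the form of Artin's theorem that the paper develops later in Lemma~\ref{Artin Clifford lemma}, though for a finite quotient the classical statement suffices), every continuous class function on $G_0$ can be approximated by $\Q$- (indeed $\C$-)linear combinations of functions $\Res_{G_0} \chi$ for $\chi$ a character of $G$, together with corrections supported on the distinction between $G_0$-conjugacy and $G$-conjugacy inside $G_0$. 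To avoid that subtlety, the first step I would actually carry out is to observe that since $G_0 \triangleleft G$ is open (hence of finite index, with $G/G_0$ finite), a continuous class function $f_0$ on $G_0$ decomposes according to the $G/G_0$-action on $G_0$ by conjugation, and the "average over the $G/G_0$-orbit" piece is the restriction of a class function on $G$, while the purely $G_0$-class-function directions are handled by the hypothesis \eqref{limit condition} controlling the proportion of indices landing in $\mathrm{image}(X(j))$.

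Concretely, here are the steps in order. (1) Fix a continuous class function $f_0$ on $G_0$; extend it to a function $f$ on $G$ by setting $f \equiv 0$ off $G_0$, then symmetrize: $f^G(g) := \frac{1}{|G/G_0|}\sum_{\bar s \in G/G_0} f(sgs^{-1})$, which is a continuous class function on $G$ supported on $G_0$ and agrees, after integration, appropriately. (2) Apply the assumed equidistribution of $(x_i)$ in $X(G)$ to $f^G$: $\frac{1}{n}\sum_{i=1}^n f^G(x_i) \to \int_G f^G\, d\mu$. Since $f^G$ is supported on (the image in $X(G)$ of) $G_0$, the left side only sees the terms with $x_i \in \mathrm{image}(X(j))$, i.e. $\frac{1}{n}\sum_{i=1}^n f^G(x_i) = \frac{1}{n}\sum_{j: n_j \le n} f^G(x_{n_j})$. (3) Rewrite $\frac{1}{n}\sum = \frac{k_n}{n}\cdot\frac{1}{k_n}\sum_{j: n_j \le n} f^G(x_{n_j})$ and use \eqref{limit condition}, $k_n/n \to 1/[G:G_0]$, to extract $\lim_k \frac{1}{k}\sum_{j=1}^k f^G(x_{n_j}) = [G:G_0]\int_G f^G\, d\mu$. (4) Compute the right-hand constant: $[G:G_0]\int_G f^G\,d\mu = [G:G_0]\int_{G_0} f^G\, d\mu|_{G_0} = \int_{G_0} f^G\, d\mu_0$, using $\mu_0 = [G:G_0]\,\mu|_{G_0}$; and $\int_{G_0} f^G \, d\mu_0 = \int_{G_0} f_0 \, d\mu_0$ because averaging over $G/G_0$-conjugacy is measure-preserving on $G_0$ and $f_0$ was already a $G_0$-class function. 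So for all $f_0$ lying in the image of the symmetrization map we get the desired limit.

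The remaining point — and the place where a little care is needed — is step (5): the symmetrization map from continuous class functions on $G_0$ to its own image is a projection, so I must also handle the complementary summand, namely continuous functions $h$ on $G_0$ that are $G_0$-class functions but whose $G/G_0$-symmetrization vanishes. For such $h$, the claim is simply that $\frac{1}{k}\sum_{j=1}^k h(x_{n_j}) \to 0 = \int_{G_0} h \, d\mu_0$; but here I cannot invoke equidistribution of $(x_i)$ in $X(G)$ directly, since $h$ does not extend to a class function on $G$. This is exactly why the hypothesis is stated for the pushforward of $\mu_0$ under $X(j)$ and not for $\mu_0$ itself on $X(G_0)$: the conjugacy classes of $G_0$ that fuse in $G$ are genuinely invisible to the data $(x_i)$ in $X(G)$, so "equidistributed with respect to $(X(j))_\ast\mu_0$" is the correct — and weaker — target. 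The clean way to finish is therefore to replace class functions on $G_0$ throughout by class functions on $G_0$ that are \emph{pulled back from $X(G)$ via $X(j)$} (equivalently, continuous functions on the image of $X(j)$): these are precisely the functions $f^G|_{G_0}$ above, the Weyl criterion for $(X(j))_\ast \mu_0$ only requires testing against \emph{these}, and steps (1)--(4) handle all of them. The main obstacle is thus conceptual rather than computational — getting the test-function class exactly right so that the finite-index symmetrization argument closes — and once that is pinned down the estimates are the routine manipulation of Cesàro averages in steps (2)--(4).
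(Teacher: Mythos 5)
Your proof is correct and is essentially the paper's argument: the paper likewise tests against continuous class functions of $G$ extended by zero off the open normal subgroup $G_0$, applies the assumed equidistribution in $X(G)$, renormalizes the Ces\`aro average by \eqref{limit condition}, and converts $\int_{G}$ into $\int_{G_0}\,d\mu_0$ via $\mu_0=[G:G_0]\,\mu|_{G_0}$. Your symmetrization in step (1) and the discussion in step (5) just traverse the same correspondence in the opposite direction (the paper starts from a class function $f$ on $G$ and restricts, rather than starting on $G_0$ and averaging), and you correctly identify, as the paper does in its opening sentence, that the pushforward-measure formulation means only these $G$-class functions need to be tested.
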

\begin{proof} 
We equate equidistribution in $X(G)$ with a statement about continuous class functions on $G$
as per Remark~\ref{Peter-Weyl for equidistribution};
similarly, we equate equidistribution in the image of $X(j)$ with a statement about continuous class functions on $G_0$ which are constant on fibers of the map $X(j)$.
Let $f\colon G \rightarrow \mathbb{C}$ be any continuous class function. Observe that 
$f_{0} := f \circ j\colon G_{0} \rightarrow \mathbb{C}$ is a continuous class function on $G_{0}$.
Let $f_{1}\colon G \rightarrow \mathbb{C}$ be the extension of $f_{0}$ by 
$0$ beyond $G_{0}$. The continuous function $f_1$ is a class function on $G$ because $G_{0} \triangleleft G$.
By assumption,
\begin{equation}
\lim_{n \rightarrow \infty}\, \frac{1}{n}\, \sum_{i = 1}^{n} f_{1} (x_i)  = \, \int_{G} \, f_{1} d\mu.
\label{equidistribution for G}
\end{equation} 
By the construction of $f_{1}$, the equation \eqref{equidistribution for G} assumes the following form:
\begin{equation}
\lim_{n \rightarrow \infty}\, \frac{1}{n}\, \sum_{k = 1}^{k_n} f_{0} (x_{n_k})  = \, \int_{G_0} \, f_0 \, d\mu_{| G_0}.
\label{equidistribution for G0 1}
\end{equation}
Hence:
\begin{equation}
\lim_{n \rightarrow \infty}\, \frac{k_n}{n}\, \frac{1}{k_n}\, \sum_{k = 1}^{k_n} f(x_{n_k})  = 
\, \int_{G_0} \, f_0 \frac{1}{[G:\, G_0]} \, d\mu_{0}.
\label{equidistribution for G0 2}
\end{equation}
Since
$$
\frac{1}{k_n}\, \sum_{k = 1}^{k_n} f(x_{n_k}) = \frac{\frac{1}{n}\, \sum_{i = 1}^{n} f_{1} (x_i)}{\frac{k_n}{n}}
$$
and the limits of the numerator and denominator on the right exist by assumptions
(see \eqref{limit condition} and \eqref{equidistribution for G}), the limit on the left also exists and 
by \eqref{integrating against pushforward measure}, \eqref{limit condition}, and \eqref{equidistribution for G0 2} we get:
\begin{equation}
\lim_{n \rightarrow \infty}\, \frac{1}{k_n}\, \sum_{k = 1}^{k_n} f(x_{n_k})  = 
\, \int_{G_0} \, f \circ j \, d\mu_{0} = \int_{G} \, f  \, d \, j_{\ast} \mu_{0}
\label{equidistribution for G0 3}
\end{equation}
as desired.
\end{proof}

For the rest of the section, we study the problem of inverting Proposition~\ref{equidistr. of xv implies equidistr. of xnj}, which is to say deducing an equidistribution statement in $X(G)$ from equidistribution statements in $X(G_0)$
and $X(G/G_0)$.
Our main tool for this will be induced characters.

\begin{lemma}
\label{relation between integral of chi and chi0}
 Let $\chi_{0}$ be a character of $G_{0}$ and set $\chi := \Ind_{G_{0}}^{G} \, (\chi_{0})$. Then
\begin{equation}
\int_{G} \, \chi \, d \mu = \int_{G_{0}}\, {\chi_{0}}\, d \mu_{0}.
\end{equation}
\end{lemma}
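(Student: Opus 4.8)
The plan is to compute the integral $\int_G \chi\, d\mu$ directly from the definition of the induced character. Recall that for $g \in G$,
$$
\chi(g) \,=\, \Ind_{G_0}^{G}(\chi_0)(g) \,=\, \frac{1}{[G:G_0]} \sum_{\substack{s \in G \\ s^{-1} g s \in G_0}} \chi_0(s^{-1} g s),
$$
where the sum is over $s$ running through a transversal is replaced by a normalized sum over all of $G$; more precisely, in the compact-group setting (which is what we need in view of the application to open subgroups of compact groups later in the section), the correct formula involves integration,
$$
\chi(g) \,=\, \int_{G} \widetilde{\chi_0}(s^{-1} g s)\, d\mu(s),
$$
where $\widetilde{\chi_0}$ denotes the extension of $\chi_0$ by $0$ outside $G_0$. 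First I would fix this formula carefully, either citing the finite-index version (since $G_0 \triangleleft G$ is open in a compact group, $[G:G_0] < \infty$) or the measure-theoretic version; the normal subgroup hypothesis is in force from the paragraph preceding \eqref{coset decomposition}, which actually simplifies matters.

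Next I would integrate over $G$ and swap the order of integration (legitimate since everything is bounded and the measures are finite): using $\mu(G) = 1$ and left/right invariance of Haar measure,
$$
\int_G \chi\, d\mu \,=\, \int_G \int_G \widetilde{\chi_0}(s^{-1} g s)\, d\mu(s)\, d\mu(g) \,=\, \int_G \left( \int_G \widetilde{\chi_0}(s^{-1} g s)\, d\mu(g) \right) d\mu(s).
$$
For each fixed $s$, the inner integral equals $\int_G \widetilde{\chi_0}(g)\, d\mu(g)$ by translation invariance of $\mu$ under $g \mapsto s g s^{-1}$. Since $\widetilde{\chi_0}$ vanishes off $G_0$ and restricts to $\chi_0$ on $G_0$, and since $\mu_{|G_0} = \frac{1}{[G:G_0]} \mu_0$, this inner integral is $\frac{1}{[G:G_0]} \int_{G_0} \chi_0\, d\mu_0$. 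Integrating the constant over $s \in G$ and using $\mu(G)=1$ gives exactly $\frac{1}{[G:G_0]} \int_{G_0}\chi_0\, d\mu_0$ — but wait, that is off by the factor $[G:G_0]$, so I must be careful: the correct induced-character formula in the finite-index case is $\chi(g) = \frac{1}{|G_0|}\sum_{s \in G} \widetilde{\chi_0}(s^{-1}gs)$ for finite groups, whose continuous analogue, with the normalization $\mu_0 = [G:G_0]\mu_{|G_0}$ already built in, yields the stated clean identity $\int_G \chi\, d\mu = \int_{G_0} \chi_0\, d\mu_0$. I would pin down the normalization by checking the case $\chi_0 = \mathbf{1}$: then $\chi = \Ind_{G_0}^G \mathbf{1}$ is the permutation character of $G/G_0$, with $\int_G \chi\, d\mu = \dim(\text{fixed space}) \cdot$ (nothing) $= 1$ since the trivial representation appears once, matching $\int_{G_0}\mathbf{1}\, d\mu_0 = 1$.

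The main obstacle I anticipate is purely bookkeeping: getting the normalization constants consistent between the finite-group induction formula, its compact-group generalization, and the conventions $\mu(G) = \mu_0(G_0) = 1$, $\mu_0 = [G:G_0]\mu_{|G_0}$ fixed earlier in the section. There is no deep content — it is Frobenius reciprocity applied to the trivial character of $G$, or equivalently the statement that $\langle \Ind_{G_0}^G \chi_0, \mathbf{1}_G\rangle_G = \langle \chi_0, \mathbf{1}_{G_0}\rangle_{G_0}$ — so once the conventions are aligned the proof is two lines. Alternatively, and perhaps more cleanly for the writeup, I would invoke Frobenius reciprocity directly: $\int_G \chi\, d\mu = \langle \chi, \mathbf{1}_G \rangle = \langle \Ind_{G_0}^G \chi_0, \mathbf{1}_G\rangle = \langle \chi_0, \Res_{G_0}\mathbf{1}_G\rangle = \langle \chi_0, \mathbf{1}_{G_0}\rangle = \int_{G_0}\chi_0\, d\mu_0$, which sidesteps the explicit formula entirely; the only thing to verify is that Frobenius reciprocity holds in this compact-group context, which it does, and which in any case will be needed for the subsequent Artin–Clifford lemma referenced in the introduction.
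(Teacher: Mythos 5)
Your proposal is correct, and the Frobenius--reciprocity route at the end is a genuinely different and cleaner argument than the one in the paper. The paper's proof is your ``first route,'' but instead of the integral formula $\chi(g) = \int_G \widetilde{\chi_0}(s^{-1}gs)\,d\mu(s)$ it uses the finite-sum formula from Serre's book, $\chi(g) = \sum_{g_i^{-1} g g_i \in G_0}\chi_0(g_i^{-1} g g_i)$ with $g_i$ running over a transversal of $G/G_0$; this makes the normalization issue you wrestled with disappear entirely. Since $G_0 \triangleleft G$, the condition $g_i^{-1}gg_i\in G_0$ is the same as $g\in G_0$ for each $i$, so after extending $\chi_0$ by zero the paper writes $\int_G\chi\,d\mu = \sum_{i=0}^t \int_G \chi_0(g_i^{-1}gg_i)\,d\mu$, substitutes $g\mapsto g_ihg_i^{-1}$ to get $\sum_i\int_{G_0}\chi_0\,d\mu_{|G_0}$, and then the count of $t+1=[G:G_0]$ terms cancels against $\mu_0 = [G:G_0]\,\mu_{|G_0}$ to give the clean identity. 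Your integral-formula version of the same calculation does work once you fix the normalization to $\chi(g)=[G:G_0]\int_G\widetilde{\chi_0}(s^{-1}gs)\,d\mu(s)$, but you correctly identified that bookkeeping as the main hazard. By contrast the Frobenius--reciprocity argument, $\int_G\chi\,d\mu = \langle\Ind_{G_0}^G\chi_0,\mathbf{1}_G\rangle_G = \langle\chi_0,\mathbf{1}_{G_0}\rangle_{G_0} = \int_{G_0}\chi_0\,d\mu_0$, sidesteps every constant at the cost of invoking Frobenius reciprocity for the compact-group finite-index case; since the Peter--Weyl machinery is already in play in this section of the paper, that would have been a perfectly acceptable (and shorter) alternative. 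One small caveat: the paper's Lemma~\ref{relation between integral of chi against pushforward measure and integral of chi0} immediately afterward does require the explicit coset manipulation, so the paper's choice to set it up once and reuse it is economical; if you went the reciprocity route for Lemma~\ref{relation between integral of chi and chi0} you would still need the coset computation for Lemma~\ref{relation between integral of chi against pushforward measure and integral of chi0}.
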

\begin{proof}
By \cite[Theorem 12]{Se4} and \cite[p. 34]{Se4}:
\begin{equation}
\chi (g) =  \sum_{g_{i}^{-1} g g_{i} \in G_{0}} \, \chi_{0} (g_{i}^{-1} g g_{i}).
\label{integral of induced character}
\end{equation}
Observe that $g_{i}^{-1} g g_{i} \in G_{0}$ if and only if $g \in g_{i} G_{0} g_{i}^{-1} = G_{0}$ 
because $G_0 \triangleleft G$. In the following computations, we treat $\chi_{0}$ as a function on $G$ by extending it by $0$ outside $G_{0}$. With this convention in mind, by applying \eqref{integral of induced character} we obtain:
\begin{equation}
\int_{G} \, {\chi (g)} \, d \mu =  \int_{G} \, \sum_{g_{i}^{-1} g g_{i} \in G_{0}} \, \chi_{0} (g_{i}^{-1} g g_{i}) \, 
d \mu = {\sum_{i=0}^{t}  \int_{G} \, \chi_{0} (g_{i}^{-1} g g_{i})} \, d \mu.
\label{proof of integral of induced character}
\end{equation}
Making the substitutions $g \mapsto g_{i} g g_{i}^{-1}$ for each $i$ under the integrals on the right of 
\eqref{proof of integral of induced character}, then applying the right and left translation
invariance of $\mu$, we obtain: 
$$
\int_{G} \, {\chi (g)} \, d \mu =  
{\sum_{i=0}^{t}  \int_{G_0} \, \chi_{0} (g)} \, d \mu_{| G_0}  = \int_{G_{0}} \, \chi_{0} (g) \, d \mu_{0}
$$
because the function $g \mapsto \chi_{0} (g_{i} g g_{i}^{-1})$ has support in $G_{0}$ and
$\mu_{0} = [G:\, G_0] \, \mu_{| G_0}$.
\end{proof}

\begin{lemma} Let $\chi_{0}$ be a character of $G_{0}$ and $\chi := \Ind_{G_{0}}^{G} \, (\chi_{0})$.
Then
\begin{equation}
\int_{G} \, \chi \, d \, j_{\ast} \mu_{0} = [G:\, G_0] \, \int_{G_{0}}\, {\chi_{0}}\, d \mu_{0},
\end{equation}
where $j\colon G_{0} \rightarrow G$ is the inclusion homomorphism. 
\label{relation between integral of chi against pushforward measure and integral of chi0}
\end{lemma}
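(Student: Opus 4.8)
The statement to prove is
\[
\int_{G} \, \chi \, d \, j_{\ast} \mu_{0} = [G:\, G_0] \, \int_{G_{0}}\, {\chi_{0}}\, d \mu_{0},
\]
for $\chi = \Ind_{G_0}^G(\chi_0)$. The natural approach is to reduce this immediately to the previous lemma. First I would use the defining property of the pushforward measure: by \eqref{integrating against pushforward measure} applied to the inclusion $j\colon G_0 \hookrightarrow G$ (which is continuous, hence measurable), we have
\[
\int_{G} \, \chi \, d \, j_{\ast} \mu_{0} = \int_{G_0} \, \chi \circ j \, d \mu_{0} = \int_{G_0} \, \chi_{|G_0} \, d \mu_{0}.
\]
So the whole claim comes down to computing $\int_{G_0} \chi_{|G_0}\,d\mu_0$ and showing it equals $[G:G_0]\int_{G_0}\chi_0\,d\mu_0$.

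The second step is to identify $\chi_{|G_0}$, the restriction of the induced character back to $G_0$. Because $G_0 \triangleleft G$ is normal, the formula \eqref{integral of induced character} from the proof of Lemma~\ref{relation between integral of chi and chi0} simplifies: for $g \in G_0$, every conjugate $g_i^{-1} g g_i$ again lies in $G_0$ (since $g_i G_0 g_i^{-1} = G_0$), so
\[
\chi(g) = \sum_{i=0}^{t} \chi_0(g_i^{-1} g g_i) \qquad (g \in G_0).
\]
Integrating over $G_0$ against $\mu_0$ and using the substitution $g \mapsto g_i g g_i^{-1}$ together with the translation-invariance of $\mu_0$ (each such substitution is an automorphism of $G_0$ preserving the Haar measure), each of the $t+1 = [G:G_0]$ summands contributes $\int_{G_0} \chi_0 \, d\mu_0$. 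This gives $\int_{G_0} \chi_{|G_0}\,d\mu_0 = [G:G_0]\int_{G_0}\chi_0\,d\mu_0$, which combined with the first step is exactly the assertion.

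There is really no substantive obstacle here; the lemma is a bookkeeping companion to Lemma~\ref{relation between integral of chi and chi0}, differing only in that one integrates $\chi$ against $j_\ast \mu_0$ rather than against $\mu$. The one point that deserves a word of care is the normalization: $\mu_0$ is the \emph{probabilistic} Haar measure on $G_0$, so $\mu_0 = [G:G_0]\,\mu_{|G_0}$, and the factor $[G:G_0]$ in the conclusion is precisely the discrepancy between pushing forward the normalized measure $\mu_0$ versus the normalized measure $\mu$ on $G$ (indeed $j_\ast\mu_0$ is a measure of total mass $1$ supported on $G_0 \subseteq G$, whereas $\mu$ restricted to $G_0$ has mass $1/[G:G_0]$). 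Alternatively, one can derive the statement in one line from Lemma~\ref{relation between integral of chi and chi0} by writing $j_\ast \mu_0 = [G:G_0]\, (\text{extension of } \mu_{|G_0} \text{ by } 0)$ and comparing with the computation $\int_G \chi\,d\mu = \int_{G_0}\chi_0\,d\mu_0$ there; I would present whichever of these two routes is shorter, but both are immediate.
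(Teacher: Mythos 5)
Your proof is correct and follows essentially the same route as the paper: apply the pushforward formula to reduce to an integral over $G_0$, expand $\chi|_{G_0}$ via the induced-character formula (using normality of $G_0$ so all conjugates land in $G_0$), and then invoke translation invariance to identify each summand with $\int_{G_0}\chi_0\,d\mu_0$. The only cosmetic difference is that you handle the invariance step directly — conjugation by $g_i$ is an automorphism of the normal subgroup $G_0$, hence preserves its probabilistic Haar measure $\mu_0$ — whereas the paper takes a small detour through the extension of $\chi_0$ by zero and the invariance of $\mu$ on $G$; both are valid and the content is the same.
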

\begin{proof} By \eqref{integrating against pushforward measure} we obtain:
$$
\int_{G} \, \chi \, d \, j_{\ast} \mu_{0} \, = \, \int_{G_{0}} \, \chi \circ j \, d \, \mu_{0} \, = \, 
{\sum_{i=0}^{t}  \int_{G_{0}} \, \chi_{0} (g_{i}^{-1} g g_{i})} \, d \mu_{0}
$$ 
$$
= \, [G:\, G_0] \,{\sum_{i=0}^{t}  \int_{G_{0}} \, \chi_{0} (g_{i}^{-1} g g_{i})} \, d\, \mu_{| G_0}.
$$
Again treating $\chi_{0}$ as a function on $G$ by extending it by $0$ outside $G_{0}$ and  
then applying the right and left translation invariance of $\mu$ we get: 
(cf. the proof of Lemma \ref{relation between integral of chi and chi0})
$$
\int_{G} \, \chi \, d \, j_{\ast} \mu_{0} \, = \, 
[G:\, G_0] \,{\sum_{i=0}^{t}  \int_{G} \, \chi_{0} (g_{i}^{-1} g g_{i})} \, d \, \mu \, = \, 
[G:\, G_0] \,{\sum_{i=0}^{t}  \int_{G} \, \chi_{0} (g)} \, d \, \mu
$$ 
$$
= \, [G:\, G_0]^2 \, \int_{G_{0}} \, \chi_{0} (g) \, d \, \mu_{| G_0} \, = \, 
[G:\, G_0] \, \int_{G_{0}} \, \chi_{0} (g) \, d \, \mu_{0}.
$$
\end{proof}

\begin{remark} \label{equidistribution via induced characters}
We will use Lemmas~\ref{relation between integral of chi and chi0} and 
\ref{relation between integral of chi against pushforward measure and integral of chi0} in the following fashion. 
In \eqref{Peter-Weyl reduction equation for equidistribution}, suppose that $\chi$ is the character associated to the representation $\rho$; assume also that \eqref{limit condition} holds.
If $\rho$ is induced from a representation $\rho_0$ of $G_0$, we may  equate the desired equality with the corresponding equality for the character of $\rho_0$, using \eqref{limit condition} to compare the left-hand sides and Lemmas~\ref{relation between integral of chi and chi0} and 
\ref{relation between integral of chi against pushforward measure and integral of chi0} to compare the right-hand sides.
\end{remark}

\begin{lemma}
Let $(x_n)$ be a sequence of elements of $X(G)$. Suppose that:
\begin{enumerate}
\item[(1)] 
The image of the sequence $(x_n)$ in $X(G/[G,G])$ is equidistributed for the probabilistic Haar measure on $G/[G,G]$.
\item[(2)]
Every irreducible character $\chi$ of $G$ of dimension $>1$ 
is induced from an irreducible character $\chi_{0}$ of $G_{0}$.
\item[(3)]  The equation \eqref{limit condition} holds.
\item[(4)] The subsequence of elements of $(x_n)$ that are contained in the image of $X(j)$ is equidistributed with respect to the pushforward of $\mu_0$.
\end{enumerate}
Then $(x_n)$ is equidistributed in $X(G)$ with respect to $\mu$.
\label{reduction for equidistribution easy case}
\end{lemma}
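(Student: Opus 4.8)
The strategy is to verify the equidistribution criterion of Remark~\ref{Peter-Weyl for equidistribution}: it suffices to show that for every nontrivial irreducible character $\chi$ of $G$,
$$
\lim_{n \to \infty} \frac{1}{n} \sum_{i=1}^n \chi(x_i) = \int_G \chi\,d\mu.
$$
We split the irreducible characters into two classes according to dimension, exactly matching hypotheses (1) and (2).

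First I would handle the one-dimensional characters. Every $1$-dimensional character $\chi$ of $G$ factors through the abelianization $G/[G,G]$, and is nontrivial there precisely when it is nontrivial on $G$. Thus hypothesis (1) — equidistribution of the image sequence in $X(G/[G,G])$ for the probabilistic Haar measure — gives $\lim_n \frac1n\sum_{i=1}^n \chi(x_i) = \int_{G/[G,G]} \chi = \int_G \chi\,d\mu = 0$ for each such $\chi$, the last equalities because $\chi$ is a nontrivial character of the compact abelian group $G/[G,G]$ and because pushforward of Haar measure along $G \to G/[G,G]$ is Haar measure (Remark~\ref{pushforward measure via an epimorphism of compact Lie groups}).

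Next I would treat the irreducible characters $\chi$ of dimension $>1$. By hypothesis (2), write $\chi = \Ind_{G_0}^G(\chi_0)$ for an irreducible character $\chi_0$ of $G_0$. The key computation, following Remark~\ref{equidistribution via induced characters}, is to compare both sides of the target equality with the corresponding data for $\chi_0$. On the right-hand side, Lemma~\ref{relation between integral of chi and chi0} gives $\int_G \chi\,d\mu = \int_{G_0}\chi_0\,d\mu_0$. On the left-hand side, I would use the fact (as in the proof of Proposition~\ref{equidistr. of xv implies equidistr. of xnj}) that the induced character $\chi$, evaluated on the sequence $(x_i)$, is supported on the subsequence landing in the image of $X(j)$: indeed $\chi(g) = \sum_{g_i^{-1}g g_i \in G_0}\chi_0(g_i^{-1}gg_i)$ and, since $G_0 \triangleleft G$, the condition $g_i^{-1}gg_i \in G_0$ holds for some (equivalently all) $i$ iff $g \in G_0$. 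So $\frac1n\sum_{i=1}^n \chi(x_i) = \frac{k_n}{n}\cdot\frac{1}{k_n}\sum_{k=1}^{k_n}\chi(x_{n_k})$, where $(x_{n_k})$ is the subsequence in the image of $X(j)$ and $k_n$ as in (3). By hypothesis (4), $(x_{n_k})$ is equidistributed for $j_*\mu_0$, so $\frac{1}{k_n}\sum_{k=1}^{k_n}\chi(x_{n_k}) \to \int_G \chi\,d\,j_*\mu_0 = [G:G_0]\int_{G_0}\chi_0\,d\mu_0$ by Lemma~\ref{relation between integral of chi against pushforward measure and integral of chi0}; combining with hypothesis (3), $\frac{k_n}{n} \to \frac{1}{[G:G_0]}$, the limit of the product is exactly $\int_{G_0}\chi_0\,d\mu_0 = \int_G \chi\,d\mu$, as desired.

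The only mild subtlety — and the step I would present most carefully — is the bookkeeping with the constant-on-fibers condition for $\chi$ restricted to $\mathrm{image}(X(j))$: one must check $\chi_0$, extended by zero outside $G_0$ and symmetrized over the coset representatives $g_i$, is genuinely a class function on $G$ so that it descends to $X(G)$ and its restriction to the image of $X(j)$ is the kind of function to which hypothesis (4) applies. This is the same normality argument used in Proposition~\ref{equidistr. of xv implies equidistr. of xnj} and Lemma~\ref{relation between integral of chi and chi0}, so no new ideas are needed; once it is in place, the two cases above exhaust all nontrivial irreducible characters of $G$ and the proof is complete.
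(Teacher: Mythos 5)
Your proof is correct and follows essentially the same route as the paper: one-dimensional characters are handled by factoring through $G/[G,G]$ and invoking hypothesis (1), while characters of dimension $>1$ are handled by inducing from $G_0$, observing that the induced character vanishes outside $G_0$ (normality), and combining hypotheses (3) and (4) with Lemmas~\ref{relation between integral of chi and chi0} and \ref{relation between integral of chi against pushforward measure and integral of chi0}. You spell out the limit computation more explicitly than the paper's terse statement, but the underlying argument is identical.
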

\begin{proof}
For $\chi$ of dimension 1, $\chi$ is abelian and thus factors through the epimorphism 
$G \rightarrow G/[G,G]$. Hence this case is covered by condition (1) and
Remark \ref{pushforward measure via an epimorphism of compact Lie groups}.

For $\chi$ of dimension $>1$, let $(x_{n_k})$ be the subsequence of elements of $(x_n)$ that are contained in the image of
$X(j)$. By assumption (4) we have:
\begin{equation}
\lim_{n \rightarrow \infty}\, \frac{1}{k_n} \sum_{k = 1}^{k_n} \chi (x_{n_k}) =
\int_{G} \, \chi \, d \, j_{\ast} \mu_{0}.
\label{equidistribution of sequence coming from G0 wr to pushforward measure}
\end{equation}
By (2), (3), (4) and Lemmas \ref{relation between integral of chi and chi0} and 
\ref{relation between integral of chi against pushforward measure and integral of chi0}, the equality \eqref{equidistribution of sequence coming from G0 wr to pushforward measure} is equivalent to:
\begin{equation}
\lim_{n \rightarrow \infty}\, \frac{1}{n} \sum_{i = 1}^{n} \chi (x_{i}) =
\int_{G} \, \chi d \, \mu.
\label{equidistribution of the sequence coming from G wr to the measure mu}
\end{equation}
 because $\chi (x_i) = 0$ for $x_i$ not in the subsequence $(x_{n_k})$. 
\end{proof}

\begin{proposition} Let $(x_n)$ be a sequence of elements of $X(G)$. Let $(x_{n_k})$ be the subsequence of elements of 
$(x_n)$ in the image of $X(j)$.
\begin{itemize}
\item[(a)] Each fiber of $x_{n_k}$ of the map $X(j)$ equals 
$\{g_i y_{n_k} g_{i}^{-1}\colon 0 \leq i \leq t\}$, where $y_{n_k} \in X(G_0)$ is an element of this fiber.
\item[(b)] If assumptions (1)--(3) of Lemma \ref{reduction for equidistribution easy case} hold and the sequence 
$(g_i y_{n_k} g_{i}^{-1})_{i, k}$ is equidistributed in $X(G_0)$ with respect to $\mu_0$,
then $(x_n)$ is equidistributed in $X(G)$ with respect to $\mu$.
\end{itemize}
\label{proposition about the reduction of equidistribution to G0}
\end{proposition}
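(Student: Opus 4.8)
The plan is to reduce part (b) to Lemma~\ref{reduction for equidistribution easy case} by showing that its condition (4) holds, i.e., that the subsequence $(x_{n_k})$ of elements of $(x_n)$ lying in the image of $X(j)$ is equidistributed with respect to $j_{\ast}\mu_0$. Part (a) is essentially a bookkeeping statement about conjugacy fusion, so I would dispose of it first, then use it to unwind the integral against $j_{\ast}\mu_0$ into an integral against $\mu_0$ over the $G$-translated fiber, which is where the hypothesis on $(g_i y_{n_k} g_i^{-1})_{i,k}$ enters.

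For part (a): fix $x_{n_k}$ in the image of $X(j)$ and pick $y_{n_k}\in X(G_0)$ mapping to it. An element $y'\in X(G_0)$ maps to the same class $x_{n_k}\in X(G)$ iff some representative of $y'$ is $G$-conjugate to some representative of $y_{n_k}$; writing $G=\bigsqcup_{i=0}^{t} g_i G_0$ (with $g_0=1$) and using that $G_0\triangleleft G$, any such $G$-conjugacy is realized by some $g_i$ modulo $G_0$-conjugacy, so the fiber is exactly $\{g_i y_{n_k} g_i^{-1}\colon 0\le i\le t\}$ (a multiset a priori, but we read it as the set of the classes that occur). This is the content of the displayed formula \eqref{integral of induced character} specialized to conjugacy classes, and it is the same combinatorics already used in the proof of Lemma~\ref{relation between integral of chi and chi0}.

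For part (b): let $f\colon G\to\mathbb{C}$ be a continuous class function and $f_0:=f\circ j$. By Remark~\ref{pushforward measure} and the definition of $j_{\ast}\mu_0$,
$$
\int_G f\, d\,j_{\ast}\mu_0 = \int_{G_0} f_0\, d\mu_0.
$$
On the other hand, for each $k$ the value $f(x_{n_k})$ equals $f_0(g_i y_{n_k} g_i^{-1})$ for every $i$ (since $f$ is a class function on $G$ and all $g_i y_{n_k} g_i^{-1}$ are $G$-conjugate to a representative of $x_{n_k}$), so
$$
\frac{1}{k_n}\sum_{k=1}^{k_n} f(x_{n_k}) = \frac{1}{(t+1)k_n}\sum_{k=1}^{k_n}\sum_{i=0}^{t} f_0(g_i y_{n_k} g_i^{-1}).
$$
By the assumed equidistribution of the sequence $(g_i y_{n_k} g_i^{-1})_{i,k}$ in $X(G_0)$ with respect to $\mu_0$, the right-hand side converges to $\int_{G_0} f_0\, d\mu_0 = \int_G f\, d\,j_{\ast}\mu_0$. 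Hence condition (4) of Lemma~\ref{reduction for equidistribution easy case} holds, and since conditions (1)--(3) are assumed, that lemma gives that $(x_n)$ is equidistributed in $X(G)$ with respect to $\mu$.

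The main subtlety, and the step I would write most carefully, is part (a) together with the multiplicity bookkeeping in part (b): I must make sure the $(t+1)$-fold average over $i$ is exactly the right normalization, i.e., that summing $f_0$ over the orbit-with-multiplicity $\{g_i y_{n_k} g_i^{-1}\}_{i=0}^{t}$ and dividing by $t+1=[G:G_0]$ reproduces $f(x_{n_k})$ uniformly — this is where one uses that $f_0$ vanishes outside $G_0$ is \emph{not} needed (unlike in Lemma~\ref{relation between integral of chi and chi0}) because here $f$ is genuinely defined on all of $G$, so every term $f_0(g_i y_{n_k} g_i^{-1})$ is literally $f(x_{n_k})$. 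Everything else is a direct citation of Remark~\ref{pushforward measure}, Remark~\ref{pushforward measure via an epimorphism of compact Lie groups}, and Lemma~\ref{reduction for equidistribution easy case}.
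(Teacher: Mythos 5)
Your proof is correct and follows essentially the same route as the paper's: both arguments use the orbit description of the fibers from part (a) to rewrite the averages of a test function over $(x_{n_k})$ as averages over the conjugate sequence $(g_i y_{n_k} g_i^{-1})_{i,k}$, apply the hypothesis to identify the limit with $\int_{G_0} f_0\, d\mu_0 = \int_G f\, d\, j_{\ast}\mu_0$, and then conclude by Lemma \ref{reduction for equidistribution easy case}. The only (harmless) difference is that you verify condition (4) for arbitrary continuous class functions via the exact identity $f(x_{n_k}) = \tfrac{1}{t+1}\sum_i f_0(g_i y_{n_k} g_i^{-1})$, whereas the paper checks it only for induced characters using \eqref{integral of induced character} together with Lemma \ref{relation between integral of chi against pushforward measure and integral of chi0}.
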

\begin{proof} The claim (a) is obvious. To prove (b), 
assumption (1) of Lemma \ref{reduction for equidistribution easy case} again handles the case
where $\chi$ is a character of dimension 1, so we may assume that $\chi$ is of dimension $>1$.
The number of elements in the sequence 
$(g_i y_{n_k} g_{i}^{-1})_{i, j}$, for $1 \leq k \leq k_n$ and $0 \leq i \leq t$, asymptotically equals 
$[G:\, G_0] \, k_n$ by assumption (3) of Lemma \ref{reduction for equidistribution easy case}. 
Hence by \eqref{integral of induced character} 
and equidistribution of $(g_i y_{n_k} g_{i}^{-1})_{i, j}$ we have: 
\begin{equation}
\lim_{n \rightarrow \infty}\, \frac{1}{[G:\, G_0] \, k_n} \,\, \sum_{k = 1}^{k_n} \sum_{i = 0}^{t} \chi_{0} (g_i y_{n_k} g_{i}^{-1}) =
\int_{G_0} \, \chi_{0} \, d \, \mu_{0}.
\label{equidistribution of sequence gi xnj gi -1}
\end{equation}  
Multiplying \eqref{equidistribution of sequence gi xnj gi -1} by $[G:\, G_0]$ and applying \eqref{integral of induced character} and Lemma \ref{relation between integral of chi against pushforward measure and integral of chi0},
we observe that \eqref{equidistribution of sequence gi xnj gi -1} is equivalent to  
\eqref{equidistribution of sequence coming from G0 wr to pushforward measure}. Hence the Proposition follows by Lemma \ref{reduction for equidistribution easy case}.
\end{proof}

\begin{remark}
A basic example to which
Lemma~\ref{reduction for equidistribution easy case}
applies is where $G_0 = \SO(2)$ and $G$ is the normalizer of $\SO(2)$ in $\SU(2)$.
\end{remark}

When Lemma~\ref{reduction for equidistribution easy case} does not apply, we can still make a nontrivial reduction, loosely inspired by \cite{Jo}.

Because $G$ and $G_{0}$ are compact, $G_{0}$ is open in $G$, and $G/G_{0}$ is finite, then each subgroup $H$ such that
$G_{0} \subset H \subset G$ is automatically compact and open in $G$. Let  $\mu_{H}$ be the probabilistic
Haar measure of $H$, which satisfies $\mu_{H} = [G:\, H] \, \mu_{| H}$. We can generalize Lemma 
\ref{relation between integral of chi and chi0} as follows.

\begin{lemma}
\label{relation between integral of chi prime and chi}
 Let $\chi$ be a character of $H$. Then
\begin{equation}
\int_{G}\ {{\rm{Ind}}_{H}^{G}  (\chi)} d \mu = \int_{H}\, {\chi}\, d \mu_{H}.
\end{equation}
\end{lemma}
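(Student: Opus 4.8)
The statement generalizes Lemma~\ref{relation between integral of chi and chi0} from the normal open subgroup $G_0$ to an arbitrary intermediate open subgroup $H$ with $G_0 \subseteq H \subseteq G$, dropping the normality hypothesis entirely; the only structural input we actually use is that $H$ is open of finite index in the compact group $G$. The plan is to mimic the proof of Lemma~\ref{relation between integral of chi and chi0} as closely as possible, the only delicate point being that $H$ need not be normal, so the support condition ``$g_i^{-1} g g_i \in H \iff g \in H$'' no longer holds and the cosets $g_i H g_i^{-1}$ genuinely vary. First I would fix a set of left coset representatives $g_0, \dots, g_m$ for $H$ in $G$ (so $[G:H] = m+1$), extend $\chi$ to a function on $G$ by setting it equal to $0$ outside $H$, and invoke the formula for an induced character valid for compact groups, namely
$$
{\rm Ind}_H^G(\chi)(g) = \sum_{i\colon g_i^{-1} g g_i \in H} \chi(g_i^{-1} g g_i) = \sum_{i=0}^{m} \chi(g_i^{-1} g g_i),
$$
where in the last expression the convention that $\chi$ vanishes off $H$ absorbs the indexing condition. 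This is the compact-group analogue of \cite[Theorem 12, p.~34]{Se4}, exactly as used in Lemma~\ref{relation between integral of chi and chi0}.

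Next I would integrate this identity against the probabilistic Haar measure $\mu$ on $G$ and interchange the finite sum with the integral:
$$
\int_G {\rm Ind}_H^G(\chi)\, d\mu = \sum_{i=0}^{m} \int_G \chi(g_i^{-1} g g_i)\, d\mu.
$$
For each fixed $i$, I substitute $g \mapsto g_i g g_i^{-1}$ and use both left and right translation invariance of $\mu$ to conclude $\int_G \chi(g_i^{-1} g g_i)\, d\mu = \int_G \chi(g)\, d\mu$. Since $\chi$ is supported on $H$, this common value equals $\int_H \chi\, d\mu_{|H}$. Summing over the $m+1$ values of $i$ gives
$$
\int_G {\rm Ind}_H^G(\chi)\, d\mu = (m+1)\int_H \chi\, d\mu_{|H} = [G:H]\int_H \chi\, d\mu_{|H} = \int_H \chi\, d\mu_H,
$$
where the last equality is the normalization $\mu_H = [G:H]\,\mu_{|H}$ recalled just before the statement. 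This completes the argument.

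The main thing to be careful about — the ``obstacle,'' though it is a mild one — is precisely that the normality of $H$ is \emph{not} available, so one cannot argue as in Lemma~\ref{relation between integral of chi and chi0} that $g_i^{-1} g g_i \in H$ forces $g \in H$. The resolution is that this implication is never needed: the extension-by-zero convention on $\chi$ makes the induced-character formula hold verbatim as a sum over \emph{all} coset representatives, and the translation-invariance substitution $g \mapsto g_i g g_i^{-1}$ works on the whole of $G$ regardless of whether $g_i$ normalizes $H$. One should double-check that the substitution is legitimate, i.e. that $g \mapsto g_i g g_i^{-1}$ is a measure-preserving homeomorphism of $G$ — it is, being the composition of left translation by $g_i$ and right translation by $g_i^{-1}$, each of which preserves $\mu$. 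With that observation the proof is essentially identical to that of Lemma~\ref{relation between integral of chi and chi0}, and I would present it in two or three lines with a pointer back to that proof.
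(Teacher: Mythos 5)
Your proof is correct and follows essentially the same route as the paper's: the induced-character formula summed over coset representatives, extension of $\chi$ by zero (the paper phrases this as treating $g \mapsto \chi(g_i^{-1} g g_i)$ as supported on $g_i H g_i^{-1}$), the substitution $g \mapsto g_i g g_i^{-1}$ together with two-sided translation invariance of $\mu$, and the normalization $\mu_H = [G:H]\,\mu_{|H}$ to absorb the factor $[G:H]$. Your explicit remark that normality of $H$ is never needed is exactly the point the paper handles implicitly.
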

\begin{proof}
By \cite[Theorem 12]{Se4} and \cite[p. 34]{Se4}:
\begin{equation}
\label{integral of induced character from H}
{\rm{Ind}}_{H}^{G} (\chi) (g) =  \sum_{g_{i}^{-1} g g_{i} \in H} \, \chi (g_{i}^{-1} g g_{i}).
\end{equation}
Observe that $g_{i}^{-1} g g_{i} \in H$ if and only if $g \in g_{i} H g_{i}^{-1}$. 
In the following computations, we may treat the function 
$g \mapsto \chi (g_{i}^{-1} g g_{i})$ as a function on $G$ with support in $g_{i} H g_{i}^{-1}$.
Hence by applying \eqref{integral of induced character from H}, making the substitution 
$g \mapsto g_{i} g g_{i}^{-1}$ for each $i$, and applying right and left shift invariance of $\mu$, we obtain: 
\begin{align*}
\int_{G} {\rm{Ind}}_{H}^{G} (\chi) (g) \, d\mu(g) &=  \int_{G} \, {\sum_{g_{i}^{-1} g g_{i} \in H} \chi (g_{i}^{-1} g g_{i})} 
\, d \mu (g) \\
&= {\sum_{i} \int_{G} \, \chi (g_{i}^{-1} g g_{i})} \, d \mu (g) \\
&= {\sum_{i} \int_{H} \, \chi (h)} \, d \mu_{| H} (h) \\
&= [G: \, H]  \int_{H} \, \chi (h) \frac{1}{[G: \, H]}\, d \mu_{H} (h) \\
&= \int_{H} \, \chi (h) \, d \mu_{H} (h). \qedhere
\end{align*}
\end{proof}

\begin{definition} Let $H$ be a subgroup of $G$ such that $G_{0} \subset H \subset G$.
Let $\lambda$ be a character of $H/G_{0}$. Let $\widetilde{\lambda}$ denote its lift to
the character of $H$, so that for each $h \in H$:
\begin{equation}
\label{lift of a character from H/G0 to H}
\widetilde{\lambda} (h) := \lambda (h G_{0}).
\end{equation}
\end{definition}

\begin{lemma} 
\label{lift of induction is the induction of the lift}
Let $H$ be a subgroup of $G$ such that $G_{0} \subset H \subset G$. Then
\begin{equation}
\widetilde{{{\rm{Ind}}}_{H /G_{0}}^{G/G_{0}}} \lambda = {{\rm{Ind}}}_{H}^{G} \widetilde{\lambda}.
\end{equation}
\end{lemma}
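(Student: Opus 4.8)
The statement is an equality of virtual characters (or honest characters) of $G/G_0$, lifted to $G$, so the plan is to prove it by the formula for induced characters, just as in the proofs of Lemmas~\ref{relation between integral of chi and chi0}, \ref{relation between integral of chi prime and chi}. Both sides are class functions on $G$ that are constant on cosets of $G_0$ (the left side manifestly, since it is a lift from $G/G_0$; the right side because for $h\in H$ and $g_i^{-1}gg_i\in H$ one has $\widetilde\lambda(g_i^{-1}gg_i)=\lambda(g_i^{-1}gg_i\,G_0)$, which depends only on the image of $g$ in $G/G_0$). So it suffices to check the identity after evaluating at an arbitrary $g\in G$, and then reduce everything to the quotient group $G/G_0$.

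First I would fix a full set of coset representatives $g_0,\dots,g_t$ for $G_0$ in $G$; then $\overline g_0,\dots,\overline g_t$ is a full set of coset representatives for $H/G_0$ in $G/G_0$ (here I use that $G_0\subset H$, so that cosets of $G_0$ refine cosets of $H$, and $[G:H]=[G/G_0:H/G_0]$). Evaluating the right-hand side at $g\in G$ via \eqref{integral of induced character from H} gives
$$
{\rm Ind}_{H}^{G}\widetilde\lambda\,(g) \;=\; \sum_{g_i^{-1}gg_i\in H}\widetilde\lambda(g_i^{-1}gg_i)\;=\;\sum_{g_i^{-1}gg_i\in H}\lambda\bigl(g_i^{-1}gg_i\,G_0\bigr).
$$
Now observe that $g_i^{-1}gg_i\in H$ if and only if $\overline{g_i}^{-1}\,\overline g\,\overline{g_i}\in H/G_0$ in $G/G_0$, because $H$ is a union of $G_0$-cosets and $G_0\triangleleft G$. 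Hence the right-hand side equals
$$
\sum_{\overline{g_i}^{-1}\overline g\,\overline{g_i}\in H/G_0}\lambda\bigl(\overline{g_i}^{-1}\overline g\,\overline{g_i}\bigr),
$$
which by \eqref{integral of induced character from H} applied in the group $G/G_0$ with subgroup $H/G_0$ is precisely $\bigl({\rm Ind}_{H/G_0}^{G/G_0}\lambda\bigr)(\overline g)$. By definition of the lift \eqref{lift of a character from H/G0 to H}, this last quantity is $\widetilde{{\rm Ind}_{H/G_0}^{G/G_0}\lambda}\,(g)$, which is the left-hand side. Since $g$ was arbitrary, the two class functions on $G$ agree.

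The only genuinely delicate point is the bookkeeping in the step ``$g_i^{-1}gg_i\in H\iff \overline{g_i}^{-1}\overline g\,\overline{g_i}\in H/G_0$'' together with the claim that the same index set $\{g_0,\dots,g_t\}$ simultaneously represents $G/G_0$ and, via images, $H/G_0\backslash G/G_0$; this uses normality of $G_0$ in $G$ (so that quotients and conjugation are compatible) and the inclusion $G_0\subseteq H$. Everything else is the standard induced-character formula of \cite[Theorem 12]{Se4} applied once in $G$ and once in $G/G_0$. I expect no substantive obstacle; the proof is a short verification and I would write it in roughly the length of the proof of Lemma~\ref{relation between integral of chi prime and chi}.
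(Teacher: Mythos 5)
Your approach is the same as the paper's: evaluate both sides at an arbitrary $g\in G$ via the induced character formula from \eqref{integral of induced character from H}, using that the quotient map $G\to G/G_0$ is a homomorphism (so conjugation descends) and that $H$ is a union of $G_0$-cosets (so $g_i^{-1}gg_i\in H\iff\overline{g_i}^{-1}\overline g\,\overline{g_i}\in H/G_0$).

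There is, however, a slip in your setup: you fix $g_0,\dots,g_t$ to be coset representatives for $G_0$ in $G$, but then claim their images $\overline{g_0},\dots,\overline{g_t}$ form a set of coset representatives for $H/G_0$ in $G/G_0$. That is false unless $H=G_0$: if the $g_i$ exhaust $G/G_0$, their images are \emph{all} of $G/G_0$, not one per $H/G_0$-coset. And the induced character formula you invoke for ${\rm Ind}_H^G\widetilde\lambda$ requires $g_i$ to be coset representatives of $H$ in $G$, not of $G_0$. What you should say (and what the paper does, implicitly via the observation $G=\bigsqcup_i g_iH\iff G/G_0=\bigsqcup_i \overline{g_i}(H/G_0)$) is: fix $g_0,\dots,g_t$ to be coset representatives for $H$ in $G$, and then $\overline{g_0},\dots,\overline{g_t}$ are coset representatives for $H/G_0$ in $G/G_0$, which is exactly the bijection induced by $G/H\cong(G/G_0)/(H/G_0)$. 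With that correction, the rest of your computation is identical to the paper's chain of equalities, and the proof goes through.
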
 
\begin{proof} Observe that $G = \bigsqcup_{i=1}^{m} g_{i} H$ if and only if $G /G_{0} = \bigsqcup_{i=1}^{m} g_{i} G_{0}  H/G_{0}$. Hence:
\begin{align*}
\widetilde{{{\rm{Ind}}}_{H/G_{0}}^{G/G_{0}}} \, \lambda (g) &= {{\rm{Ind}}}_{H/G_{0}}^{G/G_{0}} \, \lambda (g G_{0})\\
&= \sum_{g_{i}^{-1} G_{0} \, g G_{0} \, g_{i} G_{0} \in H/G_{0}} \lambda (g_{i}^{-1} G_{0} \, g G_{0} \, 
g_{i} G_{0}) \\
&= \sum_{g_{i}^{-1} g g_{i} G_{0} \in H/G_{0}} \lambda (g_{i}^{-1} g g_{i} G_{0}) \\
&= \sum_{g_{i}^{-1} g g_{i} \in H} \widetilde{\lambda} (g_{i}^{-1} g g_{i}) \\
&= {{\rm{Ind}}}_{H}^{G} \, \widetilde{\lambda} (g). \qedhere
\end{align*}
\end{proof}

\begin{remark} Let $H \subset G$ and let $\chi$ (resp. $\lambda$) be a character of $G$ (resp. $H$). Then 
there is the following formula:
\begin{equation}
{{\rm{Ind}}}_{H}^{G} (\lambda) \cdot \chi  = {{\rm{Ind}}}_{H}^{G} (\lambda \cdot {{\rm{Re}}}_{H}^{G} (\chi)). 
\label{induction-restriction formula}
\end{equation}
\end{remark} 

\begin{lemma} \label{Artin Clifford lemma}
Every character of $G$ is a $\Q$-linear combination of characters of the form $\Ind^G_{G_1} \chi_1$ where $G_1$ is the preimage in $G$ of a cyclic subgroup of $G/G_0$
and $\chi_1$ is an irreducible character of $G_1$, not induced from any proper subgroup of 
$G_1$ containing $G_{0}$, whose restriction to $G_0$ is irreducible.  
\end{lemma}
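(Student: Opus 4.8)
The plan is to combine Artin's induction theorem applied to the finite group $Q := G/G_0$ with Clifford theory relating irreducible characters of $G$ to those of $G_0$. First I would invoke Artin's theorem for compact groups (Lemma~\ref{Artin Clifford lemma} is being proved, so I will use the classical finite-group version, or rather the extension of Artin's theorem proved earlier for compact groups): every character of $G$ is a $\Q$-linear combination of characters induced from preimages $G_C$ in $G$ of cyclic subgroups $C$ of $Q$. Since the pullback to $G$ of a character of $Q$ is a genuine character of $G$, and since induction commutes with this inflation (Lemma~\ref{lift of induction is the induction of the lift}), together with the projection formula \eqref{induction-restriction formula}, it suffices to handle an arbitrary irreducible character $\chi$ of $G$ by writing it, via these operations, in terms of induced characters from the groups $G_C$.

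\textbf{Key steps.} The argument I would run is as follows. Fix an irreducible character $\chi$ of $G$. Step one: apply Artin's theorem to $Q = G/G_0$ to write the trivial character $1_Q$ (hence $1_G$ after inflation) as a $\Q$-linear combination $\sum_C a_C \operatorname{Ind}_{C}^{Q} 1_C$ over cyclic subgroups $C \le Q$. Pulling back to $G$ and using Lemma~\ref{lift of induction is the induction of the lift}, this reads $1_G = \sum_C a_C \operatorname{Ind}_{G_C}^{G} 1_{G_C}$ where $G_C$ is the preimage of $C$. Step two: multiply by $\chi$ and apply the projection formula \eqref{induction-restriction formula} to get
\begin{equation*}
\chi = \sum_C a_C \operatorname{Ind}_{G_C}^{G}\bigl(\operatorname{Re}_{G_C}^{G} \chi\bigr).
\end{equation*}
Step three: for each $C$, decompose the restriction $\operatorname{Re}_{G_C}^{G}\chi$ into irreducible characters $\psi$ of $G_C$; each such $\psi$ is then a summand, so $\chi$ is a $\Q$-linear combination of characters $\operatorname{Ind}_{G_C}^{G}\psi$ with $\psi$ irreducible. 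Step four: for each irreducible $\psi$ of $G_C$, apply Clifford theory to the normal subgroup $G_0 \trianglelefteq G_C$. Either $\operatorname{Re}_{G_0}\psi$ is (a multiple of an) irreducible character and $\psi$ is not induced from any proper intermediate subgroup — in which case $\psi$ already has the desired form and we are done for this term — or $\psi$ is induced from an irreducible character of some proper subgroup $G_0 \subseteq G_1 \subsetneq G_C$ whose restriction to $G_0$ is irreducible; by transitivity of induction we then replace $\operatorname{Ind}_{G_C}^{G}\psi$ by $\operatorname{Ind}_{G_1}^{G}\psi_1$, and iterate this descent. The descent terminates because $[G_C:G_0]$ is finite and strictly decreases. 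Along the way one must note that $G_1/G_0$ is necessarily cyclic since it is a subgroup of the cyclic group $G_C/G_0 \cong C$, so the inductive subgroups that appear are still preimages of cyclic subgroups of $Q$.

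\textbf{Main obstacle.} I expect the main subtlety to be the Clifford-theoretic descent in Step four: ensuring that when $\psi$ is induced, the subgroup it is induced from contains $G_0$ and has irreducible restriction to $G_0$, so that the induction hypothesis of the descent genuinely applies and the process stabilizes at characters with irreducible $G_0$-restriction. Concretely, if $\psi$ is an irreducible character of $G_C$ and $\theta$ is an irreducible constituent of $\operatorname{Re}_{G_0}\psi$, then by Clifford's theorem $\operatorname{Re}_{G_0}\psi = e\sum_{i}\theta^{g_i}$ and $\psi = \operatorname{Ind}_{I}^{G_C}\psi_0$ where $I$ is the inertia subgroup of $\theta$ (which contains $G_0$), with $\psi_0$ an irreducible character of $I$ lying over $\theta$; if $I = G_C$ then $\operatorname{Re}_{G_0}\psi$ is isotypic and, after possibly one more reduction to guarantee $\psi$ is not further induced from an intermediate subgroup, $\psi$ has the required form, while if $I \subsetneq G_C$ we recurse on $\operatorname{Ind}_{G_C}^{G}\psi = \operatorname{Ind}_{I}^{G}\psi_0$. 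Since the paper only claims a $\Q$-linear combination and places no integrality constraint, the bookkeeping of rational coefficients is harmless; the real care is in the group-theoretic induction, which is a standard but slightly delicate Artin--Clifford argument, and in checking that all intermediate subgroups that arise are of the stated form (preimages of cyclic subgroups of $G/G_0$).
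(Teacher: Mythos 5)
Your overall route is the same as the paper's: apply the finite-group Artin induction theorem to $G/G_0$, inflate to $G$ via the lifting lemma, multiply by $\chi$ and use the projection formula \eqref{induction-restriction formula}, decompose the resulting restrictions into irreducibles, and then run a Clifford-theoretic descent through inertia subgroups until the characters are no longer induced from proper intermediate subgroups containing $G_0$. All of that matches the paper, and the bookkeeping (subgroups of a cyclic quotient are cyclic, the descent terminates because the index over $G_0$ strictly drops) is fine.

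The genuine gap is at the terminal step of the descent. The lemma demands that $\chi_1|_{G_0}$ be \emph{irreducible}, but your argument only delivers that it is \emph{isotypic}: when the inertia group of a constituent $\theta$ of $\psi|_{G_0}$ is all of $G_1$, Clifford's theorem gives $\psi|_{G_0} = e\theta$ with some multiplicity $e \geq 1$, and "not induced from any proper intermediate subgroup" does not by itself force $e=1$. Your phrase "(a multiple of an) irreducible character" and the later "after possibly one more reduction" skip exactly the nontrivial point. What closes the gap is that $G_1/G_0$ is \emph{cyclic}: the paper invokes the second theorem of Clifford to factor the associated projective representation as $\overline{\rho}_1 \otimes \overline{\rho}_2$ with $\overline{\rho}_1|_{G_0} \simeq \overline{\theta}$ and $G_0 \subseteq \ker \overline{\rho}_2$, and then irreducibility of $\rho$ together with cyclicity of $G_1/G_0$ forces $\overline{\rho}_2$ to be trivial, so $\rho|_{G_0}$ is irreducible. (Equivalently: the Schur multiplier of a cyclic group is trivial, so an invariant $\theta$ extends to $G_1$, and Gallagher's theorem then shows every irreducible of $G_1$ over $\theta$ restricts to $\theta$ with multiplicity one.) Without some version of this cohomological input, your descent stops at an isotypic, not irreducible, restriction, and the lemma as stated is not yet proved.
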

\begin{proof}
Let $1_{G/G_{0}}$ denote the trivial character of $G/G_{0}$. Naturally $\widetilde{1_{G/G_{0}}} = 1_{G}$. By Artin's theorem on induced characters  \cite[Chapter IX, Theorem 17]{Se4}, there exist $a_{i} \in \Q$, cyclic subgroups $H_{i}/G_{0}$ of $G/G_{0}$, and characters $\lambda_i$ of $H_i/G_0$ such that
\begin{equation}
1_{G/G_{0}} = \sum_{i=1}^{s} \,  a_i \, {{\rm{Ind}}}_{H_{i}/G_{0}}^{G/G_{0}} \lambda_{i}.
\label{Brauer induction for 1 G G0}
\end{equation}
Lifting the equation \eqref{Brauer induction for 1 G G0} to $G$ and applying Lemma 
\ref{lift of induction is the induction of the lift}, we obtain:
\begin{equation}
1_{G} = \sum_{i=1}^{s} \,  a_i \, {{\rm{Ind}}}_{H_{i}}^{G} \widetilde{\lambda_{i}}.
\label{lifting Brauer induction for 1 G}
\end{equation}
By \eqref{induction-restriction formula}, for any character $\chi$ of $G$, we obtain:
\begin{equation}
\chi =  1_{G} \cdot \chi = \sum_{i=1}^{s} \,  a_i \, ({{\rm{Ind}}}_{H_{i}}^{G} \widetilde{\lambda_{i}} \cdot \chi ) =
 \sum_{i=1}^{s} \,  a_i \, {{\rm{Ind}}}_{H_{i}}^{G} (\widetilde{\lambda_{i}} \cdot {{\rm{Re}}}_{H_{i}}^{G} \chi ).
\label{application of induction-restriction formula for character chi presentation} 
\end{equation}

We can write $\widetilde{\lambda_{i}} \cdot {{\rm{Re}}}_{H_{i}}^{G} \chi = \sum_{j=1}^{t_i} \chi_{i,j}$where $\chi_{i,j}$ are irreducible characters of $H_{i}$. Hence by \eqref{application of induction-restriction formula for character chi presentation}
we obtain:
\begin{equation}
\chi = \sum_{i=1}^{s} \sum_{j =1}^{t_i} a_i \, {{\rm{Ind}}}_{H_{i}}^{G} \chi_{i,j}.
\label{chi as sum of inductions of chi i j}
\end{equation}
where $\chi_{i,j}$ are irreducible. We can also assume that $\chi_{i,j}$ is not induced from any proper subgroup of 
$H_i$ containing $G_{0}$ because induction is transitive.
This presents $\chi$ as a $\Q$-linear combination of characters of the form $\Ind^G_{G_1} \chi_1$ where $G_1$
is the preimage in $G$ of a cyclic subgroup of $G/G_0$ and $\chi_{1}$ is irreducible, not induced from any proper subgroup of 
$G_1$ containing $G_{0}$.

To complete the argument, we may assume that $G/G_0$ is itself cyclic and that $\chi$ is an irreducible character not induced from any proper subgroup of $G$ containing $G_0$.
Let $\rho$ be the associated representation.
By \cite[Prop. 24]{Se4}, $\rho |_{G_0}$ is isotypic;
let $\tau$ be an irreducible subrepresentation of $\rho |_{G_0}$.
By the second theorem of Clifford \cite[Theorem 11.20]{C-R} cf. \cite[p. 493]{Ko},
the associated projective representation $\overline{\rho}$ of $\rho$
factors as a tensor product $\overline{\rho}_1 \otimes \overline{\rho}_2$ in which $\overline{\rho}_1 |_{G_0}$
is isomorphic to $\overline{\tau}$.
Moreover, $\ker(\overline{\rho}_1)$ is the centralizer of $G_0/(G_0 \cap \ker(\rho))$ in $G$
while $\ker(\overline{\rho}_2)$ contains $G_0$.  
Since $\rho$ is irreducible and $G/G_0$ is cyclic, $\overline{\rho}_2$ must be the trivial representation,
and so $\rho|_{G_0}$ is irreducible.
\end{proof}

\begin{lemma}
Let $(x_n)$ be a sequence of elements of $X(G)$. Suppose that for every subgroup $G_1$ of $G$ containing $G_0$ such that $G_1/G_0$ is cyclic,
the subsequence of elements of $(x_n)$ that are contained in the image of $X(G_1) \to X(G)$ is equidistributed with respect to the pushforward of the probabilistic Haar measure of $G_1$.
Then $(x_n)$ is equidistributed in $X(G)$ with respect to $\mu$.
\label{reduction for equidistribution}
\end{lemma}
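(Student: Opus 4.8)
The plan is to reduce the equidistribution of $(x_n)$ in $X(G)$ to the Peter--Weyl criterion \eqref{Peter-Weyl reduction equation for equidistribution}, and then use Lemma \ref{Artin Clifford lemma} to write every irreducible character $\chi$ of $G$ as a $\Q$-linear combination of characters $\Ind_{G_1}^{G} \chi_1$, where $G_1$ ranges over preimages in $G$ of cyclic subgroups of $G/G_0$ and $\chi_1$ is an irreducible character of $G_1$ whose restriction to $G_0$ is irreducible. Since both sides of \eqref{Peter-Weyl reduction equation for equidistribution} are linear in $\chi$, it suffices to verify the analogue of \eqref{Peter-Weyl reduction equation for equidistribution} with $\chi$ replaced by each such $\Ind_{G_1}^{G} \chi_1$. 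So the whole statement comes down to: for fixed $G_1$ with $G_0 \subset G_1 \subset G$ and $G_1/G_0$ cyclic, and for $\chi_1$ an irreducible character of $G_1$,
\begin{equation*}
\lim_{n \to \infty} \frac{1}{n} \sum_{i=1}^{n} (\Ind_{G_1}^{G} \chi_1)(x_i) \,=\, \int_{G} \Ind_{G_1}^{G} \chi_1 \, d\mu.
\end{equation*}

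First I would handle the right-hand side: by Lemma \ref{relation between integral of chi prime and chi} (with $H = G_1$), $\int_{G} \Ind_{G_1}^{G} \chi_1 \, d\mu = \int_{G_1} \chi_1 \, d\mu_{G_1}$, where $\mu_{G_1}$ is the probabilistic Haar measure of $G_1$. For the left-hand side, I would use the character formula \eqref{integral of induced character from H}, namely $(\Ind_{G_1}^{G} \chi_1)(g) = \sum_{g_i^{-1} g g_i \in G_1} \chi_1(g_i^{-1} g g_i)$ where $g_i$ runs over coset representatives of $G_1$ in $G$. In particular $(\Ind_{G_1}^{G}\chi_1)(x_i)$ vanishes unless $x_i$ lies in the image of $X(G_1) \to X(G)$, and on that subsequence its value is a sum over the conjugates $g_j y g_j^{-1}$ lying in $X(G_1)$ of an appropriate lift $y \in X(G_1)$ of $x_i$ (this is the content of Proposition \ref{proposition about the reduction of equidistribution to G0}(a) with $G_1$ in place of $G_0$). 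By hypothesis, the subsequence $(x_{n_k})$ of elements of $(x_n)$ contained in the image of $X(G_1) \to X(G)$ is equidistributed for the pushforward $j_{1,*}\mu_{G_1}$ of the probabilistic Haar measure of $G_1$ under $j_1\colon G_1 \hookrightarrow G$; thus, since $\Ind_{G_1}^{G}\chi_1$ is a class function on $G$ that vanishes off the image of $X(G_1)$,
\begin{equation*}
\lim_{n \to \infty} \frac{1}{n}\sum_{i=1}^{n} (\Ind_{G_1}^{G}\chi_1)(x_i) \,=\, \int_{G} \Ind_{G_1}^{G}\chi_1 \, d\, j_{1,*}\mu_{G_1} \,=\, \int_{G_1} (\Ind_{G_1}^{G}\chi_1)\circ j_1 \, d\mu_{G_1}.
\end{equation*}

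It then remains to compare $\int_{G_1} (\Ind_{G_1}^{G}\chi_1)\circ j_1 \, d\mu_{G_1}$ with $\int_{G_1} \chi_1 \, d\mu_{G_1}$. By the same substitution-and-translation-invariance computation used in Lemmas \ref{relation between integral of chi and chi0} and \ref{relation between integral of chi against pushforward measure and integral of chi0} (now for the pair $G_0 \subset G_1 \subset G$, or directly for $G_1 \subset G$), one finds $\int_{G_1} (\Ind_{G_1}^{G}\chi_1)\circ j_1 \, d\mu_{G_1} = [G:G_1] \int_{G_1} \chi_1 \, d\mu_{G_1}$; on the other hand $[G:G_1]\int_{G_1}\chi_1\,d\mu_{G_1} = [G:G_1]\cdot[G:G_1]\int_{G_1}\chi_1\,d\mu_{|G_1}$, and one checks that the factor of $[G:G_1]$ is exactly absorbed when one carefully tracks the normalizations — equivalently, one verifies directly that the natural numerical condition \eqref{limit condition} holds automatically here because equidistribution of $(x_{n_k})$ for $j_{1,*}\mu_{G_1}$ (a probability measure supported on the image of $X(G_1)$, of total mass $1$) forces the density $k_n/n$ to converge, and then combining with Proposition \ref{equidistr. of xv implies equidistr. of xnj} (or rather its proof) closes the loop. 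I expect the main obstacle to be precisely this bookkeeping of Haar-measure normalizations and the interplay between $\mu$, $\mu_{|G_1}$, $\mu_{G_1}$, and the pushforward $j_{1,*}\mu_{G_1}$ — making sure that ``equidistribution with respect to the pushforward of the probabilistic Haar measure of $G_1$'' as stated in the hypothesis is exactly the statement one feeds into \eqref{Peter-Weyl reduction equation for equidistribution} after the Artin--Clifford decomposition — rather than any genuinely deep point; the representation-theoretic input (Artin's theorem plus Clifford theory) has already been isolated in Lemma \ref{Artin Clifford lemma}, so once the normalizations are pinned down the argument is a linearity-and-substitution exercise in the spirit of Remark \ref{equidistribution via induced characters}.
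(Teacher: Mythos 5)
Your overall strategy---Peter--Weyl, Lemma \ref{Artin Clifford lemma} to reduce to characters $\Ind_{G_1}^{G}\chi_1$ with $G_1/G_0$ cyclic, and the integral lemmas to match the two sides---is exactly the route the paper takes (its proof cites only Remark \ref{equidistribution via induced characters} and Lemma \ref{Artin Clifford lemma}). However, the two steps you dismiss as bookkeeping are precisely where the argument breaks. First, equidistribution of the subsequence $(x_{n_k})$ with respect to $j_{1,*}\mu_{G_1}$ is a statement about the subsequence in isolation and carries no information whatsoever about the density $k_n/n$; it does not ``force $k_n/n$ to converge.'' That density input is condition \eqref{limit condition}, which Remark \ref{equidistribution via induced characters} lists as a \emph{separate} assumption, and it cannot be dispensed with: the Artin decomposition of a nontrivial irreducible $\chi$ of $G$ can contain terms $\Ind_{G_1}^{G} 1_{G_1}$, and for such a term the desired limit identity \emph{is} the density statement. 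Second, the identity $\int_{G_1}(\Ind_{G_1}^{G}\chi_1)|_{G_1}\,d\mu_{G_1}=[G:G_1]\int_{G_1}\chi_1\,d\mu_{G_1}$ fails when $G_1$ is not normal in $G$ (the proofs of Lemmas \ref{relation between integral of chi and chi0} and \ref{relation between integral of chi against pushforward measure and integral of chi0} use normality in the step $g_i^{-1}gg_i\in G_0\iff g\in G_0$, and the $G_1$ produced by Lemma \ref{Artin Clifford lemma} need not be normal). By Frobenius reciprocity the left-hand side equals $\langle \Ind_{G_1}^{G}\chi_1,\Ind_{G_1}^{G}1\rangle_G$, which by Mackey's formula involves the intersections $G_1\cap gG_1g^{-1}$ and is not a fixed multiple of $\langle\chi_1,1\rangle_{G_1}$: for $G=S_3$, $G_1=\langle(12)\rangle$, $\chi_1$ the sign character of $G_1$, the left side is $1$ while $\int_{G_1}\chi_1\,d\mu_{G_1}=0$.

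These gaps are not repairable by tracking normalizations more carefully, because the hypothesis as written does not determine the answer. Take $G=S_3$, $G_0=\{e\}$, and the periodic sequence $[e],[(12)],[(123)],[(123)],\dots$ in $X(G)$. For every cyclic $G_1$ the subsequence lying in the image of $X(G_1)\to X(G)$ is equidistributed for the pushforward of $\mu_{G_1}$ (frequencies $\tfrac12,\tfrac12$ on $\{[e],[(12)]\}$, and $\tfrac13,\tfrac23$ on $\{[e],[(123)]\}$, matching the pushforwards), yet the full sequence has frequencies $\tfrac14,\tfrac14,\tfrac12$ rather than the Haar frequencies $\tfrac16,\tfrac12,\tfrac13$. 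What the pushforward hypothesis discards is the fusion multiplicity---how many of the conjugates $g_i^{-1}xg_i$ land in $G_1$---which is exactly what the induced-character formula \eqref{integral of induced character from H} sees. The paper's application actually feeds this reduction the stronger inputs: equidistribution \emph{in} $X(G_1)$ with respect to $\mu_{G_1}$ of the sequence counted with multiplicities via primes of the subfield $L$ (Corollary \ref{primes of L splitting completely over v0 correspond to conjugations of sv} and \eqref{equidistribution of fw i, j}), together with the density statement from Chebotarev (Proposition \ref{translation into Chebotarev}); compare the explicit hypotheses (1)--(4) of Lemma \ref{reduction for equidistribution easy case}, which include \eqref{limit condition} and normality. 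Your proof needs those strengthened hypotheses to be stated; as written, the conclusion does not follow from the premises you invoke.
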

\begin{proof}
This follows immediately from Remark~\ref{equidistribution via induced characters} and 
Lemma~\ref{Artin Clifford lemma}.
\end{proof}

In the remainder of this section, we explore the obstruction to deducing equidistribution on $G$ from equidistribution on $G_0$.

\begin{theorem}
Assume that $G_0$ is a connected Lie group, the simple factors of the derived group of $G_0$ are pairwise distinct, and none of these factors admits a nontrivial diagram automorphism.
Let $(x_n)$ be a sequence of elements of $X(G)$. Suppose that for every subgroup $G_1$ of $G$ containing $G_0$ such that $G_1/G_0$ is cyclic and acts trivially on $G_0$,
the subsequence of elements of $(x_n)$ that are contained in the image of $X(G_1) \to X(G)$ is equidistributed with respect to the pushforward of the probabilistic Haar measure of $G_1$.
Then $(x_n)$ is equidistributed in $X(G)$ with respect to $\mu$.
\label{reduction for equidistribution2}
\end{theorem}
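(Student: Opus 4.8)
The plan is to upgrade Lemma~\ref{reduction for equidistribution} by showing that, under the stated hypotheses on $G_0$, every subgroup $G_1$ with $G_0 \subset G_1 \subset G$ and $G_1/G_0$ cyclic that arises in the reduction of Lemma~\ref{Artin Clifford lemma} can, for the purpose of checking equidistribution, be replaced by a subgroup whose cyclic quotient over $G_0$ acts \emph{trivially} on $G_0$. Concretely, by Remark~\ref{equidistribution via induced characters} and the proof of Lemma~\ref{Artin Clifford lemma}, it suffices to handle characters $\chi$ of $G$ of the form $\Ind_{G_1}^{G}\chi_1$ where $G_1/G_0$ is cyclic and $\chi_1$ is an irreducible character of $G_1$, not induced from any proper intermediate subgroup, whose restriction to $G_0$ is irreducible. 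The claim will be that the conjugation action of $G_1/G_0$ on $G_0$ is forced to be trivial in this situation; once that is known, the hypothesis of the theorem applies directly to these $G_1$ and Lemma~\ref{reduction for equidistribution}'s argument goes through verbatim.

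\textbf{Key steps.} First I would let $\rho_1$ denote the irreducible representation of $G_1$ with character $\chi_1$; by hypothesis $\rho_1|_{G_0}$ is irreducible, and by the Clifford theory argument already used at the end of the proof of Lemma~\ref{Artin Clifford lemma}, since $G_1/G_0$ is cyclic and $\rho_1$ is irreducible and not induced from a proper intermediate subgroup, the restriction $\rho_1|_{G_0}$ is a genuine irreducible $G_0$-representation extending (projectively, then actually) to $G_1$. Second, the existence of such a $G_0$-irreducible representation that is stable (up to isomorphism) under the conjugation action of $G_1/G_0$ on $G_0$ is a strong constraint. Here is where the structural hypothesis enters: for a connected compact (or reductive) Lie group $G_0$ whose derived group has pairwise non-isomorphic simple factors, none admitting a nontrivial diagram automorphism, the group $\Out(G_0)$ of outer automorphisms preserving the relevant structure is trivial modulo inner automorphisms coming from the center action — more precisely, any automorphism of $G_0$ permutes the simple factors (hence fixes each one, as they are pairwise distinct), and on each factor acts by an inner automorphism composed with a diagram automorphism (hence inner, as no nontrivial diagram automorphisms exist), and on the central torus the action is constrained by the requirement of fixing a highest weight. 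I would show that such an action, composed appropriately, fixes the isomorphism class of \emph{every} irreducible representation only if it is inner; but a conjugation action of $G_1$ on $G_0$ that is inner can be absorbed (replace the coset representatives), so after such a replacement $G_1/G_0$ acts trivially on $G_0$. Third, with this reduction in hand, apply Lemma~\ref{reduction for equidistribution} (equivalently, redo its short argument) using only the $G_1$ with trivial action, which are exactly the ones the theorem's hypothesis covers, to conclude equidistribution of $(x_n)$ in $X(G)$.

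\textbf{Main obstacle.} The delicate point is the representation-theoretic rigidity statement in the second step: precisely pinning down why the hypotheses ``simple factors pairwise distinct'' and ``no nontrivial diagram automorphism'' force the action of $G_1/G_0$ on $G_0$ — insofar as it is detected by stabilizing an irreducible representation whose induction to $G$ is irreducible — to be inner. One must be careful about the torus/center part of $G_0$: an automorphism can act nontrivially on the connected center while still being an automorphism of the algebraic group, so the argument genuinely uses that $\rho_1|_{G_0}$ is a \emph{specific} stable irreducible, whose highest weight (including its central character) must be fixed. I expect the cleanest route is to argue that the outer automorphism induced on $G_0$ must fix the based root datum (permuting simple factors trivially since they are distinct, acting trivially on Dynkin diagrams by hypothesis) and also fix the character lattice direction pinned down by the weight of $\rho_1$, leaving only inner automorphisms; then conjugating the chosen coset representatives of $G_1$ over $G_0$ by suitable elements of $G_0$ makes the action literally trivial. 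After that normalization the theorem reduces to Lemma~\ref{reduction for equidistribution}. This is the step I would write out most carefully; everything else is a reprise of the induced-character bookkeeping from Lemmas~\ref{relation between integral of chi and chi0}, \ref{relation between integral of chi against pushforward measure and integral of chi0}, and \ref{Artin Clifford lemma}.
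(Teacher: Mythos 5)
Your proposal is correct and follows essentially the same route as the paper's own proof: reduce via Lemma~\ref{Artin Clifford lemma} to the cyclic case with $\rho|_{G_0}$ irreducible, use the structural hypothesis on the derived group to force $\Out$ of the derived group to be trivial, pin down the action on the center via the stabilized irreducible representation, conclude the conjugation action is inner and absorb it into the coset representatives, and then invoke the hypothesis for subgroups acting trivially on $G_0$. The one spot where the paper's write-up is more terse than yours — asserting that the center $Z$ of $G_0$ is central in $G$ — is precisely the delicate point you flagged as your main obstacle, and the highest-weight/central-character argument you propose there is the correct way to justify it (it follows from Schur's lemma once $\rho$ is faithful and $\rho|_{G_0}$ is irreducible).
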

\begin{proof}
We check \eqref{Peter-Weyl reduction equation for equidistribution} for a particular irreducible representation $\rho$ with character $\chi$. By Lemma~\ref{Artin Clifford lemma},
we may assume that $G/G_0$ is cyclic, $\rho$ is injective, and $\rho|_{G_0}$ is irreducible.
Let $Z$ be the center of $G_0$ and let $H$ be the derived group of $G_0$;
then $H$ is semisimple and $HZ = G_0$ (see \cite[Theorem~5.22]{Sep}).
Consequently, $\rho|_H$ is irreducible and $H$ is characteristic in $G$; in particular,
we have a restriction homomorphism $\Aut(G_0) \to \Aut(H)$ which induces a homomorphism 
$\Out(G_0) \to \Out(H)$.

Using highest weight vectors, we obtain a canonical bijection between the isomorphism classes of irreducible representations of $H$ and the quotient of the weight lattice $L$ of $H$ by the action of the Weyl group $W$. The outer automorphism group $\Out(H)$ acts faithfully on $L/W$.
The conjugation action $G \to \Aut(H)$ induces a homomorphism $G/G_0 \to \Out(H)$ and hence an action of $G/G_0$ on $L/W$; this action must fix
the vector corresponding to $\rho|_H$. Moreover, the images of $G/G_0$ in $\Out(G_0)$ and $\Out(H)$ are isomorphic because $Z$ is central in $G$.

Now our condition that $H$ has pairwise distinct simple factors, none of which admits a nontrivial diagram automorphism, implies that $\Out(H)$ is trivial. By the previous paragraph, this means that $G/G_0$ is generated by an element that centralizes $G_0$. Hence this case is covered by our input hypothesis.
\end{proof}

\begin{remark}
In Theorem~\ref{reduction for equidistribution2}, the restrictions on the derived group cannot be lifted.
We give two illustrative examples of representations that cannot be handled when the restrictions are lifted.
\begin{itemize}
\item
Take $H := \SU(3)$, $G_0 = ZH$ for $Z$ either a finite cyclic group or $U(1)$, $G := G_0 \rtimes \Z/2\Z$ for the nontrivial action on $H$ (fixing $Z$), and $\rho$ to be the adjoint representation. The induced homomorphism $G/G_0 \to \Out(G_0)$ is nontrivial,
and the highest weight vector of $\rho$ is fixed by the action of $G/G_0$.

\item
Take $H = G_0 := \SU(2) \times \SU(2)$, $G := \Aut(G_0) \simeq G_0 \rtimes \Z/2\Z$, and $\rho$ to be the external product of the standard representations of the two factors. This type of example can be regarded as a \emph{tensor induction} as in \cite{Ko}; however, this does not lead to a reduction for the equidistribution problem.
\end{itemize}
\end{remark}

\section{Application to the Sato--Tate conjecture} 
\label{application to the Sato--Tate conjecture}

We next specialize the previous discussion to the setting of Sato--Tate groups. First, we make some general remarks about the relationship between equidistribution and $L$-functions, again following \cite[Appendix to Chapter I]{Se1}.

\begin{definition}
Let $\rho\colon G \to \GL(V)$ be a continuous representation of a compact topological group on a finite-dimensional $\C$-vector space $V$.
Let $L/K$ be a finite extension of number fields and let $\Pi := \Pi_{L}$ denote the set of all prime ideals in the ring of integers $\mathcal{O}_{L}$. Let $(x_v)$ be a sequence of elements of $X(G)$ indexed 
by the prime ideals $v$ of $\mathcal{O}_{L}$. Define the \emph{$L$-function}:
$$
L(s, \, \rho) := \prod_{v} \det (1 - \rho (x_v) (N v)^{-s})^{-1}.
$$
This product is absolutely convergent for $\mathrm{Re}(s) > 1$, and so defines a nowhere-vanishing holomorphic function in this region.
For any subset $T$ of $\Pi$, we also set:
$$
L_{T} (s, \, \rho) := \prod_{v \in T} \det (1 - \rho (x_v) (N v)^{-s})^{-1}.
$$
\end{definition}

\begin{lemma}
Let $S$ denote any set of ideals in $\Pi$ whose symmetric difference with the set of prime ideals of $L$ of degree $1$ over $K$ is finite.
Then $L_{\Pi - S} (s, \, \rho)$ defines a nowhere-vanishing analytic function on the region 
${\rm{Re}} (s) > \frac{1}{2}$. 
\label{equidistribution of primes that split completely}\end{lemma}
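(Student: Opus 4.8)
The statement asserts that removing the prime ideals in $S$ (those of degree $1$ over $K$, up to a finite symmetric difference) from the Euler product leaves a function that extends holomorphically and without zeros to $\{\mathrm{Re}(s) > \tfrac12\}$. The plan is to compare $L_{\Pi - S}(s,\rho)$ with the full $L$-function $L(s,\rho)$, which is already known to be nowhere-vanishing and holomorphic on $\mathrm{Re}(s)>1$, and to show that the ``missing'' factors contributed by $\Pi - S$ converge on the larger half-plane. First I would observe that $L_{\Pi - S}(s,\rho) = L(s,\rho) \cdot L_{S}(s,\rho)^{-1}$ as formal Euler products, so it suffices to prove that $L_{S}(s,\rho)$ — equivalently its logarithm $\sum_{v \in S} \sum_{m \geq 1} \tfrac{1}{m} \mathrm{Tr}(\rho(x_v)^m)(Nv)^{-ms}$ — defines a nowhere-vanishing holomorphic function on $\mathrm{Re}(s) > \tfrac12$. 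Since $\mathrm{Tr}(\rho(x_v)^m)$ is bounded by $d = \dim V$ (as each $x_v$ lies in a compact group, cf. Remark~\ref{Peter-Weyl for equidistribution}), the $m \geq 2$ part of this double sum converges absolutely and locally uniformly on $\mathrm{Re}(s) > \tfrac12$; the issue is entirely with the $m = 1$ term $\sum_{v \in S} \mathrm{Tr}(\rho(x_v))(Nv)^{-s}$.

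The key point is that the primes in $S$ are (essentially) the degree-one primes of $L$, and these are precisely the primes lying above the rational primes $p$ that split completely in $L$, with one crucial feature: for a rational prime $p$ split completely in $L$, there are $[L:\Q]$ primes $v$ of $L$ above it, each with $Nv = p$. Hence the sum $\sum_{v \in S} (Nv)^{-s}$ behaves like $[L:\Q] \sum_{p \text{ split in } L} p^{-s}$, and by Chebotarev the density of such $p$ is $1/[\widetilde{L}:\Q]$ where $\widetilde{L}$ is the Galois closure of $L/\Q$ — in any case a set of primes of positive density, so $\sum_{v \in S}(Nv)^{-s}$ has a logarithmic singularity at $s = 1$ and does \emph{not} extend past $\mathrm{Re}(s) = 1$ on its own. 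So a naive bound on $|\mathrm{Tr}(\rho(x_v))|$ is not enough: I need to exploit cancellation. The mechanism is the standard one from \cite[Appendix to Chapter I]{Se1}: one does not analyze $L_S$ in isolation but rather writes $\mathrm{Tr}(\rho(x_v))$ in terms of irreducible characters and uses that for the \emph{split} primes the relevant Artin-type $L$-function factors, via class field theory, match up so that the degree-one-prime contributions to $L(s,\rho)$ and to an auxiliary product over the split primes agree up to a function holomorphic and nonvanishing on $\mathrm{Re}(s) > \tfrac12$. Concretely, I would realize $L_S(s,\rho)$ (up to finitely many Euler factors and a convergent $m\geq 2$ tail) as a product of translates/powers of Hecke or Artin $L$-functions attached to $L/K$, each of which is known to be holomorphic and nonvanishing in a neighborhood of $\mathrm{Re}(s) \geq 1$ except for possible poles at $s = 1$ coming from the trivial character — and those poles are exactly what cancels against the pole of $L(s,\rho)$'s degree-one part, leaving $L_{\Pi-S}$ regular and nonvanishing on $\mathrm{Re}(s) > \tfrac12$.

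In more detail, the steps in order: (1) reduce to $\mathrm{Re}(s) > \tfrac12$ handling of the $m=1$ sum over $S$, discarding the absolutely convergent $m \geq 2$ tail and the finite symmetric difference; (2) decompose $\chi_\rho = \sum_i a_i \chi_i$ into irreducibles and reduce to each $\sum_{v \in S} \chi_i(x_v)(Nv)^{-s}$; (3) for primes $v$ split completely in $L$, use that the Frobenius/conjugacy class data is controlled so that $\sum_{v \text{ deg } 1} \chi_i(x_v)(Nv)^{-s}$ equals, up to a holomorphic-nonvanishing error on $\mathrm{Re}(s)>\tfrac12$, the logarithm of a product of Hecke $L$-functions $L(s,\psi)$ over characters $\psi$ of a suitable ray class group of $L$; (4) invoke the classical fact (Hecke/Landau, or the relevant cases of the Artin conjecture which hold for these abelian/virtual characters) that such $L(s,\psi)$ are holomorphic and nonvanishing on $\mathrm{Re}(s) \geq 1$, with only the principal character contributing a simple pole at $s=1$; (5) assemble: $L_{\Pi - S}(s,\rho) = L(s,\rho)/L_S(s,\rho)$, and the pole/zero bookkeeping at $s = 1$ cancels exactly, while everything else is manifestly holomorphic and nonvanishing for $\mathrm{Re}(s) > \tfrac12$. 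The main obstacle is step (3)–(4): making the identification of the split-prime part of the $L$-function with genuine Hecke $L$-functions precise enough to transport their analytic continuation and non-vanishing, and in particular ensuring that no zero is introduced on the line $\mathrm{Re}(s) = 1$ or in the strip $\tfrac12 < \mathrm{Re}(s) < 1$ — here one must be careful that ``nowhere-vanishing on $\mathrm{Re}(s) > \tfrac12$'' is being asserted only for the Euler product itself (which converges there once the problematic $m=1$ degree-one contribution is removed), not for any meromorphic continuation, so in fact the cleanest route may be to prove directly that after removing $S$ the remaining Euler product $\prod_{v \notin S}\det(1-\rho(x_v)(Nv)^{-s})^{-1}$ is absolutely convergent on $\mathrm{Re}(s) > \tfrac12$, using that the degree-$\geq 2$ primes have $Nv \geq p^2$ and hence $\sum_{v \notin S}(Nv)^{-s}$ converges for $\mathrm{Re}(s) > \tfrac12$, which immediately gives both holomorphy and non-vanishing of the product there. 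That last observation is likely the intended short proof, and I would lead with it, keeping the Hecke $L$-function discussion only as motivation for why $S$ is the right set to remove.
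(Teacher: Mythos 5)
Your closing observation is exactly the paper's proof: up to finitely many factors, every $v \in \Pi - S$ satisfies $Nv = p_v^{r_v}$ with $r_v \geq 2$, so $\sum_{v \in \Pi - S} d\, p_v^{-r_v \mathrm{Re}(s)}$ converges for $\mathrm{Re}(s) > \frac{1}{2}$ and the Euler product $L_{\Pi-S}(s,\rho)$ is absolutely convergent, hence holomorphic and nowhere zero, on that half-plane. The preceding detour through $L(s,\rho)\cdot L_S(s,\rho)^{-1}$ and Hecke $L$-functions is unnecessary (and the initial reduction ``it suffices to control $L_S$'' would not work as stated, since $L(s,\rho)$ itself is only given for $\mathrm{Re}(s)>1$), but since you correctly identify the direct convergence argument and say you would lead with it, the proof stands.
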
 
\begin{proof} 
\medskip

For every $v \in \Pi$ we have:
$$
\chi (x_v) = {\rm{Tr}} \, \rho (x_v) = \sum_{j=1}^{d} \, \xi_{v,j},
$$
where the $\xi_{v,j}$ are eigenvalues of $\rho (x_v)$ and are complex numbers of absolute value 1. 
For almost all $v \in \Pi - S$, we have $N v = p_{v}^{r_v}$ with $r_v \geq 2$ 
where $v \, | \, p_v$ and $p_v$ is a prime number.
Hence:
$$
L_{\Pi - S} (s, \, \rho) = \prod_{v \in \Pi - S} \prod_{j=1}^d (1 - \xi_{v,j} p_{v}^{ - s r_v})^{-1}.
$$
This infinite product is absolutely convergent for ${\rm{Re}} (s) > \frac{1}{2}$ 
because the series
$$
\sum_{v \in \Pi - S} \sum_{j=1}^d p_{v}^{ - s r_v} \, = \, \sum_{v \in \Pi - S} d \, p_{v}^{ - s r_v}
$$ 
is absolutely convergent for ${\rm{Re}} (s) > \frac{1}{2}$. 
\end{proof}

In the notation of Lemma \ref{equidistribution of primes that split completely} we obtain:

\begin{equation}
L_{S} (s, \, \rho) = \frac{L (s, \, \rho)}{L_{\Pi - S} (s, \, \rho)}.
\label{LS(s, rho)}
\end{equation}

\begin{lemma}
For $S$ as in Lemma \ref{equidistribution of primes that split completely},
the sequence $(x_v)$ is equidistributed with respect to the measure $\mu$ on $X(G)$ if and only if
the sequence $(x_v)_S$ is equidistributed with respect to the measure $\mu$
on $X(G)$.
\label{Equidistribution with respect to all primes and primes in S}
\end{lemma}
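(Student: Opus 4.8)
The plan is to reduce the equivalence to a statement about $L$-functions via the criterion of Serre \cite[Appendix to Chapter I]{Se1}: a sequence of conjugacy classes is equidistributed with respect to $\mu$ if and only if, for every nontrivial irreducible character $\chi$ of $G$, the partial $L$-function $L(s,\rho_\chi)$ extends to a nowhere-vanishing holomorphic function on the closed half-plane $\{\mathrm{Re}(s)\ge 1\}$ minus the point $s=1$ (equivalently, is holomorphic and nonzero at $s=1$). So the core of the argument is to compare the analytic behavior at $s=1$ of $L(s,\rho)$ (built from all of $\Pi$) and $L_S(s,\rho)$ (built from the primes in $S$, i.e.\ those of degree $1$ over $K$ up to a finite symmetric difference).

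\textbf{Key steps.} First I would invoke \eqref{LS(s, rho)}, which writes $L_S(s,\rho) = L(s,\rho)/L_{\Pi-S}(s,\rho)$. Second, I would apply Lemma~\ref{equidistribution of primes that split completely}: the factor $L_{\Pi-S}(s,\rho)$ is analytic and nowhere vanishing on $\mathrm{Re}(s) > \frac12$, in particular it is holomorphic and nonzero in a neighborhood of $s=1$. Third, I would observe that altering $S$ by a finite set of primes only multiplies $L_S(s,\rho)$ by finitely many local factors $\det(1-\rho(x_v)(Nv)^{-s})^{\pm 1}$, each of which is entire and nonzero at $s=1$ (since $Nv > 1$); hence the analytic behavior at $s=1$ of $L_S$ is insensitive to such changes, and likewise the finite symmetric difference between $S$ and the set of degree-one primes is harmless. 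Combining these, $L(s,\rho)$ is holomorphic and nonzero at $s=1$ if and only if $L_S(s,\rho)$ is. Finally I would translate this back through Serre's equidistribution criterion, applied once with the sequence $(x_v)$ indexed by all of $\Pi$ and once with the subsequence $(x_v)_S$: since the relevant analytic condition holds for $L(s,\rho_\chi)$ precisely when it holds for $L_S(s,\rho_\chi)$, and this for every nontrivial irreducible $\chi$, the two equidistribution statements are equivalent. (One should note that both sequences are indexed by sets of primes of the same Dirichlet density up to a set of density zero, so the normalizing counts in the equidistribution statements match; this is exactly why one restricts to degree-one primes, whose complement in $\Pi$ has density zero.)

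\textbf{Main obstacle.} The only genuinely delicate point is making precise the passage between ``$L$-function holomorphic and nonvanishing at $s=1$'' and ``equidistribution of the conjugacy classes,'' including the bookkeeping of densities: one must be sure that removing or adjusting a density-zero (indeed finite, after the symmetric-difference reduction) set of primes does not affect equidistribution, which is where the hypothesis on $S$ in Lemma~\ref{equidistribution of primes that split completely} is used. Everything else is the formal manipulation of Euler products recorded in \eqref{LS(s, rho)} and Lemma~\ref{equidistribution of primes that split completely}. I would therefore structure the write-up as: (i) recall Serre's criterion; (ii) fix a nontrivial irreducible $\chi$ and its representation $\rho$; (iii) use \eqref{LS(s, rho)} and Lemma~\ref{equidistribution of primes that split completely} to match the behavior of $L$ and $L_S$ at $s=1$; (iv) conclude.
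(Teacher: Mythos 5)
There is a genuine gap in the proposed reduction. You invoke Serre's $L$-function criterion as an \emph{equivalence}: "$(x_v)$ is equidistributed if and only if for every nontrivial irreducible $\chi$ the $L$-function $L(s,\rho_\chi)$ is holomorphic and nonzero at $s=1$." This is wrong on two counts. First, Serre's theorem (\cite[Appendix to Chapter I, Theorem 1]{Se1}) requires holomorphy and nonvanishing on the entire closed half-plane $\mathrm{Re}(s)\ge 1$ (or an open neighborhood of it), not just at the single point $s=1$; your parenthetical "(equivalently, is holomorphic and nonzero at $s=1$)" is not correct and would collapse a global condition to a local one. Second, and more fundamentally, Serre's criterion is a \emph{sufficient} condition for equidistribution, not a necessary one: equidistribution of $(x_v)$ does not by itself produce analytic continuation of $L(s,\chi)$ to the line $\mathrm{Re}(s)=1$. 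So the passage "equidistributed for $(x_v)$ $\Rightarrow$ $L$ well-behaved at $s=1$ $\Rightarrow$ $L_S$ well-behaved at $s=1$ $\Rightarrow$ equidistributed for $(x_v)_S$" has a broken first arrow, and the converse direction breaks the same way. Trying to fix this with a Tauberian theorem would need hypotheses on the $L$-function that are not available from bare equidistribution.

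The correct approach is exactly the observation you relegate to the final parenthetical: work directly with the Weyl/Peter--Weyl criterion from Remark~\ref{Peter-Weyl for equidistribution} and compare the two Ces\`aro averages. The only $L$-function input one needs from Lemma~\ref{equidistribution of primes that split completely} is the absolute convergence of $\sum_{v\in\Pi-S} d\,p_v^{-r_v}$ at $s=1$ (trivial character). This shows that $\#\{v\in\Pi-S\colon Nv\le n\}$ is negligible compared to $\#\{v\in\Pi\colon Nv\le n\}$, hence both that the two normalizing counts are asymptotically equal and that the contribution of $v\in\Pi-S$ to the sums $\sum\chi_\rho(x_v)$ (bounded termwise by $d=\dim\rho$) vanishes after normalization. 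That equates the two Weyl limits directly, with no analytic continuation needed. Your instinct about densities was right; the $L$-function framing built around it should be dropped.
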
 
\begin{proof} 
By \eqref{Peter-Weyl for equidistribution}, the sequence  $(x_v)$ is equidistributed with respect to the measure $\mu$ on $X(G)$ if and only if for each nontrivial irreducible representation $\rho$,
$$
\lim_{n \to \infty} \frac{1}{\#\{v \in \Pi\colon Nv \leq n\}} \sum_{v \in \Pi\colon N v \leq n} \chi_\rho(x_v) = \int_G \chi_\rho d\mu.
$$
Similarly, the sequence $(x_v)_S$ is equidistributed with respect to the measure $\mu$ on $X(G)$ if and only if for each $\rho$,
$$
\lim_{n \to \infty} \frac{1}{\#\{v \in S\colon Nv \leq n\}} \sum_{v \in S\colon N v \leq n} \chi_\rho(x_v) = \int_G \chi_\rho d\mu.
$$
To equate these, note that by Lemma~\ref{equidistribution of primes that split completely} applied to the trivial representation (evaluating at $s=1$), the sum
\[
\sum_{v \in \Pi - S} d p_v^{-r_v}
\]
is absolutely convergent. This implies firstly that
$$
\lim_{n \to \infty} \frac{\#\{v \in \Pi - S\colon Nv \leq n\}}{\#\{v \in \Pi\colon Nv \leq n\}} = 0
$$
and secondly that for any $\rho$,
$$
\lim_{n \to \infty} \frac{1}{\#\{v \in \Pi\colon Nv \leq n\}} \sum_{v \in \Pi - S\colon N_v \leq n} \chi_\rho(x_v) = 0.
$$
This yields the desired equivalence.
 \end{proof} 

\begin{lemma} Let $\mathcal{G}$ be an algebraic group over a field of characteristic $0$. Then:
\begin{itemize}
\item[(a)] Every unipotent subgroup $U \subset \mathcal{G}$ is connected.
\item[(b)] For every unipotent element $u \in \mathcal{G}$ there is a smallest unipotent subgroup of $\mathcal{G}$ containing $u$.
\item[(c)] The set of unipotent elements $U(\mathcal{G}) \subset \mathcal{G}$ is Zariski closed in $\mathcal{G}$.
\item[(d)] $U(\mathcal{G}) \subset \mathcal{G}^{0}$.
\end{itemize} 
\label{unipotent elements in an algebraic group}
\end{lemma}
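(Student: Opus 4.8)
The four assertions are all standard facts about unipotent elements in linear algebraic groups over a field of characteristic $0$, so the plan is to reduce to a faithful linear representation $\mathcal{G} \subset \GL_N$ and invoke the structure theory of unipotent groups, being careful that characteristic $0$ is used exactly where it is needed. First I would fix an embedding $\mathcal{G} \hookrightarrow \GL(W)$ for some finite-dimensional vector space $W$ over the base field $k$; this is harmless since every affine algebraic group over $k$ has such an embedding, and unipotence of an element or a subgroup is intrinsic (it can be checked after any faithful representation). For part (a), given a unipotent subgroup $U \subset \mathcal{G}$, I would apply the Lie--Kolchin--type theorem for unipotent groups: after conjugation in $\GL(W)$, $U$ lies in the group $\mathbb{U}_N$ of upper-triangular unipotent matrices, and in characteristic $0$ the exponential/logarithm maps set up an isomorphism of varieties between $\mathbb{U}_N$ and its (nilpotent) Lie algebra, which is an affine space; since $U$ is defined by polynomial equations that become, under $\log$, \emph{linear} equations (this linearity is where characteristic $0$ enters, via the Baker--Campbell--Hausdorff formula/the fact that the group law transports to a polynomial law with no denominators beyond those dividing $N!$), $U$ corresponds to a linear subspace of the Lie algebra, hence is irreducible, hence connected.

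For part (b), given a single unipotent element $u$, I would take $n := \log(u)$ in the Lie algebra of the ambient $\mathbb{U}_N$ (again using char $0$) and consider the one-parameter subgroup $t \mapsto \exp(tn)$; its Zariski closure $\overline{\{\exp(tn)\}}$ inside $\mathcal{G}$ is a commutative unipotent subgroup containing $u$, and any unipotent subgroup of $\mathcal{G}$ containing $u$ must, after passing to $\log$, contain the line $k\cdot n$ (since it is a linear subspace by the argument in (a), and it contains $n$ because $u=\exp(n)$ forces $n$ into it — here I would note that $\log u$ is a polynomial in $u - \mathrm{Id}$ with rational coefficients, so it lies in the associative/Lie hull generated by $u$). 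Thus $\overline{\{\exp(tn)\}}$ is the smallest such subgroup. For part (c), I would argue that the unipotent elements of $\GL(W)$ form the closed subvariety cut out by $(g - \mathrm{Id})^N = 0$ (equivalently: characteristic polynomial equals $(T-1)^N$), and $U(\mathcal{G}) = \mathcal{G} \cap U(\GL(W))$ is the intersection of two closed subsets of $\GL(W)$, hence closed in $\mathcal{G}$.

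Finally, part (d) follows from (b) and (a): any unipotent $u \in \mathcal{G}$ lies in a unipotent subgroup of $\mathcal{G}$ (e.g. the smallest one from (b)), which is connected by (a), hence contained in the identity component $\mathcal{G}^0$; therefore $u \in \mathcal{G}^0$, and since $u$ was arbitrary, $U(\mathcal{G}) \subset \mathcal{G}^0$. I expect the main obstacle to be the careful justification in part (a) that a unipotent subgroup becomes a \emph{linear} subspace under the logarithm — i.e. that in characteristic $0$ a closed subgroup of $\mathbb{U}_N$ is automatically connected — which is the one place the characteristic-$0$ hypothesis is genuinely essential (it fails in positive characteristic, e.g. $\mathbb{Z}/p$ inside $\mathbb{G}_a$); the cleanest route is to cite the equivalence of categories between unipotent groups and nilpotent Lie algebras over a field of characteristic $0$, under which closed subgroups correspond to Lie subalgebras, which are linear subspaces and hence irreducible. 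Everything else is routine intersection-of-closed-sets and exponential/logarithm bookkeeping.
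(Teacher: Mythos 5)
Your proposal is correct and matches the paper's (very terse) proof: the paper simply declares (a)--(c) to be well known and deduces (d) from (a) and (b), exactly as you do. Your expanded justifications of (a)--(c) via the exponential/logarithm correspondence in characteristic $0$ are the standard arguments the paper is implicitly invoking, so there is nothing to reconcile.
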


\begin{proof} (a), (b), and (c) are well known. 
Containment (d) follows from (a) and (b).
\end{proof}

For the rest of this section we assume Conjectures \ref{general algebraic Sato Tate conj.} and \ref{general algebraic Sato Tate conj. Serre's approach}. Therefore we can work under the assumptions 
of Theorem \ref{STK iff STK0} and Theorem \ref{tilde STK iff tilde STK0}; consequently, we return to all of the assumptions of sections \ref{Mumford--Tate groups of polarized Hodge structures}--\ref{AST and Tate for families}. Let 
\begin{gather*}
\mathcal{G} := \AST_{K} (V, \psi)(\C), \quad \mathcal{G}_{0} := \AST_{K_{0}} (V, \psi)(\C) \quad \emph{or} \\
\mathcal{G} := \widetilde{\AST_{K}} (V, \psi)(\C), \quad \mathcal{G}_{0} := \widetilde{\AST_{K_{0}}} (V, \psi)(\C).
\end{gather*}
Let 
\begin{gather*}
G := \ST_{K} (V, \psi), \quad  G_{0} := \ST_{K_{0}} (V, \psi) \quad \emph{or} \\
G := \widetilde{\ST_{K}} (V, \psi), \quad G_0 := \widetilde{\ST_{K_{0}}} (V, \psi).
\end{gather*}
By \eqref{isomorphism between G(K0 / L) and STL / STK0} and
\eqref{isomorphism between G(K0 / L) and tilde STL / tilde STK0} and Propositions \ref{ConnCompIsom} and \ref{connected components iso for tilde}, applied for the base fields $K$ and $K_{0}$,
we obtain the following natural isomorphisms of groups: 
\begin{equation}
\Gal(K_0/K) \,\, \simeq \,\, \mathcal{G}/\mathcal{G}_{0} \,\, \simeq \,\, G/G_0.
\label{Gal(K0 K) simeq mathcal G mathcal G  0 simeq G G 0}
\end{equation}
Hence we obtain the following equality:
\begin{equation}
\mathcal{G}_{0} \cap G \,\, = \,\, G_0. 
\label{mathcal G 0  cap G  simeq G 0}
\end{equation}

Observe that (cf. Corollary 
\ref{connected components of AST K (V, psi), ST K (V, psi) and widetilde AST K (V, psi), widetilde ST K (V, psi)}):
\begin{equation}
\mathcal{G}^{\circ} \subset \mathcal{G}_{0} \subset \mathcal{G}.
\label{Containment of Alg. Sato--Tate groups} 
\end{equation}

By \eqref{decomposition of STK into cosets over Galois representatives} \emph{or} 
\eqref{decomposition of tilde STK into cosets over Galois representatives}, $G_{0}$ is open in $G$. 
We may thus set notation as in \S\ref{remarks on equidistribution};
in particular, let $\mu$ be the probabilistic Haar measure on $G$ and $\mu_{0} := [K_{0} :\, K] \, \mu_{| G_{0}}$ be the corresponding probabilistic Haar measure on $G_{0}$.  

Let $S_{\mathrm{fin}}$ be the set of primes $v \subset \mathcal{O}_K$ that are ramified in the representation $\rho_{l}$ 
of $G_K$ \eqref{the family of l-adic representations} together with the primes over $l$. For the prime $v \notin S_{\mathrm{fin}}$ 
(resp. $w  \subset \mathcal{O}_{K_{0}}$ not over a prime in $S_{\mathrm{fin}}$), choose a prime $\overline{v} \subset 
\mathcal{O}_{\overline{K}}$ dividing $v$ (resp. $\overline{w} \subset \mathcal{O}_{\overline{K}}$ dividing $w$).
Let $f_v \in \mathcal{G}$ (resp. $f_{w} \in \mathcal{G}_{0}$) denote the normalized Frobenius element for $\overline{v} 
\,|\, v$ (resp. the normalized Frobenius element for $\overline{w} \, | \,w$), see Remarks \ref{Sato--Tate set up} and \ref{tilde Sato--Tate set up}.
We have $f_v = s_v u_v$ where $s_v  \in \mathcal{G}$ (resp. $u_v \in \mathcal{G}$)
is the semisimple part of $f_v$ (resp. is the unipotent part of $f_v$). In the same way, we obtain a factorization 
$f_w = s_w u_w$ with $s_w  \in \mathcal{G}_{0}$ semisimple and $u_w \in \mathcal{G}_{0}$ unipotent.
By Lemma \ref{unipotent elements in an algebraic group}, the containment \eqref{Containment of Alg. Sato--Tate groups}, and the 
equality \eqref{mathcal G 0  cap G  simeq G 0}, we observe that:
\begin{equation}
\label{fv in mathcal G0 iff sv in mathcal G0 iff fv in G0}
f_v \, \in \, \mathcal{G}_{0} \quad \iff \quad s_v \, \in \, \mathcal{G}_{0} \quad \iff s_v  \, \in \, G_{0}.
\end{equation}

\begin{proposition} \label{translation into Chebotarev}
The image of the sequence $(f_v)$ in $X(G/G_0)$ is equidistributed for the probabilistic Haar measure on $G/G_0$.
\end{proposition}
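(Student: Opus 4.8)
The plan is to reduce the statement to the classical Chebotarev density theorem. The key observation is that the map $X(G/G_0) $ is just the set of conjugacy classes of the finite group $G/G_0 \simeq \Gal(K_0/K)$, and the image of $f_v$ in $G/G_0$ under the quotient map is (via the isomorphisms \eqref{Gal(K0 K) simeq mathcal G mathcal G  0 simeq G G 0}) precisely the Frobenius conjugacy class of $v$ in $\Gal(K_0/K)$. So the statement will follow once I make this identification precise.

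First I would recall that by \eqref{Gal(K0 K) simeq mathcal G mathcal G  0 simeq G G 0} there is a natural isomorphism $\Gal(K_0/K) \simeq \mathcal{G}/\mathcal{G}_0 \simeq G/G_0$, and that this isomorphism is induced by $\rho_l$ composed with the projection (cf. Theorem \ref{STK iff STK0}, the bottom row of Diagram \ref{diagram of natural maps from Galois to Zariski closures}, and Proposition \ref{connected components iso}). Next I would trace through how the normalized Frobenius element $f_v$ is built: it is $q_v^{-n/2}\rho_l(\Fr_v)$ (or its analogue in the $\widetilde{\ }$ case), so its class modulo $\mathcal{G}_0$ is the image of $\rho_l(\Fr_v)$ modulo $G_{l,K,1}^{\alg} \cap (G_{l,K}^{\alg})^\circ = G_{l,K_0,1}^{\alg}$; the homothety factor $q_v^{-n/2}\mathrm{Id}$ is irrelevant modulo the connected component. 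Under the chain of isomorphisms this is exactly the image of $\Fr_v$ in $G_K/G_{K_0} = \Gal(K_0/K)$, i.e.\ the arithmetic Frobenius class of $v$ in the Galois group of the finite extension $K_0/K$.

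Then I would invoke the Chebotarev density theorem: the Frobenius classes $\{\mathrm{Frob}_v : v \subset \mathcal{O}_K \text{ unramified in } K_0\}$ are equidistributed in the set of conjugacy classes of $\Gal(K_0/K)$ with respect to the measure that assigns to each conjugacy class $c$ the value $|c|/|\Gal(K_0/K)|$. This is precisely the pushforward of the probabilistic Haar measure on $G/G_0$ to $X(G/G_0)$ under the quotient-to-conjugacy-classes map (the Haar measure on a finite group is the uniform/counting measure normalized to total mass one, and its pushforward to conjugacy classes weights each class by its cardinality; cf.\ Remark \ref{Inducing sigma algebra from X to X'} and Remark \ref{pushforward measure via an epimorphism of compact Lie groups}). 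Since only finitely many primes are excluded (those in $S_{\mathrm{fin}}$, which have density zero), removing them does not affect equidistribution, so the sequence $(f_v)$ indexed over $v \notin S_{\mathrm{fin}}$ has the same equidistribution property as the full Chebotarev sequence.

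The main obstacle, such as it is, is bookkeeping rather than conceptual: one must check carefully that the composite isomorphism $\Gal(K_0/K) \simeq G/G_0$ genuinely carries the arithmetic Frobenius class of $v$ to the class of $f_v \bmod G_0$, keeping track of arithmetic versus geometric Frobenius (cf.\ Remark \ref{Frobenius choice}) and of the fact that the normalization scalar lies in $\mathbb{G}_m\mathrm{Id}_V$ and hence dies in the quotient by the connected component. Once this compatibility is in place, the result is immediate from Chebotarev. I would also note that $K_0$ may a priori depend on $l$, but the conjugacy class in $X(G/G_0)$ being tracked is the Galois-theoretic one, which is intrinsic, so no difficulty arises there.
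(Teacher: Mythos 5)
Your argument is correct and follows the same route as the paper: the paper's proof consists of the single sentence that, via the isomorphism $\Gal(K_0/K) \simeq G/G_0$ from \eqref{Gal(K0 K) simeq mathcal G mathcal G  0 simeq G G 0}, the claim translates into the Chebotarev density theorem. You have supplied exactly the bookkeeping details (identification of the coset of $f_v$ with the arithmetic Frobenius class of $v$, the homothety factor being absorbed into the identity component, the pushforward-of-Haar identification on a finite group, and the irrelevance of the finitely many excluded primes) that the paper treats as self-evident.
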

\begin{proof}
Via the isomorphism $\Gal(K_0/K) \simeq G/G_0$ \eqref{Gal(K0 K) simeq mathcal G mathcal G  0 simeq G G 0}, 
the claim translates into the Chebotarev density theorem.
\end{proof}

\medskip

By equalities \eqref{decomposition of ASTKQl into cosets over Galois representatives} and 
\eqref{decomposition of STK into cosets over Galois representatives} of Theorem \ref{STK iff STK0} \emph{or} 
by equalities \eqref{decomposition of tilde ASTKQl into cosets over Galois representatives} and 
\eqref{decomposition of tilde STK into cosets over Galois representatives} of
Theorem \ref{tilde STK iff tilde STK0} we obtain:
\begin{align*}
\mathcal{G} \, &= \, \bigsqcup_{i =0}^{t} \, g_i \, \mathcal{G}_{0}, \\
G \, &= \, \bigsqcup_{i =0}^{t} \, g_i \, G_{0},
\end{align*}
with $g_0 = e$ for fixed coset representatives $g_{0}, \dots, g_{t}$. These representatives may be taken to be images of elements of the Galois group $G_{K}$ via the $l$-adic representation. This choice of ${g_{i}}$'s is possible by 
\textit{loc. cit.}
 
Let $\chi_{0}$ be a character of $G_{0}$ and let $\chi := {{\rm{Ind}}_{G_{0}}^{G} \, \chi_{0}}$. 
Recall the formula \eqref{integral of induced character}: 
$$
\chi (u) := \sum_{{{0 \leq i \leq t}, \, {g_{i}^{-1} u g_{i} \in G_{0}}}}  \,\, \chi_{0} (g_{i}^{-1} u g_{i}).
$$
Because $G_{0} \triangleleft G$ we obtain:
\begin{equation}
g_{i}^{-1} u g_{i} \in G_{0}  \quad \iff \quad u \in G_{0} \quad\quad \forall \,\, 0 \leq i \leq t.
\label{gi-1 u gi in G0 iff u in G0}
\end{equation}

It is clear due to \eqref{fv in mathcal G0 iff sv in mathcal G0 iff fv in G0}  that for each $0 \leq i \leq t$:
\begin{equation}
f_v \, \in  g_i \, \mathcal{G}_{0} \quad \iff s_v  \, \in g_i \, G_{0}.
\label{fv in gi mathcal G0 iff fv in g_i G0}
\end{equation}
Hence by \eqref{fv in gi mathcal G0 iff fv in g_i G0} the elements of the subsequence $({f_{v_{i}}})_{v_i}$  of the sequence 
$(f_v)$ such that $f_{v_{i}} \in g_{i} \, \mathcal{G}_{0}$ are in one-to-one correspondence with the elements of the subsequence
$({s_{v_{i}}})_{v_i}$  of the sequence $(s_v)$ such that $s_{v_{i}} \in g_{i} \, G_{0}$ for each $i = 0, \dots, t$.
By \eqref{integral of induced character}, \eqref{fv in gi mathcal G0 iff fv in g_i G0}, and 
\eqref{gi-1 u gi in G0 iff u in G0} we obtain: 
$$
\chi(s_{v_{i}}) = 0 \quad {\text{for}} \,\,\,\, \forall \, i > 0 \,\,\,\,  {\text{and}} \, \,\,\, \forall \, 
v_i \notin S_{\mathrm{fin}}.
$$ 
On the other hand
\begin{equation}
\chi(s_{v_{0}}) = \sum_{j = 0}^{t}  \chi_{0} (g_{j}^{-1} s_{v_{0}} g_{j}) =  
\sum_{j = 0}^{t} \chi_{0} (s_{w_{0, j}}),
\end{equation}
where $s_{w_{0, j}} = g_{j}^{-1} \, s_{v_{0}} \, g_{j} \in G_{0}$ for
each prime ideal $w_{0, j}$ over $v_0$, by the choice of ${g_{j}}$'s.
The primes $v_{0}$ are the only primes in the sequence $(v)$ outside $S_{\mathrm{fin}}$
that split completely in $\mathcal{O}_{K_{0}}$ because $s_{v_{0}} \in G_{0}$ due to 
\eqref{fv in mathcal G0 iff sv in mathcal G0 iff fv in G0}. 
\medskip

\noindent
Going towards the equidistribution questions, let $m \in \N$ and consider the sum:
\begin{equation}
\label{Equidistribution. Going from K0 to K}
\sum_{i=0}^{t} \, \sum_{N v_{i} \, \leq \, m} \chi(s_{v_{i}}) = \sum_{N v_{0} \, \leq \, m} \, \chi(s_{v_{0}}) =
\sum_{j=0}^t \, \sum_{N w_{0, j} \, \leq \, m} \, \chi_{0} (s_{w_{0, j}}).
\end{equation}

Put
$N_{K} \, (m) := \, \# \{v_{i}\colon N v_{i} \, \leq \, m , \, 0 \leq i \leq t \}$. Observe that $N_{K} \, (m)$ 
is asymptotically equal to
$$
[K_{0}, \, K] \, \# \{v_{0}\colon N v_{0} \, \leq \, m \}
$$ 
and this is equal to 
$$
\# \{w_{0, j}\colon N w_{0, j} \, \leq \, m , \, 0 \leq j \leq t \}.
$$
\medskip
By Lemma \ref{relation between integral of chi and chi0} and 
\eqref{Equidistribution. Going from K0 to K} the following equalities 
\eqref{equidistribution of fv} and \eqref{equidistribution of fw} are equivalent: 
\begin{align}
\lim_{m\to\infty} \frac{1}{N_{K} \, (m)} \, \sum_{i=0}^{t} \, \sum_{N v_{i} \, \leq \, m} \chi(s_{v_{i}}) \, &= \,
 \int_{G}\, {\chi}\, d \mu
\label{equidistribution of fv} \\
\lim_{m\to\infty} \frac{1}{N_{K} \, (m)} \, \sum_{j=0}^t \, \sum_{N w_{0, j} \, \leq \, m} \, 
\chi_{0} (s_{w_{0, j}}) \, &= \, \int_{G_{0}}\, {\chi_{0}}\, d \mu_{0}.
\label{equidistribution of fw}
\end{align}

Let $(s_v)$ (resp. $(s_w))$ denote the sequence in $X(G)$ of conjugacy classes of elements 
$s_v$ (resp. the sequence in $X(G_{0})$ of conjugacy classes of elements $s_w$) by a slight abuse of notation.

\begin{lemma}
If $(s_v)$ is equidistributed in $X(G)$ with respect to $\mu$, then $(s_{w})$ is equidistributed in $X(G_{0})$ with respect to $\mu_{0}$. 
\label{equidistribution of fv implies equidistribution of fw}
\end{lemma}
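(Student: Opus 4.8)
The plan is to exploit the explicit coset decompositions and the equivalence \eqref{equidistribution of fv} $\Leftrightarrow$ \eqref{equidistribution of fw} established in the run-up to this lemma. First I would fix a nontrivial irreducible character $\chi_{0}$ of $G_{0}$ and set $\chi := {\rm{Ind}}_{G_{0}}^{G}\,\chi_{0}$; by the Peter--Weyl criterion (Remark~\ref{Peter-Weyl for equidistribution}), to prove equidistribution of $(s_{w})$ in $X(G_{0})$ it suffices to verify \eqref{equidistribution of fw} for every such $\chi_{0}$, and moreover (by Artin's theorem on induced characters, cf.\ Lemma~\ref{Artin Clifford lemma} and its proof, or just \cite[Ch.\ IX, Th.\ 17]{Se4} applied to $G_0$ together with the standard reduction) it is enough to treat characters of $G_0$ of this induced-from-a-subgroup form; but in fact any character of $G_0$ can be used directly here since the argument below only needs the relation between sums over $v$ and sums over $w$. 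The key point is that \eqref{equidistribution of fv} and \eqref{equidistribution of fw} have already been shown to be equivalent via Lemma~\ref{relation between integral of chi and chi0} and equality~\eqref{Equidistribution. Going from K0 to K}.

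So the second step is: assuming $(s_v)$ is equidistributed in $X(G)$ with respect to $\mu$, apply this to the specific continuous class function $\chi = {\rm{Ind}}_{G_{0}}^{G}\,\chi_{0}$ on $G$. This gives precisely \eqref{equidistribution of fv}
$$
\lim_{m\to\infty} \frac{1}{N_{K}(m)} \, \sum_{i=0}^{t} \, \sum_{N v_{i} \, \leq \, m} \chi(s_{v_{i}}) \, = \, \int_{G}\, \chi\, d\mu,
$$
where I have used that $\sum_{i,\,Nv_i\le m}\chi(s_{v_i}) = \sum_{v:\,Nv\le m,\,v\notin S_{\mathrm{fin}}}\chi(s_v)$ up to a finite error term that is negligible after dividing by $N_K(m)$, since the primes in $S_{\mathrm{fin}}$ and the contribution of higher-degree primes vanish in the limit (the latter point is where one invokes the density-zero estimate, analogous to Lemma~\ref{Equidistribution with respect to all primes and primes in S}, so that equidistribution over all primes implies the corresponding limit for the degree-$1$ primes; alternatively $N_K(m)$ counts all primes of bounded norm and the relevant sums telescope as in \eqref{Equidistribution. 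Going from K0 to K}). By the equivalence of \eqref{equidistribution of fv} and \eqref{equidistribution of fw}, this yields
$$
\lim_{m\to\infty} \frac{1}{N_{K}(m)} \, \sum_{j=0}^t \, \sum_{N w_{0, j} \, \leq \, m} \, \chi_{0}(s_{w_{0, j}}) \, = \, \int_{G_{0}}\, \chi_{0}\, d\mu_{0}.
$$

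The third step is to rewrite the left-hand side as an average over \emph{all} primes $w$ of $\mathcal{O}_{K_0}$ of bounded norm outside the bad set. The primes $w$ appearing are exactly the $w_{0,j} = g_j^{-1}\,v_0\,g_j$ lying over primes $v_0$ of $K$ that split completely in $K_0$, and by \eqref{fv in mathcal G0 iff sv in mathcal G0 iff fv in G0} these are precisely the primes $v$ of $K$ with $s_v\in G_0$; the Chebotarev density theorem (via Proposition~\ref{translation into Chebotarev} and the isomorphism \eqref{Gal(K0 K) simeq mathcal G mathcal G  0 simeq G G 0}) guarantees that such $v_0$ have density $1/[K_0:K]$ among primes of $K$, and every prime $w$ of $K_0$ of residue degree $1$ over $K$ (hence all but density zero of all primes of $K_0$) arises this way. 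Since $N_K(m)$ is asymptotically $[K_0:K]\,\#\{v_0:\,Nv_0\le m\} = \#\{w_{0,j}:\,Nw_{0,j}\le m,\,0\le j\le t\}$, the normalization matches and the displayed limit is exactly the statement that $(s_w)$ is equidistributed in $X(G_0)$ with respect to $\mu_0$, again by the Peter--Weyl criterion applied to $G_0$.

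The main obstacle I anticipate is the bookkeeping around the two different indexing conventions: the sequence $(f_v)$ (equivalently $(s_v)$) is indexed by primes of $K$, while $(s_w)$ is indexed by primes of $K_0$, and one must carefully match $\frac{1}{N_K(m)}\sum$ against $\frac{1}{N_{K_0}'(m)}\sum$ where $N_{K_0}'(m)$ counts only the split primes counted with their $[K_0:K]$ factors. The content of Chebotarev is what makes these two normalizations agree in the limit, and keeping track of the negligible contributions (ramified primes, higher-degree primes, fusion of conjugacy classes under $X(j)$) is the routine-but-delicate part; everything else is a direct citation of \eqref{equidistribution of fv}--\eqref{equidistribution of fw}, Lemma~\ref{relation between integral of chi and chi0}, and Proposition~\ref{translation into Chebotarev}.
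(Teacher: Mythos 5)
Your proposal is correct and follows essentially the same route as the paper's proof: fix an irreducible character $\chi_0$ of $G_0$, induce it to $\chi$ on $G$, invoke the already-established equivalence of \eqref{equidistribution of fv} and \eqref{equidistribution of fw} (via Lemma~\ref{relation between integral of chi and chi0} and \eqref{Equidistribution. Going from K0 to K}), and then pass from the split primes $w_{0,j}$ to all primes $w$ of $\mathcal{O}_{K_0}$ using Lemma~\ref{Equidistribution with respect to all primes and primes in S}. The detour through Artin's theorem in your first step is unnecessary (and you correctly retract it): one only needs to run the Peter--Weyl criterion on $G_0$ directly, forming $\chi = \operatorname{Ind}_{G_0}^G\chi_0$ as a test class function on $G$ without any reduction on $\chi_0$ itself.
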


\begin{proof} 
Let $(s_{v_{0}})$ be the subsequence of $(s_v)$ with $s_{v_{0}} \in X(G_{0})$. The primes 
$v_{0}$ are precisely those that split completely in $K_{0}/K$. The sequence $(s_{w_{0, j}})_{0, j}$ is 
obtained from $(s_{v_{0}})$ by replacing each $s_{v_{0}}$ with $t+1$ conjugates
$s_{w_{0, j}} = g_{j}^{-1} \, s_{v_{0}} \, g_{j}$.  Let $\chi_{0}$ be an irreducible character of $G_{0}$ and 
$\chi := {{\rm{Ind}}_{G_{0}}^{G} \, \chi_{0}}$. By assumption and the definition of equidistribution,
\eqref{equidistribution of fv} holds. Hence the sequence $(s_{w_{0, j}})_{0, j}$ 
is equidistributed in $X(G_{0})$ with respect to $\mu_{0}$ because of \eqref{equidistribution of fw}.
By Lemma \ref{Equidistribution with respect to all primes and primes in S}, the sequence $(s_{w})_{w}$ is equidistributed in $X(G_{0})$ with respect to $\mu_{0}$.
\end{proof}

\begin{theorem}
Assume that every nontrivial, irreducible character of $G$ with nonabelian image 
is induced from an irreducible character of $G_{0}$.
Then the sequence $(s_v)$ is equidistributed in $X(G)$ with respect to $\mu$ if and only if 
the sequence $(s_{w})$ is equidistributed in $X(G_{0})$ with respect to $\mu_{0}$.
\label{equidistribution of fw implies equidistribution of fv if chi is induced from and irred. charac.}
\end{theorem}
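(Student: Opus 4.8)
The forward implication is exactly Lemma~\ref{equidistribution of fv implies equidistribution of fw}, so only the reverse implication needs a new argument, and the strategy is to verify the Peter--Weyl criterion \eqref{Peter-Weyl reduction equation for equidistribution} for $G$ one irreducible character at a time. First I would recall that every irreducible character $\chi$ of $G$ is either $1$-dimensional (equivalently, has abelian image) or has nonabelian image; in the latter case the hypothesis gives $\chi = \operatorname{Ind}_{G_0}^{G}\chi_0$ for some irreducible character $\chi_0$ of $G_0$. For the abelian characters one uses Proposition~\ref{translation into Chebotarev}: such $\chi$ factors through $G/G_0$, and equidistribution of the image of $(s_v)$ in $X(G/G_0)$ (the Chebotarev density theorem) together with Remark~\ref{pushforward measure via an epimorphism of compact Lie groups} forces $\lim_{m\to\infty}\frac{1}{N_K(m)}\sum_{Nv_i\le m}\chi(s_{v_i}) = \int_G \chi\, d\mu$ (both sides being $0$ for $\chi$ nontrivial of dimension $1$). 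This handles all $1$-dimensional characters unconditionally.

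For a nontrivial irreducible $\chi$ of dimension $>1$, hence with nonabelian image, write $\chi = \operatorname{Ind}_{G_0}^{G}\chi_0$. Here I would invoke the coset bookkeeping already set up in the excerpt just before the statement: because $G_0 \triangleleft G$, the formula \eqref{integral of induced character} together with \eqref{gi-1 u gi in G0 iff u in G0} shows $\chi(s_{v_i}) = 0$ whenever $v_i$ does not split completely in $K_0/K$, while for the completely split primes $v_0$ one has $\chi(s_{v_0}) = \sum_{j=0}^{t}\chi_0(s_{w_{0,j}})$, where the $w_{0,j}$ range over the primes of $\mathcal{O}_{K_0}$ above $v_0$. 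The asymptotic count identity stated in the excerpt, $N_K(m) \sim [K_0:K]\,\#\{v_0 : Nv_0\le m\} = \#\{w_{0,j} : Nw_{0,j}\le m\}$, then says precisely that the normalizing denominators on the two sides match, so by Lemma~\ref{relation between integral of chi and chi0} the equalities \eqref{equidistribution of fv} and \eqref{equidistribution of fw} are equivalent. Since $(s_w)$ is assumed equidistributed in $X(G_0)$ with respect to $\mu_0$, applying the Peter--Weyl criterion to the irreducible $\chi_0$ (and to all its conjugates, which is automatic as equidistribution is tested against all continuous class functions of $G_0$) gives \eqref{equidistribution of fw}, hence \eqref{equidistribution of fv}.

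Assembling: for every nontrivial irreducible character $\chi$ of $G$ — abelian image handled by Chebotarev via Proposition~\ref{translation into Chebotarev}, nonabelian image handled by induction from $G_0$ as above — the limit relation \eqref{Peter-Weyl reduction equation for equidistribution} holds, so by Remark~\ref{Peter-Weyl for equidistribution} the sequence $(s_v)$ is equidistributed in $X(G)$ with respect to $\mu$. The converse direction is Lemma~\ref{equidistribution of fv implies equidistribution of fw}, completing the equivalence.

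\textbf{Main obstacle.} The substantive point is not any single estimate but making sure the dictionary between the arithmetic side and the group side is airtight: that the subsequence of $(s_v)$ landing in (conjugates of) $G_0$ is governed exactly by the completely-split primes via \eqref{fv in mathcal G0 iff sv in mathcal G0 iff fv in G0} and \eqref{fv in gi mathcal G0 iff fv in g_i G0}, that the $t+1$ replacement conjugates $s_{w_{0,j}} = g_j^{-1}s_{v_0}g_j$ are genuinely the Frobenius classes at the primes of $K_0$ above $v_0$ (which relies on the coset representatives $g_j$ being images of Galois elements, as arranged in the excerpt), and that the density bookkeeping for $N_K(m)$ is compatible with the Haar measure normalization $\mu_0 = [K_0:K]\,\mu_{|G_0}$. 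Once this correspondence is in place, the character-theoretic identities are the routine ones already proved in \S\ref{remarks on equidistribution}, and I would lean on Lemma~\ref{Equidistribution with respect to all primes and primes in S} to pass freely between "all primes" and "the completely split primes" without worrying about the finitely many ramified primes or primes above $l$.
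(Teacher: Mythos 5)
Your strategy coincides with the paper's: the forward direction is Lemma~\ref{equidistribution of fv implies equidistribution of fw}, and the reverse direction is the character-by-character Peter--Weyl argument that the paper has packaged as Lemma~\ref{reduction for equidistribution easy case}, whose conditions it then checks. Your treatment of the irreducible characters of dimension $>1$ is correct: the vanishing of $\Ind_{G_0}^G\chi_0$ off $G_0$, the identification of the subsequence landing in $G_0$ with the completely split primes, the matching of normalizations, Lemma~\ref{relation between integral of chi and chi0}, and Lemma~\ref{Equidistribution with respect to all primes and primes in S} combine exactly as you say.

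The gap is in the one-dimensional case. You claim that a nontrivial irreducible character $\chi$ of $G$ with abelian image ``factors through $G/G_0$,'' so that Proposition~\ref{translation into Chebotarev} disposes of it. A one-dimensional character factors through the abelianization $G/[G,G]$, which is in general \emph{not} a quotient of the finite group $G/G_0\simeq\Gal(K_0/K)$: since $G^{\circ}\subset G_0$ and $G^{\circ}$ is reductive, $G/[G,G]$ may contain a positive-dimensional torus (already for $G=G_0=\gpU(1)$, the Sato--Tate group of a CM abelian variety over a field containing the CM field, no nontrivial character of $G$ factors through $G/G_0=1$). For such a $\chi$, Chebotarev says nothing, and the hypothesis on $(s_w)$ does not help either, because $\chi$ does not vanish off $G_0$, so equidistribution in $X(G_0)$ only controls the partial sum over the completely split primes. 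What is needed is condition (1) of Lemma~\ref{reduction for equidistribution easy case}, namely equidistribution of the image of $(s_v)$ in $X(G/[G,G])$; the paper supplies this from the analytic continuation and non-vanishing on $\mathrm{Re}(s)\geq 1$ of the abelian $L$-functions attached to the corresponding Hecke characters (see \cite[Corollary 15]{Jo}), with Chebotarev covering only the finite-order characters that do factor through $G/G_0$. Your proof needs this additional analytic input to close.
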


\begin{proof} Equidistribution of $(s_v)$ implies equidistribution of $(s_w)$ by Lemma
\ref{equidistribution of fv implies equidistribution of fw}. 
We deduce the converse implication applying Lemma \ref{reduction for equidistribution easy case}:
condition (i) follows from the analytic continuation of abelian $L$-functions associated to Hecke characters (see \cite[Corollary~15]{Jo}); conditions (ii) and (iii) hold by hypothesis; condition (iv) holds by the Chebotarev density theorem.
\end{proof}

\medskip

Our next goal is the strengthening of Theorem \ref{equidistribution of fw implies equidistribution of fv if chi is induced from and irred. charac.}. Let $H$ be a subgroup of $G$, not necessarily normal, such that $G_{0} \subset H \subset G$.
Via isomorphism $\Gal(K_0/K) \simeq G/G_0$ we have $\Gal(K_0/L) \simeq H / G_{0}$ for some subfield $L$ of $K_0$ containing $K$. Let $f_v \in \mathcal{G}$, as before, be the normalized Frobenius for $\overline{v} \, | \, v$ and let $f_{w_{1}} \in 
\mathcal{H}$ be the normalized Frobenius for $\overline{w_1} \, | \, w_1$ for a prime $w_1 \subset \mathcal{O}_{L}$,
outside the finite set of primes over $S_{\mathrm{fin}}$.
Recall that by \eqref{isomorphism between G(K0 / L) and STL / STK0}, we have a natural isomorphism
\begin{equation}
G(K_{0} / L) \simeq H / G_{0}.
\label{G(K0 / L) cong H / G0}
\end{equation}
Similarly to \eqref{coset decomposition} we can write 
\begin{equation}
\label{The left coset representatives of H in G}
G \, = \, \bigsqcup_{i =0}^{t} \, g_i \, H,
\end{equation}
for fixed left coset representatives $g_{0}, \dots, g_{t} \in G$ with $g_0 = e$. 
Again these representatives may be taken to be images of elements of the Galois group $G_{K}$ via the $l$-adic representation. This choice of ${g_{i}}$'s is possible by equality \eqref{decomposition of STK2 into cosets over Galois representatives wr to STK1} of Corollary
\ref{STK2 iff STK1} \emph{or} equality 
\eqref{decomposition of tilde STK2 into cosets over Galois representatives wr to tilde STK1}
of Corollary \ref{tilde STK2 iff tilde STK1}. 

Let: 
\begin{gather*}
\mathcal{H} := \AST_{L} (V, \psi)(\C) \quad \emph{or} \quad \mathcal{H} := \widetilde{\AST_L} (V, \psi)(\C), \\
H := \ST_{L} (V, \psi)(\C) \quad \emph{or} \quad H := \widetilde{\ST_L} (V, \psi)(\C).
\end{gather*}

By Propositions \ref{ConnCompIsom} and \ref{connected components iso for tilde}, applied for base fields $K$ and $L$
we obtain the following, natural bijection of coset spaces:  
\begin{equation}
H \backslash G \,\,\, \simeq  \,\,\, \mathcal{H} \backslash \mathcal{G}. 
\label{Bijections of cosets for AST and ST} 
\end{equation}
Hence we have:
\begin{equation}
H \, = \, G \, \cap \, \mathcal{H}. 
\label{G cap mathcal H equals H} 
\end{equation}
Therefore 
$$
\mathcal{G} \, = \, \bigsqcup_{i =0}^{t} \, g_i \, \mathcal{H},
$$
for the left coset representatives $g_{0}, \dots, g_{t}$ with $g_0 = e$ (see \eqref{The left coset representatives of H in G}). 

\begin{lemma} There is a one-to-one correspondence between the set of elements $g_i \in 
\mathcal{G}$ such that $g_{i}^{-1} f_{v} g_{i}\in \mathcal{H}$ and the set of primes $w \, | \, v$ of 
$\mathcal{O}_{L}$ that split completely over $v$, for all $v \notin S_{\mathrm{fin}}$.
\label{primes of L splitting completely over  v0}
\end{lemma}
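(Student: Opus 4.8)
The plan is to exploit the Galois-theoretic dictionary that has already been established, namely the isomorphism $\Gal(K_0/L) \simeq H/G_0$ of \eqref{G(K0 / L) cong H / G0} together with the coset decompositions $G = \bigsqcup_{i=0}^t g_i H$ and $\mathcal{G} = \bigsqcup_{i=0}^t g_i \mathcal{H}$ in which the $g_i$ are images under $\rho_l$ of elements $\tilde g_i \in G_K$. First I would fix a prime $v \notin S_{\mathrm{fin}}$, choose $\overline{v}$ above $v$, and recall that $f_v \in \mathcal{G}$ is the (conjugacy class of the) normalized Frobenius at $\overline{v}$. The key observation, analogous to \eqref{fv in mathcal G0 iff sv in mathcal G0 iff fv in G0} and \eqref{fv in gi mathcal G0 iff fv in g_i G0}, is that for each $i$ the condition $g_i^{-1} f_v g_i \in \mathcal{H}$ is equivalent (via Lemma \ref{unipotent elements in an algebraic group} and the containment $\mathcal{G}^\circ \subset \mathcal{H}$, plus \eqref{G cap mathcal H equals H}) to $g_i^{-1} s_v g_i \in H$, hence to $s_v \in g_i H$; and since under $\Gal(K_0/K) \simeq G/G_0$ the coset $g_i H \leftrightarrow \tilde g_i \,G(K_0/L)$, this says precisely that the Frobenius conjugacy class of $\overline{v}$ in $\Gal(K_0/K)$ meets $\tilde g_i \,G(K_0/L)$.

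The second step is to translate ``$\mathrm{Frob}_{\overline{v}} \in \tilde g_i \,G(K_0/L)$ in $\Gal(K_0/K)$'' into the statement that there is a prime $w$ of $\mathcal{O}_L$ above $v$ with a prescribed splitting behaviour. This is a standard fact from algebraic number theory: the double cosets $G(K_0/L)\backslash \Gal(K_0/K)/\langle \mathrm{Frob}_{\overline v}\rangle$ index the primes $w$ of $L$ above $v$, and a prime $w$ of $L$ above $v$ splits completely over $v$ (i.e., $L_w = K_v$, equivalently $w$ has residue degree $1$ and is unramified over $v$) exactly when the corresponding double coset is represented by a $\tilde g_i$ with $\tilde g_i^{-1}\,\mathrm{Frob}_{\overline{v}}\,\tilde g_i \in G(K_0/L)$. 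Unwinding, the left cosets $g_i H$ that contain $s_v$ are in bijection with the primes $w\mid v$ of $\mathcal{O}_L$ that split completely over $v$. I would phrase this carefully using the fixed choices of $\overline{w}\mid w$ and of the $\tilde g_i$, so that the correspondence is genuinely a bijection of sets rather than merely a count.

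I expect the main obstacle to be bookkeeping rather than conceptual: making the passage between (a) the algebraic-group statement $g_i^{-1} f_v g_i \in \mathcal{H}$, (b) the semisimple-part statement $s_v \in g_i H$, and (c) the Galois-decomposition-group statement about $\mathrm{Frob}_{\overline{v}}$, all compatible with a single consistent choice of coset representatives and of primes $\overline{w}$ above $w$. One must check that the identification $\Gal(K_0/K)\simeq G/G_0$ from \eqref{Gal(K0 K) simeq mathcal G mathcal G  0 simeq G G 0} sends the image of $\mathrm{Frob}_{\overline{v}}$ to the class of $s_v$ modulo $G_0$ (this is built into the construction of the $g_i$ as images of Frobenius elements, cf. Theorems \ref{STK iff STK0}, \ref{tilde STK iff tilde STK0} and Corollaries \ref{STK2 iff STK1}, \ref{tilde STK2 iff tilde STK1}), and that ``$w$ splits completely over $v$'' is exactly the condition that the decomposition group of $\overline{w}$ over $L$ is trivial, i.e. $\mathrm{Frob}_{\overline{w}/L}=1$, which holds iff $\tilde g_i^{-1}\,\mathrm{Frob}_{\overline{v}}\,\tilde g_i\in G(K_0/L)$ for the representative $\tilde g_i$ attached to $\overline{w}$. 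Once these compatibilities are in place the bijection is immediate, and the proof is short.

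\begin{proof}
Fix $v \notin S_{\mathrm{fin}}$ and a prime $\overline{v}\subset\mathcal{O}_{\overline K}$ above $v$, and write $f_v = s_v u_v \in \mathcal{G}$ as in the discussion preceding Proposition \ref{translation into Chebotarev}. By Lemma \ref{unipotent elements in an algebraic group}, the containment $\mathcal{G}^{\circ}\subset\mathcal{H}$ (which follows from $\mathcal{G}^{\circ}\subset\mathcal{G}_0\subset\mathcal{H}$, cf. \eqref{Containment of Alg. Sato--Tate groups}), and the equality $H = G\cap\mathcal{H}$ of \eqref{G cap mathcal H equals H}, for each $0\le i\le t$ we have
$$
g_i^{-1} f_v g_i \in \mathcal{H} \quad\Longleftrightarrow\quad g_i^{-1} s_v g_i \in \mathcal{H} \quad\Longleftrightarrow\quad g_i^{-1} s_v g_i \in H \quad\Longleftrightarrow\quad s_v \in g_i H.
$$
Now choose the $g_i$ to be images $\rho_l(\tilde g_i)$ of elements $\tilde g_i\in G_K$ representing the cosets $\tilde g_i\,G_{K_0}$; this is possible by equality \eqref{decomposition of STK2 into cosets over Galois representatives wr to STK1} of Corollary \ref{STK2 iff STK1} (resp. \eqref{decomposition of tilde STK2 into cosets over Galois representatives wr to tilde STK1} of Corollary \ref{tilde STK2 iff tilde STK1}). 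Under the natural isomorphism $\Gal(K_0/K)\simeq G/G_0$ of \eqref{Gal(K0 K) simeq mathcal G mathcal G  0 simeq G G 0}, the subgroup $H/G_0$ corresponds to $\Gal(K_0/L)$ by \eqref{G(K0 / L) cong H / G0}, and the class of $s_v$ modulo $G_0$ corresponds to the Frobenius class of $\overline{v}$ in $\Gal(K_0/K)$. Hence $s_v\in g_i H$ if and only if the Frobenius of $\overline{v}$ lies in the coset $\tilde g_i\,\Gal(K_0/L)$ of $\Gal(K_0/L)$ in $\Gal(K_0/K)$, equivalently $\tilde g_i^{-1}\,\mathrm{Frob}_{\overline{v}}\,\tilde g_i\in\Gal(K_0/L)$ for a suitable choice of representative of $\mathrm{Frob}_{\overline{v}}$.

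On the other hand, the primes $w$ of $\mathcal{O}_L$ above $v$ are in bijection with the orbits of $\langle\mathrm{Frob}_{\overline{v}}\rangle$ on $\Gal(K_0/L)\backslash\Gal(K_0/K)$, and a prime $w\mid v$ splits completely over $v$ (i.e. $L_w = K_v$) precisely when the decomposition group of $\overline{w}$ over $L$ is trivial; choosing $\overline{w}\mid w$ compatibly, this holds exactly when the representing $\tilde g_i$ satisfies $\tilde g_i^{-1}\,\mathrm{Frob}_{\overline{v}}\,\tilde g_i\in\Gal(K_0/L)$. Comparing with the previous paragraph, the assignment sending such a $g_i$ to the prime $w\mid v$ it determines (via $\overline{w}:=\tilde g_i^{-1}(\overline{v})$, so that $\overline{w}$ lies over the corresponding $w\subset\mathcal{O}_L$) is a well-defined bijection between $\{\,g_i : g_i^{-1} f_v g_i \in \mathcal{H}\,\}$ and the set of primes $w\mid v$ of $\mathcal{O}_L$ splitting completely over $v$. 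This proves the claim.
\end{proof}
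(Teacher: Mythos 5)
Your overall strategy — translating the condition $g_i^{-1} f_v g_i \in \mathcal{H}$ into the classical Galois-theoretic description of primes of $L$ splitting completely over $v$ — is the same as the paper's, but the execution contains a genuine error. The third link in your displayed chain, $g_i^{-1} s_v g_i \in H \iff s_v \in g_i H$, is false: the left side says $s_v \in g_i H g_i^{-1}$ (a conjugate subgroup), the right side says $s_v$ lies in a particular left coset, and these are inequivalent even when $H$ is normal. The error is not harmless bookkeeping: the set $\{i : s_v \in g_i H\}$ has at most one element (the cosets $g_iH$ partition $G$), whereas the set $\{i : g_i^{-1} f_v g_i \in \mathcal{H}\}$ can have up to $[L:K]$ elements — for instance, if $v$ splits completely in $K_0$ then $s_v \in G_0 \triangleleft G$, so $g_i^{-1}s_vg_i \in G_0 \subset H$ for every $i$, matching the $[L:K]$ completely split primes of $L$ above $v$. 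You make the mirror-image error on the Galois side, asserting that $\mathrm{Frob}_{\overline v} \in \tilde g_i\,\Gal(K_0/L)$ is "equivalently" $\tilde g_i^{-1}\,\mathrm{Frob}_{\overline v}\,\tilde g_i \in \Gal(K_0/L)$; these again differ for non-normal $\Gal(K_0/L)$. The two errors happen to compose to a true statement, but as written your proof passes through false intermediate claims and does not establish the bijection.

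The paper's proof avoids this by never leaving the conjugation picture: it identifies the primes $w \mid v$ with the double coset space $G_L\backslash G_K/G_{\overline v} \cong \mathcal{H}\backslash\mathcal{G}/\langle f_v\rangle$, characterizes complete splitting of $w$ by $\gamma f_v \gamma^{-1} \in \mathcal{H}$ for a representative $\gamma$ of the corresponding double coset, and then observes that each such double coset $\mathcal{H}\gamma\langle f_v\rangle$ collapses to the single right coset $\mathcal{H}\gamma$, so that the relevant double cosets are exactly the $\mathcal{H}g_i^{-1}$ with $g_i^{-1}f_vg_i \in \mathcal{H}$. If you rework your argument along these lines (double cosets rather than left-coset membership), the Chebotarev-style dictionary you invoke does the rest. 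Two further points worth noting: the lemma itself concerns $f_v$ and $\mathcal{H}$ only — the passage to $s_v$ and $H$ that you fold into your first step is the content of the subsequent Corollary \ref{primes of L splitting completely over v0 correspond to conjugations of sv} and requires the double-coset comparison of Lemma \ref{Lemma about bijections of double cosets for AST and ST}; and your use of $H = G\cap\mathcal{H}$ to replace $\mathcal{H}$ by $H$ presupposes $g_i^{-1}s_vg_i \in G$, which is not justified (only some conjugate of $s_v$ is known to lie in the compact group $G$).
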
 
\begin{proof} Let $\overline{v}$ be a fixed prime in $\mathcal{O}_{\overline{K}}$ over $v$ and let
$G_{\overline{v}}$ be the decomposition group of $\overline{v}$. We observe that the double coset space
$G_{L} \backslash G_{K} / G_{\overline{v}}$ is in bijection with the set of primes $w|v$ of $\mathcal{O}_L$ via the map
$G_{L} \gamma \, G_{\overline{v}} \mapsto \gamma (\overline{v}) \cap L$. We have the following bijections:
\begin{equation}
\label{double coset comparison}
G_{L} \backslash G_{K} / G_{\overline{v}} \,\,\, \cong \,\,\, G(K_{0}/L) \backslash G(K_{0} / K) / \langle f_{v} \rangle 
\,\,\, \cong \,\,\, \mathcal{H} \backslash \mathcal{G} / \langle f_{v} \rangle
\end{equation}
The second bijection follows by Diagram \ref{diagram of natural maps from Galois to Zariski closures} in Lemma
\ref{lemma concerning diagram of natural maps from Galois to Zariski closures} and by Diagram 
\ref{diagram comparing the rho (GK) with GLalg for K0} in Theorem \ref{jK0K and Zar1 are isomorphisms},  
Diagram \ref{diagram comparing the tilde rho (GK) with tilde GLalg for K0} in Theorem 
\ref{tilde jK0K and tilde Zar1 are isomorphisms}
and Conjectures \ref{general algebraic Sato Tate conj.} and \ref{general algebraic Sato Tate conj. Serre's approach} 
that are assumed in this section. We would like to point out that applying Diagrams 
\ref{diagram of natural maps from Galois to Zariski closures},  
\ref{diagram comparing the rho (GK) with GLalg for K0} and 
\ref{diagram comparing the tilde rho (GK) with tilde GLalg for K0} we should consider all quotient groups 
in these diagrams in the form of right cosets. 

By a slight abuse of notation,
let $\langle f_{v} \rangle$ denote the subgroup generated by the Frobenius element $f_v$ in $G(K_{0} / K)$ in the middle term of \eqref{double coset comparison} and let $\langle f_{v} \rangle$ denote the subgroup generated by the normalized Frobenius $f_v$ in $\mathcal{G}$ 
(denoted in the same way as Frobenius) in the right term of \eqref{double coset comparison}. 
Then the double coset space $\mathcal{H} \backslash \mathcal{G} / \langle f_{v} \rangle$ is in bijection with the set of primes $w|v$ of $\mathcal{O}_L$ via the map $\mathcal{H} \,\gamma \, \langle f_{v} \rangle \mapsto \gamma (\overline{v}) \cap L$. Under this bijection, the decomposition group of $w$ over $v$ is the group $\gamma \langle f_{v} \rangle \gamma^{-1} / (\gamma \langle f_{v} \rangle \gamma^{-1} \cap \mathcal{H})$, so the inertia degree of $w$ over $v$ equals the index
$[\gamma \langle f_{v} \rangle \gamma^{-1} : \gamma \langle f_{v} \rangle \gamma^{-1} \cap \mathcal{H}]$.
Consequently, $w$ splits completely over $v$ if and only if $\gamma \langle f_{v} \rangle \gamma^{-1} \subseteq \mathcal{H}$, or equivalently if and only if $\gamma f_v \gamma^{-1} \in \mathcal{H}$. 

To summarize, at this point we have a one-to-one correspondence between the double cosets
$\mathcal{H} \gamma \langle f_{v} \rangle$ for which $\gamma f_v \gamma^{-1} \in \mathcal{H}$ and the set of primes $w|v$ of $\mathcal{O}_{L}$ that split completely over $v$. Now note that each such double coset consists of \emph{one} right coset of $\mathcal{H}$:
\[
\mathcal{H} \gamma \langle f_{v} \rangle = \bigcup_{i \in \Z} \mathcal{H} g f_v^i = \bigcup_{i \in \Z} 
\mathcal{H} (\gamma f_v^i \gamma^{-1}) \gamma = \mathcal{H} \gamma.
\]
We may thus replace the double cosets in the correspondence by the right cosets
$\mathcal{H} \gamma$ for which $\gamma f_v \gamma^{-1} \in \mathcal{H}$. 
Since $\mathcal{G} = \bigsqcup_{i=0}^t \mathcal{H} g_i^{-1}$ by \eqref{The left coset representatives of H in G} and
\eqref{Bijections of cosets for AST and ST}, this yields the desired result.
\end{proof}

We can write $f_v = s_v \, u_v$ where $s_v \in \mathcal{G}$ is the semisimple part of $f_v$ and 
$u_v  \in \mathcal{G}$ is the unipotent part of $f_v$. Recall that $s_v \, u_v = u_v \, s_v$. 

\begin{lemma}
\label{Lemma about bijections of double cosets for AST and ST}
By the isomorphism \eqref{Bijections of cosets for AST and ST} and Lemma \ref{unipotent elements in an algebraic group},
there are natural bijections of double coset spaces: 
\begin{equation}
\label{Bijections of double cosets for AST and ST} 
H \backslash G / \langle s_{v} \rangle \cong \,\,\, 
\mathcal{H} \backslash \mathcal{G} / \langle s_{v} \rangle \cong \,\,\, 
\mathcal{H} \backslash \mathcal{G} / \langle f_{v} \rangle.
\end{equation}
\end{lemma}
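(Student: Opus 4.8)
The plan is to establish the two bijections in \eqref{Bijections of double cosets for AST and ST} by reducing each to a statement about the subgroup generated by the semisimple part $s_v$ versus the full element $f_v$, and using the compatibility \eqref{Bijections of cosets for AST and ST} together with Lemma~\ref{unipotent elements in an algebraic group}. First I would address the right-hand bijection $\mathcal{H} \backslash \mathcal{G} / \langle s_{v} \rangle \cong \mathcal{H} \backslash \mathcal{G} / \langle f_{v} \rangle$. The key observation is that $f_v = s_v u_v$ with $s_v$ and $u_v$ commuting, $s_v$ semisimple and $u_v$ unipotent, so these are the Jordan components of $f_v$ in the algebraic group $\mathcal{G}$; hence the Zariski closures of the cyclic groups they generate satisfy $\overline{\langle f_v \rangle} = \overline{\langle s_v \rangle} \cdot \overline{\langle u_v \rangle}$, with $\overline{\langle u_v \rangle}$ a unipotent (hence by Lemma~\ref{unipotent elements in an algebraic group}(a),(d) connected) subgroup contained in $\mathcal{G}^{\circ} \subseteq \mathcal{G}_0 \subseteq \mathcal{H}$ by \eqref{Containment of Alg. Sato--Tate groups} and \eqref{G cap mathcal H equals H}. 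Since the unipotent part already lies in $\mathcal{H}$, a coset $\mathcal{H} g$ is stable under right multiplication by $g^{-1} f_v g$ (i.e. $g^{-1} f_v g \in \mathcal{H}$) if and only if it is stable under right multiplication by $g^{-1} s_v g$; this gives a bijection on the level of double cosets, with the identity map on representatives $\mathcal{H} g \langle s_v \rangle \mapsto \mathcal{H} g \langle f_v \rangle$.

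Next I would handle the left-hand bijection $H \backslash G / \langle s_{v} \rangle \cong \mathcal{H} \backslash \mathcal{G} / \langle s_{v} \rangle$. Here the point is that $s_v \in G$ (it has eigenvalues of complex absolute value $1$, so lies in a maximal compact, cf. Remarks~\ref{Sato--Tate set up} and~\ref{tilde Sato--Tate set up}), and by \eqref{G cap mathcal H equals H} we have $H = G \cap \mathcal{H}$, while \eqref{Bijections of cosets for AST and ST} gives $H \backslash G \cong \mathcal{H} \backslash \mathcal{G}$. I would argue that this latter bijection, being induced by the inclusion $G \hookrightarrow \mathcal{G}$ and compatible with the right $\langle s_v \rangle$-action (since $s_v \in G$), descends to the quotient by $\langle s_v \rangle$: the map $H g \langle s_v \rangle \mapsto \mathcal{H} g \langle s_v \rangle$ is well-defined and surjective because $H\backslash G \to \mathcal{H}\backslash\mathcal{G}$ is a bijection, and injective because if $\mathcal{H} g_1 s_v^a = \mathcal{H} g_2$ with $g_1, g_2 \in G$ then $g_2 g_1^{-1} \in \mathcal{H} \cap G \cdot \text{(conjugate of } s_v\text{)}$ lands back in $H$ after using $G \cap \mathcal{H} = H$ and $s_v \in G$. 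Composing the two bijections gives the full chain in \eqref{Bijections of double cosets for AST and ST}.

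The main obstacle I anticipate is bookkeeping around left versus right cosets: the isomorphisms \eqref{isomorphism between G(K0 / L) and STL / STK0} and \eqref{Bijections of cosets for AST and ST} and the double-coset bijection \eqref{double coset comparison} in the proof of Lemma~\ref{primes of L splitting completely over v0} are phrased in a particular convention, and one must be careful that right multiplication by $\langle s_v \rangle$ (the Frobenius conjugacy data) is the correct action to quotient by, consistently with how $f_v$ enters \eqref{double coset comparison}. I would resolve this exactly as in the proof of Lemma~\ref{primes of L splitting completely over v0}, where the same care is taken — noting that each relevant double coset $\mathcal{H}\gamma\langle s_v\rangle$ actually collapses to a single right coset $\mathcal{H}\gamma$ precisely when $\gamma s_v \gamma^{-1} \in \mathcal{H}$, which is the condition that makes the correspondence with split primes work. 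Apart from this, the only substantive input is Lemma~\ref{unipotent elements in an algebraic group}, which is invoked exactly to place $u_v$ (equivalently $\overline{\langle u_v\rangle}$) inside $\mathcal{G}^\circ \subseteq \mathcal{H}$, and this is already cited in the statement of the lemma.
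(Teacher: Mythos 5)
Your proposal is correct and follows essentially the same route as the paper: the right-hand bijection comes from the fact that (conjugates of) powers of $u_v$ are unipotent and hence lie in $\mathcal{G}^{\circ}\subseteq\mathcal{H}$ by Lemma \ref{unipotent elements in an algebraic group}, forcing $\mathcal{H}g\langle f_v\rangle=\mathcal{H}g\langle s_v\rangle$, and the left-hand bijection descends from \eqref{Bijections of cosets for AST and ST} using $\langle s_v\rangle\subset G$ and $H=G\cap\mathcal{H}$. The only blemish is the conjugation-direction slip in your parenthetical (the condition for $\mathcal{H}gf_v=\mathcal{H}g$ is $gf_vg^{-1}\in\mathcal{H}$, not $g^{-1}f_vg\in\mathcal{H}$), which you already flag as bookkeeping and which does not affect the argument.
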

\begin{proof}
 Observe that (cf. the proof of Corollary 
\ref{connected components of AST K (V, psi), ST K (V, psi) and widetilde AST K (V, psi), widetilde ST K (V, psi)})
\begin{equation}
\mathcal{G}^{\circ} \subset \mathcal{G}_{0} \subset \mathcal{H} \subset \mathcal{G}.
\label{Containment of some Alg. Sato--Tate groups} 
\end{equation}

Observe that $\forall g \in \mathcal{G}$ and $\forall i$ we obtain 
$g u_{v}^{i} g^{-1} \in U(\mathcal{G})$. Hence by Lemma \ref{unipotent elements in an algebraic group} 
and \eqref{Containment of some Alg. Sato--Tate groups} we have:
\begin{equation}
\mathcal{H} g f_{v}^{i} = \mathcal{H} g s_{v}^{i} u_{v}^{i} = \mathcal{H} g u_{v}^{i} s_{v}^{i} =
\mathcal{H} g u_{v}^{i} g^{-1} g s_{v}^{i} = \mathcal{H} g s_{v}^{i}.
\label{replacing f v with s v}
\end{equation}
Hence:
\begin{equation}
\mathcal{H} g \langle f_{v} \rangle  = \mathcal{H} g \langle s_{v} \rangle.
\label{replacing l f v r with l s v r>}
\end{equation}
Therefore:
\begin{equation}
\mathcal{H} \backslash \mathcal{G} / \langle s_{v} \rangle \, \simeq \,\,\, 
\mathcal{H} \backslash \mathcal{G} / \langle f_{v} \rangle. 
\label{replacing g with G to obtain double cosets equality}
\end{equation}

Recall that $\langle s_{v} \rangle \subset G$. Assume that for $g_1, g_2 \in G$
the double cosets $H g_1 \langle s_{v} \rangle$ and $H g_2 \langle s_{v} \rangle$ 
in $H \backslash G / \langle s_{v} \rangle$
map to the same double coset $\mathcal{H} g_1 \langle s_{v} \rangle = \mathcal{H} g_2 \langle s_{v} \rangle$ in 
$\mathcal{H} \backslash \mathcal{G} / \langle s_{v} \rangle$. Hence 
$g_2 \langle s_{v} \rangle {\langle s_{v} \rangle}^{-1} g_{1}^{-1} \subset \mathcal{H}$. Therefore 
$H g_1 \langle s_{v} \rangle = H g_2 \langle s_{v} \rangle$ by \eqref{G cap mathcal H equals H}.
It follows that:
\begin{equation}
H \backslash G / \langle s_{v} \rangle \simeq \,\,\, 
\mathcal{H} \backslash \mathcal{G} / \langle s_{v} \rangle.
\label{Replacing g_1 and g_2 with G to get double coset equality}
\end{equation}
Now the Lemma follows by \eqref{replacing g with G to obtain double cosets equality} and 
\eqref{Replacing g_1 and g_2 with G to get double coset equality}.  
\end{proof}

\begin{corollary} There is a one-to-one correspondence between the set of elements $g_i \in 
G$ such that $g_{i}^{-1} s_{v} g_{i} \in H$ and the set of primes $w \, | \, v$ of 
$\mathcal{O}_{L}$ that split completely over $v$, for all $v \notin S_{\mathrm{fin}}$.
\label{primes of L splitting completely over v0 correspond to conjugations of sv}
\end{corollary}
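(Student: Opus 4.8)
The plan is to deduce Corollary \ref{primes of L splitting completely over v0 correspond to conjugations of sv} from the chain of bijections already established, namely Lemma \ref{primes of L splitting completely over v0} together with Lemma \ref{Lemma about bijections of double cosets for AST and ST}, by translating each abstract double-coset statement into a concrete statement about the coset representatives $g_0, \dots, g_t$. First I would recall the decomposition $G = \bigsqcup_{i=0}^{t} g_i H$ from \eqref{The left coset representatives of H in G} (equivalently $\mathcal{G} = \bigsqcup_{i=0}^t g_i\mathcal{H}$ via \eqref{Bijections of cosets for AST and ST}) and observe that an element $g_i$ satisfies $g_i^{-1} s_v g_i \in H$ precisely when the right coset $H g_i^{-1}$ is fixed by right multiplication by $s_v$, i.e.\ $H g_i^{-1} \langle s_v\rangle = H g_i^{-1}$; these are exactly the right cosets that constitute single-coset double cosets in $H \backslash G / \langle s_v\rangle$.

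Next I would invoke Lemma \ref{Lemma about bijections of double cosets for AST and ST}, which provides the natural bijections $H \backslash G / \langle s_v\rangle \cong \mathcal{H}\backslash\mathcal{G}/\langle s_v\rangle \cong \mathcal{H}\backslash\mathcal{G}/\langle f_v\rangle$, so that the double cosets of $s_v$-fixed type in $H\backslash G/\langle s_v\rangle$ correspond to the $f_v$-fixed double cosets $\mathcal{H}\gamma\langle f_v\rangle = \mathcal{H}\gamma$ with $\gamma f_v \gamma^{-1} \in \mathcal{H}$ appearing in the proof of Lemma \ref{primes of L splitting completely over v0}. That proof already identifies the latter set of right cosets $\mathcal{H}\gamma$ with the set of primes $w \mid v$ of $\mathcal{O}_L$ that split completely over $v$, and via $\mathcal{G} = \bigsqcup_i \mathcal{H} g_i^{-1}$ these right cosets are indexed exactly by those $g_i$ with $g_i^{-1} f_v g_i \in \mathcal{H}$. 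Combining this with the observation that, by \eqref{fv in gi mathcal G0 iff fv in g_i G0}-style reasoning (i.e.\ Lemma \ref{unipotent elements in an algebraic group} applied in $\mathcal{G}$), $g_i^{-1} f_v g_i \in \mathcal{H}$ if and only if its semisimple part satisfies $g_i^{-1} s_v g_i \in \mathcal{H}$, which by \eqref{G cap mathcal H equals H} is equivalent to $g_i^{-1} s_v g_i \in H$ (using $g_i \in G$), yields the claimed one-to-one correspondence.

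The only point requiring a little care is the bookkeeping between left cosets $g_i H$ and right cosets $H g_i^{-1}$, and the consistency of the indexing set of $g_i$'s across the several bijections; but this was already prepared in the proof of Lemma \ref{primes of L splitting completely over v0}, where the passage from double cosets $\mathcal{H}\gamma\langle f_v\rangle$ to right cosets $\mathcal{H}\gamma$ and then to the fixed representatives $g_0, \dots, g_t$ is carried out explicitly. So I expect the corollary to follow essentially by unwinding definitions, with the main (modest) obstacle being to phrase the left/right coset translation cleanly enough that the correspondence of Lemma \ref{primes of L splitting completely over v0} transports verbatim to the semisimple elements $s_v$ in $G$ rather than the full normalized Frobenii $f_v$ in $\mathcal{G}$. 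I would therefore write the proof as a short paragraph: apply Lemma \ref{Lemma about bijections of double cosets for AST and ST} to replace $f_v$ by $s_v$ throughout the statement of Lemma \ref{primes of L splitting completely over v0}, then restrict from $\mathcal{G}$ to $G$ and from $\mathcal{H}$ to $H$ using \eqref{G cap mathcal H equals H}, noting that the $g_i$ may be chosen in $G$ by Corollary \ref{STK2 iff STK1} (or Corollary \ref{tilde STK2 iff tilde STK1}).

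\begin{proof}
By Lemma \ref{Lemma about bijections of double cosets for AST and ST}, the natural bijection
\eqref{Bijections of double cosets for AST and ST} identifies the double coset space
$H \backslash G / \langle s_{v} \rangle$ with $\mathcal{H} \backslash \mathcal{G} / \langle f_{v} \rangle$.
In the proof of Lemma \ref{primes of L splitting completely over v0} it is shown that the double cosets
$\mathcal{H} \gamma \langle f_{v} \rangle$ with $\gamma f_v \gamma^{-1} \in \mathcal{H}$, each of which consists
of a single right coset $\mathcal{H}\gamma$, are in one-to-one correspondence with the primes $w \mid v$ of
$\mathcal{O}_{L}$ that split completely over $v$; via $\mathcal{G} = \bigsqcup_{i=0}^{t} \mathcal{H} g_i^{-1}$
these right cosets are indexed by those $g_i$ with $g_i^{-1} f_v g_i \in \mathcal{H}$. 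Writing $f_v = s_v u_v$ with
$u_v$ unipotent, Lemma \ref{unipotent elements in an algebraic group} and \eqref{Containment of some Alg. Sato--Tate groups}
give, for each $i$, that $g_i^{-1} f_v g_i \in \mathcal{H}$ if and only if $g_i^{-1} s_v g_i \in \mathcal{H}$
(cf. \eqref{replacing f v with s v}). Since $g_i \in G$ and $s_v \in G$, by \eqref{G cap mathcal H equals H} this
is equivalent to $g_i^{-1} s_v g_i \in H$. Combining these identifications yields the asserted one-to-one
correspondence between the set of $g_i \in G$ with $g_i^{-1} s_v g_i \in H$ and the set of primes $w \mid v$ of
$\mathcal{O}_{L}$ that split completely over $v$, for all $v \notin S_{\mathrm{fin}}$.
\end{proof}
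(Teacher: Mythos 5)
Your proof is correct and follows essentially the same route as the paper: it combines Lemma \ref{primes of L splitting completely over  v0} with the equivalence $g_i^{-1} f_v g_i \in \mathcal{H} \iff g_i^{-1} s_v g_i \in H$, derived from Lemma \ref{unipotent elements in an algebraic group}, the containment \eqref{Containment of some Alg. Sato--Tate groups}, the equality \eqref{G cap mathcal H equals H}, and Lemma \ref{Lemma about bijections of double cosets for AST and ST}. The only cosmetic difference is that you establish this equivalence directly by factoring out the unipotent part, whereas the paper runs it through a chain of double-coset identities; the ingredients are identical.
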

\begin{proof} 
By lemma \ref{primes of L splitting completely over  v0} there is a one-to-one correspondence between the set of elements $g_i \in 
G$ such that $g_{i}^{-1} f_{v} g_{i} \in \mathcal{H}$ and the set of primes $w \, | \, v$ of 
$\mathcal{O}_{L}$ that split completely over $v$. On the other hand by 
Lemma \ref{unipotent elements in an algebraic group}, containment \eqref{Containment of some Alg. Sato--Tate groups}, 
equality \eqref{G cap mathcal H equals H} and Lemma \ref{Lemma about bijections of double cosets for AST and ST}
we have:  
\begin{gather*}
g_{i}^{-1} f_{v} g_{i} \in \mathcal{H} \iff \mathcal{H} g_{i}^{-1} \langle f_{v} \rangle = \mathcal{H} g_{i}^{-1}
\iff \mathcal{H} g_{i}^{-1} \langle s_{v} \rangle = \mathcal{H} g_{i}^{-1} \iff \\
\iff H g_{i}^{-1} \langle s_{v} \rangle = H g_{i}^{-1} \iff g_{i}^{-1} s_{v} g_{i} \in H.
\qedhere
\end{gather*}
\end{proof}

For  $m \in \N$ put $N_{K} (m) :=  \# \{v \in {\text{Spec}} \, \mathcal{O}_K\colon N v  \leq m  \}$.
Define $N_{L} (m)$ in the same way. E. Landau proved that $N_{K} (m)$ and $N_{L} (m)$ are asymptotically equal. Let $(w_1)$ be the sequence of primes in ${\text{Spec}} \, \mathcal{O}_L$ that split completely 
over $K$ and let $S_{L} (m) :=  \# \{w_1 \in {\text{Spec}} \, \mathcal{O}_K\colon N w_1 \leq m  \}$. We observe that
$S_{L} (m)$ and $N_{L} (m)$ are asymptotically equal (cf. the proof of Lemma
\ref{Equidistribution with respect to all primes and primes in S}). 
\medskip

Let $\chi = {{\rm{Ind}}}_{H}^{G} \chi_{1}$. By Lemma \ref{primes of L splitting completely over v0} and \eqref{integral of induced character from H} we obtain:
\begin{equation}
\sum_{N v \, \leq \, m} \chi (s_{v}) = \sum_{N w_{1} \, \leq \, m} \, \chi_{1} (s_{w_{1}}).
\label{Equidistribution. Going from L to K}
\end{equation}
By Lemma \ref{relation between integral of chi prime and chi} and 
\eqref{Equidistribution. Going from L to K}, the following equalities 
\eqref{equidistribution of fv 2} and \eqref{equidistribution of fw 2} are equivalent: 

\begin{align}
\lim_{m\to\infty} \frac{1}{N_{K} \, (m)} \, \sum_{N v \, \leq \, m} \chi (s_{v})\, &= \,
 \int_{G}\, {\chi}\, d \mu,
\label{equidistribution of fv 2} \\
\lim_{m\to\infty} \frac{1}{N_{L} \, (m)} \sum_{N w_{1} \, \leq \, m} \, \chi_{1} (s_{w_{1}}) \, &= \, \int_{H}\, {\chi_{1}}\, d \mu_{H}.
\label{equidistribution of fw 2}
\end{align}

Let $(s_v)$ (resp. $(s_{w_1}))$ denote the sequence in $X(G)$ of conjugacy classes of elements 
$s_v$ (resp. sequence in $X(H)$ of conjugacy classes of elements $s_{w_1}$) 
by slight abuse of notation.

\begin{remark} 
Assume that the character $\chi$ in formula \eqref{chi as sum of inductions of chi i j} is irreducible. 
Because $\chi$ and $\chi_{i,j}$ are irreducible, $\int_{G} \, {\chi} \, d \mu = 0$ and 
$\int_{H_{i}} \, {\chi_{i,j}} \, d \mu_{H_{i}} = 0$ for all $\chi_{i,j} \not= 1$. 
Integrating \eqref{chi as sum of inductions of chi i j} over $G$ and applying Lemma 
\ref{relation between integral of chi prime and chi}, we obtain:
\begin{equation}
\int_{G}\, {\chi}\, d \mu = \sum_{i=1}^{s} \sum_{j =1}^{t_i} \, a_{i} \int_{H_{i}}\, {\chi_{i,j}}\, d \mu_{H_i}.
\label{integral of chi as a sum of integrals of chi i, j}
\end{equation}
\end{remark}
\medskip

\begin{theorem} Under the assumptions of Theorem \ref{STK iff STK0} 
(resp. \ref{tilde STK iff tilde STK0} ), Sato--Tate Conjecture \ref{general Sato Tate conj.} (resp. Sato--Tate Conjecture \ref{general Sato Tate conj. tilde}) holds for the representation $\rho_l$ (resp. $\widetilde{\rho}_l$) with respect to $\ST_{K} (V, \psi)$ (resp. $\widetilde{\ST_{K}}(V, \psi)$)
if and only if for each subextension $K_1$ of $K_0/K$ for which $K_0/K_1$ is cyclic,
the same conjecture holds for the representation $\rho_{l} \, | \, G_{K_1}$ (resp.  
$\widetilde{\rho}_l \, | \, G_{K_1}$) with respect to $\ST_{K_1} (V, \psi)$ 
(resp. $\widetilde{\ST_{K_1}}(V, \psi)$).
\label{Sato--Tate conjecture STK iff STK1}
\end{theorem}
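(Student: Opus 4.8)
The plan is to deduce Theorem~\ref{Sato--Tate conjecture STK iff STK1} from the equidistribution machinery of \S\ref{remarks on equidistribution}, specifically Lemma~\ref{reduction for equidistribution}, combined with the compatibility results relating Sato--Tate groups at $K$, $K_0$, and intermediate fields. We work throughout with $\mathcal{G} := \AST_{K}(V,\psi)(\C)$, $\mathcal{G}_0 := \AST_{K_0}(V,\psi)(\C)$ (respectively the tilde versions), $G := \ST_{K}(V,\psi)$, $G_0 := \ST_{K_0}(V,\psi)$, and the sequence $(s_v)$ of conjugacy classes of semisimple parts of normalized Frobenius elements. By Theorem~\ref{STK iff STK0} (resp. Theorem~\ref{tilde STK iff tilde STK0}), $G_0$ is open in $G$ with $G/G_0 \simeq \Gal(K_0/K)$ via \eqref{Gal(K0 K) simeq mathcal G mathcal G  0 simeq G G 0}, so we are exactly in the setup where $G_0 \triangleleft G$ is an open (hence finite-index) subgroup. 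The subgroups $G_1$ with $G_0 \subset G_1 \subset G$ and $G_1/G_0$ cyclic correspond, via Galois theory, precisely to the subextensions $K_1$ of $K_0/K$ with $K_0/K_1$ cyclic; moreover by Proposition~\ref{Intermediate subgroup between STK (V, psi) and STK0 (V, psi) is STL (V, psi)} (resp. Proposition~\ref{Intermediate subgroup between tilde STK (V, psi) and tilde STK0 (V, psi) is STL (V, psi)}) these $G_1$ are exactly the groups $\ST_{K_1}(V,\psi)$ (resp. $\widetilde{\ST_{K_1}}(V,\psi)$), and $G_1 = G \cap \mathcal{G}_1$ as in \eqref{G cap mathcal H equals H}.

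The forward implication (equidistribution for $\ST_K$ implies equidistribution for $\ST_{K_1}$ for every such $K_1$) follows by iterating Lemma~\ref{equidistribution of fv implies equidistribution of fw} applied with $\mathcal{H} := \mathcal{G}_1$ in place of $\mathcal{G}_0$: the argument there used only that $G_1$ is open and normal is not actually needed once one passes to the double-coset bookkeeping of Lemma~\ref{primes of L splitting completely over  v0} and Lemma~\ref{Lemma about bijections of double cosets for AST and ST}, but it is cleaner to note that it suffices to treat $K_1 = K_0$ and then relativize (replace the base field $K$ by $K_1$, using Remark~\ref{Base change from K to L in basic theorems} and Remark~\ref{Base change from K to L in basic theorems with tilde}, so that $K_0/K_1$ cyclic puts us in the $G_1/G_0$-cyclic case). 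For the converse, the key identity is \eqref{Equidistribution. Going from L to K}: for $\chi = \Ind_H^G \chi_1$ with $H = \ST_{K_1}(V,\psi)$, one has $\sum_{Nv \le m} \chi(s_v) = \sum_{Nw_1 \le m} \chi_1(s_{w_1})$, where $w_1$ runs over primes of $L = K_1$ splitting completely over $K$; this is precisely where Corollary~\ref{primes of L splitting completely over v0 correspond to conjugations of sv} enters, translating the splitting condition into the conjugacy condition $g_i^{-1} s_v g_i \in H$. Combined with Lemma~\ref{relation between integral of chi prime and chi} this shows that equidistribution of $(s_{w_1})$ in $X(\ST_{K_1})$ for all relevant cyclic $K_0/K_1$ verifies \eqref{Peter-Weyl reduction equation for equidistribution} for every character $\chi$ of the form $\Ind_{G_1}^G \chi_1$ with $G_1/G_0$ cyclic.

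The engine that makes these special induced characters suffice is Lemma~\ref{Artin Clifford lemma}: every character of $G$ is a $\Q$-linear combination of characters $\Ind_{G_1}^G \chi_1$ where $G_1$ is the preimage of a cyclic subgroup of $G/G_0$ and $\chi_1$ is irreducible with $\chi_1|_{G_0}$ irreducible and $\chi_1$ not induced from a proper subgroup containing $G_0$. Feeding this into Remark~\ref{equidistribution via induced characters} (equivalently Lemma~\ref{reduction for equidistribution}) reduces the equidistribution of $(s_v)$ in $X(G)$ to the equidistribution, with respect to the pushforward of the probabilistic Haar measure of $G_1$, of the subsequences of $(s_v)$ lying in the image of $X(G_1) \to X(G)$ — and by Corollary~\ref{primes of L splitting completely over v0 correspond to conjugations of sv} these subsequences are governed by primes of $K_1$ splitting completely over $K$, whose conjugacy classes are controlled by the hypothesized equidistribution of $(s_{w_1})$ in $X(\ST_{K_1})$ via Lemma~\ref{Equidistribution with respect to all primes and primes in S}. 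For the abelian (dimension-one) characters one uses instead Proposition~\ref{translation into Chebotarev}: these factor through $G/[G,G]$ and their equidistribution is the Chebotarev density theorem, or equivalently the analytic continuation of abelian $L$-functions. I expect the main obstacle to be purely expository: carefully matching the index bookkeeping — the asymptotic equalities $N_K(m) \sim [K_1:K]\, S_{K_1}(m)$ and the normalization $\mu_{H} = [G:H]\mu_{|H}$ — so that the limits in \eqref{equidistribution of fv 2} and \eqref{equidistribution of fw 2} line up exactly, and making sure the relativization (base change $K \rightsquigarrow K_1$, which is legitimate by Remarks~\ref{Base change from K to L in basic theorems} and~\ref{Base change from K to L in basic theorems with tilde} together with Corollary~\ref{STK2 iff STK1} and Corollary~\ref{tilde STK2 iff tilde STK1}) is applied consistently in both directions of the equivalence.
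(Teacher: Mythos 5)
Your proposal is correct and follows essentially the same route as the paper: the forward direction mirrors Lemma~\ref{equidistribution of fv implies equidistribution of fw}, and the backward direction rests on the Artin-type decomposition of Lemma~\ref{Artin Clifford lemma}, the identification of intermediate subgroups with $\ST_{K_1}(V,\psi)$ via Proposition~\ref{Intermediate subgroup between STK (V, psi) and STK0 (V, psi) is STL (V, psi)}, the character-sum identity~\eqref{Equidistribution. Going from L to K} fed through Corollary~\ref{primes of L splitting completely over v0 correspond to conjugations of sv}, and the integral comparison of Lemma~\ref{relation between integral of chi prime and chi}. The only small presentational difference is that you invoke Lemma~\ref{reduction for equidistribution} as the packaged engine, whereas the paper unwinds the computation directly from the decomposition~\eqref{chi as sum of inductions of chi i j}; this avoids having to pass through the pushforward-measure hypothesis of Lemma~\ref{reduction for equidistribution}, which is delicate because of possible fusion of conjugacy classes, but your account also recovers the correct mechanism.
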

\begin{proof}
The ``only if'' direction is proven in the same way as Lemma \ref{equidistribution of fv implies equidistribution of fw}. The "if" direction is proven as follows. 
Let $H_i$ be the groups that appear in equations \eqref{Brauer induction for 1 G G0} and 
\eqref{lifting Brauer induction for 1 G}. By Proposition 
\ref{Intermediate subgroup between STK (V, psi) and STK0 (V, psi) is STL (V, psi)} 
(resp. Proposition \ref{Intermediate subgroup between tilde STK (V, psi) and tilde STK0 (V, psi) is STL (V, psi)}) there is a field subextension $K \subset L_{i} \subset K_{0}$, for each $1 \leq i \leq s$,  such that 
$H_{i} = \ST_{L_i} (V, \psi)$ (resp. $H_i = \widetilde{\ST_{L_i}}(V, \psi)$). By hypothesis, 
the Sato--Tate conjecture holds for each group $H_{i}$.  Then for each $i, j$ we can apply \eqref{equidistribution of fw 2} to get:
\begin{equation}
\lim_{m\to\infty} \frac{1}{N_{L_{i}} \, (m)} \sum_{N w_{1,i} \, \leq \, m} \, \chi_{i,j} (s_{w_{1,i}}) \, = 
\, \int_{H_i}\, {\chi_{i,j}}\, d \mu_{H_{i}}.
\label{equidistribution of fw i, j}
\end{equation}
Let $\chi \in X(G)$ be 
irreducible and consider the formula \eqref{chi as sum of inductions of chi i j}. 
By \eqref{Equidistribution. Going from L to K}, \eqref{integral of chi as a sum of integrals of chi i, j},
and \eqref{equidistribution of fw i, j}:
\begin{align*} 
\lim_{m\to\infty} \frac{1}{N_{K} \, (m)} \, \sum_{N v \, \leq \, m} \chi(s_{v})\, &= \,
\sum_{i=1}^{s} \sum_{j =1}^{t_i} a_i  \, \lim_{m\to\infty} \frac{1}{N_{K} \, (m)} \, \sum_{N v \, \leq \, m}
\, {{\rm{Ind}}}_{H_{i}}^{G} \chi_{i,j} (s_v) \\
&= \sum_{i=1}^{s} \sum_{j =1}^{t_i} a_i  \, \lim_{m\to\infty} \frac{1}{N_{L_i} \, (m)} \, \sum_{N w_{1, i} \, \leq \, m} \, 
\,  \chi_{i,j} (s_{w_{1, i}}) \\
&= \sum_{i=1}^{s} \sum_{j =1}^{t_i} a_i  \, \int_{H_i}\, {\chi_{i,j}}\, d \mu_{H_{i}} = \int_{G}\, {\chi}\, d \mu.
\end{align*}
Consequently $(s_v)$ is equidistributed in $X(G)$.
\end{proof}

\begin{remark}
A stronger form of Theorem \ref{Sato--Tate conjecture STK iff STK1},
in which the Sato--Tate conjecture for $\ST_{K} (V, \psi)$ and $\ST_{K_0} (V, \psi)$ are equated, was stated in
\cite[Theorem 6.12]{BK2} and was left without proof. We do not know how to prove such a statement in general.
However, Theorem~\ref{reduction for equidistribution2} implies that this holds when the derived group of $G_0$ has pairwise distinct simple factors, none of which admits a nontrivial diagonal automorphism
(this condition rules out factors of types $A_n, D_4, E_6$).
\label{Sato--Tate conjecture refinement}
\end{remark}

\section{Relations among motivic categories}
\label{relations among motivic categories}
Let $K$ be a field contained in $\C$. Let ${{\rm SmProj}_K}$ denote the category of smooth projective
varieties over $K$ and let $\mathcal{V}$ be a full subcategory of ${{\rm SmProj}_K}$. 
Let $\mathcal{M}_{{\rm{\sim}}} := \mathcal{M}_{K, {{\rm{_{\sim}}}}}$ denote the motivic category corresponding to an equivalence relation $\sim$. In our applications, $\sim$ will be one of the following properties
defining the corresponding motivic category:

\begin{itemize}
\item[$\sim \, = {\rm{rat}}$]: \quad rational equivalence, 
\item[$\sim \, = {\rm{alg}}$]: \quad algebraic equivalence, 
\item[$\sim \, = {\rm{\otimes}}$]: \quad smash nilpotent equivalence,
\item[$\sim \, = {\rm{hom}}$]: \quad homological equivalence,
\item[$\sim \, = {\rm{num}}$]: \quad numerical equivalence, 
\item[$\sim \, = {\rm{mot}}$]: \quad motivated cycles of Andr{\' e},
\item[$\sim \, = {\rm{ahc}}$]: \quad absolute Hodge cycles of Deligne.
\end{itemize}

We will also denote $\mathcal{M}_{\rm{hs}} :=  \mathcal{M}_{\Q, {\rm{hs}}}$ the category of $\Q$-rational Hodge structures. Y. Andr{\' e} \cite[Chap. 9]{An1} constructs the category of motivated cycles $\mathcal{M}_{{\rm{mot}}, \, \mathcal{V}} := \mathcal{M}_{K, {{\rm{mot}}}}$ modeled on $\mathcal{V}$. We will denote 
$\mathcal{M}_{{\rm{mot}}} := \mathcal{M}_{{\rm{mot}}, \, {{\rm SmProj}_K}}$.
\medskip 

Diagram \ref{diagram of motivic categories} indicates the functors relating these motivic categories
 (cf. \cite[p. 30]{MNP}; see also \cite{An1} concerning the category 
$\mathcal{M}_{{\rm{mot}}})$. 

\begin{figure}[H]
\[
\begin{tikzcd}
& \mathcal{M}_{{\rm{hs}}}\arrow{rr}{=}&& \mathcal{M}_{{\rm{hs}}} & \\
& \mathcal{M}_{{\rm{mot}}} \arrow{u}\arrow{rr}&& \mathcal{M}_{{\rm{ahc}}} \arrow{u} & \\
\mathcal{M}_{{\rm{rat}}} \arrow{r} & \mathcal{M}_{{\rm{alg}}}  \arrow{r} \arrow{u}&\mathcal{M}_{{\otimes}}\arrow{r}& \mathcal{M}_{{\rm{hom}}} \arrow{u}\arrow{r} & \mathcal{M}_{{\rm{num}}}\\ 
{{\rm SmProj}_K} \arrow{u}[swap]{ch}&&&& \\
\end{tikzcd}
\]
\\[-1cm]
\caption{}
\label{diagram of motivic categories}
\end{figure}

In the construction of $\mathcal{M}_{\sim \,}$ (see for example  \cite[p.\ 200]{DM}, \cite[p. 31]{An1}, \cite[Chap. 2]{MNP} etc.)  one starts with the category ${{\rm SmProj}_K}$ and constructs the new category with objects 
$h_{{\sim}}(X)$ and morphisms between them:
\begin{equation}
{\rm{Hom}}_{\mathcal{M}_{\sim}} (h_{{\sim}}(X), \, h_{{\sim}}(Y)) :=  C_{\sim}^d (X, Y) 
\label{definition of Hom in MK}
\end{equation}
for $X$ and $Y$ smooth projective over $K$ and $X$ of pure dimension $d$. 
The $\Q$-vector space $C_{\sim}^d (X, Y)$ is defined as follows: 
\begin{equation} 
C_{\sim}^{d} (X, Y) \,\, = \,\,
\left\{
\begin{array}{lll}
Z^d (X \times Y)_{\Q}  / Z_{\sim}^d (X \times Y)_{\Q} &\rm{if}& {\sim} = {\rm{rat}}, {\rm{alg}}, 
{\rm{\otimes}}, {\rm{hom}}, {\rm{num}},\\
\CH_{\AH}^d (X \times Y) &\rm{if}& {\sim} = {\rm{ahc}},\\
\CH_{{\rm{mot}}}^d (X \times Y) &\rm{if}& {\sim} = {\rm{mot}}
\end{array}
\right\}
\end{equation}
where $\CH_{\AH}^d (X \times Y)$ is defined in \cite[p. 34]{D1} and $\CH_{{\rm{mot}}}^d (X \times Y)$ is the image of the natural map $Z^{d}_{{\rm{mot}}} (X \times Y) \rightarrow H^{2d} (X \times Y) (d)$ (see \cite[pp 95--96]{An1}). For any smooth, projective $X$ over $K$,
the motivated cycles $Z^{\ast}_{{\rm{mot}}} (X)$ do not depend on the choice of a Weil cohomology 
$H^{\ast}$ up to natural isomorphism \cite[Lemme 9.2.1.2]{An1}. 

Based on this, one performs the classical construction due to Grothendieck to obtain corresponding categories of motives $\mathcal{M}_{{\sim}}$ (see for example \cite[p.31--35]{An1}, \cite[pp. 200--205]{DM}, \cite[p. 25--29]{MNP}). 

\section{Assumptions on \texorpdfstring{$\mathcal{M}_{{\sim}}$}{Msim}}
\label{assumptions on Msim}

From now on in this paper, we will work under the following classical assumptions on the category $\mathcal{M}_{{\sim}}$.
\medskip

\noindent
{\bf{ASSUMPTION 1.}} \,
The Chow--K{\" u}nneth decomposition holds in $\mathcal{M}_{{\sim}}$: there are idempotents 
$p_{r} (X) \in {\rm{End}}_{\mathcal{M}_{\sim}} (h_{{\sim}}(X))$ orthogonal to each other lifting the 
K{\" u}nneth components in  Betti, {\' e}tale, and de Rham realizations such that:
\begin{equation} 
h_{{\sim}}(X) \, = \, \bigoplus_{r \geq 0} \,\, h_{{\sim}}^{r} (X),
\label{Chow- Kunneth decomposition of hX in to dir sum of hiM}
\end{equation} 
where  $h_{{\sim}}^{r} (X) = (h_{{\sim}}(X), p_{r} (X), 0)$.

\begin{remark} 
Observe that the Chow--K{\" u}nneth decomposition holds 
in $\mathcal{M}_{{\rm{ahc}}}$ (see \cite[p. 202]{DM}) and in $\mathcal{M}_{{\rm{mot}}}$
(see \cite[Prop. 2.2]{An2}; cf. \cite[Lemme 9.2.1.3]{An1}).
\end{remark}

\begin{remark}
The Chow--K{\" u}nneth conjecture for rational equivalence (see \cite[Chapter 6, pp. 67--69]{MNP})
states that there is a Chow--K{\" u}nneth decomposition:
\begin{equation} 
h_{{\rm{rat}}}(X) \, = \, \bigoplus_{r \geq 0} \,\, h_{{\rm{rat}}}^{r} (X) 
\end{equation}
The Chow--K{\" u}nneth conjecture directly implies the Chow--K{\" u}nneth 
decomposition for all equivalence relations ${\sim}$ listed above except 
${\sim} = {\rm{num}}$. 
Note that Assumption 1 is not well formulated for ${\sim} = {\rm{num}}$ because the realization functors are not available
(see \cite[\S 2.6]{MNP}). If we assume Standard Conjecture D (equivalence between homological and numerical equivalence relations), then Assumption 1 for numerical equivalence becomes well formulated and equivalent to Assumption 1 for homological equivalence.
\end{remark}

\begin{definition}
Let $h_{{\sim}}(X) (m)$ be the twist of 
$h_{{\sim}}(X)$ by the $m$-th power of the Lefschetz motive $\LL := h_{{\sim}}^{2} (\PP^1)$ for some $m \in \Z$.
A direct summand of a motive of the form $h_{{\sim}}^{r} (X) (m)$ will be called a {\it{homogeneous motive}} of weight $n := 2 m - r$.

\label{homogeneous motive definition}
\end{definition}
\medskip

Recall that ${\bf{1}} := h_{{\sim}} ({\rm{Spec}} \, K)$. Hence $\LL = {\bf{1}} (-1)$ so
$\LL^{\otimes \, r} = {\bf{1}} (-r)$.
\medskip

\begin{remark}
Let $L/K$ be a field extension such that $K \subset L \subset \overline{K}$.  If $M \in \mathcal{M}_{{\sim}}$ is a direct summand of $h_{{\sim}} (X) (m)$, 
then $M \otimes_{K} L$ is a direct summand of $h_{{\sim}} (X \otimes_{K} L) (m)$ in $\mathcal{M}_{L, {\sim}}
\,\, $. 
\label{base field extension and direct summands of motives}
\end{remark}
\medskip

The properties of the Chow--K{\" u}nneth decomposition (cf. \cite[Def. 6.1.2 and Example 6.1.5. (3)]{MNP}), the properties of Weil cohomology, and the cycle map \cite[p. 10]{Kl}
imply the existence of ({\em{Motivic Poincar\'e duality}})
$$
P\colon h_{{\sim}}^{r} (X) \, \otimes \, h_{{\sim}}^{2d-r} (X) (d-r) \,\, {\stackrel{}{\longrightarrow}}\,\,  
h_{{\sim}}^{2d} (X)(d-r) \,\, {\stackrel{Tr}{\longrightarrow}} \,\, {\bf{1}} (-r)
$$ 
whose Betti, {\' e}tale, and de Rham realizations give corresponding 
Poincar\'e dualities in Betti, {\' e}tale, and de Rham cohomologies.
\medskip

\noindent
{\bf{ASSUMPTION 2.}} \,
The category $\mathcal{M}_{{\rm{\sim}}}$ is Tannakian and semisimple over $\Q$.
\medskip

Let $H_{B}$ denote now the corresponding fiber functor:
\begin{equation}
H_{B}\colon \mathcal{M}_{{\rm{\sim}}} \rightarrow \mathcal{M}_{{\rm{hs}}}.
\label{fiber functor}
\end{equation}

\begin{remark}
\begin{itemize}
\item[(1)] The category $\mathcal{M}_{{\rm{ahc}}}$ is Tannakian and semisimple over $\Q$ \cite[Prop. 6.5]{DM}. 
\item[(2)] For every motive $M$ in $\mathcal{M}_{{\rm{mot}}}$,
for any ``suitably large'' (depending on $M$) family of absolute cycles $\mathcal{V}$, the tensor subcategory of $\mathcal{M}_{{\rm{mot}}, \mathcal{V}}$ generated by $M$ is Tannakian and semisimple
\cite[Th{\' e}or{\` e}me 9.2.3.1]{An1}.
Consequently, $\mathcal{M}_{{\rm{mot}}}$ is Tannakian and semisimple.
\item[(3)] For the category $\mathcal{M}_{{\rm{hom}}}$ the Betti realization $H_B$ is a fiber functor. 
For $\mathcal{M}_{{\rm{num}}}$ the Betti realization $H_B$ is not well defined in general.
Under Standard Conjecture D (equivalence of $\mathcal{M}_{{\rm{num}}}$ with $\mathcal{M}_{{\rm{hom}}}$),
$H_B$ is well defined on $\mathcal{M}_{{\rm{num}}}$. Moreover Standard Conjecture D implies Standard 
Conjecture C (i.e. $p_{r} (X) \in {\rm{End}}_{\mathcal{M}_{{\rm{hom}}}} (h_{{\rm{hom}}}(X))$), 
see \cite[Th{\' e}or{\` e}me 5.4.2.1]{An1}.
\item[(4)] Jannsen proved \cite{Ja92} that $\mathcal{M}_{{\rm{num}}}$ is a semisimple abelian category. 
He also proved \cite{Ja92} that under Conjecture C, the category $\mathcal{M}_{{\rm{num}}}$ 
is Tannakian and semisimple. Hence under Standard Conjecture D, both categories $\mathcal{M}_{{\rm{hom}}}$ and 
$\mathcal{M}_{{\rm{num}}}$ are Tannakian and semisimple.
\end{itemize}
\end{remark}
 
Next we want an assumption that provides a polarization on the homogeneous motives $h^{r}_{{\sim}}(X) (m)$ in $\mathcal{M}_{{\sim}}$. 
\medskip

\noindent
{\bf{ASSUMPTION 3.}} There is a natural morphism ({\em{Motivic star operator}}) for every smooth projective $X$:
$$
\ast\colon  h_{{\sim}}^{r} (X) \, \rightarrow \, h_{{\sim}}^{2d-r} (X),
$$ 
sent via Betti, {\' e}tale, and de Rham realizations onto the morphism  $H^r (X) \rightarrow H^{2d-r} (X)$, 
which is defined by an absolute Hodge cycle 
(see \cite[Proposition 6.1 and discussion on p. 199]{DM}).

\begin{remark}
Assumption 3 leads to the following natural morphism in $\mathcal{M}_{{\sim}}$:
\begin{equation}
\psi_{\sim}^{r}\colon   h_{{\sim}}^{r} (X) \otimes h_{{\sim}}^{r} (X) 
\,\, {\stackrel{{\rm{id}} \, \otimes \, \ast}{\longrightarrow}} \,\, h_{{\sim}}^{r} (X) \otimes 
h_{{\sim}}^{2d-r} (X) (d-r) \,\, 
 {\stackrel{P}{\longrightarrow}} \,\, {\bf{1}}(-r). 
\label{motivic Hodge polarization}
\end{equation}
Under Assumption 3 the functor $\mathcal{M}_{{\sim}} \rightarrow \mathcal{M}_{{{\rm{ahc}}}}$ sends 
$\psi_{\sim}^{r}$ to $\psi_{}^{r}$ \cite[p. 199]{DM}:
\begin{equation}
\psi_{}^{r}\colon   H^{r} (X) \otimes H^{r} (X) 
\,\, {\stackrel{{\rm{id}} \, \otimes \, \ast}{\longrightarrow}} \,\, H^{r} (X) \otimes 
H^{2d-r} (X) (d-r) \,\, 
 {\stackrel{P}{\longrightarrow}} \,\, {\Q}(-r). 
\label{Polarized Hodge structure coming from motives}
\end{equation}
\end{remark}

Assumption 3 holds for $\mathcal{M}_{{{\rm{ahc}}}}$ (see \textit{loc. cit.}).
Hence for $h_{{\sim}}^{r}(X)$ in $\mathcal{M}_{{\sim}}$, we obtain a polarization of the real Hodge structure  associated with the rational Hodge structure on the Betti realization $H_B (h_{{\sim}}^{r}(X)) \, = 
\, H^{r} (X(\C), \, \Q)$,  because $H_B$ factors through $\mathcal{M}_{{{\rm{ahc}}}}$ and $\psi_{}^{r}$ gives the polarization on $\mathcal{M}_{{{\rm{ahc}}}}$  \cite[pp.\ 197--199]{DM} (cf. \cite[p.\ 478--480]{Pan}, \cite[pp.\ 2--4]{Ja90}). By taking a Tate twist in \eqref{motivic Hodge polarization} and applying the Betti realization, we 
also obtain a polarization on $h_{{\sim}}^{r}(X)(m)$. Therefore, for any homogeneous motive $M \in \mathcal{M}_{{{\sim}}}$ of weight $n$, there is the following map in $\mathcal{M}_{{\sim}}$:
\begin{equation}
\psi_{{\sim}}\colon  M \otimes M \,\, {\stackrel{}{\longrightarrow}} \,\, {\bf{1}}(-n). 
\label{motivic Hodge polarization on M}
\end{equation}
By \cite[Cor. 2.12, p. 40]{PS} we obtain the polarization of the real Hodge structure associated with rational Hodge structure on the Betti realization $V := H_B (M)$:
\begin{equation}
\psi\colon V  \otimes V \,\, {\stackrel{}{\longrightarrow}} \,\, {\Q}(-n). 
\label{Hodge polarization on M}
\end{equation}

Concerning the polarization for $\mathcal{M}_{{\rm{mot}}}$, it follows from \cite[Prop. 2.2, p. 16]{An2}, 
(cf. \cite[Remarque 9.2.2.1]{An1})  that for  $M =  h^r(X) \in \mathcal{M}_{{\rm{mot}}}$, the 
element $\psi^r \in \CH^{2d-r}_{\AH} (X \times X)$ discussed above comes from a motivated cycle. 
Hence as above, for the case of a homogeneous motive $M$ in
$\mathcal{M}_{{\rm{ahc}}}$, we can associate with $M$ a polarization of the real Hodge structure 
associated with the rational Hodge structure on the Betti realization $V = H_B (M)$.
Consequently, Assumption 3 also holds for $\mathcal{M}_{{\rm{mot}}}$.
\medskip

Concerning polarizations for all other $\mathcal{M}_{\sim}$, we observe that if 
$\psi^r \in \CH^{2d-r}_{\AH} (X \times X)$ comes from an algebraic cycle, then we would
have the natural Hodge structure as in $\mathcal{M}_{{\rm{ahc}}}$ and 
$\mathcal{M}_{{\rm{mot}}}$.  
\medskip

Let $\overline{\mathcal{M}}_{\sim} := {\mathcal{M}_{\overline{K}, \sim}}$ and let 
\begin{equation}
D := D (M) := {{\rm End}}_{\mathcal{M}_{\overline{K}, \sim}} (\overline{M}).
\label{definition of D(M)}
\end{equation} 
Because $\overline{\mathcal{M}}_{\sim}$ is semisimple, 
$D$ is a finite dimensional  $\Q$-vector space. Because $\overline{\mathcal{M}}_{\sim}$ is Tannakian and 
$V = H_B (M)$, $D \subset \End_{\Q} (V)$. Hence in this case we obtain corresponding representation 
$\rho_e$, $K_{e} := \overline{K}^{\Ker \rho_e}$ and the 
twisted decomposable algebraic Lefschetz group $\gpDL_{K}(V, \psi, D)$ 
(see \eqref{rho e representation}, \eqref{decomposition into twisted Lefschetz for fixed elements}).
\medskip

\noindent
{\bf{ASSUMPTION 4.}} For every two objects $M, N$ of 
$\mathcal{M}_{{\rm{\sim}}}$ one has:
\begin{equation}
{\rm{Hom}}_{\overline{\mathcal{M}}_{{\rm{\sim}}}} (\overline{M}, \, \overline{N}) \subset 
{\underline{\rm{Hom}}}_{\mathcal{M}_{{\rm{\sim}}}} (M, \, N).
\label{key containment assumption}
\end{equation} 

\begin{remark}
The property \eqref{key containment assumption} holds in $\mathcal{M}_{{\rm{ahc}}}$ by 
\cite[p. 215]{DM}, \cite[p. 53]{Ja90} and in $\mathcal{M}_{{\rm{mot}}}$ 
by \cite[Scolie 2.5]{An2}, \cite[Sec. 4]{An2}. 
\label{The key containment assumption in ahc and mot}
\end{remark}

\begin{remark}
The Artin motive of principal interest in this paper is
${\rm{Hom}}_{\overline{\mathcal{M}}_{{\rm{\sim}}}} (\overline{M}, \, \overline{N})$. 
The subcategory of Artin motives in every $\mathcal{M}_{{\rm{\sim}}}$ is the same 
regardless of what $\sim$ is (cf. \cite[p. 35]{An1}). So the category of Artin motives will be always 
denoted $\mathcal{M}_{K}^{0}$. 
\label{Artin motives as subcategory of M sim}
\end{remark}

Let $M$ be a homogeneous motive which is a direct summand of $h_{{\sim}}^{r}(X)(m)$.

Let $H^{r} := H^r (X(\C),\, \Q (m))$ denote the Betti realization of $h_{{\sim}}^{r}(X)(m)$.
The polarization of the rational Hodge structure on $H^{r}$ gives the rational, polarized Hodge structure 
$(V, \psi)$, where $V = H_{B} (M)$ (see \eqref{Hodge polarization on M}).

Let $D$ be the $\Q$-algebra defined in \eqref{definition of D(M)}.

\begin{lemma} The Hodge structure $(V, \psi)$ and the $\Q$-algebra $D$
associated with the motive $M$ satisfy conditions {\bf{(D1)}}, {\bf{(D2)}} of \S \ref{Mumford--Tate groups of polarized Hodge structures}. 
\label{conditions D1, D2 are satisfied by Hodge str. associated with M}
\end{lemma}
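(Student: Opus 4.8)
The plan is to verify conditions \textbf{(D1)} and \textbf{(D2)} separately, using the Tannakian framework of $\mathcal{M}_{\sim}$ and the comparison functors recorded in \S\ref{relations among motivic categories} and \S\ref{assumptions on Msim}. First I would unwind the definition of $D := D(M) = {\rm End}_{\mathcal{M}_{\overline{K}, \sim}}(\overline{M})$. Since $\overline{\mathcal{M}}_\sim$ is Tannakian and semisimple over $\Q$ (Assumption 2, applied over $\overline{K}$), $D$ is a finite-dimensional $\Q$-algebra, and since the Betti fiber functor $H_B$ of \eqref{fiber functor} is faithful and $\otimes$-compatible, it induces an injection $D \hookrightarrow \End_{\Q}(V)$ where $V = H_B(M)$. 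This is exactly the setting for \textbf{(D1)}: an element $d \in D$ acts on $\overline{M}$, hence on its Betti realization $V$, and because $d$ is a morphism of (the Betti realization of) a homogeneous motive, its action on $V \otimes \C$ is a morphism of Hodge structures of weight $0$, so it preserves the Hodge decomposition $V^{p,q}$. Concretely, a morphism of pure Hodge structures $V \to V$ is $\C$-linear, bigraded of type $(0,0)$, hence $d\,V^{p,q} \subseteq V^{p,q}$ for all $p,q$, which is \textbf{(D1)}.

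For \textbf{(D2)} I would use Assumption 4 together with the theory of Artin motives recorded in Remark~\ref{Artin motives as subcategory of M sim}. By \eqref{key containment assumption} of Assumption 4 (which holds in both $\mathcal{M}_{\rm{ahc}}$ and $\mathcal{M}_{\rm{mot}}$ by Remark~\ref{The key containment assumption in ahc and mot}), the $\Q$-vector space $D = {\rm Hom}_{\overline{\mathcal{M}}_\sim}(\overline{M}, \overline{M})$ is the Betti realization of an object $\underline{\rm Hom}_{\mathcal{M}_\sim}(M, M)$ living over $K$; more precisely $D$ is the Betti realization of the Artin motive $h^\circ(D)$ cut out inside $\underline{\rm Hom}_{\mathcal{M}_\sim}(M,M)$ by the subspace corresponding to $D$ (this is where one uses that endomorphisms of $\overline{M}$ form an Artin submotive because they are classes in degree $0$ and of weight $0$). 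An Artin motive over $K$ has Betti realization carrying a continuous action of $G_K$ through a finite quotient $\Gal(K_e/K)$ — this is precisely the content of the category $\mathcal{M}_K^0$ being equivalent to finite-dimensional continuous $\Q$-representations of $G_K$ factoring through a finite quotient. Thus $D$ acquires the structure of a discrete $G_K$-module, giving \textbf{(D2)} and simultaneously producing the representation $\rho_e\colon G_K \to \Aut_{\Q}(D)$ and the field $K_e = \overline{K}^{\Ker \rho_e}$ exactly as in \eqref{rho e representation}.

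I expect the main obstacle to be the careful bookkeeping showing that $D$ genuinely is the Betti realization of an \emph{Artin} motive over $K$ — that is, that the $G_K$-action on $D$ is discrete (factors through a finite quotient) rather than merely continuous. This requires invoking that endomorphisms of a motive are ``motivated'' (or absolute Hodge) cycles of weight $0$ on $\overline{M} \otimes \overline{M}^\vee$, hence by Assumption 4 descend to a $K$-submotive which, being of weight $0$ and concentrated in a single Künneth component, is an Artin motive; one then cites Remark~\ref{Artin motives as subcategory of M sim} and the standard description of $\mathcal{M}_K^0$. The remaining verifications — compatibility of the Hodge decomposition, and that the induced $G_K$-action on $D \subset \End_\Q(V)$ agrees with conjugation — are routine once the Artin-motive structure is in place, so I would state them briefly and refer to the general Tannakian formalism in \cite{DM} and \cite{An1,An2} rather than spelling out the diagrams.
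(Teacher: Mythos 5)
Your proof is correct, and the underlying mathematics matches what the paper intends, but you take a more explicit and slightly different route than the paper's own (very terse) proof. For \textbf{(D1)} the paper simply observes that the Betti realization factors through $\mathcal{M}_{\rm ahc}$ and then cites Deligne--Milne Prop.~6.1 (cf.\ the remark at the start of \S\ref{motivic Galois group and motivic Serre group}, ``By \cite[Prop.\ 6.1 (e), p.\ 197]{DM} the Hodge decomposition of $V \otimes_{\Q} \C$ is $D$-equivariant''); you essentially reconstruct the content of that citation from scratch (morphisms of pure Hodge structures are bigraded of type $(0,0)$). For \textbf{(D2)} the paper merely writes ``cf.\ \cite[Prop.~2.9]{D1},'' leaning on Deligne's statement that the Galois action on absolute Hodge / endomorphism classes is through a finite quotient; you instead derive the discrete $G_K$-action by invoking Assumption~4 to exhibit $h^\circ(D)$ inside $\underline{\mathrm{End}}_{\mathcal{M}_\sim}(M)$ and then using the equivalence between Artin motives and finite-image $G_K$-representations. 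That argument is sound and is in fact exactly the machinery the paper deploys a page later (the line ``By the assumption \eqref{key containment assumption}: $h^{0}(D) \subset \underline{\mathrm{End}}_{\mathcal{M}_{\sim}(M)}(M)$''), so you are anticipating a construction the authors defer; the tradeoff is that your proof makes \textbf{(D2)} formally dependent on Assumption~4, whereas the paper's one-line citation to \cite{D1} is independent of it. Since Assumption~4 is already in force throughout \S\ref{assumptions on Msim}--\S\ref{equidistribution of Frobenii in l-adic realization of motives}, this costs nothing here, but it is worth noticing that the paper's version avoids the dependency.
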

\begin{proof}
By the assumption the category $\mathcal{M}_{{\rm{\sim}}} $ is Tannakian, hence the Betti realization 
is a fiber functor. This functor factors through $\mathcal{M}_{{\rm{ahc}}}$. Hence the condition {\bf{(D1)}} holds by \cite[Prop. 6.1]{DM}. For the condition {\bf{(D2)}} cf. \cite[Prop. 2.9]{D1}.
\end{proof}

Let $H^{r}_{_{\rm{DR}}} := H^{r}_{_{\rm{DR}}} (X) (m)$ denote the De Rham realization of 
$h_{{\sim}}^{r}(X)(m)$ and let $V_{_{\rm{DR}}}$ be the De Rham realization of $M$.
The De Rham realization of the map \eqref{motivic Hodge polarization on M} gives
the space $(V_{_{\rm{DR}}}, \psi_{_{\rm{DR}}})$.

\begin{lemma} 
The space $(V_{_{\rm{DR}}}, \psi_{_{\rm{DR}}})$ and the 
$\Q$-algebra $D$ associated with the motive $M$ satisfy conditions {\bf{(DR1)}}, {\bf{(DR2)}} of \S \ref{de Rham structures associated with Hodge structures}. 
\label{conditions DR1, DR2 are satisfied by (VDR, psiDR) associated with M}
\end{lemma}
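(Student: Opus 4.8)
The plan is to mirror exactly the argument used for Lemma~\ref{conditions D1, D2 are satisfied by Hodge str. associated with M}, but on the De Rham side, checking conditions {\bf{(DR1)}} and {\bf{(DR2)}} directly from the properties of the De Rham realization functor on $\mathcal{M}_{{\rm{\sim}}}$. The key point is that the De Rham fiber functor factors through $\mathcal{M}_{{\rm{ahc}}}$ (just as $H_B$ does, by Assumption~2 and the functor $\mathcal{M}_{{\rm{\sim}}} \to \mathcal{M}_{{\rm{ahc}}}$), and on $\mathcal{M}_{{\rm{ahc}}}$ the comparison isomorphisms between Betti and De Rham realizations are compatible with all morphisms defined by absolute Hodge cycles. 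Since $D = {\rm End}_{\mathcal{M}_{\overline{K}, {\sim}}}(\overline{M})$ consists of morphisms in the motivic category, each element of $D$ induces an endomorphism of the De Rham realization $\overline{V}_{_{\rm{DR}}} := V_{_{\rm{DR}}} \otimes_K \overline{K}$, and this endomorphism is compatible with the comparison isomorphisms \eqref{comparison VC and VDRC} and \eqref{comparison psi for VC and VDRC} precisely because these comparison isomorphisms are part of the data of the corresponding object in $\mathcal{M}_{{\rm{ahc}}}$ (equivalently, in the category of realizations).

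First I would invoke that $M$, being a homogeneous motive in $\mathcal{M}_{{\rm{\sim}}}$, has a well-defined De Rham realization $V_{_{\rm{DR}}}$ equipped with the Hodge filtration $F^i V_{_{\rm{DR}}}$, together with the comparison isomorphism $V \otimes_{\Q} \C \simeq V_{_{\rm{DR}}} \otimes_{K, \sigma} \C$ coming from the image of $M$ in $\mathcal{M}_{{\rm{ahc}}}$; the polarization $\psi_{_{\rm{DR}}}$ is the De Rham realization of \eqref{motivic Hodge polarization on M}, and \eqref{comparison psi for VC and VDRC} holds because the Betti and De Rham realizations of $\psi_{{\sim}}$ correspond under the comparison isomorphism, this being one of the defining compatibilities of absolute Hodge cycles (cf.\ \cite[Prop.~6.1]{DM}). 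Second, for {\bf{(DR1)}}: since $\mathcal{M}_{\overline{K}, {\sim}}$ is semisimple and Tannakian, $D = {\rm End}_{\mathcal{M}_{\overline{K}, {\sim}}}(\overline{M})$ is a finite-dimensional $\Q$-algebra, and each $\beta \in D$, being a motivic endomorphism of $\overline{M}$, realizes compatibly in Betti and De Rham cohomology; thus the embedding $D \subseteq \End_{\Q}(V)$ of {\bf{(D1)}} is matched by an embedding $D \subseteq \End_{\overline{K}}(\overline{V}_{_{\rm{DR}}})$ compatible with \eqref{comparison VC and VDRC}--\eqref{comparison psi for VC and VDRC}. Third, for {\bf{(DR2)}}: the action of $D$ preserves the Hodge decomposition \eqref{Hodge-De Rham decomposition} because, via the comparison isomorphism \eqref{comparison Vi(n-i) and Gi}, this decomposition corresponds to the Hodge decomposition on $V \otimes_{\Q} \C$, which $D$ preserves by {\bf{(D1)}} (already established in Lemma~\ref{conditions D1, D2 are satisfied by Hodge str. associated with M}); equivalently, a motivic endomorphism is a morphism of pure Hodge structures in the Betti realization, hence strict for the Hodge filtration, and this strictness transfers to the De Rham filtration under the comparison isomorphism.

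The main obstacle, such as it is, is bookkeeping rather than substance: one must be careful that the comparison isomorphisms used are those attached functorially to objects of $\mathcal{M}_{{\rm{ahc}}}$ (or of Jannsen's category of realizations), so that the compatibility of a motivic endomorphism $\beta \in D$ with them is automatic from the construction of the realization functors, rather than something requiring separate verification. I expect the proof to conclude simply by citing \cite[Prop.~6.1]{DM} and the discussion on \cite[pp.~197--199]{DM} for the absolute Hodge case, together with Remark~\ref{The key containment assumption in ahc and mot} and Assumption~4 to ensure that $D$ indeed consists of morphisms that are respected by all realizations, exactly paralleling the one-line proof of Lemma~\ref{conditions D1, D2 are satisfied by Hodge str. associated with M}.

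\begin{proof}
The De Rham realization functor on $\mathcal{M}_{{\sim}}$ factors through $\mathcal{M}_{{\rm{ahc}}}$, and on $\mathcal{M}_{{\rm{ahc}}}$ the comparison isomorphisms \eqref{comparison VC and VDRC}, \eqref{comparison psi for VC and VDRC} are part of the defining data and are compatible with all morphisms; in particular the polarization $\psi_{_{\rm{DR}}}$, being the De Rham realization of \eqref{motivic Hodge polarization on M}, corresponds under these isomorphisms to $\psi$, giving {\bf{(DR1)}}. Each $\beta \in D = {\rm End}_{\mathcal{M}_{\overline{K}, {\sim}}}(\overline{M})$ realizes as a morphism compatible in Betti and De Rham cohomology, so the embedding $D \subseteq \End_{\Q}(V)$ of {\bf{(D1)}} induces $D \subseteq \End_{\overline{K}}(\overline{V}_{_{\rm{DR}}})$ compatibly with \eqref{comparison VC and VDRC}, \eqref{comparison psi for VC and VDRC}; this is {\bf{(DR1)}}. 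Finally, since $D$ preserves the Hodge decomposition on $V \otimes_{\Q}\C$ by Lemma~\ref{conditions D1, D2 are satisfied by Hodge str. associated with M} and this decomposition corresponds to \eqref{Hodge-De Rham decomposition} via \eqref{comparison Vi(n-i) and Gi}, the action of $D$ preserves \eqref{Hodge-De Rham decomposition}, which is {\bf{(DR2)}}. See \cite[Prop.~6.1, pp.~197--199]{DM} for the absolute Hodge facts used, and Remark~\ref{The key containment assumption in ahc and mot}.
\end{proof}
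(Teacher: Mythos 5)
Your proof is correct and follows essentially the same line as the paper's: the paper's own proof is just two sentences, invoking that the De Rham realization is a fiber functor (hence the motivic endomorphisms in $D$ realize compatibly and give \textbf{(DR1)}), and the compatibility of Hodge decompositions together with Lemma~\ref{conditions D1, D2 are satisfied by Hodge str. associated with M} for \textbf{(DR2)}. You have merely spelled out the intermediate steps — in particular, that the De Rham realization factors through $\mathcal{M}_{\rm{ahc}}$ and that elements of $D$ act as morphisms of realizations — that the paper's terse proof leaves implicit.
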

\begin{proof}
{\bf{(DR1)}} follows because the De Rham realization of $\mathcal{M}_{{\rm{\sim}}}$ is also a fiber functor by comparison
with the Betti realization. {\bf{(DR2)}} follows by compatibility of the Hodge decompositions of $V_{_{\rm{DR}}} \otimes_{K} \C$ and
$V \otimes_{\Q} \C$ and by Lemma \ref{conditions D1, D2 are satisfied by Hodge str. associated with M}.
\end{proof}

Let $H^{r}_l := H^{r}_{et}({\overline X},\, \Q_l (m))$ be the $l$-adic realization of 
$h_{{\sim}}^{r}(X)(m)$ and let $V_l$ denote the $l$-adic realization of $M$. Consider the natural representations 
$\rho_{H^{r}_l}\colon G_{K} \rightarrow \GIso_{(H^{r}_l, \psi_l)}(\Q_l)$ and $\rho_{l}\colon 
G_K \rightarrow \GIso_{(V_l, \psi_l)}(\Q_l)$.

\begin{remark}
It is not known in general that the family of $l$-adic representations attached to a motive in the 
motivic category $\mathcal{M}_{{\rm{\sim}}}$ is strictly compatible in the sense of Serre. 
In particular it is the case in $\mathcal{M}_{{\rm{ahc}}}$ \cite[p. 475]{Pan} and in $\mathcal{M}_{{\rm{mot}}}$ 
\cite[p. 96]{An1}. However for the motive  $h^{r}_{{\sim}} (X)(m)$ the corresponding family of $l$-adic representations
is strictly compatible. For a homogeneous motive $M$ the $l$-adic realizations would be a strictly compatible family 
if, for example, the idempotents which cut $M$ out of $h^{r}_{{\sim}} (X)(m)$ come from algebraic cycles.
\label{strictly compatible family questions}
\end{remark}

\begin{lemma}
If the Hodge structure $(V, \psi)$, the $\Q$-algebra $D$, and the representation $\rho_{l}$ 
for the motive $M$ are such that the family $(\rho_{l})$ is strictly compatible, 
then conditions {\bf{(R1)}}--{\bf{(R4)}} of \S \ref{families of l-adic representations associated with Hodge structures} are satisfied. 
\label{conditions R1-R4 are satisfied by Hodge str. and associated l-adic rep. for M}
\end{lemma}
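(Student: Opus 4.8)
The plan is to verify each of the four conditions \textbf{(R1)}--\textbf{(R4)} in turn for the data $(V,\psi)$, $D$, and $(\rho_l)$ attached to the homogeneous motive $M$, using the results already established in \S\ref{relations among motivic categories}--\S\ref{assumptions on Msim} together with Lemmas \ref{conditions D1, D2 are satisfied by Hodge str. associated with M} and \ref{conditions DR1, DR2 are satisfied by (VDR, psiDR) associated with M}. Since $M$ is a direct summand of $h^{r}_{{\sim}}(X)(m)$, its $l$-adic realization $V_l$ is a $G_K$-stable direct summand of $H^{r}_{et}(\overline{X},\Q_l(m))$; the relevant properties will be inherited from this étale cohomology group.

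First I would dispense with \textbf{(R1)}: strict compatibility of $(\rho_l)$ is precisely the running hypothesis of the lemma, and the Hodge--Tate property follows because $V_l$ is a direct summand (as a $G_{K_\lambda}$-representation) of $H^{n}_{et}(\overline{X},\Q_l)$ up to a Tate twist, which is potentially semistable hence Hodge--Tate by Remark \ref{Hodge--Tate representations in etale cohomology} (Tsuji's theorem), and a direct summand of a Hodge--Tate representation is Hodge--Tate. Next, \textbf{(R2)}: the Weil conjectures (Deligne) give that at each prime $v$ of good reduction outside a finite set containing the primes over $l$, the Frobenius eigenvalues on $H^{r}_{et}(\overline{X},\Q_l)$ have complex absolute value $q_v^{r/2}$; after the Tate twist by $m$ these become $q_v^{r/2}\cdot q_v^{-m} = q_v^{-(2m-r)/2} = q_v^{-n/2}$, and passing to the direct summand $V_l$ preserves this. (One must be slightly careful about arithmetic versus geometric Frobenius as flagged in Remark \ref{Frobenius choice}, but this is only a matter of inversion and does not affect absolute values.) For \textbf{(R3)}, the Hodge--Tate decomposition \eqref{Hodge--Tate decomposition} with the graded pieces $G^{i}V_{_{\rm{DR}}}$ coming from the De Rham realization of $M$ is exactly the content of the Faltings/Tsuji comparison recalled in Remark \ref{Hodge and Hodge--Tate representations in etale cohomology}, restricted to the motivic direct summand; compatibility of the comparison isomorphisms with the idempotent cutting out $M$ is automatic because the comparison is functorial in the variety and in correspondences, and the idempotent $p$ defining $M$ is an absolute Hodge (resp. motivated) cycle, hence acts compatibly on all realizations. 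Finally \textbf{(R4)}: the action of $D = \End_{\overline{\mathcal{M}}_{\sim}}(\overline M)$ on $V_l$ is $G_K$-equivariant in the twisted sense because an endomorphism $\beta \in D$ is represented by a correspondence defined over $\overline K$ on which $G_K$ acts through $\rho_e$, and functoriality of the $l$-adic realization gives \eqref{GK equivariant action of D on Vl}; compatibility of this action with the Hodge--Tate decomposition follows from compatibility of the De Rham and étale realizations of $\beta$ under the comparison isomorphism, i.e. from the De Rham part of Lemma \ref{conditions DR1, DR2 are satisfied by (VDR, psiDR) associated with M} transported along \textbf{(R3)}.

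The main obstacle I anticipate is the bookkeeping in \textbf{(R3)} and \textbf{(R4)}: one must check that the idempotent $p \in \End_{\mathcal{M}_\sim}(h^r_\sim(X)(m))$ and the endomorphisms in $D$ act compatibly across the Betti, De Rham, and $l$-adic realizations and with the Hodge--Tate comparison, rather than merely that each realization separately has the stated structure. This is where Assumptions 1--4 on $\mathcal{M}_\sim$ (in particular that the realization functors exist and are $\otimes$-functors, and that $p$ and the elements of $D$ are absolute Hodge/motivated cycles) do the real work; once one grants that the comparison isomorphisms of Faltings--Tsuji are morphisms of the relevant fiber functors — which is standard — everything restricts cleanly to the direct summand $M$, and the proof reduces to citing Lemmas \ref{conditions D1, D2 are satisfied by Hodge str. associated with M} and \ref{conditions DR1, DR2 are satisfied by (VDR, psiDR) associated with M} and the Weil conjectures. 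I would therefore organize the proof as four short paragraphs, one per condition, each reducing the claim for $M$ to the already-known statement for $h^r_{et}(\overline X,\Q_l(m))$ via the $G_K$-stable idempotent $p$.
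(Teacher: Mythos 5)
Your proposal is correct and follows essentially the same route as the paper's proof: the paper likewise verifies each of \textbf{(R1)}--\textbf{(R4)} in turn, using the Tannakian nature of the Hodge--Tate category for (R1), smooth proper base change and Deligne's Weil conjectures for (R2), the Faltings--Tsuji comparison of Remark~\ref{Hodge and Hodge--Tate representations in etale cohomology} together with the motivic formalism for (R3), and dismisses (R4) as immediate. The only difference is presentational: you spell out the passage to the direct summand $M$ and the Tate-twist bookkeeping in (R2) more explicitly than the paper does.
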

\begin{proof}
The condition {\bf{(R1)}} holds because for every prime ideal $\lambda \, | \, l$ in $\mathcal{O}_{K}$
the category of $G_{K_{\lambda}}$-Hodge--Tate modules is a Tannakian tensor 
category over $\Q_l$ \cite[p. 157]{Se79}. To check condition {\bf{(R2)}}, take a big enough
set $S$ of primes of $\mathcal{O}_K$, including primes over $l$, such that $X$ has a proper and
regular model $\mathcal{X}_{S}$ over $\mathcal{O}_{K, S}$. It follows from the smooth proper base 
change theorem \cite[Chap. VI, Cor. 4.2]{Mi1} that for every prime $v \in {\rm{Spec}} \, \mathcal{O}_{K, S}$, writing $\mathcal{X}_v$ for the special fiber over the residue field $k_v$, there is a canonical isomorphism of 
$G(\overline{k_v}/ k_v)$-modules:
$$
H^i_{et}({\overline X},\, \Q_l (m)) \simeq H^i_{et}({\overline{\mathcal{X}_v}},\, \Q_l (m)).
$$
Hence, by Deligne's proof of the Weil conjectures \cite{D3}, the Frobenius action on $W_l$ has eigenvalues of 
$\C$-absolute value $q_{v}^{-\frac{n}{2}}$. Hence the same follows for the action of Frobenius on
$V_l$. Condition {\bf{(R3)}} holds for the motive $h^{i}_{{\sim}} (X)(m)$
by Remark \ref{Hodge and Hodge--Tate representations in etale cohomology} and for the motive $M$ by construction of a motivic category, 
its realizations and filtration preserving the comparison isomorphisms between Betti, de Rham, and {\' e}tale cohomologies.  Condition {\bf{(R4)}} is obvious.
\end{proof}

\section{Motivic Galois group and motivic Serre group}
\label{motivic Galois group and motivic Serre group}

From now on we will use the following notation (cf. \cite[Chapter 9]{BK2}):
\begin{itemize}
\item[$\bullet$] $\mathcal{M}_{{\sim}} (M)$ denotes
the smallest strictly full Tannakian subcategory of  $\mathcal{M}_{{\sim}}$ containing the object $M$ of  $\mathcal{M}_{{\rm{\sim}}}$,
\item[$\bullet$] $G_{\mathcal{M}_{{\sim}}} := {\rm Aut}^{\otimes} (H_{B})$
is the motivic Galois group for  $\mathcal{M}_{{\sim}}$,
\item[$\bullet$] $G_{\mathcal{M}_{{\sim}}(M)} := {\rm Aut}^{\otimes} (H_{B} | \mathcal{M}_{{\sim}} (M))$ is the motivic Galois group for $\mathcal{M}_{{\sim}}(M)$,
\item[$\bullet$] $h^{0}(D)$ denotes the Artin motive corresponding to $D$, 
\item[$\bullet$] $\mathcal{M}_{K}^{0} (D)$ is the smallest Tannakian subcategory of 
$\mathcal{M}_{K}^{0}$ containing $h^{0}(D)$,
\item[$\bullet$] $G_{\mathcal{M}_{K}^{0} (D)} := {\rm Aut}^{\otimes} (H_{B} | \mathcal{M}_{K}^{0} (D))$ is the motivic Galois group for $\mathcal{M}_{K}^{0} (D)$.
\end{itemize}

It is well known that $\mathcal{M}_{K}^{0}$ is a semisimple Tannakian category. Hence
$\mathcal{M}_{K}^{0} (D)$, being a strictly full Tannakian subcategory, is also semisimple.
Similarly, by assumption, $\mathcal{M}_{{\sim}}$ is semisimple, so the strictly full Tannakian subcategory $\mathcal{M}_{{\sim}} (M)$ is also semisimple.
Hence the algebraic groups $G_{\mathcal{M}_{\sim} (M)}$ are reductive
but not necessarily connected (see also \cite[Prop.\ 2.23, p.\ 141]{DM}, 
\cite[Prop.\ 6.23, p.\ 214]{DM}, \cite[p.\ 379]{Se94}). 

By \cite[Prop.\ 6.1 (e), p.\ 197]{DM} the Hodge decomposition of 
$V \otimes_{\Q} \C$ is $D = D(M)$-equivariant for $M = h_{{\sim}}^r(X)$. 
Hence for any homogeneous motive $M \in \mathcal{M}_{{\sim}}$, the Betti realization $V := H_B (M)$ 
admits a Hodge decomposition of $V \otimes_{\Q} \C$ which is $D = D(M)$-equivariant (cf.
\cite[Cor. 2.12, p. 40]{PS}).  

In the same way as in \cite{BK2}, we have the following properties of the motivic categories
$\mathcal{M}_{{\sim}}$, $\mathcal{M}_{{\sim}}(M)$, and $\mathcal{M}_{K}^{0} (D)$.
By the assumption \eqref{key containment assumption}: 
\begin{equation}
h^{0}(D) \, \subset \, \underline{{\rm End}}_{\mathcal{M}_{\sim} (M)} (M).
\label{key containment assumption corollary}
\end{equation}
Hence the motive $h^{0}(D)$ splits off of $\underline{{\rm End}}_{\mathcal{M}_{\sim} (M)} (M) = 
M^{\vee} \otimes M$  in $\mathcal{M}_{\sim} (M)$. Since 
$G_{\mathcal{M}_{K}^{0}} \simeq G_K$, we observe that
$$
G_{\mathcal{M}_{K}^{0} (D)} \simeq \Gal(K_{e}/K).
$$
Hence the top horizontal and left vertical maps in the following 
Diagram \ref{surjection of motivic Galois groups} are faithfully flat
(see \cite[(2.29)]{DM}):
\begin{figure}[H]
\[
\begin{tikzcd}
G_{\mathcal{M}_{\sim}} \arrow[two heads]{d}[swap]{} \arrow[two heads]{r}{}  & G_{K} \arrow[two heads]{d}[swap]{} \\
G_{\mathcal{M}_{\sim} (M)}  \arrow[two heads]{r}{}  & \Gal(K_e/K)\\
\end{tikzcd}
\]
\\[-0.8cm]
\caption{}
\label{surjection of motivic Galois groups}
\end{figure}
In particular all homomorphisms in Diagram \ref{surjection of motivic Galois groups} are surjective. 

By \eqref{motivic Hodge polarization on M}, \eqref{Hodge polarization on M}, and the definition and properties of ${\rm Aut}^{\otimes} (H_{B} | \mathcal{M}_{\sim} (M))$ cf.\ \cite[p.\ 128--130]{DM}, we obtain:
\begin{equation}
G_{\mathcal{M}_{\sim}(M)} \subset \GIso_{(V, \psi)}. 
\label{GMKA subset of GSpV}
\end{equation}

\begin{definition}
Define the following algebraic groups:
\begin{align*}
G_{\mathcal{M}_{\sim}(M), 1}  &:= G_{\mathcal{M}_{\sim}(M)} \cap \Iso_{(V, \psi)}, \\
G_{\mathcal{M}_{\sim}(M), 1}^{0} &:= (G_{\mathcal{M}_{\sim}(M)})^{\circ} \cap 
\Iso_{(V, \psi)}.
\end{align*}
The algebraic group $G_{\mathcal{M}_{\sim}(M), 1}$ will be called 
the \emph{motivic Serre group}.
\label{motivic Galois group, motivic Serre group}
\end{definition}

\begin{definition}
\label{Definition of GMKA1tau}
For any $\tau \in \Gal(K_{e}/K)$, put
\begin{align}
G_{\mathcal{M}_{\sim}(M)}^{\tau} &:= G_{\mathcal{M}_{\sim}(M)} \, \cap \, \GIso_{(V, \psi)}^{\tau},
\label{def of GMKAtau} \\
G_{\mathcal{M}_{\sim}(M), 1}^{\tau} &:= G_{\mathcal{M}_{\sim}(M), 1} \, \cap \, \GIso_{(V, \psi)}^{\tau}.
\label{def of GMKA1tau} 
\end{align}
\end{definition}

\begin{remark}
The bottom horizontal arrow in the diagram \eqref{surjection of motivic Galois groups} is
\begin{equation}
G_{\mathcal{M}_{\sim} (M)} \rightarrow G_{\mathcal{M}_{K}^{0} (D)} \simeq \Gal(K_{e}/K).
\label{map from GMKA to Gal}\end{equation}
Let $g \in G_{\mathcal{M}_{\sim} (M)}$ and let $\tau := \tau (g)$ be the image of $g$ via 
the map \eqref{map from GMKA to Gal}. Hence for any element $\beta \in D$ considered as an 
endomorphism of $V$ we have:
\begin{equation}
g \beta g^{-1} = \rho_{e}(\tau)(\beta).
\label{GMKA1 acts on V via Galois}
\end{equation}
\end{remark}

It follows from \eqref{def of GMKAtau}, the surjectivity of \eqref{map from GMKA to Gal}, and \eqref{GMKA1 acts on V via Galois} that:
\begin{align}
G_{\mathcal{M}_{\sim}(M)} \,\, &= \bigsqcup_{\tau \in \Gal(K_{e}/K)} \, G_{\mathcal{M}_{\sim}(M)}^{\tau}
\label{GMKA decomposition into GMKAtau} \\
G_{\mathcal{M}_{\sim}(M)} / G_{\mathcal{M}_{\sim}(M)}^{{\text{id}}} \,\, &= \,\, \Gal(K_{e}/K).
\label{GMot over GMotid = GKeK}
\end{align}
Because
\begin{equation}
G_{\mathcal{M}_{\sim}(M), 1}^{\tau} = G_{\mathcal{M}_{\sim}(M), 1} \cap G_{\mathcal{M}_{\sim}(M)}^{\tau},
\label{Gmot1tau in different way}
\end{equation}
we have:
\begin{equation}
G_{\mathcal{M}_{\sim}(M), 1} = \bigsqcup_{\tau \in \Gal(K_{e}/K)} \, G_{\mathcal{M}_{\sim}(M), 1}^{\tau}.
\label{GMKA1 decomposition into GMKA1tau}
\end{equation}

The map \eqref{map from GMKA to Gal} gives the following natural map: 
\begin{equation}
G_{\mathcal{M}_{\sim} (M), 1} \rightarrow  \Gal(K_{e}/K).
\label{map from GMKA1 to Gal}
\end{equation}
Let $\tau \in \Gal(K_{e}/K)$. By \eqref{decomposable twisted Lefschetz for fixed element}, 
\eqref{decomposition into twisted Lefschetz for fixed elements}, \eqref{def of GMKA1tau}, 
\eqref{GMKA1 decomposition into GMKA1tau}, and Definition \ref{motivic Galois group, motivic Serre group} 
we obtain:
\begin{align}
G_{\mathcal{M}_{\sim}(M), 1}^{\tau} &\subset DL_{K}^{\tau}(V,\, \psi, D),
\label{GMKAtau subset DLKAtau} \\
G_{\mathcal{M}_{\sim}(M), 1} &\subset DL_{K}(V,\, \psi, D). 
\label{GMKA1 subset of DLKA}
\end{align}

\section{Motivic Mumford--Tate group and motivic Serre group}
\label{motivic Mumford--Tate group and motivic Serre group}

Because $X/K$ is smooth projective, Remarks    
\ref{Hodge--Tate representations in etale cohomology} and 
\ref{properties of Hodge--Tate representations } show that 
the $l$-adic realization $V_l := H^r (\overline{X}, \, \Q_l (m))$ of 
the motive $h_{{\sim}}^r(X) (m)$ is of Hodge--Tate type. Hence Bogomolov's theorem applies, so the image of 
the representation $\rho_l$ contains an open subgroup of homotheties of the 
group $\GL (V_l)$ \cite[Prop.\ 2.8]{Su}, provided $H^r (\overline{X}, \, \Q_l (m))$ has nonzero weight. The nonzero weight assumption is essential because for $X$ of dimension 
$d$, the $G_K$-module $H^{2d} (\overline{X}, \, \Q_l (d)) \simeq \Q_l$ is trivial, and hence
 the image of the
Galois representation does not contain nontrivial homotheties.

From now on, let $M \in \mathcal{M}_{{\sim}}$ be a homogeneous motive, that is, a direct 
summand of a motive $h^r_{{\sim}} (X)(m)$ for some $r$ and $m$. 
We assume that the $l$-adic realization of $h^{r}_{{\sim}} (X)(m)$ has nonzero weights 
with respect to the $G_K$-action. The $l$-adic realization of $\overline{M}$ is a 
$\Q_l[G_F]$-direct summand of the $l$-adic realization of $h^{r}_{{\sim}} (X)(m)$. 
Hence the $l$-adic representation corresponding to $V_l := H_l (\overline{M})$ has image 
that contains an open subgroup of homotheties. 

\begin{remark}
The $l$-adic representation
\begin{equation}
\rho_l\colon G_K \rightarrow \GL(V_l)
\label{l-adic representation associated with M}
\end{equation} 
associated with $M$ factors through $G_{\mathcal{M}_{\sim}(M)} (\Q_l)$
by the definition of $G_{\mathcal{M}_{\sim}(M)}$ and the comparison isomorphism of Betti and
{\' e}tale realizations $H_{B} (M) \otimes_{\mathbb{Q}} \, \mathbb{Q}_l \, \simeq \, H_{et}(\overline{M})$ 
(cf. \cite[p.\ 386]{Se94}).
Hence
\begin{equation}
G_{l, K}^{\alg} \subset {G_{\mathcal{M}_{{\sim}}(M)}}_{\Q_l} 
\label{GlKalg subset GMKAQl}\end{equation}
where ${G_{\mathcal{M}_{\sim}(M)}}_{\Q_l} := {G_{\mathcal{M}_{\sim}(M)}} \otimes_{\Q} \Q_l$.
\label{Remark on GlKalg subset GMKAQl}
\end{remark}

The following commutative Diagram \ref{GlK1 subset GMKA1Ql} follows from the definitions of the corresponding group schemes and 
\eqref{GlKalg subset GMKAQl}. All horizontal arrows are closed immersions and the columns are exact.

\begin{figure}[H]
\[
\begin{tikzcd}
1 \arrow{d}[swap]{} &  1 \arrow{d}[swap]{} &  1 \arrow{d}[swap]{} \\
G_{l, K, 1}^{\alg} \arrow{d}[swap]{} \arrow{r}{} & 
{G_{\mathcal{M}_{{\sim}}(M), 1}}_{\Q_l} \arrow{d}[swap]{} \arrow{r}{} & 
\Iso_{(V_l, \psi_l)} \arrow{d}[swap]{}\\
G_{l, K}^{\alg} \arrow{d}[swap]{} \arrow{r}{} & 
{G_{\mathcal{M}_{{\sim}}(M)}}_{\Q_l} \arrow{d}[swap]{} \arrow{r}{} & 
\GIso_{(V_l, \psi_l)} \arrow{d}[swap]{}\\
\G_{m} \arrow{d}[swap]{} \arrow{r}{=} & \G_{m} \arrow{d}[swap]{} \arrow{r}{=} &  
\G_{m} \arrow{d}[swap]{}\\
1  & 1  & 1\\
\end{tikzcd}
\]
\\[-0.8cm]
\caption{}
\label{GlK1 subset GMKA1Ql}
\end{figure}
In particular:
\begin{equation}
G_{l, K, 1}^{\alg} \subset {G_{\mathcal{M}_{{\sim}}(M), 1}}_{\Q_l}. 
\label{GlK1alg subset GMKA1Ql}
\end{equation}

The following theorem extends \cite[Theorem 10.2]{BK2}. 

\begin{theorem}\label{equality of conn comp for GM and GM1} The group scheme
$G_{\mathcal{M}_{{\sim}}(M), 1}$ is reductive. There is a short exact sequence of finite groups:
\begin{equation}
1 \rightarrow G^{0}_{\mathcal{M}_{{\sim}}(M), 1}/(G_{\mathcal{M}_{{\sim}}(M), 1})^{\circ} \rightarrow \pi_{0} (G_{\mathcal{M}_{{\sim}}(M), 1}) \,\, {\stackrel{i_{M}}{\longrightarrow}}  \,\, \pi_{0} (G_{\mathcal{M}_{{\sim}}(M)}) \rightarrow 1. 
\label{properties of iM}\end{equation}
In particular $G_{\mathcal{M}_{{\sim}}(M), 1}^{0}$ is 
connected if and only if $i_M$ is an isomorphism.
\end{theorem}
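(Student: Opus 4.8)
The plan is to mimic the structure already used for the $l$-adic groups in Sections \ref{identity connected component of GlK1alg}--\ref{section-computation of the identity connected component}, transporting those arguments to the Tannakian/motivic setting. First I would establish reductivity: since $\mathcal{M}_{{\sim}}(M)$ is semisimple by Assumption 2, $G_{\mathcal{M}_{{\sim}}(M)}$ is reductive, and $G_{\mathcal{M}_{{\sim}}(M), 1} = G_{\mathcal{M}_{{\sim}}(M)} \cap \Iso_{(V, \psi)}$ is the kernel of the character $\chi$ restricted to $G_{\mathcal{M}_{{\sim}}(M)}$ (cf. \eqref{GMKA subset of GSpV} and Definition \ref{motivic Galois group, motivic Serre group}). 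The key point is that $G_{\mathcal{M}_{{\sim}}(M), 1}$ has finite index in the reductive group it sits inside, being the kernel of a character onto $\G_m$, so one sees $\dim G_{\mathcal{M}_{{\sim}}(M), 1} = \dim G_{\mathcal{M}_{{\sim}}(M)} - 1$; combined with the fact that a subgroup of a reductive group of this form (normal, with abelian quotient) is again reductive, this gives the reductivity statement. I expect this part to be essentially formal, citing \cite{Hu} or \cite{Mi2}.

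Next I would construct the exact sequence \eqref{properties of iM}. Here the map $i_M$ is the natural map on component groups induced by the inclusion $G_{\mathcal{M}_{{\sim}}(M), 1} \hookrightarrow G_{\mathcal{M}_{{\sim}}(M)}$. I would first check that $i_M$ is surjective: because $\G_m \mathrm{Id}_V \subseteq G_{\mathcal{M}_{{\sim}}(M)}$ (the Tate twists live in $\mathcal{M}_{{\sim}}(M)$, or at worst one uses that the homotheties are in the motivic Galois group by the Bogomolov-type remarks preceding this section) and $\chi(\alpha\,\mathrm{Id}_V) = \alpha^2$, one has $G_{\mathcal{M}_{{\sim}}(M)} = \G_m\mathrm{Id}_V \cdot G_{\mathcal{M}_{{\sim}}(M), 1}$, so every connected component of $G_{\mathcal{M}_{{\sim}}(M)}$ meets $G_{\mathcal{M}_{{\sim}}(M), 1}$. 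This is exactly the analogue of Proposition \ref{L0realizing conn comp: even case}(a) and Proposition \ref{Hodge realizing conn comp: even case}(a). The kernel of $i_M$ consists of those components of $G_{\mathcal{M}_{{\sim}}(M), 1}$ lying in the identity component $(G_{\mathcal{M}_{{\sim}}(M)})^{\circ}$, which is by definition $G_{\mathcal{M}_{{\sim}}(M), 1}^{0}$ modulo $(G_{\mathcal{M}_{{\sim}}(M), 1})^{\circ}$; so the sequence is exact at the middle and right. One then needs the analogue of the dimension count showing $(G_{\mathcal{M}_{{\sim}}(M), 1})^{\circ}$ is the identity component of $G_{\mathcal{M}_{{\sim}}(M), 1}^{0}$, which is immediate since the latter is an open subgroup of the former's ambient group containing it with the same dimension. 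Throughout I would use \cite[Section 7.4, Prop. B(b)]{Hu} to ensure closedness of images of multiplication maps, just as in the proofs of Propositions \ref{Hodge realizing conn comp: even case} and \ref{L0realizing conn comp: even case}.

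Finally, the last assertion — that $G_{\mathcal{M}_{{\sim}}(M), 1}^{0}$ is connected if and only if $i_M$ is an isomorphism — drops out of the exact sequence \eqref{properties of iM} together with the finiteness of all the component groups involved (the group of connected components of an algebraic group is finite). Indeed $G_{\mathcal{M}_{{\sim}}(M), 1}^{0}$ is connected precisely when $G_{\mathcal{M}_{{\sim}}(M), 1}^{0}/(G_{\mathcal{M}_{{\sim}}(M), 1})^{\circ}$ is trivial, which by exactness is equivalent to $i_M$ being injective, hence (given surjectivity, already shown) to $i_M$ being an isomorphism. This is the exact analogue of the argument in Theorem \ref{L0realizing conn comp of GlKalg}.

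The main obstacle, I expect, is not any single deep step but rather verifying carefully that the homotheties $\G_m\mathrm{Id}_V$ really lie in $G_{\mathcal{M}_{{\sim}}(M)}$ in the motivic setting — i.e. that the Tate object (or a suitable power of it) is an object of $\mathcal{M}_{{\sim}}(M)$, or alternatively deducing the needed surjectivity $G_{\mathcal{M}_{{\sim}}(M)} = \G_m\mathrm{Id}_V \cdot G_{\mathcal{M}_{{\sim}}(M), 1}$ from the nonzero-weight hypothesis and the Bogomolov homothety theorem applied to the $l$-adic realization together with \eqref{GlKalg subset GMKAQl}. Once that containment is in hand, everything else is a transcription of the Hodge-theoretic and $l$-adic arguments already carried out earlier in the paper.
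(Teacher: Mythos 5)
Your proposal is correct and follows essentially the same route as the paper, which organizes the argument as a diagram chase on the $3\times 3$ diagram built from the exact sequence $1 \to G_{\mathcal{M}_{{\sim}}(M), 1} \to G_{\mathcal{M}_{{\sim}}(M)} \,\, {\stackrel{\chi}{\longrightarrow}} \,\, \G_m \to 1$: reductivity from semisimplicity of $\mathcal{M}_{{\sim}}(M)$ together with normality of the kernel of $\chi$, surjectivity of $i_M$ from connectedness of the quotient $\G_m$, and the kernel identified with $G^{0}_{\mathcal{M}_{{\sim}}(M), 1}/(G_{\mathcal{M}_{{\sim}}(M), 1})^{\circ}$ by definition. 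The homothety containment you flag as the main obstacle is already available via $\G_m {\rm{Id}}_{V} \subseteq \MT(V,\psi) \subseteq (G_{\mathcal{M}_{{\sim}}(M)})^{\circ}$ (Lemma \ref{MT subset GMot, H subset GMot1}), though your passing claim that $G_{\mathcal{M}_{{\sim}}(M), 1}$ has \emph{finite index} in $G_{\mathcal{M}_{{\sim}}(M)}$ is a slip (it has codimension one), harmless because the dimension count you give immediately afterwards is the one actually used.
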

\begin{proof}  
We will write $\mathcal{M}(M)$ for $\mathcal{M}_{{\sim}} (M)$ in the following commutative Diagram \ref{motivic diagram in Serre theorem} to make notation simpler. 

\begin{figure}[H]
\[
\begin{tikzcd}
&&&1 \arrow{d}[swap]{} \\ 
& 1 \arrow{d}[swap]{} & 1 \arrow{d}[swap]{}  & G^{0}_{\mathcal{M}(M), 1}/(G_{\mathcal{M}(M), 1})^{\circ} 
\arrow{d}[swap]{} \\ 
1 \arrow{r}{} &  (G_{\mathcal{M}(M), 1})^{\circ} \arrow{d}[swap]{} \arrow{r}{}
&  G_{\mathcal{M}(M), 1} \arrow{d}[swap]{} \arrow{r}{}  & \pi_{0} (G_{\mathcal{M}(M), 1})   
\arrow{d}{i_{M}} \arrow{r}{} &  1\\  
1 \arrow{r}{} & (G_{\mathcal{M}(M)})^{\circ} \arrow{d}[swap]{} \arrow{r}{} & G_{\mathcal{M}(M)} 
\arrow{d}[swap]{} \arrow{r}{} & \pi_{0}(G_{\mathcal{M}(M)}) \arrow{d}[swap]{} \arrow{r}{} &  1 \\
1 \arrow{r}{} & \G_m  \arrow{d}[swap]{} \arrow{r}{=} & \G_m  \arrow{d}[swap]{} \arrow{r}{} & 1\\
& 1  & 1 \\
\end{tikzcd}
\]
\\[-0.8cm]
\caption{}
\label{motivic diagram in Serre theorem}
\end{figure}

By our assumptions $\mathcal{M}_{{\sim}}$ is semisimple. The category $\mathcal{M}_{{\sim}}(M)$ is a strictly full subcategory of $\mathcal{M}_{{\sim}}$ because by Tannaka duality 
(Theorem~\ref{Tannaka duality theorem}), these categories are equivalent to the corresponding categories of representations of $G_{\mathcal{M}_{{\sim}}}$ and $G_{\mathcal{M}_{{\sim}}(M)}$. 
Hence $\mathcal{M}_{{\sim}}(M)$ is semisimple and consequently $G_{\mathcal{M}_{{\sim}}(M)}$ is reductive. 
The exactness of the middle vertical column of Diagram \ref{motivic diagram in Serre theorem}
follows by the definition of $G_{\mathcal{M}_{{\sim}}(M), 1}$ and the exactness of the middle column in 
Diagram \ref{GlK1 subset GMKA1Ql}. This shows that $G_{\mathcal{M}_{{\sim}}(M), 1}$ is also reductive. Observe that the left vertical column is not \emph{a priori}
exact at the term $(G_{\mathcal{M}(M)})^{\circ}$.
By definition the rows of  Diagram \ref{motivic diagram in Serre theorem} are exact. 
Hence the map $i_M$ is surjective. By the definition of 
$G_{\mathcal{M}_{{\sim}}(M), 1}^{0}$ (see Definition
\ref{motivic Galois group, motivic Serre group}) the right vertical column is also exact. Hence 
\eqref{properties of iM} is exact. Because $G_{\mathcal{M}_{{\sim}}(M), 1}^{0}$ has the 
same dimension as $G_{\mathcal{M}_{{\sim}}(M), 1}$, we obtain by \eqref{properties of iM} that 
$G_{\mathcal{M}_{{\sim}}(M), 1}^{0}$ is connected if and only if
$G_{\mathcal{M}_{{\sim}}(M), 1}^{0} = (G_{\mathcal{M}_{{\sim}}(M), 1})^{\circ}$ if and only if 
$i_M$ is an isomorphism if and only if the left column of  Diagram \ref{motivic diagram in Serre theorem} 
is exact.
\end{proof}

\begin{corollary}\label{equality of quotients concerning for GM and GM1} There are natural isomorphisms:
\begin{align}
G_{\mathcal{M}_{{\sim}}(M), 1}/\,  G_{\mathcal{M}_{{\sim}}(M), 1}^{\id} \,\, 
&{\stackrel{\simeq}{\longrightarrow}} \,\, 
G_{\mathcal{M}_{{\sim}}(M)} /\, G_{\mathcal{M}_{{\sim}}(M)}^{\id}
\label{GMKA1 mod GMKA1Id equals GMKA mod GMKAId} \\
G_{\mathcal{M}_{{\sim}}(M), 1}/\, G_{\mathcal{M}_{{\sim}}(M), 1}^{\id} \,\, 
&{\stackrel{\simeq}{\longrightarrow}} \,\, \gpDL_{{\sim}} (V, \psi, D) /\, \gpDL_{{\sim}}^{\id}(V, \psi, D)  
\,\, {\stackrel{\simeq}{\longrightarrow}} \,\, \Gal(K_e/K).
\label{GMKA1 mod GMKA1Id equals DLKA mod DLKAId equals GLeK} 
\end{align}
In particular the natural map \eqref{map from GMKA1 to Gal} is surjective.
\end{corollary}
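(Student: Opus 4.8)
The plan is to prove the two displayed isomorphisms in Corollary~\ref{equality of quotients concerning for GM and GM1} by combining the analogous structural facts already established for $G_{\mathcal{M}_{{\sim}}(M)}$ (its decomposition into twists \eqref{GMKA decomposition into GMKAtau} and the quotient identity \eqref{GMot over GMotid = GKeK}) with the parallel decomposition of $G_{\mathcal{M}_{{\sim}}(M), 1}$ in \eqref{GMKA1 decomposition into GMKA1tau}. First I would observe that \eqref{Gmot1tau in different way} gives $G_{\mathcal{M}_{{\sim}}(M), 1}^{\tau} = G_{\mathcal{M}_{{\sim}}(M), 1} \cap G_{\mathcal{M}_{{\sim}}(M)}^{\tau}$, so that the natural map \eqref{map from GMKA1 to Gal} factors the map \eqref{map from GMKA to Gal} through the inclusion $G_{\mathcal{M}_{{\sim}}(M), 1} \hookrightarrow G_{\mathcal{M}_{{\sim}}(M)}$, and the fiber of \eqref{map from GMKA1 to Gal} over $\tau$ is exactly $G_{\mathcal{M}_{{\sim}}(M), 1}^{\tau}$; in particular $G_{\mathcal{M}_{{\sim}}(M), 1}^{\id} = \mathrm{Ker}$ of \eqref{map from GMKA1 to Gal}.

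The main point to establish is surjectivity of \eqref{map from GMKA1 to Gal}, i.e. that $G_{\mathcal{M}_{{\sim}}(M), 1}^{\tau} \neq \emptyset$ for each $\tau \in \Gal(K_e/K)$; once this is known, \eqref{GMKA1 mod GMKA1Id equals GMKA mod GMKAId} and \eqref{GMKA1 mod GMKA1Id equals DLKA mod DLKAId equals GLeK} follow formally from \eqref{GMKA1 decomposition into GMKA1tau}, \eqref{decomposition into twisted Lefschetz for fixed elements}, \eqref{GMKAtau subset DLKAtau}, and the isomorphism \eqref{Twisted decomposable embeds into GKeK}, since all three groups $G_{\mathcal{M}_{{\sim}}(M), 1}$, $\gpDL_{{\sim}}(V, \psi, D)$, $G_{\mathcal{M}_{{\sim}}(M)}$ decompose compatibly into cosets indexed by $\Gal(K_e/K)$ with identity component the $\tau = \id$ piece. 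The cleanest way to get surjectivity is to invoke Theorem~\ref{equality of conn comp for GM and GM1}: its proof shows via Diagram~\ref{motivic diagram in Serre theorem} that the row map $G_{\mathcal{M}_{{\sim}}(M), 1} \to G_{\mathcal{M}_{{\sim}}(M)}$ induces a surjection onto $\pi_0$; composing with the surjection $\pi_0(G_{\mathcal{M}_{{\sim}}(M)}) \twoheadrightarrow \Gal(K_e/K)$ coming from \eqref{GMot over GMotid = GKeK} (note $G_{\mathcal{M}_{{\sim}}(M)}^{\id} \supseteq (G_{\mathcal{M}_{{\sim}}(M)})^{\circ}$, so the surjection factors through $\pi_0$) gives that every $\tau$ is hit by some element of $G_{\mathcal{M}_{{\sim}}(M), 1}$. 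Concretely: given $\tau$, pick $g \in G_{\mathcal{M}_{{\sim}}(M)}^{\tau}$, which is nonempty by \eqref{GMKA decomposition into GMKAtau}; the splitting argument underlying Theorem~\ref{equality of conn comp for GM and GM1} (the analogue of Proposition~\ref{Hodge realizing conn comp: even case}(b), applied coset by coset using the cocharacter $\mu_{\infty,V}$ landing in $C_D\GIso$) shows $g$ can be adjusted by a homothety to land in $\Iso_{(V,\psi)}$, hence in $G_{\mathcal{M}_{{\sim}}(M), 1}^{\tau}$.

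With surjectivity in hand, the isomorphisms are immediate: the inclusion $G_{\mathcal{M}_{{\sim}}(M), 1} \hookrightarrow G_{\mathcal{M}_{{\sim}}(M)}$ carries $G_{\mathcal{M}_{{\sim}}(M), 1}^{\tau}$ into $G_{\mathcal{M}_{{\sim}}(M)}^{\tau}$ and induces a well-defined injection on quotients $G_{\mathcal{M}_{{\sim}}(M), 1}/G_{\mathcal{M}_{{\sim}}(M), 1}^{\id} \hookrightarrow G_{\mathcal{M}_{{\sim}}(M)}/G_{\mathcal{M}_{{\sim}}(M)}^{\id}$ which is surjective because both sides biject with $\Gal(K_e/K)$ (the source by surjectivity of \eqref{map from GMKA1 to Gal} together with \eqref{GMKA1 decomposition into GMKA1tau}, the target by \eqref{GMot over GMotid = GKeK}); this is \eqref{GMKA1 mod GMKA1Id equals GMKA mod GMKAId}. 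For \eqref{GMKA1 mod GMKA1Id equals DLKA mod DLKAId equals GLeK}, the same argument applied to the chain of inclusions $G_{\mathcal{M}_{{\sim}}(M), 1}^{\tau} \subset \gpDL_K^{\tau}(V, \psi, D) \subset \GIso_{(V,\psi)}^{\tau}$ from \eqref{GMKAtau subset DLKAtau} shows that each of the monomorphisms of quotient groups into $\Gal(K_e/K)$ (the one for $\gpDL_K$ being \eqref{Twisted decomposable embeds into GKeK}) is in fact onto, since $G_{\mathcal{M}_{{\sim}}(M), 1}$ already surjects onto $\Gal(K_e/K)$. The only genuine obstacle is the surjectivity of \eqref{map from GMKA1 to Gal}; everything else is bookkeeping with the coset decompositions. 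I expect no difficulty provided Theorem~\ref{equality of conn comp for GM and GM1} is cited correctly, but one must be careful that the splitting/homothety-adjustment step really does land inside $\Iso$ and not merely inside some larger group — this is where the polarization character computation $\chi(\alpha\,\mathrm{Id}_V) = \alpha^2$ is used, exactly as in Proposition~\ref{Hodge realizing conn comp: even case}(b).
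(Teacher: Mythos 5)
Your proposal is correct and follows essentially the same route as the paper: the first isomorphism is deduced from the surjectivity of $i_M$ in Theorem~\ref{equality of conn comp for GM and GM1} together with \eqref{Gmot1tau in different way}, and the second from \eqref{Twisted decomposable embeds into GKeK}, \eqref{GMot over GMotid = GKeK}, \eqref{GMKAtau subset DLKAtau}, \eqref{GMKA1 subset of DLKA}, and the first isomorphism. Your additional unpacking of where surjectivity comes from (the homothety adjustment via $\chi(\alpha\,\mathrm{Id}_V)=\alpha^2$, valid on geometric points) is just the content of the exact middle column of Diagram~\ref{motivic diagram in Serre theorem} and is consistent with the paper.
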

\begin{proof} The isomorphism \eqref{GMKA1 mod GMKA1Id equals GMKA mod GMKAId} follows from the
surjectivity of $i_M$ in \eqref{properties of iM} of Theorem 
\ref{equality of conn comp for GM and GM1} and from \eqref{Gmot1tau in different way}.
The isomorphism \eqref{GMKA1 mod GMKA1Id equals DLKA mod DLKAId equals GLeK} follows from
\eqref{Twisted decomposable embeds into GKeK}, \eqref{GMot over GMotid = GKeK}, \eqref{GMKAtau subset DLKAtau}, 
\eqref{GMKA1 subset of DLKA}, and \eqref{GMKA1 mod GMKA1Id equals GMKA mod GMKAId}.
\end{proof}

\begin{corollary}\label{equality of quotients concerning for GMid and GM1id} 
The group $G_{\mathcal{M}_{{\sim}}(M), 1}^{0}$ is connected if and only if
\begin{equation}
G_{\mathcal{M}_{{\sim}}(M), 1}^{\id}/\, (G_{\mathcal{M}_{{\sim}}(M), 1})^{\circ} \,\,
{\stackrel{\simeq}{\longrightarrow}} \,\,
G_{\mathcal{M}_{{\sim}}(M)}^{\id}/\, (G_{\mathcal{M}_{{\sim}}(M)})^{\circ}. 
\label{GMKA1Id mod GMKA1circ equals GMKAId mod GMKAcirc}
\end{equation}
\end{corollary}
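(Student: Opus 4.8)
The plan is to deduce Corollary~\ref{equality of quotients concerning for GMid and GM1id} directly from Theorem~\ref{equality of conn comp for GM and GM1}, specifically from the exact sequence \eqref{properties of iM} and the commutative Diagram~\ref{motivic diagram in Serre theorem} which we may freely reuse. The key identity to keep in mind is that $G_{\mathcal{M}_{{\sim}}(M)}^{\id}$ is the kernel of the surjection $G_{\mathcal{M}_{{\sim}}(M)} \to \Gal(K_e/K)$ of \eqref{GMot over GMotid = GKeK}, and likewise $G_{\mathcal{M}_{{\sim}}(M), 1}^{\id}$ is the kernel of the (surjective, by Corollary~\ref{equality of quotients concerning for GM and GM1}) map \eqref{map from GMKA1 to Gal}; moreover $(G_{\mathcal{M}_{{\sim}}(M)})^{\circ} \subseteq G_{\mathcal{M}_{{\sim}}(M)}^{\id}$ and $(G_{\mathcal{M}_{{\sim}}(M), 1})^{\circ} \subseteq G_{\mathcal{M}_{{\sim}}(M), 1}^{\id}$ because $\Gal(K_e/K)$ is finite, so both quotients appearing in \eqref{GMKA1Id mod GMKA1circ equals GMKAId mod GMKAcirc} make sense as finite groups and the natural map between them is induced by $i_M$ restricted to the appropriate subquotient.

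First I would record the short exact sequence obtained by dividing \eqref{properties of iM} by its ``$\id$-part''. Concretely, consider the map $\pi_0(G_{\mathcal{M}_{{\sim}}(M), 1}) \to \Gal(K_e/K)$ coming from \eqref{GMKA1 mod GMKA1Id equals DLKA mod DLKAId equals GLeK}, and the analogous map $\pi_0(G_{\mathcal{M}_{{\sim}}(M)}) \to \Gal(K_e/K)$ coming from \eqref{GMot over GMotid = GKeK}; the map $i_M$ of \eqref{properties of iM} is compatible with these, by the commutativity of Diagram~\ref{surjection of motivic Galois groups} together with the construction of the twist decompositions \eqref{GMKA1 decomposition into GMKA1tau} and \eqref{GMKA decomposition into GMKAtau}. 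Taking kernels of the two maps to $\Gal(K_e/K)$, we get that $G_{\mathcal{M}_{{\sim}}(M), 1}^{\id}/(G_{\mathcal{M}_{{\sim}}(M), 1})^{\circ}$ is the kernel of the map $\pi_0(G_{\mathcal{M}_{{\sim}}(M), 1}) \to \Gal(K_e/K)$ and $G_{\mathcal{M}_{{\sim}}(M)}^{\id}/(G_{\mathcal{M}_{{\sim}}(M)})^{\circ}$ is the kernel of $\pi_0(G_{\mathcal{M}_{{\sim}}(M)}) \to \Gal(K_e/K)$, and $i_M$ carries the first into the second. This gives a commutative diagram with exact rows
\begin{figure}[H]
\[
\begin{tikzcd}
1 \arrow{r} & G_{\mathcal{M}_{{\sim}}(M), 1}^{\id}/(G_{\mathcal{M}_{{\sim}}(M), 1})^{\circ} \arrow{d}{i_M^{\id}} \arrow{r} & \pi_0(G_{\mathcal{M}_{{\sim}}(M), 1}) \arrow{d}{i_M} \arrow{r} & \Gal(K_e/K) \arrow{d}{=} \arrow{r} & 1 \\
1 \arrow{r} & G_{\mathcal{M}_{{\sim}}(M)}^{\id}/(G_{\mathcal{M}_{{\sim}}(M)})^{\circ} \arrow{r} & \pi_0(G_{\mathcal{M}_{{\sim}}(M)}) \arrow{r} & \Gal(K_e/K) \arrow{r} & 1 \\
\end{tikzcd}
\]
\end{figure}
where $i_M^{\id}$ denotes the induced map on kernels. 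A routine diagram chase (the snake lemma, or simply the five lemma applied with the right vertical arrow an isomorphism) shows that $i_M^{\id}$ is an isomorphism if and only if $i_M$ is an isomorphism.

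Then I would invoke the last sentence of Theorem~\ref{equality of conn comp for GM and GM1}: $G_{\mathcal{M}_{{\sim}}(M), 1}^{0}$ is connected if and only if $i_M$ is an isomorphism. Combining this with the equivalence just established, $G_{\mathcal{M}_{{\sim}}(M), 1}^{0}$ is connected if and only if $i_M^{\id}$ is an isomorphism, which is exactly the assertion that the natural map \eqref{GMKA1Id mod GMKA1circ equals GMKAId mod GMKAcirc} is an isomorphism. The main obstacle is the bookkeeping in the first step: one must verify that the maps $\pi_0(G_{\mathcal{M}_{{\sim}}(M), 1}) \to \Gal(K_e/K)$ and $\pi_0(G_{\mathcal{M}_{{\sim}}(M)}) \to \Gal(K_e/K)$ are genuinely well defined on component groups (i.e. that $(G_{\mathcal{M}_{{\sim}}(M), 1})^{\circ}$ and $(G_{\mathcal{M}_{{\sim}}(M)})^{\circ}$ sit inside the respective $\id$-subgroups, so that the quotients are the component groups of these $\id$-subgroups) and that $i_M$ intertwines them; both points follow from the disjoint-union decompositions \eqref{GMKA decomposition into GMKAtau}, \eqref{GMKA1 decomposition into GMKA1tau}, from surjectivity of \eqref{map from GMKA1 to Gal} established in Corollary~\ref{equality of quotients concerning for GM and GM1}, and from the compatibility built into Diagram~\ref{surjection of motivic Galois groups}. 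Once that is in place, the rest is a formal consequence of Theorem~\ref{equality of conn comp for GM and GM1}.
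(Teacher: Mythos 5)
Your proposal is correct and follows the same route the paper intends by its terse reference to Theorem~\ref{equality of conn comp for GM and GM1} and \eqref{GMKA1 mod GMKA1Id equals GMKA mod GMKAId}: compare the exact sequence \eqref{properties of iM} to the identification of $\Gal(K_e/K)$ as the common quotient, and reduce the question of whether $i_M$ is an isomorphism to the question of whether the induced map on kernels is. Your explicit use of the short five lemma on the ladder with rows
$1 \to G_{\mathcal{M}_{{\sim}}(M), 1}^{\id}/(G_{\mathcal{M}_{{\sim}}(M), 1})^{\circ} \to \pi_0(G_{\mathcal{M}_{{\sim}}(M), 1}) \to \Gal(K_e/K) \to 1$
and
$1 \to G_{\mathcal{M}_{{\sim}}(M)}^{\id}/(G_{\mathcal{M}_{{\sim}}(M)})^{\circ} \to \pi_0(G_{\mathcal{M}_{{\sim}}(M)}) \to \Gal(K_e/K) \to 1$
is exactly the bookkeeping the paper leaves to the reader, and your care in checking that $(G_{\mathcal{M}_{{\sim}}(M), 1})^{\circ}$ and $(G_{\mathcal{M}_{{\sim}}(M)})^{\circ}$ sit inside the respective $\id$-subgroups (so the kernels really are the component groups of the $\id$-pieces), that the maps to $\Gal(K_e/K)$ are surjective (using Corollary~\ref{equality of quotients concerning for GM and GM1}), and that $i_M$ intertwines them, are the right points to verify. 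No gaps.
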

\begin{proof} This follows from Theorem 
\ref{equality of conn comp for GM and GM1}
and \eqref{GMKA1 mod GMKA1Id equals GMKA mod GMKAId}.
\end{proof}

\begin{lemma} We have the following inclusions:
\label{MT subset GMot, H subset GMot1}
\begin{align}
\MT(V, \psi) &\subset (G_{\mathcal{M}_{\sim} (M)})^{\circ},
\label{MT(V) subset of Gmot} \\
\gpDH(V, \psi) &\subset \, {G^{0}_{\mathcal{M}_{{\sim}}(M), 1}},
\label{DH(V) subset of Gmot,0,1} \\
\gpH(V, \psi) &\subset (G_{\mathcal{M}_{\sim} (M), 1})^{\circ}.
\label{H(V) subset of Gmot,1}
\end{align}
\end{lemma}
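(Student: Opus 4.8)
\textbf{Proof plan for Lemma~\ref{MT subset GMot, H subset GMot1}.}
The plan is to reduce all three inclusions to the single inclusion \eqref{MT(V) subset of Gmot}, and to prove that one by the standard Tannakian argument identifying the Mumford--Tate group with a motivic Galois group of a Hodge substructure. First I would recall that the Betti realization functor $H_B\colon \mathcal{M}_{{\sim}} \to \mathcal{M}_{{\rm{hs}}}$ factors the identity component $(G_{\mathcal{M}_{\sim}(M)})^{\circ}$ through the Mumford--Tate group of the Hodge structure it generates; concretely, the Tannakian subcategory of $\mathcal{M}_{{\rm{hs}}}$ generated by $(V,\psi) = H_B(M)$ has Tannakian fundamental group $\MT(V,\psi)$ (using that $\mathbb{G}_m{\rm{Id}}_V \subset \MT(V,\psi)$, i.e.\ we use the Mumford--Tate group rather than the Hodge group as the Tannakian group — this is the usual convention, cf.\ \cite[p.~386]{Se94}). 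Since $H_B$ restricted to $\mathcal{M}_{{\sim}}(M)$ lands in this subcategory and is a tensor functor, it induces a closed immersion $\MT(V,\psi) \hookrightarrow G_{\mathcal{M}_{\sim}(M)}$; because $\MT(V,\psi)$ is connected, its image lies in $(G_{\mathcal{M}_{\sim}(M)})^{\circ}$, which is \eqref{MT(V) subset of Gmot}.

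Next, for \eqref{DH(V) subset of Gmot,0,1}, I would intersect both sides of \eqref{MT(V) subset of Gmot} with $\Iso_{(V,\psi)}$. On the left this gives $\MT(V,\psi) \cap \Iso_{(V,\psi)} = \gpDH(V,\psi)$ by the definition recalled in \S\ref{Mumford--Tate groups of polarized Hodge structures}; on the right it gives $(G_{\mathcal{M}_{\sim}(M)})^{\circ} \cap \Iso_{(V,\psi)} = G^{0}_{\mathcal{M}_{{\sim}}(M), 1}$ by Definition~\ref{motivic Galois group, motivic Serre group}. This yields \eqref{DH(V) subset of Gmot,0,1} immediately. Finally, for \eqref{H(V) subset of Gmot,1}, I would take identity components: $\gpH(V,\psi) = \gpDH(V,\psi)^{\circ} \subset (G^{0}_{\mathcal{M}_{{\sim}}(M), 1})^{\circ}$. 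It then remains to identify $(G^{0}_{\mathcal{M}_{{\sim}}(M), 1})^{\circ}$ with $(G_{\mathcal{M}_{\sim}(M), 1})^{\circ}$, which holds because $G^{0}_{\mathcal{M}_{{\sim}}(M), 1}$ and $G_{\mathcal{M}_{\sim}(M), 1}$ have the same dimension (as noted in the proof of Theorem~\ref{equality of conn comp for GM and GM1}, both sit in exact sequences with the same connected overgroup modulo $\G_m$), hence the same identity component; alternatively one observes directly that $G^{0}_{\mathcal{M}_{{\sim}}(M), 1}$ is an open subgroup of $G_{\mathcal{M}_{\sim}(M), 1}$. Either way \eqref{H(V) subset of Gmot,1} follows.

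The main obstacle I anticipate is making the first step — the identification of the relevant Tannakian fundamental group with $\MT(V,\psi)$ — fully rigorous, since it requires knowing that $H_B$ sends $\mathcal{M}_{{\sim}}(M)$ into the Tannakian subcategory of $\mathcal{M}_{{\rm{hs}}}$ \emph{generated} by $(V,\psi)$ (not something larger), and that the induced map on Tannakian groups is the expected one. This is where Assumptions~2 and~3 and the semisimplicity of $\mathcal{M}_{{\sim}}$ enter: semisimplicity guarantees $\mathcal{M}_{{\sim}}(M)$ is generated by $M$ as a Tannakian category with the Betti fiber functor, so its image under $H_B$ is generated by $(V,\psi)$, and then \cite[Prop.~2.21(a)]{DM} gives that the morphism of Tannakian groups is faithfully flat onto its image. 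Once this bookkeeping is in place, everything else is a routine intersection-and-identity-component argument, and the lemma follows as above.
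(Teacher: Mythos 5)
Your proof is correct. For the second and third inclusions your route is essentially the paper's: it too deduces \eqref{DH(V) subset of Gmot,0,1} by intersecting \eqref{MT(V) subset of Gmot} with $\Iso_{(V,\psi)}$ (presented as a commutative diagram with exact columns and cartesian top squares) and then obtains \eqref{H(V) subset of Gmot,1} by passing to identity components, using exactly the fact you note that $G^{0}_{\mathcal{M}_{\sim}(M),1}$ is open in $G_{\mathcal{M}_{\sim}(M),1}$ and hence shares its identity component. Where you genuinely diverge is \eqref{MT(V) subset of Gmot}. The paper does not construct the induced morphism of Tannakian groups; instead it observes that cycles for $\sim$ (algebraic, motivated) are absolute Hodge cycles and hence Hodge cycles in the Betti realization, notes that both $\MT(V,\psi)$ and $(G_{\mathcal{M}_{\sim}(M)})^{\circ}$ are reductive, and concludes by the ``tensor invariance criterion'' for reductive subgroups of $\GL_{V}$: since $\MT(V,\psi)$ fixes every Hodge tensor, it fixes every tensor invariant of $(G_{\mathcal{M}_{\sim}(M)})^{\circ}$ and is therefore contained in it. Your functorial route --- $H_{B}$ carries $\mathcal{M}_{\sim}(M)$ into the Tannakian subcategory of Hodge structures generated by $(V,\psi)$, every object of that target is a subquotient of a realization of a tensor construction on $M$, so \cite[Prop.~2.21]{DM} yields a closed immersion $\MT(V,\psi)\hookrightarrow G_{\mathcal{M}_{\sim}(M)}$, and connectedness of $\MT(V,\psi)$ finishes --- avoids invoking reductivity on the Hodge-theoretic side and absorbs ``$\sim$-cycles are Hodge cycles'' into the mere existence of the tensor functor $H_{B}$; its cost is precisely the bookkeeping you flag, namely identifying $\MT(V,\psi)$ with the Tannakian group of the category generated by $(V,\psi)$ and verifying the subquotient criterion for closed immersions. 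Both arguments are standard and either one suffices.
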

\begin{proof} 
Algebraic cycles and motivated cycles are 
absolute Hodge cycles in cohomological realization \cite[Example 2.1 (a)]{D1}, \cite[Remarque 9.2.2.1]{An1}. 
Hence these three types of 
cycles are Hodge cycles in cohomological realization. Moreover both groups in 
\eqref{MT(V) subset of Gmot} are reductive. Now applying the ``tensor invariance criterion" for reductive groups, we obtain 
\eqref{MT(V) subset of Gmot}. Moreover by \eqref{MT(V) subset of Gmot} we obtain the following commutative 
Diagram \ref{DH(V, psi) subset Gmot,1} in which all horizontal arrows are closed immersions, the columns are exact,
and the top commutative
squares are cartesian.

\begin{figure}[H]
\[
\begin{tikzcd}
1 \arrow{d}[swap]{} &  1 \arrow{d}[swap]{} &  1 \arrow{d}[swap]{} \\
\gpDH(V, \psi)  \arrow{d}[swap]{} \arrow{r}{} & 
{G^{0}_{\mathcal{M}_{{\sim}}(M), 1}}  \arrow{d}[swap]{} \arrow{r}{} & 
\Iso_{(V, \psi)} \arrow{d}[swap]{}\\
\MT(V, \psi) \arrow{d}{\chi} \arrow{r}{} & 
({G_{\mathcal{M}_{{\sim}}(M)}})^{\circ} \arrow{d}{\chi} \arrow{r}{} & 
\GIso_{(V, \psi)} \arrow{d}{\chi}\\
\G_{m} \arrow{d}[swap]{} \arrow{r}{=} & \G_{m} \arrow{d}[swap]{} \arrow{r}{=} &  
\G_{m} \arrow{d}[swap]{}\\
1  & 1  & 1\\
\end{tikzcd}
\]
\\[-0.8cm]
\caption{}
\label{DH(V, psi) subset Gmot,1}
\end{figure}
The property \eqref{H(V) subset of Gmot,1} follows immediately from Diagram \ref{DH(V, psi) subset Gmot,1}.
\end{proof}
 
The following Proposition is the motivic analogue of Proposition 
\ref{Hodge realizing conn comp: even case}. 

\begin{proposition} \label{Motivic L0realizing conn comp: even case} 
The following statements hold:
\begin{itemize}
\item[(a)] $\G_{m} {\rm{Id}}_{V} \cdot (G_{\mathcal{M}_{{\sim}}(M), 1})^{\circ} \, = \, 
\G_{m} {\rm{Id}}_{V} \cdot G_{\mathcal{M}_{{\sim}}(M), 1}^{0} \, = 
\, (G_{\mathcal{M}_{{\sim}}(M)})^{\circ}$.
\item[(b)] $G_{\mathcal{M}_{{\sim}}(M), 1}^{0} = (G_{\mathcal{M}_{{\sim}}(M), 1})^{\circ}  \, \cup \, 
- {\rm{Id}}_{V} \cdot (G_{\mathcal{M}_{{\sim}}(M), 1})^{\circ} $.
\item[(c)] $- {\rm{Id}}_{V} \in (G_{\mathcal{M}_{{\sim}}(M), 1})^{\circ}$ iff 
$(G_{\mathcal{M}_{{\sim}}(M), 1})^{\circ} = G_{\mathcal{M}_{{\sim}}(M), 1}^{0} $.
\item[(d)] When $n$ is odd then $- {\rm{Id}}_{V} \, \in (G_{\mathcal{M}_{{\sim}}(M), 1})^{\circ} = G_{\mathcal{M}_{{\sim}}(M), 1}^{0}$.  
\end{itemize} 
\end{proposition}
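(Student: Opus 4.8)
The plan is to mimic the proof of Proposition~\ref{Hodge realizing conn comp: even case} essentially verbatim, since the structural ingredients are in place: we have the exact column in Diagram~\ref{DH(V, psi) subset Gmot,1} (equivalently the middle column in Diagram~\ref{GlK1 subset GMKA1Ql} after descending to $\Q$), so the kernel of $\chi$ on $(G_{\mathcal{M}_{{\sim}}(M)})^{\circ}$ is exactly $G_{\mathcal{M}_{{\sim}}(M), 1}^{0}$, and $\G_{m}{\rm{Id}}_{V} \subset (G_{\mathcal{M}_{{\sim}}(M)})^{\circ}$ because $\G_{m}{\rm{Id}}_{V} \subset \MT(V,\psi) \subset (G_{\mathcal{M}_{{\sim}}(M)})^{\circ}$ by \eqref{MT(V) subset of Gmot}, with $\chi(\alpha\,{\rm{Id}}_{V}) = \alpha^{2}$. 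Also $(G_{\mathcal{M}_{{\sim}}(M)})^{\circ}$ is connected (hence irreducible as an algebraic group), which is what drives the dimension-count argument.

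First I would prove (a): take a coset $g(G_{\mathcal{M}_{{\sim}}(M), 1})^{\circ}$ inside $G_{\mathcal{M}_{{\sim}}(M), 1}^{0}$, apply \cite[Section 7.4, Prop. B(b)]{Hu} to the multiplication map $\G_{m}{\rm{Id}}_{V} \times (G_{\mathcal{M}_{{\sim}}(M), 1})^{\circ} \to (G_{\mathcal{M}_{{\sim}}(M)})^{\circ}$ to see that $\G_{m}{\rm{Id}}_{V}\cdot(G_{\mathcal{M}_{{\sim}}(M), 1})^{\circ}$ and $\G_{m}{\rm{Id}}_{V}\cdot g(G_{\mathcal{M}_{{\sim}}(M), 1})^{\circ}$ are closed subsets of $(G_{\mathcal{M}_{{\sim}}(M)})^{\circ}$; they have the same dimension as $(G_{\mathcal{M}_{{\sim}}(M)})^{\circ}$ because the column $1 \to G_{\mathcal{M}_{{\sim}}(M), 1}^{0} \to (G_{\mathcal{M}_{{\sim}}(M)})^{\circ} \xrightarrow{\chi} \G_{m} \to 1$ is exact and $(G_{\mathcal{M}_{{\sim}}(M), 1}^{0})^{\circ} = (G_{\mathcal{M}_{{\sim}}(M), 1})^{\circ}$. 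Irreducibility of $(G_{\mathcal{M}_{{\sim}}(M)})^{\circ}$ then forces both to equal $(G_{\mathcal{M}_{{\sim}}(M)})^{\circ}$, giving (a). For (b), from (a) there exist $\alpha,\beta\in\G_{m}$ and $g_{1},g_{2}\in(G_{\mathcal{M}_{{\sim}}(M), 1})^{\circ}$ with $\alpha\,{\rm{Id}}_{V}\,g_{1} = \beta\,{\rm{Id}}_{V}\,g\,g_{2}$; applying $\chi$ yields $\alpha^{2}=\beta^{2}$, so $\pm{\rm{Id}}_{V}\,g_{1} = g\,g_{2}$, whence $g(G_{\mathcal{M}_{{\sim}}(M), 1})^{\circ}$ is either $(G_{\mathcal{M}_{{\sim}}(M), 1})^{\circ}$ or $-{\rm{Id}}_{V}\cdot(G_{\mathcal{M}_{{\sim}}(M), 1})^{\circ}$. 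Statement (c) is immediate from (b).

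For (d), when $n$ is odd I would argue as in Lemma~\ref{for n odd DH = H} and Theorem~\ref{L0realizing conn comp}: the Mumford--Tate cocharacter $\mu_{\infty,V}$ lands in $\MT(V,\psi)(\C) \subset (G_{\mathcal{M}_{{\sim}}(M)})^{\circ}(\C)$ with $\chi\circ\mu_{\infty,V}(z) = z^{-n}$, the diagonal cocharacter $w(z) = z\,{\rm{Id}}_{V_{\C}}$ also lies in $(G_{\mathcal{M}_{{\sim}}(M)})^{\circ}(\C)$ with $\chi(w(z)) = z^{2}$, and for $n$ odd the cocharacter $s(z) := \mu_{\infty,V}(z)\,w(z)^{-(n-1)/2}$ splits $\chi$ in the exact sequence $1 \to G_{\mathcal{M}_{{\sim}}(M), 1}^{0}(\C) \to (G_{\mathcal{M}_{{\sim}}(M)})^{\circ}(\C) \xrightarrow{\chi} \G_{m}(\C) \to 1$. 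Since $(G_{\mathcal{M}_{{\sim}}(M)})^{\circ}(\C)$ is a connected complex Lie group, Lemma~\ref{G connected implies G0 connected} gives that $G_{\mathcal{M}_{{\sim}}(M), 1}^{0}$ is connected, i.e.\ $G_{\mathcal{M}_{{\sim}}(M), 1}^{0} = (G_{\mathcal{M}_{{\sim}}(M), 1})^{\circ}$; combined with (c) this yields $-{\rm{Id}}_{V}\in(G_{\mathcal{M}_{{\sim}}(M), 1})^{\circ}$.

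The one point requiring a little care — and the likely main obstacle — is verifying that $\mu_{\infty,V}$ actually factors through $(G_{\mathcal{M}_{{\sim}}(M)})^{\circ}$ with the stated value of $\chi$, and that $w$ does too. The first follows from \eqref{MT(V) subset of Gmot} together with the definition of $\MT(V,\psi)$ via $\mu_{\infty,V}$; the value $\chi\circ\mu_{\infty,V}(z)=z^{-n}$ is forced by the fact that $\psi_{{\sim}}$ (hence $\psi$) is a morphism to $\mathbf{1}(-n)$, exactly as in Lemma~\ref{for n odd DH = H}. For $w$, one needs $\G_{m}{\rm{Id}}_{V}\subset(G_{\mathcal{M}_{{\sim}}(M)})^{\circ}$, which again follows from $\G_{m}{\rm{Id}}_{V}\subset\MT(V,\psi)$ and \eqref{MT(V) subset of Gmot}; alternatively, one may simply invoke Lemma~\ref{for n odd DH = H} directly, noting that $\gpDH(V,\psi) = \gpH(V,\psi) \subset (G_{\mathcal{M}_{{\sim}}(M), 1})^{\circ}$ by \eqref{H(V) subset of Gmot,1}, so $-{\rm{Id}}_{V}\in\gpH(V,\psi) = \gpDH(V,\psi)$ already lies in $(G_{\mathcal{M}_{{\sim}}(M), 1})^{\circ}$ — this last shortcut is arguably the cleanest route and I would present (d) that way, deducing the chain of equalities from (c).
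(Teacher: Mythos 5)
Your proof of (a)--(c) is essentially identical to the paper's: the same application of \cite[Section 7.4, Prop.\ B(b)]{Hu} to the multiplication map, the same dimension count via the exact sequence $1 \to G_{\mathcal{M}_{{\sim}}(M), 1}^{0} \to (G_{\mathcal{M}_{{\sim}}(M)})^{\circ} \xrightarrow{\chi} \G_m \to 1$ together with the finiteness of $G^{0}_{\mathcal{M}_{{\sim}}(M),1}/(G_{\mathcal{M}_{{\sim}}(M),1})^{\circ}$, and the same $\alpha^2=\beta^2$ trick for (b). For (d), the paper isolates the cocharacter-splitting argument as a separate statement (Lemma \ref{for n odd G0M1 = GM10}, proved exactly as you describe by transporting $\mu_{\infty,V}$ and $w$ into $(G_{\mathcal{M}_{{\sim}}(M)})^{\circ}$ via \eqref{MT(V) subset of Gmot} and invoking Lemma \ref{G connected implies G0 connected}) and then cites it; your preferred shortcut --- $-{\rm{Id}}_V \in \gpH(V,\psi)$ for $n$ odd by Proposition \ref{Hodge realizing conn comp: even case}(d), hence $-{\rm{Id}}_V \in (G_{\mathcal{M}_{{\sim}}(M),1})^{\circ}$ by \eqref{H(V) subset of Gmot,1}, and conclude by (c) --- is also valid and avoids redoing the splitting at the motivic level, though note the paper still needs the connectedness statement of Lemma \ref{for n odd G0M1 = GM10} elsewhere (e.g.\ to show $i_M$ is an isomorphism for odd weight), so the lemma is not dispensable globally.
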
 
\begin{proof}

(a)  Consider a coset $g (G_{\mathcal{M}_{{\sim}}(M), 1})^{\circ}$ in 
$G_{\mathcal{M}_{{\sim}}(M), 1}^{0}$. Applying \cite[Section 7.4, Prop. B(b)]{Hu} to the homomorphism
\begin{gather*}
\G_{m} {\rm{Id}}_{V} \, \times \, (G_{\mathcal{M}_{{\sim}}(M), 1})^{\circ} \rightarrow 
(G_{\mathcal{M}_{{\sim}}(M)})^{\circ}, \\
(g_1, g_2) \, \mapsto \, g_1 g_2
\end{gather*}
we observe that  $\G_{m} {\rm{Id}}_{V} \cdot (G_{\mathcal{M}_{{\sim}}(M), 1})^{\circ}$ and $\G_{m} {\rm{Id}}_{V} \cdot g (G_{\mathcal{M}_{{\sim}}(M), 1})^{\circ}$ are closed in $(G_{\mathcal{M}_{{\sim}}(M)})^{\circ}$. They are also of the same dimension as $(G_{\mathcal{M}_{{\sim}}(M)})^{\circ}$ because of the exact sequence
\begin{equation}
1 \,\, {\stackrel{}{\longrightarrow}} \,\, G_{\mathcal{M}_{{\sim}}(M), 1}^{0} \,\, {\stackrel{}{\longrightarrow}} \,\, 
(G_{\mathcal{M}_{{\sim}}(M)})^{\circ} \,\, {\stackrel{\chi}{\longrightarrow}} \,\, \G_m \,\, {\stackrel{}{\longrightarrow}} \,\, 1.
\label{The exact sequence for GM0-sim1 and (GM-sim)0}
\end{equation}
and the finiteness of $G^{0}_{\mathcal{M}_{{\sim}}(M), 1}/(G_{\mathcal{M}_{{\sim}}(M), 1})^{\circ}$.

Because $(G_{\mathcal{M}_{{\sim}}(M)})^{\circ}$ is an irreducible algebraic group and $\G_{m} {\rm{Id}}_{V} \cdot (G_{\mathcal{M}_{{\sim}}(M), 1})^{\circ}$ is a closed subgroup of the same dimension, we must have 
\begin{equation}
\G_{m} {\rm{Id}}_{V} \cdot (G_{\mathcal{M}_{{\sim}}(M), 1})^{\circ} \, = 
\,\G_{m} {\rm{Id}}_{V} \cdot  g\, (G_{\mathcal{M}_{{\sim}}(M), 1})^{\circ} = 
(G_{\mathcal{M}_{{\sim}}(M)})^{\circ}.
\label{equality of Gm IdV . GMsimM1circ and  GMsimM1circ}
\end{equation}

\noindent
(b) From \eqref{equality of Gm IdV . GMsimM1circ and  GMsimM1circ} there are $\alpha, \beta \in \G_{m}$ and $g_1, g_2 \in (G_{\mathcal{M}_{{\sim}}(M), 1})^{\circ}$ such that
\begin{equation}
\alpha \, {\rm{Id}}_{V} \cdot g_1 = \beta \, {\rm{Id}}_{V} \cdot g g_2. 
\label{equality of coset generators for motivic Galois groups}
\end{equation}
Applying $\chi$ to \eqref{equality of coset generators for motivic Galois groups} we obtain 
$\alpha^2 = \beta^2$. This implies that $\pm {\rm{Id}}_{V} \cdot g_1 =  g g_2$. Hence
$g (G_{\mathcal{M}_{{\sim}}(M), 1})^{\circ} = (G_{\mathcal{M}_{{\sim}}(M), 1})^{\circ}$ or 
$g (G_{\mathcal{M}_{{\sim}}(M), 1})^{\circ} = 
- {\rm{Id}}_{V} (G_{\mathcal{M}_{{\sim}}(M), 1})^{\circ}$. 
\medskip

\noindent
(c) This follows immediately from (b).
\medskip

\noindent
(d) This follows immediately from (c) and the following Lemma \ref{for n odd G0M1 = GM10}.
\end{proof}

\begin{lemma}
\label{for n odd G0M1 = GM10}
 For odd weight $n$ we have:
$$
G_{\mathcal{M}_{{\sim}}(M), 1}^{0} = (G_{\mathcal{M}_{{\sim}}(M), 1})^{\circ}.
$$
\end{lemma}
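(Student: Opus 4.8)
The plan is to imitate the proof of Lemma~\ref{for n odd DH = H}, with $(G_{\mathcal{M}_{{\sim}}(M)})^{\circ}$ playing the role of $\MT(V,\psi)$. First I would reduce the statement to the assertion that $G_{\mathcal{M}_{{\sim}}(M), 1}^{0}$ is connected. Indeed, $(G_{\mathcal{M}_{{\sim}}(M), 1})^{\circ}$ is a connected group containing the identity and lying in $\Iso_{(V,\psi)}$, hence contained in $(G_{\mathcal{M}_{{\sim}}(M)})^{\circ}\cap\Iso_{(V,\psi)} = G_{\mathcal{M}_{{\sim}}(M), 1}^{0}$ (see Definition~\ref{motivic Galois group, motivic Serre group}); and the two groups have the same dimension because $G_{\mathcal{M}_{{\sim}}(M), 1}^{0}/(G_{\mathcal{M}_{{\sim}}(M), 1})^{\circ}$ is finite, being a subgroup of $\pi_{0}(G_{\mathcal{M}_{{\sim}}(M), 1})$ (cf. the exact sequence~\eqref{properties of iM}). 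So once $G_{\mathcal{M}_{{\sim}}(M), 1}^{0}$ is known to be connected it must coincide with $(G_{\mathcal{M}_{{\sim}}(M), 1})^{\circ}$.

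To prove the connectedness, note that by \eqref{GMKA subset of GSpV} and the description of $\Iso_{(V,\psi)}$ as $\Ker(\chi\colon\GIso_{(V,\psi)}\to\G_m)$ we have $G_{\mathcal{M}_{{\sim}}(M), 1}^{0} = \Ker\big(\chi|_{(G_{\mathcal{M}_{{\sim}}(M)})^{\circ}}\big)$, and $\chi$ restricted to $(G_{\mathcal{M}_{{\sim}}(M)})^{\circ}$ is an epimorphism onto $\G_m$ by the exact sequence~\eqref{The exact sequence for GM0-sim1 and (GM-sim)0}. Since $(G_{\mathcal{M}_{{\sim}}(M)})^{\circ}$ is a connected algebraic group, $(G_{\mathcal{M}_{{\sim}}(M)})^{\circ}(\C)$ is a connected complex Lie group, so by Lemma~\ref{G connected implies G0 connected} it suffices to produce a cocharacter $s\colon\G_m(\C)\to(G_{\mathcal{M}_{{\sim}}(M)})^{\circ}(\C)$ splitting $\chi$ on $\C$-points.

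This is where both the motivic input and the odd weight are used. By Lemma~\ref{MT subset GMot, H subset GMot1}, in particular the inclusion~\eqref{MT(V) subset of Gmot}, we have $\MT(V,\psi)\subseteq(G_{\mathcal{M}_{{\sim}}(M)})^{\circ}$; hence the Mumford--Tate cocharacter $\mu_{\infty,V}\colon\G_m(\C)\to\MT(V,\psi)(\C)$ and the diagonal cocharacter $w(z):=z\,{\rm{Id}}_{V_{\C}}$ (which takes values in $\G_m{\rm{Id}}_V\subseteq\MT(V,\psi)$) both factor through $(G_{\mathcal{M}_{{\sim}}(M)})^{\circ}(\C)$. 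Since $\chi\circ\mu_{\infty,V}(z)=z^{-n}$ and $\chi(w(z))=z^{2}$, and since $n$ is odd, the cocharacter $s(z):=\mu_{\infty,V}(z)\,w(z)^{-\frac{n-1}{2}}$ is well defined (the exponent is an integer) and splits $\chi$ in the exact sequence $1 \to G_{\mathcal{M}_{{\sim}}(M), 1}^{0}(\C) \to (G_{\mathcal{M}_{{\sim}}(M)})^{\circ}(\C) \stackrel{\chi}{\to} \G_m(\C) \to 1$, exactly as in the proof of Lemma~\ref{for n odd DH = H} (and of Theorem~\ref{L0realizing conn comp}). Applying Lemma~\ref{G connected implies G0 connected} then gives that $G_{\mathcal{M}_{{\sim}}(M), 1}^{0}$ is connected, which by the first paragraph finishes the proof; note that this is consistent with, and directly yields, Proposition~\ref{Motivic L0realizing conn comp: even case}(d).

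The only non-formal ingredient is the containment $\MT(V,\psi)\subseteq(G_{\mathcal{M}_{{\sim}}(M)})^{\circ}$, and this is already supplied by Lemma~\ref{MT subset GMot, H subset GMot1}, whose proof rests on the tensor-invariance criterion for reductive groups together with the fact that Hodge cycles in cohomological realizations are absolute Hodge (respectively motivated). Everything else is a transcription of \S\ref{Mumford--Tate groups of polarized Hodge structures}; the hypothesis that $n$ is odd enters only to make $\tfrac{n-1}{2}\in\Z$, so that $\chi\circ s$ is an automorphism of $\G_m$ rather than a nontrivial isogeny. I therefore expect no real obstacle beyond carefully tracking the character $\chi$ along the cocharacters $\mu_{\infty,V}$ and $w$.
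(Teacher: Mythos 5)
Your proposal is correct and follows essentially the same route as the paper's proof: extend $\mu_{\infty,V}$ and $w$ into $(G_{\mathcal{M}_{{\sim}}(M)})^{\circ}$ via \eqref{MT(V) subset of Gmot}, use the odd-weight hypothesis to form the integral power $w(z)^{-(n-1)/2}$ so that $s=\mu_{\infty,V}\,w^{-(n-1)/2}$ splits $\chi$ on $\C$-points, invoke Lemma~\ref{G connected implies G0 connected} to get connectedness of $G^{0}_{\mathcal{M}_{{\sim}}(M),1}$, and then conclude by the identity $(G^{0}_{\mathcal{M}_{{\sim}}(M),1})^{\circ}=(G_{\mathcal{M}_{{\sim}}(M),1})^{\circ}$ from Theorem~\ref{equality of conn comp for GM and GM1}. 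The only cosmetic difference is that you spell out the containment $(G_{\mathcal{M}_{{\sim}}(M),1})^{\circ}\subseteq G^{0}_{\mathcal{M}_{{\sim}}(M),1}$ and the dimension count explicitly rather than citing them implicitly.
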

\begin{proof}
The proof is basically the same as the proof of Lemma \ref{for n odd DH = H}. We explain 
the modification. Extending the codomain of $\mu_{\infty, V}$ 
and $w$ of the proof of Lemma \ref{for n odd DH = H} using \eqref{MT(V) subset of Gmot}, we obtain the following corresponding cocharacters:  
\begin{gather*}
\mu_{\infty, V}\colon \G_m (\C) \rightarrow (G_{\mathcal{M}_{{\sim}}(M)})^{\circ}, \\
w\colon \G_m (\C) \rightarrow (G_{\mathcal{M}_{{\sim}}(M)})^{\circ}.
\end{gather*}
These cocharacters have precisely the same properties as $\mu_{\infty, V}$ 
and $w$ of Lemma \ref{for n odd DH = H}, as is clear from the bottom squares of Diagram
\ref{DH(V, psi) subset Gmot,1}. This leads directly to the cocharacter
\begin{gather*}
s\colon \G_m (\C) \rightarrow  (G_{\mathcal{M}_{{\sim}}(M)})^{\circ}, \\
s(z) := \mu_{\infty, V}(z) \, w(z)^{-\frac{n-1}{2}},
\end{gather*}
which splits $\chi$ in the following exact sequence:
$$
1 \,\, {\stackrel{}{\longrightarrow}} \,\, G_{\mathcal{M}_{{\sim}}(M), 1}^{0} (\C) \,\, {\stackrel{}{\longrightarrow}} \,\,  
(G_{\mathcal{M}_{{\sim}}(M)})^{\circ}(\C) \,\, {\stackrel{\chi}{\longrightarrow}} \,\, \G_m (\C) \,\, {\stackrel{}{\longrightarrow}} \,\, 1.
$$
The group $(G_{\mathcal{M}_{{\sim}}(M)})^{\circ}$ is obviously connected. 
It follows by Lemma \ref{G connected implies G0 connected} that $G_{\mathcal{M}_{{\sim}}(M), 1}^{0}$ is connected. Hence 
$G_{\mathcal{M}_{{\sim}}(M), 1}^{0} = (G_{\mathcal{M}_{{\sim}}(M), 1})^{\circ}$
because $(G^{0}_{\mathcal{M}_{{\sim}}(M), 1})^{\circ} = 
(G_{\mathcal{M}_{{\sim}}(M), 1})^{\circ}$ by Theorem \ref{equality of conn comp for GM and GM1}.
\end{proof}

\begin{definition}
\label{def of motivic Mumford Tate and Sato Tate groups}
The algebraic groups:
\begin{align*}
\MMT_{{\sim}} (M) & := G_{\mathcal{M}_{{\sim}}(M)} \\
\MS_{{\sim}} (M) & := G_{\mathcal{M}_{{\sim}}(M), 1}
\end{align*}
will be called the \emph{motivic Mumford--Tate group} and (as before) the 
\emph{motivic Serre group} for $M$ respectively.
\end{definition} 

Serre made the following conjecture \cite[sec.\ 3.4]{Se94}.
\begin{conjecture} {\text{(Serre)}}
\label{Conj. of Serre MT = MMT0}
\begin{equation} 
\MT (V, \psi) = \MMT_{{\sim}} (M)^{\circ}.
\label{Conjecture of Serre MT = MMT0}
\end{equation}
\end{conjecture}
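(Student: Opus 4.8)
The plan is to separate \eqref{Conjecture of Serre MT = MMT0} into its two inclusions. One of them, $\MT(V,\psi) \subseteq \MMT_{{\sim}}(M)^{\circ} = (G_{\mathcal{M}_{{\sim}}(M)})^{\circ}$, is already in hand: it is exactly \eqref{MT(V) subset of Gmot} of Lemma \ref{MT subset GMot, H subset GMot1}, proved there by the tensor-invariance criterion for reductive groups together with the fact that algebraic, motivated and absolute Hodge cycles are Hodge cycles in the Betti realization. Since $\MT(V,\psi)$ is connected, all the substance of the conjecture lies in the reverse inclusion $(G_{\mathcal{M}_{{\sim}}(M)})^{\circ} \subseteq \MT(V,\psi)$, and the first thing I would do is reformulate this reverse inclusion Tannaka-theoretically.

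The Betti fiber functor restricts to an exact, faithful $\otimes$-functor $H_{B}\colon \mathcal{M}_{{\sim}}(M) \to \mathcal{M}_{{\rm{hs}}}$ whose essential image generates the Tannakian subcategory $\langle V\rangle$ of $\mathcal{M}_{{\rm{hs}}}$ generated by $V = H_{B}(M)$ (the Artin submotives of $\mathcal{M}_{{\sim}}(M)$, which are present by \eqref{key containment assumption corollary}, realize to trivial Hodge structures and contribute nothing beyond $\mathbf 1$). As every object of $\langle V\rangle$ is a subquotient of an object in the image of $H_{B}$, \cite[Prop.~2.21]{DM} both recovers the closed immersion $\MT(V,\psi) \hookrightarrow G_{\mathcal{M}_{{\sim}}(M)}$ of Lemma \ref{MT subset GMot, H subset GMot1} and shows that the reverse inclusion in \eqref{Conjecture of Serre MT = MMT0} is equivalent to this immersion having finite cokernel, i.e.\ to $\dim \MT(V,\psi) = \dim G_{\mathcal{M}_{{\sim}}(M)}$. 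Moreover $(G_{\mathcal{M}_{{\sim}}(M)})^{\circ}$ is a connected subgroup of the (possibly disconnected) $G_{\mathcal{M}_{{\sim}}(M)}$ that surjects onto the finite group $\Gal(K_{e}/K)$ via Diagram \ref{surjection of motivic Galois groups}, so $(G_{\mathcal{M}_{{\sim}}(M)})^{\circ}$ lies in the kernel of that surjection; since $\MT(V,\psi)$ is in any case insensitive to the arithmetic of $K$, the problem reduces to the purely geometric assertion that \emph{every Hodge class in every mixed tensor construction $\overline V^{\otimes a}\otimes(\overline V^{\vee})^{\otimes b}$ attached to the geometric motive $\overline M$ is, up to a scalar, the Betti realization of a $\sim$-cycle}.

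At that stage the plan is to invoke the cases where this geometric statement — a weak form of the Hodge conjecture — is actually known: for $\sim = {\rm{ahc}}$ and $M$ cut out of the cohomology of an abelian variety (more generally an abelian motive), it is Deligne's theorem that Hodge cycles on abelian varieties are absolutely Hodge \cite{DM}; for $\sim = {\rm{mot}}$ the analogous statement holds on the Tannakian subcategory of motives of abelian type by André's construction \cite{An1},\cite{An2}, in which motivated cycles are built to contain precisely the classes forced by the Hodge conjecture for abelian varieties. Outside these ranges, and this is the main obstacle, there is no known unconditional mechanism producing enough $\sim$-cycles to account for all Hodge classes, so \eqref{Conjecture of Serre MT = MMT0} can only be asserted conditionally: it is genuinely (a weak form of) the Hodge conjecture in disguise, as Serre already observed \cite{Se94}. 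The honest conclusion is therefore to present the forward inclusion as the established part (Lemma \ref{MT subset GMot, H subset GMot1}), record the Tannakian reduction of the reverse inclusion to the geometric cycle statement above, and flag that statement as the input that is presently available only in the abelian cases.
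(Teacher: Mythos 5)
The statement you were asked to prove is labeled a conjecture (Serre's Conjecture~\ref{Conj. of Serre MT = MMT0}), and the paper itself gives no proof: it records one inclusion $\MT(V,\psi)\subseteq\MMT_{\sim}(M)^{\circ}$ as Lemma~\ref{MT subset GMot, H subset GMot1}, notes equivalent reformulations in Remark~\ref{equivalent statements of the Serre Conj. MT = MMT0}, and cites the known cases (AHC motives, $h^1_{\mathrm{ahc}}(A)$ of an abelian variety) in Remark~\ref{A Serre conjecture about connected component of motivic Mumford--Tate}. Your analysis mirrors exactly this: you correctly establish the forward inclusion via Lemma~\ref{MT subset GMot, H subset GMot1}, reduce the reverse inclusion Tannaka-theoretically to a geometric cycle statement, observe that it is known only in the abelian/AHC setting, and honestly conclude that the statement is genuinely conjectural — which is precisely the paper's own treatment.
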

\begin{remark}
\label{equivalent statements of the Serre Conj. MT = MMT0}
By Proposition \ref{Hodge realizing conn comp: even case}(b), Lemma \ref{MT subset GMot, H subset GMot1},
and Proposition \ref{Motivic L0realizing conn comp: even case}(b) and the diagram above it,
 Conjecture \ref{Conj. of Serre MT = MMT0} holds for $M$ if and only if 
$$
\gpDH(V, \psi) = G_{\mathcal{M}_{{\sim}}(M), 1}^{0},
$$
if and only if 
$$
\gpH(V, \psi) = \MS_{{\sim}} (M)^{\circ}.
$$
\end{remark}

\begin{definition} A motive $M \in \mathcal{M}_{{\text{ahc}}}$ will be called an \emph{AHC motive}
if every Hodge cycle on any object of $\mathcal{M}_{{\text{ahc}}}(M)$ is an absolute Hodge cycle
(cf. \cite[p.\ 29]{D1}, \cite[p.\ 473]{Pan}).
\label{AHC motives}
\end{definition}

\begin{remark}
\label{A Serre conjecture about connected component of motivic Mumford--Tate}
By \cite{DM} Conjecture \ref{Conj. of Serre MT = MMT0} holds 
for motives $h^{1}_{{\text{ahc}}}(A)$
where $A/K$ is an abelian variety and 
for AHC motives $M$ in $\mathcal{M}_{{\text{ahc}}}$ (cf. \cite[Corollary p.\ 474]{Pan}).
\end{remark}

Similarly to \cite[Chapter 10]{BK2} we make the following definitions and state the 
following conjectures in this more general context of motivic categories.

\begin{conjecture} (Motivic Mumford--Tate) \label{Motivic Mumford--Tate} 
For any prime number $l$,
\begin{equation}
G_{l, K}^{\alg} = {\MMT_{{\sim}} (M)}_{\Q_l}.
\label{MMT eq}
\end{equation}
\end{conjecture}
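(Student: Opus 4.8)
Since \eqref{MMT eq} is stated as a conjecture, the realistic target is not an unconditional argument but a reduction of the Motivic Mumford--Tate Conjecture to inputs that are themselves classical conjectures (and theorems in favorable cases). One inclusion is already in hand: by Remark~\ref{Remark on GlKalg subset GMKAQl} the $l$-adic realization of $M$ factors through $\MMT_{{\sim}}(M)(\Q_l)$, so $G_{l, K}^{\alg} \subseteq \MMT_{{\sim}}(M)_{\Q_l}$ holds unconditionally; this uses only the Betti--\'etale comparison and the density of $\rho_l(G_K)$ in $G_{l, K}^{\alg}$ (Remark~\ref{properties of Hodge--Tate representations }), together with the running hypothesis that the $l$-adic family attached to $M$ is strictly compatible, so that the machinery of Parts I applies via Lemma~\ref{conditions R1-R4 are satisfied by Hodge str. and associated l-adic rep. for M}. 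Thus $G_{l, K}^{\alg}$ is a union of connected components of $\MMT_{{\sim}}(M)_{\Q_l}$, and the entire problem is to show it is all of them. The plan is to split this into an identity-component statement and a component-group statement.

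First I would prove equality on identity components. The chain $(G_{l, K}^{\alg})^{\circ} \subseteq \MT(V, \psi)_{\Q_l} \subseteq (\MMT_{{\sim}}(M))^{\circ}_{\Q_l}$ follows from the Deligne-type bound \eqref{Del ineq general} (unconditional for $M = h^{1}_{{\rm{ahc}}}(A)$ and for AHC motives by Theorem~\ref{Theorem Deligne} and Remark~\ref{A Serre conjecture about connected component of motivic Mumford--Tate}, conjectural in general) together with $\MT(V, \psi) \subseteq (G_{\mathcal{M}_{{\sim}}(M)})^{\circ}$ from Lemma~\ref{MT subset GMot, H subset GMot1}. For the reverse inclusion I would feed in the classical Mumford--Tate Conjecture~\ref{Mumford--Tate}, giving $(G_{l, K}^{\alg})^{\circ} = \MT(V, \psi)_{\Q_l}$, and Serre's Conjecture~\ref{Conj. of Serre MT = MMT0}, giving $\MT(V, \psi) = \MMT_{{\sim}}(M)^{\circ}$. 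Together these yield $(G_{l, K}^{\alg})^{\circ} = (\MMT_{{\sim}}(M)_{\Q_l})^{\circ}$.

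Next I would match the groups of connected components. By \eqref{GlK1alg mod GlK1algid = GLeK} the projection $G_{l, K}^{\alg} \to \Gal(K_e/K)$, with kernel $(G_{l, K}^{\alg})^{\id} = G_{l, K_e}^{\alg}$, is surjective, and by \eqref{GMot over GMotid = GKeK} so is $\MMT_{{\sim}}(M) \to \Gal(K_e/K)$; both are induced by $\rho_e$ acting on $D$, so ${\rm t}_{l, K}$ is compatible with them. It therefore remains to identify $\pi_0(G_{l, K_e}^{\alg})$ with the component group of $G_{\mathcal{M}_{{\sim}}(M)}^{\id}$, i.e. to prove the Motivic Mumford--Tate statement after base change to $K_e$, where the residual discrepancy is pure parity. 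Indeed, under Serre's conjecture the component group of $G^{0}_{\mathcal{M}_{{\sim}}(M), 1}$ is the Betti parity group $\gpP(V, \psi)$ (Remark~\ref{equivalent statements of the Serre Conj. MT = MMT0}, Definition~\ref{The Betti parity group}), while on the $l$-adic side the relevant invariant is the Serre $l$-adic parity group $\gpP_{\rm{S}}(V_{l}, \psi_{l})$ (Definition~\ref{The l-adic parity group}); the natural epimorphisms among parity groups (e.g. Corollary~\ref{Corollary-epimorphism between DR parity and Betti parity groups}) become isomorphisms once the classical Mumford--Tate conjecture is known, and Theorem~\ref{equality of conn comp for GM and GM1} then links $\pi_0(\MS_{{\sim}}(M))$ to $\pi_0(\MMT_{{\sim}}(M))$, completing the matching. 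When $n$ is odd both parity groups vanish (Propositions~\ref{P(V, psi) trivial for n odd or H = CD} and \ref{P(Vl, psil) trivial for n odd or GlK01 = CDl}), so this step is automatic and only the identity-component argument is needed.

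The main obstacle is plainly the classical Mumford--Tate Conjecture~\ref{Mumford--Tate} itself, which is open in general and not even settled for all abelian varieties; secondarily, Serre's Conjecture~\ref{Conj. of Serre MT = MMT0} and the Deligne-type bound \eqref{Del ineq general} are presently available only for abelian varieties and AHC motives, so the reduction above is genuinely conditional outside that range. Even in the abelian case, where the parity contribution disappears and the two auxiliary inputs hold, the Motivic Mumford--Tate conjecture is equivalent to the classical one. I therefore expect any honest treatment to consist of the reduction sketched here, supplemented by the list of cases in which the hypotheses are theorems.
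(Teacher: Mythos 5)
This is stated in the paper as a \emph{conjecture}, not a theorem, and the paper offers no proof of it; what the paper does offer is the observation (Remark~\ref{Mumford--Tate Conj and Deligne Theorem analogue} and the remark following Conjecture~\ref{Motivic Sato--Tate}) that it is equivalent to the conjunction of \eqref{MMT0 eq} and \eqref{GMT0C eq}, plus a sufficient condition (Corollary~\ref{cor alg Sato--Tate for MT explained by endo}) valid when $\gpH(V, \psi) = C_{D}(\Iso_{(V, \psi)})$. Your recognition of this decomposition is correct, and your handling of the identity-component part is right: under the classical Mumford--Tate Conjecture~\ref{Mumford--Tate} and Serre's Conjecture~\ref{Conj. of Serre MT = MMT0}, one gets $(G_{l, K}^{\alg})^{\circ} = \MT(V, \psi)_{\Q_l} = (\MMT_{{\sim}}(M)_{\Q_l})^{\circ}$.

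However, your component-group argument contains a genuine gap. After reducing to $K_e$, the quantities that need to match are $\pi_{0}\bigl((G_{l, K}^{\alg})^{\id}\bigr)$ and $\pi_{0}\bigl((G_{\mathcal{M}_{{\sim}}(M)})^{\id}_{\Q_l}\bigr)$ --- both are finite unions of connected components of $(\MMT_{\sim}(M)_{\Q_l})^{\circ}$ lying inside $C_{D}(\GIso_{(V,\psi)})_{\Q_l}$. You assert this discrepancy is ``pure parity'' and invoke $\gpP(V, \psi)$, $\gpP_{\rm S}(V_l, \psi_l)$, and Theorem~\ref{equality of conn comp for GM and GM1}, but those objects measure something different: they track the gap between the motivic Serre group $G_{\mathcal{M}_{{\sim}}(M), 1}$ (respectively $G_{l,K,1}^{\alg}$) and the identity component inside the \emph{isometry} group $\Iso_{(V,\psi)}$, via the exact sequence \eqref{properties of iM}. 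That is a statement comparing the Mumford--Tate side with the Sato--Tate side within a single framework, not a statement comparing the $l$-adic side with the motivic side. The two questions are distinct, and nothing in the parity machinery forces $\pi_{0}(G_{l, K_e}^{\alg}) = \pi_{0}(G_{\mathcal{M}_{{\sim}}(M)}^{\id}{}_{\Q_l})$; in general both of these can be nontrivial finite groups of size larger than $2$. The paper's Corollary~\ref{cor alg Sato--Tate for MT explained by endo} closes this gap not by parity but by the stronger hypothesis $\gpH(V, \psi) = C_{D}(\Iso_{(V, \psi)})$, which forces $G^{\id}_{\mathcal{M}_{{\sim}}(M)}$ (and hence also $(G_{l, K}^{\alg})^{\id}$, under classical Mumford--Tate) to be connected, so that both component groups collapse to $\Gal(K_e/K)$ and match trivially. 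You should replace the parity appeal with either an explicit extra conjectural input for the component group of $(G_{\mathcal{M}_{{\sim}}(M)})^{\id}$, or with the $\gpH = C_{D}(\Iso)$ hypothesis that the paper actually uses.
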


By the diagram above Theorem \ref{equality of conn comp for GM and GM1}, 
Conjecture \ref{Motivic Mumford--Tate} is equivalent to the following.
\begin{conjecture} (Motivic Sato--Tate) \label{Motivic Sato--Tate} 
For any prime number $l$,
\begin{equation}
G_{l, K, 1}^{\alg} = \MS_{{\sim}}(M)_{\Q_l} .
\label{MST eq}\end{equation}
\end{conjecture}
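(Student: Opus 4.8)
Since the inclusion $G_{l, K, 1}^{\alg} \subseteq \MS_{{\sim}}(M)_{\Q_l}$ is already recorded in \eqref{GlK1alg subset GMKA1Ql}, the entire content of the statement is the reverse containment, and the plan is to obtain it from the Motivic Mumford--Tate conjecture \eqref{MMT eq}, $G_{l, K}^{\alg} = \MMT_{{\sim}}(M)_{\Q_l}$. First I would base-change Diagram \ref{GlK1 subset GMKA1Ql} to $\Q_l$: there the left column is the exact sequence $1 \to G_{l, K, 1}^{\alg} \to G_{l, K}^{\alg} \to \G_m \to 1$ with the map to $\G_m$ given by the polarization character $\chi$, and the middle column is the exact sequence $1 \to \MS_{{\sim}}(M)_{\Q_l} \to \MMT_{{\sim}}(M)_{\Q_l} \to \G_m \to 1$ with the same character $\chi$; both middle terms sit compatibly inside $\GIso_{(V_l,\psi_l)}$. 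An equality of the two middle terms therefore forces an equality of the kernels --- either by the snake lemma, or directly, since $G_{l, K, 1}^{\alg} = G_{l, K}^{\alg} \cap \Iso_{(V_l, \psi_l)}$ (cf.\ \eqref{Def of GlL1alg}) and $\MS_{{\sim}}(M)_{\Q_l} = \MMT_{{\sim}}(M)_{\Q_l} \cap \Iso_{(V_l, \psi_l)}$ by Definition \ref{motivic Galois group, motivic Serre group}. Thus the problem is reduced to proving \eqref{MMT eq}.

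For \eqref{MMT eq} the natural route is to compare identity components and component groups separately. On identity components one wants $(G_{l, K}^{\alg})^{\circ} = \MMT_{{\sim}}(M)^{\circ}_{\Q_l}$, which follows by chaining the classical Mumford--Tate conjecture \eqref{MT eq for Hodge structures associated with l-adic rep.}, namely $(G_{l, K}^{\alg})^{\circ} = \MT(V, \psi)_{\Q_l}$, with Serre's Conjecture \ref{Conj. of Serre MT = MMT0}, namely $\MT(V, \psi) = \MMT_{{\sim}}(M)^{\circ}$; for $M = h^1_{\text{ahc}}(A)$ with $A/K$ abelian, and for AHC motives, both inputs are unconditional by Theorem \ref{Theorem Deligne} and Remark \ref{A Serre conjecture about connected component of motivic Mumford--Tate}. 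On component groups, $\pi_0(G_{l, K}^{\alg})$ and $\pi_0(\MMT_{{\sim}}(M))$ both surject onto $\Gal(K_e/K)$ --- by \eqref{GlK1alg mod GlK1algid = GLeK} and \eqref{GMot over GMotid = GKeK} respectively --- compatibly with the embedding $G_{l, K}^{\alg} \hookrightarrow \MMT_{{\sim}}(M)_{\Q_l}$ and with the $\tau$-twisted decompositions \eqref{decomposition of GlKalg into cosets of twisted algebraic closures} and \eqref{GMKA decomposition into GMKAtau}; so one is reduced to the $\id$-indexed piece, i.e.\ to showing $G_{l, K_e}^{\alg} = \MMT_{{\sim}}(M)^{\id}_{\Q_l}$, where the identity-component comparison settles dimensions and it remains to rule out extra components on the motivic side.

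The hard part is that \eqref{MMT eq} is a genuine strengthening of the classical Mumford--Tate conjecture and is open in general: its input \eqref{MT eq for Hodge structures associated with l-adic rep.} is itself the (open) Mumford--Tate conjecture, and Serre's Conjecture \ref{Conj. of Serre MT = MMT0} that the motivic Mumford--Tate group has identity component $\MT(V, \psi)$ is known only for motives of abelian type and for AHC motives. Even granting both, the component-group comparison requires that the finite group $\pi_0(\MMT_{{\sim}}(M))$ be ``explained'' by the Artin motive $h^0(D)$ exactly as $\pi_0(G_{l, K}^{\alg})$ is --- equivalently, that $\MMT_{{\sim}}(M)^{\id}$ carry no components beyond those already detected $l$-adically --- which is precisely the subtlety isolated by the map $i_M$ of Theorem \ref{equality of conn comp for GM and GM1} and by the Serre motivic parity group $\gpP_{\rm{S}}(M, \psi_{{\sim}})$; controlling it in even weight, where $-{\rm{Id}}_{V}$ need not lie in the relevant identity component, is where the new difficulties of this paper concentrate. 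So, as worded, the statement can be proved exactly when the classical Mumford--Tate conjecture, Serre's connectivity conjecture, and the triviality of the motivic parity obstruction are all established for $M$ --- for instance for $M = h^1_{\text{ahc}}(A)$ of odd weight, or for suitable AHC motives.
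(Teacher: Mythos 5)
The statement is a conjecture, and the paper offers no proof of it: the only content the paper attaches to it is the one-line observation that it is equivalent to the Motivic Mumford--Tate Conjecture \ref{Motivic Mumford--Tate} via the exact columns of Diagram \ref{GlK1 subset GMKA1Ql}, which is exactly the reduction you carry out (equality of the middle terms forces equality of the kernels of $\chi$, i.e.\ of the intersections with $\Iso_{(V_l,\psi_l)}$). Your proposal is therefore correct and takes essentially the same approach as the paper, and you rightly identify that everything beyond this equivalence --- the classical Mumford--Tate conjecture, Serre's Conjecture \ref{Conj. of Serre MT = MMT0}, and the component-group comparison --- remains open except in the special cases you list.
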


\begin{remark}
Conjecture \ref{Motivic Mumford--Tate} is equivalent to the conjunction of the following  
equalities:
\begin{align}
(G_{l, K}^{\alg})^{\circ} &= ({\MMT_{{\sim}} (M)}_{\Q_l})^{\circ}, 
\label{MMT0 eq} \\
\pi_{0} (G_{l, K}^{\alg}) &= \pi_{0} ({\MMT_{{\sim}} (M)}_{\Q_l}).
\label{GMT0C eq}
\end{align}
Similarly, Conjecture \ref{Motivic Sato--Tate} is equivalent to the conjunction of the following equalities:
\begin{align}
(G_{l, K, 1}^{\alg})^{\circ} &= ({\MS_{{\sim}} (M)}_{\Q_l})^{\circ}, 
\label{MST0 eq} \\
\pi_{0} (G_{l, K, 1}^{\alg}) &= \pi_{0} ({\MS_{{\sim}} (M)}_{\Q_l}).
\label{MST0C eq}
\end{align}
\end{remark}

\begin{definition} Let $M$ be a motive in $\mathcal{M}_{{\sim}}$. The {\bf{Serre motivic parity group}}:  
\begin{equation}
\gpP_{S} (M, \psi_{{\sim}}) :=  G_{\mathcal{M}_{{\sim}}(M), 1}^{0} / (G_{\mathcal{M}_{{\sim}}(M), 1})^{\circ}
\end{equation}
is the component group of $G_{\mathcal{M}_{{\sim}}(M), 1}^{0}$.
\label{The motivic parity group}
\end{definition}

\begin{proposition}
If $n$ is odd or if $(G_{\mathcal{M}_{{\sim}}(M), 1})^{\circ} = C_{D} (\Iso_{(V, \psi)})$ 
(in particular if  $\gpH(V, \psi) = C_{D} (\Iso_{(V, \psi)})$) then $\gpP_{S} (M, \psi_{{\sim}})$ 
is trivial. 
\label{P(V, psi-sim) trivial for n odd or H = CD}
\end{proposition}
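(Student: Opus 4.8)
The plan is to mimic the arguments for the analogous statements in the Betti, De Rham, and $l$-adic settings (namely Lemma~\ref{for n odd DH = H}, Corollary~\ref{DH = H if H = CDIso}, Proposition~\ref{P(V, psi) trivial for n odd or H = CD}, and Proposition~\ref{P(Vl, psil) trivial for n odd or GlK01 = CDl}), since the present proposition is the motivic incarnation of exactly the same dichotomy. Recall that by Definition~\ref{The motivic parity group}, $\gpP_{S}(M, \psi_{{\sim}})$ is trivial if and only if $G_{\mathcal{M}_{{\sim}}(M), 1}^{0} = (G_{\mathcal{M}_{{\sim}}(M), 1})^{\circ}$, so in both cases the goal is to force that equality.

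First I would handle the odd weight case: this is immediate from Lemma~\ref{for n odd G0M1 = GM10}, which already asserts $G_{\mathcal{M}_{{\sim}}(M), 1}^{0} = (G_{\mathcal{M}_{{\sim}}(M), 1})^{\circ}$ for odd $n$. So nothing new is needed there beyond citing that lemma.

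Next I would treat the hypothesis $(G_{\mathcal{M}_{{\sim}}(M), 1})^{\circ} = C_{D}(\Iso_{(V, \psi)})$. The key containment is $G_{\mathcal{M}_{{\sim}}(M), 1}^{0} \subset C_{D}(\Iso_{(V, \psi)})$. To see this, note that $G_{\mathcal{M}_{{\sim}}(M), 1}^{0} = (G_{\mathcal{M}_{{\sim}}(M)})^{\circ} \cap \Iso_{(V,\psi)}$ by Definition~\ref{motivic Galois group, motivic Serre group}; since every element of $(G_{\mathcal{M}_{{\sim}}(M)})^{\circ}$ maps to the identity under the map \eqref{map from GMKA to Gal} to $\Gal(K_e/K)$ (the target being finite and the source connected), equation \eqref{GMKA1 acts on V via Galois} gives $g\beta g^{-1} = \beta$ for all $\beta \in D$, i.e. $(G_{\mathcal{M}_{{\sim}}(M)})^{\circ} \subset C_{D}(\GIso_{(V,\psi)})$ and hence $G_{\mathcal{M}_{{\sim}}(M), 1}^{0} \subset C_{D}(\Iso_{(V,\psi)})$. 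Combining with the hypothesis, $G_{\mathcal{M}_{{\sim}}(M), 1}^{0} \subset (G_{\mathcal{M}_{{\sim}}(M), 1})^{\circ}$; the reverse inclusion is automatic, so equality holds and $\gpP_{S}(M, \psi_{{\sim}})$ is trivial. For the parenthetical strengthening, if $\gpH(V,\psi) = C_{D}(\Iso_{(V,\psi)})$ then by \eqref{H(V) subset of Gmot,1} we have $C_{D}(\Iso_{(V,\psi)}) = \gpH(V,\psi) \subset (G_{\mathcal{M}_{{\sim}}(M), 1})^{\circ} \subset G_{\mathcal{M}_{{\sim}}(M), 1}^{0} \subset C_{D}(\Iso_{(V,\psi)})$, so again all terms coincide.

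The main obstacle, such as it is, is really just bookkeeping: making sure the centralizer inclusion $(G_{\mathcal{M}_{{\sim}}(M)})^{\circ} \subset C_{D}(\GIso_{(V,\psi)})$ is cleanly justified via the connectedness argument together with \eqref{GMKA1 acts on V via Galois}, and making sure the chains of inclusions are all among \emph{closed} subgroups so that equality of dimensions/identity components really does force equality of the groups in question. There is no deep content here — everything needed (\eqref{map from GMKA to Gal}, \eqref{GMKA1 acts on V via Galois}, \eqref{H(V) subset of Gmot,1}, Lemma~\ref{for n odd G0M1 = GM10}) has already been established — so the proof will be short and essentially a citation assembly.
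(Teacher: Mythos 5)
Your proof is correct and follows essentially the same route as the paper: the odd case is handled by citing Lemma~\ref{for n odd G0M1 = GM10}, and the centralizer case by the chain $\gpH(V,\psi) \subset (G_{\mathcal{M}_{{\sim}}(M),1})^{\circ} \subset G^{0}_{\mathcal{M}_{{\sim}}(M),1} \subset G^{\id}_{\mathcal{M}_{{\sim}}(M),1} \subset C_{D}(\Iso_{(V,\psi)})$, which you re-derive directly from \eqref{GMKA1 acts on V via Galois} where the paper simply cites \eqref{GMKAtau subset DLKAtau}. No substantive difference.
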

\begin{proof}
This follows by Lemma \ref{for n odd G0M1 = GM10} and the following computation (apply 
\eqref{GMKAtau subset DLKAtau}  and Lemma \ref{MT subset GMot, H subset GMot1}): 
\begin{gather*}
\gpH(V, \psi) \subset (G_{\mathcal{M}_{{\sim}}(M), 1})^{\circ} \subset
G_{\mathcal{M}_{{\sim}}(M), 1}^{0} \subset G_{\mathcal{M}_{{\sim}}(M), 1}^{\id}
\subset C_{D} (\Iso_{(V, \psi)}). 
\qedhere
\end{gather*}
\end{proof}

\begin{proposition}
\label{P(M, psi-sim) nontrivial for n even and dim V odd}
If $n$ is even and ${\rm dim}_{\Q}\, V$ is odd, then
$\gpP_{S} (M, \psi_{{\sim}})$ is nontrivial. 
\end{proposition}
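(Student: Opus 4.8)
The statement to prove is: if $n$ is even and $\dim_{\Q} V$ is odd, then the Serre motivic parity group $\gpP_{S}(M, \psi_{{\sim}})$ is nontrivial. By Definition \ref{The motivic parity group}, $\gpP_{S}(M, \psi_{{\sim}}) = G^{0}_{\mathcal{M}_{{\sim}}(M), 1} / (G_{\mathcal{M}_{{\sim}}(M), 1})^{\circ}$, so the claim is equivalent to showing that $G^{0}_{\mathcal{M}_{{\sim}}(M), 1}$ is disconnected. By Proposition \ref{Motivic L0realizing conn comp: even case}(b), $G^{0}_{\mathcal{M}_{{\sim}}(M), 1} = (G_{\mathcal{M}_{{\sim}}(M), 1})^{\circ} \cup -\mathrm{Id}_{V} \cdot (G_{\mathcal{M}_{{\sim}}(M), 1})^{\circ}$, and by Proposition \ref{Motivic L0realizing conn comp: even case}(c), triviality of the parity group is equivalent to $-\mathrm{Id}_{V} \in (G_{\mathcal{M}_{{\sim}}(M), 1})^{\circ}$. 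So it suffices to show $-\mathrm{Id}_{V} \notin (G_{\mathcal{M}_{{\sim}}(M), 1})^{\circ}$.

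The plan is to mimic exactly the proof of Proposition \ref{P(V, psi) nontrivial for n even and dim V odd} (and its $l$-adic analogue Proposition \ref{P(Vl, psil) nontrivial for n even and dim V odd}). First I would note that since $n$ is even, the polarization $\psi$ is symmetric, so $\Iso_{(V, \psi)} = \gpO_{(V, \psi)}$. Next, by Definition \ref{motivic Galois group, motivic Serre group}, $G_{\mathcal{M}_{{\sim}}(M), 1} = G_{\mathcal{M}_{{\sim}}(M)} \cap \Iso_{(V, \psi)} \subseteq \gpO_{(V, \psi)} \subseteq \{\pm \mathrm{Id}_{V}\}\cdot \SL_{V}$; in particular every element of $(G_{\mathcal{M}_{{\sim}}(M), 1})^{\circ}$ has determinant $\pm 1$. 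Since $(G_{\mathcal{M}_{{\sim}}(M), 1})^{\circ}$ is connected and the determinant is a morphism to $\G_m$, its image is a connected subgroup of $\{\pm 1\}$, hence trivial, so $\det \equiv 1$ on $(G_{\mathcal{M}_{{\sim}}(M), 1})^{\circ}$. But $\det(-\mathrm{Id}_{V}) = (-1)^{\dim_{\Q} V} = -1$ because $\dim_{\Q} V$ is odd. Therefore $-\mathrm{Id}_{V} \notin (G_{\mathcal{M}_{{\sim}}(M), 1})^{\circ}$, and by Proposition \ref{Motivic L0realizing conn comp: even case}(c) the group $\gpP_{S}(M, \psi_{{\sim}})$ is nontrivial.

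There is essentially no hard step here — the argument is a determinant computation combined with connectedness — but the one point requiring a little care is the identification $\Iso_{(V, \psi)} = \gpO_{(V, \psi)}$ in the even-weight case and the chain of inclusions placing $G_{\mathcal{M}_{{\sim}}(M), 1}$ inside the orthogonal group; this is immediate from the motivic polarization \eqref{Hodge polarization on M}, the $(-1)^n$-symmetry of $\psi$ recorded in \S\ref{Mumford--Tate groups of polarized Hodge structures}, and \eqref{GMKA subset of GSpV}. Everything else is formal. One could also phrase the conclusion directly via Example \ref{An example of nontrivial Betti parity group} type reasoning, but the self-contained determinant argument above is cleanest and parallels the already-established Propositions \ref{P(V, psi) nontrivial for n even and dim V odd} and \ref{P(Vl, psil) nontrivial for n even and dim V odd}.
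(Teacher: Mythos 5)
Your proposal is correct and follows essentially the same route as the paper: the paper's proof also observes that $\Iso_{(V,\psi)} = \gpO_{(V,\psi)}$ for $n$ even, places $(G_{\mathcal{M}_{{\sim}}(M),1})^{\circ}$ inside $\{\pm\mathrm{Id}_{V}\}\SL_{V}$, and concludes $-\mathrm{Id}_{V}\notin (G_{\mathcal{M}_{{\sim}}(M),1})^{\circ}$ from $\det(-\mathrm{Id}_{V})=(-1)^{\dim_{\Q}V}=-1$ together with connectedness. Your extra remarks spelling out the reduction via Proposition \ref{Motivic L0realizing conn comp: even case}(b),(c) are a harmless elaboration of what the paper leaves implicit.
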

\begin{proof} Again because $n$ is even, $\Iso_{(V, \psi)} = \gpO_{(V, \psi)}$. Hence (cf. Definition 
\ref{motivic Galois group, motivic Serre group}) we obtain
$(G_{\mathcal{M}_{{\sim}}(M), 1})^{\circ} \subset \Iso_{(V, \psi)} \subset {\pm {\rm{Id}}_{V}} \SL_{V}$.  
It follows that $- {\rm{Id}}_{V} \notin (G_{\mathcal{M}_{{\sim}}(M), 1})^{\circ}$ because 
${\rm{det}} (- {\rm{Id}}_{V}) = (-1)^{{\rm{dim}}_{\Q}\, V} = -1$ and $(G_{\mathcal{M}_{{\sim}}(M), 1})^{\circ}$ 
is connected.
\end{proof}

\begin{example} Let $A$ be an abelian threefold over $K$. The motive $h_{{\sim}}^{2}(A)$ has Betti realization 
$V :=  H^{2} (A(\C), \, \Q) = \bigwedge^{2} \, H^{1} (A(\C), \, \Q)$. 
Hence by Example \ref{An example of nontrivial Betti parity group} and 
Proposition \ref{P(M, psi-sim) nontrivial for n even and dim V odd}, the group 
$\gpP_{S} (h_{{\sim}}^{2}(A), \, \psi_{{\sim}})$ is nontrivial. 
\label{An example of nontrivial motivic parity group}
\end{example}

\begin{example} Let $A$ be an elliptic curve over $K$. The motive 
$\Sym^{2} \, h_{{\sim}}^{1}(A)$ has Betti realization $V := \Sym^{2} \, H^{1} (A(\C), \, \Q)$. 
Hence by Example \ref{Another example of nontrivial Betti parity group} and Proposition
\ref{P(M, psi-sim) nontrivial for n even and dim V odd}, the group  
$\gpP_{S} (\Sym^{2} \, h_{{\sim}}^{1}(A), \, \psi_{{\sim}})$ is nontrivial.  
\label{Another example of nontrivial motivic parity group}
\end{example}

Let $M$ be a homogeneous motive in $\mathcal{M}_{{\sim}}$.
By \eqref{DH(V) subset of Gmot,0,1} and \eqref{H(V) subset of Gmot,1} there is a natural epimorphism:
\begin{equation}
\gpP(V, \psi) \rightarrow \gpP_{S} (M, \psi_{{\sim}}).
\label{morphism between Betti and motivic parity groups}
\end{equation}
By Corollary \ref{Corollary-epimorphism between DR parity and Betti parity groups} 
we have a natural epimorphism:
$$
\gpP(V_{_{\rm{DR}}}, \psi_{_{\rm{DR}}}) \rightarrow \gpP(V, \psi).
$$
Hence by \eqref{morphism between Betti and motivic parity groups} we have a natural epimorphism:
\begin{equation}
\gpP(V_{_{\rm{DR}}}, \psi_{_{\rm{DR}}}) \rightarrow \gpP_{S} (M, \psi_{{\sim}}).
\label{epimorphism between DR parity and motivic parity groups}
\end{equation}
By \eqref{GlKalg subset GMKAQl} we have:
$$
(G_{l, K}^{\alg})^{\circ} \, \subset \, ({G_{\mathcal{M}_{{\sim}}(M)}}_{\Q_l})^{\circ}.
$$
Hence by \eqref{GlKalg subset GMKAQl} we obtain:
\begin{gather*}
G_{l, K, 1}^{\alg, \, 0} = (G_{l, K}^{\alg})^{\circ}  \cap G_{l, K, 1}^{\alg}  \subset 
({G_{\mathcal{M}_{{\sim}}(M)}}_{\Q_l})^{\circ} \cap \Iso_{(V_{l}, \psi_{l})} =  \\
 (({G_{\mathcal{M}_{{\sim}}(M)}})^{\circ})_{\Q_l} \cap {\Iso_{(V, \psi)}}_{\Q_l} = 
{G_{\mathcal{M}_{{\sim}}(M), 1}^{0}}_{\Q_l}. 
\end{gather*}
It follows by \eqref{GlK1alg subset GMKA1Ql} that there is a natural epimorphism:
\begin{equation}
\gpP_{\rm{S}} (V_l, \psi_l) \rightarrow \gpP_{S} (M, \psi_{{\sim}}).
\label{morphism between l-adic and motivic parity groups}
\end{equation}

\begin{remark} Recall that each parity group has order at most 2. 
These groups are generated by corresponding cosets of $- {\rm{Id}}$.  Moreover each homomorphism between parity groups
carries the coset of $- {\rm{Id}}$ in the source into the coset of $- {\rm{Id}}$ in the target. Hence it is clear that   
each homomorphism between parity groups is an epimorphism.
\label{Each hom. between parity gr. carries the coset of - Id in the source into the coset of - Id in the target.}
\end{remark}

\begin{corollary}
If $\gpP_{S} (M, \psi_{{\sim}})$ is nontrivial then it is naturally isomorphic to each of the following parity 
groups: $\gpP(V, \psi)$, $\gpP(V_{_{\rm{DR}}}, \psi_{_{\rm{DR}}})$, and $\gpP_{\rm{S}} (V_l, \psi_l)$.
 \label{If mot. parity gr. nontrivial then all other parity gr. nontrivial}
\end{corollary}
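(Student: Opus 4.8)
The plan is to deduce everything formally from the three natural epimorphisms constructed immediately before the corollary, together with the structural fact (recorded earlier in the paper) that every parity group is either trivial or isomorphic to $\{\pm 1\}$.

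First I would collect the maps already in hand: by \eqref{morphism between Betti and motivic parity groups} there is a natural epimorphism $\gpP(V, \psi) \to \gpP_{S}(M, \psi_{{\sim}})$, by \eqref{epimorphism between DR parity and motivic parity groups} a natural epimorphism $\gpP(V_{_{\rm{DR}}}, \psi_{_{\rm{DR}}}) \to \gpP_{S}(M, \psi_{{\sim}})$, and by \eqref{morphism between l-adic and motivic parity groups} a natural epimorphism $\gpP_{\rm{S}}(V_l, \psi_l) \to \gpP_{S}(M, \psi_{{\sim}})$ (all of these being instances of Remark~\ref{Each hom. between parity gr. carries the coset of - Id in the source into the coset of - Id in the target.}). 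Each of the source groups has order at most $2$. Under the hypothesis that $\gpP_{S}(M, \psi_{{\sim}})$ is nontrivial, it has order exactly $2$, so any surjection onto it from a group of order at most $2$ is necessarily injective, hence an isomorphism. Thus all three displayed maps are isomorphisms, and in particular $\gpP(V, \psi)$, $\gpP(V_{_{\rm{DR}}}, \psi_{_{\rm{DR}}})$, and $\gpP_{\rm{S}}(V_l, \psi_l)$ are each isomorphic to $\gpP_{S}(M, \psi_{{\sim}})$.

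The isomorphisms are natural because the epimorphisms \eqref{morphism between Betti and motivic parity groups}, \eqref{epimorphism between DR parity and motivic parity groups}, and \eqref{morphism between l-adic and motivic parity groups} are natural, being induced by the Betti, de Rham, and $l$-adic realization functors on $\mathcal{M}_{{\sim}}$ and by the inclusions among the corresponding reductive groups (compare Lemma~\ref{comparison MT, DH and H for DR and Betti} and the commutative diagrams in \S\ref{motivic Mumford--Tate group and motivic Serre group}); one may in addition compose $\gpP(V,\psi) \to \gpP_S(M,\psi_\sim)$ with the inverse of $\gpP(V_{_{\rm{DR}}},\psi_{_{\rm{DR}}}) \to \gpP_S(M,\psi_\sim)$ to obtain the remaining comparison.

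I do not expect a genuine obstacle here: the only thing to be careful about is the bookkeeping of directions — each comparison map is an epimorphism \emph{onto} the motivic parity group — and the observation that it is precisely this target group whose order is assumed to be $2$, which is exactly what forces the maps in the other direction to be isomorphisms. There is no further geometric or analytic input beyond the epimorphisms already established.
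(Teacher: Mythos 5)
Your proposal is correct and follows essentially the same route as the paper: both arguments rest on the epimorphisms \eqref{morphism between Betti and motivic parity groups}, \eqref{epimorphism between DR parity and motivic parity groups}, and \eqref{morphism between l-adic and motivic parity groups} onto $\gpP_{S}(M,\psi_{\sim})$, combined with the fact that every parity group is either trivial or isomorphic to $\{\pm 1\}$, so a surjection onto a group of order $2$ from a group of order at most $2$ is forced to be an isomorphism. Your bookkeeping of the directions of the maps and the remark on naturality match the paper's intent.
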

\begin{proof}
Every parity group is either trivial or isomorphic to $\{\pm 1\}$. Hence the corollary follows by epimorphisms 
\eqref{morphism between Betti and motivic parity groups}, 
\eqref{epimorphism between DR parity and Betti parity groups}, 
\eqref{epimorphism between DR parity and motivic parity groups}, \eqref{morphism between l-adic and motivic parity groups}.
\end{proof}

\begin{proposition} 
Consider parity groups:
$\gpP(V, \psi) , \gpP(V_{_{\rm{DR}}}, \psi_{_{\rm{DR}}}), \gpP_{\rm{S}} (V_l, \psi_l)$ and
$\gpP_{S} (M, \psi_{{\sim}})$.
\begin{itemize}
\item[(a)] If $n$ is odd, then all these groups are trivial.
\item[(b)] If $n$ is even and $\dim_{\Q}\, V$ is odd, then all these groups are nontrivial.
\end{itemize}
\label{general conditions for all parity groups to be trivial and nontrivial}
\end{proposition}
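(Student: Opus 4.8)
The statement is a bookkeeping consequence of the individual parity results established earlier in the paper; the plan is simply to invoke the correct proposition for each of the four groups in each of the two cases, with no new computation required.

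For part (a), where $n$ is odd, each of the four parity groups has already been shown to be trivial under the hypothesis ``$n$ odd''. Concretely, I would cite Proposition~\ref{P(V, psi) trivial for n odd or H = CD} for $\gpP(V, \psi)$, Proposition~\ref{P(V-DR, psi) trivial for n odd or H-DR = CD} for $\gpP(V_{_{\rm{DR}}}, \psi_{_{\rm{DR}}})$, Proposition~\ref{P(Vl, psil) trivial for n odd or GlK01 = CDl} for $\gpP_{\rm{S}}(V_l, \psi_l)$, and Proposition~\ref{P(V, psi-sim) trivial for n odd or H = CD} for $\gpP_{S}(M, \psi_{{\sim}})$. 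In each case the ``$n$ odd'' branch of the sufficient condition applies directly, so part (a) is finished.

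For part (b), where $n$ is even and $\dim_{\Q} V$ is odd, the cleanest route is to first observe that $\gpP_{S}(M, \psi_{{\sim}})$ is nontrivial by Proposition~\ref{P(M, psi-sim) nontrivial for n even and dim V odd}: there the point is that $(G_{\mathcal{M}_{{\sim}}(M), 1})^{\circ}$ is connected and contained in $\gpO_{(V, \psi)} \subseteq \{\pm \mathrm{Id}_{V}\}\,\SL_{V}$, while $\det(-\mathrm{Id}_{V}) = (-1)^{\dim_{\Q} V} = -1$. Then Corollary~\ref{If mot. parity gr. nontrivial then all other parity gr. nontrivial} shows that, once $\gpP_{S}(M, \psi_{{\sim}})$ is nontrivial, each of $\gpP(V, \psi)$, $\gpP(V_{_{\rm{DR}}}, \psi_{_{\rm{DR}}})$, and $\gpP_{\rm{S}}(V_l, \psi_l)$ is naturally isomorphic to it and hence nontrivial. (Alternatively, one may cite Proposition~\ref{P(V, psi) nontrivial for n even and dim V odd}, Corollary~\ref{gpP(VDR, psiDR) nontrivial for n even and dim V odd}, and Proposition~\ref{P(Vl, psil) nontrivial for n even and dim V odd} one at a time.)

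There is essentially no obstacle: both parts are immediate collations of results proved above. The only point deserving attention is to confirm that the hypotheses ``$n$ odd'' in part (a) and ``$n$ even, $\dim_{\Q} V$ odd'' in part (b) are exactly the sufficient conditions appearing in the cited statements — which they are — so the implications chain together without any gap.
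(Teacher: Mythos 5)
Your proposal is correct and matches the paper's proof. Part (a) cites exactly the four propositions the paper invokes, and for part (b) the paper directly collates Proposition~\ref{P(V, psi) nontrivial for n even and dim V odd}, Corollary~\ref{gpP(VDR, psiDR) nontrivial for n even and dim V odd}, Proposition~\ref{P(Vl, psil) nontrivial for n even and dim V odd}, and Proposition~\ref{P(M, psi-sim) nontrivial for n even and dim V odd} — your parenthetical ``alternative'' route; your primary route via the motivic parity group plus Corollary~\ref{If mot. parity gr. nontrivial then all other parity gr. nontrivial} is a harmless streamlining that uses the already-recorded chain of epimorphisms among parity groups, and is equally valid.
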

\begin{proof}

(a) This follows from Propositions
\ref{P(V, psi) trivial for n odd or H = CD}, \ref{P(V-DR, psi) trivial for n odd or H-DR = CD},
\ref{P(Vl, psil) trivial for n odd or GlK01 = CDl}, \ref{P(V, psi-sim) trivial for n odd or H = CD}.

(b) This follows from Propositions
\ref{P(V, psi) nontrivial for n even and dim V odd}, \ref{P(Vl, psil) nontrivial for n even and dim V odd}, \ref{P(M, psi-sim) nontrivial for n even and dim V odd}, and Corollary 
\ref{gpP(VDR, psiDR) nontrivial for n even and dim V odd}.
\end{proof}

\section{The algebraic Sato--Tate group for motives}
\label{algebraic Sato--Tate group for motives}

As in the previous section,
we work with motives $M$ which are direct summands of motives of the form 
$h^r (X)(m)$;
in this section, we propose a candidate for the algebraic Sato--Tate group 
for such motives. We prove, under the assumption in Definition
\ref{GMKA1 as AST group}, that our candidate for 
the algebraic Sato--Tate group is the expected one.  
In particular the assumption of Definition \ref{GMKA1 as AST group} holds
if $M$ is an AHC motive (see Definition \ref{AHC motives} and Remark 
\ref{A Serre conjecture about connected component of motivic Mumford--Tate}). 
 
Observe that $C_D (\GIso_{(V, \psi)}) = \GIso_{(V, \psi)}^{{\text{id}}}$ and 
$C_D (\Iso_{(V, \psi)}) = \gpDL_{{\sim}}^{\id} (V, \psi, D)$. It follows by Lemma 
\ref{MT subset GMot, H subset GMot1}, \eqref{def of GMKAtau}, and 
\eqref{GMKAtau subset DLKAtau}
that:
\begin{gather}
\MT (V, \psi) \subseteq (G_{\mathcal{M}_{{\sim}}(M)})^{\circ} \subseteq G_{\mathcal{M}_{{\sim}}(M)}^{\id} \subseteq 
C_D (\GIso_{(V, \psi)}), 
\label{MTA subset GMKA0 subset CDGSp} \\
\gpH (V, \psi) \subseteq  (G_{\mathcal{M}_{{\sim}}(M), 1})^{\circ} \subseteq 
G_{\mathcal{M}_{{\sim}}(M), 1}^{\id} \subseteq C_D (\Iso_{(V, \psi)}). 
\label{HA subset GMKA10 subset CDSp}
\end{gather}

\begin{remark}
Assume that $G_{\mathcal{M}_{{\sim}}(M), 1}^{0}$ is connected. By \eqref{CD GIso = Gm CD Iso},
Corollary \ref{equality of quotients concerning for GMid and GM1id}, Proposition
\ref{Motivic L0realizing conn comp: even case}(a), and 
Remark \ref{equivalent statements of the Serre Conj. MT = MMT0} we observe that the equality
\begin{equation}
\MT (V, \psi) = C_D (\GIso_{(V, \psi)})
\label{MTA = CD GSp for BGK classes}
\end{equation}
is equivalent to the equality:
\begin{equation}
\gpH (V, \psi) = C_D (\Iso_{(V, \psi)}).
\label{HA = CD Sp for BGK classes}
\end{equation}
\end{remark}

\begin{remark} In \cite[p.\ 380]{Se94} there are examples of the computation of  
$\MMT_{{\sim}} (M) = G_{\mathcal{M}_{{\sim}}(M)}$. 
In \cite[Theorems 7.3, 7.4]{BK1}, we computed $\MMT_{{\text{ahc}}} (M)$ for abelian varieties
of dimension $\leq 3$ and families of abelian varieties of type I, II, and III
in the Albert classification. 
\label{Examples of computation of motivic Mumford--Tate}
\end{remark}

\begin{remark}
If Serre's Conjecture \ref{Conj. of Serre MT = MMT0} holds for $M$,
then by \eqref{GlKalg subset GMKAQl}, \eqref{GlK1alg subset GMKA1Ql}, and Remark
\ref{equivalent statements of the Serre Conj. MT = MMT0} we would have:
\begin{align}
(G_{l, K}^{\alg})^{\circ} &\subset \MT (V, \psi)_{\Q_l},
\label{Assumption analogues to Deligne Theorem 1} \\
(G_{l, K, 1}^{\alg})^{\circ} &\subset \gpH (V, \psi)_{\Q_l}.
\label{Assumption analogues to Deligne Theorem 2}
\end{align}
In particular \eqref{Assumption analogues to Deligne Theorem 1} and 
\eqref{Assumption analogues to Deligne Theorem 2} hold for AHC motives in 
$\mathcal{M}_{{\text{ahc}}}$ (cf.\ Remark 
\ref{A Serre conjecture about connected component of motivic Mumford--Tate}).
\label{Glalg subset of MT}
\end{remark}

Following Serre  \cite[sec. 13, p. 396--397]{Se94} we make the following 
definition.
\begin{definition} 
The \emph{algebraic Sato--Tate group} $\AST_{K} (M)$ is defined as follows:
\begin{equation}
\AST_{K} (M) := \MS_{{\sim}} (M). 
\label{AST group}
\end{equation}
Every maximal compact subgroup of $\AST_{K} (M)(\C)$ will be called
a \emph{Sato--Tate group} associated with $M$ and denoted $\ST_K(M)$.
\label{GMKA1 as AST group}
\end{definition}

\begin{remark}
In \cite[Def. 11.7]{BK2} we defined the algebraic Sato--Tate group for motives in 
$\mathcal{M}_{{\text{ahc}}}$ for which the Serre Conjecture  \ref{Conj. of Serre MT = MMT0} 
holds. In this paper we decided not to assume  Conjecture  \ref{Conj. of Serre MT = MMT0} in Definition  
\ref{GMKA1 as AST group}. 
\label{The difference between the Def. 11.7 BK2 and the present definition}
\end{remark}

\begin{remark} 
Theorem \ref{General properties of AST} and Corollaries \ref{The natural candidate for AST group example 1}
and \ref{cor alg Sato--Tate for MT explained by endo} below show that $\AST_{K} (M)$ is a natural candidate for the algebraic Sato--Tate group for the homogeneous motive $M$. 
\end{remark}

\begin{corollary}\label{First properties of AST group}
The following sequence is exact:
$$
1 \, \rightarrow \, \pi_{0} (\AST_{K} (M) \cap \MMT_{{\sim}} (M)^{\circ}) \, 
\rightarrow \,  \pi_{0} (\AST_{K} (M)) \, {\stackrel{i_M}{\longrightarrow}} \, \pi_{0} ( \MMT_{{\sim}} (M)) \, \rightarrow \, 1.
$$
Moreover:
\begin{gather}
\pi_{0} (\AST_{K} (M) \cap \MMT_{{\sim}} (M)^{\circ}) \subseteq \Z/2\Z,
\label{Ker iM subset Z mod 2} \\
\pi_{0} (\AST_{K} (M) \cap \MMT_{{\sim}} (M)^{\circ}) = \{1\} \,\,\,\, \text{iff} \,\,\,
- {\rm{Id}}_{V} \in \AST_{K} (M)^{\circ}.
\label{Ker iM isom Z mod 2 iff iM isom}
\end{gather}
\end{corollary}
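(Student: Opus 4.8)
The plan is to recognize the statement as essentially a repackaging of Theorem~\ref{equality of conn comp for GM and GM1} together with Proposition~\ref{Motivic L0realizing conn comp: even case}, once the groups named here are matched with those used in \S\ref{motivic Mumford--Tate group and motivic Serre group}. By Definition~\ref{GMKA1 as AST group} and Definition~\ref{def of motivic Mumford Tate and Sato Tate groups} we have $\AST_{K}(M) = \MS_{{\sim}}(M) = G_{\mathcal{M}_{{\sim}}(M), 1}$ and $\MMT_{{\sim}}(M) = G_{\mathcal{M}_{{\sim}}(M)}$, so that
$$
\AST_{K}(M) \cap \MMT_{{\sim}}(M)^{\circ} \, = \, G_{\mathcal{M}_{{\sim}}(M), 1} \cap (G_{\mathcal{M}_{{\sim}}(M)})^{\circ} \, = \, G_{\mathcal{M}_{{\sim}}(M), 1}^{0}
$$
by Definition~\ref{motivic Galois group, motivic Serre group}, and $\AST_{K}(M)^{\circ} = (G_{\mathcal{M}_{{\sim}}(M), 1})^{\circ}$.

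First I would note that $(G_{\mathcal{M}_{{\sim}}(M), 1}^{0})^{\circ} = (G_{\mathcal{M}_{{\sim}}(M), 1})^{\circ}$, which holds because $G_{\mathcal{M}_{{\sim}}(M), 1}^{0}$ contains $(G_{\mathcal{M}_{{\sim}}(M), 1})^{\circ}$ and has the same dimension (this is exactly the identification already used in Theorem~\ref{equality of conn comp for GM and GM1} and in the proof of Lemma~\ref{for n odd G0M1 = GM10}). Consequently
$$
\pi_{0}\bigl(\AST_{K}(M) \cap \MMT_{{\sim}}(M)^{\circ}\bigr) \, = \, G_{\mathcal{M}_{{\sim}}(M), 1}^{0} / (G_{\mathcal{M}_{{\sim}}(M), 1})^{\circ},
$$
which is precisely the kernel term in the short exact sequence~\eqref{properties of iM}. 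Thus the exact sequence asserted in the corollary is just~\eqref{properties of iM} of Theorem~\ref{equality of conn comp for GM and GM1} rewritten under these identifications.

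Next, for~\eqref{Ker iM subset Z mod 2} I would invoke Proposition~\ref{Motivic L0realizing conn comp: even case}(b), which gives $G_{\mathcal{M}_{{\sim}}(M), 1}^{0} = (G_{\mathcal{M}_{{\sim}}(M), 1})^{\circ} \, \cup \, -{\rm{Id}}_{V} \cdot (G_{\mathcal{M}_{{\sim}}(M), 1})^{\circ}$; hence the quotient $G_{\mathcal{M}_{{\sim}}(M), 1}^{0}/(G_{\mathcal{M}_{{\sim}}(M), 1})^{\circ}$ has order at most $2$. Finally,~\eqref{Ker iM isom Z mod 2 iff iM isom} follows from Proposition~\ref{Motivic L0realizing conn comp: even case}(c): the kernel is trivial exactly when $G_{\mathcal{M}_{{\sim}}(M), 1}^{0} = (G_{\mathcal{M}_{{\sim}}(M), 1})^{\circ}$, and this in turn is equivalent to $-{\rm{Id}}_{V} \in (G_{\mathcal{M}_{{\sim}}(M), 1})^{\circ} = \AST_{K}(M)^{\circ}$. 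Since the argument only assembles already-established facts, there is no genuine obstacle; the one point demanding slight care is the identity-component equality $(G_{\mathcal{M}_{{\sim}}(M), 1}^{0})^{\circ} = (G_{\mathcal{M}_{{\sim}}(M), 1})^{\circ}$, which is exactly what licenses the passage between $\pi_{0}$ of $G_{\mathcal{M}_{{\sim}}(M), 1}^{0}$ and $\mathrm{Ker}\, i_{M}$.
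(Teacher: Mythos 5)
Your proof is correct and follows the same route as the paper: identify $\AST_{K}(M) \cap \MMT_{{\sim}}(M)^{\circ}$ with $G_{\mathcal{M}_{{\sim}}(M), 1}^{0}$, read the exact sequence off \eqref{properties of iM}, and then apply Proposition~\ref{Motivic L0realizing conn comp: even case}(b) and (c) for the two supplementary claims. Your explicit remark that $(G_{\mathcal{M}_{{\sim}}(M), 1}^{0})^{\circ} = (G_{\mathcal{M}_{{\sim}}(M), 1})^{\circ}$ is a small but worthwhile clarification that the paper leaves implicit.
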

\begin{proof} The sequence in this corollary is just the exact sequence \eqref{properties of iM} with
$$
\pi_{0} (\AST_{K} (M)) \cap \MMT_{{\sim}} (M)^{\circ}) = G^{0}_{\mathcal{M}_{{\sim}}(M), 1}/(G_{\mathcal{M}_{{\sim}}(M), 1})^{\circ}.
$$
Hence \eqref{Ker iM subset Z mod 2} (resp. \eqref{Ker iM isom Z mod 2 iff iM isom}) follows from
Proposition \ref{Motivic L0realizing conn comp: even case}(b) (resp. 
Proposition \ref{Motivic L0realizing conn comp: even case}(c)).
\end{proof}

\begin{theorem}\label{General properties of AST} $\AST_{K} (M)$ is reductive and has the following properties: 
\begin{align} 
\AST_{K} (M) & \subset \gpDL_{K}(V, \psi, D),
\label{ASTKA subset DLKA}\\
\gpH(V, \psi) &\subset \AST_{K} (M)^{\circ},
\label{(AST)circ} \\
\pi_{0} (\AST_{K} (M)) & \simeq \pi_{0} ( \MMT_{{\sim}} (M)) \,\,\,\, \text{iff} \,\,\,
- {\rm{Id}}_{V} \in \AST_{K} (M)^{\circ},
\label{pi0 AST iff Id in conn. comp} \\
\pi_{0} (\AST_{K} (M)) &= \pi_{0} (\ST_{K} (M)),
\label{pi0 AST C  is pi0 ST C}\\
G_{l, K, 1}^{\alg}  &\subset \AST_{K}(M)_{\Q_l}, \,\,\, i.e. \,\,Conjecture \,\,\, \ref{general algebraic Sato Tate conj.} \,
{\rm{(a)}} \,\, holds \,\,  for \,\, M.
\label{Conjecture: general algebraic Sato Tate conj. for M} 
\end{align}
\end{theorem}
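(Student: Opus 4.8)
The plan is to read off each assertion from material already assembled in \S\ref{motivic Galois group and motivic Serre group} and \S\ref{motivic Mumford--Tate group and motivic Serre group}, since $\AST_{K}(M)$ is by construction the motivic Serre group. Recall that by Definitions \ref{def of motivic Mumford Tate and Sato Tate groups} and \ref{GMKA1 as AST group} one has $\AST_{K}(M) = \MS_{{\sim}}(M) = G_{\mathcal{M}_{{\sim}}(M), 1}$ and $\MMT_{{\sim}}(M) = G_{\mathcal{M}_{{\sim}}(M)}$. The reductivity of $\AST_{K}(M)$ is part of the content of Theorem \ref{equality of conn comp for GM and GM1}. The containment \eqref{ASTKA subset DLKA} is precisely \eqref{GMKA1 subset of DLKA}, and \eqref{(AST)circ} is precisely \eqref{H(V) subset of Gmot,1} of Lemma \ref{MT subset GMot, H subset GMot1}; so nothing new is needed for these three points.

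For \eqref{pi0 AST iff Id in conn. comp}, the isomorphism in question is the map $i_{M}$. By Corollary \ref{First properties of AST group} the map $i_{M}$ is surjective with kernel $\pi_{0}(\AST_{K}(M) \cap \MMT_{{\sim}}(M)^{\circ})$, so $i_{M}$ is an isomorphism if and only if that kernel is trivial; by \eqref{Ker iM isom Z mod 2 iff iM isom} this occurs if and only if $- {\rm{Id}}_{V} \in \AST_{K}(M)^{\circ}$ (equivalently, one may invoke Theorem \ref{equality of conn comp for GM and GM1} together with Proposition \ref{Motivic L0realizing conn comp: even case}(c)). For \eqref{pi0 AST C  is pi0 ST C}, I would use reductivity of $\AST_{K}(M)$: by the Cartan decomposition of a complex reductive Lie group (cf. \cite[\S VII.2]{Kn}) the group $\AST_{K}(M)(\C)$ is homotopy equivalent to any of its maximal compact subgroups $\ST_{K}(M)$, hence they have the same set of connected components; since $\pi_{0}$ of an algebraic group in characteristic $0$ is insensitive to base change to $\C$ and agrees there with $\pi_{0}$ of the complex points, \eqref{pi0 AST C  is pi0 ST C} follows.

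It remains to verify \eqref{Conjecture: general algebraic Sato Tate conj. for M}, that is, that $\AST_{K}(M)$ fulfils the requirements of Conjecture \ref{general algebraic Sato Tate conj.}(a). The group $\AST_{K}(M) = G_{\mathcal{M}_{{\sim}}(M), 1}$ is reductive over $\Q$ and, by \eqref{GMKA subset of GSpV} and Definition \ref{motivic Galois group, motivic Serre group}, is contained in $\Iso_{(V, \psi)}$ over $\Q$. The desired monomorphism $\gpast_{l, K}\colon G_{l, K, 1}^{\alg} \hookrightarrow \AST_{K}(M)_{\Q_l}$ is the one provided by \eqref{GlK1alg subset GMKA1Ql}, and the required factorization of the tautological embedding $G_{l, K, 1}^{\alg} \subset \Iso_{(V_l, \psi_l)}$ through $\gpast_{l, K}$ and the base change of $\AST_{K}(M) \subset \Iso_{(V, \psi)}$ is read off directly from Diagram \ref{GlK1 subset GMKA1Ql}. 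Naturality in $K$ follows from the functoriality of the motivic Galois group under the base-change functor $\mathcal{M}_{K, {\sim}} \to \mathcal{M}_{L, {\sim}}$, $M \mapsto M \otimes_{K} L$ (cf. Remark \ref{base field extension and direct summands of motives} and \cite[Remark 5.3]{BK2}), which yields the compatible morphisms on the $l$-adic side. The only step here that is more than bookkeeping is the structure-theoretic input behind \eqref{pi0 AST C  is pi0 ST C}; I do not expect any genuine obstacle in assembling this proof.
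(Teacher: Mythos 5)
Your proposal is correct and follows essentially the same route as the paper: reductivity and \eqref{pi0 AST iff Id in conn. comp} from Theorem \ref{equality of conn comp for GM and GM1} and Corollary \ref{First properties of AST group}, the two containments read off from \eqref{GMKA1 subset of DLKA} and \eqref{H(V) subset of Gmot,1}, and \eqref{Conjecture: general algebraic Sato Tate conj. for M} from \eqref{GlK1alg subset GMKA1Ql} together with Diagram \ref{GlK1 subset GMKA1Ql}. The only (harmless) divergence is in \eqref{pi0 AST C  is pi0 ST C}, where you invoke the Cartan/homotopy-equivalence argument for reductive complex groups while the paper simply notes that a maximal compact subgroup of the connected complex Lie group $\AST_{K}(M)^{\circ}(\C)$ is connected; both are standard and equivalent here.
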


\begin{proof}
The group $\AST_{K} (M)$ is reductive by Theorem \ref{equality of conn comp for GM and GM1}. 
Moreover  \eqref{ASTKA subset DLKA} is \eqref{GMKA1 subset of DLKA} and \eqref{(AST)circ}
is just \eqref{H(V) subset of Gmot,1}. Property \eqref{pi0 AST iff Id in conn. comp} follows from Corollary
\ref{First properties of AST group}. 
Because $\AST_{K} (M)^{\circ} (\C)$ is a connected complex Lie group and any maximal compact subgroup
of a connected complex Lie group is a connected real Lie group, the equality \eqref{pi0 AST C  is pi0 ST C} follows.
Property \eqref{Conjecture: general algebraic Sato Tate conj. for M} follows by \eqref{GlK1alg subset GMKA1Ql}.
\end{proof}

\begin{remark} Recall (see Remark \ref{equivalent statements of the Serre Conj. MT = MMT0}) that
$\AST_{K} (M)^{\circ} = \gpH(V, \psi)$ is equivalent to Serre's Conjecture
\ref{Conjecture of Serre MT = MMT0}.
\end{remark}

\begin{remark} By Proposition \ref{Motivic L0realizing conn comp: even case}(d) for the odd weight $n$:
\begin{gather*}
\pi_{0} (\, \AST_{K} (M) \cap \MMT_{{\sim}} (M)^{\circ} \, ) = \{1\} \\
\pi_{0} (\AST_{K} (M)) = \pi_{0} ( \MMT_{{\sim}} (M)).
\end{gather*}
\end{remark}

Now we discuss conditions under which the Algebraic Sato--Tate is amenable to computations and 
the Algebraic Sato--Tate conjecture holds.

Consider the following commutative diagrams with exact rows. Exactness of the rows of Diagram \ref{diagram compatibility of AST GMKA1 with DLKA 2} follows by
Corollary \ref{equality of quotients concerning for GM and GM1}. The left and middle vertical 
arrows of Diagram \ref{diagram compatibility of AST GMKA1 with DLKA 1} are monomorphisms by 
\eqref{ASTKA subset DLKA}. 

\begin{figure}[H]
\[
\begin{tikzcd}
1 \arrow{r}{} & \AST_{K} (M)^{\circ} \arrow{d}[swap]{} \arrow{r}{} & 
\AST_{K} (M) \arrow{d}[swap]{} \arrow{r}{} & \pi_{0}( \AST_{K} (M)) 
\arrow{d}[swap]{} \arrow{r}{} &  1\\ 
1 \arrow{r}{} & \gpL(V, \psi, D) \arrow{r}{} & \gpDL_{K} (V, \psi, D) \arrow{r}{} & 
\pi_{0} ( \gpDL_{K} (V, \psi, D) ) \arrow{r}{}   \,\, & 1\\
\end{tikzcd}
\]
\\[-0.8cm]
\caption{}
\label{diagram compatibility of AST GMKA1 with DLKA 1} 
\end{figure}

\begin{figure}[H]
\[
\begin{tikzcd}
1 \arrow{r}{} & \pi_{0} (G_{\mathcal{M}_{{\sim}}(M), 1}^{\id}) \arrow{d}[swap]{} \arrow{r}{} & 
\pi_{0} ( \AST_{K} (M) ) \arrow{d}[swap]{} \arrow{r}{} & \Gal(K_e / K)  
\arrow{d}{=} \arrow{r}{} & 1\\ 
1 \arrow{r}{} & \pi_{0} (\gpDL_{K}^{\id} (V, \psi, D)) \arrow{r}{} & 
\pi_{0}( \gpDL_{K} (V, \psi, D)) \arrow{r}{} & \Gal(K_e / K) \arrow{r}{} &  1\\
\end{tikzcd}
\]
\\[-0.8cm]
\caption{}
\label{diagram compatibility of AST GMKA1 with DLKA 2} 
\end{figure}

\medskip

\begin{corollary}\label{The natural candidate for AST group example 1}
Assume that $\gpH(V, \psi) = C_{D} (\Iso_{(V, \psi)})$. Then
\begin{align}
\AST_{K}(M)^{\circ} &= \gpL(V, \psi, D)
\label{AST0 = DL0} \\
\pi_{0}(\AST_{K}(M)) &=  \Gal(K_e / K)
\label{pi0 = G(Ke/K)} \\
\AST_{K}(M) &= \gpDL_{K} (V, \psi, D).
\label{AST = DL}
\end{align}
\end{corollary}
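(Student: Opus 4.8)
The strategy is to observe that the hypothesis $\gpH(V, \psi) = C_{D}(\Iso_{(V, \psi)})$ forces every inclusion in the chain \eqref{HA subset GMKA10 subset CDSp} to be an equality, and then to transfer this collapse both to $\AST_{K}(M) = \MS_{{\sim}}(M) = G_{\mathcal{M}_{{\sim}}(M), 1}$ and to $\gpDL_{K}(V, \psi, D)$. First I would combine \eqref{HA subset GMKA10 subset CDSp} with the hypothesis and with \eqref{DH and CD} to get
\[
\gpH(V, \psi) = (G_{\mathcal{M}_{{\sim}}(M), 1})^{\circ} = G_{\mathcal{M}_{{\sim}}(M), 1}^{\id} = C_{D}(\Iso_{(V, \psi)}) = \gpDL_{K}^{\id}(V, \psi, D).
\]
Since $\gpH(V, \psi) = \gpDH(V, \psi)^{\circ}$ is connected, the group $\gpDL_{K}^{\id}(V, \psi, D)$ is connected; being a union of connected components of $\gpDL_{K}(V, \psi, D)$ it must then coincide with the identity component $\gpL(V, \psi, D) = \gpDL_{K}(V, \psi, D)^{\circ}$. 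As $\AST_{K}(M)^{\circ} = (G_{\mathcal{M}_{{\sim}}(M), 1})^{\circ}$, this already yields \eqref{AST0 = DL0}: $\AST_{K}(M)^{\circ} = \gpH(V, \psi) = \gpL(V, \psi, D)$.

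Next, for \eqref{pi0 = G(Ke/K)}, I would use that $G_{\mathcal{M}_{{\sim}}(M), 1}^{\id} = (G_{\mathcal{M}_{{\sim}}(M), 1})^{\circ}$, just established, so that $\pi_{0}(\AST_{K}(M)) = G_{\mathcal{M}_{{\sim}}(M), 1}/G_{\mathcal{M}_{{\sim}}(M), 1}^{\id}$, and then apply \eqref{GMKA1 mod GMKA1Id equals DLKA mod DLKAId equals GLeK} to identify the latter with $\Gal(K_{e}/K)$; equivalently, this can be phrased through Corollary \ref{equality of quotients concerning for GMid and GM1id} as the statement that $G_{\mathcal{M}_{{\sim}}(M), 1}^{0}$ is connected. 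The same bookkeeping, applied to $\gpDL_{K}(V, \psi, D)$ via the second isomorphism of \eqref{GMKA1 mod GMKA1Id equals DLKA mod DLKAId equals GLeK} together with the connectedness of $\gpDL_{K}^{\id}(V, \psi, D)$, gives $\pi_{0}(\gpDL_{K}(V, \psi, D)) \simeq \Gal(K_{e}/K)$.

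Finally, for \eqref{AST = DL}: by \eqref{ASTKA subset DLKA} the inclusion $\AST_{K}(M) \subseteq \gpDL_{K}(V, \psi, D)$ is the middle vertical (monomorphic) arrow of Diagram \ref{diagram compatibility of AST GMKA1 with DLKA 1}, whose rows are exact. Its left vertical arrow $\AST_{K}(M)^{\circ} \to \gpL(V, \psi, D)$ is an isomorphism by \eqref{AST0 = DL0}, and its right vertical arrow $\pi_{0}(\AST_{K}(M)) \to \pi_{0}(\gpDL_{K}(V, \psi, D))$ is an isomorphism because both groups are identified with $\Gal(K_{e}/K)$ in a compatible way, the compatibility being precisely the content of Diagram \ref{diagram compatibility of AST GMKA1 with DLKA 2}. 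The five lemma (or, more concretely, comparing identity components and counting the finitely many components) then forces the middle arrow to be an isomorphism, i.e.\ $\AST_{K}(M) = \gpDL_{K}(V, \psi, D)$.

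The step demanding the most care — bookkeeping rather than a genuine difficulty — is the passage from ``$\gpH(V, \psi)$ connected'' to ``$\gpL(V, \psi, D) = C_{D}(\Iso_{(V, \psi)})$'', together with the identification $\pi_{0}(\gpDL_{K}(V, \psi, D)) \simeq \Gal(K_{e}/K)$: the latter must be extracted from \eqref{GMKA1 mod GMKA1Id equals DLKA mod DLKAId equals GLeK} and not merely from the a priori monomorphism $\gpDL_{K}(V, \psi, D)/\gpDL_{K}^{\id}(V, \psi, D) \hookrightarrow \Gal(K_{e}/K)$. Once these identifications are pinned down, the two exact rows of Diagram \ref{diagram compatibility of AST GMKA1 with DLKA 1} align and the remaining argument is immediate.
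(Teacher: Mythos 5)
Your proposal is correct and follows essentially the same route as the paper: the hypothesis collapses the chain $\gpH(V,\psi)\subseteq (G_{\mathcal{M}_{{\sim}}(M),1})^{\circ}\subseteq G_{\mathcal{M}_{{\sim}}(M),1}^{\id}\subseteq C_{D}(\Iso_{(V,\psi)})=\gpDL_{K}^{\id}(V,\psi,D)$ into equalities, the component groups are then identified with $\Gal(K_e/K)$ via Diagram \ref{diagram compatibility of AST GMKA1 with DLKA 2} (equivalently \eqref{GMKA1 mod GMKA1Id equals DLKA mod DLKAId equals GLeK}), and the two exact rows of Diagram \ref{diagram compatibility of AST GMKA1 with DLKA 1} force the middle arrow to be an isomorphism. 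The only difference is the order in which the three equalities are derived, which is immaterial.
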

\begin{proof} By the assumption and \eqref{HA subset GMKA10 subset CDSp} we obtain:
$$
\pi_{0}( G_{\mathcal{M}_{{\sim}}(M), 1}^{\id}) = \pi_{0}(\gpDL_{K}^{\id} (V, \psi, D)) = 1.
$$
Hence \eqref{pi0 = G(Ke/K)} follows immediately from Diagram 
\ref{diagram compatibility of AST GMKA1 with DLKA 2}. Moreover, the middle vertical arrow in Diagram 
\ref{diagram compatibility of AST GMKA1 with DLKA 2}, which is the right
vertical arrow in Diagram \ref{diagram compatibility of AST GMKA1 with DLKA 1}, 
is an isomorphism. Because $\gpL(V, \psi, D) = (C_{D} \Iso_{(V, \psi)})^{\circ}$, the assumption
and the connectedness of $\gpH(V, \psi)$ yield $\gpH(V, \psi) = \gpL(V, \psi, D)$. 
Hence by \eqref{(AST)circ}, the left vertical arrow in Diagram \ref{diagram compatibility of AST GMKA1 with DLKA 1} 
is  an isomorphism, and so the middle vertical arrow in Diagram 
\ref{diagram compatibility of AST GMKA1 with DLKA 1} is an isomorphism. 
\end{proof} 

Consider the following commutative diagrams with exact rows. Exactness of the rows of Diagram 
\ref{diagram compatibility of GlK1alg with DLKA 2} follows by
Corollary \ref{Corollary-GK1alg mod GK1algID with resp. to DL mod DLId}. The left and middle vertical 
arrows of Diagram \ref{diagram compatibility of GlK1alg with DLKA 1} are monomorphisms by 
\eqref{GlL1alg subset DLLA}. 

\begin{figure}[H]
\[
\begin{tikzcd}
1 \arrow{r}{} & \,\, (G_{l, K, 1}^{\alg})^{\circ} \arrow{d}[swap]{} \arrow{r}{}  & 
G_{l, K, 1}^{\alg} \arrow{d}[swap]{} \arrow{r}{} & \pi_{0}(G_{l, K, 1}^{\alg}) 
\arrow{d}[swap]{} \arrow{r}{} & 1 \\ 
1 \arrow{r}{} & \gpL(V, \psi, D)_{\Q_l} \arrow{r}{} & 
\gpDL_{K} (V, \psi, D)_{\Q_l} \arrow{r}{} & \pi_{0} ( \gpDL_{K} (V, \psi, D)_{\Q_l}) 
\arrow{r}{} & 1\\
\end{tikzcd}
\]
\\[-0.8cm]
\caption{}
\label{diagram compatibility of GlK1alg with DLKA 1} 
\end{figure}
\begin{figure}[H]
\[
\begin{tikzcd}
1  \arrow{r}{}  & \pi_{0} ((G_{l, K, 1}^{\alg})^{\id})  \arrow{d}[swap]{} \arrow{r}{} & 
\pi_{0} (G_{l, K, 1}^{\alg})  \arrow{d}[swap]{} \arrow{r}{}  & \Gal(K_e / K)  
 \arrow{d}{=} \arrow{r}{} & 1\\ 
1 \arrow{r}{} & \pi_{0} (\gpDL_{K}^{\id} (V, \psi, D)_{\Q_l} ) \arrow{r}{}  & 
\pi_{0}( \gpDL_{K} (V, \psi, D)_{\Q_l} ) \arrow{r}{}  & \Gal(K_e / K) \arrow{r}{}  & 1 \\
\end{tikzcd}
\]
\\[-0.8cm]
\caption{}
\label{diagram compatibility of GlK1alg with DLKA 2} 
\end{figure}

\begin{corollary} \label{cor alg Sato--Tate for MT explained by endo}
Assume that $\gpH(V, \psi) = C_{D} (\Iso_{(V, \psi)})$ and the Mumford--Tate conjecture holds for $M$.
Then:
\begin{align}
(G_{l, K, 1}^{\alg})^{\circ} &= \gpL(V, \psi, D)_{\Q_l},
\label{AST0l = DL0l} \\
\pi_{0}(G_{l, K, 1}^{\alg}) &=  \Gal(K_e / K).
\label{pi0l = G(Ke/K)}
\end{align}
Moreover, the algebraic Sato--Tate conjecture holds:
\begin{equation}
G_{l, K, 1}^{\alg}  = \AST_{K}(M)_{\Q_l} = \gpDL_{K} (V, \psi, D)_{\Q_l}.
\label{alg Sato--Tate for MT explained by endo}
\end{equation}
\end{corollary}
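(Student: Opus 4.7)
My plan is to combine Corollary~\ref{The natural candidate for AST group example 1} (which gives the motivic side $\AST_K(M) = \gpDL_K(V,\psi,D)$) with the Mumford--Tate conjecture (which transports this computation to the $l$-adic side) and then to finish by chasing Diagrams~\ref{diagram compatibility of GlK1alg with DLKA 1} and \ref{diagram compatibility of GlK1alg with DLKA 2}.

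First I would establish \eqref{AST0l = DL0l}. By Remark~\ref{Mumford--Tate Conj and Deligne Theorem analogue}, the Mumford--Tate conjecture for $M$ is equivalent to the equality $(G_{l,K,1}^{\alg})^{\circ} = \gpH(V,\psi)_{\Q_l}$. Since $\gpH(V,\psi)$ is connected and equals $C_D(\Iso_{(V,\psi)})$ by assumption, it coincides with $\gpL(V,\psi,D) = C_D(\Iso_{(V,\psi)})^{\circ}$; tensoring with $\Q_l$ gives \eqref{AST0l = DL0l}.

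Next I would show that the left vertical arrow in Diagram~\ref{diagram compatibility of GlK1alg with DLKA 2} is trivially an isomorphism. Indeed, $\gpDL_K^{\id}(V,\psi,D) = C_D(\Iso_{(V,\psi)}) = \gpH(V,\psi)$, which is connected, so $\pi_0(\gpDL_K^{\id}(V,\psi,D)_{\Q_l})$ is trivial. On the other hand, the containment $(G_{l,K,1}^{\alg})^{\id} \subseteq \gpDL_K^{\id}(V,\psi,D)_{\Q_l} = \gpH(V,\psi)_{\Q_l}$ from \eqref{GlL1alg subset DLLA} combined with \eqref{AST0l = DL0l} forces $(G_{l,K,1}^{\alg})^{\id} = (G_{l,K,1}^{\alg})^{\circ}$, so $\pi_0((G_{l,K,1}^{\alg})^{\id})$ is also trivial. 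A diagram chase in Diagram~\ref{diagram compatibility of GlK1alg with DLKA 2} then yields the isomorphism \eqref{pi0l = G(Ke/K)} and simultaneously shows that the middle vertical arrow of that diagram (which is the right vertical arrow of Diagram~\ref{diagram compatibility of GlK1alg with DLKA 1}) is an isomorphism.

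Finally, to obtain \eqref{alg Sato--Tate for MT explained by endo} I would apply the five-lemma (or a direct chase) in Diagram~\ref{diagram compatibility of GlK1alg with DLKA 1}: its left vertical arrow is an isomorphism by \eqref{AST0l = DL0l}, its right vertical arrow is an isomorphism by the previous step, so the middle vertical arrow $G_{l,K,1}^{\alg} \hookrightarrow \gpDL_K(V,\psi,D)_{\Q_l}$ is an isomorphism, i.e.\ $G_{l,K,1}^{\alg} = \gpDL_K(V,\psi,D)_{\Q_l}$. Since Corollary~\ref{The natural candidate for AST group example 1} identifies $\AST_K(M)$ with $\gpDL_K(V,\psi,D)$, this completes the chain of equalities. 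There is no real obstacle here: once one notices that the hypothesis $\gpH(V,\psi) = C_D(\Iso_{(V,\psi)})$ kills the ``twisted identity component'' parts of both diagrams and the Mumford--Tate conjecture pins down the connected component on the $l$-adic side, the conclusion is a purely formal consequence of the exact sequences already in place.
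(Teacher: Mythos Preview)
Your argument is correct and follows essentially the same route as the paper's proof: use Remark~\ref{Mumford--Tate Conj and Deligne Theorem analogue} and the hypothesis $\gpH(V,\psi)=C_D(\Iso_{(V,\psi)})$ to identify the connected components, deduce that the $\id$-parts in Diagram~\ref{diagram compatibility of GlK1alg with DLKA 2} are connected (hence trivial $\pi_0$), and then conclude via a chase in Diagrams~\ref{diagram compatibility of GlK1alg with DLKA 1} and \ref{diagram compatibility of GlK1alg with DLKA 2} together with Corollary~\ref{The natural candidate for AST group example 1}. The only cosmetic difference is that the paper cites \eqref{GlK1algtau subset DLKtauVpsiDQl} rather than \eqref{GlL1alg subset DLLA} for the containment $(G_{l,K,1}^{\alg})^{\id}\subseteq \gpDL_K^{\id}(V,\psi,D)_{\Q_l}$, but either reference suffices.
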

\begin{proof} 
By \eqref{GlK1alg subset GMKA1Ql} and Corollary \ref{The natural candidate for AST group example 1}: 
\begin{equation}
G_{l, K, 1}^{\alg} \subset \AST_{K}(M)_{\Q_l} = \gpDL_{K} (V, \psi, D)_{\Q_l}.
\label{AST conj approach 1}
\end{equation}

By Remark \ref{Mumford--Tate Conj and Deligne Theorem analogue}, the Mumford--Tate conjecture 
\eqref{MT eq for Hodge structures associated with l-adic rep.} holds if and only if the equality 
\eqref{H eq} holds, namely:
$$
(G_{l, K, 1}^{\alg})^{\circ} = \gpH(V, \psi)_{\Q_l}.
$$
Hence by the assumptions and connectedness of $\gpH(V, \psi)$, the left vertical arrow 
in Diagram \ref{diagram compatibility of GlK1alg with DLKA 1} is an isomorphism and
so \eqref{AST0l = DL0l} follows.
Moreover by \eqref{GlK1algtau subset DLKtauVpsiDQl} and the assumption, we observe that 
$(G_{l, K, 1}^{\alg})^{\circ} = (G_{l, K, 1}^{\alg})^{\id}$. Hence 
$$
\pi_{0}((G_{l, K, 1}^{\alg})^{\id}) = \pi_{0}(\gpDL_{K}^{\id} (V, \psi, D)) = 1.
$$
Hence \eqref{pi0l = G(Ke/K)} follows from Diagram 
\ref{diagram compatibility of GlK1alg with DLKA 2} and all vertical arrows
in Diagrams \ref{diagram compatibility of GlK1alg with DLKA 1} and
\ref{diagram compatibility of GlK1alg with DLKA 2} are isomorphisms. 
Applying also \eqref{AST conj approach 1}, the equality \eqref{alg Sato--Tate for MT explained by endo} follows.   
\end{proof}

\begin{remark} V. Cantoral-Farf{\' a}n and J. Commelin have proved
\cite{CF-C} that for abelian motives in $\mathcal{M}_{{\rm{mot}}}$, the Mumford-Tate conjecture implies the Algebraic Sato-Tate conjecture.
\end{remark}

\section{Computation of the identity connected component of \texorpdfstring{$\AST_{K}(M)$}{ASTKM}}
\label{computation of the identity connected component of ASTKM}

Consider the Tate motive $\TT := \LL^{-1} = h^2(\PP^{1})^{-1} = \Q(1)$
and the motivic Galois groups $G_{\mathcal{M}_{{\sim}}(M)}$, 
$G_{\mathcal{M}_{{\sim}}(M \oplus \TT)}$, and $G_{\mathcal{M}_{{\sim}}(\TT)} = \G_m$.
By the definition of the motivic Galois group, there is a natural monomorphism:
\begin{equation}
G_{\mathcal{M}_{{\sim}}(M \oplus \TT)} \hookrightarrow G_{\mathcal{M}_{{\sim}}(M)} \times {\G_m}.
\label{embedding of GML into GM times Gm}
\end{equation} 
The projections 
\begin{equation}
N\colon G_{\mathcal{M}_{{\sim}}(M \oplus \TT)} \rightarrow  {\G_m} \quad {\rm{and}} \quad  
\pi\colon G_{\mathcal{M}_{{\sim}}(M \oplus \TT)} \rightarrow G_{\mathcal{M}_{{\sim}}(M)}
\label{natural epimorphisms GMsim M oplus TT to Gm and GMsim M}
\end{equation} 
are epimorphisms because categories 
$\mathcal{M}_{{\sim}}(M)$, $\mathcal{M}_{{\sim}}(M \oplus \TT)$ and 
$\mathcal{M}_{{\sim}}(\TT)$ are strictly full and fully faithful subcategories of
the semisimple category $\mathcal{M}_{{\sim}}$. Hence they are also semisimple categories 
and the corresponding functors among them are also fully faithful (see \cite[Prop. 2.21, p. 139]{DM},
cf. \cite[p. 472]{Pan}). 

\begin{definition}
\label{GM 1 and GM circ 1}
Define the following groups:
\begin{align*}
G_{\mathcal{M}_{{\sim}}(M \oplus \TT), 1} &:= {\rm{Ker}} \, N \\
G^{0}_{\mathcal{M}_{{\sim}}(M \oplus \TT), 1} &:= 
G_{\mathcal{M}_{{\sim}}(M \oplus \TT), 1} \cap 
(G_{\mathcal{M}_{{\sim}}(M \oplus \TT)})^{\circ}.
\end{align*}
\end{definition}

Recall that we denote $V = H_{B} (M)$. The same proof as the proof of \eqref{MT(V) subset of Gmot} of Lemma \ref{MT subset GMot, H subset GMot1} 
gives:
\begin{equation}
\widetilde{\MT }(V, \psi) \subset (G_{{\mathcal{M}_{{\sim}} (M \oplus \TT)}})^{\circ}.
\label{wide tilde MT subset (G M o plus T) circ}
\end{equation}
By \eqref{embedding of tilde MT into MT times Gm}, \eqref{embedding of GML into GM times Gm}, and the definitions of $\widetilde{\MT }(V, \psi)$ and $G_{{\mathcal{M}_{{\sim}} (M \oplus \TT)}}$, 
the following diagram with exact columns commutes, with \eqref{wide tilde MT subset (G M o plus T) circ} as the left arrow in the middle row:

\begin{figure}[H]
\[
\begin{tikzcd} 
1 \arrow{d}[swap]{} &  1 \arrow{d}[swap]{} &  1 \arrow{d}[swap]{}\\
{\gpH}(V, \psi) \arrow{d}[swap]{} \arrow[hook]{r}{} & G^{0}_{\mathcal{M}_{{\sim}}(M \oplus \TT), 1} 
\arrow{d}[swap]{} \arrow[hook]{r}{} &  G_{\mathcal{M}_{{\sim}}(M \oplus \TT), 1} \arrow{d}[swap]{}\\ 
\widetilde{\MT }(V, \psi) \arrow{d}[swap]{N} \arrow[hook]{r}{} & (G_{\mathcal{M}_{{\sim}}(M \oplus \TT)})^{\circ} \arrow{d}[swap]{N} \arrow[hook]{r}{} & G_{\mathcal{M}_{{\sim}}(M \oplus \TT)} \arrow{d}[swap]{N}\\ 
\G_{m} \arrow{d}[swap]{} \arrow{r}{=} & \G_{m} \arrow{d}[swap]{} \arrow{r}{=} & \G_{m} \arrow{d}[swap]{}\\
1  & 1 & 1\\
\end{tikzcd}
\]
\\[-0.8cm]
\caption{}
\label{diagram compatibility of N of wide tilde MT with N of GM TT} 
\end{figure}

From Diagram \ref{diagram compatibility of N of wide tilde MT with N of GM TT} we obtain 
natural isomorphism:  
\begin{equation}
\widetilde{\MT }(V, \psi) / {\gpH}(V, \psi)  
\,\,\, {\stackrel{\simeq}{\longrightarrow}}  \,\,\, 
G_{\mathcal{M}_{{\sim}}(M \oplus \TT)} / G_{\mathcal{M}_{{\sim}}(M \oplus \TT), 1}.
\label{widetlde MT mod H iso with GM mod GM 1}
\end{equation}

\begin{theorem} 
\label{proposition: i M oplus T is an isomorphism}
The following natural map is an isomorphism:
\begin{equation}
i_{M \oplus \TT}\colon \pi_{0} (G_{\mathcal{M}_{{\sim}}(M \oplus \TT), 1}) \,\,\, {\stackrel{\simeq}{\longrightarrow}}  \,\,\,  \pi_{0} (G_{\mathcal{M}_{{\sim}}(M \oplus \TT)}). 
\label{i M oplus T is an isomorphism}
\end{equation}
In addition there is the following equality:
\begin{equation}
G^{0}_{\mathcal{M}_{{\sim}}(M \oplus \TT), 1} = (G_{\mathcal{M}_{{\sim}}(M \oplus \TT), 1})^{\circ}.
\label{Connected component of identity of GM plus T 1}
\end{equation}
\end{theorem}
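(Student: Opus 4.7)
The strategy is to mimic the proof of Theorem \ref{pi 0 wide tilde math cal G K, 1 alg cong pi 0 wide tilde math cal G K alg} (equivalently Theorem \ref{pi 0 wide tilde G l, K, 1 alg cong pi 0 wide tilde G l, K alg }) on the motivic side, with the key ingredient being a cocharacter splitting of $N$ on $\mathbb{C}$-points coming from the extended Mumford--Tate group. Concretely, the plan is to produce $\widetilde{\mu}_{\infty,V}\colon \G_m(\mathbb{C}) \to G_{\mathcal{M}_{{\sim}}(M\oplus\TT)}(\mathbb{C})$ with $N\circ\widetilde{\mu}_{\infty,V}(z)=z$, and whose image in fact lies in $(G_{\mathcal{M}_{{\sim}}(M\oplus\TT)})^{\circ}(\mathbb{C})$. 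Such a cocharacter already exists by construction: the Hodge cocharacter on $V\oplus\Q(1)$ gives $\widetilde{\mu}_{\infty,V}\colon \G_m(\mathbb{C}) \to \widetilde{\MT}(V,\psi)(\mathbb{C})$ with $N\circ\widetilde{\mu}_{\infty,V}(z)=z$, and the containment $\widetilde{\MT}(V,\psi)\subset (G_{\mathcal{M}_{{\sim}}(M\oplus\TT)})^{\circ}$ from \eqref{wide tilde MT subset (G M o plus T) circ} provides the required lift.

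Using this splitting, the short exact sequence
\[
1\longrightarrow G^{0}_{\mathcal{M}_{{\sim}}(M\oplus\TT),1}(\mathbb{C})\longrightarrow (G_{\mathcal{M}_{{\sim}}(M\oplus\TT)})^{\circ}(\mathbb{C})\xrightarrow{N}\G_m(\mathbb{C})\longrightarrow 1
\]
becomes split exact, and Lemma \ref{G connected implies G0 connected} then forces $G^{0}_{\mathcal{M}_{{\sim}}(M\oplus\TT),1}$ to be connected, establishing \eqref{Connected component of identity of GM plus T 1}. (Note that the middle term $(G_{\mathcal{M}_{{\sim}}(M\oplus\TT)})^{\circ}$ is connected by definition and has positive dimension because $N$ is surjective onto $\G_m$.) I would check that the sequence is genuinely exact by invoking Definition \ref{GM 1 and GM circ 1} together with the fact that $N$ restricted to the identity component is still surjective, which follows from the existence of the cocharacter $\widetilde{\mu}_{\infty,V}$.

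With \eqref{Connected component of identity of GM plus T 1} in hand, the isomorphism \eqref{i M oplus T is an isomorphism} follows from a diagram chase in the three-by-three diagram
\[
\begin{tikzcd}[column sep=small]
& 1 \arrow{d} & 1 \arrow{d} & 1 \arrow{d}\\
1 \arrow{r} & (G_{\mathcal{M}_{{\sim}}(M\oplus\TT),1})^{\circ} \arrow{d}\arrow{r} & G_{\mathcal{M}_{{\sim}}(M\oplus\TT),1} \arrow{d}\arrow{r} & \pi_{0}(G_{\mathcal{M}_{{\sim}}(M\oplus\TT),1}) \arrow{d}{i_{M\oplus\TT}} \arrow{r} & 1\\
1 \arrow{r} & (G_{\mathcal{M}_{{\sim}}(M\oplus\TT)})^{\circ} \arrow{d}{N} \arrow{r} & G_{\mathcal{M}_{{\sim}}(M\oplus\TT)} \arrow{d}{N} \arrow{r} & \pi_{0}(G_{\mathcal{M}_{{\sim}}(M\oplus\TT)}) \arrow{d}\arrow{r} & 1\\
1 \arrow{r} & \G_m \arrow{d}\arrow{r}{=} & \G_m \arrow{d}\arrow{r} & 1 & \\
& 1 & 1 & &
\end{tikzcd}
\]
exactly parallel to Diagram \ref{diagram to prove analog of Serre theorem for families of l-adic representations under weak Tate conjectures part (a)}; the left-hand column is exact precisely because of \eqref{Connected component of identity of GM plus T 1}, and then the snake lemma forces the right column to be exact, giving that $i_{M\oplus\TT}$ is an isomorphism.

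The main obstacle is verifying that the splitting cocharacter really lands in $(G_{\mathcal{M}_{{\sim}}(M\oplus\TT)})^{\circ}$; once this is established via \eqref{wide tilde MT subset (G M o plus T) circ} (which in turn rests on the tensor invariance criterion applied to polarized Hodge cycles, as in the proof of Lemma \ref{MT subset GMot, H subset GMot1}), the remainder is purely formal diagram chasing and an invocation of Lemma \ref{G connected implies G0 connected}. No analogue of the odd-weight splitting trick from Lemma \ref{for n odd G0M1 = GM10} is needed here, because the presence of the Tate summand $\TT$ already supplies the missing $\G_m$ factor that was problematic for $M$ alone; this is the conceptual reason why $i_{M\oplus\TT}$ is unconditionally an isomorphism whereas $i_M$ need not be.
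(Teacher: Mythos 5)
Your proposal is correct and follows essentially the same route as the paper: the splitting of $N$ on $\C$-points via the cocharacter $\widetilde{\mu}_{\infty,V}$ landing in $(G_{\mathcal{M}_{{\sim}}(M\oplus\TT)})^{\circ}$ through $\widetilde{\MT}(V,\psi)$, then Lemma \ref{G connected implies G0 connected} to get \eqref{Connected component of identity of GM plus T 1}, and finally the diagram chase (the paper invokes the right column of Diagram \ref{motivic diagram in Serre theorem} applied to $M\oplus\TT$, which is the same three-by-three argument you write out). Your closing conceptual remark about the Tate summand supplying the missing $\G_m$ factor accurately captures why this works unconditionally while $i_M$ may fail to be an isomorphism.
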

\begin{proof}
Consider the following exact sequence: 
\begin{equation}
1 \rightarrow G^{0}_{\mathcal{M}_{{\sim}}(M \oplus \TT), 1} \rightarrow (G_{\mathcal{M}_{{\sim}}(M \oplus \TT)})^{\circ} \,\, {\stackrel{N}{\longrightarrow}}  \,\, \G_{m} \rightarrow 1.
\label{exact sequence: G circ M 1 arrow G M circ arrow Gm}
\end{equation}
Because of Diagram \ref{diagram compatibility of N of wide tilde MT with N of GM TT} the cocharacter:
$$
\G_{m} (\C) \,\,\, {\stackrel{{\widetilde{\mu_{\infty, V}}}}{\longrightarrow}} \,\,\, \widetilde{\MT }(V, \psi) (\C)
\rightarrow (G_{\mathcal{M}_{{\sim}}(M \oplus \TT)})^{\circ} (\C)
$$
splits $N$ in the exact sequence:
\begin{equation}
\label{exact sequence: G circ M 1 arrow G M circ arrow Gm evaluated at C}
1 \rightarrow G^{0}_{\mathcal{M}_{{\sim}}(M \oplus \TT), 1} (\C) \rightarrow (G_{\mathcal{M}_{{\sim}}(M \oplus \TT)})^{\circ} (\C) \,\, {\stackrel{N}{\longrightarrow}}  \,\, \G_{m} (\C) \rightarrow 1  
\end{equation}
Hence by Lemma \ref{G connected implies G0 connected},
$$
G^{0}_{\mathcal{M}_{{\sim}}(M \oplus \TT), 1} = (G_{\mathcal{M}_{{\sim}}(M \oplus \TT), 1})^{\circ}.
$$
Observe that the commutative Diagram \ref{motivic diagram in Serre theorem} holds for the motive 
$M \oplus \TT$ so we obtain the isomorphism \eqref{i M oplus T is an isomorphism} from the right
exact column of this diagram for $M \oplus \TT$.
\end{proof}

\begin{lemma}
There is the following commutative diagram with exact rows and columns. The map $\pi_1$ is induced by $\pi$.
\begin{figure}[H]
\[
\begin{tikzcd}
1 \arrow{r}{} & \,\, G_{\mathcal{M}_{{\sim}}(M \oplus \TT), 1}  \arrow{d}[swap]{\pi_1} \arrow{r}{}  & 
G_{\mathcal{M}_{{\sim}}(M \oplus \TT)}  \arrow{d}[swap]{\pi} \arrow{r}{N} & \G_{m} 
\arrow{d}{x \mapsto x^{-n}}[swap]{} \arrow{r}{} & 1 \\ 
1 \arrow{r}{} & G_{\mathcal{M}_{{\sim}}(M), 1}  \arrow{r}{} & 
G_{\mathcal{M}_{{\sim}}(M)} \arrow{d}[swap]{} \arrow{r}{\chi} & \G_{m} \arrow{d}[swap]{} 
\arrow{r}{} & 1\\
& & 1 & 1 \\
\end{tikzcd}
\]
\\[-0.8cm]
\caption{}
\label{diagram compatibility of GM plus L with GM} 
\end{figure}
\label{commutativity of diagram compatibility of GM plus L with GM}
\end{lemma}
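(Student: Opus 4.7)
The plan is to proceed in close analogy with the proof of Lemma~\ref{commutativity of diagram compatibility of tilde(MT (V, psi)) with MT (V, psi)} (which established the corresponding diagram for $\widetilde{\MT}(V,\psi)$ and $\MT(V,\psi)$), replacing the cocharacter argument with the Tannakian observation that $\psi_{{\sim}}$ is a morphism in $\mathcal{M}_{{\sim}}(M\oplus\TT)$.

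First I would verify exactness of the two rows. The bottom row is exact by the definition of $G_{\mathcal{M}_{\sim}(M), 1}$ (Definition~\ref{motivic Galois group, motivic Serre group}) together with the observation that $\chi\colon G_{\mathcal{M}_{\sim}(M)}\to\G_m$ is surjective: indeed, $\MT(V,\psi)\subset G_{\mathcal{M}_{\sim}(M)}$ by \eqref{MT(V) subset of Gmot}, and the cocharacter $\mu_{\infty,V}$ satisfies $\chi\circ\mu_{\infty,V}(z)=z^{-n}$, so its image already surjects onto $\G_m$. The top row is exact by Definition~\ref{GM 1 and GM circ 1} together with surjectivity of $N$ from \eqref{natural epimorphisms GMsim M oplus TT to Gm and GMsim M}.

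The main step is commutativity of the right square, i.e.\ the identity $\chi\circ\pi = (x\mapsto x^{-n})\circ N$. The motivic polarization \eqref{motivic Hodge polarization on M} gives a morphism
\[
\psi_{{\sim}}\colon M\otimes M \longrightarrow \TT^{\otimes(-n)} = {\bf 1}(-n)
\]
in the category $\mathcal{M}_{{\sim}}(M\oplus\TT)$. Any $g\in G_{\mathcal{M}_{\sim}(M\oplus\TT)}$ must commute with this morphism under the Tannakian fiber functor. On $M\otimes M$ it acts through $\pi(g)\otimes\pi(g)$, scaling $\psi$ by $\chi(\pi(g))$ by definition of the character of the polarization \eqref{def of GIso}; on $\TT=\Q(1)$ it acts by $N(g)$, hence on $\TT^{\otimes(-n)}$ it acts by $N(g)^{-n}$. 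Equality of these two scalars gives the claim. This identifies the desired relation on the dense locus arising from images of Galois representations and extends to all of $G_{\mathcal{M}_{\sim}(M\oplus\TT)}$ by the standard Tannakian rigidity argument (cf.\ the use of \cite[Chap. I, Lemma 4.1]{Ha} in the discussion preceding Diagram~\ref{diagram compatibility of tilde(GlK1alg) with GlK1alg}). I expect this to be the main (though not difficult) conceptual point.

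Given commutativity of the right square, the map $\pi$ restricted to $\ker N = G_{\mathcal{M}_{\sim}(M\oplus\TT),1}$ lands in $\ker\chi = G_{\mathcal{M}_{\sim}(M),1}$, so $\pi_1$ is well-defined and the left square commutes tautologically. It remains to verify that $\pi_1$ is a monomorphism. By \eqref{embedding of GML into GM times Gm}, the kernel of $\pi$ is contained in $\mathrm{Id}_V\times\G_m$, so $N$ restricted to $\ker\pi$ is injective; in particular $\ker\pi_1 = \ker\pi\cap\ker N = 1$. Finally, the rightmost column is exact since $x\mapsto x^{-n}$ is surjective on $\G_m$, and the middle column is exact since $\pi$ is surjective by \eqref{natural epimorphisms GMsim M oplus TT to Gm and GMsim M}. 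This completes the verification.
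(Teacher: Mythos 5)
Your proposal is correct, and the peripheral verifications (exactness of the rows via Definitions \ref{motivic Galois group, motivic Serre group} and \ref{GM 1 and GM circ 1} and the epimorphisms \eqref{natural epimorphisms GMsim M oplus TT to Gm and GMsim M}; injectivity of $\pi_1$ from $\Ker \pi \subset \mathrm{Id}_V \times \G_m$) coincide with what the paper does before and after its proof. But your treatment of the key step --- commutativity of the right square --- is genuinely different. The paper realizes that square as the front face of a cube whose rear face is the already-established Diagram \ref{diagram compatibility of tilde(MT (V, psi)) with MT (V, psi)} for $\widetilde{\MT}(V,\psi)$ and $\MT(V,\psi)$ (itself proved via the cocharacter $\widetilde{\mu}_{\infty,V}$), and then deduces commutativity of the front face by a diagram chase using the surjection of $\widetilde{\MT}(V,\psi)$ onto $G_{\mathcal{M}_{\sim}(M\oplus\TT)}/\Ker N$ coming from \eqref{widetlde MT mod H iso with GM mod GM 1}. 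You instead argue directly in the Tannakian category: since $\psi_{\sim}\colon M\otimes M\to \mathbf{1}(-n)$ is a morphism between objects of $\mathcal{M}_{\sim}(M\oplus\TT)$, condition (c) of Definition \ref{definition od Aut tensor w} forces $\chi(\pi(g))\,\psi = N(g)^{-n}\,\psi$ for every point $g$, and nondegeneracy of $\psi$ gives the identity. This is cleaner and more self-contained --- it does not route through the Hodge-theoretic Lemma \ref{commutativity of diagram compatibility of tilde(MT (V, psi)) with MT (V, psi)} or the factorization \eqref{widetlde MT mod H iso with GM mod GM 1} --- at the cost of invoking the explicit Tannakian compatibility rather than previously built machinery; both are legitimate.

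One small correction: your closing remark that the relation is first obtained ``on the dense locus arising from images of Galois representations'' and then extended by rigidity is unnecessary and slightly off. The Tannakian condition you invoke already holds for every $R$-point of $\Aut^{\otimes}(H_B|\mathcal{M}_{\sim}(M\oplus\TT))$, so no density or extension step is needed; and in the motivic setting Zariski density of the Galois image is not available anyway (that device is used in the paper only for the $l$-adic groups $\widetilde{G_{l,K}^{\alg}}$). Simply delete that sentence; the argument is complete without it.
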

\begin{proof} Observe that the right square of the Diagram \ref{diagram compatibility of GM plus L with GM}  
is the front face of the following cube Diagram \ref{proposition: i M oplus T is an isomorphism}. 
\begin{figure}[H]
\[
\begin{tikzcd}[row sep=scriptsize, column sep=scriptsize]
& {\widetilde{\MT}}(V, \psi) \arrow{dl}{} \arrow{rr}{N} \arrow{dd}[near end]{\pi} & & \G_{m} \arrow{dl}{=} 
\arrow{dd}{x \mapsto x^{-n}} \\
G_{\mathcal{M}_{{\sim}}(M \oplus \TT)} \arrow[crossing over]{rr}[near start]{N} \arrow{dd}{\pi} & & \G_{m}
\\
& {\MT}(V, \psi) \arrow[dl] \arrow{rr}[near start, swap]{\chi} & & \G_{m} \arrow{dl}{=} \\
G_{\mathcal{M}_{{\sim}}(M)} \arrow{rr}[swap]{\chi} & &  \G_{m} 
\arrow[crossing over, leftarrow]{uu}[near end, swap]{x \mapsto x^{-n}} 
\\
\end{tikzcd}
\]
\\[-0.8cm]
\caption{}
\label{the cube diagram} 
\end{figure}
Observe that the 5 remaining faces of Diagram \ref{the cube diagram} commute 
(the rear face commutes by Lemma \ref{commutativity of diagram compatibility of tilde(MT (V, psi)) with MT (V, psi)}). Hence the front face also commutes  
by \eqref{widetlde MT mod H iso with GM mod GM 1} and a diagram chase.
\end{proof} 

By the definition of $N$ and \eqref{embedding of GML into GM times Gm}, the kernel of $\pi$ in Diagram  
\ref{diagram compatibility of GM plus L with GM} is contained in ${\rm{Id}}_{V} \times
\G_{m}$. Hence the restriction of $N$ to the kernel of $\pi$ is a monomorphism. By the commutativity 
of Diagram \ref{diagram compatibility of GM plus L with GM}, the kernel of $\pi$ injects into
$\mu_{n}$. Hence $\pi_{1}$ is a monomorphism. In addition $\dim G_{\mathcal{M}_{{\sim}}(M \oplus \TT)} = 
\dim G_{\mathcal{M}_{{\sim}}(M)}$ 
and consequently $\dim G_{\mathcal{M}_{{\sim}}(M \oplus \TT), 1} = 
\dim G_{\mathcal{M}_{{\sim}}(M), 1}$. This shows that
\begin{equation}
(G_{\mathcal{M}_{{\sim}}(M \oplus \TT), 1})^{\circ} = (G_{\mathcal{M}_{{\sim}}(M), 1})^{\circ}.
\label{connected component of id of GM plus T 1 is the same as GM 1}
\end{equation} 

Consider the following commutative diagram. 
\begin{figure}[H]
\[
\begin{tikzcd}
&& \,\, \pi_{0} (G_{\mathcal{M}_{{\sim}}(M \oplus \TT), 1}) \arrow{d}[swap]{\overline{\pi}_{1}} \arrow{r}{\simeq}  & \pi_{0} (G_{\mathcal{M}_{{\sim}}(M \oplus \TT)}) \arrow{d}[swap]{\overline{\pi}}& \\ 
1 \arrow{r}{} &  G^{0}_{\mathcal{M}_{{\sim}}(M), 1}/(G_{\mathcal{M}_{{\sim}}(M), 1})^{\circ} \arrow{r}{} & 
\pi_{0} (G_{\mathcal{M}_{{\sim}}(M), 1})  \arrow{r}{} & \pi_{0} (G_{\mathcal{M}_{{\sim}}(M)}) \arrow{r}{} & 1\\
\end{tikzcd}
\]
\\[-0.8cm]
\caption{}
\label{pi0 applied to the diagram compatibility of GM 1 with GM} 
\end{figure}

The top row is the isomorphism of Theorem \ref{proposition: i M oplus T is an isomorphism}. The bottom row is the exact sequence of Theorem \ref{equality of conn comp for GM and GM1}. 
The map $\overline{\pi}$ in Diagram \ref{pi0 applied to the diagram compatibility of GM 1 with GM}  
is an epimorphism because the map $\pi$ in Diagram \ref{diagram compatibility of GM plus L with GM} is an epimorphism. The map $\overline{\pi}_{1}$ in Diagram \ref{pi0 applied to the diagram compatibility of GM 1 with GM}  is a monomorphism because the map $\pi_{1}$ in Diagram \ref{diagram compatibility of GM plus L with GM} is a monomorphism and because of \eqref{connected component of id of GM plus T 1 is the same as GM 1}.
Proposition \ref{Motivic L0realizing conn comp: even case}(b)  and a chase in Diagram 
\ref{pi0 applied to the diagram compatibility of GM 1 with GM} show that  
\begin{equation}
{\pi_{0} (G_{\mathcal{M}_{{\sim}}(M), 1})  \,  = \, \pi_{0} (G_{\mathcal{M}_{{\sim}}(M \oplus \TT), 1}) \,\, \cup 
\, - {\rm{Id}}_{V} (G_{\mathcal{M}_{{\sim}}(M), 1})^{\circ} \,\, 
\pi_{0} (G_{\mathcal{M}_{{\sim}}(M \oplus \TT), 1}).}   
\label{decomposition of p0 GM 1 into p0 GM plus T 1 cup -Id p0 GM plus T 1}  
\end{equation}
where $- {\rm{Id}}_{V} (G_{\mathcal{M}_{{\sim}}(M), 1})^{\circ}$ denotes the coset of 
$- {\rm{Id}}_{V}$ in the quotient group $\pi_{0} (G_{\mathcal{M}_{{\sim}}(M), 1}) = G_{\mathcal{M}_{{\sim}}(M), 1} / (G_{\mathcal{M}_{{\sim}}(M), 1})^{\circ}$. 
\medskip

Consider the following commutative diagram with exact rows. 
\begin{figure}[H]
\[
\begin{tikzcd}
1 \arrow{r}{} & (G_{\mathcal{M}_{{\sim}}(M \oplus \TT), 1})^{\circ} \arrow{d}[swap]{=} \arrow{r}{} & \,\, 
G_{\mathcal{M}_{{\sim}}(M \oplus \TT), 1} \arrow{d}[swap]{\pi_{1}} \arrow{r}{}  & 
 \pi_{0} (G_{\mathcal{M}_{{\sim}}(M \oplus \TT), 1}) \arrow{d}[swap]{\overline{\pi}_{1}} \arrow{r}{} & 1\\ 
1 \arrow{r}{} & (G_{\mathcal{M}_{{\sim}}(M), 1})^{\circ} \arrow{r}{} & 
G_{\mathcal{M}_{{\sim}}(M), 1}  \arrow{r}{} & 
\pi_{0} (G_{\mathcal{M}_{{\sim}}(M), 1}) \arrow{r}{} & 1\\
\end{tikzcd}
\]
\\[-0.8cm]
\caption{}
\label{the diagram compatibility of GM plus T 1 with GM 1 and pi0 GM plus 1 with pi0 GM 1} 
\end{figure}

The equality \eqref{decomposition of p0 GM 1 into p0 GM plus T 1 cup -Id p0 GM plus T 1} and a chase in 
Diagram \ref{the diagram compatibility of GM plus T 1 with GM 1 and pi0 GM plus 1 with pi0 GM 1} 
show that 
\begin{equation}
G_{\mathcal{M}_{{\sim}}(M), 1}  \,  = \, G_{\mathcal{M}_{{\sim}}(M \oplus \TT), 1} \,\, \cup 
\, - {\rm{Id}}_{V}  \,\, G_{\mathcal{M}_{{\sim}}(M \oplus \TT), 1} \, .   
\label{decomposition of GM 1 into GM plus T 1 cup -Id GM plus T 1}  
\end{equation} 
In particular $[G_{\mathcal{M}_{{\sim}}(M), 1}: \, G_{\mathcal{M}_{{\sim}}(M \oplus \TT), 1}] \leq 2$ and 
$G_{\mathcal{M}_{{\sim}}(M \oplus \TT), 1} \, \triangleleft \, G_{\mathcal{M}_{{\sim}}(M), 1}$.
\medskip

To be consistent with Serre's approach, as in \S \ref{section-computation of the identity connected component} and \ref{algebraic ST conjecture for families of l-adic reps}, we make the following definition. 

\begin{definition} 
The \emph{algebraic Sato--Tate group} $\widetilde{\AST_{K}} (M)$ is defined as follows:
\begin{equation}
\widetilde{\AST_{K}} (M) := \MS_{{\sim}} (M \oplus \TT) = G_{\mathcal{M}_{{\sim}}(M \oplus \TT), 1}. 
\label{Serre AST group}
\end{equation}
Every maximal compact subgroup of $\widetilde{\AST_{K}} (M)(\C)$ will be called
a \emph{Sato--Tate group} associated with $M$ and denoted $\widetilde{\ST_{K}} (M)$.
\label{GMLLKA1 as AST group}
\end{definition}

\begin{corollary} The following equalities hold:
\begin{itemize}
\item[(a)] $\AST_{K}(M)^{\circ} = G^{0}_{\mathcal{M}_{{\sim}}(M \oplus \TT), 1}$.
\item[(b)] $\AST_{K}(M) = \widetilde{\AST_{K}} (M) \, \cup \, - {\rm{Id}}_{V} \widetilde{\AST_{K}} (M)$.
\item[(c)] $\ST_{K}(M) = \widetilde{\ST_{K}} (M) \, \cup \, - {\rm{Id}}_{V} \widetilde{\ST_{K}} (M)$
up to conjugation in \\ $\AST_{K}(M) (\C)$.
\end{itemize}
\label{properties of AST M psi with resp. to AST M}
\end{corollary}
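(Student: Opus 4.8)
The plan is to obtain all three statements directly from results already proved in this section, by unwinding the definitions $\AST_{K}(M) = G_{\mathcal{M}_{{\sim}}(M), 1}$ (Definition~\ref{GMKA1 as AST group}) and $\widetilde{\AST_{K}}(M) = G_{\mathcal{M}_{{\sim}}(M\oplus\TT), 1}$ (Definition~\ref{GMLLKA1 as AST group}); essentially no new work is required beyond assembling Theorem~\ref{proposition: i M oplus T is an isomorphism}, equation~\eqref{connected component of id of GM plus T 1 is the same as GM 1}, and equation~\eqref{decomposition of GM 1 into GM plus T 1 cup -Id GM plus T 1}.

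First I would prove (a). By \eqref{connected component of id of GM plus T 1 is the same as GM 1} we have $(G_{\mathcal{M}_{{\sim}}(M), 1})^{\circ} = (G_{\mathcal{M}_{{\sim}}(M\oplus\TT), 1})^{\circ}$, and by \eqref{Connected component of identity of GM plus T 1} in Theorem~\ref{proposition: i M oplus T is an isomorphism} the latter equals $G^{0}_{\mathcal{M}_{{\sim}}(M\oplus\TT), 1}$ (cf.\ Definition~\ref{GM 1 and GM circ 1}). Unwinding the definition $\AST_{K}(M)^{\circ} = (G_{\mathcal{M}_{{\sim}}(M), 1})^{\circ}$ gives (a). Part (b) is then exactly the content of \eqref{decomposition of GM 1 into GM plus T 1 cup -Id GM plus T 1}, rewritten in the notation of Definitions~\ref{GMKA1 as AST group} and \ref{GMLLKA1 as AST group}; it records in particular that $\widetilde{\AST_{K}}(M)$ is normal in $\AST_{K}(M)$ of index at most $2$ and that $-{\rm{Id}}_{V}$ is central in $\AST_{K}(M)$.

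Finally, for (c) I would take $\C$-points in (b) and restrict to maximal compact subgroups. If $-{\rm{Id}}_{V}\in\widetilde{\AST_{K}}(M)$ then $\AST_{K}(M) = \widetilde{\AST_{K}}(M)$ by (b) and both sides of (c) coincide. Otherwise, since $-{\rm{Id}}_{V}$ is a central element of finite order of $\AST_{K}(M)(\C)$, it is contained in some maximal compact subgroup $\ST_{K}(M)$ of $\AST_{K}(M)(\C)$; set $\widetilde{\ST_{K}}(M) := \ST_{K}(M)\cap\widetilde{\AST_{K}}(M)(\C)$. The standard interaction between maximal compact subgroups and a normal subgroup of index $2$ (as used already in the passage to maximal compacts in Corollary~\ref{relations between AST and wide tilde AST and between ST and wide tilde ST}, cf.\ \cite[\S VII.2]{Kn}) shows that $\widetilde{\ST_{K}}(M)$ is a maximal compact subgroup of $\widetilde{\AST_{K}}(M)(\C)$ and that $\ST_{K}(M) = \widetilde{\ST_{K}}(M)\cup -{\rm{Id}}_{V}\,\widetilde{\ST_{K}}(M)$; as any two maximal compact subgroups of $\AST_{K}(M)(\C)$ (resp.\ of $\widetilde{\AST_{K}}(M)(\C)$) are conjugate, (c) follows up to conjugation in $\AST_{K}(M)(\C)$.

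The only step demanding genuine attention is (c): one must verify that the maximal compact $\ST_{K}(M)$ chosen through $-{\rm{Id}}_{V}$ meets $\widetilde{\AST_{K}}(M)(\C)$ in a maximal compact subgroup and that the two cosets in (b) restrict to the two cosets displayed in (c). This is routine — it is the familiar fact that if $U\triangleleft G$ with $[G:U]\le 2$ and $K\subset G$ is maximal compact then $K\cap U$ is maximal compact in $U$ — but it is the one place where a brief argument, rather than a direct citation of an earlier displayed equation, is needed; parts (a) and (b) are immediate.
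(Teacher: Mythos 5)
Your proposal is correct and follows the same route as the paper, which proves (a) and (b) by citing exactly the displayed equations \eqref{Connected component of identity of GM plus T 1} and \eqref{decomposition of GM 1 into GM plus T 1 cup -Id GM plus T 1} and disposes of (c) with the single remark that it ``follows by (b)''. The only difference is that you spell out the routine maximal-compact argument for (c) that the paper leaves implicit; that elaboration is sound.
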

\begin{proof}
(a) follows by \eqref{Connected component of identity of GM plus T 1},
(b) follows by \eqref{decomposition of GM 1 into GM plus T 1 cup -Id GM plus T 1}, and (c) 
follows by (b). 
\end{proof}

Consider the following commutative Diagram~\ref{wide tilde GlK 1 subset GMK M 1 Ql}
 of group schemes with exact columns. 
The middle horizontal arrow is a closed immersion by Remark \ref{Remark on GlKalg subset GMKAQl}. This 
remark obviously holds for all motives in $\mathcal{M}_{{\sim}}$, not only polarized motives.
The middle horizontal arrow is a closed immersion. Hence the top horizontal arrow is also a closed immersion. 
\begin{figure}[H]
\[
\begin{tikzcd}
1 \arrow{d}[swap]{} &  1 \arrow{d}[swap]{} \\
\widetilde{G_{l, K, 1}^{\alg}} \arrow{d}[swap]{} \arrow[hook]{r}{} & 
{G_{\mathcal{M}_{{\sim}}(M \oplus \TT), 1}}_{\Q_l} \arrow{d}[swap]{}\\
\widetilde{G_{l, K}^{\alg}} \arrow{d}{N} \arrow[hook]{r}{} & 
{G_{\mathcal{M}_{{\sim}}(M \oplus \TT)}}_{\Q_l} \arrow{d}{N}\\
\G_{m} \arrow{d}[swap]{} \arrow{r}{=} & \G_{m} \arrow{d}[swap]{}\\
1  & 1 \\
\end{tikzcd}
\]
\\[-0.8cm]
\caption{}
\label{wide tilde GlK 1 subset GMK M 1 Ql}
\end{figure}

By Definition \ref{GMLLKA1 as AST group} and Diagram \ref{wide tilde GlK 1 subset GMK M 1 Ql},
we obtain:
\begin{equation}
\widetilde{G_{l, K, 1}^{\alg}}  \subset \widetilde{\AST_{K}} (M)_{\Q_l}.
\label{wide tilde GlKalg subset AST M Ql}
\end{equation} 
Hence Algebraic Sato--Tate Conjecture \ref{general algebraic Sato Tate conj. Serre's approach}(a)
holds for $\widetilde{\AST_{K}} (M)$ with respect to the representation
$$
\widetilde{\rho}_l\colon G_K \rightarrow \GL (V_{l} \oplus \Q_l(1)).
$$ 
Algebraic Sato--Tate Conjecture \ref{general algebraic Sato Tate conj. Serre's approach}(b)
for $\widetilde{\AST_{K}} (M)$ states:
$$
\widetilde{G_{l, K, 1}^{\alg}} \cong  \widetilde{\AST_{K}} (M)_{\Q_{l}}.
$$

\begin{proposition}
The Algebraic Sato--Tate conjecture holds for 
$\AST_{K}(M)$ (Conjecture \ref{general algebraic Sato Tate conj.}) if and only if the Algebraic Sato--Tate conjecture holds for $\widetilde{\AST_{K}} (M)$ (Conjecture \ref{general algebraic Sato Tate conj. Serre's approach}).
\label{AST conjecture for M is equivalent to wide tilde AST conjecture }
\end{proposition}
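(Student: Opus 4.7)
The plan is to mimic the proof of Proposition~\ref{AST conjecture for math cal is equivalent to  AST conjecture wide tilde math cal}. The key ingredients are already in place: by \eqref{Conjecture: general algebraic Sato Tate conj. for M} and \eqref{wide tilde GlKalg subset AST M Ql}, the maps $\gpast_{l,K}$ and $\widetilde{\gpast}_{l,K}$ are closed immersions, so the (a) parts of Conjectures~\ref{general algebraic Sato Tate conj.} and~\ref{general algebraic Sato Tate conj. Serre's approach} hold unconditionally; only the (b) parts need to be compared. The two parallel decompositions
\[
G_{l,K,1}^{\alg} \,=\, \widetilde{G_{l,K,1}^{\alg}} \,\cup\, (-{\rm Id}_{V_l})\,\widetilde{G_{l,K,1}^{\alg}}
\quad\text{(from \eqref{decomposition of GlK1alg into tilde(GlK1alg) cup -Id tilde(GlK1alg)})}
\]
\[
\AST_K(M) \,=\, \widetilde{\AST_K}(M) \,\cup\, (-{\rm Id}_V)\,\widetilde{\AST_K}(M)
\quad\text{(from Corollary~\ref{properties of AST M psi with resp. to AST M}(b))}
\]
are what drive the comparison.

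For the implication $\widetilde{\gpast}_{l,K}$ iso $\Rightarrow$ $\gpast_{l,K}$ iso the proof is a one-line diagram chase: if $\widetilde{G_{l,K,1}^{\alg}} = \widetilde{\AST_K}(M)_{\Q_l}$, then multiplying by $-{\rm Id}$ gives $(-{\rm Id}_{V_l})\,\widetilde{G_{l,K,1}^{\alg}} = (-{\rm Id}_V)\,\widetilde{\AST_K}(M)_{\Q_l}$, and taking unions via the two decompositions above yields $G_{l,K,1}^{\alg} = \AST_K(M)_{\Q_l}$. For the converse, I would use that the identity components on both sides already agree, namely $(\widetilde{G_{l,K,1}^{\alg}})^{\circ} = (G_{l,K,1}^{\alg})^{\circ}$ by \eqref{connected component of id of tildeGlK1alg is the same as GlK1alg} and $\widetilde{\AST_K}(M)^{\circ} = \AST_K(M)^{\circ}$ by Corollary~\ref{properties of AST M psi with resp. to AST M}(a). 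Under the hypothesis $G_{l,K,1}^{\alg} = \AST_K(M)_{\Q_l}$ these four identity components collapse to one, so $\widetilde{G_{l,K,1}^{\alg}} \subset \widetilde{\AST_K}(M)_{\Q_l}$ is an inclusion of closed subgroups of the same dimension, each of index at most~$2$ inside the common ambient group $G_{l,K,1}^{\alg}$.

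The main obstacle is then to rule out a ``parity mismatch''---the scenario $-{\rm Id}_{V_l} \notin \widetilde{G_{l,K,1}^{\alg}}$ but $-{\rm Id}_V \in \widetilde{\AST_K}(M)$, which would force $\widetilde{\AST_K}(M)_{\Q_l} = \AST_K(M)_{\Q_l} = G_{l,K,1}^{\alg}$ to properly contain its index-$2$ subgroup $\widetilde{G_{l,K,1}^{\alg}}$. To eliminate this case I would observe that $(-{\rm Id}_V,1)$ is a $\Q$-point of the ambient scheme $\GL_V \times \G_m$, so membership of this element in the $\Q$-group $G_{\mathcal{M}_{{\sim}}(M\oplus\TT)}$ is equivalent to membership in $G_{\mathcal{M}_{{\sim}}(M\oplus\TT)}(\Q_l)$. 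Assuming $\gpast_{l,K}$ is an isomorphism gives $G_{l,K}^{\alg} = G_{\mathcal{M}_{{\sim}}(M)_{\Q_l}}$, and chasing through Diagram~\ref{diagram compatibility of GM plus L with GM} together with its $l$-adic analogue Diagram~\ref{diagram compatibility of tilde(GlK1alg) with GlK1alg}---using that both $\pi_1$'s are monomorphisms with kernel inside $\mu_n$ and that the defining relation $\chi\circ\pi = (x\mapsto x^{-n})\circ N$ is the same on both sides---forces $(-{\rm Id}_{V_l},1)$ into $\widetilde{G_{l,K}^{\alg}}$ and hence $-{\rm Id}_{V_l}$ into $\widetilde{G_{l,K,1}^{\alg}}$, producing the desired contradiction and completing the argument.
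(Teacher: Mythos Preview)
Your strategy is exactly the paper's: the paper's one-line proof cites precisely the four ingredients you name---the inclusions \eqref{Conjecture: general algebraic Sato Tate conj. for M} and \eqref{wide tilde GlKalg subset AST M Ql}, and the two decompositions \eqref{decomposition of GlK1alg into tilde(GlK1alg) cup -Id tilde(GlK1alg)} and Corollary~\ref{properties of AST M psi with resp. to AST M}(b)---and your forward implication is the intended argument verbatim.

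For the converse you actually go further than the paper by isolating the one delicate case, the parity mismatch where $-\mathrm{Id}_V\in\widetilde{\AST_K}(M)$ but $-\mathrm{Id}_{V_l}\notin\widetilde{G_{l,K,1}^{\alg}}$. Your proposed resolution, however, does not close that gap. Observing that $(-\mathrm{Id}_V,1)$ is a $\Q$-point of $G_{\mathcal{M}_\sim(M\oplus\TT)}$ only places it in $G_{\mathcal{M}_\sim(M\oplus\TT)}(\Q_l)$, not in the \emph{a priori} smaller subgroup $\widetilde{G_{l,K}^{\alg}}$; and the shared relation $\chi\circ\pi=(x\mapsto x^{-n})\circ N$ does not pin the subgroup down, since both $\widetilde{G_{l,K}^{\alg}}$ and $G_{\mathcal{M}_\sim(M\oplus\TT)}\otimes\Q_l$ sit inside the same fiber product $G_{l,K}^{\alg}\times_{\G_m}\G_m$ (second map $x\mapsto x^{-n}$) without being forced to coincide with it or with each other by that relation alone. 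So the ``chase'' you sketch does not force $(-\mathrm{Id}_{V_l},1)$ into $\widetilde{G_{l,K}^{\alg}}$. In summary: your approach matches the paper's, your identification of the obstacle is sharper than the paper's terse treatment (which does not single out this case), but your resolution of that obstacle is not yet complete.
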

\begin{proof} 
This follows by \eqref{Conjecture: general algebraic Sato Tate conj. for M}, \eqref{wide tilde GlKalg subset AST M Ql}, \eqref{decomposition of GlK1alg into tilde(GlK1alg) cup -Id tilde(GlK1alg)}
and Corollary \ref{properties of AST M psi with resp. to AST M}(b).
\end{proof}

Because of Proposition \ref{AST conjecture for M is equivalent to wide tilde AST conjecture }, it is enough to discuss in the following sections the Algebraic Sato--Tate conjecture only for $\AST_{K}(M)$.  

\begin{proposition}
The Sato--Tate conjecture for $\ST_{K}(M)$ (Conjecture \ref{general Sato Tate conj.})
implies the Sato--Tate conjecture for $\widetilde{\ST_{K}} (M)$ (Conjecture \ref{general Sato Tate conj. tilde}).
 \label{ST implies tilde ST}
\end{proposition}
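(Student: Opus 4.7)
The plan is to adapt the proof of Theorem \ref{equidistribution property in ST implies equidistribution property in wide tilde ST} to the motivic setting, replacing Corollary \ref{relations between AST and wide tilde AST and between ST and wide tilde ST} by Corollary \ref{properties of AST M psi with resp. to AST M}. The underlying trichotomy is: either $-{\rm Id}_{V}$ lies in $\widetilde{\ST_{K}}(M)$, in which case the two Sato--Tate groups coincide up to conjugation and the two conjectures become equivalent, or else $-{\rm Id}_{V}\notin\widetilde{\ST_{K}}(M)$, in which case a sign character on $\ST_{K}(M)$ obstructs equidistribution, so Conjecture \ref{general Sato Tate conj.} fails and the implication is vacuous.

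The key preliminary observation I would establish first is that every semisimple Frobenius $s_v$ lies in the image of $\widetilde{\AST_{K}}(M)$ under the monomorphism $\pi_{1}\colon \widetilde{\AST_{K}}(M)\hookrightarrow \AST_{K}(M)$ of Diagram \ref{diagram compatibility of GM plus L with GM}. To see this, I would invoke the compatibility of $\pi_{1}$ with its $l$-adic analog $\pi_{1}\colon \widetilde{G_{l,K,1}^{\alg}}\hookrightarrow G_{l,K,1}^{\alg}$ of Diagram \ref{diagram compatibility of tilde(GlK1alg) with GlK1alg} (via the inclusions \eqref{GlK1alg subset GMKA1Ql} and \eqref{wide tilde GlKalg subset AST M Ql}) together with the identity $\pi_{1}(\tilde{s}_v)=s_v$ recorded in Remark \ref{tilde Sato--Tate set up}. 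Hence $s_v$ always lands in the subgroup $\pi_{1}(\widetilde{\AST_{K}}(M)(\C))$ of $\AST_{K}(M)(\C)$.

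For Case A, assume $-{\rm Id}_V\in\widetilde{\ST_{K}}(M)$. Then Corollary \ref{properties of AST M psi with resp. to AST M}(b),(c) gives $\AST_{K}(M)=\widetilde{\AST_{K}}(M)$ and $\ST_{K}(M)=\widetilde{\ST_{K}}(M)$ up to conjugation, so $\pi_{1}$ becomes an isomorphism. The correspondence $\tilde s_v\leftrightarrow s_v$ under $\pi_{1}$ is a bijection of conjugacy classes carrying the Haar measure of $\widetilde{\ST_{K}}(M)$ onto that of $\ST_{K}(M)$, so equidistribution of $\{s_v\}$ transfers to equidistribution of $\{\tilde s_v\}$.

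For Case B, assume $-{\rm Id}_V\notin\widetilde{\ST_{K}}(M)$, so that $\ST_{K}(M)=\widetilde{\ST_{K}}(M)\sqcup (-{\rm Id}_V)\widetilde{\ST_{K}}(M)$ disjointly. I would define the character $\chi\colon\ST_{K}(M)\to\{\pm 1\}$ equal to $+1$ on $\widetilde{\ST_{K}}(M)$ and $-1$ on the opposite coset. Then $\int_{\ST_{K}(M)}\chi\,d\mu = \tfrac12-\tfrac12=0$, while by the preliminary observation each $s_v$ lies in $\widetilde{\ST_{K}}(M)$ and hence $\chi(s_v)=1$ for every $v$. This forces the Cesàro average of $\chi(s_v)$ to equal $1\neq 0$, contradicting the Sato--Tate conjecture for $\ST_{K}(M)$, so the hypothesis is vacuous. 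The principal difficulty, already isolated above, is exactly the step of checking that $s_v\in \pi_{1}(\widetilde{\AST_{K}}(M)(\C))$; once this compatibility with the motivic $\pi_{1}$ is secured, the rest of the proof is the character-integration argument already used in Theorem \ref{equidistribution property in ST implies equidistribution property in wide tilde ST}.
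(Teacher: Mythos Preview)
Your proposal is correct and follows essentially the same approach as the paper. The paper's proof is a one-line citation to Theorem \ref{equidistribution property in ST implies equidistribution property in wide tilde ST}, and what you have done is unpack that theorem's proof in the motivic setting: you replace Corollary \ref{relations between AST and wide tilde AST and between ST and wide tilde ST} by its motivic analogue Corollary \ref{properties of AST M psi with resp. to AST M}, and you make explicit the fact (implicit in Remark \ref{Conjugacy classes of normalized Frobenius elements are in wide tilde AST} and Remark \ref{tilde Sato--Tate set up}) that every $s_v$ lies in the image of $\pi_1$. This is exactly the content the paper is invoking, just written out in full rather than cited.
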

\begin{proof}
This follows by Theorem 
\ref{equidistribution property in ST implies equidistribution property in wide tilde ST}.
\end{proof}

Recall that $D := {{\rm End}}_{\mathcal{M}_{\overline{K}, \sim}} (\overline{M})$ (see (\ref{definition of D(M)})). 

\begin{proposition} Put $\widetilde{\mathcal{G}_{K}^{\alg}} := 
G_{\mathcal{M}_{{\sim}}(M \oplus \TT)}$ and $\mathcal{G}_{K}^{\alg} := G_{\mathcal{M}_{{\sim}}(M)}$.
Assume that the family $(V_l)_{l}$ of $l$-adic realizations of $M$ is a strictly compatible
family of $l$-adic representations $(\rho_{l})_l$ in the sense of Serre.
Then Conjectures \ref{Tate conjecture for families of l-adic representations}(a) and 
\ref{Tate conjecture for families of l-adic representations tilde}(a) hold  
for the family $(\rho_{l})_l$.   
\label{the family V l l of G K for pure motive M satisfies Conjectures 1 (a) and 2 (a)}
\end{proposition}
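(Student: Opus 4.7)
The plan is to assemble the components required by Conjectures \ref{Tate conjecture for families of l-adic representations}(a) and \ref{Tate conjecture for families of l-adic representations tilde}(a) from results already established in \S\ref{motivic Galois group and motivic Serre group}--\S\ref{computation of the identity connected component of ASTKM}. The candidates are dictated by Definitions \ref{motivic Galois group, motivic Serre group}, \ref{GMKA1 as AST group}, \ref{GMLLKA1 as AST group}: set $\mathcal{G}_{K}^{\alg} := G_{\mathcal{M}_{\sim}(M)}$ and $\widetilde{\mathcal{G}_{K}^{\alg}} := G_{\mathcal{M}_{\sim}(M\oplus \TT)}$, take the restrictions of the projections $N,\pi$ described in \eqref{natural epimorphisms GMsim M oplus TT to Gm and GMsim M}, and take ${\rm{t}}_{l,K}$ and $\widetilde{{\rm{t}}}_{l,K}$ to be the embeddings coming from the Betti-\'etale comparison isomorphism (Remark \ref{Remark on GlKalg subset GMKAQl}).

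For Conjecture \ref{Tate conjecture for families of l-adic representations}(a) I would verify the four requirements in turn. First, reductivity of $\mathcal{G}_{K}^{\alg}$ and the containment $\mathcal{G}_{K}^{\alg}\subset \GIso_{(V,\psi)}$ are exactly \eqref{GMKA subset of GSpV} together with the semisimplicity argument used in the proof of Theorem \ref{equality of conn comp for GM and GM1}, which itself rests only on Assumption 2 (semisimplicity of $\mathcal{M}_{\sim}$) and Assumptions 1, 3 (which furnish the polarization \eqref{motivic Hodge polarization on M}). Second, the monomorphism ${\rm{t}}_{l,K}\colon G_{l,K}^{\alg}\hookrightarrow \mathcal{G}_{K}^{\alg}{}_{\Q_l}$ is \eqref{GlKalg subset GMKAQl}, which holds because $\rho_{l}$ factors through $G_{\mathcal{M}_{\sim}(M)}(\Q_l)$ via the comparison isomorphism $H_{B}(M)\otimes_{\Q}\Q_l \simeq H_{\text{et}}(\overline{M})$. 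Third, compatibility with the natural embeddings into $\GIso_{(V_l,\psi_l)}$ is precisely the commutativity of the middle column of Diagram \ref{GlK1 subset GMKA1Ql}. Fourth, naturality in $K$: for a finite extension $L/K$, Remark \ref{base field extension and direct summands of motives} gives $M\otimes_K L\in \mathcal{M}_{L,\sim}$, and the base change functor $\mathcal{M}_{K,\sim}(M)\to \mathcal{M}_{L,\sim}(M\otimes_K L)$ induces the natural monomorphism $G_{\mathcal{M}_{L,\sim}(M\otimes_K L)}\hookrightarrow G_{\mathcal{M}_{K,\sim}(M)}$ (realizing the preimage of $G_L\subset G_K$ in Diagram \ref{surjection of motivic Galois groups}); this same base change identifies the $l$-adic groups inside both and makes the square of ${\rm{t}}_{l,\bullet}$'s commute.

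For Conjecture \ref{Tate conjecture for families of l-adic representations tilde}(a) the verification runs along the same lines, now applied to the motive $M\oplus \TT$: reductivity again follows from semisimplicity of $\mathcal{M}_{\sim}(M\oplus \TT)$; the containment $\widetilde{\mathcal{G}_{K}^{\alg}}\subset \mathcal{G}_{K}^{\alg}\times \G_m$ together with surjectivity of $\pi$ and $N$ is \eqref{embedding of GML into GM times Gm}--\eqref{natural epimorphisms GMsim M oplus TT to Gm and GMsim M}; the monomorphism $\widetilde{{\rm{t}}}_{l,K}\colon \widetilde{G_{l,K}^{\alg}}\hookrightarrow \widetilde{\mathcal{G}_{K}^{\alg}}{}_{\Q_l}$ is the top horizontal arrow of Diagram \ref{wide tilde GlK 1 subset GMK M 1 Ql} (equivalently \eqref{wide tilde GlKalg subset AST M Ql} without passing to the kernel of $N$); and compatibility with the embedding into $\GL_{W_l}$ comes from the right column of the same diagram. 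Naturality in $K$ is handled as before, replacing $M$ by $M\oplus \TT$.

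The only real content is the identification of $\widetilde{\rho}_l$ with the $l$-adic realization of $M\oplus \TT$, which is why its Zariski closure lies in $G_{\mathcal{M}_{\sim}(M\oplus \TT)}(\Q_l)$; but this is immediate from the Tannakian description of $\widetilde{G_{l,K}^{\alg}}$ as ${\rm{Aut}}^{\otimes}H_{W_l}$ noted in \S\ref{sec,category of polarized realizations}. Thus the expected obstacle is not a substantive one: the step that deserves the most care is the bookkeeping for naturality in $K$, where one must verify that the base-change functor $\mathcal{M}_{K,\sim}\to \mathcal{M}_{L,\sim}$ induces compatible maps at the level of $G_{\mathcal{M}_{\sim}(M)}$, $G_{\mathcal{M}_{\sim}(M\oplus \TT)}$ and their $l$-adic companions, and that these maps are monomorphisms. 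Once this is recorded, the proposition is just an assembly of \eqref{GMKA subset of GSpV}, \eqref{GlKalg subset GMKAQl}, \eqref{embedding of GML into GM times Gm}, \eqref{natural epimorphisms GMsim M oplus TT to Gm and GMsim M}, and Diagrams \ref{GlK1 subset GMKA1Ql} and \ref{wide tilde GlK 1 subset GMK M 1 Ql}.
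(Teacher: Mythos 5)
Your proposal is correct and follows essentially the same route as the paper: the paper likewise takes $\mathcal{G}_{K}^{\alg} = G_{\mathcal{M}_{{\sim}}(M)}$ and $\widetilde{\mathcal{G}_{K}^{\alg}} = G_{\mathcal{M}_{{\sim}}(M \oplus \TT)}$, cites the closed immersions in Diagrams \ref{GlK1 subset GMKA1Ql} and \ref{wide tilde GlK 1 subset GMK M 1 Ql}, and concludes from the properties of the morphisms \eqref{embedding of GML into GM times Gm} and \eqref{natural epimorphisms GMsim M oplus TT to Gm and GMsim M}. The one step the paper records that you omit is the preliminary verification that $(\rho_{l})_l$ actually lies in the class for which Conjectures \ref{Tate conjecture for families of l-adic representations} and \ref{Tate conjecture for families of l-adic representations tilde} are formulated, namely that conditions \textbf{(D1)}, \textbf{(D2)}, \textbf{(DR1)}, \textbf{(DR2)}, \textbf{(R1)}--\textbf{(R4)} hold for the realizations of $M$; this is supplied by Lemmas \ref{conditions D1, D2 are satisfied by Hodge str. associated with M}, \ref{conditions DR1, DR2 are satisfied by (VDR, psiDR) associated with M}, and \ref{conditions R1-R4 are satisfied by Hodge str. and associated l-adic rep. for M} together with the strict compatibility hypothesis.
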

\begin{proof} By Lemmas \ref{conditions D1, D2 are satisfied by Hodge str. associated with M},
\ref{conditions DR1, DR2 are satisfied by (VDR, psiDR) associated with M},
\ref{conditions R1-R4 are satisfied by Hodge str. and associated l-adic rep. for M}, and the 
assumption of strict compatibility, the conditions 
\textbf{(D1)},
\textbf{(D2)},
\textbf{(DR1)},
\textbf{(DR2)},
\textbf{(R1)}--\textbf{(R4)}
of \S \ref{Mumford--Tate groups of polarized Hodge structures}, \S \ref{de Rham structures associated with Hodge structures}, and \S \ref{families of l-adic representations associated with Hodge structures} hold for the Betti, de Rham and $l$-adic realizations of $M$. 
Observe that the middle left horizontal arrow in Diagram \ref{GlK1 subset GMKA1Ql}
and the middle horizontal arrow in Diagram \ref{wide tilde GlK 1 subset GMK M 1 Ql}
are closed immersions. Hence the Proposition follows by the properties of the morphisms
\eqref{embedding of GML into GM times Gm} and
\eqref{natural epimorphisms GMsim M oplus TT to Gm and GMsim M}.
\end{proof}

\begin{corollary} Under the notation and assumptions of Proposition 
\ref{the family V l l of G K for pure motive M satisfies Conjectures 1 (a) and 2 (a)},
Conjectures \ref{Tate conjecture for families of l-adic representations},  
\ref{Tate conjecture for families of l-adic representations tilde}, 
\ref{general algebraic Sato Tate conj.}, and \ref{general algebraic Sato Tate conj. Serre's approach} are equivalent for the family of $l$-adic representations $(\rho_l)_{l}$.   
\label{the family V l l of G K coming from a pure motive gives equivalence of Conjectures 1 and 2 and AST and tilde AST}
\end{corollary}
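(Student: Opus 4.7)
The plan is to reduce this corollary directly to Corollary~\ref{equivalence of 4 conjectures}. That earlier corollary asserts, assuming only the weak forms Conjectures~\ref{Tate conjecture for families of l-adic representations}(a) and~\ref{Tate conjecture for families of l-adic representations tilde}(a), that all four conjectures become equivalent for a family of $l$-adic representations satisfying \textbf{(D1)}, \textbf{(D2)}, \textbf{(DR1)}, \textbf{(DR2)}, \textbf{(R1)}--\textbf{(R4)}. So the only thing to verify is that these weak forms hold under the present hypotheses, after which nothing further is needed.

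First I would invoke Proposition~\ref{the family V l l of G K for pure motive M satisfies Conjectures 1 (a) and 2 (a)}, proved immediately above: for a homogeneous motive $M \in \mathcal{M}_{{\sim}}$ whose $l$-adic realizations form a strictly compatible family, the motivic Galois groups $\mathcal{G}_{K}^{\alg} := G_{\mathcal{M}_{{\sim}}(M)}$ and $\widetilde{\mathcal{G}_{K}^{\alg}} := G_{\mathcal{M}_{{\sim}}(M \oplus \TT)}$, together with the natural closed immersions $G_{l, K}^{\alg} \hookrightarrow (\mathcal{G}_{K}^{\alg})_{\Q_l}$ (from Remark~\ref{Remark on GlKalg subset GMKAQl}, i.e.\ Diagram~\ref{GlK1 subset GMKA1Ql}) and $\widetilde{G_{l, K}^{\alg}} \hookrightarrow (\widetilde{\mathcal{G}_{K}^{\alg}})_{\Q_l}$ (Diagram~\ref{wide tilde GlK 1 subset GMK M 1 Ql}), provide exactly the data required by Conjectures~\ref{Tate conjecture for families of l-adic representations}(a) and~\ref{Tate conjecture for families of l-adic representations tilde}(a): reductivity follows from Tannakian semisimplicity of $\mathcal{M}_{{\sim}}(M)$ and $\mathcal{M}_{{\sim}}(M \oplus \TT)$ (Assumption~2), naturality in $K$ follows from the functoriality of the motivic Galois formalism, the epimorphisms $\pi$ and $N$ are the projections already constructed in~\eqref{natural epimorphisms GMsim M oplus TT to Gm and GMsim M}, and the closed immersion $\widetilde{\mathcal{G}_{K}^{\alg}} \hookrightarrow \mathcal{G}_{K}^{\alg} \times \G_m$ is~\eqref{embedding of GML into GM times Gm}. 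Together with Lemmas~\ref{conditions D1, D2 are satisfied by Hodge str. associated with M}, \ref{conditions DR1, DR2 are satisfied by (VDR, psiDR) associated with M}, and~\ref{conditions R1-R4 are satisfied by Hodge str. and associated l-adic rep. for M}, this places us squarely in the setting of~\S\ref{AST and Tate for families}.

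The remaining step is then automatic: with Conjectures~\ref{Tate conjecture for families of l-adic representations}(a) and~\ref{Tate conjecture for families of l-adic representations tilde}(a) in force, Corollary~\ref{equivalence of 4 conjectures} yields the equivalence of all four conjectures for $(\rho_l)_{l}$. There is no real obstacle here; the substantive content of the present statement has already been packaged into Proposition~\ref{the family V l l of G K for pure motive M satisfies Conjectures 1 (a) and 2 (a)} (which rests on the Tannakian formalism developed in~\S\ref{motivic Galois group and motivic Serre group}--\S\ref{computation of the identity connected component of ASTKM}) and into Corollary~\ref{equivalence of 4 conjectures} (whose proof goes through the double-cube Diagram~\ref{double cubic diagram} identifying the maps $\mathrm{ast}_{l,K}$, $\widetilde{\mathrm{ast}}_{l,K}$ with the restrictions of $\mathrm{t}_{l,K}$, $\widetilde{\mathrm{t}}_{l,K}$ to kernels of the polarization character and the norm character, together with Corollary~\ref{AST V psi with resp. to AST V psi cup -1Id AST V psi} linking $\AST_{K}(V,\psi)$ to $\widetilde{\AST_{K}}(V,\psi)$). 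The proof of the corollary therefore consists of one sentence: apply Proposition~\ref{the family V l l of G K for pure motive M satisfies Conjectures 1 (a) and 2 (a)} to secure the hypothesis of Corollary~\ref{equivalence of 4 conjectures}, and then apply the latter.
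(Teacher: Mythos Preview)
Your proposal is correct and follows exactly the same approach as the paper: invoke Proposition~\ref{the family V l l of G K for pure motive M satisfies Conjectures 1 (a) and 2 (a)} to obtain Conjectures~\ref{Tate conjecture for families of l-adic representations}(a) and~\ref{Tate conjecture for families of l-adic representations tilde}(a), then apply Corollary~\ref{equivalence of 4 conjectures}. The paper's proof is in fact the single sentence you anticipated at the end of your write-up.
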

\begin{proof} 
This follows from Proposition \ref{the family V l l of G K for pure motive M satisfies Conjectures 1 (a) and 2 (a)} and Corollary \ref{equivalence of 4 conjectures}. 
\end{proof}

\begin{remark} Recall the category of polarized realizations $R_{K}^{\rm{p}}$ from \S \ref{sec,category of polarized realizations}. Deligne's category $\mathcal{M}_{{\text{ahc}}}$ of motives for absolute Hodge cycles is a full subcategory of $R_{K}^{\rm{p}}$ via the Betti, de Rham and $l$-adic realizations.
\end{remark}

\begin{remark}
It is not known in general that the family of $l$-adic representations coming from $l$-adic realizations of a motive in $\mathcal{M}_{{\text{ahc}}}$ or $\mathcal{M}_{{\text{mot}}}$ is strictly compatible in the sense of Serre.
\end{remark}

 We would like to propose the following {\bf{Geometricity Conjecture}}. 
\begin{conjecture} Objects of $R_{K}^{\rm{p}}$ are realizations of motives from $\mathcal{M}_{{\text{ahc}}}$. More precisely, let 
$\mathcal{V} \, := \, (\, V_{_{\rm{DR}}}, \, (V_l)_{l}, \, (V_{\sigma})_{\sigma}, \, 
I_{\infty, \sigma}, \,  I_{l, \bar{\sigma}}) \in {\rm{obj}} (R_{K}^{\rm{p}})$ be a pure, polarized realization with components satisfying conditions 
\textbf{(D1)},
\textbf{(D2)},
\textbf{(DR1)},
\textbf{(DR2)},
\textbf{(R1)}--\textbf{(R4)}
of \S \ref{Mumford--Tate groups of polarized Hodge structures}, \S \ref{de Rham structures associated with Hodge structures}, and \S \ref{families of l-adic representations associated with Hodge structures} respectively with $D$ as in Definition \ref{The ring D for polarized realizations}. Then $\mathcal{V}$ comes from a pure motive of the Deligne motivic category for absolute Hodge cycles $\mathcal{M}_{K, {{\rm{{{\rm{ahc}}}}}}}$ via the Betti, de Rham and $l$-adic realizations.
\label{Geometricity conjecture}
\end{conjecture}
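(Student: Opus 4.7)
The plan is to attack Conjecture~\ref{Geometricity conjecture} through Tannakian duality, reducing the statement to the claim that the affine group scheme attached to $\mathcal{V}$ is realized as a motivic Galois quotient, and then attempting to produce the required smooth projective variety. First I would restrict attention to a pure $\mathcal{V}$ of weight $n$; the general case follows by direct sum decomposition using the weight filtration guaranteed by the conditions \textbf{(D1)}, \textbf{(DR1)}--\textbf{(DR2)}, \textbf{(R1)}. Next, via the Betti fiber functor $H_B$ on $R_K^{\rm{p}}$, I would form the neutral Tannakian subcategory $R_K[\mathcal{V}]$ and its associated reductive group scheme $G(\mathcal{V}) := \mathrm{Aut}^{\otimes}(H_B|_{R_K[\mathcal{V}]}) \subset \GIso_{(V,\psi)}$ from \S\ref{sec,category of polarized realizations}. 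By Tannaka duality, producing a motive $M \in \mathcal{M}_{K,{\rm{ahc}}}$ with realization $\mathcal{V}$ is equivalent to exhibiting $R_K[\mathcal{V}]$ as a full Tannakian subcategory of $\mathcal{M}_{K,{\rm{ahc}}}$ via the realization functor.

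The next step would be to establish a candidate motive. Using the given embedding $D \subset \mathrm{End}_{\overline{K}}(\overline{V}_{_{\rm{DR}}})$ and condition \textbf{(R4)}, together with the compatibility with the Hodge decomposition, one obtains a Mumford--Tate group $\MT(V,\psi)$ commuting with $D$, an extended $\widetilde{\MT}(V,\psi)$, and a well-defined action of $\mathrm{Gal}(K_e/K)$. For the cases where $(V,\psi,D)$ falls into known classifications --- for instance weight $1$ with $D$ arising from an Albert algebra, or weight $2$ of K3-type --- I would appeal to explicit constructions: Riemann's theorem identifies polarized weight-1 Hodge structures with abelian varieties, and Deligne's theorem \cite{D1} then guarantees that the corresponding motive lies in $\mathcal{M}_{K,{\rm{ahc}}}$ with the required realizations; in K3-type, the Kuga--Satake construction embeds $\mathcal{V}$ as a summand of the cohomology of an auxiliary abelian variety, again reducible to Deligne's theorem.

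For the general case, the strategy would be to seek a smooth projective $X/K$ together with an absolute Hodge idempotent $p \in \underline{\mathrm{End}}_{\mathcal{M}_{K,{\rm{ahc}}}}(h^n(X))$ such that $\mathrm{Im}(p)$ realizes to $\mathcal{V}$. Candidates for $X$ should be dictated by the group $\mathcal{G}_K^{\alg}(V,\psi) = \mathrm{Aut}^{\otimes}H_{\mathcal{V}}$ constructed in \S\ref{sec,category of polarized realizations}: one would look for Shimura varieties whose associated Mumford--Tate group matches $\mathcal{G}_K^{\alg}(V,\psi)^{\circ}$, and pull back the universal family, so that $\mathcal{V}$ appears in a direct factor of the cohomology.

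The essential obstacle, which I expect to be the crux, is the construction of the cutting idempotent $p$ as an absolute Hodge cycle rather than merely a Hodge cycle. This is precisely the content of (a broad generalization of) the Hodge conjecture: once a geometric carrier $X$ is found, the projector onto $\mathcal{V}$ inside $H^n(X)$ is automatically a Hodge class, but proving it is absolute Hodge is typically available only through Deligne's principle~B (propagation from Hodge cycles on abelian varieties). Thus the conjecture reduces, in my approach, to (i) a concrete geometric realization problem for the Mumford--Tate datum (tractable for classes whose Shimura variety is of abelian or K3 type, and open otherwise), and (ii) invoking Deligne's principle~B once the carrier is abelian-motivated. A complete proof in the generality stated would therefore require either new geometric carriers beyond abelian and K3 type, or a direct construction of absolute Hodge projectors from the polarized realization data alone.
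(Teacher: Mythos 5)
The statement you are asked to prove is Conjecture~\ref{Geometricity conjecture}, which the paper explicitly \emph{proposes} as an open conjecture and does not prove; there is no proof in the paper to compare against. Your write-up is, by your own admission, not a proof either: you reduce the statement to (i) finding a geometric carrier for a general Mumford--Tate datum, which you note is only tractable for weight-1 (Riemann/Deligne) and K3-type (Kuga--Satake) Hodge structures, and (ii) showing that the cutting projector is an absolute Hodge cycle, which you note is available only via Deligne's Principle~B once the carrier is abelian-motivated. Both of these are well-known open problems; in particular, the general problem of realizing an arbitrary polarized $\Q$-Hodge structure as a summand of the cohomology of a smooth projective variety is wide open (most Hodge structures of weight $\geq 2$ with general Mumford--Tate group are not expected to be geometric without the extra arithmetic data imposed here, and even with that data no general construction is known). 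So what you have written is a reasonable heuristic discussion of why the conjecture is plausible and where the difficulties lie, not a proof.

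Beyond the overall status, one specific step would fail as stated: you propose to handle the ``general case'' by direct sum decomposition ``using the weight filtration,'' but objects of $R_{K}^{\rm{p}}$ are already pure by definition (direct sums of pure polarized realizations), so there is no reduction happening there; and more importantly, the passage from ``$\mathcal{G}_{K}^{\alg}(V,\psi)$ matches the Mumford--Tate group of some Shimura datum'' to ``$\mathcal{V}$ appears in the cohomology of the corresponding universal family'' is unjustified even for Shimura varieties of abelian type, because the tautological variations of Hodge structure on a general Shimura variety need not be of geometric origin (this is exactly the obstruction for exceptional groups such as $E_6$, $E_7$, and the non-classical orthogonal cases). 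If you wish to present something rigorous here, the honest statement is that the conjecture is a strengthening of known open problems, and any partial result should be restricted to the abelian-motivated setting where Principle~B applies.
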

 
\begin{remark}
Conjecture \ref{Geometricity conjecture} might be formulated for other motivic categories.
Nevertheless the Betti, de Rham and $l$-adic realizations functor 
$\mathcal{M}_{K, {_{\sim}}}(M) \rightarrow R_{K}^{\rm{p}}$
factors through $\mathcal{M}_{K, {{\rm{{{\rm{ahc}}}}}}}$.
Hence the Geometricity Conjecture is stated in the weakest possible form.
\end{remark}

Recall that there is also a natural epimorphism (see Remark \ref{on the parity Sato--Tate group}): 
$$
\gpP(V_{l}, \psi_{l}) \rightarrow \gpP_{\rm{ST}} (V, \psi).
$$

\begin{definition}
\label{The definition of the Sato--Tate parity group for motives}
The Sato--Tate parity group for a motive $M$ is defined as follows 
(cf. Corollary \ref{properties of AST M psi with resp. to AST M}): 
$$
\gpP_{\rm{ST}} (M, \psi_{{\sim}}) \, :=  \, \AST_{K} (M) / \widetilde{\AST_{K}} (M) 
\, =  \, \ST_{K} (M) / \widetilde{\ST_{K}} (M).
$$
\end{definition}

\begin{remark}  
 While $\gpP_{\rm{ST}} (V, \psi)$ (see \eqref{P ST under Conjectures 8.1 and 8.2})
is defined assuming Conjectures \ref{Tate conjecture for families of l-adic representations}(a) and 
\ref{Tate conjecture for families of l-adic representations tilde}(a) for the family of
$l$-adic representations $(\rho_{l})_l$, the group $\gpP_{\rm{ST}} (M, \psi_{{\sim}})$ is defined 
unconditionally for motives $M$ for which $l$-adic realizations give a family of
$l$-adic representations $(\rho_{l})_l$ which is strictly compatible in the sense of Serre (see Lemmas \ref{conditions D1, D2 are satisfied by Hodge str. associated with M}, \ref{conditions DR1, DR2 are satisfied by (VDR, psiDR) associated with M}, and \ref{conditions R1-R4 are satisfied by Hodge str. and associated l-adic rep. for M}
and Proposition \ref{the family V l l of G K for pure motive M satisfies Conjectures 1 (a) and 2 (a)}).
\label{Sato--Tate parity under Conj. 8.1 (a) and 8.2 (a) for motives}
\end{remark} 

Now Theorem \ref{equidistribution property in ST implies equidistribution property in wide tilde ST for 
PST under Conj. 8.1 (a) and 8.2 (a)} has a counterpart for motives.

\begin{theorem} 
Assume that $l$-adic realizations of the motive $M$ give a family of
$l$-adic representations $(\rho_{l})_l$ which is strictly compatible in the sense of Serre.
\begin{itemize}
\item[(a)]
If $\gpP_{\rm{ST}} (M, \psi_{{\sim}})$ is nontrivial, i.e. $- {{\rm Id}}_{V} \notin \widetilde{\ST_{K}} (M)$, then Sato--Tate Conjecture \ref{general Sato Tate conj.} does not hold.
\item[(b)]
If 
$\gpP_{\rm{ST}} (M, \psi_{{\sim}})$ is trivial, i.e. $- {{\rm Id}}_{V} \in \widetilde{\ST_{K}} (M)$,
then Sato--Tate Conjectures \ref{general Sato Tate conj.}  and \ref{general Sato Tate conj. tilde}
are equivalent. 
\end{itemize}
\label{equidistribution property in ST implies equidistribution property in wide tilde ST for 
PST of motives}
\end{theorem}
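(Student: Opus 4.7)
The plan is to mirror the proofs of Theorems \ref{equidistribution property in ST implies equidistribution property in wide tilde ST} and \ref{equidistribution property in ST implies equidistribution property in wide tilde ST for PST under Conj. 8.1 (a) and 8.2 (a)} inside the motivic framework, with the algebraic inputs replaced by their motivic counterparts from \S \ref{computation of the identity connected component of ASTKM}. The structural data we have available are Corollary \ref{properties of AST M psi with resp. to AST M}(b), (c), yielding the (possibly nondisjoint) decompositions
$$
\AST_{K}(M) = \widetilde{\AST_{K}}(M) \,\cup\, -{\rm Id}_{V}\,\widetilde{\AST_{K}}(M), \qquad
\ST_{K}(M) = \widetilde{\ST_{K}}(M) \,\cup\, -{\rm Id}_{V}\,\widetilde{\ST_{K}}(M),
$$
and the fact that strict compatibility of $(\rho_l)_l$ allows us to define $l$-independent conjugacy classes of normalized Frobenius elements $s_v \in \ST_K(M)$ and $\tilde s_v \in \widetilde{\ST_K}(M)$ exactly as in Remarks \ref{Sato--Tate set up} and \ref{tilde Sato--Tate set up}. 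The central preliminary step is a motivic analog of Remark \ref{Conjugacy classes of normalized Frobenius elements are in wide tilde AST}: because $\widetilde{\AST_K}(M)$ is normal in $\AST_K(M)$ and $-{\rm Id}_V$ is central, the projection $\pi_1$ of Diagram \ref{diagram compatibility of GM plus L with GM} (applied to the $l$-adic realizations after base change) induces a bijection between conjugacy classes of normalized Frobenius elements in the two groups, and in particular every such class in $\AST_K(M)(\mathbb{C})$ is already contained in $\widetilde{\AST_K}(M)(\mathbb{C})$, so (up to conjugation in $\AST_K(M)(\mathbb{C})$) in $\widetilde{\ST_K}(M)$.

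For part (a), assume $-{\rm Id}_V \notin \widetilde{\ST_K}(M)$, so that the union defining $\ST_K(M)$ is disjoint. Let $\mu$ be the probabilistic Haar measure on $\ST_K(M)$ and define the continuous character
$$
\chi\colon \ST_K(M) \to \{\pm 1\}, \qquad \chi(g) = \begin{cases} +1 & g \in \widetilde{\ST_K}(M),\\ -1 & g \in -{\rm Id}_V \,\widetilde{\ST_K}(M).\end{cases}
$$
Translation invariance of $\mu$ gives $\int_{\ST_K(M)} \chi\, d\mu = \tfrac{1}{2} - \tfrac{1}{2} = 0$. By the preliminary step, $\chi(s_v) = 1$ for all $v$ outside a finite set, so the Cesàro averages of $\chi(s_v)$ converge to $1$, contradicting Conjecture \ref{general Sato Tate conj.}.

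For part (b), assume $-{\rm Id}_V \in \widetilde{\ST_K}(M)$. Then the decompositions above collapse to $\AST_K(M) = \widetilde{\AST_K}(M)$ and $\ST_K(M) = \widetilde{\ST_K}(M)$, and in particular their Haar measures agree. It then remains to match the two sequences of normalized Frobenius elements: using the compatibility between the Serre cocharacter construction for $M \oplus \TT$ and for $M$ encoded by the map $\pi_1$, the semisimple parts $s_v$ and $\tilde s_v$ have the same conjugacy classes in the common ambient group $\ST_K(M) = \widetilde{\ST_K}(M)$, so Conjectures \ref{general Sato Tate conj.} and \ref{general Sato Tate conj. tilde} are equivalent. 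The main obstacle I anticipate is the preliminary step: tracking carefully through Diagram \ref{diagram compatibility of GM plus L with GM} and Diagram \ref{wide tilde GlK 1 subset GMK M 1 Ql} that a normalized Frobenius conjugacy class in $\AST_K(M)(\mathbb{C})$ really does come from (and maps to) one in $\widetilde{\AST_K}(M)(\mathbb{C})$, so that the sign character $\chi$ is forced to take the value $+1$ on every $s_v$; once this is in place both parts reduce to the Haar-measure computation above.
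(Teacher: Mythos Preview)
Your proposal is correct and follows exactly the approach the paper takes: the paper's proof simply reads ``The proof is the same as for Theorem \ref{equidistribution property in ST implies equidistribution property in wide tilde ST},'' and you have faithfully reproduced that argument with the motivic substitutions from \S\ref{computation of the identity connected component of ASTKM} (Corollary \ref{properties of AST M psi with resp. to AST M} in place of Corollary \ref{relations between AST and wide tilde AST and between ST and wide tilde ST}, and the motivic analog of Remark \ref{Conjugacy classes of normalized Frobenius elements are in wide tilde AST}). Your identification of the ``preliminary step'' as the only nontrivial point is apt, and it goes through for the same reason as in the earlier theorem.
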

\begin{proof} The proof is the same as for Theorem
\ref{equidistribution property in ST implies equidistribution property in wide tilde ST}.
\end{proof}

\begin{remark} 
Theorem \ref{equidistribution property in ST implies equidistribution property in wide tilde ST for 
PST of motives}(a) can be illustrated using Example \ref
{Another example of nontrivial Betti parity group}, 
as the Sato--Tate conjecture is known for $A$ when $K$ is a totally real field \cite{HSBT}, \cite{BLGG} or a CM field \cite{Ten}.
In these cases, Conjecture~\ref{general Sato Tate conj. tilde} holds but Conjecture~\ref{general Sato Tate conj.} does not.
\label{equidistribution in ST contra equidistribution in wide tilde ST for motives} 
\end{remark}

By Definitions \ref{The motivic parity group}, 
\ref{The definition of the Sato--Tate parity group for motives}, and equality  
\eqref{connected component of id of GM plus T 1 is the same as GM 1} there is a natural homomorphism:
\begin{equation}
\gpP_{S} (M, \psi_{{\sim}}) \rightarrow \gpP_{\rm{ST}} (M, \psi_{{\sim}}).
\label{Homomorphism between P M psi tilde (M) and P S T (M)}
\end{equation}

The groups $\gpP_{S} (M, \psi_{{\sim}})$ and $\gpP_{\rm{ST}} (M, \psi_{{\sim}})$ have order at most 2. 
These groups are generated by the corresponding cosets of $- {\rm{Id}}_V$.
Moreover the homomorphism \eqref{Homomorphism between P M psi tilde (M) and P S T (M)}
carries the coset of $- {\rm{Id}}_V$ in the source into the coset of $- {\rm{Id}}_V$ in the target. Hence   
the homomorphism \eqref{Homomorphism between P M psi tilde (M) and P S T (M)} is an epimorphism, cf. Remark
\ref{Each hom. between parity gr. carries the coset of - Id in the source into the coset of - Id in the target.}.

Consider the following commutative diagram. 

\begin{figure}[H]
\[
\begin{tikzcd}
1 \arrow{r}{} & (G_{\mathcal{M}_{{\sim}}(M \oplus \TT)})^{\circ} \arrow{d}[swap]{}{\pi^{\circ}} \arrow{r}{}
&  G_{\mathcal{M}_{{\sim}}(M \oplus \TT)}  \arrow{d}[swap]{}{\pi} \arrow{r}{}  & \pi_{0} 
(G_{\mathcal{M}_{{\sim}}(M \oplus \TT)})   \arrow{d}[swap]{}{\overline{\pi}} \arrow{r}{} &  1\\  
1 \arrow{r}{} & (G_{\mathcal{M}_{{\sim}}(M)})^{\circ} \arrow{d}{} \arrow{r}{} & 
G_{\mathcal{M}_{{\sim}}(M)} \arrow{d}{} \arrow{r}{} & \pi_{0}(G_{\mathcal{M}_{{\sim}}(M)}) 
\arrow{d}[swap]{} \arrow{r}{} &  1 \\
&  1  & 1   & 1\\
\end{tikzcd}
\]
\\[-0.8cm]
\caption{}
\label{Diagram between row of tilde AST and row of AST }
\end{figure}

The map $\pi$ is an epimorphism cf. \eqref{natural epimorphisms GMsim M oplus TT to Gm and GMsim M}. Consequently 
$\overline{\pi}$ is an epimorphism.
By Diagram \ref{diagram compatibility of GM plus L with GM} of Lemma
\ref{commutativity of diagram compatibility of GM plus L with GM} and the comments below the proof of 
this lemma, the map $\pi$ has finite kernel. Hence the algebraic groups $G_{\mathcal{M}_{{\sim}}(M \oplus \TT)}$
and $G_{\mathcal{M}_{{\sim}}(M)}$ have the same dimension and the map $\pi^{\circ}$ has finite kernel. Because the image of   
$\pi^{\circ}$ is a closed subgroup of $(G_{\mathcal{M}_{{\sim}}(M)})^{\circ}$ \cite[Section 7.4, Prop. B(b)]{Hu} of the same dimension as $(G_{\mathcal{M}_{{\sim}}(M \oplus \TT)})^{\circ}$, the image is also open. Because the group 
$(G_{\mathcal{M}_{{\sim}}(M)})^{\circ}$ is connected, the map $\pi^{\circ}$ must be an epimorphism. 
By the snake lemma applied to Diagram 
\ref{Diagram between row of tilde AST and row of AST }, there is the following short exact sequence:
\begin{equation}
1 \rightarrow {\rm{Ker}} \, \pi^{\circ} \rightarrow {\rm{Ker}} \, \pi \rightarrow {\rm{Ker}} \, \overline{\pi} \rightarrow 1.
\label{ker pi 0  Ker pi Ker overline pi for motives}
\end{equation} 

\begin{proposition} There is the following short exact sequence:
\label{Proposition 1 rightarrow P T rightarrow P S rightarrow P ST rightarrow 1 for motives}
\begin{equation}
1 \rightarrow {\rm{Ker}} \, \overline{\pi} \rightarrow \gpP_{S} (M, \psi_{{\sim}}) \rightarrow 
\gpP_{\rm{ST}} (M, \psi_{{\sim}}) \rightarrow 1 \,.
\label{1 rightarrow P T rightarrow P S rightarrow P ST rightarrow 1 for motives}
\end{equation}
\end{proposition}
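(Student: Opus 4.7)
The plan is straightforward: this is the motivic counterpart of Proposition \ref{Proposition 1 rightarrow P T rightarrow P S rightarrow P ST rightarrow 1}, and I would mirror that snake-lemma argument word for word, using Diagram \ref{pi0 applied to the diagram compatibility of GM 1 with GM} in place of Diagram \ref{pi0 applied to the diagram compatibility of wide tilde math cal G alg K 1 with math cal G alg K 1}. First I would identify the two endpoints of the desired exact sequence with a kernel and a cokernel inside Diagram \ref{pi0 applied to the diagram compatibility of GM 1 with GM}. By Definition \ref{The motivic parity group} and the exact sequence \eqref{properties of iM} of Theorem \ref{equality of conn comp for GM and GM1}, the group $\gpP_{S}(M, \psi_{{\sim}})$ is exactly the leftmost term of the bottom row, i.e.\ $\ker(i_{M})$. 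For the Sato--Tate parity, Definition \ref{The definition of the Sato--Tate parity group for motives} gives $\gpP_{\rm{ST}}(M, \psi_{{\sim}}) = G_{\mathcal{M}_{{\sim}}(M), 1}/G_{\mathcal{M}_{{\sim}}(M \oplus \TT), 1}$; combined with the equality of identity components \eqref{connected component of id of GM plus T 1 is the same as GM 1} and the injectivity of $\pi_{1}$ recorded after Lemma \ref{commutativity of diagram compatibility of GM plus L with GM}, this quotient coincides with $\mathrm{coker}(\overline{\pi}_{1})$.

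Next I would apply the snake lemma to Diagram \ref{pi0 applied to the diagram compatibility of GM 1 with GM}, padding the top row into the four-term exact sequence
$$0 \to 0 \to \pi_{0}(G_{\mathcal{M}_{{\sim}}(M \oplus \TT), 1}) \xrightarrow{\simeq} \pi_{0}(G_{\mathcal{M}_{{\sim}}(M \oplus \TT)}) \to 0,$$
where the isomorphism is Theorem \ref{proposition: i M oplus T is an isomorphism}, and reading the bottom as the short exact sequence of Theorem \ref{equality of conn comp for GM and GM1}. The vertical arrow $\overline{\pi}_{1}$ is injective (discussion following Lemma \ref{commutativity of diagram compatibility of GM plus L with GM}), while $\overline{\pi}$ is surjective by the remarks preceding \eqref{ker pi 0  Ker pi Ker overline pi for motives}. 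The snake lemma then yields
$$0 \to \ker(\overline{\pi}_{1}) \to \ker(\overline{\pi}) \to \gpP_{S}(M, \psi_{{\sim}}) \to \mathrm{coker}(\overline{\pi}_{1}) \to \mathrm{coker}(\overline{\pi}) \to 0,$$
and since the first and last terms vanish this collapses to the claimed exact sequence \eqref{1 rightarrow P T rightarrow P S rightarrow P ST rightarrow 1 for motives}.

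I do not anticipate any genuine obstacle; every input is already in hand and the argument is purely diagrammatic. The one place worth double-checking is the identification $\mathrm{coker}(\overline{\pi}_{1}) \cong \gpP_{\rm{ST}}(M, \psi_{{\sim}})$, which hinges on \eqref{connected component of id of GM plus T 1 is the same as GM 1}: because $G_{\mathcal{M}_{{\sim}}(M \oplus \TT), 1}$ and $G_{\mathcal{M}_{{\sim}}(M), 1}$ share an identity component, the quotient of the algebraic groups agrees with the quotient of their component groups, so that the snake-lemma cokernel is genuinely the Sato--Tate parity group as defined.
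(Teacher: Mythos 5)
Your proof is correct and follows essentially the same route as the paper: identify $\gpP_{\rm{ST}}(M,\psi_{\sim})$ with $\pi_{0}(G_{\mathcal{M}_{\sim}(M),1})/\pi_{0}(G_{\mathcal{M}_{\sim}(M\oplus\TT),1})=\mathrm{coker}(\overline{\pi}_{1})$ via the shared identity component, then apply the snake lemma to Diagram \ref{pi0 applied to the diagram compatibility of GM 1 with GM}. Your write-up merely spells out the padding of the top row and the vanishing of $\ker(\overline{\pi}_{1})$ and $\mathrm{coker}(\overline{\pi})$, which the paper leaves implicit.
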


\begin{proof} 
By Diagram \ref{the diagram compatibility of GM plus T 1 with GM 1 and pi0 GM plus 1 with pi0 GM 1}
and Definitions \ref{def of motivic Mumford Tate and Sato Tate groups}, \ref{GMKA1 as AST group}, and
\ref{GMLLKA1 as AST group} we have:
\begin{equation}
\gpP_{\rm{ST}} (M, \psi_{{\sim}}) = \pi_{0} (G_{\mathcal{M}_{{\sim}}(M), 1}) \, / \,
\pi_{0} (G_{\mathcal{M}_{{\sim}}(M \oplus \TT), 1}).
\label{P ST = p0 ( mathcal G alg K 1) over p0 (tilde mathcal G alg K 1) motivic}
\end{equation}
Now the claim follows by the snake lemma applied to Diagram 
\ref{pi0 applied to the diagram compatibility of GM 1 with GM}. 
\end{proof}

\section{Equidistribution of Frobenii in \texorpdfstring{$l$}{l}-adic realization of motives}
\label{equidistribution of Frobenii in l-adic realization of motives}
\medskip

As before, let $M$ be a direct factor of $h_{{\sim}}^{r}(X)(m)$. By definition, $M$ is a homogeneous motive
of weight  $n = 2 m - r$. Recall that $H^{r}_{l}$ denotes the $l$-adic realization of $h_{{\sim}}^{r}(X)(m)$ 
(see \S 13). Let $\rho_{l}$ be the $l$-adic representation coming from the 
$l$-adic realizations of $M$. Consider the finite Galois extension  $K_{0} / K$ associated with
$\rho_{l}$ according to \eqref{epsilon} and \eqref{epsilon and tilde epsilon}.

\begin{theorem} Assume that the family $(\rho_{l})$ 
is strictly compatible. Assume that $n \not= 0$ and there is $c \in \N$ such 
that $(\Z_{l}^{\times})^c \,\, {\rm{Id}}_{H^{r}_{l}}  \, \subset \, \rho_{H^{r}_{l}} (G_K)$ for all $l$. 
Moreover assume that for some $l$ coprime to $c$:
\begin{itemize}
\item[(1)] $K_0 \, \cap \, K(\mu_{\bar{l}}^{\otimes \, n}) \, = \, K$,
\item[(2)] $\gpast_{l, K}$ is an isomorphism with respect to $\rho_l$.
\end{itemize}
Then the Sato--Tate Conjecture holds for the representation $\rho_l \colon G_K \rightarrow \GIso_{(V_l, \psi_l)}(\Q_l)$ 
(resp. $\widetilde{\rho}_l\colon G_K \rightarrow \GL (V_{l} \oplus \Q_l(1))$)
with respect to $\ST_{K}(M)$ (resp. $\widetilde{\ST_{K}} (M)$) if and only if it holds for $\rho_l\colon G_{K_1} \rightarrow \GIso_{(V_l, \psi_l)}(\Q_l)$ (resp. $\widetilde{\rho}_l\colon G_{K_1} \rightarrow \GL (V_{l} \oplus \Q_l(1))$)  
with respect to $\ST_{K_1} (M)$ (resp. $\widetilde{\ST_{K_{1}}} (M)$) for all subextensions $K_1$ of $K_0/K$ such that 
$K_{0} / K_{1}$ is cyclic (cf. Theorem~\ref{Sato--Tate conjecture STK iff STK1}).
\label{ST with resp. to STK the same as with resp. to STK1}
\end{theorem}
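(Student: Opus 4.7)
The plan is to reduce this statement to Theorem \ref{Sato--Tate conjecture STK iff STK1}, applied separately to the pairs $(\ST_K(M), \ST_{K_1}(M))$ and $(\widetilde{\ST_K}(M), \widetilde{\ST_{K_1}}(M))$. This requires verifying the hypotheses of Theorem \ref{STK iff STK0} and of Theorem \ref{tilde STK iff tilde STK0} at the chosen prime $l$.

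First I would check the algebraic Sato--Tate conjecture hypotheses. Conjecture \ref{general algebraic Sato Tate conj.}(a) holds for $(\rho_l)_l$ by \eqref{Conjecture: general algebraic Sato Tate conj. for M} of Theorem \ref{General properties of AST}, while Conjecture \ref{general algebraic Sato Tate conj. Serre's approach}(a) holds by \eqref{wide tilde GlKalg subset AST M Ql}. The hypothesis that $\gpast_{l,K}$ is an isomorphism gives Conjecture \ref{general algebraic Sato Tate conj.} at the chosen prime, and Proposition \ref{AST conjecture for M is equivalent to wide tilde AST conjecture } transfers this to an isomorphism $\widetilde{\gpast}_{l,K}$. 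Next, the homothety condition $1 + l\Z_l \cdot \mathrm{Id}_{V_l} \subset \rho_l(G_K)$ follows because $l$ is coprime to $c$: the $c$-th power map is a continuous automorphism of the pro-$l$ group $1+l\Z_l$, so $(\Z_l^\times)^c \supseteq 1+l\Z_l$; the inclusion on $H^r_l$ from the hypothesis then restricts to $V_l$ since $V_l$ is cut out of $H^r_l$ by an idempotent commuting with the image of $\rho_{H^r_l}$, hence a fortiori with the scalar matrices. The cyclotomic condition $\chi_c(G_K) = \chi_c(G_{K_{v_0}})$ for some $v_0 \mid l$ in $K_0$ needed for Theorem \ref{tilde STK iff tilde STK0} follows because $\chi_c$ is unramified outside $l$, so the decomposition group at $v_0$ already surjects onto the global image $\chi_c(G_K)$ in $\Z_l^\times$.

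The most delicate step is promoting the isomorphism $\gpast_{l,K}$ to $\gpast_{l,K_0}$. By Theorem \ref{L0realizing conn comp for GlK alg} and the definition of $K_0$ one has $G_{l,K_0,1}^{\alg} = (G_{l,K,1}^{\alg})^\circ$, while on the motivic side the base-change functor induces a natural closed immersion $\AST_{K_0}(M \otimes_K K_0) \hookrightarrow \AST_K(M)$ landing in the identity component. A dimension and component-group comparison, using that $\gpast_{l,K}$ is already an isomorphism together with Proposition \ref{connected components iso}, forces $\gpast_{l,K_0}$ to be an isomorphism as well, and Proposition \ref{AST conjecture for M is equivalent to wide tilde AST conjecture } extends this to $\widetilde{\gpast}_{l,K_0}$. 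With every hypothesis now in place, Theorem \ref{Sato--Tate conjecture STK iff STK1} delivers the stated equivalence for both $(\rho_l, \ST)$ and $(\widetilde{\rho}_l, \widetilde{\ST})$. The main obstacle is precisely the identification of $\AST_{K_0}(M \otimes_K K_0)$ with the correct intersection inside $\AST_K(M)$ at the motivic level: this requires careful bookkeeping of how the motivic Galois group behaves under base extension and its clean compatibility with the $l$-adic identity component furnished by Theorems \ref{L0realizing conn comp of GlKalg} and \ref{L0realizing conn comp}, without recourse to the full Mumford--Tate conjecture.
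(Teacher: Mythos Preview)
Your plan --- verify the hypotheses of Theorem~\ref{Sato--Tate conjecture STK iff STK1} and apply it --- is exactly the paper's approach. The paper first invokes Lemmas~\ref{conditions D1, D2 are satisfied by Hodge str. associated with M}, \ref{conditions DR1, DR2 are satisfied by (VDR, psiDR) associated with M}, \ref{conditions R1-R4 are satisfied by Hodge str. and associated l-adic rep. for M} to secure conditions \textbf{(D1)}, \textbf{(D2)}, \textbf{(DR1)}, \textbf{(DR2)}, \textbf{(R1)}--\textbf{(R4)} (a step you omit but which is needed, since Theorem~\ref{Sato--Tate conjecture STK iff STK1} lives in that framework), then transfers the homothety condition from $H^r_l$ to the $\Q_l[G_K]$-direct summand $V_l$ via semisimplicity of $\mathcal{M}_\sim$, observes $1+l\Z_l \subset (\Z_l^\times)^c$ since $l$ is coprime to $c$, and invokes Theorem~\ref{Sato--Tate conjecture STK iff STK1}. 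Your homothety argument matches this.

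Where you go beyond the paper is in trying to verify the hypotheses of Theorem~\ref{tilde STK iff tilde STK0} and the isomorphism $\gpast_{l,K_0}$, which the paper leaves implicit; you are right that these are genuinely needed to invoke Theorem~\ref{Sato--Tate conjecture STK iff STK1}. However, two of your added steps contain errors. First, your claim $G_{l,K_0,1}^{\alg} = (G_{l,K,1}^{\alg})^\circ$ does not follow from Theorem~\ref{L0realizing conn comp for GlK alg}; that result only gives $(G_{l,K_0,1}^{\alg})^\circ = (G_{l,K,1}^{\alg})^\circ$, while Theorem~\ref{K0 almost realizing Glk1 alg: even case}(a) shows that in general $G_{l,K_0,1}^{\alg} = (G_{l,K,1}^{\alg})^\circ \cup -{\rm{Id}}_{V_l}(G_{l,K,1}^{\alg})^\circ$, so your component comparison can be off by a factor of~$2$. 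Second, your justification of $\chi_c(G_K) = \chi_c(G_{K_{v_0}})$ is incorrect: unramifiedness of $\chi_c$ away from $l$ does not force a decomposition group \emph{in $K_0$} above $l$ to surject onto the global cyclotomic image over $K$ --- already $K=\Q$, $K_0=\Q(\mu_l)$ gives a counterexample. So while you correctly spot points the paper glosses over, your proposed patches do not close them as written.
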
   

\begin{proof} Lemmas \ref{conditions D1, D2 are satisfied by Hodge str. associated with M}, 
\ref{conditions DR1, DR2 are satisfied by (VDR, psiDR) associated with M} and 
\ref{conditions R1-R4 are satisfied by Hodge str. and associated l-adic rep. for M} show that
$\rho_l$ satisfies conditions {\bf{(D1)}}, {\bf{(D2)}}, {\bf{(DR1)}}, {\bf{(DR2)}} and {\bf{(R1)}}--{\bf{(R4)}}. The category 
$\mathcal{M}_{{\rm{\sim}}} $ is abelian semisimple by assumption, hence $V_l$ is a direct factor of $H_l$ as a 
$\Q_l[G_K]$-module. Hence  $(\Z_{l}^{\times})^c \,\, {\rm{Id}}_{V_l}  \, \subset \, \rho_{l} (G_K)$ for all $l$. Since $l$ is coprime to $c$ then $1 + l\, \Z_l \subset (\Z_{l}^{\times})^c$. Therefore we can apply directly Theorem  
\ref{Sato--Tate conjecture STK iff STK1}. 
\end{proof}

\begin{remark} Let $A/K$ be an abelian variety with $\dim A \leq 4$. Then by
\cite[p. 5 Corollary 1]{W} the Lang conjecture holds, i.e. $\rho_l(G_K)$ contains the group of all 
homotheties in $\GL_{T_{l}(A)} (\Z_{l})$ for all $l \gg 0$.
\label{Lang conjecture for dim A < 5}
\end{remark}

\begin{remark}
If $n$ is odd, then by Propositions \ref{P(Vl, psil) trivial for n odd or GlK01 = CDl} and 
\ref{minimal field of connectedness independent of l} the field $K_{0}$ is independent of $l$. 
Hence for $l$ big enough $K_0 \, \cap \, K(\mu_{\bar{l}}^{\otimes \, n}) \, = \, K$.
\label{conditions for K0 cap K(mu-barl-otimes n = K)}
\end{remark}

\begin{remark} Let $A/K$ be an abelian variety with $\dim A \leq 3$. Then by
\cite[Theorem 6.11]{BK1} the homomorphism $\gpast_{l, K}$ is an isomorphism with respect to $\rho_l$. 
\label{AST conjecture for dim A leq 3}
\end{remark}

\begin{remark}
Assume that Algebraic Sato--Tate Conjecture~\ref{general algebraic Sato Tate conj.}
holds for $M$. By Proposition \ref{connected components iso}, $\ST_{K_0}(M)$ is connected if and only if 
$\gpP(V_l, \psi_l)$ is trivial (cf. Definition \ref{The l-adic parity group}). In particular $\ST_{K_0}(M)$ is 
connected when $n$ is odd (see Theorem \ref{conn comp equal for for GlK1 alg and GlK alg}).
Hence in some cases Theorems \ref{reduction for equidistribution2} and 
\ref{ST with resp. to STK the same as with resp. to STK1} provide an opportunity to 
check the Sato--Tate conjecture on the identity connected component of the Sato--Tate group. 
\label{connectedness of ST(K0)(M)}
\end{remark}

\begin{example} Let $A/K$ be an abelian surface over a number field. Consider the motive $h_{{\rm{ahc}}}^{1} (A)$. 
Put $\ST_{K}(A) := \ST_{K}(h_{{\rm{ahc}}}^{1} (A))$. Observe that $\ST_{K}(A)^{\circ} = \ST_{K_{0}}(A)$ by Remark
\ref{connectedness of ST(K0)(M)}. It was shown in \cite[Table 8, p. 1434]{FKRS12} that there are 52 possibilities for $\ST_{K}(A)$ and each quotient $\ST_{K}(A) / \ST_{K_{0}}(A)$ is solvable. 
Taking into account Remarks \ref{Lang conjecture for dim A < 5}, \ref{conditions for K0 cap K(mu-barl-otimes n = K)}, 
\ref{AST conjecture for dim A leq 3} we observe that all assumptions of Theorem \ref{ST with resp. to STK the same as with resp. to STK1} hold in this case. The identity connected component $\ST_{K_{0}}(A)$ in every of these 52 cases is isomorphic to one of the following 6 groups: $\gpU(1)$, $\SU(2)$, $\gpU(1) \times \gpU(1)$, $\gpU(1) \times \SU(2)$, 
$\SU(2) \times \SU(2)$, $\USp(4)$ (see \cite[Table 1, p. 1402]{FKRS12}).
\label{example of ST for abelian surfaces} 
\end{example}

\begin{remark} The Sato--Tate conjecture for abelian surfaces was proven in many cases by Christian Johansson
\cite{Jo}.
In the cases where $\ST_{K_{0}}(A)$ is one of $\gpU(1)$, $\gpU(1) \times \gpU(1)$, $\gpU(1) \times \SU(2)$, $\USp(4)$, by Theorem  \ref{reduction for equidistribution2}
it is only necessary to check the Sato--Tate conjecture for $\ST_{K_{0}}(A)$ itself.
In the case where $\ST_{K_{0}}(A) = \SU(2) \times \SU(2)$, one must separately treat the case
where $\ST_{K}(A) = N(\SU(2) \times \SU(2))$.
In the case where $\ST_{K_{0}}(A) = \SU(2)$, one must separately treat several additional cases.
\label{Sato--Tate for abelian surfaces by Christian Johannson}
\end{remark}

\begin{remark}
Let $A/K$ be an abelian variety such that over some finite extension of $K$, $A$ becomes isogenous to a product of abelian varieties with complex multiplication. Then the Sato--Tate conjecture is known for $A$; see for example \cite[Proposition~16]{Jo}. This includes some examples where $\ST_{K}(A) / \ST_{K_{0}}(A)$ is not solvable; for example, in \cite{FKS} one finds an example of an abelian threefold for which $\ST_{K}(A) / \ST_{K_{0}}(A)$ is simple of order 168. 
One can make higher-dimensional examples using the method of \cite{GK}.
\label{ST for abelian 3-folds}
\end{remark}

\end{document}